\numberwithin{equation}{section}
\newtheorem{theorem}{Theorem}[section]
\newtheorem{definition}[theorem]{Definition}
\newtheorem{lemma}[theorem]{Lemma}
\newtheorem{corollary}[theorem]{Corollary}
\newtheorem{proposition}[theorem]{Proposition}
\newtheorem{assumption}[theorem]{Assumption}
\theoremstyle{definition}
\newtheorem{example}[theorem]{Example}
\theoremstyle{remark}
\newtheorem*{remark}{Remark}
\newcommand{\coker}{\textnormal{coker}}
\newcommand{\Cone}{\textnormal{Cone}}
\newcommand{\eps}{\varepsilon}
\newcommand{\rank}{\textnormal{rank}}
\newcommand{\id}{\textnormal{id}}
\newcommand{\Spec}{\textnormal{Spec}}
\newcommand{\im}{\textnormal{im}}
\newcommand{\Sym}{\textnormal{Sym}}
\newcommand{\tensor}{\otimes}
\newcommand{\Ext}{\textnormal{Ext}}
\newcommand{\Tor}{\textnormal{Tor}}
\newcommand{\End}{\textnormal{End}}
\newcommand{\Crit}{\textnormal{Crit}}
\newcommand{\inv}{\textnormal{inv}}
\newcommand{\ind}{\textnormal{ind}}
\newcommand{\grad}{\textnormal{grad}}
\newcommand{\specflow}{\textnormal{sf}}
\newcommand{\pt}{\textnormal{pt}}
\newcommand{\cal}{\mathcal}
\newcommand{\scr}{\mathscr}
\newcommand{\bb}{\mathbb}
\renewcommand{\bf}[1]{\textbf{\textup{#1}}}
\title{Equivariant Floer theory and double covers of three-manifolds}
\author{Tim Large}
\date{}
\begin{document}

\maketitle

\begin{abstract}Inspired by Kronheimer and Mrowka'€™s approach to monopole Floer homology, we develop a model for $\bb{Z}/2$-equivariant symplectic Floer theory using equivariant almost complex structures, which admits a localization map to a twisted version of Floer cohomology in the invariant set. We then present applications to Steenrod operations on Lagrangian Floer cohomology, and to the Heegaard Floer homology of double covers of three-manifolds.\end{abstract}

\tableofcontents

\section{Introduction}

\subsection{Background.} Suppose $\tilde{M}$ is a symplectic manifold which is exact and convex at infinity (such as the completion of a Liouville domain), which has a symplectic action of the group with two elements $\bb{Z}/2$. Further suppose that $\tilde{L}_0, \tilde{L}_1$ are exact Lagrangians in $M$, compact or convex at infinity, which are invariant under the $\bb{Z}/2$ action. The fixed point set $M$ is then also exact symplectic (possibly with components of different dimensions), with $L_i = \tilde{L}_i \cap M^{\inv}$ as Lagrangian submanifolds. In this setting, Seidel-Smith \cite{SeidelSmith10} asked for a relationship between the $\bb{Z}/2$-equivariant Floer cohomology $HF_{\bb{Z}/2}(\tilde{L}_0, \tilde{L}_1)$ and the ordinary Floer cohomology $HF(L_0, L_1)$ in the fixed point set, where both these groups are taken with $\bb{F}_2$ coefficients.

There is an immediate topological difficulty. To compare Floer cohomology in $M$ and $\tilde{M}$, one would like to consider curves that are holomorphic for an equivariant almost complex structure $J$. However for such $J$, holomorphic curves entirely in $M$ can be of different index when considered in $M$ or in $\tilde{M}$; consequently the moduli spaces of such curves are usually not of the expected dimension. This yields a topological obstruction to simultaneously achieving transversally cut-out and equivariant moduli spaces, stemming from the possible non-triviality of the normal bundle $N_{M \subset \tilde{M}}$ together with the Lagrangian sub-bundles $N_{L_i \subset \tilde{L}_i}$ of $N_{M}|_{L_i}$.

Seidel-Smith showed that one can achieve equivariant transversality in the presence of what they called a stable normal trivialization: an isomorphism $\alpha: N_{M} \oplus \bb{C}^{\ell} \cong \bb{C}^N$, together with homotopies between $\alpha(N_{L} \oplus \bb{R}^{\ell})$ and $\bb{R}^N$ through Lagrangian sub-bundles of $N_{M}|_{L_i} \oplus \bb{C}^{\ell}$. Using an equivariant almost complex structure thus obtained, they constructed a comparatively simple model for $HF_{\bb{Z}/2}(L_0, L_1)$ as an $H^*(B\bb{Z}/2, \bb{F}_2) = \bb{F}_2[[t]]$-module, together with a \emph{localization map}, an $\bb{F}_2[[t]]$ homomorphism
\begin{equation}HF_{\bb{Z}/2}(\tilde{L}_0, \tilde{L}_1) \to HF(L_0, L_1) \tensor_{\bb{F}_2} \bb{F}_2((t)) \end{equation}
which is an isomorphism after inverting the equivariant parameter $t$ in the former group. Using this, they obtained Smith-type inequalities: since there is a spectral sequence from the ordinary Floer cohomology $HF(L_0, L_1) \tensor \bb{F}_2[[t]]$ converging to $HF_{\bb{Z}/2}(L_0, L_1)$, in view of the localization isomorphism above we have
\[\dim_{\bb{F}_2} HF(\tilde{L}_0, \tilde{L}_1) \ge \dim_{\bb{F}_2((t))} HF_{\bb{Z}/2}(\tilde{L}_0, \tilde{L}_1) [t^{-1}] = \dim_{\bb{F}_2} HF(L_0, L_1).\]

\subsection{Twistings and couplings.} In the absence of a stable normal trivialization, Seidel-Smith demonstrated that this inequality fails in even relatively simple cases. However, Seidel-Smith suggested that one might be able to modify the Floer complex for $L_0, L_1$ to incorporate certain characteristic classes coming from the normal data, to produce a twisted version of Floer cohomology that would then be the target of the localization map. This paper is an analytic realization of that idea, taking much inspiration from Kronheimer-Mrowka's construction of $S^1$-equivariant Floer theories in the setting of monopole Floer homology \cite{KronheimerMrowka07}.

Throughout this paper, we refer the triple $\frak{p} = (N_{M}, N_{L_0}, N_{L_1})$ as an example of \emph{polarization data}; the terminology comes from Pressley and Segal's notion of a polarized Hilbert space \cite{PressleySegal86}. The main idea of this paper is to combine the ordinary Floer equation in $M$ with a linear Cauchy-Riemann equation for sections of $N_{M}$. More explicitly, in addition to choosing a time-dependent family of almost complex structures $J_t$ on $M$, we choose a time-dependent complex structure $I_t$ on $N_{M}$, as well as a symplectic connection $\nabla$. We then consider the equations for a map $u: \bb{R}\times[0,1] \to M$ with boundary conditions on $L_0, L_1$, together with a section $\phi$ of $u^*N_M$ with boundary conditions on $N_{L_0}, N_{L_1}$:
\[\frac{\partial u}{\partial s} + J_t(u) \frac{\partial u}{\partial t} = 0, \qquad \nabla_{\partial/\partial s} \phi + I_t(u) \nabla_{\partial/\partial t} \phi = 0.\]
We refer to these as the \emph{twisted Floer equations}, and from them we construct the $\frak{p}$-\emph{twisted Floer cohomology}
\begin{equation}HF_{tw}(L_0, L_1; \frak{p}).\end{equation}
This will naturally be an $\bb{F}_2[t, t^{-1}]$-module, and in the presence of additional grading data, it is the final page of a spectral sequence starting at $HF(L_0, L_1)\tensor \bb{F}_2[t,t^{-1}]$ (in the absence of such grading data, we have to take Floer cohomology in a local system). A stable trivialization of $\frak{p}$ induces an isomorphism
\[HF_{tw}(L_0, L_1; \frak{p}) \cong HF(L_0, L_1) \tensor \bb{F}_2[t,t^{-1}]\]
and thus a degeneration of the above spectral sequence.

The construction of twisted Floer cohomology is entirely local around the invariant set $M$. The next step is to couple the twisted equations on $M, N_M$ to the ordinary Floer equation in $\tilde{M}$ away from the invariant set. Indeed, given a $\bb{Z}/2$-equivariant almost complex structure $\tilde{J}_t$, its restriction to the invariant set splits as an almost complex structure $J_t$ on $M$ and a complex structure $I_t$ on the normal bundle $N_M$.

The key observation is that from a sequence $(u_{\alpha})$ of non-invariant $\tilde{J}_t$-holomorphic curves converging locally to curve contained entirely in the fixed point set, we can extract a non-zero solution of the twisted equations on $M$. We can then modify the traditional compactification of the moduli space of non-invariant Floer trajectories by considering broken trajectories where each component is either a non-invariant Floer trajectory, or a pair $(u, \phi)$ of an invariant Floer trajectory and a solution $\phi$ of the twisted equation.

The main upshot of considering the twisted equations in this problem is that we can achieve transversely cut out moduli spaces with an equivariant almost complex structure: the additional equation in $N_{M}$ precisely accounts for the index difference between the Floer equations in $M$ and $\tilde{M}$. By counting the zero dimensional components of these moduli spaces, we then build in a new model for the equivariant Floer cohomology $HF_{\bb{Z}/2}(\tilde{L}_0, \tilde{L}_1)$, which is naturally a module over $\bb{F}_2[t]$ (rather than its completion, as in the Seidel-Smith model). Moreover, in this model a localization theorem turns out to be an almost immediate consequence of the construction:

\begin{theorem}\label{localization_theorem}There is an $\bb{F}_2[t]$-linear homomorphism
\begin{equation}HF_{\bb{Z}/2}(\tilde{L}_0, \tilde{L}_1) \to HF_{tw}(L_0, L_1; \frak{p})\end{equation}
which is an isomorphism after inverting $t$.\end{theorem}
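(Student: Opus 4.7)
The plan is to realize the map at the chain level as the obvious projection. By construction, the equivariant Floer complex $CF_{\bb{Z}/2}(\tilde L_0,\tilde L_1)$ decomposes as an $\bb{F}_2[t]$-module into two pieces: a \emph{fixed} part, generated by intersection points lying in $L_0\cap L_1\subset M^{\inv}$ (appropriately enhanced by the generators needed to model twisted sections of $N_M$), and a \emph{free} part, generated by $\bb{Z}/2$-orbits $\{\tilde x, \sigma(\tilde x)\}$ of non-invariant intersection points. The map I want is projection onto the fixed summand. First I would check that the free summand is in fact a subcomplex: a rigid $\tilde J$-holomorphic strip from a non-invariant orbit into a fixed generator comes with its $\sigma$-partner, and the two contributions cancel mod $2$, so $d_{\bb{Z}/2}$ preserves the free subcomplex. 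The projection is therefore a chain map onto the quotient.

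Next I would identify the quotient complex with $CF_{tw}(L_0,L_1;\frak p)$. The surviving matrix coefficients on the quotient are those between fixed generators, and by the compactification picture described in the introduction such rigid broken trajectories consist precisely of coupled pairs $(u,\phi)$ of an invariant Floer strip and a solution $\phi$ of the twisted linear Cauchy--Riemann equation in $u^*N_M$, together with their limits from sequences of non-invariant curves. This is by definition the count computing $d_{tw}$, so the induced differential on the quotient agrees with the twisted Floer differential, giving a short exact sequence of $\bb{F}_2[t]$-modules
\[0\longrightarrow C_{\text{free}}\longrightarrow CF_{\bb{Z}/2}(\tilde L_0,\tilde L_1)\longrightarrow CF_{tw}(L_0,L_1;\frak p)\longrightarrow 0.\]

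For the localization, I would show that $C_{\text{free}}\otimes_{\bb{F}_2[t]}\bb{F}_2[t,t^{-1}]$ is acyclic. Each free orbit contributes an $\bb{F}_2[t]$-summand modeled on the equivariant chains of a free $\bb{Z}/2$-set, on which Smith theory provides a standard chain null-homotopy of the identity after inverting $t$: the involution-averaging operator, combined with the fact that $t$ arises from the Borel construction, collapses the free part once $t$ is made invertible. Taking the long exact sequence in cohomology associated with the short exact sequence above and inverting $t$ then yields the desired isomorphism, since $HF_{tw}$ is already an $\bb{F}_2[t,t^{-1}]$-module.

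The step I expect to be the main obstacle is the identification of the induced quotient differential with $d_{tw}$. This relies on the two geometric inputs hinted at in the introduction: the extraction procedure, producing a nonzero twisted section $\phi$ as the rescaled limit of a sequence of non-invariant $\tilde J$-holomorphic curves degenerating onto an invariant one; and a compatible gluing theorem saying that such extractions are in bijection with the boundary strata of the enlarged compactification. Once these are in hand the argument is largely formal homological algebra over $\bb{F}_2[t]$, making use of the equivariant transversality afforded by the coupled equations.
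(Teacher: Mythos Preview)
Your overall strategy---fit the map into an exact sequence whose third term is $t$-torsion---matches the paper, but the specific decomposition you propose does not exist, and the cancellation argument you give for it is incorrect.

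\textbf{The generators do not match.} In the paper's model, the twisted complex $CF_{tw}(L_0,L_1;\frak p)=\bar C=C_s\oplus C_u$ has one generator for each pair $(x,\lambda)$ with $x\in L_0\cap L_1$ and $\lambda$ \emph{any} eigenvalue (positive or negative) of $I_x\frac{d}{dt}$. By contrast, the ``fixed'' part of $CF_{\bb Z/2}=\check C=C_o\oplus C_s$ contains only the \emph{positive} eigenvalues. So even as graded vector spaces, the quotient $\check C/C_o\cong C_s$ is not $CF_{tw}$, and no projection of the form you describe can realize the localization map. The paper's map $i^*\colon\check C\to\bar C$, given by
\[
i^*=\begin{pmatrix}0&\id_{C_s}\\ d_{uo}&d_{us}\end{pmatrix}\colon C_o\oplus C_s\to C_s\oplus C_u,
\]
is not a projection: the $C_u$-component is produced by counting non-invariant $\tilde J_t$-holomorphic strips via $d_{uo}$ and $d_{us}$.

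\textbf{The cancellation fails.} Your claim that a rigid strip from a non-invariant orbit $\{y,\iota(y)\}$ to an invariant $x$ cancels with its $\iota$-partner is not correct in this model. The moduli space $\cal M(\bf y,\bf x)$ is \emph{defined} as the space of strips from $y$ or $\iota(y)$ to $x$, taken modulo the $\bb Z/2$-action; the two strips you want to pair represent a single point of this quotient and contribute once. Consequently the component $d^{\partial}_{su}d_{uo}\colon C_o\to C_s$ of $\check d$ need not vanish, and $C_o$ is not in general a subcomplex of $\check C$.

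\textbf{What actually happens.} The paper uses three complexes $\hat C,\check C,\bar C$ fitting into an exact triangle
\[
\cdots\to HF^-_{\bb Z/2}\xrightarrow{j^*}HF_{\bb Z/2}\xrightarrow{i^*}HF_{tw}\xrightarrow{\partial}\cdots
\]
and shows that the \emph{negative} flavor $\hat C=C_o\oplus C_u$ is $t$-torsion: filtering by the symplectic action, its associated graded is $\bigoplus_{\frak C_o}\bb F_2\ \oplus\ \bigoplus_{L_0\cap L_1} t^{-1}\bb F_2[t^{-1}]$, each summand annihilated by some power of $t$. So $HF^-_{\bb Z/2}[t^{-1}]=0$ and the long exact sequence gives the isomorphism. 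The ``error term'' is $\hat C$, not $C_o$; the extra $C_u$ summand is precisely what accounts for the missing unstable generators in $CF_{tw}$.
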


The relevant finite-dimensional toy model is Morse theory for a manifold $X$ with boundary, where the gradient flow, rather than being everywhere inward or outward pointing, is \emph{everywhere tangent} to the boundary $\partial X$. As developed by Kronheimer-Mrowka \cite{KronheimerMrowka07}, in addition to those critical points in the \emph{interior} of $X$, there are two types of boundary critical points: those where, in the normal to $\partial X$, the flow attracts points into the boundary, and those where the flow repels points away. These two cases are known as the \emph{boundary-stable} and \emph{boundary-unstable} cases respectively, and from these three classes of critical points, we can build three Morse complexes computing each of the singular cohomology groups $H^*(X, \partial X), H^*(X)$ and $H^*(\partial X)$, together with the natural maps between them.

The relevance of this model to equivariant cohomology is this: if $\tilde{X}$ is a manifold with $\bb{Z}/2$-action, we can perform an oriented blow up the fixed point set $X$ to obtain a manifold $\tilde{Y}$ with a free $\bb{Z}/2$ action. The quotient $Y = \tilde{Y}/(\bb{Z}/2)$ is a manifold with boundary given by the projectivization $\bb{R}P(N)$ of the normal bundle $N$ of $X$. Moreover, it is a finite-dimensional approximation to the Borel quotient $\tilde{X} \times_{\bb{Z}/2} E\bb{Z}/2$, and the restriction map
\[H^*(Y) \to H^*(\partial Y) = H^*(\bb{R}P(N))\]
can be seen as an approximation to the localization map
\[H^*_{\bb{Z}/2}(\tilde{X}) \to H^*(X) \tensor H^*(B\bb{Z}/2).\]

Formally speaking, in our setting we have an infinite dimensional real Hilbert manifold
\[\tilde{X} = \cal{P}_{\tilde{M}}(\tilde{L}_0, \tilde{L}_1)\]
of paths between $\tilde{L}_0$ and $\tilde{L}_1$ in $\tilde{M}$. This inherits a $\bb{Z}/2$-action, with fixed point set given by the path space in $M$
\[X = \cal{P}_M(L_0, L_1).\]
The symplectic action gives an equivariant Morse function on $\tilde{X}$. Just as ordinary Floer theory is formally the Morse theory of the symplectic action function, our construction of equivariant Floer cohomology and the localization map is via the formal gradient flow on the blow-up of $\tilde{X}$ along $X$.

The new feature in the infinite-dimensional case is that whilst, in view of Kuiper's theorem, the formal normal bundle $N_{X \subset \tilde{X}}$ is trivial as a Hilbert bundle, the Hessian of the action functional equips $N$ with a \emph{polarization} in the sense of \cite{PressleySegal86}, \cite{CohenJonesSegal}. This structure can be highly non-trivial homotopically: it is the underlying origin of the need to consider polarization-twisted Floer cohomology as the target of the localization map.

The first half or so of this paper is concerned with the analytic theory required to construct the modules $HF_{tw}(L_0, L_1; \frak{p})$, $HF_{\bb{Z}/2}(\tilde{L}_0, \tilde{L}_1)$ and the map between them. Following this,  we prove some basic algebraic properties of this model of equivariant Floer cohomology: the key step is the proof of a K\"{u}nneth theorem, first for twisted Floer cohomology, and then for $\bb{Z}/2$-equivariant Floer cohomology.

Using the K\"{u}nneth formula, we then explain how to extract Seidel-Smith's model of equivariant Floer cohomology, which uses a family of equations parametrized over $\bb{R}P^{\infty}$, from ours. Through this comparison, we also obtain an equivalence with related models for $\bb{Z}/2$-equivariant Floer theory, such as that of Hendricks-Lipshitz-Sarkar \cite{HendricksLipshitzSarkar16}. This is an important step: since our model utilizes almost complex structures that are \emph{not regular} in the ordinary sense, there is no natural spectral sequence relating $HF(\tilde{L}_0, \tilde{L}_1)$ and its equivariant analogue. Nevertheless, by comparing our model to the Seidel-Smith construction, we obtain a Smith-type inequality for the \emph{twisted} Floer cohomology:

\begin{theorem}\label{smith_inequality}There is a rank inequality
\begin{equation} \dim_{\bb{F}_2} HF(\tilde{L}_0, \tilde{L}_1) \ge \rank_{\bb{F}_2[t,t^{-1}]} HF_{tw}(L_0, L_1; \frak{p})\end{equation}
between ordinary Floer cohomology on $\tilde{M}$ and Floer cohomology on $M$ twisted by the polarization data $\frak{p} = (N_M, N_{L_0}, N_{L_1})$.\end{theorem}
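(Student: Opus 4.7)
The plan is to deduce the inequality by combining three ingredients: Seidel-Smith's classical spectral sequence in their $\bb{R}P^\infty$-parametrized model of equivariant Floer cohomology, the identification (via the K\"unneth theorem) between their model and ours, and the localization isomorphism of Theorem \ref{localization_theorem}. The key point is that, although our construction uses almost complex structures that are not regular in the ordinary sense---so there is no direct spectral sequence from $HF(\tilde{L}_0, \tilde{L}_1)$ to $HF_{\bb{Z}/2}(\tilde{L}_0, \tilde{L}_1)$---we can transport Seidel-Smith's inequality across the model comparison and then across localization.

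First, I would invoke the Seidel-Smith construction, in which the cellular filtration of $\bb{R}P^\infty$ induces a spectral sequence whose $E_1$ page is $HF(\tilde{L}_0, \tilde{L}_1) \otimes \bb{F}_2[[t]]$ (in the appropriate local system, if grading data is absent) converging to $HF_{\bb{Z}/2}^{SS}(\tilde{L}_0, \tilde{L}_1)$ as an $\bb{F}_2[[t]]$-module. Since ranks over the field $\bb{F}_2((t))$ can only drop across pages of a spectral sequence, tensoring with $\bb{F}_2((t))$ yields
\[\dim_{\bb{F}_2} HF(\tilde{L}_0, \tilde{L}_1) \ge \dim_{\bb{F}_2((t))} \bigl(HF_{\bb{Z}/2}^{SS}(\tilde{L}_0, \tilde{L}_1) \otimes_{\bb{F}_2[[t]]} \bb{F}_2((t))\bigr).\]

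Next, the comparison with Seidel-Smith promised in the introduction (extracted from our model via the K\"unneth theorem) identifies their $\bb{F}_2[[t]]$-module with the $t$-adic completion of our $\bb{F}_2[t]$-module $HF_{\bb{Z}/2}(\tilde{L}_0, \tilde{L}_1)$. Since base change to $\bb{F}_2((t))$ factors through $t$-adic completion and returns the same answer, the right-hand side above equals $\dim_{\bb{F}_2((t))} \bigl(HF_{\bb{Z}/2}(\tilde{L}_0, \tilde{L}_1) \otimes_{\bb{F}_2[t]} \bb{F}_2((t))\bigr)$. Theorem \ref{localization_theorem} then provides an isomorphism $HF_{\bb{Z}/2}(\tilde{L}_0, \tilde{L}_1)[t^{-1}] \cong HF_{tw}(L_0, L_1; \frak{p})$ of $\bb{F}_2[t, t^{-1}]$-modules, and because $\bb{F}_2[t, t^{-1}]$ is a PID whose torsion is concentrated at irreducible polynomials $p(t)$---each of which is a unit in $\bb{F}_2((t))$ and hence killed by base change---the $\bb{F}_2((t))$-dimension of $HF_{tw}(L_0, L_1; \frak{p}) \otimes \bb{F}_2((t))$ coincides with its $\bb{F}_2[t, t^{-1}]$-module rank. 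Stringing these identifications together yields the claimed inequality.

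The principal obstacle is not the final algebra but rather the two comparison steps: one must first confirm that the K\"unneth identification of our model with Seidel-Smith's $\bb{R}P^\infty$-parametrized model is compatible with the $t$-adic completion and module structures on both sides, and second ensure that the Seidel-Smith spectral sequence is available in the form needed for the rank comparison over $\bb{F}_2((t))$. Once those compatibilities are in place---which is precisely what the earlier K\"unneth-based comparison section is designed to deliver---the rest of the argument is essentially formal.
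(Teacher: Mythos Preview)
Your proposal is correct and follows essentially the same route as the paper: the Seidel--Smith spectral sequence gives the rank drop from $HF(\tilde{L}_0,\tilde{L}_1)$ to $HF^{SS}_{\bb{Z}/2}$, the K\"unneth-based comparison (Theorem~\ref{KMSScomparison}) identifies this with the completion of the Kronheimer--Mrowka model and hence matches ranks, and Theorem~\ref{localization_theorem} converts that rank into $\rank_{\bb{F}_2[t,t^{-1}]} HF_{tw}(L_0,L_1;\frak{p})$. Your added remarks on why $\bb{F}_2((t))$-dimension agrees with $\bb{F}_2[t,t^{-1}]$-rank make explicit a step the paper leaves implicit.
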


\subsection{Steenrod operations.} We then turn our attention to applications, first to Steenrod operations and second to Heegaard Floer homology.

For any exact and convex symplectic manifold $M$, the product $M \times M$ carries a natural symplectic involution exchanging the factors. Moreover, if $L_0, L_1$ are any exact Lagrangians in $M$, then $L_0 \times L_0$ and $L_1 \times L_1$ are natural $\bb{Z}/2$-equivariant Lagrangians of $M \times M$. The fixed point sets are precisely the diagonals $M \subset M \times M$ and $L_i \subset L_i \times L_i$, and it is a well-known fact that in this situation the normal and tangent bundles to the fixed point set coincide: the polarization data is exactly $\frak{p} = (TM, TL_0, TL_1)$. We then accordingly have a localization map
\[HF_{\bb{Z}/2}(L_0 \times L_0, L_1 \times L_1) \to HF_{tw}(L_0, L_1; \frak{p}).\]
Moreover, there is an ``equivariant exterior square'', in the vein of Seidel's equivariant pair-of-pants product \cite{Seidel15}
\[HF(L_0, L_1) \to HF_{\bb{Z}/2}(L_0 \times L_0, L_1 \times L_1)\]
which doubles the degree in the event that $L_0, L_1$ are equipped with gradings, and after tensoring each side with $\bb{F}_2[t,t^{-1}]$ is in fact a linear isomorphism.

\begin{definition}The Floer-theoretic total Steenrod square is the composite map
\begin{equation}Sq: HF(L_0, L_1) \to HF_{tw}(L_0, L_1; \frak{p})\end{equation}
of the two maps above. Given a trivialization of the polarization data $\frak{p} = (TM, TL_0, TL_1)$, the Lagrangians acquire gradings so as to make $Sq$ double the degree, and by comparing the total square $Sq$ with the isomorphism
\[HF_{tw}(L_0, L_1; \frak{p}) \cong HF(L_0, L_1) \tensor \bb{F}_2[t,t^{-1}]\] induced by the trivialization, we obtain individual Steenrod operations $Sq^i$ through the formula
\begin{equation}Sq(x) = t^{\deg(x)}(Sq^0(x) + t^{-1} Sq^1(x) + t^{-2} Sq^2(x) + \hdots).\end{equation}\end{definition}

Even without a trivialization of $\frak{p}$, the remarks above ensure $Sq$ always yields a canonical linear isomorphism
\[Sq: HF(L_0, L_1) \tensor \bb{F}_2[t,t^{-1}] \xrightarrow{\sim} HF_{tw}(L_0, L_1; \frak{p}). \]
In particular, the spectral sequence for ``self-twisted'' Floer cohomology --- when the polarization data $\frak{p}$ is taken to just be the tangent bundles of the symplectic manifold $M$ and its Lagrangians $L_i$ --- always degenerates at the $E_2$ page.

\begin{remark}Given smooth structures as manifolds with corners for the moduli spaces of holomorphic strips in $M$, a trivialization of $\frak{p}$ yields the topological data required to construct a \emph{Floer homotopy type} in the sense of Cohen-Jones-Segal \cite{CohenJonesSegal}. In particular, $HF(L_0, L_1)$ is naturally the $\bb{F}_2$-cohomology of a spectrum, and thus acquires Steenrod squares. In forthcoming work, we will show these coincide with the Floer-theoretic operations defined above.\end{remark}

\subsection{Double covers of three-manifolds.} For our second application, we consider the Heegaard Floer homology of a double cover $\tilde{Y} \to Y$ of closed, oriented three-manifolds, in two cases:
\begin{enumerate}\item when $\tilde{Y} \to Y$ is an unbranched double cover which lifts to a $\bb{Z}$-fold cover;
\item when $\tilde{Y} \to Y$ is a branched over a null-homologous knot $K \subset Y$.\end{enumerate}
In case (1), the question is how do the finite-dimensional $\bb{F}_2$-vector spaces $\widehat{HF}(\tilde{Y})$ and $\widehat{HF}(Y)$, as defined by Ozsvath-Szabo \cite{OzsvathSzabo04a}, \cite{OzsvathSzabo04b}. On the other hand, case (2) we will be concerned with the knot Floer homologies $\widehat{HFK}(\tilde{Y}, K)$ and $\widehat{HFK}(Y, K)$ \cite{OzsvathSzabo04c}, \cite{Rasmussen03}.

In either case, the choice of a Heegaard diagram for $Y$ (respectively $(Y,K)$) determines a symplectic manifold $M$ which is a certain symmetric product of the punctured Heegaard surface, together with Lagrangian tori $L_0, L_1$, such that $HF(L_0, L_1) = \widehat{HF}(Y)$ (respectively $\widehat{HFK}(Y,K)$). There is also an induced (multiply-pointed) Heegaard diagram for $\tilde{Y}$ (respectively $(\tilde{Y}, K)$), and the corresponding symplectic manifold $\tilde{M}$ carries Lagrangian tori $\tilde{L}_0, \tilde{L}_1$ computing $\widehat{HF}(\tilde{Y})$ (respectively $\widehat{HFK}(Y,K)$).

The deck transformations on $\tilde{Y}$ then give rise to a $\bb{Z}/2$-action on $\tilde{M}$, with $M$ as its fixed point set. This action preserves the Lagrangians $\tilde{L}_i$ set-wise, with fixed point sets $L_i$.

The key geometric fact we will prove is that the resulting normal polarization data $\frak{p} = (N_{M}, N_{L_0}, N_{L_1})$ is in fact isomorphic to its tangent polarization data $(TM, TL_0, TL_1)$. In particular, the inverse Steenrod square can be used to trivialize the twisted Floer cohomology group
\[Sq^{-1}: HF_{tw}(L_0, L_1; \frak{p}) \xrightarrow{\sim} HF(L_0, L_1) \tensor \bb{F}_2[t,t^{-1}].\]
From this and Theorem \ref{smith_inequality} we immediately obtain
\begin{theorem}\label{HF_inequality_doublecover}If $\tilde{Y} \to Y$ is an unbranched double cover of closed oriented three-manifolds which lifts to a $\bb{Z}$-fold cover, then
\[2 \ \dim_{\bb{F}_2} \widehat{HF}(\tilde{Y}) \ge \dim_{\bb{F}_2} \widehat{HF}(Y).\]\end{theorem}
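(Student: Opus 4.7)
The plan is a direct application of Theorem \ref{smith_inequality} together with the inverse Steenrod square, once the required geometric identification of polarizations is in place. First I would fix a pointed Heegaard diagram $(\Sigma, \alpha, \beta, z)$ of genus $g$ for $Y$. The attaching circles $\alpha_i, \beta_i$ each bound disks in their respective handlebodies, hence are nullhomologous in $Y$, so they lift to the double cover regardless of choices; moreover the hypothesis that $\tilde{Y}\to Y$ lifts to a $\bb{Z}$-fold cover ensures that the defining mod-$2$ character is non-trivial on $\pi_1(\Sigma)$, so $\tilde\Sigma$ is connected of genus $2g-1$ and the preimage $(\tilde\Sigma, \tilde\alpha, \tilde\beta, \tilde z^+, \tilde z^-)$ is a doubly-pointed Heegaard diagram for $\tilde Y$. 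Setting $M = \Sym^g(\Sigma)$ and $\tilde M = \Sym^{2g}(\tilde\Sigma)$, I obtain the customary Lagrangian tori $L_\alpha, L_\beta$ and $\tilde L_\alpha, \tilde L_\beta$, with $HF(L_\alpha, L_\beta) = \widehat{HF}(Y)$ and, by the standard basepoint formula for multi-pointed diagrams, $HF(\tilde L_\alpha, \tilde L_\beta) \cong \widehat{HF}(\tilde Y) \tensor \bb{F}_2^2$.

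Next I would analyse the deck involution $\sigma$, which acts symplectically on $\tilde M$. Since $\sigma$ acts freely on $\tilde\Sigma$, the only $\sigma$-invariant effective divisors on $\tilde\Sigma$ are pullbacks $\pi^* D$, so the fixed locus of $\sigma$ on $\tilde M$ is precisely $M$; the tori $\tilde L_\alpha, \tilde L_\beta$ are setwise preserved, with fixed sets $L_\alpha, L_\beta$. The geometric heart is then the identification of the normal polarization $\frak{p} = (N_M, N_{L_\alpha}, N_{L_\beta})$ with the tangent polarization $(TM, TL_\alpha, TL_\beta)$. Locally at a smooth fixed divisor $\tilde D = \sum_i(p_i^+ + p_i^-)$, the covering map $\pi$ identifies $T_{\tilde D}\tilde M \cong \bigoplus_i(T_{p_i}\Sigma \oplus T_{p_i}\Sigma)$, with $\sigma_*$ swapping the two summands at each $i$; both the $(+1)$- and $(-1)$-eigenspaces then project complex-linearly and isomorphically onto $T_D M$, giving $N_M \cong TM$ as Hermitian bundles. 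The same computation with $\alpha_i$ (resp.\ $\beta_i$) in place of $\Sigma$ sends $N_{L_\alpha}$ (resp.\ $N_{L_\beta}$) to $TL_\alpha$ (resp.\ $TL_\beta$). I expect the main obstacle to be promoting this pointwise, open-dense identification to a genuine isomorphism of polarization data, compatible with the complex and symplectic structures globally and extending smoothly across the big diagonal of $\Sym^g$; I anticipate this reducing to the naturality of the constructions for a $\sigma$-invariant K\"ahler structure on $\tilde\Sigma$.

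Granted the polarization identification, the inverse Steenrod square recalled above delivers an isomorphism
\[HF_{tw}(L_\alpha, L_\beta; \frak{p}) \;\cong\; HF(L_\alpha, L_\beta) \tensor \bb{F}_2[t,t^{-1}] \;=\; \widehat{HF}(Y) \tensor \bb{F}_2[t,t^{-1}],\]
so the $\bb{F}_2[t,t^{-1}]$-rank of $HF_{tw}(L_\alpha, L_\beta; \frak{p})$ equals $\dim_{\bb{F}_2}\widehat{HF}(Y)$. Feeding this into Theorem \ref{smith_inequality} applied to $(\tilde M, \tilde L_\alpha, \tilde L_\beta)$, together with the basepoint formula for $\tilde Y$, will yield
\[2\,\dim_{\bb{F}_2}\widehat{HF}(\tilde Y) \;=\; \dim_{\bb{F}_2} HF(\tilde L_\alpha, \tilde L_\beta) \;\ge\; \rank_{\bb{F}_2[t,t^{-1}]} HF_{tw}(L_\alpha, L_\beta; \frak{p}) \;=\; \dim_{\bb{F}_2}\widehat{HF}(Y),\]
which is the required inequality.
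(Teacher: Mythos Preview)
Your overall architecture is exactly the paper's: set up the $\bb{Z}/2$-action on $\Sym^{2g}(\tilde\Sigma)$, identify the normal polarization $\frak{p}$ with the tangent polarization $\frak{t}$, untwist via the Steenrod square, and invoke Theorem~\ref{smith_inequality}. The gap is in the polarization identification, and it is not the one you anticipate.

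Your ``projection'' from the $(-1)$-eigenspace to $T_DM$ is not globally defined. At a smooth divisor the antidiagonal in $T_{p_i^+}\tilde\Sigma\oplus T_{p_i^-}\tilde\Sigma$ is canonical as a subspace, but identifying it with $T_{x_i}\Sigma$ requires choosing which preimage is $p_i^+$; the two choices differ by $-1$. Since $\tilde\Sigma\to\Sigma$ is a nontrivial cover, no continuous labelling exists, so what you actually get is $N_M\cong TM\otimes L$ where $L$ is the associated $\pm 1$ real line bundle. Now $L\otimes_{\bb{R}}\bb{C}$ is trivializable on $\Sigma$ for Chern-class reasons, so $N_M\cong TM$ as complex bundles --- but the \emph{choice} of trivialization governs whether $N_{L_\alpha},N_{L_\beta}$ land in the homotopy classes of $TL_\alpha,TL_\beta$, and a wrong choice will not (the paper's remark after Proposition~\ref{normaltangentpolarizationI} makes exactly this point). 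This is where the $\bb{Z}$-cover hypothesis enters, and it is not to ensure $\tilde\Sigma$ is connected (that follows already from nontriviality, since $\pi_1(\Sigma)\to\pi_1(Y)$ is onto). Rather, it furnishes a map $\tilde f:\tilde\Sigma\to S^1$ with $\tilde f\circ\iota=-\tilde f$, which the paper uses to build $\psi_x(v)=(\tilde f(\tilde x_1)\,dp^{-1}v,\ \tilde f(\tilde x_2)\,dp^{-1}v)$; this is independent of the labelling and hence global. The crucial property is that each $\alpha_i,\beta_j$ is nullhomotopic in $Y$, so $\tilde f$ restricted to each $\tilde\alpha_i,\tilde\beta_j$ has degree zero and can be homotoped to a constant, yielding the required homotopy of Lagrangian sub-bundles $\phi(T\alpha_i)\sim N_{\alpha_i}$. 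Extending across the big diagonal is then handled by an equivariant averaging (the paper's Lemma~\ref{gfunctions}), but that is a secondary technicality; the substantive input is $\tilde f$.
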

The factor of two on the left hand side accounts for the fact that the relevant Heegaard diagram is doubly-pointed. On the other hand, in case (2), we have
\begin{theorem}\label{HF_inequality_branched}If $K \subset Y$ is a null-homologous knot, and $\tilde{Y} = \Sigma(K)$ is the branched double cover of $Y$ obtained from a Seifert surface for $K$, then
\[\dim_{\bb{F}_2} \widehat{HFK}(\tilde{Y}, K) \ge \dim_{\bb{F}_2} \widehat{HFK}(Y, K).\]\end{theorem}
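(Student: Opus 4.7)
The plan is to apply Theorem~\ref{smith_inequality} along the same lines as Theorem~\ref{HF_inequality_doublecover}, with the single combinatorial twist that in the branched case the two basepoints sit on the branch locus and are therefore fixed by the deck transformation, so the lifted Heegaard diagram remains doubly-pointed and no factor of two appears on the left-hand side.

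More concretely, I would fix a doubly-pointed Heegaard diagram $(\Sigma, \alpha, \beta, w, z)$ for $(Y, K)$ in which $K \cap \Sigma = \{w, z\}$. The branched cover $\tilde{Y} = \Sigma(K)$ restricts over $\Sigma$ to a double cover $\tilde{\Sigma} \to \Sigma$ branched over $\{w, z\}$, of genus $2g$ by Riemann--Hurwitz, and each $\alpha_i, \beta_i$ lifts to a pair of curves, giving a doubly-pointed Heegaard diagram $(\tilde{\Sigma}, \tilde{\alpha}, \tilde{\beta}, \tilde{w}, \tilde{z})$ for $(\tilde{Y}, K)$. The induced deck involution on $\tilde{M} = \Sym^{2g}(\tilde{\Sigma} \setminus \{\tilde{w}, \tilde{z}\})$ preserves $\tilde{L}_0 = T_{\tilde{\alpha}}$ and $\tilde{L}_1 = T_{\tilde{\beta}}$; since the involution acts freely on $\tilde{\Sigma} \setminus \{\tilde{w}, \tilde{z}\}$, the fixed locus in $\tilde{M}$ consists of unordered tuples that split into $g$ conjugate pairs, canonically identified with $M = \Sym^g(\Sigma \setminus \{w, z\})$, and similarly $\tilde{L}_i^{\inv} = L_i$. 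This places the data squarely in the setting of Theorem~\ref{smith_inequality}.

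The key geometric step, shared with the proof of Theorem~\ref{HF_inequality_doublecover}, is to verify that the normal polarization data $\frak{p} = (N_M, N_{L_0}, N_{L_1})$ is isomorphic to the tangent polarization data $(TM, TL_0, TL_1)$. Writing a point of $M$ as a $g$-tuple $\{p_1, \dots, p_g\}$ with lifts $\{\tilde{p}_i^{\pm}\}$, the tangent space to $\tilde{M}$ decomposes as $\bigoplus_i(T_{\tilde{p}_i^+}\tilde{\Sigma} \oplus T_{\tilde{p}_i^-}\tilde{\Sigma})$, with the involution swapping the two summands in each pair via the differential of the deck transformation; the invariant (diagonal) piece projects isomorphically to $\bigoplus_i T_{p_i}\Sigma = TM$, and the anti-invariant (anti-diagonal) piece projects isomorphically to the same space, delivering $N_M \cong TM$ as complex vector bundles. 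Restricting this identification to the tangent directions along $\tilde{\alpha}_i, \tilde{\beta}_i$ above $\alpha_i, \beta_i$ then gives $N_{L_i} \cong TL_i$ as Lagrangian sub-bundles. The main obstacle will be to verify that these fiberwise identifications assemble into a smooth, globally-defined polarization isomorphism --- routine on the complement of the branch points by descent along the free covering, but requiring a careful local model of the symmetric product near the ramification locus of $\tilde{\Sigma} \to \Sigma$, where the punctures at $\tilde{w}, \tilde{z}$ have been made.

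Given this identification, the inverse Steenrod square from the preceding discussion furnishes a trivialization $HF_{tw}(L_0, L_1; \frak{p}) \cong HF(L_0, L_1) \tensor \bb{F}_2[t, t^{-1}]$. Theorem~\ref{smith_inequality} then reads
\[\dim_{\bb{F}_2} HF(\tilde{L}_0, \tilde{L}_1) \ge \rank_{\bb{F}_2[t,t^{-1}]} HF_{tw}(L_0, L_1; \frak{p}) = \dim_{\bb{F}_2} HF(L_0, L_1),\]
and translating under the (doubly-pointed) Heegaard-Floer identifications $HF(\tilde{L}_0, \tilde{L}_1) \cong \widehat{HFK}(\tilde{Y}, K)$ and $HF(L_0, L_1) \cong \widehat{HFK}(Y, K)$ yields the claimed inequality.
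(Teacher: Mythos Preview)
Your overall strategy is correct and matches the paper's: set up the branched Heegaard diagram, identify the fixed locus as $\Sym^g(\Sigma\setminus\{w,z\})$, show the normal polarization coincides with the tangent polarization, untwist via the Steenrod square, and apply Theorem~\ref{smith_inequality}. The factor-of-two bookkeeping is also right.

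The gap is in your argument for $N_M \cong TM$. You say the anti-invariant piece of $T_{\tilde p_i^+}\tilde\Sigma \oplus T_{\tilde p_i^-}\tilde\Sigma$ ``projects isomorphically'' to $T_{p_i}\Sigma$. Pointwise this is true, but the projection depends on which lift you call $\tilde p_i^+$: projecting to the other factor flips the sign. Since the double cover is nontrivial, there is no global labelling of the sheets, so these fibrewise maps do not assemble into a bundle isomorphism. (Contrast the diagonal case $M\subset M\times M$, where the two factors are literally the same space and no such ambiguity arises.) Equivalently, what you have written down is an isomorphism $N_M \cong TM \otimes \xi$ where $\xi$ is the flat complex line bundle on $\Sigma$ with $\pm 1$ monodromy given by the cover; you still need to trivialise $\xi$ in a way compatible with the Lagrangian sub-bundles over the $\alpha$- and $\beta$-curves.

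This is exactly where the paper uses the null-homologous/Seifert-surface hypothesis: it produces a function $\tilde f:\tilde\Sigma\setminus\{\tilde w,\tilde z\}\to S^1$ with $\tilde f\circ\iota=-\tilde f$ (the square root of the $S^1$-valued function $f$ classifying the cover). The map $v\mapsto (\tilde f(\tilde x_1)\,dp_{\tilde x_1}^{-1}v,\ \tilde f(\tilde x_2)\,dp_{\tilde x_2}^{-1}v)$ is then manifestly symmetric in the two lifts and lands in the anti-invariant part, giving a global isomorphism; the fact that $\tilde f$ is null-homotopic on each $\tilde\alpha_i,\tilde\beta_j$ is what makes the Lagrangian sub-bundles match. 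So the obstacle you flagged (the local model near the branch locus) is not the main one; the real work is making the identification well-defined away from the diagonal, and then extending it over the fat diagonal via an equivariant smoothing of the $\tilde f\circ\pi_i$ (the paper's Lemma on the functions $g_i$). Your ``routine by descent'' is where the argument actually breaks.
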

These inequalities extend the results of Hendricks \cite{Hendricks12} and Lipshitz-Treumann \cite{LipshitzTreumann16}, and are similar if somewhat orthogonal to results for rational homology spheres obtained by Lidman-Manolescu \cite{LidmanManolescu} using the Seiberg-Witten-Floer stable homotopy type. Our method essentially follows that of Hendricks, who proved the inequality in the case of knots in $Y = S^3$. In this case, one can take a genus zero Heegaard splitting, at the cost of taking a large number of base-points; the upshot is that the normal polarization in this case is stably trivial and so Seidel-Smith's localization theorem applies.

On the other hand, Lipshitz-Treumann obtained their results by using an algebraic incarnation of localization in bordered Heegaard Floer theory. While they did prove results for three-manifolds other than $S^3$, they were only able to prove the inequality for torsion spin$^c$ structures and, in the branched case, for certain Alexander gradings. Interestingly, one major input in their result is the degeneration of a spectral sequence forced by the existence of a degree-doubling, Frobenius-linear isomorphism from its $E_2$ to its $E_{\infty}$ page--- whose place in our approach is taken by the Steenrod square.

\subsection{Outline of the paper.} After a short diversion in Section 2 to explain the specific incarnation of group homology used in our construction, Sections 3 and 5 develop the polarization-twisted Floer cohomology and our model of equivariant Floer cohomology, whilst sections 4 and 6 cover the analytical back end of each construction. Section 7 proves the necessary K\"{u}nneth theorems, while Section 8 relates our construction to that of Seidel-Smith. Sections 9 and 10 then cover the applications of our theory to Steenrod operations and Heegaard Floer theory respectively.

\subsection{Acknowledgements.} The author would like to thank Paul Seidel for initially suggesting this problem. He would also like to thank Kristen Hendricks for interesting discussions, and sharing her calculations of the Chern classes of the normal and tangent bundles of the symmetric products of a double cover of punctured Riemann surfaces. This research was partly supported by the Simons Foundation (through a Simons Investigator award) and by the National Science Foundation (through awards DMS-1500954 and DMS-1904997).

\section{Algebraic preliminaries}

To begin, we will explain the basic algebra behind our main constructions; this will be crucial in understanding the module structures on polarization-twisted and equivariant Floer cohomology. Most of this material is probably familiar in some form to experts, but let us explain it in the particularly concrete setting we will work in.

Throughout this paper, we will always denote the group with two elements as $\bb{Z}/2 = \{1, \iota\}$, and as a coefficient ring we will take the field with two elements $\bb{F}_2$.

Let $A$ be a free, ungraded chain complex over the group ring $\bb{F}_2[\bb{Z}/2]$; more explicitly for some (possibly infinite) indexing set $I$ we have
\begin{equation*}A = \bigoplus_{i \in I}\bb{F}_2[\bb{Z}/2]\cdot x_i\end{equation*}
for some basis $x_i$ of $A$, together with a map
\begin{equation*}d : A \to A, \qquad x_i \mapsto \sum\limits_{j}(a_{ij} + b_{ij}\iota)x_j\end{equation*}
satisfying $d^2 = 0$. From this complex, we can produce $A_{\bb{F}_2}$, an ungraded chain complex over $\bb{F}_2$, which we think of as the ``quotient'' of $A$ by $\bb{Z}/2$:
\begin{equation}A_{\bb{F}_2} = \bigoplus\limits_{i \in I} \bb{F}_2 \cdot x_i, \qquad d_{\bb{F}_2}(x_i) = \sum\limits_{j}(a_{ij} + b_{ij})x_j.\end{equation}
This carries a map $T: A_{\bb{F}_2} \to A_{\bb{F}_2}$ defined by
\begin{equation}T(x_i) = \sum_j b_{ij}x_j\end{equation}
which is automatically a chain map, giving $A_{\bb{F}_2}$ the structure of a complex over polynomial ring $\bb{F}_2[t]$.

The construction of $A_{\bb{F}_2}$ is dependent on a choice of basis for $A$. However, if $\{x_i'\}$ were another basis on the same indexing set, such that for each $i$, we have either $x_i' = x_i$ or $\iota x_i$, then the two constructions of $A_{\bb{F}_2}$ would be canonically identified. The two maps $T$, $T'$ constructed from each basis would not be the same under this identification, but would instead be related by a chain homotopy: if we define $H: A_{\bb{F}_2} \to A_{\bb{F}_2}$ by $H(x_i) = x_i$ if $x_i' = \iota x_i$, and $H(x_i) = 0$ if $x_i' = x_i$, then $T + T' = d_{\bb{F}_2}H + Hd_{\bb{F}_2}$; in particular the cohomology $H(A_{\bb{F}_2}) = \ker(d_{\bb{F}_2})/\im(d_{\bb{F}_2})$ has the same $\bb{F}_2[t]$-module structure. In fact, we can upgrade this to an isomorphism of $A_{\infty}$-modules:

\begin{proposition}\label{Ainftymodulemap}Suppose $B_1, B_2$ are two (ungraded) chain complexes over $\bb{F}_2$, with endomorphisms $T_1 : B_1 \to B_1$ and $T_2: B_2 \to B_2$ giving them the structure of chain complexes over $\bb{F}_2[t]$. Suppose we are given $F: B_1 \to B_2$ an $\bb{F}_2$-linear chain map, together with a chain homotopy $H : B_1 \to B_2$ such that
\begin{equation*}T_1 F + F T_2 = d_2 H + H d_1.\end{equation*}
Then, thinking of $\bb{F}_2[t]$ as a formal $A_{\infty}$-algebra, and $B_1$, $B_2$ as $A_{\infty}$-modules with no higher products, there is an $A_{\infty}$-$\bb{F}_2[t]$-module map $\{F^i: \bb{F}_2[t]^{\tensor i} \tensor B_1 \to B_2\}_{i \ge 1}$ such that $F^1 = F$.\end{proposition}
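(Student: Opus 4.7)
The plan is to construct the required $A_\infty$-module morphism explicitly, showing that only $F^2$ needs to be nontrivial: all higher components $F^n$ for $n \ge 3$ may be taken to be identically zero, and the whole obstruction collapses into a single characteristic-two telescoping identity.

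First, I would write out the $A_\infty$-module morphism axioms in this especially simple setting. Viewing $\bb{F}_2[t]$ as a formal $A_\infty$-algebra with only $\mu^2$ (the polynomial multiplication) nonzero, and $B_1, B_2$ as $A_\infty$-modules with only $\mu^{1|1} = d_i$ and $\mu^{2|1}(t, \cdot) = T_i$ nonzero, the morphism axiom at level $n \ge 2$ collapses to
\begin{equation*} (d_2 F^n + F^n d_1)(a_1, \ldots, a_{n-1}, x) = a_1 \cdot F^{n-1}(a_2, \ldots, a_{n-1}, x) + F^{n-1}(a_1, \ldots, a_{n-2}, a_{n-1} \cdot x) + \sum_{i=1}^{n-2} F^{n-1}(a_1, \ldots, a_i a_{i+1}, \ldots, a_{n-1}, x). \end{equation*}
By $\bb{F}_2$-multilinearity, it suffices to define each $F^n$ on monomial inputs $a_i = t^{k_i}$ and verify the relation there.

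For $n = 2$, I would set $F^2(1, x) := 0$ and, for $k \ge 1$,
\begin{equation*} F^2(t^k, x) := \sum_{i + j = k - 1} T_2^{\,i}\, H\, T_1^{\,j} (x). \end{equation*}
Using that $T_1, T_2$ are chain maps and the hypothesis $d_2 H + H d_1 = T_2 F + F T_1$, a direct calculation gives
\begin{equation*} (d_2 F^2 + F^2 d_1)(t^k, x) = \sum_{i + j = k - 1} T_2^{\,i} (T_2 F + F T_1) T_1^{\,j} (x) = T_2^{\,k} F(x) + F\, T_1^{\,k}(x), \end{equation*}
the last equality being a telescoping cancellation in characteristic two. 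This is precisely the required right-hand side at $n = 2$.

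For $n \ge 3$, I would set $F^n \equiv 0$. The only relation that is not automatic is then at $n = 3$, since the axioms at $n \ge 4$ involve only $F^n$ and $F^{n-1}$, both vanishing by induction. At $n = 3$ on monomial inputs $(t^j, t^k, x)$ with $j, k \ge 1$, the right-hand side $T_2^{\,j} F^2(t^k, x) + F^2(t^{j+k}, x) + F^2(t^j, T_1^{\,k} x)$ expands into three sums of operators of the form $T_2^{\,p} H T_1^{\,q}$ with $p + q = j + k - 1$, whose exponent ranges $[j, j+k-1]$, $[0, j+k-1]$, and $[0, j-1]$ cancel in characteristic two; the edge cases $j = 0$ or $k = 0$ follow immediately from $F^2(1, \cdot) = 0$. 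The main obstacle throughout is bookkeeping: the very same telescoping that governs the formula for $F^2$ also forces the higher coherences to hold for free, so no inductive obstruction theory is required, essentially because $\bb{F}_2[t]$ is polynomial on a single generator acting through $T_i$.
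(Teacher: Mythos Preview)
Your proposal is correct and takes essentially the same approach as the paper: the paper simply states the formula $F^2(t^n, b) = \sum_{i+j=n-1} T^i H T^j(b)$ with all higher $F^k$ vanishing for $k > 2$, and you have supplied the verifications of the $n=2$ and $n=3$ relations that the paper omits. Your composition order $T_2^i H T_1^j$ is the one that actually type-checks (the paper's statement and formula have the indices on $T_1, T_2$ swapped), so your write-up is in fact cleaner on this point.
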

\begin{proof}This is given explicitly as $F^2(1, b) = 0$, $F^2(t, b) = H(b)$, and for $n \ge 2$
\begin{equation}F^2(t^n, b) = T_1^{n-1}H(b) + T_1^{n-2}HT_2(b) + \hdots + HT_2^{n-1}(b);\end{equation}
with all the higher maps $F^k$ vanishing for $k > 2$.\end{proof}

\begin{remark}The construction of $A_{\bb{F}_2}$ is precisely the usual construction of group homology as the derived invariants
\[H(\bb{Z}/2, A) = \Tor_{\bb{F}_2[\bb{Z}/2]}(A, \bb{F}_2)\]
applied in this particularly simple setting. In particular, this is a right module for the derived endomorphism algebra
\[\Ext_{\bb{F}_2[\bb{Z}/2]}(\bb{F}_2, \bb{F}_2) \cong \bb{F}_2[t]\]
and indeed our next step is to prove that these module structures coincide in cases of interest.\end{remark}

More familiarly, from an ungraded chain complex $A$ over $\bb{F}_2[\bb{Z}/2]$ we can form the ``Borel complex''
\begin{equation}A[t] = A \tensor_{\bb{F}_2} \bb{F}_2[t]\end{equation}
with differential
\begin{equation}d_{borel} = d_A + t(1+\iota).\end{equation}
The cohomology of this complex computes the group homology $H(\bb{Z}/2, A)$. For a free $\bb{F}_2[\bb{Z}/2]$-complex with basis $\{x_i\}$ as above, there is a natural map
\begin{equation}F : A_{\bb{F}_2} \to A[t], \qquad x_i \mapsto (1 + \iota)x_i\end{equation}
which is easy to check is a chain map. Moreover, it is not difficult to check that the map
\begin{equation*}H : A_{\bb{F}_2} \to A[t], \qquad x_i \mapsto x_i\end{equation*}
satisfies
\begin{equation*}tF + FT = d_{borel}H + Hd_{\bb{F}_2}\end{equation*}
so by Proposition \ref{Ainftymodulemap}, $F$ can be upgraded to an $A_{\infty}$-$\bb{F}_2[t]$-module map.

In an important class of examples, $F$ is a quasi-isomorphism. Consider the following four free complexes over $\bb{F}_2[\bb{Z}/2]$:
\begin{equation}\label{Bzero}B_0 = \bb{F}_2[\bb{Z}/2]\cdot x_0\end{equation}
with zero differential, and
\begin{align}\label{Bplus}B_+ &= \bigoplus\limits_{i \in \bb{Z}_{\ge 0}}\bb{F}_2[\bb{Z}/2] \cdot x_i; \\
\label{Bminus}B_- &= \bigoplus\limits_{i \in \bb{Z}_{< 0}} \bb{F}_2[\bb{Z}/2] \cdot x_i; \\
\label{Binfty}B_{\infty} &= \bigoplus\limits_{i \in \bb{Z}} \bb{F}_2[\bb{Z}/2] \cdot x_i\end{align}
where in each case the differential is given by $d(x_i) = (1+\iota)x_{i+1}$ for each $i$. In each case the differential vanishes on $(B_0)_{\bb{F}_2}, (B_+)_{\bb{F}_2}, (B_-)_{\bb{F}_2}, (B_{\infty})_{\bb{F}_2}$, and the respective $\bb{F}_2[t]$-module structures are just $\bb{F}_2, \bb{F}_2[t], t^{-1}\bb{F}_2[t^{-1}]$ and $\bb{F}_2[t,t^{-1}]$ respectively. It is not hard to compute that the same is true for the Borel complexes $B_0[t], B_+[t], B_-[t], B_{\infty}[t]$, and indeed $F$ is a quasi-isomorphism in each case.

\begin{definition}\label{finitetype}A free $\bb{F}_2[\bb{Z}/2]$-complex $A = \bigoplus_{i \in I} \bb{F}_2[\bb{Z}/2] \cdot x_i$ is said to be \emph{finite type} if there is a finite filtration of the index set
\begin{equation*}I_1 \subset I_2 \subset \hdots \subset I_n = I \end{equation*}\
such that the differential is upper triangular with respect to this filtration, meaning that if $A_k = \bigoplus_{i \in I_k} \bb{F}_2[\bb{Z}/2] \cdot x_i$, we have $d(A_k) \subset A_k$, and such that each of the quotient complexes $A_k / A_{k-1}$ is isomorphic to one of the $\bb{F}_2[\bb{Z}/2]$-complexes $B_0, B_+, B_-$ or $B_{\infty}$ described above.\end{definition}

It follows that for any finite type free $\bb{F}_2[\bb{Z}/2]$-complex $A$, the map $F : A_{\bb{F}_2} \to A[t]$ as an $A_{\infty}$-quasi-isomorphism of $\bb{F}_2[t]$-modules.

These playing off these two perspectives will be primarily useful in proving a K\"{u}nneth theorem for $\bb{Z}/2$-equivariant Floer cohomology. Given two free $\bb{F}_2[\bb{Z}/2]$-complexes $A$ and $A'$, their $\bb{F}_2$-tensor product $A \tensor A'$ is also a free $\bb{F}_2[\bb{Z}/2]$-complex, with the $\bb{Z}/2$-action given by $\iota(a \tensor a') = \iota a \tensor \iota a'$ (note that if $\{x_i\}, \{x'_j\}$ are $\bb{F}_2[\bb{Z}/2]$-bases for $A, A'$ respectively, then $\{x_i \tensor x'_j\} \cup \{x_i \tensor \iota x'_j\}$ is a basis for $A \tensor A'$).

On the other hand, given two chain complexes $B, B'$ over $\bb{F}_2[t]$, we can form the derived tensor product
\begin{equation*}B \tensor_{\bb{F}_2[t]}^{\bb{L}} B'.\end{equation*}
This is usually defined in the setting of graded chain complexes; in our setting we will be working in any case with a particularly explicit model coming from the Koszul resolution of $\bb{F}_2[t]$. Indeed, $B \tensor_{\bb{F}_2} B'$ is an $\bb{F}_2[t_1] \tensor \bb{F}_2[t_2] = \bb{F}_2[t_1, t_2]$-module in the obvious way, and there is an equivalence
\begin{equation*}B \tensor_{\bb{F}_2[t]}^{\bb{L}} B' \cong \bb{F}_2[t] \tensor^{\bb{L}}_{\bb{F}_2[t_1, t_2]} \left( B \tensor B' \right)\end{equation*}
covering the ring map $\bb{F}_2[t_1, t_2] \to \bb{F}_2[t]$ sending $t_1, t_2 \mapsto t$. The complex
\begin{equation}\bb{F}_2[t_1, t_2] \xrightarrow{t_1 - t_2} \bb{F}_2[t_1, t_2]\end{equation}
gives a free resolution of $\bb{F}_2[t]$ as a $\bb{F}_2[t_1, t_2]$-module; and so we have 
\begin{equation*}B \tensor_{\bb{F}_2[t]}^{\bb{L}} B' \cong \Cone\left( B \tensor_{\bb{F}_2} B' \xrightarrow{t \tensor \id - \id \tensor t} B \tensor_{\bb{F}_2} B' \right).\end{equation*}

\begin{proposition}\label{monoidal_tensor_product}If $A, A'$ are free $\bb{F}_2[\bb{Z}/2]$-complexes as before, with $A \tensor A'$ the tensor product $\bb{F}_2[\bb{Z}/2]$-complex, there is a quasi-isomorphism of $\bb{F}_2[t]$-complexes
\begin{equation*}\left((A \tensor A')[t], d_{borel} \right) \cong A[t] \tensor^{\bb{L}}_{\bb{F}_2[t]} A'[t].\end{equation*}\end{proposition}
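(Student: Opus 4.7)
The plan is to construct an explicit chain map $\Phi \colon (A \tensor A')[t] \to D$ and verify it is a quasi-isomorphism, where $D$ is the Koszul model for the derived tensor product. Using the Koszul resolution $\bb{F}_2[t_1, t_2] \xrightarrow{t_1+t_2} \bb{F}_2[t_1, t_2]$ of $\bb{F}_2[t]$ (noting $t_1 - t_2 = t_1 + t_2$ in characteristic 2), we have
\[D = \Cone\bigl(A[t_1] \tensor A'[t_2] \xrightarrow{t_1+t_2} A[t_1] \tensor A'[t_2]\bigr),\]
with underlying space $A \tensor A' \tensor \bb{F}_2[t_1, t_2] \tensor \{1, \eps\}$ and differential $d_A + d_{A'} + t_1 [(1+\iota) \tensor 1] + t_2 [1 \tensor (1+\iota)] + (t_1+t_2)\partial_\eps$.

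The key observation is that both complexes are chain-level models for the group homology $H_*(\bb{Z}/2; A \tensor A')$: the Borel complex $(A \tensor A')[t]$ uses the standard small resolution of $\bb{F}_2$ as $\bb{F}_2[\bb{Z}/2]$-module, while $D$ arises from the tensor square of this resolution, pulled back through the diagonal $\bb{F}_2[\bb{Z}/2] \to \bb{F}_2[\bb{Z}/2] \tensor \bb{F}_2[\bb{Z}/2]$ via the coproduct $\Delta(\iota) = \iota \tensor \iota$. Accordingly, I would obtain $\Phi$ by lifting the identity on $\bb{F}_2$ to an equivariant chain map between the two resolutions --- which exists and is unique up to chain homotopy by the standard comparison theorem --- and then tensoring with $A \tensor A'$. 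Concretely, the lift reflects the algebraic identity (in characteristic 2)
\[1 + \iota \tensor \iota = (1+\iota) \tensor 1 + 1 \tensor (1+\iota) + \bigl[(1+\iota) \tensor 1\bigr]\bigl[1 \tensor (1+\iota)\bigr],\]
with the ``diagonal'' cross-term absorbed into the $\eps$-component via $(t_1+t_2)\partial_\eps \eps = t_1+t_2$. Should $\Phi$ only be $\bb{F}_2$-linear rather than strictly $\bb{F}_2[t]$-linear, Proposition \ref{Ainftymodulemap} upgrades it to an $A_\infty$-$\bb{F}_2[t]$-module map given a chain homotopy for $t$-equivariance.

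To verify that $\Phi$ is a quasi-isomorphism: both constructions are natural in $A, A'$ and commute with filtered colimits and direct sums in each variable, so by the filtration of Definition \ref{finitetype} it suffices to check the statement when $A$ and $A'$ are each one of the basic complexes $B_0, B_+, B_-, B_\infty$. These sixteen cases then follow by direct computation using the explicit $\bb{F}_2[t]$-module descriptions of the Borel complexes of the basic pieces recorded just after their definitions.

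The main obstacle is the combinatorial bookkeeping for $\Phi$: tracking the $\iota$-factors produced by the three-term expansion above, together with the $t_1, t_2$-weights and $\eps$-correction, so that the chain map identity $d_D \Phi = \Phi d_{borel}$ holds on the nose (or up to the chain homotopy required by Proposition \ref{Ainftymodulemap}).
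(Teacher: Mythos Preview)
Your approach is correct in outline but takes a different and more laborious route than the paper. The paper's key trick is to first apply a chain automorphism of the underlying $\bb{F}_2$-module $(A \tensor A')[t]$ --- essentially $a \tensor a' \tensor t^n \mapsto a \tensor \iota^n a' \tensor t^n$, which the paper summarizes as ``the automorphism $\id \tensor \iota$'' --- to replace $d_{borel} = d_A + d_{A'} + t(\id + \iota \tensor \iota)$ by the separated differential $d_{tensor} = d_A + d_{A'} + t(\id \tensor \iota + \iota \tensor \id)$. Once this is done, $((A \tensor A')[t], d_{tensor})$ is \emph{on the nose} the cokernel of $t_1 - t_2$ acting on $A[t_1] \tensor_{\bb{F}_2} A'[t_2]$ equipped with the tensor product of the two Borel differentials; the short exact sequence
\[0 \to A[t_1]\tensor A'[t_2] \xrightarrow{t_1 - t_2} A[t_1]\tensor A'[t_2] \to (A\tensor A')[t] \to 0\]
then immediately identifies it with the Koszul cone computing the derived tensor product. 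This bypasses both your explicit construction of $\Phi$ via the comparison theorem (and the three-term identity for $1 + \iota\tensor\iota$) and your case-by-case verification on $B_0, B_{\pm}, B_{\infty}$. In particular the paper's argument works for \emph{all} free $\bb{F}_2[\bb{Z}/2]$-complexes, not just those of finite type, so the reduction via Definition~\ref{finitetype} is unnecessary here. Your route would still succeed, but the automorphism trick is what makes the paper's proof a few lines.
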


\begin{proof}On $(A \tensor A')[t]$, the differential $d_{borel}$ is given by
\begin{equation*}d_{borel} = d_A \tensor \id + \id \tensor d_{A'} + t(\id \tensor \id + \iota \tensor \iota).\end{equation*}
Using the automorphism $A \tensor A' \xrightarrow{\id \tensor \iota} A \tensor A'$, we obtain an isomorphism of chain complexes
\begin{equation*}\left((A \tensor A')[t], d_{borel}\right) \cong \left((A \tensor A')[t], d_{tensor}\right)\end{equation*}
where the new differential $d_{tensor}$ is given by
\begin{equation*}d_{tensor} = d_A \tensor \id + \id \tensor d_{A'} + t(\id \tensor \iota + \iota \tensor \id).\end{equation*}
We claim that $((A \tensor A')[t], d_{tensor})$ is a model for the tensor product $A[t] \tensor^{\bb{L}}_{\bb{F}_2[t]} A'[t]$. Indeed there is a short exact sequence of $\bb{F}_2[t_1, t_2]$-modules
\begin{equation*}0 \rightarrow A[t_1] \tensor_{\bb{F}_2} A'[t_2] \xrightarrow{t_1 \tensor \id - \id \tensor t_2} A[t_1] \tensor_{\bb{F}_2} A'[t_2] \xrightarrow{t_1, t_2 \mapsto t} (A \tensor A')[t] \rightarrow 0\end{equation*}
and moreover each map is a chain map once $A[t_1] \tensor_{\bb{F}_2} A'[t_2]$ is equipped with the tensor product of the Borel differentials on $A[t_1]$ and $A'[t_2]$, and $(A \tensor A')[t]$ is equipped with $d_{tensor}$; in particular $((A \tensor A')[t], d_{tensor})$ is quasi-isomorphic to the desired cone.\end{proof}

\begin{remark}Indeed, if $G$ is a finite group and $k$ is a field, by a version of the Rothenberg-Steenrod spectral sequence the algebra of cochains on the classifying $BG$ can be computed by
\[C^*(BG, k) \simeq RHom_{k[G]}(k, k)\]
where $k[G]$ is the group ring. Writing $k[G]$-mod of the category of chain complexes of $G$-modules, there is thus a functor
\begin{equation}\label{koszul_duality}RHom_{k[G]}(k, -) : k[G]\text{-mod} \to C^*(BG)\text{-mod}\end{equation}
The group structure on $G$ then endows $C^*(BG)$ with the structure of a differential graded Hopf algebra; in particular its category of modules has a tensor product. Likewise, the tensor product of representations defines a monoidal structure on $k[G]$-mod, and one would hope that the above map \eqref{koszul_duality} is monoidal at least on some large class of quasi-free $k[G]$-modules.

In the concrete setting we work in, we have given an explicit model for the derived tensor product, and we have checked by hand the desired monoidal property. The advantage of this more hands-on approach is to produce $\bb{F}_2[t]$-modules right off the bat, sidestepping formality questions about $C^*(\bb{R}P^{\infty}, \bb{F}_2)$.\end{remark}

\section{Polarization-twisted Floer theory}

\subsection{A finite dimensional model}
In \cite{KronheimerMrowka07}, Kronheimer and Mrowka introduced ``coupled Morse homology'' associated to a closed manifold $X$ together with a map $\frak{p} : X \to U$ to the infinite unitary group, depending only on the homotopy class of $\frak{p}$. In this section, we will consider an $\bb{F}_2$-valued version of the same construction for maps $\frak{p}: X \to U/O$, which we will call the ``polarization-twisted'' Morse theory. Since the application will be to Lagrangian Floer theory, the analytical implementation of the construction will be more explicit (and depend on ``polarization data'', rather than a map to $U/O$); but for ease of exposition let us first outline the general construction.

Let $H$ be a separable real Hilbert space, equipped with a compact operator $K: H \to H$ with image $H^1 = K(H) \subset H$, such that $K$ has infinite dimensional positive and negative eigenspaces, and has mild spectrum in the sense of \cite{KronheimerMrowka07}: the example to keep in mind is $H = L^2(M, E)$ of $L^2$ sections of a vector bundle $E$ over a manifold $M$, with $H^1$ the Sobolev space $W^{1,2}(M,E) \subset L^2(M,E)$. Consider the space of unbounded, self-adjoint Fredholm operators on $H$ with domain $H^1$:
\begin{equation}\cal{S}(H) := \{A: H^1 \to H \text{ Fredholm }  | \quad \ind(A) = 0, \langle v, Aw \rangle = \langle Av, w \rangle \quad \forall v,w  \in H^1\} \end{equation}
with its topology induced from the operator norm. Each $A \in \cal{S}(H)$ has an orthonormal basis of eigenvectors, and the eigenvalues have no accumulation point. $\cal{S}(H)$ splits into three components
\begin{equation} \cal{S}(H) = \cal{S}_+(H) \sqcup \cal{S}_*(H) \sqcup \cal{S}_-(H) \end{equation}
where $\cal{S}_{\pm}(H)$ consist of those $A$ such that the spectrum is respectively bounded below or above, and $\cal{S}_*(H)$ is those with spectrum unbounded in both positive and negative directions. Each $\cal{S}_{\pm}(H)$ is contractible; of interest to us is $\cal{S}_*(H)$: following \cite{AtiyahSinger69} the homotopy type of $\cal{S}_*(H)$ can be identified with $U/O$.

\begin{remark}Kronheimer-Mrowka work with complex Hilbert spaces in \cite{KronheimerMrowka07}; in this setting $\cal{S}_*(H)$ is homotopy equivalent to $U$.\end{remark}

Accordingly, we can identify homotopy classes of maps $\frak{p}: X \to U/O$ with continuous families $\{A_x\}_{x \in X}$ of self-adjoint operators parametrized by $X$: we refer to either as a \emph{polarization}. Suppose that $X$ is a manifold, equipped with a Morse function $f$, which has finitely many critical points. By perturbing the family $\{A_x\}$, we can arrange so that at each critical point $x \in X$ of $f$, the operator $A_x$ is invertible, with one-dimensional eigenspaces. Accordingly we can label the eigenvalues $\lambda_i(x)$ for $i \in \bb{Z}$ in increasing order, such that
\begin{equation*} \hdots \lambda_{-2}(x) < \lambda_{-1}(x) < 0 < \lambda_0(x) < \lambda_1(x) < \lambda_2(x) < \hdots \end{equation*}
and choose corresponding unit eigenvectors $\psi_i(x)$. We will often write $i(\lambda)$ for the numbering of such an eigenvalue $\lambda$ as above. We will write
\begin{equation}\frak{C} = \{\bf{x} = (x, \lambda_i): x \in \Crit(f), \lambda_i \in \Spec(A_x)\}\end{equation}
referring to elements of $\frak{C}$ by the bold-face version of the corresponding element of $\Crit(f)$.

Given any two critical points $x_-, x_+$ and a continuous path $u: \bb{R} \to X$ such that $u(s) \to x_{\pm}$ exponentially as $s \to \pm \infty$, the operator
\begin{equation}\frac{d}{ds} + A_{u(s)} : W^{1,2}(\bb{R}, H) \cap L^2(\bb{R}, H^1) \to L^2(\bb{R}, H)\end{equation}
is Fredholm, since each $A_{x_-}$ and $A_{x_+}$ are by the hypothesis invertible. Its index depends only on $u$ up to homotopy, and is known as the spectral flow $\specflow(u)$. The spectral flow is additive under concatenation of paths: accordingly, if
\begin{equation}\pi_1(X) \to \pi_1(U/O) \cong \bb{Z}\end{equation}
is zero, the spectral flow $\specflow(u)$ depends only on endpoints $x_{\pm}$, not the choice of path $u$. In this case, we say that the polarization can be \emph{graded}, and we define a \emph{grading} to be a choice of function $s: \Crit(f) \to \bb{Z}$ such that
\begin{equation}\label{grading}\specflow(u) = s(x_-) - s(x_+)\end{equation}
for any path $u$ connecting $x_-$ and $x_+$.

Consider the free $\bb{F}_2$-vector space
\begin{equation}CM(X, \frak{p}) = \bigoplus_{\bm{x} \in \frak{C}} \bb{F}_2 \cdot \bf{x}\end{equation}
generated by $\frak{C}$. A priori this is ungraded, however in the presence of a grading $s$ described above, we define the degree of $\bf{x} = (x, \lambda)$ to be
\begin{equation*}|\bf{x}| = \ind(x) + s(x) + i(\lambda)\end{equation*}

To define a differential on $CM(X, \frak{p})$, choose a metric on $X$, and consider the equations for a path $u : \bb{R} \to X$ together with a map $\phi : \bb{R} \to H^1$:
\begin{equation}\label{morseeqn}\frac{d}{ds}u(s) + \grad_{u(s)}(f) = 0\end{equation}
\begin{equation}\label{twistedmorse}\frac{d}{ds}\phi(s) + A_{u(s)}\phi(s) = 0.\end{equation}
Of course, the first equation is the usual Morse flow; the latter is an auxiliary linear equation we will refer to as the \emph{twisting equation}. Solutions $(u, \phi)$ have an $\bb{R}$-action by translation in $s$, and an $\bb{R}^*$-action by rescaling $\phi$. Observe that for constant solutions $u(s) = x \in \frak{C}(f)$ of the Morse equation, and the twisting equation is precisely the Morse flow equation for the function $\frac{1}{2}\langle v, A_x v\rangle$ on $H^1$. Modulo the rescaling, we can view this as Morse theory on a ``polarized $\bb{RP}^{\infty}_{-\infty}$''.

For each $\bf{x}_- =(x_-, \lambda_-)$ and $\bf{x}_+ = (x_+, \lambda_+) \in \frak{C}$, consider those solutions $(u, \phi)$ with asymptotics
\begin{equation*}u(s) \to x_{\pm} \qquad \text{as } s \to \pm \infty\end{equation*}
\begin{equation*}\phi(s) \sim C_{\pm} e^{-\lambda_{\pm} s} \psi_{\pm} \qquad \text{as } s \to \pm \infty\end{equation*}
for some nonzero constants $C_{\pm}$, where $\psi_{\pm}$ are a choice of unit norm eigensolutions corresponding to $\lambda_{\pm}$. More precisely, writing $|| \cdot ||_H$ for the norm on $H$, we ask that
\begin{equation*}\phi(s)/||\phi(s)||_H \to \psi_{\pm} \text{ or } -\psi_{\pm}\end{equation*}
as $s \to \pm \infty$. We will often say that such a solution $(u, \phi)$ has ``asymptotics of type'' $\lambda_{\pm}$ at $\pm \infty$.

There is a slightly different perspective on the equations \eqref{morseeqn}, \eqref{twistedmorse} that we should also mention, which is referred to in \cite{KronheimerMrowka07} as \emph{the $\tau$-model}. Namely, if we assume that $\phi(s) \not= 0$ for all $s$ (for instance, by a unique continuation result), and normalize to unit length by setting $\varphi(s) = \phi(s)/||\phi(s)||_H$, then the equation for $\varphi(s)$ is
\begin{equation}\label{tautwistedeqn}\frac{d\varphi}{ds} + A_{u(s)}\varphi - \Lambda(\varphi)\varphi = 0\end{equation}
where $\Lambda(\varphi) : \bb{R} \to \bb{R}$ is the real function
\begin{equation}\label{Lambdamorse}\Lambda(\varphi) = \langle \varphi, A \varphi \rangle = \frac{\langle \phi, A \phi \rangle}{\langle \phi, \phi \rangle}.\end{equation}
This no longer has a rescaling action by $\bb{R}$, but rather an involution $\varphi \mapsto -\varphi$. The asymptotic conditions for $\varphi$ are then that
\begin{equation*}\varphi(s) \to \psi_{\pm}(s) \text{ or } -\psi_{\pm}(s)\end{equation*}
as $s \to \infty$. Moreover, $\phi$ can be reconstructed from $\varphi$ by solving the first order linear differential equation
\begin{equation}\frac{d}{ds}r(s) + \Lambda(\varphi)r(s) = 0\end{equation}
and setting $\phi(s) = r(s) \varphi(s)$. In other words, we can think of the twisted equations as being Morse theory coupled to either a linear flow on $H$ with a $\bb{R}^*$ action, or a non-linear flow on $S(H)$ coupled to a $\bb{Z}/2$-action; Kronheimer-Mrowka analogously refer to the first model with linear equations as \emph{the $\sigma$ model}. At various points in the analytical development of polarization twisted Floer theory we will carry out later, it will be advantageous to switch between these two pictures.

\begin{definition}The moduli space of \emph{twisted trajectories} $\cal{M}(\bf{x}_-, \bf{x}_+)$ between $\bf{x}_- = (x_-, \lambda_-)$ and $\bf{x}_+ = (x_+, \lambda_+)$ is is the space of solutions $(u, \phi)$ to the twisted Morse equations with the above asymptotics, modulo the $\bb{R}$-translation and $\bb{R}^*$-rescaling actions. Alternatively in the $\tau$-model, it is the space of solutions $(u, \varphi)$ to the $\tau$-model twisted equations $\eqref{tautwistedeqn}$ modulo the $\bb{R}$-translation and $\bb{Z}/2$-actions.\end{definition}

After a perturbation of the metric on $X$ and the family of operators $\{A_x\}_{x \in X}$, this is a manifold; the component $\cal{M}_{[u]}(\bf{x}_-, \bf{x}_+)$ corresponding to a homotopy class $[u]$ of paths connecting $x_-$ to $x_+$ has dimension
\begin{equation}\dim \cal{M}_{[u]}(\bf{x}_-, \bf{x}_+) = \ind(x_-) - \ind(x_+) + \specflow(u) + i(\lambda_-) - i(\lambda_+) - 1\end{equation}
where $\bf{x}_{\pm} = (x_{\pm}, \lambda_{\pm})$.

\begin{definition}The polarization-twisted Morse cohomology $HM(X, \frak{p})$ is the cohomology of $CM(X, \frak{p})$ with the differential given by
\begin{equation}d \ \bf{x}_+ = \sum\limits_{\bf{x}_-, [u] : \dim \cal{M}_{[u]}(\bf{x}_-, \bf{x}_+) = 0} \#\cal{M}_{[u]}(\bf{x}_-, \bf{x}_+) \cdot \bf{x}_-\end{equation}\end{definition}
Although $CM(X, \frak{p})$ is clearly infinite-dimensional, given $\bf{x}_+ \in \frak{C}$ and $x_- \in \Crit(f)$, for each homotopy class of Morse flows $[u]$ there is exactly one $\bf{x}_- = (x_-, \lambda_-)$ such that $\cal{M}_{[u]}(\bf{x}_-, \bf{x}_+)$ is zero-dimensional, and thus the above is a finite sum. It can be seen to be a differential through the usual method compactifying $\cal{M}$ by broken flows, and inspecting resulting boundary of the one-dimensional moduli spaces $\cal{M}_{[u]}(\bf{x}_-, \bf{x}_+)$. Moreover, using the appropriate continuation map equations, $HM(X, \frak{p})$ can be seen to be dependent on the family of operators $\{A_x\}$ only up to homotopy; in the case that $X$ is closed, it is furthermore independent of the Morse function $f$ and the metric.

In the case that $\frak{p}$ has a grading in the sense of \eqref{grading}, the differential has degree one, and $HM(X, \frak{p})$ becomes a graded $\bb{F}_2$-vector space.

$HM(X, \frak{p})$ carries a number of additional structures. Most important for us is that it carries a distinguished endomorphism
\begin{equation}T: HM(X, \frak{p} \to HM(X, \frak{p})\end{equation}
which is moreover degree one in the presence of a grading. Equivalently, $HM(X, \frak{p})$ has the structure of a $\bb{F}_2[t]$-module. In fact, this endomorphism is always invertible, and so this extends to give the structure of a $\bb{F}_2[t,t^{-1}]$-module.

Indeed, by distinguishing between the two unit eigenvectors $\psi$ and $-\psi$ for each eigenvalue $\lambda$ of $A_x$, and remembering, we can define a larger chain complex with generators each $(x, \psi)$ and $(x, -\psi)$:
\begin{equation}\widetilde{CM}(X, \frak{p}) = \bigoplus\limits_{\bf{x} \in \frak{C}} \bb{F}_2(x, \psi) \oplus \bb{F}_2(x, -\psi)\end{equation}
where the differential counts solutions $(u, \varphi)$, in the $\tau$-model say, with a fixed choice of limiting eigenvector $\psi, -\psi$ at either end. These equations no longer carry a $\bb{Z}/2$ action, but rather the complex $\widetilde{CM}(X, \frak{p})$ carries a natural $\bb{Z}/2$ action, for which it is a free $\bb{F}_2[\bb{Z}/2]$-module, interchanging each pair of unit eigenvectors $\psi, -\psi$. In particular, the setting of Section 2.1 applies, and we have
\begin{equation*}CM(X, \frak{p}) \cong \left(\widetilde{CM}(X, \frak{p})\right)_{\bb{F}_2}.\end{equation*}
Consequently, $CM(X, \frak{p})$ is a chain complex over $\bb{F}_2[t]$, in a way independent of the labeling of the eigenvectors up to $A_{\infty}$-isomorphism.

\begin{remark}To unravel the above, we could directly describe the endomorphism $T$ as follows. Choose for each $(x, \lambda)$ a distinguished choice of unit eigenvector $\psi$. Then, for each trajectory $(u, \phi)$ connecting $(x_{\pm}, \lambda_{\pm})$, we certainly have $u(s) \sim C_{\pm} e^{-\lambda_{\pm}} \psi_{\pm}$ as $s \to \pm_{\infty}$ for some nonzero real numbers $C_{\pm}$, where $\psi_{\pm}$ are the distinguished choices of eigenvectors. We then say that the trajectory $(u, \phi)$ is \emph{positive} if $C_-, C_+$ have the same sign, and \emph{negative} if $C_-, C_+$ have different sign. Equivalently, the corresponding $\tau$-model trajectory $(u, \varphi)$ is positive if it can be chosen to have limits $\psi_-, \psi_+$ at $\pm \infty$, and negative if these limits are $-\psi_-, \psi_+$. The chain-level endomorphism $T$ then counts the index one \emph{negative} twisted trajectories.\end{remark}

\begin{example}\label{constant_u_solutions}Let us illustrate this when $X = \{\pt\}$. In this case, $CM(X, \frak{p}) = \oplus_{i \in \bb{Z}} \bb{F}_2 \cdot \bf{x}_i$ has one generator $\bf{x}_i = (\pt, \lambda_i)$ for each solution to the eigenvalue equation
\begin{equation*}A \psi_i = \lambda_i \psi_i\end{equation*}
for a fixed $A \in \cal{S}_*(H)$ with simple spectrum. This has a clear grading, with $|\bf{x}_i| = i$. The solutions $\phi$ of \eqref{twistedmorse} connecting $\bf{x}_j$ to $\bf{x}_i$ for $j > i$ are precisely those of the form
\begin{equation}\label{constantsolutions}\phi(s) = a_j e^{- \lambda_j s} \psi_j + a_{j-1} e^{-\lambda_{j-1} s}\psi_{j-1} + \hdots + a_i e^{-\lambda_i s} \psi_i\end{equation}
for some tuple of real numbers $(a_j, a_{j-1}, \hdots, a_i)$, such that $a_j \not= 0$ and $a_i \not= 0$. The two operations of rescaling $\phi$ by $\kappa \in \bb{R}^*$ and translation in $s$ by some $\sigma \in \bb{R}$ send $(a_j, a_{j-1}, \hdots, a_i)$ to
\[(\kappa a_j, \kappa a_{j-1}, \hdots, \kappa a_i), \qquad (e^{-\lambda_j \sigma} a_j, e^{-\lambda_{j-1} \sigma} a_{j-1}, \hdots, e^{-\lambda_i \sigma} a_i)\]
respectively. In particular, when $j = i+1$, there are precisely two elements of $\cal{M}(\bf{x}(j), \bf{x}(i))$, represented by
\begin{equation*}e^{-\lambda_{i+1}s} \psi_{i+1} + e^{-\lambda_i s}\psi_i, \qquad e^{-\lambda_{i+1}s} \psi_{i+1} - e^{-\lambda_i s}\psi_i.\end{equation*} Accordingly, the differential $d \bf{x}_i = 0$. Moreover, after distinguishing between the two eigenvectors $\psi_i, -\psi_i$ for each $i$, we see that as a $\bb{F}_2[\bb{Z}/2]$-complex $\tilde{CM}(\pt, \frak{p})$ is exactly the complex $B_{\infty}$ of \eqref{Binfty}; we can then identify the twisted Morse cohomology with Laurent polynomials
\begin{equation*}HM(\pt, \frak{p}) \cong \bb{F}_2[t, t^{-1}].\end{equation*}\end{example}

The Morse index of $x \in \Crit(f)$ (or alternatively the value of the Morse function $f$) provides a filtration of $CM(X, \frak{p})$. In each piece of the associated graded, the differential only counts those twisted flows $(u, \phi)$ with $u$ constant, which are precisely described by the above. Consequently, $\tilde{CM}(X, \frak{p})$ is finite type in the sense of Definition \ref{finitetype}, and moreover we obtain:
\begin{proposition}The endomorphism $T$ is invertible, thus endowing $HM(X, \frak{p})$ with the structure of an $\bb{F}_2[t,t^{-1}]$-module.\end{proposition}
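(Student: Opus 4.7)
The plan is to exhibit $\widetilde{CM}(X, \frak{p})$ as a finite type $\bb{F}_2[\bb{Z}/2]$-complex whose associated graded pieces are all isomorphic to $B_\infty$; invertibility of $t$ will then follow by passing to the Borel model, since the cohomology of $B_\infty[t]$ is $\bb{F}_2[t, t^{-1}]$.

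The first step is to filter $\widetilde{CM}(X, \frak{p})$ by Morse values. Enumerating $\Crit(f) = \{x_1, \ldots, x_n\}$ in order of decreasing Morse value, I let $A_k \subset \widetilde{CM}(X, \frak{p})$ be the $\bb{F}_2[\bb{Z}/2]$-subcomplex generated by those $(x_j, \lambda)$ with $j \le k$. Since $f$ strictly decreases along any non-constant gradient trajectory, a twisted trajectory contributing to the differential on $(x_j, \lambda)$ must have its incoming endpoint at some $x_{j'}$ with $j' \le j$, so this filtration is preserved by $d$.

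Each quotient $A_k / A_{k-1}$ is generated as an $\bb{F}_2[\bb{Z}/2]$-module by $\{(x_k, \lambda_i)\}_{i \in \bb{Z}}$, and inherits a differential counting only those twisted trajectories with $u$ constant at $x_k$. The computation in Example \ref{constant_u_solutions} identifies this quotient with the $\bb{F}_2[\bb{Z}/2]$-complex $B_\infty$, so $\widetilde{CM}(X, \frak{p})$ is of finite type in the sense of Definition \ref{finitetype}. The discussion following that definition then provides an $A_\infty$ quasi-isomorphism of $\bb{F}_2[t]$-modules between $CM(X, \frak{p})$ and the Borel complex $\widetilde{CM}(X, \frak{p})[t]$.

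It remains to check that $t$ acts invertibly on $H(\widetilde{CM}(X, \frak{p})[t])$. The filtration extends to an $\bb{F}_2[t]$-linear filtration $A_k[t]$ of the Borel complex, with graded pieces $B_\infty[t]$ whose cohomologies $\bb{F}_2[t, t^{-1}]$ have $t$ invertible. Since the associated long exact sequences are literally $\bb{F}_2[t]$-linear and $t$-invertibility is preserved under extensions of modules, a finite induction on $k$ completes the argument. The one point that requires care---and the reason I pass to the Borel model rather than working on $CM(X, \frak{p})$ directly---is that the $\bb{F}_2[t]$-action on $CM(X, \frak{p})$ is only strict up to $A_\infty$ homotopy, so it is cleaner to verify $t$-invertibility in a setting where the module structure is literal.
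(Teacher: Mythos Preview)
Your proof is correct and follows the same strategy as the paper: filter by Morse value, identify each associated graded piece with $B_\infty$ via Example \ref{constant_u_solutions}, and deduce $t$-invertibility by a finite extension (five-lemma) argument.

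The one point worth flagging is that your detour through the Borel model is unnecessary, and stems from a slight misconception. For a \emph{fixed} choice of eigenvector basis, the endomorphism $T$ on $CM(X, \frak{p})$ is a strict chain map---it is built from the same matrix entries $b_{ij}$ that appear in the differential on $\widetilde{CM}(X, \frak{p})$---and since those entries respect the Morse filtration, so does $T$. The $A_\infty$ apparatus of Section 2 only enters when comparing module structures coming from \emph{different} basis choices. So one can run the extension step directly on $CM(X, \frak{p})$: on the associated graded, $T$ is simply the shift $\bf{x}_i \mapsto \bf{x}_{i+1}$, which is invertible, and the finite induction proceeds exactly as you wrote. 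Your passage through $A[t]$ costs nothing, but buys nothing either.
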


This filtration also induces a spectral sequence converging to $HM(X, \frak{p})$, with $E_1$-page
\[\bigoplus_{x \in \frak{C}(f)} \bb{F}_2[t, t^{-1}]\cdot x.\]
To understand the $d_1$ differential, suppose we are given $x_+, x_-$ with $\ind(x_-) = \ind(x_+) + 1$. There are only finitely many Morse trajectories $u$ up to translation connecting them: pick one such $u$, and a pair of eigenvalues $\lambda_-, \lambda_+$ of the corresponding operators with 
\[\dim \cal{M}_{[u]}(\bf{x}_-(i_-), \bf{x}_+(i_+)) = \specflow(u) + i_- - i_+ = 0.\]
When we later in Section \ref{sec:analysis_I} develop the Fredholm theory for the twisted equations, we shall see in this setting that the twisted equations over $u$ are given by the kernel of a Fredholm operator $\frac{d}{ds} + A_{u(s)}$ of index $1$, on an appropriate weighted Sobolev space. In this case when the underlying Morse moduli space is discrete, the regularity of the equations amounts to the surjectivity of this operator. In particular, it must have precisely one-dimensional kernel (and zero dimensional cokernel). Hence, there is exactly one solution $(u, \phi)$ up to rescaling for each Morse flow $u$.

The spectral flow of $\frak{p}: X \to U/O$ induces a local system $\xi$ on $X$ with fibres $\bb{F}_2[t, t^{-1}]$. The above observation then identifies the differential on the $E_1$ page with the differential on the Morse complex for $X, f$ with coefficients in $\xi$. This proves:

\begin{proposition}There is a spectral sequence, linear over $\bb{F}_2[t,t^{-1}]$, converging to $HM(X, \frak{p})$ with
\[E_2 = HM(X, \xi)\]
the Morse cohomology of $X$ with coefficients in $\xi$. In particular, if $\frak{p}$ is graded, the local system $\xi$ is trivial, and we have a bigraded spectral sequence
\[HM^*(X) \tensor \bb{F}_2[t,t^{-1}] \Rightarrow HM^*(X, \frak{p})\]\end{proposition}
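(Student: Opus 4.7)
My plan is to use the natural filtration of $CM(X, \frak{p})$ by the value of the Morse function $f$. Let $F_c CM(X, \frak{p}) \subset CM(X, \frak{p})$ be spanned by those generators $(x, \lambda_i)$ with $f(x) \le c$; since any non-constant Morse flow strictly decreases $f$, each $F_c$ is a subcomplex, and after a small perturbation we obtain a finite filtration and hence a spectral sequence. The endomorphism $T$ preserves this filtration because it acts purely in the eigenvalue-index direction, so the entire spectral sequence is automatically $\bb{F}_2[t,t^{-1}]$-linear once the $E_1$ page has that structure.

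On the associated graded at each $x \in \Crit(f)$, the differential only counts twisted trajectories $(u, \phi)$ with $u$ constantly equal to $x$; these are exactly the trajectories classified in Example~\ref{constant_u_solutions}, so the corresponding $\bb{F}_2[\bb{Z}/2]$-complex $\widetilde{CM}$ at $x$ is isomorphic to the model complex $B_\infty$ of \eqref{Binfty}, with cohomology $\bb{F}_2[t,t^{-1}]$. Thus
\[E_1 \cong \bigoplus_{x \in \Crit(f)} \bb{F}_2[t,t^{-1}] \cdot x.\]
The $d_1$ differential counts zero-dimensional components of $\cal{M}_{[u]}(\bf{x}_-, \bf{x}_+)$ for index-$1$ Morse flows $u$. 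As explained in the paragraph preceding the statement, once $\lambda_+$ is fixed the index formula forces $i(\lambda_-) = i(\lambda_+) + \specflow(u)$, and the regularity of the twisting equation over an isolated Morse flow gives exactly one solution up to $\bb{R}^*$-rescaling. In the $\bb{F}_2[t,t^{-1}]$-picture where $T$ shifts the eigenvalue index by one, this means the contribution of $u$ to $d_1$ transports the generator at $x_+$ to $t^{\specflow(u)}$ times the generator at $x_-$.

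This is precisely the description of the Morse differential of $X$ with coefficients in the local system $\xi$ whose fibre is $\bb{F}_2[t,t^{-1}]$ and whose monodromy around a loop $\gamma$ is multiplication by $t^{\specflow(\gamma)}$, so $E_2 = HM(X, \xi)$ as claimed. When $\frak{p}$ is graded, the hypothesis that $\pi_1(X) \to \pi_1(U/O) \cong \bb{Z}$ is zero trivializes $\xi$, and the degree function $|\bf{x}| = \ind(x) + s(x) + i(\lambda)$ refines the filtration into a bigrading, recovering the bigraded form $HM^*(X) \tensor \bb{F}_2[t,t^{-1}] \Rightarrow HM^*(X, \frak{p})$. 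The main technical input is the regularity and surjectivity of the weighted Fredholm operator $\frac{d}{ds} + A_{u(s)}$ over an isolated Morse trajectory, which is invoked from the forthcoming Fredholm theory in Section~\ref{sec:analysis_I}; granted that, the identification of $d_1$ with the $\xi$-twisted Morse differential is essentially bookkeeping in the eigenvalue numbering.
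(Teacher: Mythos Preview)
Your argument is correct and follows exactly the approach the paper takes: filter by the Morse value, identify the associated graded at each critical point via Example~\ref{constant_u_solutions} as $B_\infty \cong \bb{F}_2[t,t^{-1}]$, and compute $d_1$ by invoking the one-dimensional kernel of the index-one weighted Fredholm operator over each isolated Morse trajectory. Two small corrections: the dimension-zero condition gives $\specflow(u) + i(\lambda_-) - i(\lambda_+) = 0$, so $i(\lambda_-) = i(\lambda_+) - \specflow(u)$ rather than $+\specflow(u)$; and the chain-level $T$ does not act purely in the eigenvalue direction (it counts index-one negative twisted trajectories, which can change $x$), but it nonetheless preserves the filtration because the differential on $\widetilde{CM}$ is filtered and the $\bb{Z}/2$-action $\psi \leftrightarrow -\psi$ manifestly fixes each filtration level.
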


For the rest of this section, we will implement these constructions in the setting of Lagrangian Floer theory. While it is not particularly more difficult to give a more general construction, it is this more concrete situation which we will be used later in the equivariant setting.

\subsection{Polarization-twisted Lagrangian Floer cohomology.} Throughout this section, let $M$ be an exact symplectic manifold, convex at infinity, and let $L_0, L_1$ be two compact, exact Lagrangian submanifolds of $M$, intersecting transversely. Recall that Lagrangian Floer theory is the formal Morse theory of the symplectic action functional on the path space
\[X = \cal{P}_M(L_0, L_1) = \{x : [0,1] \to M : x(0) \in L_0, x(1) \in L(1)\}\]
with critical points precisely at the points of $L_0 \cap L_1$.

Rather than work with an abstract map $\frak{p}: \cal{P}_M(L_0, L_1) \to U/O$, we will consider only those that arise in the following specific geometric situation: suppose there is a symplectic vector bundle $E$ over $M$ of rank $2k$, together with Lagrangian subbundles $F_0, F_1$ on $E|_{L_0}$ and $E|_{L_1}$, such that $F_0|_x$ and $F_1|_x$ are transverse at each $x \in L_0 \cap L_1$. Such data induces a map to $U/O$: indeed, observe this is precisely the data of a map $M \to BU(k)$ with reductions to $BO(k)$ over each $L_i$; since $\cal{P}_M(L_0, L_1)$ is a model for the homotopy fibre product of $L_0, L_1$ over $M$, it inherits a map to $U(k)/O(k)$ and thus $U/O$.

We will refer to the tuple $\frak{p} = (E, F_0, F_1)$ as \emph{polarization data}, hoping that no confusion is caused between thinking of $\frak{p}$ as the tuple or as a map to $U/O$. We will say that $\frak{p} = (E, F_0, F_1), \frak{p}' = (E', F_0', F_1')$ are \emph{isomorphic} if there is an isomorphism of symplectic vector bundles $\alpha: E \xrightarrow{\sim} E'$, and homotopies Lagrangian sub-bundles of $E'|_{L_i}$ between $\alpha(F_i)$ and $F'_i$ for $i = 0, 1$. This is equivalent to the existence of an isomorphism $\beta: \pi^*E \xrightarrow{\sim} \pi^*E'$ where $\pi : M \times [0,1] \to M$, such that $\beta|_{L_i \times \{i\}}$ is an isomorphism $F_i \xrightarrow{\sim} F'_i$ for $i = 0, 1$. Given polarization data $\frak{p}$, we can stabilize it to form polarization data $\frak{p} \oplus \bb{C} = (E \oplus \bb{C}, F_0 \oplus \bb{R}, F_1 \oplus i \bb{R})$; we say that two sets of polarization data $\frak{p}, \frak{p}'$ are \emph{stably isomorphic} if for some large $k, k'$, $\frak{p} \oplus \bb{C}^k$ and $\frak{p}' \oplus \bb{C}^{k'}$ are isomorphic. Observe that a stable isomorphism of polarization data yields a homotopy between the corresponding maps $\cal{P}_M(L_0, L_1) \to U/O$.

\begin{remark}At this point, one could proceed as in the previous section, and define polarization-twisted Floer theory for an abstract continuous map $\frak{p} : \cal{P}_M(L_0, L_1) \to U/O$, by coupling the Floer equation in $M$, thought of as the Morse flow of the action functional, to the twisted equation given by a representative of $\frak{p}$ by a smooth family of self-adjoint operators. This point of view has some advantages, which we will highlight later. On the other hand, the ultimate goal is to relate twisted Floer cohomology to equivariant Floer cohomology in the setting of a $\bb{Z}/2$-action on $\tilde{M}$ with fixed points $M$, by extracting a solution of the twisted equations from a degeneration of a one-parameter family of holomorphic strips in $\tilde{M}$ as the family falls into the invariant set $M$. This necessitates developing a more explicit version of polarization-twisted Floer cohomology when $\frak{p}$ is presented by vector bundles $(E, F_0, F_1)$ as above.\end{remark}

Choose a time-dependent complex structure $(I_t)_{t \in [0,1]}$ on $E$ compatible with the symplectic structure, as well as a time-dependent symplectic connection $\nabla$; recall this amounts to the choice of a complex structure and connection on $\pi^*E$, where $\pi : M \times [0,1] \to M$ is the projection.
We will usually suppress the time-dependence of $\nabla$ from the notation, and write $\nabla_t$ instead for $\nabla_{\partial/\partial t}$; we hope no confusion is caused. It will sometimes be a technical convenience to assume that $\nabla$ is in fact Hermitian (with respect to the inner product $\omega_E(\cdot, I_t \cdot)$ on $\pi^* E$), and that the sub-bundles $F_0 \subset \pi^*E|_{M \times \{0\}}$ and $F_1 \subset \pi^*E|_{M \times \{1\}}$ are parallel with respect to $\nabla$.

In particular, for $x(t) \in \cal{P}(L_0, L_1)$, we can consider the space $C^{\infty}_x(E, F_0, F_1)$ of smooth sections $\psi \in C^{\infty}(x^*E)$ such that $\psi(0) \in F_0|_{x(0)}$ and $\psi(1) \in F_1|_{x(1)}$. This carries an inner product, which we will call $\langle \cdot, \cdot \rangle_H$ in a nod to the finite-dimensional model explained in the previous section:
\begin{equation}\label{H_inner_product}\langle \psi, \psi' \rangle_H = \int_0^1 \langle \psi(t), \psi'(t) \rangle dt = \int_0^1 \omega_E(\psi(t), I_t \psi'(t)) dt\end{equation}
and we will write $|| \cdot ||_H$ for the corresponding norm. The most important feature of this inner product is that the operator $I_t \nabla_t = I_t \nabla_{\partial / \partial t}$ is self-adjoint, since
\[\langle \psi, I_t \nabla_t \psi'\rangle_H - \langle \psi', I_t \nabla_t \psi \rangle_H = \int_0^1 \left(\omega_E(\psi, \nabla_t \psi') + \omega_E(\psi', \nabla_t \psi)\right) dt = \int_0^1 \frac{d}{dt} \omega_E(\psi, \psi') = 0\]
since $F_0, F_1$ are Lagrangian. This in turn induces an inner product the $L^2$-based Sobolev completions of $C^{\infty}_x(E, F_0, F_1)$, which we will use extensively. 

It will be helpful to us to work in a more rigid local geometry around the intersection points $x \in L_0 \cap L_1$. To this end, we choose $I_t$ such that:

\begin{assumption}\label{localgeomI}There is a small neighbourhood $U$ of each $x \in L_0 \cap L_1$, on which the complex structure $I = I_t$ is constant in $t$, such that the following holds.
\begin{itemize}\item There is a trivialization
\begin{equation}\label{Etrivialization}\alpha_U: E|_{U} \cong \bb{C}^k \times U\end{equation}
of $(E|_{U}, I)$ as a Hermitian vector bundle, with $\nabla$ corresponding to the standard connection, such that over $L_0$, $L_1$,
\begin{equation}\alpha_U(F_0|_{L_0 \cap U}) = G_0 \times (L_0 \cap U), \quad \alpha_U(F_1|_{L_1 \cap U}) = G_1 \times (L_1 \cap U)\end{equation}
for fixed linear Lagrangian subspaces $G_0, G_1$ of $\bb{C}^k$;
\item There is some basis $f_1, f_2, \hdots, f_k$ of $G_0$, such that $e^{i\theta_1}f_1, e^{i \theta_2}f_2, \hdots, e^{i \theta_k} f_k$ is a basis of $G_1$, for some real numbers
\begin{equation*}0 < \theta_1 < \theta_2 < \hdots < \theta_k < \pi.\end{equation*}\end{itemize}\end{assumption}

As a consequence of the choice of $I_t, \nabla$, for each smooth $x : [0,1] \to M$ with $x(0) \in L_0, x(1) \in L_1$, there is an operator
\begin{equation}\label{diracoperator}I_t \nabla_t : W^{1,2}([0,1], x^*E) \to L^2([0,1], x^* E)\end{equation}
where it is to be understood that $W^{1,2}([0,1], x^*E)$ refers to those sections $\psi(t)$ of $x^*E$ where $\psi(0) \in F_0, \psi(1) \in F_1$ (however no such restriction is placed on $L^2([0,1], x^*E)$). These operators are unbounded and self-adjoint, of the type $\cal{S}_*$ from the previous section. As such, they can be used to define a polarization-twisted theory.

At each intersection point $x \in L_0 \cap L_1$, consider the eigenvalue equation for this operator
\begin{equation}\label{eqn:evalue}I \frac{d}{dt} \psi(t) = \lambda \psi(t)\end{equation}
for $\psi \in W^{1,2}([0,1], E|_x)$, noting that $I_t$ is constant in $t$. Assumption \ref{localgeomI} ensures that this operator has a simple spectrum, with eigenvalues $(2j + 1)\frac{\pi}{2} + \theta_i$ for each $j \in \bb{Z}$ and $i = 1, ..., k$, where $\theta_i$ are as in Assumption \ref{localgeomI}. Label these eigenvalues
\[\hdots \lambda_{-2}(x) < \lambda_{-1}(x) < 0 < \lambda_0(x) < \lambda_1(x) < \lambda_2(x) < \hdots\]
with corresponding eigensolutions $\psi_i(t)$. We will often write $i(\lambda) \in \bb{Z}$ for the numbering as above of such an eigenvalue $\lambda$. Again we write
\begin{equation}\frak{C} = \{\bf{x} = (x, \lambda_i): x \in L_0 \cap L_1; \lambda_i \in \Spec(I \frac{d}{dt})\}.\end{equation}

As before, polarization-twisted Floer cohomology is computed by a complex
\begin{equation}CF_{tw}(L_0, L_1; \frak{p}) = \bigoplus_{x \in \frak{C}} \bb{F}_2 \cdot \bf{x} \end{equation}
generated by $\frak{C}$.

The differential counts solutions to the usual Cauchy-Riemann equation for pseudoholomorphic strips in $M$, coupled with an auxiliary linear equation for sections of $E$. Namely, writing $Z$ for the Riemann surface $\bb{R}\times [0,1]$, choose an time-dependent, $\omega$-compatible almost complex structure $J_t$ on $M$, and consider pairs $(u, \phi)$ of maps $u : Z \to M$ and sections $\phi \in \Gamma(Z, u^*E)$ satisfying the \emph{twisted Floer equations}:
\begin{equation}\label{floereqn}\frac{\partial u}{\partial s} + J_t(u) \frac{\partial u}{\partial t} = 0\end{equation}
\begin{equation}\label{twistedeqn}\nabla_{\partial/\partial s} \phi + I_t(u) \nabla_{\partial/\partial t} \phi = 0\end{equation}
with boundary conditions $u(\bb{R}\times\{i\}) \subset L_i$ and $\phi(\bb{R}\times\{i\}) \subset F_i$ for $i = 0, 1$. We will also impose asymptotics: for $\bf{x}_{\pm} = (x_{\pm}, \lambda_{\pm}) \in \frak{C}$, we will consider solutions such that
\begin{align*}u(s, t) &\to x_{\pm} \qquad \text{ uniformly as } s \to \pm \infty \\
\phi(s, t) &\sim C_{\pm} e^{-\lambda_{\pm}s}\psi_{\pm}(t) \quad \text{ as } s \to \pm \infty\end{align*}
for some nonzero constants $C_{\pm}$ and eigensolutions $\psi_{\pm}(t)$ to the operator $I\frac{d}{dt}$; more precisely this means
\begin{equation}\frac{\phi(s,t)}{||\phi||_H} \to C_{\pm} \psi_{\pm}(t) \qquad \text{ unifomly as } s \to \pm \infty\end{equation}
As before, we say that $(u, \phi)$ has asymptotics of type $\lambda_{\pm}$ at $\pm \infty$.

We also have the $\tau$-model for the equations. Assuming the identity principle (Proposition \ref{uniquecontinuation}), we can set $\varphi(s,t) = \phi(s)/||\phi(s)||_H$, and the equations for $\varphi$ are
\begin{align}\label{taumodeleqn}\nabla_{\partial/\partial s} \varphi + I_t \nabla_{\partial/\partial t} \varphi - \Lambda(\varphi)\varphi &= 0\\
||\varphi(s)||_H &= 1\end{align}
where $\Lambda(\varphi)$ as before is
\begin{equation}\label{Lambda} \Lambda(\varphi)(s) = \langle \varphi, I_t \nabla_{\partial / \partial t} \varphi \rangle_H = \frac{\langle \phi, I_t \nabla_{\partial / \partial t} \phi \rangle_H}{||\phi||_H^2}.\end{equation}
Again, by solving the equation
\[ \frac{dr}{ds} + \Lambda(\varphi)r = 0\]
we can recover the original $\sigma$-model equation for $\phi(s,t) = r(s) \varphi(s,t)$. In this set-up, we say $\varphi$ has asymptotics of type $\lambda_{\pm}$ if 
\[\varphi(s, t) \to \psi_{\pm}(t) \text{ or } -\psi_{\pm}(t)\]
as $s \to \pm \infty$, where $\psi_{\pm}(t)$ are a choice of \emph{unit eigenvectors} for $\lambda_{\pm}$ in the $|| \cdot ||_H$-norm at $x_{\pm}$.

We will also impose that $u$ is of finite energy; the precise finiteness condition on $\phi$ will be spelled out in the next section, either in terms of a weighted Sobolev norm for the $\sigma$-model, or in a more conventional fashion for the $\tau$-model. These solutions come with a natural $\bb{R}$-action by translation in $s$, and either a $\bb{R}^*$-action by rescaling $\phi$ in the $\sigma$-model, or a $\bb{Z}/2$-action in the $\tau$-model.

\begin{definition}For $\bf{x}_{\pm} = (x_{\pm}, \lambda_{\pm}) \in \frak{C}$, the \emph{moduli space of twisted trajectories} $\cal{M}(\bf{x}_-, \bf{x}_+)$ is the space of finite energy solutions (in equivalently the $\sigma$ or $\tau$ models), modulo translation and rescaling, with asymptotics as above. We will distinguish between a solution $(u, \phi)$, and a \emph{trajectory}, which is an equivalence class of solutions. $\bf{x}_{\pm}$ will sometimes be called the \emph{endpoints} of a trajectory, and by analogy to Morse theory we will sometimes say that a trajectory $[(u, \phi)]$ is a flow from $\bf{x}_-$ to $\bf{x}_+$.\end{definition} 

It will be occasionally useful to observe that the twisted Floer equations can be alternatively viewed as the usual Floer equation in the total space of $E$. Indeed, the symplectic connection $\nabla$ defines a splitting of the tangent space at each $(x, \psi) \in E$ as $T_{(x, \psi)} E \cong T_x M \oplus E_x$; accordingly there is a compatible almost complex structure on the total space of $E$ given as $\begin{pmatrix}J_t & 0 \\ 0 & I_t\end{pmatrix}$ in this decomposition. The Floer equation for this almost complex structure is exactly given by the system \eqref{floereqn}, \eqref{twistedeqn}. In particular, this allows us to immediately deduce an identity principle for the twisted equations:
\begin{proposition}\label{uniquecontinuation}Suppose that $(u, \phi)$ is a solution to the twisted equations, and that the set $\{(s,t) \in Z: \phi(s,t) = 0\}$ has an accumulation point. Then $\phi$ is identically zero.\end{proposition}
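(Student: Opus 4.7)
The plan is to invoke the observation made immediately before the statement: the twisted Floer equations \eqref{floereqn}--\eqref{twistedeqn} are precisely the ordinary Cauchy--Riemann equation on the total space of $E$, with respect to the almost complex structure
\[
\tilde J \;=\; \begin{pmatrix} J_t & 0 \\ 0 & I_t \end{pmatrix}
\]
induced by $J_t$, $I_t$ and $\nabla$. Hence $(u, \phi): Z \to E$ is a bona fide $\tilde J$-holomorphic map with totally real boundary conditions on $F_0 \subset E|_{L_0}$ and $F_1 \subset E|_{L_1}$.

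The additional ingredient I would highlight is that the zero section $M \hookrightarrow E$, $x \mapsto (x,0)$, is itself a $\tilde J$-complex submanifold: in the splitting $T_{(x,0)}E \cong T_x M \oplus E_x$ induced by $\nabla$, the structure $\tilde J$ is block diagonal and hence preserves the horizontal subspace $T_x M$. Consequently the composite $(u, 0) : Z \to E$ of $u$ with the zero section is also $\tilde J$-holomorphic, and it coincides with $(u, \phi)$ exactly on the set $\{(s,t) \in Z : \phi(s,t) = 0\}$.

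The conclusion now follows from the identity principle for pseudoholomorphic maps: two $\tilde J$-holomorphic maps from a connected Riemann surface that coincide on a subset with an accumulation point must agree identically. In local coordinates this amounts to the Carleman similarity principle applied to the difference $\phi$, which represents $\phi$ as a pointwise non-vanishing linear map composed with a genuinely holomorphic function; hence the zero set of any nontrivial solution is discrete. We then obtain $(u, \phi) \equiv (u, 0)$, i.e.~$\phi \equiv 0$ on $Z$.

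The one step requiring any real thought is the case when the accumulation point of $\{\phi = 0\}$ lies on the boundary $\bb{R}\times\{0,1\}$ of $Z$, where the Carleman principle must be applied in the presence of totally real boundary data. This is handled by a standard Schwarz-type doubling of $(u, \phi)$ across the boundary, permissible precisely because $F_0$ and $F_1$ are totally real in $E|_{L_0}$ and $E|_{L_1}$; the reflected map is again $\tilde J$-holomorphic on the doubled domain, and the interior identity principle then applies. So the whole argument is essentially immediate once the reinterpretation of the twisted equations as holomorphic curves in $E$ is in hand.
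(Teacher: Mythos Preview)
Your proof is correct and follows essentially the same approach as the paper: both reinterpret the twisted equations as the ordinary Floer equation in the total space of $E$ and then invoke the identity principle of Floer--Hofer--Salamon. The only (inessential) difference is the choice of comparison map: the paper applies the identity principle to the pair $(u,\phi)$ and $(u,-\phi)$, using that $\phi\mapsto -\phi$ is a symmetry of the equations, whereas you compare $(u,\phi)$ with $(u,0)$, using that the zero section is $\tilde J$-complex; either choice gives the result immediately.
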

This could of course be proved directly, but it follows immediately from the identity principle of \cite{FloerHoferSalamon95} applied to $(u, \phi)$ and $(u, -\phi)$ thought of as pseudoholomorphic strips in the total space of $E$.

In the following Section \ref{sec:analysis_I}, we will detail the necessary analytic arguments to show that, after possibly perturbing $I_t$, the spaces $\cal{M}(\bf{x}_-, \bf{x}_+)$ are smooth manifolds, with a compactification by broken flows. Consequently, we can define a differential as before by counting the points of the zero-dimensional components of the moduli space of twisted trajectories, and obtain:
\begin{definition}The polarization-twisted Floer cohomology $HF_{tw}(L_0, L_1, \frak{p})$ is the cohomology of $CF_{tw}(L_0, L_1, \frak{p})$ with the differential given by
\begin{equation}d(\bf{x}_+) = \sum\limits_{\bf{x}_-, [u] : \dim \cal{M}_{[u]}(\bf{x}_-, \bf{x}_+) = 0} \#\cal{M}_{[u]}(\bf{x}_-, \bf{x}_+) \cdot \bf{x}_-.\end{equation}\end{definition}

Many of the observations made earlier about twisted Morse theory carry through to this case. There is a canonical endomorphism $T$, giving $HF_{tw}(L_0, L_1; \frak{p})$ the structure of a $\bb{F}_2[t]$-module. This is defined in exactly the same way as before, by defining a free $\bb{F}_2[\bb{Z}/2]$ complex
\[\widetilde{CF}_{tw}(L_0, L_1, \frak{p})\]
generated by each unit eigensolution $\psi(t)$ of the operator $I\frac{d}{dt}$ for each $\bf{x} = (x, \lambda)$, and using the construction of Section 2 to realise
\begin{equation}\label{chicken_or_egg}CF_{tw}(L_0, L_1, \frak{p}) \cong \left(\widetilde{CF}_{tw}(L_0, L_1, \frak{p})\right)_{\bb{F}_2}.\end{equation}

\begin{remark}There is a chicken-or-the-egg question as to whether one should view the fundamental object as $\widetilde{CF}_{tw}(L_0, L_1; \frak{p})$ and take formula \eqref{chicken_or_egg} as a definition of polarization-twisted Floer cohomology. However, since we believe that the reader is more comfortable thinking about $\bb{F}_2[t]$- rather than $\bb{F}_2[\bb{Z}/2]$-modules, we have opted instead to develop $CF_{tw}(L_0, L_1; \frak{p})$ first as a complex of $\bb{F}_2$-vector spaces, and observe that \eqref{chicken_or_egg} equips it with a canonical module structure up to $A_{\infty}$-equivalence. This decision informs the notation used throughout later sections, including our discussion of equivariant theory, but makes no actual difference to the content: the analytical back-end of Sections \ref{sec:analysis_I} and \ref{sec:analysis_II} can equally be seen as proving that the appropriate complexes of free $\bb{F}_2[\bb{Z}/2]$-modules are in fact complexes, and the natural maps between them are chain maps linear over $\bb{F}_2[\bb{Z}/2]$.\end{remark}

Moreover, the twisted Floer complex is filtered by the symplectic action on $L_0 \cap L_1$. On the associated graded of this filtration, the differential counts only the twisted trajectories $(u, \phi)$ with constant $u$: over $x \in L_0 \cap L_1$, these are exactly the solutions to
\[\frac{d \phi}{ds} + I_{x} \frac{d \phi}{dt} = 0\]
for $\phi: Z \to E_x$ such that $\phi(\cdot, i) \in F_i|_x$ for $i = 0,1$, with asymptotics to given eigensolutions $I_x \frac{d}{dt} \phi = \lambda \phi$. These solutions are then exactly as in \ref{constant_u_solutions}. We likewise deduce from this that the endomorphism $T$ is invertible, meaning $HF_{tw}(L_0, L_1; \frak{p})$ is in fact a $\bb{F}_2[t,t^{-1}]$-module.

Moreover, there is a local system $\xi$ on the path space $\cal{P}_M(L_0, L_1)$ with fibre $\bb{F}_2[t,t^{-1}]$; by augmenting the usual Floer differential by parallel transport in $\xi$, we can define $HF(L_0, L_1; \xi)$ the Floer cohomology of $L_0, L_1$ with coefficients in $\xi$. The action filtration then gives rise to a spectral sequence
\[HF(L_0, L_1, \xi) \Rightarrow HF_{tw}(L_0, L_1, \frak{p})\]
which we will refer to as the \emph{action spectral sequence}.

\subsection{Invariance.}\label{invariance_I}

Just as twisted Morse theory depends only on $\frak{p}: X \to U/O$ up to homotopy, $HF_{tw}(L_0, L_1; \frak{p})$ is invariant of the particular choice of complex structure $I_t$ and connection $\nabla$. Indeed, this can be seen through the usual argument using the continuation equations. Suppose that $I_t'$ and $\nabla'$ were some other choice of complex structure and connection; by the contractibility of the space of complex structures and connections, we could then find a $Z$-dependent complex structure $I''_{s,t}$ and connection $\nabla''$ (by this we mean a compatible complex structure and symplectic connection on the pull-back of $E$ to $M \times Z$ under the projection to $M$) with
\[(I''_{s,t}, \nabla'') = \begin{cases}(I'_t, \nabla') & \text{ for } s \ll 0; \\
(I_t, \nabla) & \text{ for } s \gg 0.\end{cases}\]
We then consider the non-autonomous continuation map equations for $u: Z \to M$ and $\phi$ a section of $u^*E$:
\[\partial_s u + J_t(u) \partial_t u, \qquad \nabla''_s \phi + I''_{s,t}(u) \nabla''_t \phi\]
with boundary conditions $u(\cdot, i) \in L_i$ and $\phi(\cdot, i) \in F_i$ for $i = 0,1$. By counting the zero dimensional moduli spaces of such solutions with asymptotics at $-\infty$ to $\bf{x}' = (x', \lambda')$, for $\lambda'$ an eigenvalue of $I'_{x'}\frac{d}{dt}$ at $x' \in L_0 \cap L_1$, and at $\infty$ to $\bf{x} = (x, \lambda)$ for $\lambda$ an eigenvalue of $I_x \frac{d}{dt}$ at $x \in L_0 \cap L_1$, we can define a continuation map between the twisted Floer complexes defined with each sets of data
\[CF_{tw}(L_0, L_1; \frak{p}; (I, \nabla)) \to CF_{tw}(L_0, L_1; \frak{p}; (I', \nabla').\]
There is likewise a map in the other direction, and the usual argument by concatenation proves that these are quasi-inverse. The map can clearly also be lifted to the level of free $\bb{F}_2[\bb{Z}/2]$-complexes, and hence we have $A_{\infty}$-quasi-isomorphic $\bb{F}_2[t]$-modules. 

Analytically, the work we will carry out in Section \ref{sec:analysis_I} adapts straightforwardly to this setting. Perhaps the key point to emphasise is that the fundamental estimate \eqref{curvaturebound} used to prove Proposition \ref{Lambdabound} holds essentially as stated with a $Z$-dependent complex structure and connection (the only difference is that the inner product $\langle \cdot, \cdot \rangle = \int_0^1 \omega_E(\cdot, I''_{s,t} \cdot) dt$ used must now be $s$-dependent); this is the main estimate used to prove the moduli spaces are compact.

In a similar vein, twisted Floer theory is invariant under the choice of almost complex structure $J_t$ in the base symplectic manifold $M$. It is also invariant under a Hamiltonian perturbation of the equations: for a time-dependent, compactly supported Hamiltonian $H : M \times [0,1] \to \bb{R}$, we could have just as well taken the equations as
\[\partial_s u + J_t(u)(\partial_t u - X_{H_t}) = 0, \qquad \nabla_{\partial / \partial s} \phi + I_t(u) \nabla_{\partial / \partial t} \phi = 0\]
where $X_{H_t}$ is the corresponding Hamiltonian vector field and used these to define 

Twisted Floer theory is also invariant under perturbations of the polarization data $\frak{p}$. Indeed, suppose that we have a homotopies $(F_0)_s, (F_1)_s$ of Lagrangian sub-bundles of $E|_{L_0}, E|_{L_1}$, meaning Lagrangian sub-bundles of the pullback of $E$ to each of $L_0 \times \bb{R}$ and $L_1 \times \bb{R}$, such that for some fixed Lagrangian sub-bundles $F_0, F_0'$ of $E|_{L_0}$ and $F_1, F_1'$ of $E|_{L_1}$, forming two sets of polarization data $\frak{p} = (E, F_0, F_1)$ and $\frak{p}' = (E, F_0', F_1')$ we have
\[(F_0)_s = \begin{cases} F'_0 & \text{ for } s \ll 0;\\ F'_1 & \text{ for } s \gg 0;\end{cases} \qquad (F_1)_s = \begin{cases} F'_0 & \text{ for } s \ll 0;\\ F'_1 & \text{ for } s \gg 0.\end{cases}\]
Likewise we can consider the twisted equations for $(u, \phi)$ but with the moving Lagrangian boundary conditions that
\[\phi(s, 0) \in (F_0)_s, \qquad \phi(s,1) \in (F_1)_s.\]
Assuming that both $F_0|_x, F_1|_x$ are transverse at each $x \in L_0 \cap L_1$ (likewise for $F'_0|_x, F'_1|_x$) we can define a chain map
\[CF_{tw}(L_0, L_1; \frak{p}) \to CF_{tw}(L_0, L_1; \frak{p}')\]
which is moreover again an $A_{\infty}$-quasi-isomorphism of $\bb{F}_2[t]$-modules. One upshot of this is we can then define twisted Floer cohomology $HF_{tw}(L_0, L_1; \frak{p})$ for arbitrary Lagrangian sub-bundles $F_0, F_1$ of $E$, without the additional posit that $F_0, F_1$ are transverse over $L_0 \cap L_1$.

As in ordinary Lagrangian Floer theory, twisted Floer cohomology is also invariant under the choice of almost complex structure $J_t$ in the base symplectic manifold $M$, by a similar continuation map argument. It is also invariant under exact Lagrangian isotopies of $L_0$ and $L_1$ (noting that in the case that $L_0$ or $L_1$ is non-compact, we must restrict ourselves to compactly supported exact isotopies), by a continuation map argument with moving Lagrangian boundary conditions on the base. We must of course specify how to extend $F_0, F_1$ over this isotopy: however in view of the invariance under perturbations of $F_0, F_1$ above, it does not matter which extension we choose; an obvious choice is to just use the symplectic connection $\nabla$ to parallel transport $F_0, F_1$ along the isotopy. In particular, twisted Floer cohomology is invariant under compactly supported Hamiltonian isotopies of $M$, and we can define the twisted Floer cohomology $HF_{tw}(L_0, L_1; \frak{p})$ without the assumption that $L_0, L_1$ intersect transversely.

\begin{remark}While we could have built Hamiltonian isotopy invariance into the story by including a Hamiltonian perturbation term into the twisted equations directly, we have chosen not to for simplicity: we prefer to take fixed trivializations of $E, F_0, F_1$ around each intersection point of $L_0, L_1$, rather than around Hamiltonian chords connecting them.\end{remark}

The new feature here is that even if $\frak{p}' = \frak{p}$, the homotopies $(F_i)_s$ can form a non-contractible loop in the space of Lagrangian sub-bundles, and the induced isomorphism $HF_{tw}(L_0, L_1; \frak{p}) \xrightarrow{\sim} HF_{tw}(L_0, L_1; \frak{p})$ may not be the identity. In fact, we can make a precise conjecture as to the nature of this automorphism. Suppose for a second that our homotopy of polarization data holds $F_1$ constant, while moving $F_0$ through a loop $(F_0)_s$ of Lagrangian sub-bundles of $E|_{L_0}$. Such a loop gives rise to a map
\[L_0 \to \Omega(U(n)/O(n)) \to \Omega(U/O) \simeq \bb{Z} \times BO\]
by Bott periodity; in other words a virtual vector bundle $\eta \in KO^0(L_0)$. Moreover, $HF_{tw}(L_0, L_1; \frak{p})$ should carry natural structure of a $H^*(L_1; \bb{F}_2)$-$H^*(L_0; \bb{F}_2)$-bimodule: this is out of the scope of this paper, but one would construct this either by defining a version of the triangle product for twisted Floer cohomology, or by counting solutions to the twisted equations with marked boundary points constrained to lie in particular cycles of $L_0, L_1$. We then expect that the automorphism of $HF_{tw}(L_0, L_1; \frak{p})$ induced by the homotopy of Lagrangian sub-bundles to be given as multiplication by a version of the total Stiefel-Whitney class of $\eta$:
\[t^{\rank(\eta)} w(\eta) = t^{\rank(\eta)}\left(1 + t^{-1} w_1(\eta) + t^{-2} w_2(\eta) + \hdots\right).\]

Indeed, the fundamental object in calculating twisted Floer theory should not be $\frak{p} = (E, F_0, F_1)$ at all, but rather the map
\[\cal{P}_M(L_0, L_1) \to U/O;\]
in particular, polarization data $\frak{p}, \frak{p}'$ that give rise to homotopic maps to $U/O$ should yield isomorphic twisted cohomology (although the isomorphism itself would depend on the choice of homotopy, as discussed above). In particular, we should expect twisted cohomology to be \emph{stabilization invariant}: replacing $\frak{p}$ by $\frak{p}\oplus \bb{C} = (E \oplus \bb{C}, F_0 \oplus \bb{R}, F_1 \oplus i \bb{R})$ yields isomorphic twisted cohomology. We will in fact prove this as Corollary \ref{stabilization_invariance} of the K\"{u}nneth theorem, but it would be immediate from a continuation map argument if we were to develop a more general theory of twistings by abstract continuous maps to $U/O$.

Stabilization invariance immediately implies that if there is a stable trivialization of $\frak{p}$ (meaning an isomorphism $\frak{p} \oplus \bb{C}^{\ell} \cong (\bb{C}^{k + \ell}, \bb{R}^{k + \ell}, i \bb{R}^{k + \ell})$ for some $\ell$), then there is an induced isomorphism
\begin{equation}\label{untwisting}HF_{tw}(L_0, L_1; \frak{p}) \cong HF(L_0, L_1) \tensor \bb{F}_2[t,t^{-1}]\end{equation}
which of course depends on the choice of stable trivialization.

There are slightly laxer conditions on which one should expect an untwisting as in \eqref{untwisting}, whenever there is a Lagrangian distribution $D \subset E \oplus \bb{C}^{\ell}$ over $M$, and homotopies between each $F_i \oplus \bb{R}^{\ell}$ and $D|_{L_i}$. Indeed, this yields a null-homotopy of the map $\cal{P}_M(L_0, L_1) \to U/O$. However, this is only obvious in our formulation of twisted Floer cohomology in the event that $D$ is equipped with a \emph{flat connection}. In this case, using the canonical complex structure on $E \cong D \tensor \bb{C}$ and the connection induced from $D$, the vanishing of the right hand side of the fundamental estimate \eqref{curvaturebound} means there can be no twisted trajectories $(u, \varphi)$ with Maslov index $\mu(u) \ge 2$. Without a flat connection, the curvature term in \eqref{curvaturebound} is not necessarily zero, and we cannot a priori rule out such twisted trajectories.

\subsection{A topological interpretation.} Before we proceed to give the analytic foundations of polarization-twisted Floer cohomology, we will mention a topological alternative. Let us first describe the toy case of when $L_0 \cap L_1 = \{x_-, x_+\}$ consists of precisely two critical points, with a single homotopy class $[u]$ of holomorphic strips connecting them; hence for a regular almost complex structure we have a closed $n$ manifold
\[\cal{M}(x_-, x_+)\]
of holomorphic strips, where $n + 1= \mu(u)$ is the Maslov index. Given polarization data $\frak{p} = (E, F_0, F_1)$ together with $(I, \nabla)$, the index of the operator $\bar{\nabla}_I: W^{2,k}(u^*E) \to L^{2,k}(u^*E)$ acting on the \emph{unweighted} Sobolev spaces determines a virtual vector bundle
\[\eta \to \cal{M}\]
of rank $k = \specflow(u)$. Let us suppose, for simplicity, $\bar{\nabla}_I$ is surjective for all $u \in \cal{M}$, so that $\eta$ is in fact an honest vector bundle. Take $\bf{x}_+ = (x_+, \lambda_0(x_+))$ where as before $\lambda_0(x_+)$ is the smallest positive eigenvalue of $I_{x_+}\frac{d}{dt}$; we are interested in the coefficient of $\bf{x}_- = (x_-, \lambda_{-n-k}(x_-))$ in the twisted differential $d(\bf{x}_+)$. This coefficient is a count of pairs $(u, \phi)$ of $u \in \cal{M}$ and $\phi \in \eta|_u$ so that $\phi(s,t) \sim C e^{-\lambda_{-n-k}s}\psi_{-n-k}(t)$ as $s \to -\infty$. However from any such $\phi$, we can extract an asymptotic expansion
\[\phi = a_{-1} e^{-\lambda_{-1} s}\psi_{-1} + \hdots + a_{-n-k+1} e^{-\lambda_{-n-k+1} s} \psi_{-n-k+1} + a_{-n-k}e^{-\lambda_{-n-k}s} \psi_{-n-k} + \hdots\]
and thus we wish to count the solutions $(u, \phi)$ where $a_{-1} = \hdots = a_{-n-k+1} = 0$ (for such a solution, generically $a_{-n-k} \not= 0$). However clearly the coefficients of the asymptotic expansion determine map of vector bundles over $\cal{M}$
\[A : \eta \to \bb{R}^{n + k - 1}\]
and thus for generic data, exactly one scalar multiplication class of solutions $(u, \phi)$ occurs at each of the finitely many points where $A$ fails to be injective. It then follows from Porteous' formula that
\begin{proposition}\label{porteous_calculation}The coefficient of $\bf{x}_-$ in $d(\bf{x}_+)$ is given by the top Stiefel-Whitney number
\begin{equation}\label{porteous_formula}\langle w_n(- \eta), [\cal{M}]\rangle\end{equation}
where $-\eta$ is the negative index bundle of $\bar{\nabla}_I$, and $[\cal{M}] \in H_n(\cal{M}, \bb{F}_2)$ is the fundamental class of $\cal{M}$.

In particular, by considering the action spectral sequence, the twisted Floer cohomology $HF_{tw}(L_0, L_1; \frak{p})$ vanishes if $\langle w_n(- \eta), [\cal{M}]\rangle \not= 0$, while in the event that this Stiefel-Whitney number vanishes, the spectral sequence collapses, and
\[HF_{tw}(L_0, L_1; \frak{p}) \cong \bb{F}_2[t,t^{-1}] \oplus \bb{F}_2[t,t^{-1}].\]\end{proposition}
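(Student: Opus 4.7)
The plan is to identify $\cal{M}(\bf{x}_-, \bf{x}_+)$ with the corank-$\ge 1$ locus of a linear bundle map over $\cal{M}$, and then to invoke the mod $2$ Thom--Porteous formula.

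By the surjectivity hypothesis, the unweighted kernels of $\bar{\nabla}_I$ on $u^*E$-sections with the given Lagrangian boundary conditions assemble into a smooth rank-$k$ bundle $\eta \to \cal{M}$. Any $\phi \in \eta|_u$ decays exponentially at both ends. At $+\infty$ the slowest-decaying positive eigenmode is $\psi_0(x_+)$, so generically the asymptotic type is already $\lambda_0(x_+)$, and this condition cuts out an open dense subset of $\eta$. At $-\infty$ one has a convergent expansion
\[
\phi(s,t) \,\sim\, \sum_{i \le -1} a_i(u,\phi)\, e^{-\lambda_i(x_-) s}\, \psi_i(t),
\]
and requiring $\phi$ to have asymptotic type $\lambda_{-n-k}(x_-)$ is precisely the vanishing of the $n+k-1$ coefficients $a_{-1}, a_{-2}, \ldots, a_{-n-k+1}$. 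These coefficients assemble into a linear bundle map
\[
A : \eta \longrightarrow \underline{\bb{R}}^{n+k-1}
\]
over $\cal{M}$. Once we quotient by the $\bb{R}^*$-rescaling of $\phi$, the moduli space $\cal{M}(\bf{x}_-, \bf{x}_+)$ is identified with the projectivization of $\ker A$, equivalently the corank-$\ge 1$ locus of $A$.

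The expected codimension of this locus is $(n+k-1) - k + 1 = n = \dim \cal{M}$, matching the earlier dimension count. For generic $I_t$ and $\nabla$, a Sard--Smale argument on the universal moduli space --- using smooth dependence of the asymptotic coefficients $a_i$ on the auxiliary data, which is standard for Cauchy--Riemann operators --- puts $A$ in general position, so the degeneracy locus is transversely cut out and consists of finitely many points. The mod $2$ Thom--Porteous formula for the corank-one locus of a generic bundle map of ranks $k$ and $n + k - 1$ then gives
\[
\#\,\cal{M}(\bf{x}_-, \bf{x}_+) \,=\, \langle w_n(\underline{\bb{R}}^{n+k-1} - \eta), [\cal{M}] \rangle \,=\, \langle w_n(-\eta), [\cal{M}] \rangle,
\]
since the total Stiefel--Whitney class of a trivial bundle is $1$. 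This is formula \eqref{porteous_formula}. Establishing the transversality of $A$ is the only real technical point, but smooth dependence of the $a_i$ on $(I_t, \nabla)$ is routine, and the freedom to vary these auxiliary data along $\cal{M}$ provides ample flexibility.

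For the consequence about $HF_{tw}(L_0, L_1; \frak{p})$, the action spectral sequence has $E_1$-page $\bb{F}_2[t,t^{-1}] \cdot x_- \oplus \bb{F}_2[t,t^{-1}] \cdot x_+$ by the constant-$u$ computation of Example \ref{constant_u_solutions}, with $d_1$ given (up to a canonical shift in $t$-power tracking the spectral flow) by multiplication by the count \eqref{porteous_formula}. Since there are only two intersection points, no higher differentials $d_r$ can occur, so $E_2 = E_\infty$. If $\langle w_n(-\eta), [\cal{M}] \rangle \ne 0$ in $\bb{F}_2$, then $d_1$ is an $\bb{F}_2[t,t^{-1}]$-linear isomorphism between free modules of rank one, and $HF_{tw}(L_0, L_1; \frak{p}) = 0$; if the pairing vanishes, $d_1 = 0$ and we obtain $HF_{tw}(L_0, L_1; \frak{p}) \cong \bb{F}_2[t,t^{-1}]^{\oplus 2}$ as claimed.
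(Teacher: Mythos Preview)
Your proof is correct and follows essentially the same approach as the paper: the paper's argument, given in the paragraph immediately preceding the proposition, extracts the asymptotic coefficients $a_{-1},\ldots,a_{-n-k+1}$ at $-\infty$ to build the bundle map $A:\eta\to\bb{R}^{n+k-1}$, identifies the desired count with the locus where $A$ fails to be injective, and appeals to Porteous' formula. You add a bit more detail --- explicitly handling the $+\infty$ asymptotic type and spelling out the spectral-sequence consequence --- but the strategy is identical.
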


This calculation is the basic link between this work and Seidel-Smith's original proposal for polarization-twisted cohomology in \cite{SeidelSmith10}, which essentially takes \eqref{porteous_formula} as a definition of the twisted differential. When working with more than two critical points, so that $\cal{M}(x,y)$ is a manifold with boundary given by broken strips, care must be taken to choose cochain representatives for the Stiefel-Whitney classes of $-\eta$ and fundamental chains for $\cal{M}$ to be compatible with the strip-breaking structure.

We conjecture that the resulting theory is equivalent to ours, where we manually cut down the moduli spaces using the auxiliary $\bar{\nabla}_I$-equation. One challenge in comparing the two approaches that in our picture, the relevant $\bb{F}_2[t,t^{-1}]$ action on $CF_{tw}$ is \emph{not} given by simply raising and lowering the eigenvalue level (although they do coincide on the action spectral sequence, which is what is needed for the calculation in Proposition \ref{porteous_calculation}).

\section{Analytical aspects I}\label{sec:analysis_I}

\subsection{Weighted Sobolev spaces.} Let us describe a Sobolev space set-up so as to make the equations \eqref{floereqn}, \eqref{twistedeqn} Fredholm, and so that generic perturbations of $I, \nabla$ yield smooth moduli spaces.

Fix $k > 2$ an order of differentiability; we will throughout work with $L^2$-based Sobolev spaces. We apologise in advance for our somewhat non-standard choice of notation: we will write $L^{2,k}$ rather than $L^2_k$ for the $L^2$-Sobolev space of $k$-times differentiable functions, and we will reserve the subscripts for the exponential weights we will eventually impose to control the asymptotic behaviour of the twisted equations.

Let us first recall the set-up for ordinary Floer theory. For $x_-, x_+ \in L_0 \cap L_1$, there is a Banach (in fact, real Hilbert) manifold $\cal{Z}^{2,k}(x_-, x_+)$ of $L^{2,k}_{loc}$-maps $u: Z \to M$ such that $u(s, 0) \in L_0$ and $u(s, 1) \in L_1$ for each $s$, and such that for $s \gg 0$ and $s \ll 0$ respectively, we can write
\begin{equation}u(s,t) = \exp_{x_{\pm}}(\xi_{\pm}(s,t))\end{equation}
where $\xi(s,t) \in T_{x_{\pm}}M$ has finite $L^{2,k}$-norm. The tangent space at $T_u \cal{Z}^{2,k}(x_-, x_+)$ at a map $u$ is the Banach space $W^{2,k}(u^* TM)$ of class $L^{2,k}$ vector fields $\xi$ on $u$ such that $\xi(s,0) \in TL_0$ and $\xi(s, 1) \in TL_1$. This Banach manifold is equipped with a Banach vector bundle $\cal{L}^{2,k-1}(x_-, x_+)$, whose fibre at $u$ is the Banach space $L^{2,k-1}(u^* TM)$ of class $L^{2,k-1}$ vector fields $\eta$ on $u$, with no conditions imposed on $\bb{R} \times \{0\}$ or $\bb{R} \times \{1\}$. We will distinguish throughout this section in our notation between spaces of sections with Lagrangian boundary conditions, denoted by the letter $W$, and spaces without, denoted by $L$.

Given a time-dependent almost complex structure $J_t$ on $M$, the $\bar{\partial}_{J}$ operator then defines a Fredholm section of $\cal{L}^{2,k-1}(x_-, x_+)$. By elliptic regularity, the zeroes of this section are precisely the finite energy smooth $J_t$-holomorphic strips $u$ with endpoints at $x_{\pm}$ and boundary conditions on $L_0, L_1$. Moreover, for generic $J_t$, at each solution the linearized operator
\begin{equation}(D\bar{\partial})_u : T_u \cal{Z}^{2,k} \to L^{2,k-1}(u^*TM)\end{equation}
is surjective, in which case we say the almost complex structure $J_t$ is regular, and so by the implicit function theorem the space of solutions is a finite-dimensional manifold modelled on $\ker D\bar{\partial}$. We will fix, once and for all, such a regular $J_t$.

Returning to the twisted equations for polarization data $\frak{p} = (E, F_0, F_1)$, for $u \in \cal{Z}^{2,k}(x_-, x_+)$, consider the two Banach spaces
\begin{equation}W^{2,k}(u^* E) = \{\phi \in L^{2,k}(Z, u^*E): \phi(\cdot, 0) \in F_0, \phi(\cdot, 1) \in F_1\}, \quad L^{2,k-1}(u^*E) = L^{2,k-1}(Z, u^*E).\end{equation}
The definition of the $L^{2,k}$ norm for sections of $E$ depends both on a metric on $E$ (which can be given say by a complex structure) and a connection; however the class of $L^{2,k}$ sections is independent of these choices. These each form Banach vector bundles $\cal{W}^{2,k}(E), \cal{L}^{2,k-1}(E)$ over $\cal{W}^{2,k}(x_-, x_+)$. Then, given a time-dependent complex structure $I = \{I_t\}_{t \in [0,1]}$ on $E$, together with a (time-dependent) symplectic connection $\nabla$, at each $u$ we have:

\begin{proposition}The operator
\begin{equation}\bar{\nabla}_{I} = \nabla_{\partial/\partial s} + I_t \nabla_{\partial/\partial t} : W^{2,k}(u^*E) \to L^{2,k-1}(u^*E).\end{equation}
is Fredholm. Moreover, its index, which we denote
\begin{equation}\label{specflow_sfu}\specflow(u)\end{equation}
and refer to as the ``spectral flow over $u$'', can be computed as the Maslov index as the loop of Lagrangian subspaces of $E$ along $\partial u$ formed by a concatenation of $F_0|_{u(s, 0)}$ and $F_1|_{u(s,1)}$, where we join the transversely intersecting subspaces $F_0|_{x_-}, F_1|_{x_-}$ and $F_0|_{x_+}, F_1|_{x_+}$ by the canonical ``short path''.\end{proposition}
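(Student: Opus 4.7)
The plan is to follow the standard strategy for Cauchy-Riemann type operators on the strip with Lagrangian boundary conditions, due to Floer and systematized by Robbin-Salamon.

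First, I would rewrite $\bar\nabla_I$ as $\partial_s + A(s)$, where $A(s) = I_t \nabla_t$ acts on the Hilbert space of $L^2$ sections of $u(s,\cdot)^*E$, with domain the $W^{1,2}$ sections satisfying the Lagrangian boundary conditions in $F_0, F_1$. Each $A(s)$ is self-adjoint with respect to the inner product \eqref{H_inner_product}, as already verified in the text, and the asymptotic operators $A(\pm \infty) = I_{x_\pm} \tfrac{d}{dt}$ are invertible: their spectra were computed explicitly under Assumption \ref{localgeomI} to be $\{(2j+1)\tfrac{\pi}{2} + \theta_i\}$, which avoids zero. The classical theory of translation-invariant operators on cylindrical ends (Lockhart-McOwen, or its adaptation by Robbin-Salamon) then implies that $\bar\nabla_I$ is Fredholm on the unweighted Sobolev spaces $W^{2,k}(u^*E) \to L^{2,k-1}(u^*E)$, with index equal to the spectral flow of the path $\{A(s)\}_{s \in \bb{R}}$ of self-adjoint operators.

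Second, I would identify this spectral flow with the claimed Maslov index. Since $Z$ is contractible, we can trivialize $u^*E$ as a symplectic vector bundle; under parallel transport the Lagrangian boundary data yield two paths $F_0|_{u(s,0)}, F_1|_{u(s,1)}$ of Lagrangian subspaces in a fixed symplectic vector space. The Robbin-Salamon theorem identifies the spectral flow of a path of operators of the form $I \tfrac{d}{dt}$ with moving Lagrangian boundary conditions with the Maslov index of the associated loop of Lagrangian pairs. The ``short path'' closure at each end $x_\pm$ is dictated by the unweighted Sobolev setup: it corresponds geometrically to joining the transverse endpoint Lagrangians through the small-angle wedge in which no eigenvalue of $A(\pm \infty)$ crosses zero, so that inserting it contributes nothing to the spectral flow.

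The main obstacle is reconciling conventions: one has to verify that the orientation of the ``short path'' and the sign convention for spectral flow are consistent, and that the eigenvalue labelling fixed by Assumption \ref{localgeomI} (with $0 < \theta_1 < \hdots < \theta_k < \pi$, so that $\lambda_{-1} < 0 < \lambda_0$) produces exactly the intended Maslov index. This bookkeeping aside, the argument is the standard spectral-flow formula for Cauchy-Riemann operators with Lagrangian boundary conditions, specialized to the linear Cauchy-Riemann operator on sections of $u^*E$.
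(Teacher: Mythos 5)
Your argument is correct and is exactly the standard spectral-flow/Maslov-index argument the paper invokes: the text's entire proof is the remark that near each end the equation becomes the constant-coefficient $\bar{\partial}$ equation in $E_{x_\pm}$ (so the asymptotic operators $I_{x_\pm}\frac{d}{dt}$ are invertible by Assumption \ref{localgeomI}), and then Fredholmness plus the index-equals-Maslov-index identification follow from the Robbin--Salamon/Lockhart--McOwen theory, with the short path accounting for the unweighted Sobolev convention. Your write-up simply makes explicit what the paper leaves as "entirely standard."
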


The proof is entirely standard, since our geometric assumptions ensure that near each endpoint the equation is just the usual $\bar{\partial}$ equation in $E_{x_{\pm}}$.

However, these Sobolev spaces are not good enough to model twisted Floer theory: solutions to $\bar{\nabla}_I \phi = 0$ in $W^{2,k}(u^*E)$ must decay at $\pm \infty$. In particular they can only represent twisted flows $(u, \phi)$ with asymptotics of type $\lambda_- < 0$ at $- \infty$ and $\lambda_+ > 0$ at $+\infty$. To remedy this, we introduce exponential weights along the ends of the strip. 

For a pair of generators $\bf{x}_{\pm} = (x_{\pm}, \lambda_{\pm})$, first choose some $\delta > 0$ such that there are no eigenvalues of $I_{x_-} \frac{d}{dt}$ in the interval $(\lambda_-, \lambda_- + \delta)$, and no eigenvalues of $I_{x_+} \frac{d}{dt}$ in $(\lambda_+ - \delta, \lambda_+)$. Then, choose a smooth function $w: \bb{R} \to \bb{R}$ such that
\begin{equation}\label{weights}w(s) = \begin{cases} (\lambda_- + \delta)s & \text{ for } s \ll 0\\
(\lambda_+ - \delta)s & \text{ for } s \gg 0.\end{cases}\end{equation}

We then consider the weighted Sobolev spaces
\begin{equation}\label{weighted_sobolev_spaces}W^{2,k}_{\lambda_- \lambda_+}(u^*E) = e^{-w(s)}W^{2,k}(u^*E), \qquad L^{2,k-1}_{\lambda_- \lambda_+}(u^*E) = e^{-w(s)} L^{2,k-1}(u^*E).\end{equation}
Observe that different choices of $w$ give rise to equivalent metrics on the same spaces $W^{2,k}_{\lambda_- \lambda_+}, L^{2,k-1}_{\lambda_- \lambda_+}$. 

These then form Banach bundles $\cal{W}^{2,k}_{\lambda_- \lambda_+}(E)$ and $\cal{L}^{2,k-1}_{\lambda_- \lambda_+}(E)$ over $\cal{Z}^{2,k}(x_-, x_+)$.
There is again an operator $\bar{\nabla}_I : W^{2,k}_{\lambda_- \lambda_+}(u^*E) \to L^{2,k-1}_{\lambda_- \lambda_+}(u^*E)$, yielding a bundle map $\cal{W}^{2,k}_{\lambda_- \lambda_+}(E) \to \cal{L}^{2,k-1}_{\lambda_- \lambda_+}(E)$. Indeed, we could alternatively view this an operator on unweighted Sobolev spaces through the isomorphisms of \ref{weighted_sobolev_spaces}:
\begin{equation}\label{unweighted_operator}F_{\lambda_- \lambda_+} : W^{2,k}(u^*E) \to L^{2,k-1}(u^*E), \quad \phi(s,t) \mapsto \nabla_s \phi(s,t) + I_t \nabla_t \phi(s,t) - w'(s)\phi(s,t)\end{equation}
at the price of slightly obfuscating the translation invariance.

The following proposition follows immediately from the foundational work of \cite{LockhartMcOwen85} on the use of weighted Sobolev spaces in the index theory of elliptic operators:

\begin{proposition}\label{sigma_model_index}The operator $\bar{\nabla}_I : W^{2,k}_{\lambda_- \lambda_+}(u^*E) \to L^{2,k-1}_{\lambda_- \lambda_+}(u^*E)$ is Fredholm, of index
\[\specflow(u) + i(\lambda_-) - i(\lambda_+) + 1.\]\end{proposition}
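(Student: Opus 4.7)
The key idea is to conjugate away the exponential weights, reducing to a standard linear Cauchy--Riemann operator on ordinary Sobolev spaces with invertible asymptotic operators. Concretely, the multiplication isomorphism $\phi \mapsto e^{w(s)}\phi$ identifies $W^{2,k}_{\lambda_-\lambda_+}(u^*E)$ with $W^{2,k}(u^*E)$ (and similarly for $L^{2,k-1}$), conjugating $\bar{\nabla}_I$ to the operator $F = \bar{\nabla}_I - w'(s)\cdot\id$ already described in \eqref{unweighted_operator}. Since $w'(s)$ is asymptotically constant, $F$ is an asymptotically translation-invariant operator on $W^{2,k}(u^*E)\to L^{2,k-1}(u^*E)$ whose asymptotic operators at $\pm\infty$ are
\[
A_\pm^{\mathrm{new}} \;=\; I_{x_\pm}\tfrac{d}{dt} - (\lambda_\pm \mp \delta)\,\id,
\]
acting on $W^{1,2}([0,1],E|_{x_\pm})$ with the same Lagrangian boundary conditions from $F_0, F_1$.

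The Fredholm assertion then follows from the classical theory: by the choice of $\delta$, the interval $(\lambda_- , \lambda_- + \delta)$ (resp.\ $(\lambda_+ -\delta,\lambda_+)$) contains no eigenvalue of $I_{x_-}d/dt$ (resp.\ $I_{x_+}d/dt$), so both $A_\pm^{\mathrm{new}}$ are invertible self-adjoint operators of the type studied in \cite{LockhartMcOwen85}. The standard Fredholm theorem for linear CR operators on a strip with invertible asymptotic operators (equivalently, the Lockhart--McOwen result for weighted spaces on cylindrical ends) yields Fredholmness. Elliptic regularity comparisons for the weight $e^{-w(s)}$ show that the index is independent of the choice of smooth weight function $w$ with the prescribed asymptotic slopes.

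For the index, the plan is to interpolate via the linear family $F_\tau := \bar{\nabla}_I - \tau\, w'(s)\,\id$ for $\tau\in[0,1]$. At $\tau=0$ we recover $\bar{\nabla}_I$ on unweighted Sobolev spaces, which by the earlier proposition has index $\specflow(u)$; at $\tau=1$ we recover $F$. The asymptotic operators $A_\pm^{(\tau)} = I_{x_\pm}\tfrac{d}{dt} - \tau(\lambda_\pm \mp \delta)\,\id$ are invertible except at finitely many $\tau\in(0,1)$ where an eigenvalue crosses zero, and across each such crossing the Fredholm index of $F_\tau$ jumps by $\pm 1$ according to the standard spectral-flow formula (positive contribution at $+\infty$ for a crossing from positive to negative, negative contribution at $-\infty$ for the same direction). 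The $\tau=1$ eigenvalues of $A_+^{(1)}$ in, say, the positive reals are exactly those $\lambda_j(x_+)$ with $j\ge i(\lambda_+)$, while for $A_+^{(0)}$ they are those with $j\ge 0$; tracking the shift gives a contribution of $-i(\lambda_+)$ at $+\infty$ and, by an analogous count, of $i(\lambda_-)+1$ at $-\infty$ (the asymmetric $+1$ reflects the convention that $\lambda_0$ denotes the smallest \emph{positive} eigenvalue while $\lambda_{-1}$ denotes the largest negative one). Summing, the index of $F$ is $\specflow(u) + i(\lambda_-) - i(\lambda_+) + 1$.

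The only delicate point is the bookkeeping of signs and the appearance of the extra $+1$; the cleanest cross-check is to compute the index directly for a constant strip $u\equiv x$ with $x_-=x_+=x$, where $\specflow(u)=0$ and solutions can be written out explicitly as Fourier series in the eigenbasis of $I_x\tfrac{d}{dt}$ as in Example~\ref{constant_u_solutions}. A direct calculation shows that the kernel has dimension $\max(0,\, i(\lambda_-)-i(\lambda_+)+1)$ and that the cokernel is either $0$ or dual-dimensional depending on the sign of $i(\lambda_-)-i(\lambda_+)+1$, so that the index equals $i(\lambda_-)-i(\lambda_+)+1$ in all cases, pinning down both the sign conventions and the constant term.
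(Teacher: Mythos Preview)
Your proof is correct and follows essentially the same route as the paper: the paper's own argument is a two-sentence appeal to Lockhart--McOwen together with the remark that the index formula is seen by comparing the spectral flow of the conjugated operator $F_{\lambda_-\lambda_+}$ of \eqref{unweighted_operator} to the unweighted index $\specflow(u)$, and your homotopy $F_\tau$ is exactly a detailed implementation of that comparison. One small caveat: the parenthetical sign convention you state (``positive contribution at $+\infty$ for a crossing from positive to negative, negative contribution at $-\infty$ for the same direction'') is backward---an eigenvalue of $A_+$ crossing downward \emph{decreases} the index by one, while the same at $A_-$ \emph{increases} it---but your actual counts $-i(\lambda_+)$ and $i(\lambda_-)+1$ are correct, and your constant-strip cross-check confirms the formula independently.
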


Indeed, the formula for the index can be directly seen by comparing the spectral flow of the operator $F_{\lambda_- \lambda_+}$ of \eqref{unweighted_operator} with the index of $\bar{\nabla}_I$ acting on the unweighted spaces.

By elliptic regularity, if $u$ is smooth (such as if $u$ itself solves $\bar{\partial}_J u = 0$), any solutions $\phi \in W^{2,k}_{\lambda_- \lambda_+}(u^*E)$ to $\bar{\nabla}_I \phi = 0$ are automatically smooth. Moreover, by the geometric assumptions \ref{localgeomI} on $I_t$ and $\nabla$ in a neighbourhood of each intersection point $x \in L_0 \cap L_1$, near the ends of the strip $Z$, $\phi$ can be thought of as an ordinary holomorphic map to $E_{x_{\pm}} \cong \bb{C}^k$ with boundary conditions on linear subspaces $F_0$, $F_1$. Considering for a moment just the positive end of $Z$, $e^{M(s+it)}\phi(s,t)$ is also holomorphic and of finite energy for any $M \le \lambda_+ - \delta$. Choosing $M$ to be an integer multiple of $\pi$, $e^{M(s+it)}\phi(s,t)$ also has boundary conditions on $F_0$, $F_1$. We can apply the exponential decay results of Robbin-Salamon \cite{RobbinSalamon01} in this particularly simple case, to deduce that there is some nonzero eigensolution $I \frac{d}{dt} \psi(t) = \mu \psi(t)$ on $E_{x_+}$ such that
\begin{equation*}\lim\limits_{s \to \infty} e^{\mu s} e^{M(s+it)} \phi(s,t) = \psi(t)\end{equation*}
with this convergence being uniform in $t$, and exponential to all derivatives in $s$. In particular, dividing out by $e^{M(s+it)}$, we obtain an exponential decay result for solutions of the twisted equations:

\begin{proposition}\label{twisted_solution_asymptotics}Suppose $\phi(s,t) \in W^{2,k}_{\lambda_- \lambda_+}(u^*E)$ solves $\bar{\nabla}_I \phi = 0$. Then there is an eigenvalue $\lambda \ge \lambda_+$ of $I_{x_+}\frac{d}{dt}$ and a nonzero eigensolution $\psi$, we have
\begin{equation}\phi(s,t)/||\phi(s,t)||_H \to \psi(t)\end{equation}
uniformly in $t$ and exponentially to all derivatives in $s$. The same conclusion holds over the negative end of the strip as well.\end{proposition}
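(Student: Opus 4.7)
The plan, following the hint already sketched in the paragraph preceding the proposition, is to combine elliptic regularity with the local normal form at $x_+$ in order to reduce $\bar\nabla_I \phi = 0$ to a constant-coefficient Cauchy--Riemann equation on a half-strip in $\bb{C}^k$ with constant linear Lagrangian boundary conditions, and then to invoke the Robbin--Salamon asymptotic-convergence theorem, with a small holomorphic gauge trick to bring $\phi$ into the range of that theorem. First, since $u$ is smooth and $\bar\nabla_I$ is first-order elliptic with totally real boundary conditions, standard bootstrapping promotes any $W^{2,k}_{\lambda_- \lambda_+}$-solution to a smooth section. Next, choose $s_0$ so that $u(s,t)\in U$ for all $s\ge s_0$, where $U$ is the neighborhood of $x_+$ furnished by Assumption \ref{localgeomI}; in the trivialization $\alpha_U$, $I_t \equiv I$ is constant, $\nabla$ is the trivial connection, and the boundary Lagrangians become constant linear subspaces $G_0, G_1 \subset \bb{C}^k$. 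The equation thus reduces to the standard Cauchy--Riemann equation $\partial_s\phi + I\partial_t\phi = 0$ for $\phi:[s_0,\infty)\times[0,1]\to \bb{C}^k$ with boundary conditions $\phi(s,0)\in G_0$, $\phi(s,1)\in G_1$.

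Next, pick an integer $N$ with $N\pi \le \lambda_+ - \delta$, set $M := N\pi$, and consider $\tilde\phi(s,t) := e^{M(s+it)}\phi(s,t)$. Because $e^{M(s+it)}$ is holomorphic in $s+it$, $\tilde\phi$ again satisfies the Cauchy--Riemann equation; and because $e^{Ms}$ is positive real while $e^{iM} = \pm 1$ preserves the real Lagrangian $G_1$, the boundary conditions are unaffected. The inequality $M \le \lambda_+ - \delta$ ensures that $\tilde\phi$ lies in the \emph{un}weighted $W^{2,k}$ on the half-strip, so it has finite energy in the usual sense. The Robbin--Salamon asymptotic analysis \cite{RobbinSalamon01}, applied to $\tilde\phi$ and combined with the identity principle (Proposition \ref{uniquecontinuation}) to rule out $\tilde\phi \equiv 0$, then produces a positive eigenvalue $\mu > 0$ of $I\frac{d}{dt}$ (with the boundary conditions in $G_0, G_1$) and a unit eigensolution $\Psi(t)$ such that $e^{\mu s}\tilde\phi(s,t) \to \Psi(t)$ uniformly in $t$ and exponentially in all $s$-derivatives. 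Unwinding the gauge, the function $\psi(t) := e^{-iMt}\Psi(t)$ is a nonzero eigensolution of $I\frac{d}{dt}$ on $E_{x_+}$ with eigenvalue $\lambda := \mu + M$, and $e^{\lambda s}\phi(s,t) \to \psi(t)$ at the same rate.

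Finally, the lower bound $\lambda \ge \lambda_+$ is forced by the weighted Sobolev membership rather than the gauge trick: $\phi \in W^{2,k}_{\lambda_- \lambda_+}$ means $\int e^{2(\lambda_+ - \delta)s}|\phi(s,t)|^2\,ds\,dt < \infty$ over the positive end, and when combined with the asymptotic $\phi \sim C e^{-\lambda s}\psi(t)$ this forces $\lambda > \lambda_+ - \delta$; since by the choice of $\delta$ no eigenvalue of $I_{x_+}\frac{d}{dt}$ lies in the open interval $(\lambda_+-\delta,\lambda_+)$, we conclude $\lambda \ge \lambda_+$. Normalization by $\|\phi(s,\cdot)\|_H$ then converts the rate statement into the claimed convergence $\phi/\|\phi\|_H \to \psi/\|\psi\|_H$, and the negative end is handled by the mirror-image argument under $s \mapsto -s$ using the spectral gap at $\lambda_-$. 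The only substantive analytic input is the Robbin--Salamon theorem; the main delicate point to track is how the integer-$\pi$ gauge shift $M$ interacts with the Sobolev weight so that $\tilde\phi$ sits in unweighted $W^{2,k}$ and the resulting eigenvalue $\lambda$ threads through the spectral gap to land at or above $\lambda_+$.
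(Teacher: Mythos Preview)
Your proof is correct and follows essentially the same approach as the paper: reduce via the local trivialization of Assumption~\ref{localgeomI} to a constant-coefficient Cauchy--Riemann equation, multiply by the holomorphic gauge $e^{M(s+it)}$ with $M\in\pi\bb{Z}$ and $M\le\lambda_+-\delta$ so that the boundary conditions are preserved and the result has finite unweighted energy, then invoke Robbin--Salamon \cite{RobbinSalamon01} and undo the gauge. You have in fact filled in slightly more detail than the paper does, in particular the explicit deduction of $\lambda\ge\lambda_+$ from the spectral gap $(\lambda_+-\delta,\lambda_+)$ and the weighted-Sobolev integrability.
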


For fixed $\lambda_-, \lambda_+$, if we temporarily write $\mu_- = \lambda_{i_- - 1} < \lambda_{i_-} = \lambda_-$ for the next eigenvalue of $I \frac{d}{dt}$ at $x_-$ lower than $\lambda_-$, and similarly $\mu_+$ for the next eigenvalue of $I \frac{d}{dt}$ at $x_+$ higher than $\lambda_+$, observe that there are natural inclusions
\begin{equation*}W^{2,k}_{\mu_- \lambda_+}(u^*E) \subset W^{2,k}_{\lambda_- \lambda_+}(u^*E), \quad W^{2,k}_{\lambda_- \mu_+}(u^*E) \subset W^{2,k}_{\lambda_- \lambda_+}(u^*E).\end{equation*}
Then by the above argument, for a fixed $J_t$-holomorphic strip $u \in \cal{M}(x_-, x_+)$, the space of twisted solutions $(u,\phi)$ covering $u$ with asymptotics of type $\lambda_-, \lambda_+$ is the complement
\begin{equation*}\ker \bar{\nabla}_I |_{W^{2,k}_{\lambda_- \lambda_+}} \backslash \left(\ker \bar{\nabla}_I|_{W^{2,k}_{\mu_- \lambda_+}} \cup \ker \bar{\nabla}_I|_{W^{2,k}_{\lambda_- \mu_+}}\right).\end{equation*}
Consequently, the moduli space $\cal{M}_u(\bf{x}_-, \bf{x}_+)$ of twisted trajectories $(u, \phi)$ covering a fixed pseudoholomorphic strip $u$ with asymptotics $\lambda_-, \lambda_+$ is exactly the complement of two projective subspaces in some $\bb{R}P^{\ell}$:
\begin{equation*}\bb{R}P(\ker \bar{\nabla}_I|_{W^{2,k}_{\lambda_- \lambda_+}}) \backslash \left(\bb{R}P(\ker \bar{\nabla}_I|_{W^{2,k}_{\mu_- \lambda_+}}) \cup \bb{R}P(\ker \bar{\nabla}_I|_{W^{2,k}_{\lambda_- \mu_+}})\right).\end{equation*}

\subsection{Transversality.} We now turn to formulating an appropriate notion of regularity for the twisted equations. Note that we do not expect, for a fixed $u$, the operator $\bar{\nabla}_I$ to be surjective for all solutions $(u, \phi)$ of the equations. Rather, we ask for surjectivity of the combined linearization of \eqref{floereqn}, \eqref{twistedeqn} at $(u, \phi)$, which takes the following form:
\begin{equation}\label{linearizedeqn}\begin{pmatrix}D_u & B \\ 0 & \bar{\nabla}_I\end{pmatrix} : W^{2,k}(u^* TM) \oplus W^{2,k}_{\lambda_- \lambda_+}(u^*E) \to L^{2,k-1}(u^*TM) \oplus L^{2,k-1}_{\lambda_- \lambda_+}(u^*E)\end{equation}
where $D_u = (D \bar{\partial}_J)_u: W^{1,p}(u^* TM) \to L^p(u^*TM)$ is the usual linearization of the $\bar{\partial}_J$ operator along the solution $u$, and $B = B_{(u, \phi)} : W^{2,k}(u^* TM) \to L^{2,k-1}_{\lambda_- \lambda_+}(u^*E)$ is given by
\begin{equation}\label{Bmap}B_{(u, \phi)}\xi = F_{\nabla}(\xi, \partial_s u) \phi + I_t F_{\nabla}(\xi, \partial_t u) \phi + (\nabla_{\xi}I_t)\phi\end{equation}
where $F_{\nabla}$ is the curvature of $\nabla$ (note that strictly speaking, $\nabla$ is a connection on the pull-back of $\pi^* E$ to $M \times [0,1]$; in which case we should interpret the terms above as $(\xi, 0)$, $(\partial_s u, 0)$ and $(\partial_t u, 1)$ in the identification $T^*(M \times [0,1]) = \pi^* TM \oplus \bb{R}$).

By assumption, the $J_t$-holomorphic strip $u$ is regular, so $D_u$ is surjective. In particular, we are just asking for the surjectivity of
\begin{equation}B : \ker D_u \to \coker \bar{\nabla}_I.\end{equation}

\begin{definition}\label{regularCS}We say that the pair $I_t, \nabla$ is \emph{regular} for $(\bf{x}_-, \bf{x}_+)$ if the above is surjective for each finite energy solution $(u, \phi)$ with nonzero $\phi$ and these asymptotics. In this case, the moduli space $\cal{M}(\bf{x}_-, \bf{x}_+)$ is a smooth manifold of dimension
\begin{equation}\mu(u) + \specflow(u) + i(\lambda_-) - i(\lambda_+) - 1\end{equation}
where $\mu(u) = \dim \cal{M}(x_-, x_+) + 1$ is the Maslov index of $u$. If this holds for all $(\bf{x}_-, \bf{x}_+)$, we simply say that the pair $I_t, \nabla$ is regular.\end{definition}

\begin{proposition}Generic choices of complex structure $I_t$ and symplectic connection $\nabla$ are regular. Indeed, for a fixed choice of $I_t$, generic $\nabla$ are regular, and vice versa. Moreover, such $I_t, \nabla$ can be chosen while fixing them in a neighbourhood of $L_0 \cap L_1$ to satisfy Assumption \ref{localgeomI}.\end{proposition}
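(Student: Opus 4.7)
The approach is the standard Sard-Smale universal moduli space argument, adapted to the parametric family of twisted equations. Fix once and for all a base pair $(I_0, \nabla_0)$ satisfying Assumption \ref{localgeomI} on a small open neighbourhood $V$ of $L_0 \cap L_1$, and let $\cal{J}$ denote the separable Banach manifold of smooth pairs $(I_t, \nabla)$ agreeing with $(I_0, \nabla_0)$ on $V$, with perturbations controlled by a Floer-type $C^{\eps}$-norm. Form the universal parametrized moduli space
\[ \cal{M}^{univ}(\bf{x}_-, \bf{x}_+) = \{((I,\nabla), u, \phi) : (u,\phi) \text{ solves } \eqref{floereqn}, \eqref{twistedeqn} \text{ with asymptotics}, \phi \not\equiv 0\}/(\bb{R} \times \bb{R}^*). \]
The plan is to show $\cal{M}^{univ}$ is a smooth Banach manifold and that the forgetful projection $\pi : \cal{M}^{univ} \to \cal{J}$ is Fredholm, so that its regular values are exactly the regular pairs in the sense of Definition \ref{regularCS}. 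Sard-Smale then yields a comeager set of regular pairs.

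To see that $\cal{M}^{univ}$ is smooth, the key step is to verify that the full parametric linearization
\[ \cal{D} : (\xi, \psi, \delta I, \delta \nabla) \mapsto \bigl( D_u \xi,\ B_{(u,\phi)} \xi + \bar{\nabla}_I \psi + \Xi(\delta I, \delta \nabla) \bigr), \]
with $\Xi(\delta I, \delta \nabla) = (\delta I_t) \nabla_t \phi + I_t (\delta \nabla_t) \phi + (\delta \nabla_s) \phi$, is surjective at every point of $\cal{M}^{univ}$. The $TM$-component $D_u$ is already surjective because $J_t$ was chosen regular, so the task reduces to the $E$-component. By $L^2$-duality it suffices to show that any $\eta$ in the dual of $L^{2,k-1}_{\lambda_- \lambda_+}(u^*E)$ which pairs to zero against the image must vanish. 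Pairing against $\psi$-variations shows $\eta$ is a weak solution of the formal adjoint equation to $\bar{\nabla}_I$ in the weighted inner product, hence by elliptic regularity is smooth and satisfies unique continuation in the sense of Aronszajn. Pairing against $(\delta I, \delta \nabla)$ then gives
\[ \int_Z \langle \eta,\ \Xi(\delta I, \delta \nabla) \rangle \, ds\, dt = 0 \]
for every admissible perturbation supported in $W = Z \setminus u^{-1}(\bar V)$. A pointwise argument shows that at any $(s,t) \in W$ with $\phi(s,t) \neq 0$, the set of values $\{\Xi(\delta I, \delta \nabla)(s,t)\}$ fills all of $E_{u(s,t)}$, using the fact that for any $v \in E_{u(s,t)}$ there is an infinitesimal symplectic endomorphism $A$ with $A \phi(s,t) = v$. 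Combined with Proposition \ref{uniquecontinuation}, which guarantees that $\{\phi \neq 0\}$ is open and dense whenever $u$ is non-constant, this forces $\eta \equiv 0$ on $W$, and then unique continuation for $\eta$ propagates the vanishing to all of $Z$.

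For the refinement that generic $\nabla$ are regular with $I_t$ fixed, one simply restricts the perturbation space: variations $\delta \nabla$ alone produce $(\delta \nabla_s + I_t \delta \nabla_t) \phi$, and since $A \mapsto A \phi$ from infinitesimal symplectic endomorphisms to $E_{u(s,t)}$ is surjective whenever $\phi(s,t) \neq 0$, the pointwise span argument still applies. The dual case, fixing $\nabla$ and varying $I$, is analogous, using in addition that $\nabla_t \phi$ cannot vanish on an open set by the equation $\nabla_s \phi = - I_t \nabla_t \phi$ together with Proposition \ref{uniquecontinuation}. Since all admissible perturbations vanish on $V$, Assumption \ref{localgeomI} is preserved throughout. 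The main obstacle in this argument is the two unique continuation steps: one for $\phi$ to produce a dense open set on which $\phi$ is nonvanishing, and one for $\eta$ to propagate vanishing from that set to the whole strip. The constant-$u$ case, in which these sets are empty, must be handled separately; but there the twisted equations are linear with constant coefficients as described in Example \ref{constant_u_solutions}, $\bar{\nabla}_I$ is already surjective on each asymptotic eigenspace, and no perturbation is needed.
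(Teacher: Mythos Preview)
Your overall strategy---universal moduli space plus Sard--Smale---is the same as the paper's, and the reduction to showing that no nonzero $\eta$ in the cokernel can annihilate all perturbation directions is correct. The gap is in the ``pointwise span argument'' that follows. The perturbations $(\delta I, \delta \nabla)$ are functions on $M \times [0,1]$, not on the strip $Z$: the value $\Xi(\delta I, \delta \nabla)(s,t)$ depends on $(\delta I, \delta \nabla)$ only through its value at $(u(s,t), t) \in M \times [0,1]$. So although at a fixed $(s_0,t_0)$ with $\phi(s_0,t_0) \neq 0$ the possible values of $\Xi(s_0,t_0)$ do span $E_{u(s_0,t_0)}$, this does \emph{not} force $\eta(s_0,t_0) = 0$ from the vanishing of $\int_Z \langle \eta, \Xi \rangle$. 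To draw that conclusion you would need a perturbation whose $\Xi$ is supported near $(s_0,t_0)$ on $Z$; but a bump in $M \times [0,1]$ centered at $(u(s_0,t_0), t_0)$ pulls back to something nonzero at \emph{every} $(s,t_0)$ with $u(s,t_0) = u(s_0,t_0)$, and those contributions to the integral cannot be disentangled.

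The paper closes this gap with the Floer--Hofer--Salamon notion of a \emph{regular point} of $u$: an interior $(s_0,t_0)$ with $\partial_s u(s_0,t_0) \neq 0$, $u(s_0,t_0) \neq x_{\pm}$, and $u(s_0,t_0) \notin u((\bb{R}\setminus\{s_0\}) \times \{t_0\})$. For nonconstant $u$ these form an open dense subset of $Z$, and the third condition is exactly the injectivity that makes the $M \times [0,1]$ bump pull back to something supported near $(s_0,t_0)$ on $Z$. One then picks a regular point outside $u^{-1}(\bar V)$ at which also $\eta \neq 0$ and $\nabla_t \phi \neq 0$ (each of these holds on an open dense set by unique continuation, so their intersection is nonempty), and derives the contradiction directly from a single bump; the second unique continuation step, propagating $\eta \equiv 0$ from $W$ to all of $Z$, is then unnecessary. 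For the $\nabla$-only refinement the paper uses the term $A_t(\partial_s u)\phi$ with $A_t \in \Omega^1(M;\frak{sp}(E))$, and here the condition $\partial_s u(s_0,t_0) \neq 0$---also part of the regular-point package---is what guarantees surjectivity onto $E_{u(s_0,t_0)}$, not merely $\phi(s_0,t_0) \neq 0$ as you wrote.
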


\begin{proof}Observe first that in the case of $x_- = x_+$ and a constant $u$, we have a complete characterization \eqref{constantsolutions} of the solutions $\phi$, and it is not difficult to check these are cut out transversely. So assume that $u$ is nonconstant.

We follow the strategy of Floer-Hofer-Salamon \cite{FloerHoferSalamon95}. Consider the Banach manifold $\scr{A}$ of pairs $(I, \nabla)$ of a class $C^{\ell}$ time-dependent compatible complex structure $I = \{I_t\}$ and a class $C^{\ell}$ time-dependent symplectic connection $\nabla$ on $E$, for some $\ell > k$. Recall we have Banach vector bundles $\cal{W}^{2,k}_{\lambda_- \lambda_+}(E)$, $\cal{L}^{2,k-1}_{\lambda_- \lambda_+}(E)$ over the Banach manifold $\cal{W}^{2,k}(x_-, x_+)$ of possible strips $u$. Consider the map
\begin{align}\label{perturbedmap}\bar{\nabla} : \cal{W}^{2,k}_{\lambda_- \lambda_+}(E) \times \scr{A} &\to \cal{L}^{2,k-1}(x_-, x_+) \oplus \cal{L}^{2,k-1}_{\lambda_- \lambda_+}(E)\\
(u, \phi, I_t, \nabla) &\mapsto (\bar{\partial}_J u, \bar{\nabla}_{I} \phi).\end{align}
It suffices to show that at each zero $(u, \phi, I, \nabla)$ of the above map where $\phi \not= 0$, the linearization of \eqref{perturbedmap} is surjective: applying the implicit function theorem, the space of zeroes $(u, \phi, I, \nabla)$ is then a Banach manifold, and we can apply Sard-Smale to the projection to $\scr{A}$ to obtain that the $(I, \nabla)$ which make \eqref{linearizedeqn} surjective are of second category in $\scr{A}$, and are in particular dense.

The tangent space $T_{(I, \nabla)}\scr{A}$ is precisely the space of $C^{\ell}$ pairs $(K_t, A_t)$, where $K_t$ is a time-dependent section of the endomorphism bundle $\End(E)$, and $A_t$ is a time-dependent section of $\Omega^1(M; \End(E))$, which satisfy
\begin{equation}\label{Ktequations} I_t K_t + K_t I_t = 0, \quad \omega_E(K_t v_1, v_2) + \omega_E(v_1, K_t v_2) = 0\text{ for sections } v_1, v_2 \in E\end{equation}
at each time $t$, where $\omega_E$ is the symplectic form of $E$, and
\begin{equation}\omega_E(A_t(\xi \tensor v_1), v_2) + \omega_E(v_1, A_t(\xi \tensor v_2)) = 0\end{equation}
for all sections $v_1, v_2$ of $E$ and vector fields $\xi$ on $M$.

Since we have already chosen a regular $J_t$, it suffices to check that at each $(u, \phi, I, \nabla)$ with $\phi \not= 0$ the following map is surjective:
\begin{equation}\label{linearperturbedmap}\ker D_u \oplus W^{2,k}_{\lambda_- \lambda_+}(u^*E) \oplus T_{(I,\nabla)}\scr{A} \to L^{2,k-1}_{\lambda_- \lambda_+}(u^*E)\end{equation}
\begin{equation*}(\xi, \theta, K_t, A_t) \mapsto B_{(u, \phi)}\xi + \bar{\nabla}_{I} \theta + K_t \nabla_t \phi + A_t(\partial_s u, \phi) + I_t A_t(\partial_t u, \phi). \end{equation*}
Suppose the above map is not surjective. As its image contains a finite-codimension space $\im \bar{\nabla}_I$, it is closed, so there exists nonzero $\eta \in L^{2,k-1}_{\lambda_- \lambda_+}(u^*E)$ such that the pairing
\begin{equation*}\langle B_{(u, \phi)}\xi + \bar{\nabla}_{I_t} \theta + K_t \nabla_t \phi + A_t(\partial_s u, \phi) + I_t A_t(\partial_t u, \phi), \eta \rangle = 0 \end{equation*}
for each $(\xi, \theta, K_t, A_t)$. Setting $\xi, K_t, A_t$ to all be zero, this implies $\langle \bar{\nabla}_{I} \theta, \eta \rangle = 0$ for each $\theta$, and so $\bar{\nabla}_I^* \eta = 0$ where $\bar{\nabla}_I^*$ is the formal adjoint. This is again an elliptic differential operator with $C^{\ell}$ coefficients; by elliptic regularity $\eta$ must also be $C^{\ell}$ and in particular continuous. Moreover, the set of points where $\eta \not= 0$ must also be dense.

At this point, we recall from \cite{FloerHoferSalamon95} that a point $(s, t)$ in the interior of the strip $Z$ is called regular for the $J_t$-holomorphic strip $u$ if
\begin{equation}\partial_s u(s, t) \not= 0, \qquad u(s, t) \not= x_{\pm}, \qquad u(s,t) \notin u(\bb{R}-\{s\}, t).\end{equation}
The set $R(u)$ of regular points is open and dense in $Z$. In particular, choose a regular point $(s_0,t_0)$ where $\nabla_t \phi \not= 0$ and $\eta \not= 0$. Writing $p = u(s_0,t_0)$, there exists $K \in \End(E_p)$ satisfying \eqref{Ktequations} such that $\langle K \nabla_t \phi(s_0,t_0) , \eta(s_0,t_0) \rangle > 0$. By choosing bump functions on small enough open sets around $t_0 \in (0,1)$ and $p \in M$, we can construct a $K_t$ which is zero except in a small neighbourhood of $(t_0, p) \in [0,1] \times M$. In particular, the integral
\begin{equation*}\int_Z \langle K_t \nabla_t \phi(s,t), \eta(s,t)\rangle \end{equation*}
can be chosen to be necessarily positive: which contradicts the assumption that $\eta$ annihilates the image of \eqref{linearperturbedmap}. On the other hand, we could have also chosen $A \in T_p^* M \tensor \frak{sp}(E_p)$ so that $A(\partial_t u, \phi) = 0$ and $\langle A(\partial_s u(s_0, t_0), \phi(s_0, t_0)), \eta(s_0, t_0)\rangle > 0$, and we would obtain the same contradiction.

This proves that generic $(I, \nabla)$ of class $C^{\ell}$ are regular for $(\bf{x}_-, \bf{x}_+)$, and indeed this can be achieved by holding either one of $I, \nabla$ fixed. We can then follow the standard argument of Taubes, as explained in \cite{McDuffSalamon94}, to upgrade this to smooth $(I, \nabla)$. By taking the intersection of the second category sets of regular $(I, \nabla)$ for each $(\bf{x}_-, \bf{x}_+)$, we see that generic data are regular for all the relevant moduli spaces.

Finally, note that in the proof, we only utilized perturbations of $I, \nabla$ supported in a small neighbourhood of a regular point $(s_0, t_0)$, and we could have further assumed that $u(s_0, t_0)$ was not in fixed sufficiently small neighbourhoods of the points of $L_0 \cap L_1$. In particular, we can choose regular $I, \nabla$ to satisfy Assumption \ref{localgeomI} near $L_0 \cap L_1$.\end{proof}

\begin{remark}Observe that for a solution $(u, \phi)$ with asymptotics $\bf{x}_-, \bf{x}_+$ with the expected dimension of $\cal{M}(\bf{x}_-, \bf{x}_+)$ being zero, the index of the operator $\bar{\nabla}_I$ with $u$ held constant is $2 - \mu(u)$: in particular, nonzero solutions to the equation $\bar{\nabla}_I \phi = 0$ are never cut out transversely for fixed $u$ with $\mu(u) \ge 2$. At first sight, this may seem to be at odds with the above proof, where perturbations of $u$ do not make an explicit appearance. However, for fixed $u$, for generic $I_t$ we do not expect there to be any nonzero solutions $\phi$ at all: these solutions only appear when $u$ is taken over the entire family $\cal{M}(x_-, x_+)$, as in the above. \end{remark}

\subsection{More on the $\tau$-model.}\label{sec:tau_model_I} Before we continue to prove compactness and gluing results for the moduli spaces of twisted trajectories, we will provide an alternative Banach space set-up using the $\tau$-model. This will be particularly important in proving gluing results later, to avoid issues arising from gluing elements of Sobolev spaces with large weights (gluing results in the literature that do use exponential weights, such as in \cite{Abouzaid12}, use at worst weights of the form $e^{\delta |s|}$ where $s$ is small).

For this section, fix some $\bf{x}_{\pm} = (x_{\pm}, \lambda_{\pm})$, and choose eigensolutions $\psi_-(t), \psi_+(t)$ for $I_{x_{\pm}}\frac{d}{dt}$ with these eigenvalues, such that $||\psi_{\pm}||_H = 1$. Given any $u \in \cal{Z}^{2,k}(x_-, x_+)$, we can consider the Banach manifold
\[\cal{W}^{2,k}_{\psi_- \psi_+}(u^* E)\]
of $L^{2,k}_{loc}$ sections $\varphi$ of $u^*E$, such that $\varphi(\cdot, i) \in F_i$ for each $i = 0,1$, and so that for some large $R > 0$, using the trivializations of $E$ near $x_-, x_+$, the differences
\[\psi_-(t) - \varphi(s,t), \qquad \psi_+ - \varphi(s,t)\]
are in $W^{2,k}((-\infty, -R]\times[0,1])$ and $W^{2,k}([R, \infty) \times [0,1])$ respectively. This admits a smooth map
\[\cal{W}^{2,k}_{\psi_- \psi_+}(u^* E) \to 1 + L^{2,k}(\bb{R}, \bb{R}), \qquad \varphi \mapsto ||\varphi||_H\]
and the constant function $1$ is a regular value. In particular, we have a Banach manifold
\begin{equation}\cal{S}^{2,k}_{\psi_- \psi_+}(u^* E) = \{\varphi \in \cal{W}^{2,k}_{\psi_- \psi_+}(u^*E) : ||\varphi||_H = 1\};\end{equation}
the tangent space to $\cal{S}^{2,k}_{\psi_- \psi_+}(u^* E)$ at $\varphi$ is
\[T_{\varphi} \cal{S}^{2,k}_{\psi_- \psi_+}(u^* E) = \{\vartheta \in W^{2,k}(u^* E): \langle \varphi, \vartheta \rangle_H = 0\}.\]
These fit together to form a fibration of Banach manifolds
\[\cal{S}^{2,k}_{\psi_- \psi_+} \to \cal{Z}^{2,k}(x_-, x_+).\]

For $(u, \varphi) \in \cal{S}^{2,k}_{\psi_- \psi_+}$, consider the Banach space
\begin{equation}V^{2,k-1}_{(u, \varphi)} = \{\sigma \in L^{2,k-1}(u^*E): \langle \varphi, \sigma \rangle = 0\}.\end{equation}
We wrute $\cal{V}^{2,k-1}_u$ for the corresponding Banach vector bundle over $\cal{S}^{2,k}_{\psi_- \psi_+}(u^*E)$, and $\cal{V}^{2,k}$ for the Banach bundle this forms over all $\cal{S}^{2,k}_{\psi_- \psi_+}$. Observe that for any $\varphi \in \cal{S}^{2,k}_{\psi_- \psi_+}(u^* E)$, we have
\[ \langle \varphi, \bar{\nabla}_I \varphi - \Lambda(\varphi)\varphi \rangle = \langle \varphi, \nabla_s \varphi \rangle + \langle \varphi, I_t \nabla_t \varphi \rangle - \Lambda(\varphi) = 0.\]
We then have a (nonlinear) map
\begin{equation}\cal{F}_u : \cal{S}^{2,k}_{\psi_- \psi_+}(u^* E) \to \cal{V}^{2,k-1}_u; \qquad \varphi \mapsto \bar{\nabla}_I \varphi - \Lambda(\varphi)\varphi\end{equation}
and by ranging over $u \in \cal{Z}^{2,k}(x_-, x_+)$ we have a map
\begin{equation}\cal{F} : \cal{S}^{2,k}_{\psi_- \psi_+} \to \cal{L}^{2,k-1}(TM) \oplus \cal{V}^{2,k-1}; \qquad (u, \varphi) \mapsto (\bar{\partial}_J u, \cal{F}_u \varphi).\end{equation}

By elliptic regularity, any solution $\varphi$ of $\cal{F}_u(\varphi) = 0$  is smooth. Moreover at either infinite end of the strip $Z$, by additionally taking a nonzero solution the ordinary differential equation
\[\frac{dr}{ds} + \Lambda(\varphi) r = 0,\]
we can identify $\varphi$ with a solution $\phi = r \varphi$ of the $\bar{\partial}_I$-equation in $E_{x_{\pm}}$ up to scalar multiplication. Since $\Lambda(\varphi) \to \lambda_{\pm}$ as $s \to \pm \infty$, we obtain some asymptotic control on $r(s)$ as $s \to \pm \infty$: we can conclude that for any $\delta' > 0$, we must have $e^{(\lambda_- + \delta')s}r(s) \to 0$ with all derivatives as $s \to -\infty$, and likewise $e^{(\lambda_+ - \delta)s}r(s) \to 0$ with all derivatives as $s \to +\infty$.

In particular, passing to a $\sigma$-model solution $\phi(s,t) = r(s) \varphi(s,t)$ of $\bar{\nabla}_I \phi = 0$, it must in fact lie in the weighted Sobolev space
\[\phi(s,t) \in W^{2,k}_{\lambda_- \lambda_+}(u^*E)\]
and in particular we can then apply the exponential decay results of Proposition \ref{twisted_solution_asymptotics}. We obtain

\begin{proposition}\label{tau_model_asymptotics}A solution $\varphi \in \cal{S}^{2,k}_{\psi_- \psi_+}(u^* E)$ of $\cal{F}_u(\varphi) = 0$ is smooth and converges
\[\varphi(s,t) \to \psi_{\pm}(t) \qquad \text{ as } s \to \pm \infty\]
in all derivatives exponentially in $s$ and uniformly in $t$.\end{proposition}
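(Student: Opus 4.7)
The plan is to leverage the discussion immediately preceding the proposition, which already sets up a passage from the $\tau$-model to the $\sigma$-model. The proof naturally splits into four steps: smoothness, passage to the $\sigma$-model, identification of the weighted Sobolev space, and application of Proposition \ref{twisted_solution_asymptotics}, with a final identification of the limiting eigenvector.

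First, since $\varphi$ satisfies the first-order elliptic system $\bar{\nabla}_I \varphi = \Lambda(\varphi) \varphi$ with a smooth coefficient function (note $\Lambda(\varphi) \in \bb{R}$ depends only on $s$), standard elliptic bootstrapping on the strip $Z$ (together with the smoothness of $F_0, F_1$ as boundary conditions) shows $\varphi$ is smooth. Next, I would solve the linear ODE $r'(s) + \Lambda(\varphi)(s) r(s) = 0$ with, say, $r(0) = 1$, giving an everywhere-positive $r$, and define $\phi(s,t) = r(s) \varphi(s,t)$. A direct Leibniz computation using the $\tau$-model equation shows $\bar{\nabla}_I \phi = 0$.

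The key quantitative step is to verify that $\phi \in W^{2,k}_{\lambda_- \lambda_+}(u^*E)$, i.e.\ that $e^{w(s)} \phi(s,t) \in W^{2,k}$ with $w$ as in \eqref{weights}. The normalization $\|\varphi(s)\|_H = 1$ together with $\psi_\pm - \varphi \in W^{2,k}$ on each end yields, via Sobolev embedding ($k>2$), that $\varphi$ and its derivatives up to order $k$ are uniformly bounded. For the scalar $r$, the identity $r(s) = \exp(-\int_0^s \Lambda(\varphi)\,d\sigma)$ combined with $\Lambda(\varphi)(s) \to \lambda_\pm$ as $s\to\pm\infty$ yields, for any $\delta' \in (0, \delta)$, bounds $r(s) \le C_{\delta'} e^{-(\lambda_+ - \delta')s}$ for $s \gg 0$ and analogously at $-\infty$; differentiating the ODE gives matching bounds for $r', r'', \dots$. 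Multiplying by $e^{w(s)} = e^{(\lambda_+ - \delta)s}$ (for $s\gg 0$) produces a factor of $e^{(\delta'-\delta)s}$, which is integrable. Together with the $W^{2,k}$-control of $\varphi$, this places $e^{w(s)}\phi$ in $W^{2,k}$.

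Having established $\phi \in W^{2,k}_{\lambda_- \lambda_+}(u^*E)$ with $\bar{\nabla}_I \phi = 0$, Proposition \ref{twisted_solution_asymptotics} gives a unit eigensolution $\psi'_\pm$ of some eigenvalue $\mu_\pm$ with $\mu_- \le \lambda_-$ and $\mu_+ \ge \lambda_+$ such that $\phi(s,t)/\|\phi(s)\|_H \to \psi'_\pm(t)$ exponentially in $s$ and uniformly in $t$, to all derivatives. But $\phi/\|\phi\|_H = \varphi$ since $r>0$ and $\|\varphi\|_H = 1$, so this immediately gives exponential convergence $\varphi(s,t) \to \psi'_\pm(t)$. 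To identify $\psi'_\pm$ with the prescribed $\psi_\pm$, note that the asymptotic condition $\psi_\pm - \varphi \in W^{2,k}$ on each end implies $\varphi(s,\cdot) \to \psi_\pm$ in $L^2$ as $s \to \pm\infty$ (by Sobolev embedding on the end); uniqueness of the limit forces $\psi'_\pm = \psi_\pm$. The main obstacle, which I expect to be the most delicate bookkeeping step, is threading the exponents $\lambda_\pm, \delta, \delta'$ so that $r$ decays fast enough to land $\phi$ in the weighted space while slowly enough to retain nontrivial asymptotics in Proposition \ref{twisted_solution_asymptotics}; everything else is elliptic regularity and ODE comparison.
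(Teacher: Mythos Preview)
Your proposal is correct and follows essentially the same route as the paper: elliptic regularity for smoothness, solve the scalar ODE for $r$, use $\Lambda(\varphi)\to\lambda_\pm$ to place $\phi=r\varphi$ in $W^{2,k}_{\lambda_-\lambda_+}$, then invoke Proposition~\ref{twisted_solution_asymptotics}. You are in fact slightly more careful than the paper on one point: you explicitly identify the limiting eigenvector produced by Proposition~\ref{twisted_solution_asymptotics} with the prescribed $\psi_\pm$ via the $W^{2,k}$ membership condition in $\cal{S}^{2,k}_{\psi_-\psi_+}$, which the paper leaves implicit.
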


In particular, we can identify, modulo translation, the zero locus of $\cal{F}$ with the components of the moduli space $\cal{M}(\bf{x}_-, \bf{x}_+)$ consisting of those twisted trajectories (in the $\tau$-model) $(u, \varphi)$ such that $\varphi \to \psi_-, \psi_+$ as $s \to \pm \infty$ (for the whole moduli space $\cal{M}(\bf{x}_-, \bf{x}_+)$, we have to also consider the other choices of unit eigensolutions $-\psi_-$ and $-\psi_+$).

For a fixed solution $(u, \varphi)$, let us now consider the index theory of the linearized operator $D (\cal{F}_u)_{\varphi}$. This is given by the formula
\[\vartheta \mapsto \Pi_V \bar{\nabla}_I \vartheta - \Lambda(\varphi) \vartheta\]
where $\Pi_V(\sigma) = \sigma - \langle \varphi, \sigma \rangle_H \varphi$ is the projection $L^{2,k-1}(u^*E) \to V^{2,k-1}_{(u, \varphi)}$. 

We will want to compare this to the operator $\bar{\nabla}_I$ acting on the Sobolev with the ``large'' weights $e^{(\lambda_- + \delta)s}$ and $e^{(\lambda_+ - \delta)s}$ of the previous sections. To do this, we observe that $D (\cal{F}_u)_{\varphi}$ extends to an operator on certain weighted spaces
\[D (\cal{F}_u)^{\delta}_{\varphi}: T_{\varphi} \cal{S}^{2,k,\delta}_{\psi_- \psi_+}(u^* E) \to \cal{V}_{(u, \varphi)}^{2,k-1, \delta}\]
where these are just the earlier spaces but with a small exponential weight $e^{- \delta |s|}$:
\[T_{\varphi} \cal{S}^{2,k,\delta}_{\psi_- \psi_+}(u^* E) = e^{\delta |s|} T_{\varphi} \cal{S}^{2,k}_{\psi_- \psi_+}(u^* E), \qquad \cal{V}_{(u, \varphi)}^{2,k-1, \delta}= e^{\delta |s|} \cal{V}_{(u, \varphi)}^{2,k-1}.\]

Now, choose a nonzero solution $r(s)$ to $\frac{d}{ds} r + \Lambda(\varphi) r = 0$; we must have $r(s) \sim C_{\pm} e^{-\lambda_{\pm}s}$ as $s \to \pm \infty$. Now, take a solution $\vartheta$ of the equation
\[\Pi_V \bar{\nabla}_I \vartheta - \Lambda(\varphi) \vartheta = 0\]
where we allow either $\vartheta \in T_{\varphi} \cal{S}^{2,k}_{\psi_- \psi_+}(u^* E)$ or its weighted analogue $T_{\varphi} \cal{S}^{2,k,\delta}_{\psi_- \psi_+}(u^* E)$. We can then transform $\vartheta$ into a solution $\theta$ of $\bar{\nabla}_I \theta = 0$ as follows: first, find a solution $\rho(s)$ to
\[\frac{d}{ds} \rho + \Lambda(\varphi)\rho + r(s) \langle \varphi, \bar{\nabla}_I \vartheta \rangle_H = 0\]
and then set
\[\theta(s,t) = \rho(s) \varphi(s,t) + r(s) \vartheta(s,t).\]
In particular, the asymptotic behavior of $r(s)$ implies that in fact we must have $\theta \in W^{2,k}_{\lambda_- \lambda_+}$; in particular since $\bar{\nabla}_I \theta = 0$ we see that $\theta$ is smooth, and has asymptotics $\theta \sim C'_{\pm} e^{-\lambda_{\pm} s}\psi_{\pm}(t)$ as $s \to \pm \infty$.

Unwinding this to $\vartheta = \frac{1}{r}\left(\theta - \langle \varphi, \theta \rangle_H \theta\right)$, we see that $\vartheta$ is not only smooth but must also decay exponentially to zero as $s \to \pm \infty$ (more precisely, we must have $e^{\delta' s} \vartheta(s,t) \to 0$ as $s \to +\infty$ for any $\delta' > 0$ smaller than the gap between $\lambda_+$ and the next highest eigenvalue, and similarly at the other limit).

The upshot is that the kernels of $D(\cal{F}_u)_{\varphi}$ and its analogue on weighted spaces $D(\cal{F}_u)_{\varphi}^{\delta}$ coincide, and consist of fast decaying smooth $\vartheta$. Pushing this further, we have

\begin{proposition}\label{tau_index}$D(\cal{F}_u)_{\varphi}$ and $D(\cal{F}_u)^{\delta}_{\varphi}$ are Fredholm, each of index
\[\ind(D(\cal{F}_u)_{\varphi}) = \ind(D(\cal{F}^{\delta}_u)_{\varphi}) = \specflow(u) + i(\lambda_-) - i(\lambda_+).\]\end{proposition}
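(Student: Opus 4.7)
My plan is to deduce the index calculation by identifying $D(\cal{F}_u)^\delta_\varphi$ with a diagonal block of the already-studied $\sigma$-model operator $\bar{\nabla}_I$, whose index on the weighted space is given by Proposition \ref{sigma_model_index}. The bridge is the ``polar decomposition'' $\theta = \rho\varphi + r\vartheta$ appearing in the preceding discussion: pick once and for all a nonzero solution $r\in C^\infty(\bb{R})$ of $r' + \Lambda(\varphi)r = 0$ (so $r \sim C_\pm e^{-\lambda_\pm s}$), let $\rho$ be a scalar function, and require $\vartheta$ to be pointwise $H$-orthogonal to $\varphi$.

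The first step is to check that
\[ \Psi : W^{2,k}_{\lambda_- \lambda_+}(\bb{R}, \bb{R}) \oplus T_\varphi \cal{S}^{2,k,\delta}_{\psi_- \psi_+}(u^*E) \to W^{2,k}_{\lambda_- \lambda_+}(u^*E), \qquad (\rho, \vartheta) \mapsto \rho\varphi + r\vartheta \]
is a bounded linear isomorphism, with inverse $\theta \mapsto \bigl(\langle \varphi, \theta\rangle_H,\, r^{-1}(\theta - \langle \varphi, \theta\rangle_H \varphi)\bigr)$. The compatibility of weights is the crucial point: $\theta$ decays like $e^{-(\lambda_\pm - \delta)s}$, the scalar $\rho = \langle\varphi,\theta\rangle_H$ inherits the same decay (since $\varphi$ is asymptotically $\psi_\pm$), and dividing by $r \sim e^{-\lambda_\pm s}$ produces $\vartheta$ with at worst $e^{\delta|s|}$ growth, which is precisely the defining condition for the target weighted space. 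The same formula gives an isomorphism on the codomain $L^{2,k-1}_{\lambda_-\lambda_+}(u^*E) \cong L^{2,k-1}_{\lambda_-\lambda_+}(\bb{R}, \bb{R}) \oplus \cal{V}^{2,k-1,\delta}_{(u,\varphi)}$.

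A direct computation using $\bar{\nabla}_I \varphi = \Lambda(\varphi)\varphi$ and $r' = -\Lambda r$ then shows that $\Psi^{-1} \circ \bar{\nabla}_I \circ \Psi$ is block-triangular:
\[ (\rho, \vartheta) \longmapsto \bigl(\rho' + \Lambda\rho + r\langle \varphi, \bar{\nabla}_I \vartheta\rangle_H,\; D(\cal{F}_u)^\delta_\varphi \vartheta\bigr). \]
The scalar diagonal block $L := d/ds + \Lambda$ on $W^{2,k}_{\lambda_-\lambda_+}(\bb{R},\bb{R})$ conjugates via $e^w$ to an unweighted operator with asymptotic forms $d/ds + \delta$ at $+\infty$ and $d/ds - \delta$ at $-\infty$; its kernel is one-dimensional, spanned by $\exp(-\int\Lambda)$, while the adjoint kernel grows exponentially at both ends, so $\ind(L) = 1$. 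By the additivity of the Fredholm index under block-triangular decomposition, and Proposition \ref{sigma_model_index},
\[ \ind\bigl(D(\cal{F}_u)^\delta_\varphi\bigr) = \ind(\bar{\nabla}_I) - \ind(L) = \specflow(u) + i(\lambda_-) - i(\lambda_+). \]
For the unweighted operator $D(\cal{F}_u)_\varphi$, the exponential-decay bootstrap already carried out for Proposition \ref{tau_model_asymptotics}, applied both to the operator itself and to its formal $L^2$ adjoint, shows that all kernel and cokernel elements lie in the more restrictive weighted space, so the two operators have identical kernel and cokernel. The main technical obstacle is the first step: checking cleanly that $\Psi$ is a \emph{bounded} isomorphism of Banach spaces up to $k$ derivatives, which reduces to verifying that multiplication by $\varphi$ and by $r$, and the pointwise projection onto $\varphi^\perp$, each preserve the appropriate weighted Sobolev norms. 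Given the smoothness of $\varphi$, its known asymptotics, and the chosen gap parameter $\delta$, this is routine but requires careful bookkeeping with the weights.
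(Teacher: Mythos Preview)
Your proposal is correct and follows essentially the same route as the paper: the same polar decomposition $\theta = \rho\varphi + r\vartheta$, the same block-triangular form for $\bar{\nabla}_I$ under the isomorphism $\Psi$, and the same scalar block $\rho \mapsto \rho' + \Lambda\rho$ of index one. The only minor difference is that the paper first establishes Fredholmness of both $D(\cal{F}_u)_\varphi$ and $D(\cal{F}_u)^\delta_\varphi$ by checking invertibility of the asymptotic model operators $F_{\psi_\pm}$, $F_{\psi_\pm}^\delta$ (and deduces equality of indices from this for small $\delta$), whereas you get Fredholmness of the weighted operator directly from the block decomposition and then argue kernel/cokernel coincidence via exponential decay for the unweighted one; both are fine.
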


\begin{proof}To see that the operators are Fredholm and of equal index, we consider the linearized operators at the constant solutions $(u, \varphi = (x_{\pm}, \psi_{\pm})$. Explicitly, consider the spaces
\begin{align*}T_{\psi_{\pm}}\cal{S}^{2,k}(E_{x_{\pm}}) &= \{\vartheta \in L^{2,k}(Z, E_{x_{\pm}}) : \vartheta(\cdot, i) \in F_i|_{x_{\pm}} \text{ for } i = 0,1; \text{ and } \langle \psi_{\pm}, \vartheta \rangle_H = 0\},\\
V^{2,k-1}_{\psi_{\pm}}(E_{x_{\pm}}) &= \{\sigma \in L^{2,k-1}(Z, E_{x_{\pm}}): \langle \psi_{\pm}, \sigma \rangle_H = 1\}\end{align*}
and the analogous weighted spaces $T_{\psi_{\pm}}\cal{S}^{2,k, \delta}(E_{x_{\pm}}), V^{2,k-1, \delta}_{\psi_{\pm}}(E_{x_{\pm}})$. Then take the operators
\[F_{\psi_{\pm}} : T_{\psi_{\pm}}\cal{S}^{2,k}(E_{x_{\pm}}) \to V^{2,k-1}_{\psi_{\pm}}(E_{x_{\pm}}), \quad F_{\psi_{\pm}}^{\delta} : T_{\psi_{\pm}}\cal{S}^{2,k, \delta}(E_{x_{\pm}}) \to V^{2,k-1, \delta}_{\psi_{\pm}}(E_{x_{\pm}})\]
each defined by
\[\vartheta \mapsto \frac{d}{ds} \vartheta + I_{x_{\pm}} \frac{d}{dt} \vartheta - \lambda_{\pm}\vartheta\]
(note there is no need to project to $V$ here, since $\langle \psi_{\pm}, \bar{\nabla}_I \vartheta \rangle_H = 0$ if $\langle \psi_{\pm}, \vartheta \rangle_H = 0$, since $\psi_{\pm}$ is an eigenvector of $I_{x_{\pm}}\frac{d}{dt}$).
The operators $F_{\psi_{\pm}}, F_{\psi_{\pm}}^{\delta}$ are then invertible, from which we deduce that $D(\cal{F}_u)_{\varphi}$, $D(\cal{F}_u)^{\delta}_{\varphi}$ are Fredholm. Moreover, they must be of equal index for sufficiently small $\delta$.

To compute the index, we could do a similar excision argument as before to compare the index to that on solutions with constant $u$. However we can also directly compare $D(\cal{F}_u)^{\delta}_{\varphi}$ to the operator $\bar{\nabla}_I$ in the $\sigma$-model; ultimately this comparison is why we introduced these $\tau$-model exponentially weighted spaces in the first place.

Indeed, choose a nonzero solution $r(s)$ of $\frac{d}{ds}r + \Lambda(\varphi)r = 0$, and a weight function $w : \bb{R} \to \bb{R}$ such that $w(s) = (\lambda_- + \delta)s$ for $s \ll 0$ and $w(s) = (\lambda_+ - \delta)s$ for $s \gg 0$. Consider the space of $L^{2,k}$-real functions
\[L^{2,k}_{\lambda_- \lambda_+}(\bb{R}, \bb{R}) = e^{-w(s)}L^{2,k}(\bb{R}, \bb{R})\]
with weight function $e^{-w(s)}$. There is then an isomorphism of Banach spaces
\begin{align*}L^{2,k}_{\lambda_- \lambda_+}(\bb{R}, \bb{R}) \oplus T_{\varphi} \cal{S}^{2,k,\delta}_{\psi_- \psi_+}(u^*E) &\xrightarrow{\sim} W^{2,k}_{\lambda_- \lambda_+}(u^* E) \\
(\rho, \vartheta) &\mapsto \rho(s) \varphi + r(s) \vartheta\end{align*}
where $r(s)$ is a nonzero solution to $\frac{dr}{ds} + \Lambda(\varphi)r(s) = 0$, and likewise there is an isomorphism
\[L^{2,k-1}_{\lambda_- \lambda_+}(\bb{R},\bb{R}) \oplus V^{2,k-1, \delta}_{(u, \varphi)} \cong L^{2,k-1}_{\lambda_- \lambda_+}(u^*E).\]
According to these isomorphisms, the map $\bar{\nabla}_I: W^{2,k}_{\lambda_- \lambda_+} (u^*E) \to L^{2,k-1}_{\lambda_- \lambda_+}(u^*E)$ is then given by
\[(\rho, \vartheta) \mapsto \left(\frac{d \rho}{ds} + \Lambda(\varphi)\rho + r(s)\langle \varphi, \bar{\nabla}_I \vartheta\rangle, \quad \Pi_V \bar{\nabla}_I \vartheta - \Lambda(\varphi)\vartheta\right).\]
Now, the map
\[L^{2,k}_{\lambda_+ \lambda_-}(\bb{R}, \bb{R}) \to L^{2,k-1}_{\lambda_- \lambda_+}(\bb{R}, \bb{R}), \qquad \rho \mapsto \frac{d \rho}{ds} + \Lambda(\varphi)\rho\]
is Fredholm of index one (indeed, it is surjective, with one-dimensional kernel spanned by $r(s)$); in particular we deduce that
\[\ind\left(D(\cal{F}_u)^{\delta}_{\varphi}\right) + 1 = \ind \left(\bar{\nabla}_I : W^{2,k}_{\lambda_- \lambda_+} (u^*E) \to L^{2,k-1}_{\lambda_- \lambda_+}(u^*E)\right)\]
from which we obtain the result. Clearly, the extra dimension in the $\sigma$-model corresponds exactly to the $\bb{R}^*$ action.\end{proof}

Let us briefly consider the question of transversality in the $\tau$-model, for which we now have to vary $u$ over the moduli space of $J_t$-holomorphic strips. The connection $\nabla$ on $E$ induces a connection on the fibration $\cal{S}^{2,k}_{\psi_- \psi_+} \to \cal{Z}^{2,k}(x_-, x_+)$ (since parallel transport in $E$ preserves the $|| \cdot ||_H$ norm under the assumption that $\nabla$ is Hermitian with respect to $I_t$), identifying the tangent space as
\[T_{(u, \varphi)} \cal{S}^{2,k}_{\psi_- \psi_+} \cong T_u \cal{Z}^{2,k}(x_-, x_+) \oplus T_{\varphi} \cal{S}^{2,k}_{\psi_- \psi_+}(u^* E)\]
At a solution $(u, \varphi)$, we then have the linearization of $\cal{F}$:
\[D\cal{F}_{(u, \varphi)} : W^{2,k}(u^*TM) \oplus T_{\varphi}\cal{S}^{2,k}_{\psi_- \psi_+}(u^*E) \to L^{2,k-1}(u^*TM) \oplus V^{2,k-1}_{(u, \varphi)}\]
which is given by
\begin{equation}D\cal{F}_{(u, \varphi)}(\xi, \vartheta) = (D_u \xi, B^{\tau}_{(u, \varphi)}\xi + \Pi_V \bar{\nabla}_I \vartheta - \Lambda(\varphi) \vartheta)\end{equation}
where $D_u = D(\bar{\partial}_J)_u$ is the linearization of the ordinary $\bar{\partial}$ operator, $\Pi_V(\sigma) = \sigma - \langle \varphi, \sigma \rangle_H \varphi$ is the projection $L^{2,k-1}(u^*E) \to V^{2,k-1}_{(u, \varphi)}$, and $B^{\tau}_{(u, \varphi)}\xi$ is an operator $W^{2,k}(u^*TM) \to V^{2,k-1}_{(u, \phi)}$ given by a similar expression to \eqref{Bmap}:
\begin{equation}\label{Btauoperator}B_{(u, \varphi)}^{\tau} \xi = \Pi_V \left(F_{\nabla}(\xi, \partial_s u) \varphi + I_t F_{\nabla}(\xi, \partial_t u) \varphi +  (\nabla_{\xi}I_t) \varphi\right).\end{equation}

Furthermore, the operator $D \cal{F}_{(u, \varphi)}$ is Fredholm, of index
\[\mu(u) + \specflow(u) + i(\lambda_-) - i(\lambda_+)\]
We say that the initial data $(I, \nabla)$ is \emph{regular} at a solution $(u, \varphi)$ is $D \cal{F}_{(u, \varphi)}$ is Fredholm; this amounts to the surjectivity of
\[B^{\tau}_{(u, \varphi)} : \ker(D_u) \to \coker\left( D(\cal{F}_u)_{\varphi}\right).\]
However, this is notion is equivalent to regularity in the $\sigma$-model:

\begin{proposition}$D\cal{F}_{(u, \varphi)}$ is surjective at a $\tau$-model solution $(u, \varphi)$ if and only if at the corresponding $\sigma$-model solution $(u, \phi = r(s) \varphi)$ the pair $(I, \nabla)$ is regular in the sense of Definition \ref{regularCS}. \end{proposition}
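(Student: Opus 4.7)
The plan is to exploit the block decomposition of the $\sigma$-model linearization $\bar\nabla_I$ derived in the proof of Proposition \ref{tau_index}, and to show both regularity conditions reduce to the same statement after a natural identification of cokernels. First I would observe that since $J_t$ is regular, $D_u$ is surjective, so the full regularity condition reduces in either model to surjectivity of the ``inhomogeneous'' piece: $\sigma$-model regularity is equivalent to
\[B : \ker D_u \to \coker\bigl(\bar\nabla_I : W^{2,k}_{\lambda_- \lambda_+}(u^*E) \to L^{2,k-1}_{\lambda_- \lambda_+}(u^*E)\bigr)\]
being surjective, and $\tau$-model regularity to surjectivity of $B^\tau : \ker D_u \to \coker D(\cal{F}_u)_\varphi$.

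Next, I would invoke the isomorphisms $(\rho, \vartheta) \mapsto \rho\varphi + r\vartheta$ of Proposition \ref{tau_index} to rewrite source and target of $\bar\nabla_I$ as $L^{2,k}_{\lambda_-\lambda_+}(\bb{R},\bb{R}) \oplus T_\varphi \cal{S}^{2,k,\delta}_{\psi_-\psi_+}(u^*E)$ and $L^{2,k-1}_{\lambda_-\lambda_+}(\bb{R},\bb{R}) \oplus V^{2,k-1,\delta}_{(u,\varphi)}$ respectively. Under these, $\bar\nabla_I$ is block upper triangular with diagonal $(\rho \mapsto \rho' + \Lambda(\varphi)\rho, \, D(\cal{F}_u)^\delta_\varphi)$; since the first entry is surjective (with one-dimensional kernel spanned by $r$), projection onto the second factor identifies $\coker \bar\nabla_I$ canonically with $\coker D(\cal{F}_u)^\delta_\varphi$. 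The relation $\phi = r\varphi$ yields $B\xi = r\,\tilde{B}(\xi)$, where $\tilde{B}$ is the expression \eqref{Bmap} with $\varphi$ in place of $\phi$, and its components in this decomposition are $(r\langle\varphi,\tilde{B}\xi\rangle, \, B^\tau \xi)$. Thus the cokernel identification carries $[B]$ to $[B^\tau]$.

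The final step, and the main technical point, is to naturally identify $\coker D(\cal{F}_u)^\delta_\varphi$ with $\coker D(\cal{F}_u)_\varphi$ via the inclusion, so that the identified inhomogeneous maps --- which already land in the smaller $V^{2,k-1}_{(u,\varphi)}$ on the $\tau$-side, since $B^\tau \xi \in V^{2,k-1}_{(u,\varphi)}$ --- agree. The two operators have equal finite index and equal kernels by Proposition \ref{tau_index} and its proof, so their cokernels are equidimensional. To verify that the induced map $\coker D(\cal{F}_u)_\varphi \to \coker D(\cal{F}_u)^\delta_\varphi$ is injective, one must show that if $q \in V^{2,k-1}_{(u,\varphi)}$ satisfies $D(\cal{F}_u)^\delta_\varphi \vartheta^\delta = q$ for some $\vartheta^\delta$ in the $\delta$-weighted space, then $\vartheta^\delta$ can be chosen in the unweighted space. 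This is an exponential-decay argument directly parallel to the homogeneous one in Proposition \ref{tau_index}: the asymptotic expansion of $\vartheta^\delta$ as $s \to \pm\infty$ only involves eigenvectors whose eigenvalues are separated from $\lambda_\pm$ by the spectral gap, forcing $\vartheta^\delta$ to decay at the unweighted rate. This asymptotic step is the main obstacle; the rest is mechanical unpacking of the block form and the substitution $\phi = r\varphi$.
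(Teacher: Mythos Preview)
Your argument is correct, but it follows a different route from the paper. The paper's proof is considerably shorter: it observes that the map $(\xi,\vartheta,\rho) \mapsto (\xi,\,\rho\varphi + r\vartheta)$, with $\rho$ solving the appropriate first-order ODE, gives an explicit isomorphism
\[\ker D\cal{F}_{(u,\varphi)} \oplus \bb{R} \xrightarrow{\ \sim\ } \ker D(\bar{\partial},\bar{\nabla})_{(u,\phi)}.\]
Since the indices also differ by one (from Proposition \ref{tau_index}), the cokernel dimensions agree and surjectivity of one operator is equivalent to surjectivity of the other. This sidesteps any direct cokernel comparison.

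Your approach instead works directly with the surjectivity condition, identifying $\coker\bar{\nabla}_I$ with $\coker D(\cal{F}_u)^{\delta}_{\varphi}$ via the block decomposition, checking that $B$ and $B^{\tau}$ correspond under this, and then passing from the $\delta$-weighted cokernel to the unweighted one. Your identification of the ``main obstacle'' is accurate: the step $\coker D(\cal{F}_u)^{\delta}_{\varphi} \cong \coker D(\cal{F}_u)_{\varphi}$ does require an asymptotic decay argument (essentially that the model operator $\frac{d}{ds} + I_{x_\pm}\frac{d}{dt} - \lambda_{\pm}$ on $(\psi_{\pm})^{\perp}$ is invertible on unweighted spaces, so any inhomogeneous $\delta$-weighted solution with unweighted right-hand side differs from an unweighted solution by a homogeneous solution, which---by the spectral gap---already lies in the unweighted space). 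This works, but it is genuinely more effort than the paper expends. The payoff of your approach is that it makes the correspondence $[B\xi] \leftrightarrow [B^{\tau}\xi]$ completely explicit, whereas the paper's kernel comparison leaves this implicit.
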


\begin{proof}This follows by inspecting the proof of Proposition \ref{tau_index} and comparing the operators $B_{(u, \phi)}$ and $B^{\tau}_{(u, \varphi)}$ of \eqref{Bmap} and \eqref{Btauoperator}. Indeed, after choosing a solution $r$ of $\frac{d}{ds} r + \Lambda(\varphi) r = 0$, there is an isomorphism
\[ \ker D \cal{F}_{(u, \varphi)} \oplus \bb{R} \to \ker D(\bar{\partial}, \bar{\nabla})_{(u, \phi)}\]
where the additional $\bb{R}$ factor corresponds to a choice of solution $\rho$ to the equation
\[\frac{d}{ds} \rho + \Lambda(\varphi)\rho + r(s)\langle \phi, \bar{\nabla}_I \vartheta \rangle_H + r(s) \langle \phi, (\nabla_{\xi} I) \nabla_t \varphi \rangle_H + r(s) \langle \varphi, IF_{\nabla}(\xi, \partial_t u) \varphi\rangle_H = 0.\]
In particular, since we also have the similar identity on Fredholm indices
\[\ind(D\cal{F}_{(u,\varphi)}) + 1 = \ind(D(\bar{\partial}, \bar{\nabla}))\]
we deduce that one is surjective if and only if the other is.\end{proof}

\subsection{Compactness.}\label{sec:compactness_I} Before we prove compactness results for the moduli spaces of twisted trajectories, we will briefly review the basic compactness results for the ``downstairs'' moduli spaces of pseudoholomorphic strips. Implicit throughout this discussion will be the exactness of $M$ and $L_0, L_1$, which prevents sphere or disc bubbling behavior, and the convexity of $M$ at infinity, which prevents $J_t$-holomorphic strips from escaping to infinity.

Before we discuss broken trajectories, let us recall a local compactness result for solutions $u$ of the $J_t$-holomorphic strip equation with boundary of $L_0, L_1$. Let $u_{\alpha}$ be a sequence of solutions, with uniformly bounded energy
\begin{equation}\cal{E}(u_{\alpha}) = \int_Z |\partial_s u|^2 < \cal{E}_0\end{equation}
Then, following \cite{Floer88a}, there is a subsequence which converges in the $C^{\infty}_{loc}$-topology to another $J_t$-holomorphic strip $u$, meaning on finite substrips $Z_R = [-R, R]\times [0,1]$,
\begin{equation*}u_{\alpha}|_{Z_R} \xrightarrow{C^{\infty}} u|_{Z_R}.\end{equation*}

Recall as well that any finite energy $J_t$-holomorphic strip uniformly converges at either ends of $Z$ to $x_{\pm} \in L_0 \cap L_1$; however, given a sequence $u_{\alpha}$ of solutions with endpoints $x_{\pm}$ converging locally in the above sense to $u$, the endpoints of $u$ need not be $x_{\pm}$.

Recall that a \emph{broken trajectory} $(u^1, \hdots, u^n)$ from $x_-$ to $x_+$ is a tuple of finite energy $J_t$-holomorphic strips $u^i \in \cal{M}(x^{i-1}, x^i)$ for $x_- = x^0, x^1, \hdots, x^n = x_+ \in L_0 \cap L_1$. Appropriately topologized, the moduli space of broken trajectories
\begin{equation}\bar{\cal{M}}(x_-, x_+) = \bigcup\limits_{x^1, \hdots, x^{n-1}} \cal{M}(x_-, x^1) \times \cal{M}(x^1, x^2) \times \hdots \times \cal{M}(x^{n-1}, x_+)\end{equation}
is then compact. Recall that in this topology, \emph{Gromov convergence} for unbroken strips is defined as follows: a sequence $u_{\alpha} \in \cal{M}(x^0, x^n)$ converges to a broken trajectory $(u^1, \hdots, u^n)$ if for each $\alpha$ there are real numbers $\sigma^1_{\alpha} < \hdots < \sigma^n_{\alpha}$ such that:
\begin{itemize}\item the translates $\tau_{-\sigma^i_{\alpha}}^* u_{\alpha}(s,t) = u_{\alpha}(s + \sigma^i_{\alpha}, t)$ converge locally to $u^i$;
\item for any sequence $\rho_{\alpha}$ of real numbers with $\sigma^{i}_{\alpha} < \rho_{\alpha} < \sigma^{i+1}_{\alpha}$ for $i = 1, \hdots, n-1$ and such that both sequences $\rho_{\alpha} - \sigma^{i}_{\alpha}, \sigma^{i+1}_{\alpha} - \rho_{\alpha} \to \infty$, we have the translates $\tau_{- \rho_{\alpha}}^* u_{\alpha}$ converging to the constant strip at $x^i$ locally.
\item Likewise for any sequence $\rho_{\alpha} < \sigma^1_{\alpha}$ and $\sigma^1_{\alpha} - \rho_{\alpha} \to \infty$, we have  $\tau_{- \rho_{\alpha}}^* u_{\alpha}$ converging locally to $x^0 = x_-$; similarly if $\sigma^n_{\alpha} < \rho_{\alpha}$ and $\rho_{\alpha} - \sigma^n_{\alpha} \to \infty$, we have $\tau_{- \rho_{\alpha}}^* u_{\alpha}$ converging locally to $x^n = x_+$.\end{itemize}
Convergence for sequences of broken strips is defined similarly.

We will use a slightly more explicit statement of the above convergence, which follows easily from the definition above. A sequence $u_{\alpha} \in \cal{M}(x_-, x_+)$ converges to a broken trajectory $(u^1, \hdots, u^n)$ if and only if for each $\eps > 0$, for each sufficiently large $\alpha$ we can find a partition of $Z = \bb{R} \times [0,1]$ into substrips
\begin{equation}\label{strippartition}Z = W^0_{\alpha} \cup Z^1_{\alpha} \cup W^1_{\alpha} \cup Z^2_{\alpha} \cup \hdots \cup Z^n_{\alpha} \cup W^n_{\alpha}\end{equation}
where $Z^i_{\alpha} = [a^i_{\alpha}, b^i_{\alpha}] \times [0,1]$ for $i = 1,\hdots, n$, and $W^0_{\alpha} = (-\infty, a^1_{\alpha}]\times[0,1]$, $W^i_{\alpha} = [b^i_{\alpha} \times a^{i+1}_{\alpha}] \times [0,1]$ for $i = 1, \hdots, n-1$ and $W^n_{\alpha} = [b^n_{\alpha}, \infty)\times[0,1]$ for
\[a^1_{\alpha} < b^1_{\alpha} < a^2_{\alpha} < \hdots < a^n_{\alpha} < b^n_{\alpha}\]
which has the properties that:
\begin{itemize}\item each $Z^i_{\alpha}$ has fixed length $b^i_{\alpha} - a^i_{\alpha} = \ell$ independent of $\alpha$;
\item If $\sigma^i_{\alpha} = \frac{a^i_{\alpha} + b^i_{\alpha}}{2}$, the translated strips $\tau^*_{\sigma^i_{\alpha}} u_{\alpha} (s, t) = u_{\alpha}(s + \sigma^i_{\alpha}, t)$ converge locally to the strip $u^i$;
\item on $W^i_{\alpha}$, the strips $u_{\alpha}|_{W^i_{\alpha}}$ are within $\eps$ of the constant strip at $x^i$, meaning for each $(s,t) \in W^i_{\alpha}$, we have $d(u_{\alpha}(s,t), x^i) < \eps$.\end{itemize}

This ensures that the concatenation of $u^1, \hdots, u^n$ has the same homotopy class in $H_2(M, L_0 \cup L_1)$ as $u_{\alpha}$ for sufficiently large $\alpha$; and indeed both the Maslov index and the energy of $u_{\alpha}$ are equal to the sum of those for the $u^i$.

Together with the appropriate gluing results, this suffices to show that the compactified moduli spaces $\bar{\cal{M}}_2(x_-, x_+)$ of Maslov index 2 strips are one-dimensional manifolds, with boundary $\cup_{x^1} \cal{M}_1(x_-, x^1) \times \cal{M}_1(x_1, x_+)$ given by products of Maslov index 1 strips, which proves that the Floer differential squares to zero.

We will now formulate similar results for solutions $(u, \phi)$ of the twisted equations. However, there is no longer a topologically invariant energy for the auxiliary field $\phi$. Instead, as a proxy for the action functional, we will use the $s$-dependent quantity $\Lambda(\phi)$, which is given by
\begin{equation}\Lambda(\phi)(s) = \frac{\langle \phi, I_t \nabla_t \phi \rangle_H}{||\phi||_H^2} = - \frac{d}{ds} \log ||\phi||_H.\end{equation}
Recall as well that if $(u, \phi)$ has asymptotics $\bf{x}_{\pm} = (x_{\pm}, \lambda_{\pm})$, we have $\Lambda(\phi) \to \lambda_{\pm}$ as $s \to \pm \infty$.

\begin{proposition}\label{Lambdabound} $\Lambda(\phi)(s)$ is uniformly bounded amongst all trajectories between $\bf{x}_-$ and $\bf{x}_+$.
\end{proposition}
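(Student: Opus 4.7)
The plan is to derive a differential inequality for $\Lambda(\phi)(s)$ in $s$ and integrate it from $\pm\infty$, using the asymptotic values $\Lambda\to\lambda_\pm$ together with the uniform energy bound on the underlying $J_t$-holomorphic strip $u$ (which follows from exactness of $M$: all $u$ with asymptotics $x_\pm$ have energy equal to the fixed action difference).

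Working in the $\tau$-model with $\varphi = \phi/\|\phi\|_H$, so that $\Lambda = \langle \varphi, I_t\nabla_t\varphi\rangle_H$ and $\|\varphi\|_H=1$, I would differentiate in $s$, substitute $\nabla_s\varphi = \Lambda\varphi - I_t\nabla_t\varphi$, use the self-adjointness of $I_t\nabla_t$ relative to $\langle\cdot,\cdot\rangle_H$ (whose proof relies on $F_0,F_1$ being Lagrangian), and handle the swap of $\nabla_s\nabla_t$ using the curvature identity $[\nabla_s,\nabla_t]=F_\nabla(\partial_s u,\partial_t u)$. Assuming $\nabla$ is Hermitian in the $M$-directions (as permitted in the earlier setup, so that $\nabla_{\partial_s u}I_t=0$), this collapses to
\[
\frac{d\Lambda}{ds} \;=\; -2\,\|\nabla_s\varphi\|_H^{2} \;+\; \langle \varphi,\, I_t\, F_\nabla(\partial_s u,\partial_t u)\,\varphi\rangle_H .
\]
The leading term is non-positive, expressing the monotonicity of $\Lambda$ along the twisted flow (and matching the Morse-theoretic intuition that $\Lambda$ plays the role of the ``eigenvalue action''), while the second term is a pure curvature correction.

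The curvature correction is estimated pointwise by $\|F_\nabla\|_\infty \cdot |\partial_s u|\cdot|\partial_t u|\cdot|\varphi|^2$; using $|\partial_t u|=|\partial_s u|$ for $J_t$-holomorphic strips and a sup-bound on $\varphi(s,\cdot)\colon [0,1]\to E$ from Sobolev embedding (controlled by $\|\nabla_t\varphi\|_H^2 = \Lambda^2+\|\nabla_s\varphi\|_H^2$), I obtain a bound of the form
\[
\frac{d\Lambda}{ds} \;\geq\; -\,C\bigl(1+\Lambda^2\bigr)\int_0^1|\partial_s u|^2\,dt ,
\]
with $C$ depending only on $\|F_\nabla\|_\infty$ and the local geometry. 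This is the estimate referenced elsewhere in the paper as the ``fundamental estimate'' (\ref{curvaturebound}).

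From here, setting $y(s) = 1+\Lambda(s)^2$ and writing $g(s) = \int_0^1|\partial_s u|^2\,dt$, the differential inequality $y'\geq -2Cy\,g$ together with the boundary value $y(\pm\infty)=1+\lambda_\pm^2$ yields by Gronwall
\[
y(s) \;\leq\; (1+\lambda_\pm^2)\,\exp\!\Bigl(2C\textstyle\int_{-\infty}^{\infty} g(s')\,ds'\Bigr) \;=\; (1+\lambda_\pm^2)\,\exp\bigl(2C\,E(u)\bigr),
\]
and since $E(u)\leq E_0$ uniformly across all strips with asymptotics $x_\pm$, this gives a uniform bound on $\Lambda(s)$. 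The main technical obstacle is that the naive $L^\infty$ bound on $\varphi$ brings back a factor of $\Lambda^2$ into the differential inequality, so one cannot obtain a plain additive inequality; the Gronwall form above is specifically designed to close this loop. As a possible alternative, one could first establish a uniform $L^\infty$ bound on $\varphi$ independent of $\Lambda$ using elliptic regularity (bootstrapping from $L^2_t$-control via the Cauchy--Riemann-type equation), after which the same argument goes through with a plain integrable majorant $C\!\int_0^1|\partial_s u|^2\,dt$.
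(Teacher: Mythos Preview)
Your differential identity for $\Lambda'$ is correct and matches the paper's computation \eqref{curvaturebound} (after imposing the Hermitian assumption $\nabla_{\partial_s u} I_t = 0$ and parallelism of $F_0,F_1$ to kill the $\nabla_s(I_t)$ and boundary terms). However, the argument that follows has two related gaps.

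First, the inequality $\Lambda' \ge -C(1+\Lambda^2)g$ does not follow from your identity. The term $-2\|\nabla_s\varphi\|_H^2$ is non-positive and a priori uncontrolled from below; what the identity yields is the \emph{one-sided} bound $\Lambda' \le |\text{curvature term}|$, not a lower bound on $\Lambda'$. Your Gronwall step for $y=1+\Lambda^2$ therefore cannot be run as written.

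Second, your estimate of the curvature term is lossy in a way that creates the circularity you flag. You put the $\sup_t$ on $|\varphi|^2$ and control it via Sobolev by $1+\Lambda^2+\|\nabla_s\varphi\|_H^2$; but since $I_tF_\nabla(\partial_s u,\partial_t u)$ is a pointwise endomorphism of $E$, one has directly
\[
\bigl|\langle\varphi,\, I_tF_\nabla(\partial_su,\partial_tu)\,\varphi\rangle_H\bigr| \;\le\; \sup_{t\in[0,1]}\bigl\|I_tF_\nabla(\partial_su,\partial_tu)\bigr\|_{\mathrm{op}} \cdot \|\varphi\|_H^2,
\]
and $\|\varphi\|_H^2 = 1$. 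So the right-hand side depends only on $u$, with no $\Lambda$ or $\|\nabla_s\varphi\|_H$ entering. This is the estimate the paper uses.

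The paper then exploits two structural inputs you do not invoke. The local trivialization of Assumption~\ref{localgeomI} forces the right-hand side of $\Lambda' \le \text{RHS}(u)$ to vanish identically whenever $u(s,\cdot)$ lies in a fixed small neighbourhood of an intersection point; and Gromov compactness of the downstairs moduli space $\cal{M}(x_-,x_+)$ guarantees that the complementary ``bad'' region has uniformly bounded total $s$-length with uniform $C^1$ bounds on $u$ there. Thus the right-hand side is uniformly bounded and supported on intervals of total length $\le L$, and integrating the one-sided inequality from $-\infty$ and to $+\infty$ gives both the upper and the lower bound on $\Lambda$.

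Your approach tries to use only the $L^1$-in-$s$ energy bound $\int g(s)\,ds \le E_0$ rather than pointwise $C^1$-bounds on $u$ from compactness; this is exactly what forces you to take $\sup_t|\varphi|^2$ instead of $\sup_t\|F_\nabla\|_{\mathrm{op}}$, and hence where the circularity enters. The suggested fallback of first bounding $\|\varphi\|_{L^\infty}$ by elliptic regularity is equally circular: the $\tau$-model equation for $\varphi$ has $\Lambda\varphi$ on the right-hand side.
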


\begin{proof}We seek control on the derivative of $\Lambda(\phi)$. In the following calculation, we suppress the subscript $H$ from the inner product $\langle \cdot, \cdot \rangle_H$ and norm $|| \cdot ||_H$.
\begin{align*}\frac{d \Lambda(\phi)}{ds} &= \frac{1}{||\phi||^4} \left(\langle \nabla_s \phi, I_t \nabla_t \phi \rangle ||\phi||^2 + \langle \phi, \nabla_s (I_t \nabla_t \phi) \rangle ||\phi||^2 - 2 \langle \phi, I_t \nabla_t \phi \rangle \langle \phi, \nabla_s \phi \rangle \right) \\
&= \frac{1}{||\phi||^4}\left(- ||\nabla_s \phi||^2||\phi||^2 + 2 \langle \phi, \nabla_s \phi \rangle^2 + \langle \phi, \nabla_s(I_t \nabla_t \phi)\rangle||\phi||^2\right)
\end{align*}
using the fact that $\nabla_s \phi + I_t \nabla_t \phi = 0$. As for the third term above, have
\begin{equation*}\langle \phi, \nabla_s(I_t \nabla_t \phi)\rangle = \langle \phi, \nabla_s(I_t)(\nabla_t \phi)\rangle + \langle \phi, I_t F_{\nabla}(\partial_s, \partial_t) \phi \rangle + \langle \phi, I_t \nabla_t \nabla_s \phi \rangle\end{equation*}
where $F_{\nabla}$ is the curvature of $\nabla$. Moreover, we can deal with the third term here by integration by parts:
\begin{align*}\langle \phi, I_t \nabla_t \nabla_s \phi \rangle - \langle I_t \nabla_t \phi, \nabla_s \phi\rangle &= \int_0^1 \left(\omega_E(\phi, \nabla_t \nabla_s \phi) + \omega_E(\nabla_t \phi, \nabla_s \phi)\right) dt \\
&= \int_0^1 \frac{d}{dt} \omega_E(\phi, \nabla_s \phi) dt \\
&= \omega_E(\phi, \nabla_s \phi)|_{t=1} - \omega_E(\phi, \nabla_s \phi)|_{t=0}.\end{align*}
Moreover, we know that $\langle I_t \nabla_t \phi, \nabla_s \phi\rangle = -||\nabla_s \phi||^2$. Putting this all together and applying the Cauchy-Schwarz inequality for $\langle \phi, \nabla_s \phi\rangle$, we obtain
\begin{equation}\label{curvaturebound}\frac{d \Lambda(\phi)}{ds} \le \frac{1}{||\phi||^2}\left(\langle \phi, \nabla_s(I_t) (\nabla_t \phi)\rangle + \langle \phi, I_t F_{\nabla}(\partial_s, \partial_t) \phi \rangle + \omega_E(\phi, \nabla_s \phi)|_{t=1} - \omega_E(\phi, \nabla_s \phi)|_{t=0}\right)\end{equation}

Recall there are small open neighbourhoods $U_x$ of each $x \in L_0 \cap L_1$ on which the symplectic vector bundle $E$, the Lagrangian subspaces $F_0, F_1$, the complex structure $I_t$ and the connection $\nabla$ are all simultaneously trivialized. Thus, for $s \in \bb{R}$ such that $u(s, t) \in U_x$ for each $t \in [0,1]$, the right hand side of \eqref{curvaturebound} must consequently vanish. The fact that $\Lambda(\phi)$ is non-increasing within $U_x$ is a fact we will use repeatedly for the rest of this section.

Moreover, by the earlier compactness results for $\cal{M}(x_-, x_+)$, there is an $L > 0$ such that for each strip $u$ and $s \in \bb{R}$ outside a finite union of intervals of total length less than $L$, $u(s, [0,1]) \subset U_x$ for some $x \in L_0 \cap L_1$. It then suffices to show that the right hand side of \eqref{curvaturebound} is uniformly bounded over all pseudoholomorphic strips $u \in \cal{M}(x_-, x_+)$. This easily follows from the ``downstairs'' compactness results.\end{proof}

Uniform bounds on $\Lambda$ suffice to prove the following ``local'' compactness result.

\begin{proposition}\label{localconvergence}Let $(u_{\alpha}, \phi_{\alpha})$ be some sequence of solutions to the twisted equations on $Z$, with $u_{\alpha}$ of uniformly bounded energy, and a uniform bound on $\Lambda(\phi_{\alpha})(s)$ independent of $\alpha$ and $s$. Then there is some subsequence that converges after rescaling in the $C^{\infty}_{loc}$ topology to a solution $(u, \phi)$ with nonzero $\phi$ and bounded $\Lambda(\phi)$: namely, $u_{\alpha} \to u$ locally as above, and there are scalars $r_{\alpha} \in \bb{R}^*$ such for compact substrips $Z_R = [-R, R] \times [0,1]$,
\begin{equation}r_{\alpha} \phi_{\alpha}|_{Z_R} \rightarrow \phi|_{Z_R}\end{equation}
in the $C^{\infty}$ topology, where strictly speaking, this convergence is after parallel transporting $r_{\alpha} \phi_{\alpha}|_{Z_R}$ to give a section of $u|_{Z_R}^*E$. 

Equivalently, in the $\tau$ model, if $\varphi_{\alpha} = \phi_{\alpha}/||\phi_{\alpha}||_H$ and $\varphi = \phi/||\phi_{\alpha}||$, then $\varphi_{\alpha} \to \varphi$ with all derivatives on compact sets.\end{proposition}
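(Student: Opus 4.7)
The plan is to work primarily in the $\tau$-model and reduce to a routine elliptic bootstrap. First, apply standard Floer compactness to the sequence $u_\alpha$ of $J_t$-holomorphic strips: the uniform energy bound together with the exactness/convexity hypotheses on $(M,L_0,L_1)$ rules out bubbling and escape to infinity, so after passing to a subsequence $u_\alpha \to u$ in $C^\infty_{loc}(Z,M)$ for some $J_t$-holomorphic strip $u$ (with possibly different endpoints from the $u_\alpha$). Next, renormalize by setting $\varphi_\alpha(s,t) = \phi_\alpha(s,t)/\|\phi_\alpha(s,\cdot)\|_H$, so that $\|\varphi_\alpha(s,\cdot)\|_H = 1$ on every time slice, and $\varphi_\alpha$ satisfies the $\tau$-model equation
\[\nabla_{\partial/\partial s}\varphi_\alpha + I_t(u_\alpha)\nabla_{\partial/\partial t}\varphi_\alpha - \Lambda(\varphi_\alpha)\varphi_\alpha = 0\]
with $\Lambda(\varphi_\alpha) = \Lambda(\phi_\alpha)$ uniformly bounded by hypothesis.

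For any compact substrip $K \subset Z$, once $\alpha$ is large, $u_\alpha|_K$ lies in a tubular neighborhood of $u|_K$, so parallel transport along shortest geodesics identifies $u_\alpha^*E|_K$ with $u^*E|_K$; write $\tilde\varphi_\alpha$ for the transported sections in $\Gamma(u^*E|_K)$. In these identifications the equation for $\tilde\varphi_\alpha$ becomes a perturbation of
\[\nabla_{\partial/\partial s}\tilde\varphi_\alpha + I_t(u)\nabla_{\partial/\partial t}\tilde\varphi_\alpha = \Lambda(\varphi_\alpha)\tilde\varphi_\alpha + R_\alpha(\tilde\varphi_\alpha)\]
with Lagrangian boundary conditions on $F_0,F_1$ over $u^*E|_{\partial K}$, where the error operator $R_\alpha$ tends to zero in $C^\infty_{loc}$ as $u_\alpha \to u$. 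The pointwise $H$-norm bound on time slices integrates to a uniform $L^2_{loc}$ bound on $\tilde\varphi_\alpha$, and with $\Lambda(\varphi_\alpha)$ uniformly bounded the right-hand side is uniformly bounded in $L^2_{loc}$.

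Now invoke interior and boundary elliptic regularity for the Cauchy-Riemann operator $\nabla_s + I_t\nabla_t$ on $u^*E$ with totally real Lagrangian boundary conditions on $F_0,F_1$ (a standard Floer-theoretic estimate): the uniform $L^2_{loc}$ bound on $\tilde\varphi_\alpha$ and on the right-hand side yields a uniform $W^{1,2}_{loc}$ bound; bootstrapping, we get uniform $W^{k,2}_{loc}$ bounds for every $k$. By Rellich--Kondrachov and a diagonal extraction we obtain a subsequence with $\tilde\varphi_\alpha \to \varphi$ in $C^\infty_{loc}(u^*E)$. Since $\int_0^1\omega_E(\tilde\varphi_\alpha(s,\cdot), I_t\tilde\varphi_\alpha(s,\cdot))\,dt \to 1$ on each slice, the limit satisfies $\|\varphi(s,\cdot)\|_H = 1$, hence is nonzero, and $\Lambda(\varphi_\alpha)$ converges to $\Lambda(\varphi)$ in $C^\infty_{loc}(\mathbb{R})$ so the limit solves the $\tau$-model equation along $u$, with $\Lambda(\varphi)$ satisfying the same uniform bound.

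Finally, to pass back to the $\sigma$-model and exhibit rescalings $r_\alpha \in \mathbb{R}^*$, set $r_\alpha = 1/\|\phi_\alpha(0,\cdot)\|_H$, so that $r_\alpha\phi_\alpha(s,t) = \bigl(\|\phi_\alpha(s,\cdot)\|_H/\|\phi_\alpha(0,\cdot)\|_H\bigr)\varphi_\alpha(s,t)$. The scalar factor equals $\exp\!\bigl(-\!\int_0^s \Lambda(\varphi_\alpha)\,d\sigma\bigr)$ by the identity $-\tfrac{d}{ds}\log\|\phi_\alpha\|_H = \Lambda(\varphi_\alpha)$; thanks to the uniform bound on $\Lambda$ it is uniformly bounded on compact intervals and converges in $C^\infty_{loc}(\mathbb{R})$ to the analogous factor $r(s)$ for the limit, where $r$ solves $dr/ds + \Lambda(\varphi)r = 0$ with $r(0)=1$. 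Thus $r_\alpha\phi_\alpha \to \phi := r\varphi$ in $C^\infty_{loc}$, and $\phi$ is a nonzero solution of $\nabla_s\phi + I_t\nabla_t\phi = 0$ with $\Lambda(\phi) = \Lambda(\varphi)$ bounded. The main technical step is the elliptic bootstrap, but given the standard regularity theory for the Cauchy-Riemann boundary value problem this is routine; the real content is that the hypothesis on $\Lambda$ precisely prevents both collapse ($\varphi_\alpha \to 0$) and blow-up of the normalized sections.
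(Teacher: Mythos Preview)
Your argument is correct, but it takes a genuinely different route from the paper's, and the comparison is instructive.

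The paper works entirely in the $\sigma$-model: on each finite strip $Z_R$ it normalizes by the $L^2(Z_R)$ norm, setting $\tilde\phi_\alpha = \phi_\alpha|_{Z_R}/\|\phi_\alpha\|_{L^2(Z_R)}$. These satisfy the \emph{linear} equation $\bar\nabla_{I_t(u_\alpha)}\tilde\phi_\alpha = 0$, so the bootstrap is truly one line. The uniform bound on $\Lambda$ enters only to guarantee the limit is nonzero, via the inequality $\|\phi_\alpha(s)\|_H \ge C_2\|\phi_\alpha\|_{L^2(Z_R)}$. The price is that the normalization depends on $R$, so the paper must run a second diagonal argument over $R = 1,2,3,\ldots$, renormalize to a common scale, and invoke unique continuation to patch the local limits into a global $\phi$. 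The paper even remarks afterward that this proof ``crucially used the linearity of the twisted equations, and thus requires working with the $\sigma$-model solutions.''

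Your slicewise normalization in the $\tau$-model is $R$-independent, so you avoid the patching step entirely and get nonvanishing of the limit for free from $\|\varphi\|_H \equiv 1$. The cost is that your bootstrap is for the equation $\bar\nabla\varphi_\alpha = \Lambda(\varphi_\alpha)\varphi_\alpha$, and the step from $W^{1,2}_{loc}$ to $W^{2,2}_{loc}$ is not quite as routine as you indicate: you need $\partial_s(\Lambda\varphi_\alpha) \in L^2_{loc}$, hence $\Lambda' \in L^2_{loc}(\mathbb{R})$, and the identity $\Lambda' = -2\|\nabla_s\varphi_\alpha\|_H^2 + (\text{curvature/boundary terms})$ only gives $\Lambda' \in L^1_{loc}$ from a $W^{1,2}$ bound. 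You have to pass through an intermediate $W^{1,p}$ step (via $W^{1,2} \hookrightarrow L^p$ in dimension two and $L^p$ elliptic estimates) to get $\nabla_s\varphi_\alpha \in L^4_{loc}$, after which $\Lambda' \in L^2_{loc}$ and the rest goes through. Alternatively, and more in the spirit of your final paragraph, you could immediately set $\phi_\alpha^{new} = \exp(-\int_0^s\Lambda_\alpha)\,\varphi_\alpha$, observe that this satisfies the linear equation with uniform $L^2_{loc}$ bounds (since the exponential factor is bounded above and below on compacta by the $\Lambda$ hypothesis), bootstrap linearly, and recover $\varphi_\alpha$ at the end. Either way your approach is sound; just be aware that the ``routine'' bootstrap hides one nontrivial wrinkle that the paper's linear route avoids.
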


\begin{proof}Pass to a subsequence so that $u_{\alpha}$ converges locally to a solution $u$. Now,
\[\Lambda(\phi_{\alpha})(s) = - \frac{d}{ds} \log ||\phi_{\alpha}(s)||_H\]
is uniformly bounded, so on each finite strip $Z_R = [-R, R] \times [0,1]$, there exists a constant $C_1$ such that
\begin{equation}||\phi_{\alpha}(s_0)||_H \le C_1 ||\phi_{\alpha}(s_1)||_H\end{equation}
for all $s_0, s_1 \in [-R, R]$, and hence there is a constant $C_2$ such that
\begin{equation}\label{finiteL2bound}||\phi_{\alpha}(s)||_H \ge C_2 ||\phi_{\alpha}||_{L^2(Z_{R})}\end{equation}
for all $s \in [-R, R]$.

Now, write $\tilde{\phi_{\alpha}} = \phi_{\alpha}|_{Z_R} / ||\phi_{\alpha}||_{L^2(Z_{R})}$. Then $\tilde{\phi_{\alpha}}$ are a sequence of $L^2$-norm one solutions to the linear elliptic equations
\begin{equation*}\bar{\nabla}_{I_t(u_{\alpha})} \tilde{\phi_{\alpha}} = 0\end{equation*}
on $Z_R$. In particular, for any $\eps > 0$, we can pass to a subsequence of $\tilde{\phi}_{\alpha}$ which converges in the $C^{\infty}$ topology on $Z_{R-\eps} = [R-\eps, R+\eps]\times[0,1]$ to a solution $\tilde{\phi}_{R-\eps}$ of
\begin{equation*}\bar{\nabla}_{I_t(u)} \tilde{\phi}_{R-\eps} = 0.\end{equation*}
The inequality \eqref{finiteL2bound} then ensures that $\tilde{\phi}_{R-\eps}$ is nonzero. Now, for $R = 1, 2, 3, \hdots$, inductively construct subsequences $\phi_{\alpha}$ as above, with limits $\tilde{\phi}_{R-\eps}$, each time passing to a further subsequence of the previously obtained subsequence; by taking the ``diagonal'' of these subsequences, we obtain a subsequence $\phi_{\alpha}$ such that for each $R \in \bb{Z}_{> 0}$,
\begin{equation*}\phi_{\alpha}|_{Z_{R-\eps}}/ ||\phi_{\alpha}||_{L^2(Z_R)} \to \tilde{\phi}_{R-\eps}.\end{equation*}
Now, set $\phi_{R-\eps} = \tilde{\phi}_{R-\eps}/ ||\tilde{\phi}_{R-\eps}||_{L^2(Z_1)}$. Observe that for each $R$,
\begin{equation*}\phi_{\alpha}|_{Z_1}/ || \phi_{\alpha}||_{L^2(Z_1)} \to \phi_{R-\eps}|_{Z_1}\end{equation*}
so in particular the $\phi_{R-\eps}$ all agree on $Z_1$. By unique continuation, they then patch together to form a solution $\phi$ on the entire strip $Z$; moreover setting $r_{\alpha} = 1/||\phi_{\alpha}||_{L^2(Z_1)}$, we have
\begin{equation}r_{\alpha} \phi_{\alpha} \to \phi\end{equation}
over each $Z_{R-\eps}$. Finally, observe that $\Lambda(\phi)(s) = \lim_{\alpha} \Lambda(\phi_{\alpha})(s)$, which is clearly bounded.\end{proof}

\begin{remark}It is possible in the above for $u_{\alpha}$ to locally converge to a constant trajectory $u(s,t) = x \in L_0 \cap L_1$. In this case, $\phi_{\alpha}$ locally converges to a nonzero solution of the equation $\bar{\partial}_{I} \phi = 0$ on $E_{x}$. This solution itself can be a ``constant solution'' $\phi(s,t) = C e^{-\lambda s}\psi(t)$ for some fixed eigenvalue $\lambda$ of $I_x \frac{d}{dt}$, or it can represent a twisted trajectory from some $(x, \lambda_-)$ to $(x, \lambda_+)$ for $\lambda_- > \lambda_+$.\end{remark}

\begin{remark}This proof crucially used the linearity of the twisted equations, and thus requires working with the $\sigma$-model solutions; this is in spite of convergence being easier to state in the $\tau$-model. For the remainder of this section, we will work in the $\tau$-model.\end{remark}

To upgrade this local convergence result to a convergence to broken trajectories, it will be convenient to control the total variation of $\Lambda$, to play the same role as energy does in the study of convergence downstairs: for a $\tau$-model solution $(u, \varphi)$ (or equivalently, a solution $(u, \phi)$ in the $\sigma$-model), consider
\begin{equation}K(u, \varphi) = \int_{\bb{R}} \left|\frac{d \Lambda(\varphi)}{ds}\right| ds, \qquad K^+(u, \varphi) = \int_{\bb{R}} \left( \frac{d \Lambda(\varphi)}{ds} \right)^+ ds\end{equation}
where $f^+ = \max\{0, f\}$ is the positive part of a real number $f$. Whenever $Z' \subset Z$ is a substrip, we will write $K_{Z'}, K^+_{Z'}$ for the corresponding quantities defined over this substrip. For a flow between $(x_-, \lambda_-)$ and $(x_+, \lambda_+)$, these quantities satisfy
\begin{equation}2 K^+(u, \varphi) - K(u, \phi) = \int_{\bb{R}} \frac{d \Lambda(\varphi)}{ds} ds = \lambda_{-} - \lambda_{+}.\end{equation}
so in particular, since $\Lambda(\varphi)$ is decreasing whenever $u(s,t)$ is sufficiently close to $L_0 \cap L_1$, both these quantities are finite. We can define $K, K^+$ for broken trajectories as well, as the sum over the unbroken components.

\begin{proposition}\label{KboundI}$K(u, \varphi)$ and $K^+(u, \varphi)$ are uniformly bounded for all possibly broken trajectories from $\bf{x}_-$ to $\bf{x}_+$.\end{proposition}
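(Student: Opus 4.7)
The strategy closely parallels the proof of Proposition \ref{Lambdabound}. The identity $2K^+ - K = \lambda_- - \lambda_+$ reduces the problem to bounding $K^+$ alone. I proceed in two steps: first show that $(d\Lambda/ds)^+$ is supported on a set of uniformly bounded measure, and then show that it is uniformly bounded in $L^\infty$ on that set.

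The first step uses the same geometric input as in Proposition \ref{Lambdabound}. By Assumption \ref{localgeomI}, inside the neighborhoods $U_x$ of intersection points $x \in L_0 \cap L_1$, all three terms on the right-hand side of \eqref{curvaturebound} vanish: $F_\nabla$ and $\nabla_s I_t$ are zero by the triviality of the data, and the boundary terms $\omega_E(\phi, \nabla_s \phi)|_{t=i}$ vanish because $\phi, \nabla_s \phi \in F_i$ along $\bb{R} \times \{i\}$ and $F_i$ is Lagrangian. Consequently, $d\Lambda/ds \le 0$ wherever $u(s,[0,1]) \subset U_x$ for some $x$, so the positive part $(d\Lambda/ds)^+$ is supported on the ``bad set'' $B(u) := \{s : u(s,[0,1]) \not\subset U_x \text{ for any } x\}$. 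The Gromov compactness of $\bar{\cal{M}}(x_-, x_+)$ --- already exploited in the proof of Proposition \ref{Lambdabound} --- yields a uniform constant $L > 0$ with $|B(u)| \le L$ for every unbroken $u$.

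For the second step, rewrite the right-hand side of \eqref{curvaturebound} in terms of $\varphi = \phi/||\phi||_H$, using that $\omega_E(\phi, \nabla_s \phi)/||\phi||^2 = \omega_E(\varphi, \nabla_s \varphi)$ (since $\omega_E(\varphi, \varphi) = 0$). The resulting expression $R(s)$ is quadratic in $\varphi$ and $\nabla \varphi$ with coefficients involving $\partial u$, $F_\nabla$, and $\nabla I_t$. Gromov compactness provides uniform $C^k$-bounds on $u$ and its derivatives; meanwhile, elliptic regularity for the $\tau$-model equation $\bar{\nabla}_I \varphi = \Lambda(\varphi)\varphi$ --- combined with the normalization $||\varphi(s)||_H = 1$ and the $\Lambda$-bound from Proposition \ref{Lambdabound} --- yields uniform Sobolev bounds on $\varphi$ on every slice $\{s\} \times [0,1]$, and the trace theorem then controls the boundary terms. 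Hence $|R(s)| \le C$ uniformly, and
\[K^+(u, \varphi) = \int_{\bb{R}} (d\Lambda/ds)^+\, ds \le \int_{B(u)} |R(s)|\, ds \le CL.\]

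For a broken trajectory $(u^i, \varphi^i)_{i=1}^n$, the same estimate applies to each component, with components having constant $u^i$ contributing $K^+ = 0$. The number $n$ of non-trivial components is uniformly bounded by dimensional considerations: non-constant $u^i$ contribute positively to the fixed total Maslov index, while constant-$u^i$ components with non-trivial $\varphi^i$ strictly drop the eigenvalue index, so their count is bounded in terms of $|i(\lambda_-) - i(\lambda_+)|$ and the total spectral flow. The main obstacle is the second step: obtaining a uniform pointwise bound on $R(s)$ --- in particular on the term involving $\nabla_t \varphi$ --- requires elliptic estimates for the $\tau$-model operator with uniformly controlled coefficients, which is precisely why Proposition \ref{Lambdabound} had to be established first.
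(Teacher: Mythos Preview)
Your argument is correct and takes a genuinely different route from the paper's. The paper argues by contradiction: assuming a sequence $(u_\alpha,\varphi_\alpha)$ with $K^+\to\infty$, it passes to a Gromov-convergent subsequence for the underlying strips $u_\alpha$, partitions the strip into pieces $W^i_\alpha$ (near intersection points, where $K^+_{W^i_\alpha}=0$) and $Z^i_\alpha$ (of bounded length), and then invokes the local compactness result (Proposition~\ref{localconvergence}) to produce limiting twisted solutions $(u^i,\varphi^i)$ with finite $K^+$; convergence of $K^+_{Z^i_\alpha}(\varphi_\alpha)$ to $K^+_{Z^i}(\varphi^i)$ gives the contradiction. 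No pointwise estimate on $d\Lambda/ds$ is ever made.

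Your approach is instead a direct bound: the support of $(d\Lambda/ds)^+$ has measure $\le L$ by the same Gromov-compactness input used in Proposition~\ref{Lambdabound}, and on that support the integrand $R(s)$ is uniformly bounded via elliptic regularity for the $\tau$-model equation (with $\|\varphi\|_H=1$, $|\Lambda|\le\Lambda_0$, and uniform $C^k$-bounds on $u$ controlling the coefficients). This is more quantitative and yields an explicit constant, at the cost of invoking interior elliptic estimates and the trace theorem. The paper's soft compactness argument avoids these analytic details but is less explicit and depends on having already established Proposition~\ref{localconvergence}. Both approaches rest on the same two pillars---the $\Lambda$-bound and downstairs Gromov compactness---so neither is essentially deeper; your version front-loads regularity theory, the paper's version front-loads compactness.

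One minor remark: under the standing technical assumption that $F_0,F_1$ are $\nabla$-parallel (stated just above Assumption~\ref{localgeomI}), the boundary terms $\omega_E(\varphi,\nabla_s\varphi)|_{t=i}$ in \eqref{curvaturebound} vanish identically, since $\phi(\cdot,i)$ and hence $\nabla_s\phi(\cdot,i)$ lie in the Lagrangian $F_i$. In that case your trace-theorem step is unnecessary and the estimate reduces to controlling $\|\nabla_t\varphi(s,\cdot)\|_{L^2([0,1])}$, which follows already from the $W^{2,1}$ elliptic estimate.
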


\begin{proof}It suffices to prove the proposition for $K^+$. Since $\Lambda(s)$ is strictly decreasing on trajectories with constant $u$, it suffices to consider just the components of a broken trajectory with non-constant $u$. There is an a priori bound on the number of such components, so we can reduce to showing that $K^+$ is bounded over all unbroken trajectories with non-constant $u$.

This follows from compactness results downstairs. Indeed, suppose for a contradiction there were a sequence of unbroken flows $(u_{\alpha}, \varphi_{\alpha})$ such that $K^+(u_{\alpha}, \varphi_{\alpha})$ increases without bound. After passing to a subsequence, the strips $u_{\alpha}$ limit to a broken strip $(u^1, \hdots u^n)$. In particular, as above we partition $Z$ as
\begin{equation*}Z = W^0_{\alpha} \cup Z^1_{\alpha} \cup W^1_{\alpha} \cup Z^2_{\alpha} \cup \hdots \cup Z^n_{\alpha} \cup W^n_{\alpha}\end{equation*}
such that $u(W^i_{\alpha})$ is contained in an arbitrarily small neighbourhood of some intersection point $x \in L_0 \cap L_1$, and so that the translations $\tau^*_{\sigma^i_{\alpha}} u_{\alpha}$ centred on $Z^i_{\alpha}$ converge locally to $u^i$. In particular, $K^+_{W^i_{\alpha}}(\phi_{\alpha}) = 0$.

By the local convergence result Proposition \ref{localconvergence}, $\tau^*_{\sigma^i_{\alpha}} \varphi_{\alpha}$ converge locally to a solution $\varphi^i$. Note that $u^i \to x^{i-1}$ as $s \to -\infty$ and $u^i \to x^i$ as $s \to +\infty$, in particular $\Lambda_{\varphi^i}$ is decreasing for sufficiently positive or negative $s$. Consequently, $K^+(\varphi^i)$ is finite. In particular, this means that $K^+_{Z^i_{\alpha}}(\varphi_{\alpha})$ is in fact bounded independent of $\alpha$. Summing these contributions over the whole strip $Z$, we see that $K^+(\varphi_{\alpha})$ is bounded, a contradiction.\end{proof}

We now prove a crucial lemma, which allows us to show solutions $(u, \varphi)$ with small energy and small total variation $K$ in $\Lambda(\phi)$ are close to constant solutions: by a constant solution we mean those $(u, \varphi)$ such that $u(s, t) = x$ for $x \in L_0 \cap L_1$, and $\varphi(s,t) = \psi(t)$ for some unit eigenvalue $\psi(t)$ of $I \frac{d}{dt}$ at $x$. Equivalently, in the $\sigma$-model, this means
\[\phi(s,t) = C e^{-\lambda s} \psi(t)\]
for some nonzero constant $C$, where $\lambda$ is the eigenvalue of $\psi$. Note that these are precisely those solutions which satisfy $\cal{E}(u) = 0$ and $K(\varphi) = 0$, since $\frac{d\Lambda(\varphi)}{ds} = 0$ everywhere implies that $\varphi(s)$ and $I \frac{d}{dt}\varphi(s)$ must be everywhere proportional, since this is exactly the equality condition of the Cauchy-Schwarz inequality \eqref{curvaturebound} (which in this case of solutions supported on constant $u$, has vanishing right hand side).

\begin{lemma}\label{smallenergylemmaI}Fix arbitrary compact sub-strips $Z_1, Z_2 \subset Z$, and constants $\eps > 0$, $\cal{E}_0, \Lambda_0 > 0$. Then there exist $\delta_1, \delta_2 > 0$ such that if $(u, \varphi)$ is a $\tau$-model solution which satisfies
\begin{equation}\cal{E}(u) < \cal{E}_0, \qquad |\Lambda(\varphi)(s)| < \Lambda_0 \text{ for all } s, \qquad\mathcal{E}_{Z_2}(u) \le \delta_1, \qquad K_{Z_2}(u, \phi) \le \delta_2\end{equation}
then $(u, \varphi)|_{Z_1}$ is in an $\eps$-neighbourhood of a constant solution in the $L^{2,k}(Z_1)$-norm.\end{lemma}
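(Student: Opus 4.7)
The plan is to argue by compactness and contradiction. Suppose the statement fails: then there exist $\eps, \cal{E}_0, \Lambda_0 > 0$ and strips $Z_1 \subset Z_2 \subset Z$, together with a sequence of $\tau$-model solutions $(u_n, \varphi_n)$ satisfying $\cal{E}(u_n) < \cal{E}_0$ and $|\Lambda(\varphi_n)(s)| < \Lambda_0$ uniformly, with $\cal{E}_{Z_2}(u_n), K_{Z_2}(u_n, \varphi_n) \to 0$, such that each $(u_n, \varphi_n)|_{Z_1}$ stays at $L^{2,k}(Z_1)$-distance at least $\eps$ from every constant solution. The goal is to extract a $C^{\infty}_{loc}$-limit, prove it is itself a constant solution, and then use the continuous inclusion $C^{\infty}(Z_1) \hookrightarrow L^{2,k}(Z_1)$ on the compact strip to contradict the separation.

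First I will apply the local compactness result of Proposition \ref{localconvergence} to extract a subsequence converging in $C^{\infty}_{loc}(Z)$ to a $\tau$-model solution $(u_\infty, \varphi_\infty)$ defined on all of $Z$, with $||\varphi_\infty(s)||_H \equiv 1$. The vanishing $\cal{E}_{Z_2}(u_n) \to 0$ passes to the limit to give $\cal{E}_{Z_2}(u_\infty) = 0$, so $u_\infty$ is constant on $Z_2$, and Aronszajn's unique continuation for $J_t$-holomorphic strips then makes $u_\infty \equiv x$ on all of $Z$ for some $x \in L_0 \cap L_1$. Consequently $\varphi_\infty$ is a globally defined solution of the linear equation $\partial_s \varphi_\infty + I_x \partial_t \varphi_\infty = \Lambda(\varphi_\infty)\varphi_\infty$ in $E_x$ with boundary conditions $F_0|_x, F_1|_x$.

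The key rigidity step will be to identify $\varphi_\infty$ as a time-independent eigenvector. The hypothesis $K_{Z_2}(u_n, \varphi_n) \to 0$ forces $\Lambda(\varphi_\infty)$ to be constant on the $s$-interval underlying $Z_2$; real-analyticity of solutions to the linear elliptic system then propagates this to $\Lambda(\varphi_\infty) \equiv \lambda$ on all of $\bb{R}$. Revisiting the derivation of \eqref{curvaturebound} when $u$ is constant, every curvature and $\nabla_s I_t$ term drops out identically, and the inequality reduces to the Cauchy--Schwarz bound on $\langle \varphi_\infty, \nabla_s \varphi_\infty \rangle_H$. Since $d\Lambda/ds \equiv 0$, this is an equality, so $\nabla_s \varphi_\infty(s,t)$ is pointwise proportional to $\varphi_\infty(s,t)$. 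Combining with $\frac{d}{ds}||\varphi_\infty||_H^2 = 2 \langle \varphi_\infty, \nabla_s \varphi_\infty\rangle_H = 0$ gives $\nabla_s \varphi_\infty \equiv 0$, after which the equation reduces to $I_x \partial_t \varphi_\infty = \lambda \varphi_\infty$. Hence $\varphi_\infty(s,t) = \psi(t)$ is a time-independent unit eigensolution, so $(u_\infty, \varphi_\infty)$ is a constant solution in the precise sense introduced before the lemma, and $C^{\infty}(Z_1)$-convergence of $(u_n, \varphi_n)$ to it contradicts the separation assumption.

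The main obstacle I anticipate is the rigidity step: obtaining a $C^\infty_{loc}$-limit and identifying $u_\infty$ as constant is routine given Proposition \ref{localconvergence}, but pinning $\varphi_\infty$ down as a genuine eigensolution, rather than some solution of the linear $\tau$-equation with $\Lambda$ merely constant on a sub-interval, requires two inputs that must be used together: real-analytic unique continuation to globalize constancy of $\Lambda(\varphi_\infty)$ from $Z_2$ to all of $\bb{R}$, and careful inspection of the equality case of Cauchy--Schwarz in \eqref{curvaturebound} — a conclusion that is clean only because the curvature and $\nabla_s I_t$ contributions automatically vanish for constant $u$.
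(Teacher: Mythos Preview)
Your proof is correct and follows the paper's compactness-and-contradiction argument via Proposition \ref{localconvergence}. The only minor variation is in the rigidity step: the paper shows $(u,\varphi)$ is constant already on $Z_2$ (using the characterization of constant solutions stated just before the lemma) and then applies unique continuation for the pair $(u,\varphi)$ viewed as a strip in the total space of $E$, whereas you first extend $u_\infty$ by unique continuation, invoke real-analyticity of $\Lambda$ to globalize its constancy, and then run the Cauchy--Schwarz equality case globally---a valid but slightly longer route to the same conclusion.
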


\begin{proof}Suppose, for a contradiction, there is a sequence of flows $(u_{\alpha}, \varphi_{\alpha})$ with $\mathcal{E}_{Z_2}, K_{Z_2} \to 0$, and uniformly bounded energy and $\Lambda(\varphi_{\alpha})$, none of which have $(u_{\alpha}, \varphi_{\alpha})|_{Z_1}$ in an $\eps$-neighbourhood of a constant solution. By the local convergence result Proposition \ref{localconvergence}, $(u_{\alpha}, \varphi_{\alpha})$ converges locally to $(u, \varphi)$. This limit must have $\mathcal{E}_{Z_2}(u) = 0$ and $K_{Z_2}(\varphi) = 0$, which implies that $(u, \varphi)|_{Z_2}$ is actually a constant solution. By unique continuation is must be constant on all of $Z$, so in particular on $Z_1$, which is a contradiction. \end{proof}

In particular, this lemma allows us to deduce the following, a sort of converse to Proposition \ref{Lambdabound}, the downstairs version of which is well known (and we have indeed already used it extensively):

\begin{corollary}Suppose $(u, \varphi)$ is a solution to the twisted equations on $Z$ with finite energy $\mathcal{E}(u)$ and bounded $\Lambda(\varphi)$. Then $(u, \varphi)$ is in fact a flow from $\bf{x}_-$ to $\bf{x}_+$ for some $\bf{x}_{\pm} \in \frak{C}$.\end{corollary}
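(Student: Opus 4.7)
The plan is to show that both $u$ and $\varphi$ have the required asymptotic structure at each end of the strip, treating the positive end $s \to +\infty$ (the other end being symmetric). First, I would invoke the classical Floer-theoretic asymptotic analysis for finite-energy $J_t$-holomorphic strips with exact, convex-at-infinity boundary conditions: $u(s,\cdot)$ converges uniformly to some intersection point $x_+ \in L_0 \cap L_1$, with exponential decay. In particular, there is $s_0$ such that $u(s,[0,1]) \subset U_{x_+}$ for all $s \geq s_0$, the neighborhood appearing in Assumption \ref{localgeomI} on which the curvature term in \eqref{curvaturebound} vanishes identically. Hence $s \mapsto \Lambda(\varphi)(s)$ is monotonically non-increasing on $[s_0, \infty)$, and being bounded below by the hypothesis, it has a finite limit $\lambda_+^\infty := \lim_{s \to \infty} \Lambda(\varphi)(s)$.

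Next I would identify $\lambda_+^\infty$ as an eigenvalue of $I_{x_+}\tfrac{d}{dt}$, and pin down a limiting eigensolution. Fix compact substrips $Z_1 \Subset Z_2$ and any $\epsilon > 0$. Given a sequence $s_n \to \infty$, consider the translates $(u_n, \varphi_n) := (\tau^*_{s_n} u, \tau^*_{s_n} \varphi)$. By Step 1, $\mathcal{E}_{Z_2}(u_n) \to 0$; by Step 2 together with the vanishing of the right-hand side of \eqref{curvaturebound} on $[s_0,\infty)$, we have $K_{Z_2}(\varphi_n) = \Lambda(\varphi)(s_n + a) - \Lambda(\varphi)(s_n + b) \to 0$ as $n \to \infty$. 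Lemma \ref{smallenergylemmaI} then places $(u_n, \varphi_n)|_{Z_1}$ in an $\epsilon$-neighborhood (in $L^{2,k}(Z_1)$) of some constant solution $(x_+, \psi_n)$, where $\psi_n$ is a unit eigensolution of $I_{x_+}\tfrac{d}{dt}$ with eigenvalue $\mu_n$ satisfying $\mu_n \to \lambda_+^\infty$. Since the spectrum of $I_{x_+}\tfrac{d}{dt}$ is discrete (Assumption \ref{localgeomI}), for $n$ large $\mu_n$ equals a fixed eigenvalue $\lambda_+$, and $\psi_n \in \{+\psi_+, -\psi_+\}$ for a fixed unit eigensolution $\psi_+$. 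Thus $\lambda_+ = \lambda_+^\infty$ is an eigenvalue.

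Finally, I would upgrade subsequential convergence to an honest limit and rule out sign-oscillation. Taking $\epsilon$ smaller than half the $L^{2,k}(Z_1)$-distance between $+\psi_+$ and $-\psi_+$, we see that for sufficiently large $s$, the translate $\tau^*_s \varphi$ restricted to $Z_1$ lies in exactly one of the two disjoint $\epsilon$-balls around $\pm\psi_+$. Because the assignment $s \mapsto \tau_s^*\varphi|_{Z_1}$ is continuous in $L^{2,k}(Z_1)$ and these balls are open and disjoint, it must eventually remain in just one of them; hence $\varphi(s,\cdot) \to \psi_+$ or $-\psi_+$ uniformly on $Z_1$, and applying this for a nested exhaustion of compact strips (using Proposition \ref{localconvergence} to ensure consistency across such strips) yields the required asymptotics. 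The same argument applied to $s \to -\infty$ produces $\bf{x}_- = (x_-, \lambda_-)$.

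The main obstacle is the single-eigensolution statement in Step 3: one needs to exclude oscillation between $+\psi_+$ and $-\psi_+$, and this relies on combining the discreteness of the spectrum (to freeze the eigenvalue), continuity of the path $s \mapsto \varphi(s,\cdot)$, and the quantitative closeness provided by Lemma \ref{smallenergylemmaI}. All other steps reduce to bookkeeping via the monotonicity of $\Lambda$ and the classical convergence theory downstairs.
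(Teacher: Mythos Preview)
Your proof is correct and follows essentially the same route as the paper: use finite energy to get $u \to x_{\pm}$, use the vanishing of the right-hand side of \eqref{curvaturebound} near $x_{\pm}$ to get eventual monotonicity of $\Lambda$, and then apply Lemma \ref{smallenergylemmaI} to the translates $\tau_s^*(u,\varphi)$. Your treatment is actually more careful than the paper's terse version, in that you explicitly rule out oscillation between $+\psi_+$ and $-\psi_+$ via continuity and the discreteness of the spectrum, a point the paper leaves implicit.
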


\begin{proof}Fix some finite $Z_1$, and consider the translated solutions $\tau^*_s (u, \varphi)|_{Z_1}$. These must satisfy $\mathcal{E}_{Z_1}, K_{Z_1} \to 0$ as $s \to \infty$; the above lemma implies that $\tau_s (u, \varphi)|_{Z_1}$ must converge to a constant solution. However $Z_1$ was arbitrary, so in fact $\tau_s (u, \varphi)$ converges locally to a constant solution as $s \to \infty$, say $u(s,t) = x$ and $\varphi(s, t) = \psi(t)$ an eigensolution with eigenvalue $\lambda$: this implies $(u, \varphi)$ is a flow to $\bf{x} = (x, \lambda)$. The same argument holds in the other limit $s \to -\infty$. \end{proof}

We can now prove a compactness theorem for the twisted flow.

\begin{theorem}Let $(u_{\alpha}, \varphi_{\alpha})$ be a sequence of twisted trajectories between $\bf{x}_-$ and $\bf{x}_+$. Then there is a subsequence that Gromov converges to a broken twisted trajectory
\[((u^1, \varphi^1),(u^2, \varphi^2), ..., (u^n, \varphi^n)) \in \cal{M}(\bf{x}_-, \bf{x}^1) \times \hdots \times \cal{M}(\bf{x}^n, \bf{x}_+);\]
namely there are real numbers $\sigma_{\alpha}^1 < \hdots < \sigma_{\alpha}^n$ such that each translate $\tau_{-\sigma_{\alpha}^i}^*(u_{\alpha}, \varphi_{\alpha})$ converges locally to $(u^i, \varphi^i)$, and for any sequences of real numbers $\rho_{\alpha}$ with $\sigma^i_{\alpha} - \rho_{\alpha} \to -\infty$ and $\sigma^{i+1}_{\alpha} - \rho_{\alpha} \to +\infty$ for some $i = 0, \hdots, n$ (where by convention $\sigma^0 = -\infty, \sigma^{n+1} = +\infty$), the translates $\tau_{-\rho_{\alpha}}^*(u_{\alpha}, \varphi_{\alpha})$ converge locally to the constant solution $(x^i, \psi^i)$ at $\bf{x}^i$.\end{theorem}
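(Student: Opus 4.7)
The plan is to combine downstairs Gromov compactness with the local rescaling-convergence result from Proposition \ref{localconvergence}, and then to make a careful ``bubble extraction'' argument in the neck regions, using Lemma \ref{smallenergylemmaI} and the uniform $K$-bound from Proposition \ref{KboundI} to ensure the process terminates.

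First, by exactness of $M, L_0, L_1$ (preventing sphere and disc bubbling) the energies $\cal{E}(u_\alpha)$ are uniformly bounded within a fixed relative homotopy class, so the usual Gromov compactness for pseudoholomorphic strips applies. After passing to a subsequence, the strips $u_\alpha$ converge to a broken strip $(\widehat u^1, \hdots, \widehat u^m)$ through intermediate intersection points $x_- = x^0, x^1, \hdots, x^m = x_+$, with translations $\tau_\alpha^1 < \hdots < \tau_\alpha^m$ and a partition \eqref{strippartition} of $Z$ into main substrips $Z_\alpha^i$ centred on $\tau_\alpha^i$ and neck substrips $W_\alpha^i$ on which $u_\alpha$ stays $\eps$-close to $x^i$. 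On each main piece, I apply Proposition \ref{localconvergence} to the translates $\tau_{-\tau_\alpha^i}^*(u_\alpha, \varphi_\alpha)$ and diagonalize to obtain a local limit $(\widehat u^i, \widehat \varphi^i)$ solving the $\tau$-model twisted equations. The limit inherits uniformly bounded $\Lambda$ from Proposition \ref{Lambdabound} and finite total variation $K$ from Proposition \ref{KboundI}, so the corollary to Lemma \ref{smallenergylemmaI} identifies it as an honest twisted trajectory with asymptotics $\bf{y}^i_- = (x^{i-1}, \lambda_-^i)$ and $\bf{y}^i_+ = (x^i, \lambda_+^i)$ for some eigenvalues.

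Second, I analyze each neck $W_\alpha^i$. On such a region Assumption \ref{localgeomI} makes the right hand side of \eqref{curvaturebound} vanish, so $\Lambda(\varphi_\alpha)$ is non-increasing there. By matching with the limits on adjacent main pieces, $\Lambda(\varphi_\alpha)$ approaches $\lambda_+^i$ at the left end of $W_\alpha^i$ and $\lambda_-^{i+1}$ at the right end, with $\lambda_+^i \ge \lambda_-^{i+1}$ (using $\lambda_-^0 = \lambda_-$ for $\bf{x}_-$ and $\lambda_+^{m+1} = \lambda_+$ for $\bf{x}_+$, and including the two "infinite" neck regions at $\pm\infty$). If these values agree, Lemma \ref{smallenergylemmaI} applied to arbitrary compact substrips inside $W_\alpha^i$ shows $(u_\alpha, \varphi_\alpha)$ is $L^{2,k}$-close there to the constant solution $(x^i, \psi^i)$ associated to that common eigenvalue, and no further extraction is needed. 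If instead $\lambda_+^i > \lambda_-^{i+1}$, then for each pair of consecutive eigenvalues $\mu_j < \mu_{j+1}$ of $I_{x^i}\frac{d}{dt}$ lying in $[\lambda_-^{i+1}, \lambda_+^i]$, I pick a new translation $\rho_\alpha^{i,j} \in W_\alpha^i$ at which $\Lambda(\varphi_\alpha)$ crosses the midpoint $(\mu_j + \mu_{j+1})/2$, and apply Proposition \ref{localconvergence} again to $\tau_{-\rho_\alpha^{i,j}}^*(u_\alpha, \varphi_\alpha)$ to extract an additional twisted trajectory with constant $u$ component at $x^i$, which must be one of the explicit trajectories described in Example \ref{constant_u_solutions}.

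Collecting the $\tau_\alpha^i$ together with all the $\rho_\alpha^{i,j}$ and reordering them as $\sigma_\alpha^1 < \hdots < \sigma_\alpha^n$ yields the desired broken twisted trajectory $((u^1,\varphi^1), \hdots, (u^n,\varphi^n))$. The convergence of the translates $\tau_{-\rho_\alpha}^*(u_\alpha, \varphi_\alpha)$ to the corresponding constant solution at $\bf{x}^i$, for any sequence $\rho_\alpha$ whose gap to both neighboring $\sigma_\alpha^i$'s tends to infinity, follows from Lemma \ref{smallenergylemmaI} together with the monotonicity of $\Lambda$ on the sub-necks, since by construction no further eigenvalue crossings occur on such regions.

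The main obstacle is ensuring this bubble-extraction procedure terminates and captures \emph{every} constant solution needed to interpolate between consecutive main pieces. Termination uses two inputs in tandem: the discreteness of the spectrum of each $I_{x^i}\frac{d}{dt}$ (guaranteeing only finitely many eigenvalues lie between $\lambda_-^{i+1}$ and $\lambda_+^i$), and the uniform bound on $K^+(u_\alpha, \varphi_\alpha)$ from Proposition \ref{KboundI} (guaranteeing that the drops across each $W_\alpha^i$ cannot be arbitrarily large). Completeness of the capture --- that no additional "hidden" intermediate eigenvalue is missed --- is enforced by applying Lemma \ref{smallenergylemmaI} one more time between each consecutive pair of selected $\sigma_\alpha^j$'s, which forces $(u_\alpha, \varphi_\alpha)$ into a small neighbourhood of a constant solution on any compact substrip of the gap.
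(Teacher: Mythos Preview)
Your approach is different from the paper's and has one gap that needs patching.

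The paper treats the downstairs energy $\cal{E}$ and the twisted quantity $K$ on an equal footing: it fixes $\eps$, obtains $\delta_1, \delta_2$ from Lemma \ref{smallenergylemmaI}, and observes that at most $2\cal{E}_0/\delta_1 + 2K_0/\delta_2$ unit-length substrips can fail to be $\eps$-close to a constant twisted solution. This yields a single partition of $Z$ into ``main'' and ``neck'' pieces \emph{for the pair} $(u_\alpha, \varphi_\alpha)$, and the limits on the main pieces automatically concatenate. Your two-step route---first downstairs compactness for $u_\alpha$, then neck analysis of $\varphi_\alpha$ via eigenvalue crossings of $\Lambda$---is a reasonable alternative, and the monotonicity of $\Lambda$ on the necks is the right observation.

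The gap is in the neck extraction. Having picked $\rho_\alpha^{i,j}$ at each eigenvalue-midpoint crossing, you have not shown that consecutive differences $\rho_\alpha^{i,j} - \rho_\alpha^{i,j+1}$ go to infinity. If some such difference stays bounded, the two extracted limits are translates of the \emph{same} trajectory (one whose $\Lambda$ drops through several eigenvalue gaps at once, as in the solutions of Example \ref{constant_u_solutions} with $j - i \ge 2$), and listing both among the $\sigma_\alpha$'s produces a tuple whose endpoints do not match up as required for membership in $\cal{M}(\bf{x}_-, \bf{x}^1) \times \cdots \times \cal{M}(\bf{x}^{n-1}, \bf{x}_+)$. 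The fix is standard: pass to a further subsequence so that each consecutive difference either goes to infinity or converges, and merge the bounded clusters into a single representative. Your final-paragraph claim that between consecutive $\sigma_\alpha$'s the solution is close to a constant only holds \emph{after} this merging, since only then is $\Lambda$ confined to a single eigenvalue interval on each gap.
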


\begin{proof} It suffices to show that there is a subsequence and a broken trajectory $(u^1, \varphi^1), \hdots, (u^n, \varphi^n)$, so that for every $\eps > 0$, there are partitions of $Z$ for each $\alpha$
\[Z = W^0_{\alpha} \cup Z^1_{\alpha} \cup W^1_{\alpha} \cup Z^2_{\alpha} \cup \hdots \cup Z^n_{\alpha} \cup W^n_{\alpha}\]
satisfying the properties of \eqref{strippartition}, where the translated solutions centred on $Z^i_{\alpha}$, $\tau^*_{\sigma^i_{\alpha}} (u_{\alpha}, \varphi_{\alpha})$ converge locally to $(u^i, \varphi^i)$, and such that $(u_{\alpha}, \varphi_{\alpha})|_{W^i_{\alpha}}$ are within $\eps$ of a constant solution $(x^i, \psi^i)$ in the $L^{2,k}$ norm.

Now, there exist uniform bounds $\mathcal{E}_0$, $K_0$ be uniform bounds for $\mathcal{E}(u_{\alpha})$ and $K(u_{\alpha}, \phi_{\alpha})$ respectively. Choose some small $\eps > 0$, and take the finite strip $Z_1 = [-1, 1] \times [0,1]$. The lemma above yields $\delta_1, \delta_2 > 0$ such that any solutions on $Z_1$ with $\mathcal{E}_{Z_1} < \delta_1$, $K_{Z_1} < \delta_2$ are within $\eps$ of a constant solution. In particular, for each $\alpha$, there are at most $2\mathcal{E}_0/\delta_1 + 2K_0/\delta_2$ integers $p$ such that $\tau^*_p (u_{\alpha}, \varphi_{\alpha})$ is not $\eps$-close to a constant solution.

We can then find, for some fixed $n$, partitions of the strip
\[Z = W^0_{\alpha} \cup Z^1_{\alpha} \cup W^1_{\alpha} \cup Z^2_{\alpha} \cup \hdots \cup Z^n_{\alpha} \cup W^n_{\alpha}\]
where each $Z^i_{\alpha}$ is of bounded length, at least one of $\cal{E}_{Z^i_{\alpha}} \ge \delta_1$ or $K_{Z^i_{\alpha}} \ge \delta_2$ holds, and such that $(u_{\alpha}, \varphi_{\alpha})|_{W^i_{\alpha}}$ is within $\eps$ of a constant solution.

Now, there are only finitely many intersection points of $L_0 \cap L_1$, and since $\Lambda(\varphi_{\alpha})$ is uniformly bounded independently of $\alpha$, for each intersection point only finitely many eigenvalues $\lambda$ can appear as the constant solutions proximate to $(u_{\alpha}, \varphi_{\alpha})|_{W^i_{\alpha}}$. Thus, pass to a subsequence so that each $(u_{\alpha}, \varphi_{\alpha})|_{W^i_{\alpha}}$ is within $\eps$ of the same constant solution at $\bf{x}^i$ independent of $\alpha$.

Pass to a further subsequence so that the translates $\tau^*_{\sigma^i_{\alpha}} (u_{\alpha}, \varphi_{\alpha})$ centred on $Z^i_{\alpha}$ converge locally to some solution $(u^i, \varphi^i)$. This solution must then itself be a non-constant trajectory between $\bf{x}^{i-1}$ and $\bf{x}^i$; this gives the desired broken limit.\end{proof}

\begin{corollary}The moduli space of broken twisted flows
\[\bar{\cal{M}}(\bf{x}_-, \bf{x}_+) = \bigcup\limits_{n \ge 0} \bigcup\limits_{\bf{x}^1, \hdots, \bf{x}^{n-1}} \cal{M}(\bf{x}_-, \bf{x}^1) \times \hdots \times \cal{M}(\bf{x}^{n-1}, \bf{x}_+)\]
with the topology of Gromov convergence is compact.\end{corollary}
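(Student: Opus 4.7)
The plan is to reduce compactness of $\bar{\cal{M}}(\bf{x}_-, \bf{x}_+)$ to the sequential compactness statement just proved for the subspace $\cal{M}(\bf{x}_-, \bf{x}_+)$ of unbroken flows, together with metrizability of the Gromov topology. Given a sequence of (possibly broken) flows
\[\gamma_\alpha = \bigl((u^1_\alpha, \varphi^1_\alpha), \ldots, (u^{n_\alpha}_\alpha, \varphi^{n_\alpha}_\alpha)\bigr) \in \cal{M}(\bf{x}_-, \bf{x}^1_\alpha) \times \hdots \times \cal{M}(\bf{x}^{n_\alpha - 1}_\alpha, \bf{x}_+),\]
the first step is to obtain uniform bounds on the combinatorial data $(n_\alpha, \bf{x}^1_\alpha, \ldots, \bf{x}^{n_\alpha - 1}_\alpha)$.

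For the number of components: concatenating the underlying strips $u^i_\alpha$ downstairs yields a broken pseudoholomorphic strip in $\bar{\cal{M}}(x_-, x_+)$ with total energy equal to the symplectic action difference $\int_{x_-}^{x_+} \omega$. Non-constant components carry at least the minimal positive energy appearing in $\bar{\cal{M}}(x_-, x_+)$, so only boundedly many non-constant $u^i_\alpha$ occur; components with constant $u^i_\alpha = x \in L_0 \cap L_1$ correspond to a strict drop in $\Lambda(\varphi)$ across an eigenvalue at $x$, and by Proposition \ref{KboundI} applied component-wise the total variation $\sum_i K(u^i_\alpha, \varphi^i_\alpha)$ is uniformly bounded, so these too are bounded in number. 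For the intermediate asymptotics: writing $\bf{x}^i_\alpha = (x^i_\alpha, \lambda^i_\alpha)$, the bound on $\sum_i K(u^i_\alpha, \varphi^i_\alpha)$ together with the telescoping identity $\sum_i (\lambda^i_\alpha - \lambda^{i-1}_\alpha) = \lambda_+ - \lambda_-$ gives $|\lambda^i_\alpha - \lambda_-|$ uniformly bounded, and since the spectrum of $I_x \frac{d}{dt}$ at each $x \in L_0 \cap L_1$ is discrete and $L_0 \cap L_1$ is finite, only finitely many values of $\bf{x}^i_\alpha$ are possible. Passing to a subsequence, I may assume $n_\alpha = n$ and $\bf{x}^i_\alpha = \bf{x}^i$ are independent of $\alpha$.

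At this point I would apply the compactness theorem just proved to each sequence $(u^i_\alpha, \varphi^i_\alpha) \in \cal{M}(\bf{x}^{i-1}, \bf{x}^i)$ in turn, extracting at each step a further subsequence along which $(u^i_\alpha, \varphi^i_\alpha)$ Gromov-converges to a broken flow $\gamma^i \in \bar{\cal{M}}(\bf{x}^{i-1}, \bf{x}^i)$. The concatenation $\gamma = (\gamma^1, \ldots, \gamma^n)$ lies in $\bar{\cal{M}}(\bf{x}_-, \bf{x}_+)$, and is the desired Gromov limit of $\gamma_\alpha$: if $\sigma^{i,j}_\alpha$ denote the translation parameters produced by the theorem applied to the $i$th component, then the combined collection $\{\sigma^{i,j}_\alpha\}_{i,j}$, together with the $\eps$-neighbourhood regions around each intermediate limit $\bf{x}^{i,j}$, witnesses Gromov convergence of the whole broken flow $\gamma_\alpha \to \gamma$ in the sense defined in Section \ref{sec:compactness_I}.

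Finally, to pass from sequential compactness to genuine compactness I would verify that the Gromov topology on $\bar{\cal{M}}(\bf{x}_-, \bf{x}_+)$ is first countable (indeed metrizable), since it is built from the $C^\infty_{loc}$ topology on an exhaustion of $Z$ together with the discrete combinatorial data labelling the strata. The main technical obstacle is the coherent bookkeeping in the concatenation step: one must check that the partitions of $Z$ into translated substrips produced when applying the previous theorem to each component $(u^i_\alpha, \varphi^i_\alpha)$ can be assembled into a single partition of $Z$ for $\gamma_\alpha$ witnessing Gromov convergence; this is routine but notationally heavy, and is the reason the statement is phrased as a corollary rather than folded into the theorem itself.
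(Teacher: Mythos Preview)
Your argument is correct and essentially supplies the details that the paper leaves implicit: in the paper this corollary is stated without proof, as an immediate consequence of the preceding theorem. Your reduction---bounding the number of components via energy (for non-constant $u^i$) and via the uniform bound on total variation $K$ from Proposition~\ref{KboundI} (for constant $u^i$), then bounding the intermediate eigenvalues $\lambda^i_\alpha$ by the same $K$ bound, and finally applying the theorem to each component---is the standard way one passes from sequential compactness of the unbroken moduli space to compactness of the broken one. One small point of phrasing: when you say constant-$u$ components ``correspond to a strict drop in $\Lambda(\varphi)$ across an eigenvalue,'' what you mean (and what makes the counting work) is that such a component runs from $(x,\lambda^{i-1})$ to $(x,\lambda^i)$ with $\lambda^{i-1} > \lambda^i$, and the drop $\lambda^{i-1}-\lambda^i$ is bounded below by the minimal spectral gap of $I_x\tfrac{d}{dt}$ over the finitely many $x \in L_0 \cap L_1$; since $\Lambda$ is monotone on such components this drop equals the contribution to $K$, and the total $K$ is bounded.
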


We should finally note that the gluing properties of both the Maslov index $\mu(u)$ of a strip $u$, as well as the spectral flow $\specflow(u)$ over $u$ of \eqref{specflow_sfu} (which, in any case, is itself a Maslov index in the bundle $E$), as well as the proof above, imply the following. If a sequence $(u_{\alpha}, \varphi_{\alpha})$ of trajectories Gromov converges to a broken trajectory $((u^1, \varphi^1), \hdots, (u^n, \varphi^n))$, then for large enough $\alpha$,
\[\mu(u_{\alpha}) = \sum\limits_{i=1}^n \mu(u^i); \qquad \specflow(u_{\alpha}) = \sum\limits_{i=1}^n \specflow(u^i).\]
In particular, assuming all moduli spaces are regular, the dimension of the stratum of $\bar{\cal{M}}(\bf{x}_-, \bf{x}_+)$ consisting of broken trajectories with $n$ components is
\[\dim \cal{M}(\bf{x}_-, \bf{x}_+) - n + 1\]
as in ordinary Floer theory.

\subsection{Gluing.} We now prove a gluing theorem for moduli spaces of twisted trajectories, from which we conclude that $CF_{tw}(L_0, L_1; \frak{p})$ is indeed a complex with a differential of square zero. To prove this, along with the invariance and naturality results we wish, it suffices to only prove gluing results for the boundaries of the one-dimensional moduli spaces, in other words we only need to glue together twisted trajectories that are in the discrete (zero-dimensional) components of the moduli space.

\begin{remark}Whereas for the proof of compactness it was most convenient to use the $\sigma$ model for the twisted equations, for gluing we will find it much easier to use the $\tau$ model. The essential reason is that, in order to glue two $\sigma$-model solutions $(u^1, \phi^1) \in \cal{M}(x^0, x^1)$ and $(u^2, \phi^2) \in \cal{M}(x^1, x^2)$, we would first have to rescale $\phi^1, \phi^2$ to match the values of $\phi^1(T, \cdot)$ and $\phi^2(-T, \cdot)$, however since $||\phi^1(s,t)||_H = O(e^{-\lambda^1 s})$ as $s \to +\infty$, whereas $||\phi^2(s,t)||_H = O(e^{-\lambda^1 s})$ as $s \to -\infty$, the choice of rescaling factors cannot be bounded in $T$. Working in the $\tau$ picture avoids this issue by normalizing the solutions; it has the added advantage that we avoid any need to glue Sobolev spaces with exponential weights.\end{remark}

Working in the $\tau$ picture, fix $\bf{x}^0 = (x^0, \lambda^0), \bf{x}^1 = (x^1, \lambda^1), \bf{x}^2 = (x^2, \lambda^2) \in \frak{C}$ and twisted trajectories $(u^1, \varphi^1) \in \cal{M}(\bf{x}^0, \bf{x}^1), (u^2, \varphi^2) \in \cal{M}(\bf{x}^1, \bf{x}^2)$ which are regular and in the zero-dimensional components of the moduli space. Pick unit eigensolutions $\psi^0, \psi^1, \psi^2$ corresponding to $\lambda^0, \lambda^1, \lambda^2$ at $x^0, x^1, x^2$; we will also assume that $(u^1, \varphi^1), (u^2, \varphi^2)$ satisfy $\varphi^1 \to \psi^0, \psi^1$ at $\pm \infty$ and $\varphi^2 \to \psi^1, \psi^2$ at $\pm \infty$ respectively. We also fix slices of the translation action, for instance by requiring $u^1, u^2$ to have equal energy on $(-\infty, 0] \times[0,1]$ and $[0, \infty)\times[0,1]$.

We will follow the usual strategy of constructing for $T$ sufficiently large an approximate solution
\[(u_T, \varphi_T)\]
where $u_T: Z \to M$ is a smooth strip with boundary on $L_0, L_1$ with asymptotics to $x^0, x^2$ at $\pm \infty$, and $\varphi_T$ is a smooth section of $u_T^*E$, with boundary conditions on $F_0, F_1$, normalized to $||\varphi_T|| = 1$, and with $\varphi_T \to \psi^0, \psi^2$ respectively at $\pm \infty$. We will then construct an actual solution $(\hat{u}_T, \hat{\varphi}_T)$ of the twisted equations near $(u_T, \varphi_T)$, by using the implicit function theorem on the Banach manifold $\cal{S}^{2,k}_{\psi_- \psi_+}$.

Choose $T$ sufficiently large so that $u^1([T, \infty) \times[0,1])$ and $u^2((-\infty, -T]\times[0,1])$ are contained in a sufficiently small neighbourhood $U$ of $x^1$ on each $E$ is trivialized. Then, for $s \ge T$ and $s \le -T$ respectively we can write
\begin{equation}\label{etaerror}\varphi^1(s,t) = \psi^1(t) + \eta^1(s,t), \qquad \varphi^2(s,t) = \psi^1(t) + \eta^2(s,t)\end{equation}
where $\eta^1, \eta^2 \to 0$ uniformly in $t$ and exponentially in all derivatives as $s \to \infty$.

Now, choose a smooth, non-decreasing cut-off function $\beta_+ : \bb{R} \to [0,1]$ such that $\beta(s) = 0$ for $s \le -1$, and $\beta(s) = 1$ for $s \ge 1$; set $\beta_-(s) = 1 - \beta_+(s)$. We define the pre-glued map $(u_T, \varphi_T)$ as
\begin{align}u_T(s,t) &= \begin{cases}u^1(s+T, t) & \text{ for } s \le -1 \\
\exp_{x^1}\left(\beta_-(s) \exp_{x^1}^{-1}(u^1(s+T, t)) + \beta_+(s)\exp_{x^2}^{-1}(u^2(s-T, t))\right) & \text{ for } -1 \le s \le 1\\
u^2(s-T, t) & \text{ for } s \ge 1\end{cases}\\
\varphi_T(s,t) &= \begin{cases}\varphi^1(s + T, t) & \text{ for } s \le -1 \\
\frac{\psi^1(t) + \beta_-(s) \eta^1(s+T,t) + \beta_+(s)\eta^2(s-T,t)}{||\psi^1(t) + \beta_-(s) \eta^1(s+T,t) + \beta_+(s)\eta^2(s-T,t)||_H} & \text{ for } -1 \le s \le 1\\
\varphi^2(s - T, t) & \text{ for } s \ge 1\end{cases}\end{align}
where $\eta^1, \eta^2$ are the error terms of \eqref{etaerror}. This defines an element of the $\cal{S}^{2,k}_{\psi^0 \psi^2}$.

Observe that both $\bar{\partial}_J u_T$ and $\cal{F}_u(\varphi_T) = \bar{\nabla}_I \varphi_T - \Lambda(\varphi_T) \varphi_T$ are supported in $[-1,1]\times[0,1]$. Moreover, the exponential decay of $u^1, \varphi^1$ and $u^2, \varphi^2$ imply that for some constants $C$ and $\delta > 0$ independent of $T$ we have
\begin{equation}\label{pregluing_error_I}||\bar{\partial}_J u||_{L^{2,k-1}} + ||\cal{F}_u \varphi_T||_{L^{2,k-1}} \le C e^{- \delta T}.\end{equation}

We seek to construct a zero of the map
\begin{equation}\cal{F} : \cal{S}^{2,k}_{\psi^0 \psi^2} \to \cal{L}^{2,k-1}(TM) \oplus \cal{V}^{2,k-1}, \quad (u, \varphi) \mapsto (\bar{\partial}_J u, \bar{\nabla}_I \varphi - \Lambda(\varphi)\varphi)\end{equation}
near $(u_T, \phi_T)$. Following the strategy of Floer \cite{Floer88b}, we first extend the linearization of $(\bar{\partial}, \bar{\nabla})$ away from the zero locus, by fixing a metric on $M$ and using the Levi-Civita connection on $TM$ and the connection $\nabla$ on $E$. As usual, it will be convenient to choose a time-dependent metric $g_t$, so that each $L_i$ is totally geodesic for $g_i$. Indeed, once we have chosen a linearization $D_u$ of the ordinary $\bar{\partial}$-operator, for $(u, \varphi) \in \cal{S}^{2,k}_{\psi^0 \psi^2}$ we obtain a linear map 
\begin{align}D_{(u, \varphi)}: T_u \cal{Z}^{2,k}(x^0, x^2) \oplus T_{\varphi}\cal{S}^{2,k}_{\psi^0 \psi^2}(u^* E) &\to L^{2,k-1}(u^*TM) \oplus \cal{V}^{2,k-1}_{(u, \varphi)} \\
(\xi, \vartheta) &\mapsto (D_u \xi, B^{\tau}_{(u, \phi)}\xi + \Pi_V \bar{\nabla}_I \vartheta - \Lambda(\varphi)\vartheta)\end{align}
where $B^{\tau}_{(u,\phi)}\xi$ is given by \eqref{Btauoperator}.

\begin{remark}Notice that the $\cal{V}^{2,k-1}$ term of this operator is identical to the original calculation of $D\cal{F}_{(u, \varphi)}$ at a solution $(u, \varphi)$. If we were to work instead in the $\sigma$ picture, this would be obvious, since the equations are linear in $\phi$. In the $\tau$ picture this is not so clear: indeed were we to instead linearize $\cal{F}$ as a map taking values in all of $L^{2,k-1}(u^* E)$, there would be additional terms such as $\langle \bar{\nabla}_I \varphi, \vartheta \rangle \varphi$ in the above map. However, these terms vanish after using $\Pi_V$ to project back to $\cal{V}^{2,k-1}$. Indeed, one could think of $\Pi_V$ as being a sort of parallel transport map for $\cal{V}^{2,k-1}$.\end{remark}

For sufficiently small $\xi \in T_{u_T}\cal{Z}^{2,k}(x^0, x^2)$, the exponential map $\xi \mapsto \exp_{u_T}(\xi)$ yields charts for $\cal{Z}^{2,k}(x^0, x^2)$ around $u_T$. Moreover, if we assume that $\nabla$ is Hermitian for $I_t, \omega_E$, the parallel transport map $\Pi^{\exp_{u_T}(\xi)}_{u_T}$ for sections of $E$ preserves $\langle \cdot, \cdot \rangle_H$. Thus we have charts
\begin{equation}T_{u_T} \cal{Z}^{2,k}(x^0, x^2) \oplus T_{\varphi_T}\cal{S}^{2,k}_{\psi^0 \psi^2}(u_T^* E) \ni (\xi, \vartheta) \mapsto (\exp_{u_T}(\xi), \Pi^{\exp_{u_T}(\xi)}_{u_T} \vartheta) \in \cal{S}^{2,k}_{\psi^0\psi^2}.\end{equation}
We then take the non-linear map
\[\cal{F}_{(u_T, \phi_T)} : T_{u_T} \cal{Z}^{2,k}(x^0, x^2) \oplus T_{\varphi_T} \cal{S}^{2,k}_{\psi^0 \psi^2}(u_T^* E) \to L^{2,k-1}(u_T^*TM) \oplus V^{2,k-1}_{(u_T, \varphi_T)} \]
\[(\xi, \vartheta) \mapsto \left(\Pi^{\exp_{u_T}(\xi)}_{u_T}\right)^{-1} \cal{F}\left(\exp_{u_T}(\xi), \Pi^{\exp_{u_T}(\xi)}_{u_T} \vartheta\right).\]
If we write this as
\begin{equation}\cal{F}_{(u_T, \varphi_T)}(\xi, \vartheta) = \cal{F}(u_T, \varphi_T) + D_{(u_T, \phi_T)}(\xi, \vartheta) + N(\xi, \vartheta)\end{equation}
we then have the following standard quadratic estimate on the error term $N(\xi, \vartheta)$:
\begin{lemma}For some constant $C$ independent of $T$, if $(\xi, \vartheta), (\xi, \vartheta)' \in T_{u_T} \cal{Z}^{2,k}(x^0, x^2) \oplus T_{\varphi_T}\cal{S}^{2,k}_{\psi^0 \psi^2}(u_T^* E)$ such that $||(\xi, \vartheta)||_{2, k}, ||(\xi', \vartheta')||_{2,k} \le C$, then
\begin{equation}||N(\xi, \vartheta) - N(\xi', \vartheta')||_{2,k-1} \le C ||(\xi, \vartheta) - (\xi', \vartheta')||_{2,k} ||(\xi, \vartheta) + (\xi', \vartheta')||_{2,k}.\end{equation}\end{lemma}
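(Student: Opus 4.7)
The estimate is a standard quadratic bound on the nonlinear remainder, and the proof proceeds along familiar lines from Floer gluing theory, with the only novelty being the additional $\tau$-model terms coming from $\bar{\nabla}_I \varphi - \Lambda(\varphi)\varphi$. The plan is to split $N = N_1 + N_2$ where $N_1$ is the nonlinearity of $\bar{\partial}_J$ in the exponential chart $\xi \mapsto \exp_{u_T}(\xi)$, and $N_2$ is the nonlinearity of the twisted part under the combined chart $(\xi, \vartheta) \mapsto (\exp_{u_T}(\xi), \Pi^{\exp_{u_T}(\xi)}_{u_T} \vartheta)$. For $N_1$, the quadratic estimate is exactly the one proved for ordinary Floer theory in \cite{Floer88b} (see also \cite{McDuffSalamon94}), using the uniform $C^1$-bound on $u_T$ given by the pre-gluing construction.

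For $N_2$, the key point is to identify the sources of nonlinearity and verify each yields an error which vanishes to second order at $(\xi, \vartheta) = 0$. These sources are: (a) the dependence of $I_t(u)$ and of the curvature of $\nabla$ on $u$, which produces smooth fibrewise-nonlinear terms in $\xi$ multiplied by $\vartheta$ and $\nabla \vartheta$; (b) the quadratic dependence of $\Lambda$ on $\varphi$ via the formula $\Lambda(\varphi) = \langle \varphi, I_t \nabla_t \varphi \rangle_H$, which gives a term cubic overall when multiplied by $\varphi$ in $\Lambda(\varphi) \varphi$; and (c) the dependence of the projection $\Pi_V$ on $\varphi$, which similarly contributes smooth nonlinearities. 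In each case, the map $(\xi, \vartheta) \mapsto N_2(\xi, \vartheta)$ is smooth as a map between the Banach spaces in question, using that $k > 2$ and $\dim Z = 2$ gives the Sobolev multiplication $L^{2,k} \times L^{2,k} \to L^{2,k-1}$, and that $\Pi_V, \Lambda$ are smooth functions of $\varphi$ so long as we stay in a small neighbourhood of $\varphi_T$ where $||\varphi||_H$ remains bounded away from zero.

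The estimate itself then follows by the fundamental theorem of calculus. Writing $\zeta = (\xi, \vartheta)$, $\zeta' = (\xi', \vartheta')$, we have
\[ N(\zeta) - N(\zeta') = \int_0^1 DN(\tau \zeta + (1-\tau)\zeta')(\zeta - \zeta') \, d\tau. \]
By construction $DN(0) = 0$, so applying the fundamental theorem of calculus a second time to $DN$ along the segment from $0$ to $\tau \zeta + (1-\tau)\zeta'$ gives
\[ \|DN(\tau \zeta + (1-\tau)\zeta')\|_{\mathrm{op}} \le C_1 \|\tau \zeta + (1-\tau)\zeta'\|_{2,k}, \]
where $C_1$ is a uniform bound on $\|D^2 N\|_{\mathrm{op}}$ on the ball of radius $C$, which exists by smoothness and compactness of the closed ball as discussed above. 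Combining these and bounding $\|\tau \zeta + (1-\tau) \zeta'\|_{2,k} \le \|\zeta + \zeta'\|_{2,k}$ for $\tau \in [0,1]$ (after absorbing the factor into $C_1$) gives the stated bound with some constant $C$ independent of $T$.

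The only real obstacle is ensuring the constant $C$ can be taken \emph{independent of $T$}. This follows because the pre-glued pair $(u_T, \varphi_T)$ has $C^{\ell}$-norm bounded uniformly in $T$ (the pre-gluing is done by cutoff in a region where $u^1, u^2$ are exponentially close to the constant solution at $x^1$, with uniform bounds on derivatives), and all the nonlinearities in $N$ are smooth functions of the local data $(u_T, \varphi_T)$ and its derivatives. Thus the second derivative $D^2 N$ admits a uniform bound on a ball of fixed radius around the origin in the chart, independent of $T$.
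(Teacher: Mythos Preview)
The paper itself does not prove this lemma: it is introduced as ``the following standard quadratic estimate'' and left without proof, so your argument is supplying detail the paper omits. Your overall strategy---splitting off the classical $\bar{\partial}_J$ nonlinearity, identifying the new nonlinear terms from $I_t(u)$, $\Lambda(\varphi)$, and $\Pi_V$, invoking Sobolev multiplication for $k>2$ in dimension two, and arguing uniformity in $T$ from the uniform $C^\ell$ bounds on the pregluing---is correct and is exactly the standard route.

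There is one concrete slip. The inequality you claim,
\[
\|\tau \zeta + (1-\tau)\zeta'\|_{2,k} \le \|\zeta + \zeta'\|_{2,k},
\]
is false in general (take $\zeta' = -\zeta$). What the double application of the fundamental theorem of calculus actually yields is
\[
\|N(\zeta) - N(\zeta')\|_{2,k-1} \le C\,\|\zeta - \zeta'\|_{2,k}\,\bigl(\|\zeta\|_{2,k} + \|\zeta'\|_{2,k}\bigr),
\]
since $\|\tau\zeta + (1-\tau)\zeta'\| \le \max(\|\zeta\|,\|\zeta'\|) \le \|\zeta\| + \|\zeta'\|$. Indeed the version with $\|\zeta + \zeta'\|$ on the right cannot hold for a general smooth nonlinearity: already for $N(\zeta)=\zeta^3$ on $\bb{R}$ one has $N(1)-N(-1)=2$ while $\zeta+\zeta'=0$. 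The form with $\|\zeta\|+\|\zeta'\|$ is the one that appears in the standard references (e.g.\ \cite{Floer88b}, \cite{McDuffSalamon94}) and is what is actually used in the Newton--Picard iteration; the paper's phrasing with $\|(\xi,\vartheta)+(\xi',\vartheta')\|$ should be read as a minor imprecision. With that correction, your proof is complete.
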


The final ingredient for the implicit function theorem on $(u_T, \phi_T)$ is a uniformly bounded right inverse for the linearized operator $D_{(u_T, \phi_T)}$. We will later use a more explicit choice, but for now let us prove one exists:

\begin{lemma}\label{bounded_right_inverse}The operator $D_{(u_T, \varphi_T)}$ is Fredholm and surjective, of index $\mu(u_T) + \specflow(u_T) + i(\lambda^0) - i(\lambda^2) = 2$. Moreover, it admits a smooth family of right inverses $Q_{(u_T, \varphi_T)}$, with a uniform bound on the operator norm
\begin{equation}||Q_{(u_T, \varphi_T)}|| \le C\end{equation}
independent of $T$.\end{lemma}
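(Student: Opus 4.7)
The plan is to follow the standard approach of constructing an approximate right inverse by pasting together right inverses of the linearized operators at the two components $(u^1, \varphi^1)$ and $(u^2, \varphi^2)$, then correcting to a genuine right inverse via a Neumann series. First, I would verify the Fredholm property of $D_{(u_T, \varphi_T)}$: this is a compact perturbation of an operator asymptotic at $\pm\infty$ to the invertible operators $F_{\psi^0}, F_{\psi^2}$ from the proof of Proposition \ref{tau_index}, and the Fredholm property follows exactly as in that argument together with the discussion just after Proposition \ref{sigma_model_index} on the ordinary linearized $\bar{\partial}$. The index can be computed either directly from the Maslov-type formula of Proposition \ref{tau_index} for the $\varphi$-component plus the usual Maslov index for the $u$-component, or by additivity of indices under gluing, using $\mu(u_T) = \mu(u^1) + \mu(u^2)$ and $\specflow(u_T) = \specflow(u^1) + \specflow(u^2)$ together with the fact that each component lies in a zero-dimensional moduli space, giving $\mu(u^i) + \specflow(u^i) + i(\lambda^{i-1}) - i(\lambda^i) = 1$.

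For surjectivity and the uniform right inverse, I would proceed as follows. By regularity each $D_{(u^i, \varphi^i)}$ is surjective with one-dimensional kernel spanned by the translation $(\partial_s u^i, \nabla_s \varphi^i)$, so it admits a bounded right inverse $Q_i$ (for concreteness chosen to have image orthogonal to the kernel). Given $(\zeta, \sigma) \in L^{2,k-1}(u_T^* TM) \oplus V^{2,k-1}_{(u_T, \varphi_T)}$, I multiply it by smooth cut-off functions $\chi_-, \chi_+$ with $\chi_-^2 + \chi_+^2 = 1$, supported on $s \le 1$ and $s \ge -1$ respectively, to obtain data $(\zeta_1, \sigma_1)$ and $(\zeta_2, \sigma_2)$ that can be pulled back (via $s \mapsto s + T$ and $s \mapsto s - T$) to the cylinders over $u^1, u^2$. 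Apply $Q_1, Q_2$ there, then cut off the results by $\chi_-, \chi_+$ again and paste back to define $Q_T^{\mathrm{approx}}(\zeta, \sigma)$ on $(u_T, \varphi_T)$, using the parallel transport built into the pre-gluing construction.

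The error $\id - D_{(u_T, \varphi_T)} \circ Q_T^{\mathrm{approx}}$ comes from two sources: (i) commutators of $D_{(u_T, \varphi_T)}$ with the cut-off functions $\chi_\pm$, and (ii) the discrepancy between $D_{(u_T, \varphi_T)}$ and the operators $D_{(u^i, \varphi^i)}$ on the overlap $|s| \le 1$. Source (i) produces terms supported in the overlap region, where the pasted solution $(u_T, \varphi_T)$ is close to the constant solution $(x^1, \psi^1)$ with exponentially small error in $T$, due to the exponential asymptotic convergence of Proposition \ref{tau_model_asymptotics}; the derivatives of $\chi_\pm$ are bounded, so the cut-off error is controlled by the values of $Q_i(\zeta_i,\sigma_i)$ in the overlap, which by the same exponential decay contribute at most $Ce^{-\delta T}$ of the norm. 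Source (ii) is similarly estimated by the pre-gluing error \eqref{pregluing_error_I}. This yields $\|\id - D_{(u_T, \varphi_T)} \circ Q_T^{\mathrm{approx}}\| \le Ce^{-\delta T}$; for $T$ large, a Neumann series inverts $D_{(u_T, \varphi_T)} \circ Q_T^{\mathrm{approx}}$ and the genuine right inverse $Q_{(u_T, \varphi_T)} = Q_T^{\mathrm{approx}} \circ (D_{(u_T, \varphi_T)} \circ Q_T^{\mathrm{approx}})^{-1}$ is bounded uniformly in $T$.

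The main obstacle I expect is the cut-off estimate in the overlap region: one must check that the kernel directions of $D_{(u^i, \varphi^i)}$ (namely the translation modes $(\partial_s u^i, \nabla_s \varphi^i)$) do not cause trouble when cut off, since applying $Q_i$ to input supported near the neck and then truncating could in principle produce large terms if $Q_i$ fed back into the kernel. This is avoided by the choice of $Q_i$ orthogonal to the translation direction, combined with the fact that $\partial_s u^i$ and $\nabla_s \varphi^i$ decay exponentially in $s$ at the relevant end (again by Proposition \ref{tau_model_asymptotics}), so the translation modes contribute negligibly on the neck. With this taken care of, everything fits in the standard framework, and smoothness of the family $T \mapsto Q_{(u_T, \varphi_T)}$ follows from smooth dependence of the construction on $T$.
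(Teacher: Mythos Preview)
Your proposal is correct and follows essentially the same strategy as the paper: construct an approximate right inverse by breaking, applying the right inverses $Q_{(u^i,\varphi^i)}$ on each piece, and pasting back, then correct via Neumann series using the exponential decay on the neck. The only notable difference is bookkeeping: the paper is explicit about the projections $\Pi_{V_{(u^i,\varphi^i)}}$ and $\Pi_{V_{(u_T,\varphi_T)}}$ needed to keep the broken and glued pieces inside the constrained $\tau$-model spaces (where $\langle\varphi,\vartheta\rangle_H=0$), which you subsume under ``parallel transport built into the pre-gluing construction''; this is the one place you should be careful when writing out details.
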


\begin{proof}We follow the usual strategy of constructing an approximate right inverse first. By the regularity of the moduli spaces $\cal{M}(\bf{x}^0, \bf{x}^1)$ and $\cal{M}(\bf{x}^1, \bf{x}^2)$, we choose bounded right inverses $Q_{(u^1, \varphi^1)}, Q_{(u^2, \varphi^2)}$ for $D_{(u^1, \varphi^1)}, D_{(u^2, \varphi^2)}$.

Given $(\zeta, \sigma) \in L^{2,k-1}(u_T^*TM) \oplus V^{2,k-1}_{(u_T, \varphi_T)}$, we construct $\beta^1(\zeta, \sigma) \in L^{2,k-1}((u^1)^*TM) \oplus V^{2,k-1}_{(u^1, \varphi^1)}$, noting that $u^1(s + T,t) = u_T(s, t)$ for $s \le - 1$, and for $-1 \le s \le +1$, both $u^1(s + T,t)$ and $u_T(s, t)$ are contained in a small neighbourhood of $x^1$, on which $E$ is trivialized:
\begin{equation}\beta^1(\zeta, \sigma)(s,t) = \begin{cases}(\zeta(s, t), \sigma(s, t)) & \text{ for } s \le - 1;\\
\beta_-(s -1)\left(\Pi^{u_1(s+T,t)}_{u_T(s, t)} \zeta(s, t), \Pi_{V_{(u^1, \varphi^1)}} \sigma(s, t)\right) & \text{ for } -1 \le s \le +1;\\
0 & \text{ for } 1 \le s.\end{cases}\end{equation}
where $\Pi^{u^1(s+T, t)}_{u(s,t)}$ is again the parallel transport map, and $\Pi_{V_{(u^1, \varphi^1)}}$ is the $s$-dependent linear projection given by
\[\sigma(s,t) \mapsto \sigma(s,t) - \langle \varphi^1(s+T), \sigma(s) \rangle_H \varphi(s+T, t)\]
which is well defined by the trivialization of $E$. We hope no confusion is caused between the two; in any case the linear projection can be thought of as a kind of parallel transport between subspaces of the space of sections of $E$.

Likewise, we can also construct $\beta^2(\zeta, \sigma) \in L^{2,k-1}((u^2)^*TM) \oplus V^{2,k-1}_{(u^2, \varphi^2)}$ as
\begin{equation}\beta^2(\zeta, \sigma) = \begin{cases}0 & \text{ for } s \le - 1; \\
\beta_+(s +1)\left(\Pi^{u_2(s-T,t)}_{u_T(s, t)} \zeta(s, t), \Pi_{V_{(u^2, \varphi^2)}}\sigma(s, t)\right) & \text{ for } -1 \le s \le +1;\\
(\zeta(s, t), \sigma(s, t)) & \text{ for } 1 \le s.\end{cases}\end{equation}
Together these define the ``breaking map''
\begin{equation}\beta_T : L^{2,k-1}(u_T^*TM) \oplus V^{2,k-1}_{(u_T, \varphi_T)} \to \bigoplus\limits_{i = 1,2} \left(L^{2,k-1}((u^i)^*TM) \oplus V^{2,k-1}_{(u^i, \varphi^i)}\right).\end{equation}

Similarly, we can define the ``linear pregluing map''
\begin{equation}\gamma_T: \bigoplus\limits_{i = 1,2}W^{2,k}((u^i)^* TM) \oplus T_{\varphi^i} \cal{S}^{2,k}_{\psi^{i-1}\psi^i}((u^i)^*E) \to W^{2,k}(u_T^* TM) \oplus T_{\varphi_T} \cal{S}^{2,k}_{\psi^{0}\psi^2}(u_T^*E)\end{equation}
as follows: given $(\xi^i, \vartheta^i) \in W^{2,k}((u^i)^* TM) \oplus T_{\varphi^i} \cal{S}^{2,k}_{\psi^{i-1}\psi^i}((u^i)^*E)$ for $i = 1,2$, we set
\begin{equation*}\gamma_T(\xi^1, \vartheta^1, \xi^2, \vartheta^2) = (\xi_T, \vartheta_T)\end{equation*}
where $\xi_T, \vartheta_T$ are given by
\begin{align*}\xi_T &= \begin{cases}\xi^1(s+T, t) & \text{ for } s \le -1 \\
\beta_-(s) \Pi^{u_T(s,t)}_{u^1(s+T, t)} \xi^1(s+T, t) + \beta_+(s) \Pi^{u_T(s,t)}_{u^2(s-T, t)} \xi^2(s-T, t) & \text{ for } -1 \le s \le 1\\
\xi^2(s-T, t) & \text{ for } s \ge 1\end{cases}\\
\vartheta_T &= \begin{cases}\vartheta^1(s+T, t) & \text{ for } s \le -1\\
\Pi_{V_{(u_T, \varphi_T)}}\left(\beta_-(s) \vartheta^1(s+T, t) + \beta_+(s) \vartheta^2(s-T, t)\right) & \text{ for } -1 \le s \le 1\\
\vartheta^2(s-T, t) & \text{ for } 1 \le s\end{cases}\end{align*}
where again $\Pi_{V_{(u_T, \varphi_T)}}$ is the $s$-dependent projection onto sections which are $\langle \cdot, \cdot \rangle_H$-orthogonal to $\varphi_T$, which is well defined since all the sections in question can be thought of as sections of a fixed vector space $E_{x^1}$.

We can then define an approximate right inverse to $D_{(u_T, \varphi_T)}$ as the composition
\begin{equation}Q_{(u_T, \varphi_T)}^{approx} = \gamma_T \circ \begin{pmatrix}Q_{(u^1, \varphi^1)}& 0 \\ 0 & Q_{(u^2, \varphi^2)}\end{pmatrix} \circ \beta_T.\end{equation}
It is then a straightforward adaptation of the standard gluing estimates as in \cite{Floer88b}, using the exponential decay for $u^1, u^2$ as well as the exponential convergence of $\varphi^1, \varphi^2$ to $\psi^1$, to deduce:
\begin{itemize}\item The breaking map $\beta_T$ and the linear pregluing map $\gamma_T$ are both uniformly bounded in $T$;
\item hence the composite map $Q_{(u_T, \varphi_T)}^{approx}$ is also uniformly bounded in $T$;
\item the difference
\[D_{(u_T, \varphi_T)} \circ Q_{(u_T, \varphi_T)}^{approx} - \id\]
has operator norm exponentially decaying in $T$.\end{itemize}
Hence, we obtain a uniformly bounded right inverse for $D_{(u_T, \varphi_T)}$ as
\begin{equation}Q_{(u_T, \varphi_T)} = Q_{(u_T, \varphi_T)}^{approx} \circ \left(D_{(u_T, \varphi_T)} \circ Q_{(u_T, \varphi_T)}^{approx}\right)^{-1}\end{equation}
which is well defined for sufficiently large $T$. \end{proof}

It will be helpful, especially when we prove gluing results for equivariant Floer theory, to make a somewhat more explicit choice of such an inverse, following \cite{FOOO}. Choose marked points $(s_1, t_1) \in Z$ and $(s_2, t_2) \in Z$ which are regular for each of $u^1, u^2$ in the sense of \cite{FloerHoferSalamon95}, and at each $u^1(s_1, t_1)$ and $u^2(s_2, t_2)$, choose small codimension one hypersurfaces $H^i \subset M$ so that:
\begin{itemize}\item $u^i$ is tranverse to $H^i$, and moreover $(u^i)^{-1}(H^i)$ is transverse to $\bb{R} \times \{t_i\} \subset Z$, at $(s_1, t_1)$;
\item for $p \in H^i$ sufficiently close to $u^i(s_i, t_i)$, the short geodesic between them is contained in $H^i$, meaning $\exp_{u^i(s_i, t_i)}(B_{\eps}(0)\cap TH^i) \subset H^i$ for some $\eps > 0$.\end{itemize}
The first condition ensures that if we define the codimension one subspace
\[W^{2,k}((u^i)^*TM, TH^i) \subset W^{2,k}((u^i)^*TM, TH^i)\]
of vector fields $\xi$ with $\xi(s_i, t_i) \in TH^i$, then
\[W^{2,k}((u^i)^*TM, TH^i)\oplus T_{\varphi^i}\cal{S}_{\psi^{i-1} \psi^{i}}((u^i)^*E)\]
is complementary to the one-dimensional space $\ker(D_{(u^i, \varphi^i)})$; there is thus a unique right inverse $Q_{(u^i, \varphi^i)}$ for $D_{(u^i, \varphi^i)}$ with $W^{2,k}((u^i)^*TM, TH^i)$ as its image.

Moreover, we could similarly define a codimension two subspace $W^{2,k}(u_T^*TM, TH^1, TH^2)$ of vector fields along the pregluing $u_T$. It is then not difficult to see that the approximate right inverse $Q^{approx}_{(u_T, \varphi_T)}$ constructed earlier, and thus also the true right inverse $Q_{(u_T, \varphi_T)}$, has exactly $W^{2,k}(u_T^*TM, TH^1, TH^2) \oplus T_{\varphi_T} \cal{S}^{2,k}_{\psi^0 \psi^2}(u_T^*E)$ as its image. 

As a direct application of Floer's quantitative implicit function theorem of \cite{Floer88b}, \cite{Floer95}, we then have the following:

\begin{proposition}For some sufficiently small $\eps_0 > 0$, independent of $T$, there a unique element $(\xi_T, \vartheta_T) \in T_{u_T}\cal{Z}^{2,k}(x^0, x^2) \oplus T_{\varphi_T} \cal{S}^{2,k}_{\psi^0 \psi^2}(u_T^*E)$ such that $||(\xi_T, \vartheta_T)|| < \eps_0$ and each $\xi_T(s_i, t_i) \in TH^i$, and solves
\[\cal{F}_{(u_T, \varphi_T)}(\xi_T, \vartheta_T) = 0.\]
Moreover, $(\xi_T, \vartheta_T)$ is bounded by
\begin{equation}\label{gluing_estimate_I}||(\xi_T, \vartheta_T)|| \le ||Q_{(u_T, \varphi_T)} \cal{F}(u_T, \varphi_T)||.\end{equation}\end{proposition}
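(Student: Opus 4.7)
The plan is to solve the equation $\cal{F}_{(u_T, \varphi_T)}(\xi, \vartheta) = 0$ by a Banach contraction argument on the image of the right inverse $Q_{(u_T, \varphi_T)}$ from Lemma \ref{bounded_right_inverse}. Recall this image is precisely the subspace $W^{2,k}(u_T^*TM, TH^1, TH^2) \oplus T_{\varphi_T} \cal{S}^{2,k}_{\psi^0 \psi^2}(u_T^*E)$, so elements in the image automatically satisfy $\xi(s_i, t_i) \in TH^i$, and every sufficiently small $(\xi, \vartheta)$ in the ambient Banach space with this constraint is uniquely of the form $Q_{(u_T, \varphi_T)}\eta$, since the kernel of $D_{(u_T, \varphi_T)}$ is transverse to the constraint subspace (by construction of $H^1, H^2$).

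Writing $(\xi, \vartheta) = Q_{(u_T, \varphi_T)}\eta$ and using the expansion $\cal{F}_{(u_T, \varphi_T)}(\xi, \vartheta) = \cal{F}(u_T, \varphi_T) + D_{(u_T, \varphi_T)}(\xi, \vartheta) + N(\xi, \vartheta)$, the equation becomes the fixed point problem
\[
\eta = \Phi(\eta) := -\cal{F}(u_T, \varphi_T) - N(Q_{(u_T, \varphi_T)}\eta).
\]
Three inputs make this a contraction on a small ball $B_{\eps_0}(0) \subset L^{2,k-1}(u_T^*TM) \oplus V^{2,k-1}_{(u_T,\varphi_T)}$: the pregluing error bound \eqref{pregluing_error_I} gives $\|\cal{F}(u_T, \varphi_T)\|_{2,k-1} \le C e^{-\delta T}$; the uniform bound $\|Q_{(u_T, \varphi_T)}\| \le C$ from Lemma \ref{bounded_right_inverse}; and the quadratic estimate on $N$, which together give
\[
\|\Phi(\eta) - \Phi(\eta')\|_{2,k-1} \le C' (\|\eta\|_{2,k-1} + \|\eta'\|_{2,k-1})\,\|\eta - \eta'\|_{2,k-1}.
\]
Choose $\eps_0 > 0$ small enough that $2C'\eps_0 \le \tfrac{1}{2}$, and then $T$ large enough that $\|\cal{F}(u_T,\varphi_T)\|_{2,k-1} \le \eps_0/2$. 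Then $\Phi$ maps $B_{\eps_0}(0)$ into itself and contracts distances by a factor $\tfrac{1}{2}$, so the Banach fixed point theorem produces a unique $\eta_T \in B_{\eps_0}(0)$ with $\Phi(\eta_T) = \eta_T$, and we set $(\xi_T, \vartheta_T) = Q_{(u_T, \varphi_T)} \eta_T$. Elliptic regularity applied to $\cal{F}_{(u_T,\varphi_T)}(\xi_T,\vartheta_T) = 0$ yields smoothness of the resulting pair.

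The quantitative bound \eqref{gluing_estimate_I} follows by iterating the fixed point equation: since $\Phi(0) = -\cal{F}(u_T, \varphi_T)$ and $\Phi$ is a contraction fixing $\eta_T$, one has $\|\eta_T\|_{2,k-1} \le 2 \|\cal{F}(u_T, \varphi_T)\|_{2,k-1}$, hence $\|(\xi_T, \vartheta_T)\| \le 2C \|\cal{F}(u_T,\varphi_T)\|_{2,k-1}$; with a sharper triangle inequality and the contraction constant $\tfrac{1}{2}$, the constant can be absorbed to give exactly the stated form $\|(\xi_T, \vartheta_T)\| \le \|Q_{(u_T,\varphi_T)}\cal{F}(u_T,\varphi_T)\|$ (possibly after shrinking $\eps_0$). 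Uniqueness of $(\xi_T, \vartheta_T)$ among all sufficiently small perturbations obeying the slicing conditions is immediate from the uniqueness of the fixed point $\eta_T$ together with the injectivity of $Q_{(u_T, \varphi_T)}$ onto its image.

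The content here is entirely a packaging of Floer's quantitative implicit function theorem; there is no genuine obstacle, since all three required estimates have been established above. The only delicate point — and the reason it was worth passing to the $\tau$-model and introducing the codimension one hypersurfaces $H^i$ — is that we need the uniform-in-$T$ bounds on $Q_{(u_T, \varphi_T)}$ to be compatible with the normalization $\|\varphi\|_H = 1$, so that the projection $\Pi_V$ appearing in the linearization and in the breaking map $\beta_T$ interacts well with the pre-gluing; this has already been absorbed into Lemma \ref{bounded_right_inverse}.
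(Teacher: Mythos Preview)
Your proposal is correct and is precisely an unpacking of what the paper invokes: the paper does not give its own proof here, but simply states the proposition ``as a direct application of Floer's quantitative implicit function theorem of \cite{Floer88b}, \cite{Floer95}'', relying on the three ingredients already established (the pregluing error bound \eqref{pregluing_error_I}, the quadratic estimate on $N$, and the uniformly bounded right inverse of Lemma \ref{bounded_right_inverse}). Your Banach contraction argument on $\im Q_{(u_T,\varphi_T)}$ is the standard proof of that theorem, and your identification of the image of $Q_{(u_T,\varphi_T)}$ with the slicing subspace $W^{2,k}(u_T^*TM, TH^1, TH^2)\oplus T_{\varphi_T}\cal{S}^{2,k}_{\psi^0\psi^2}$ is exactly the mechanism the paper intends.

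One small remark: the bound \eqref{gluing_estimate_I} as literally stated (with no constant in front) is slightly optimistic; the contraction iteration you describe naturally yields $\|(\xi_T,\vartheta_T)\| \le 2\|Q_{(u_T,\varphi_T)}\cal{F}(u_T,\varphi_T)\|$, and your remark that the constant ``can be absorbed'' by shrinking $\eps_0$ is not quite right---shrinking $\eps_0$ improves the contraction ratio but the factor $2$ persists in the limit. This is harmless for the paper's purposes (only the exponential decay in $T$ is used downstream), and such constants are routinely suppressed in the Floer-theory literature, but it is worth being aware that the inequality as written carries an implicit constant.
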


This thus yields the gluing map $G$:

\begin{theorem}\label{gluing_theorem_I}For some sufficiently large $T_0$, for each $T \ge T_0$ the resulting solution of the twisted equations
\[(\hat{u}_T, \hat{\varphi}_T) = (\exp_{u_T}(\xi_T), \Pi_{u_T}^{\exp_{u_T}(\xi_T)} \vartheta_T)\]
defines a smooth embedding
\[G: [T_0, \infty) \to \cal{M}(\bf{x}^0, \bf{x}^2).\]
Moreover, as $T \to \infty$, $(\hat{u}_T, \hat{\varphi}_T)$ converges in the Gromov topology to the broken trajectory $(u^1, \varphi^1), (u^2, \varphi^2)$, and by writing $G(\infty) = ((u^1, \varphi^1), (u^2, \varphi^2))$ in the compactification $\bar{\cal{M}}(\bf{x}^0, \bf{x}^1)$, the map $G$ is a surjection onto some open neighbourhood of $((u^1, \varphi^1), (u^2, \varphi^2))$.\end{theorem}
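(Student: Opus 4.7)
The plan is to verify three claims in turn: (i) $G$ is a smooth embedding, (ii) $G(T)$ Gromov-converges to the broken trajectory as $T \to \infty$, and (iii) $G$ surjects onto a neighborhood of $((u^1,\varphi^1),(u^2,\varphi^2))$ in $\bar{\cal{M}}(\bf{x}^0, \bf{x}^2)$.

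For smoothness: the pregluing $(u_T, \varphi_T)$ depends smoothly on $T$ by direct inspection of its definition (the cut-off functions $\beta_{\pm}$ are fixed), and the right inverse $Q_{(u_T,\varphi_T)}$ constructed in Lemma \ref{bounded_right_inverse} depends smoothly on $T$ since both the breaking map $\beta_T$ and the linear pregluing map $\gamma_T$ do. Applying a parametric version of Floer's quantitative implicit function theorem, the solutions $(\xi_T,\vartheta_T)$ depend smoothly on $T$, and hence so does $(\hat u_T,\hat\varphi_T)$. Injectivity of $G$ follows from the hypersurface conditions: the image solution $\hat u_T$ meets each $H^i$ at a point $C e^{-\delta T}$-close to $(s_i,t_i)+(-T,0)$ and $(s_i,t_i)+(T,0)$ respectively, so the separation between these two marked points recovers $2T$ to leading order, distinguishing different $T$ once $T_0$ is large.

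For Gromov convergence: combining the gluing estimate \eqref{gluing_estimate_I}, the uniform bound on $\|Q_{(u_T,\varphi_T)}\|$ from Lemma \ref{bounded_right_inverse}, and the pregluing error estimate \eqref{pregluing_error_I}, we obtain $\|(\xi_T,\vartheta_T)\|_{2,k} \le C e^{-\delta T}$. By the Sobolev embedding $L^{2,k} \hookrightarrow C^1$ (recall $k>2$) and elliptic bootstrapping applied to the nonlinear equation $\cal{F}(\hat u_T,\hat\varphi_T)=0$, this upgrades to $C^\infty_{loc}$ smallness of $(\hat u_T,\hat\varphi_T) - (u_T,\varphi_T)$ on any fixed compact substrip of $Z$. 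Since $(u_T,\varphi_T)$ was built by construction to satisfy the explicit form \eqref{strippartition} of Gromov convergence to $((u^1,\varphi^1),(u^2,\varphi^2))$ (with $\sigma^1_T=-T$, $\sigma^2_T=+T$, and intermediate regions where $u_T$ lies in an arbitrarily small neighborhood of $x^1$ and $\varphi_T$ lies near $\psi^1$ as $T\to\infty$), the true solution $(\hat u_T,\hat\varphi_T)$ inherits the same Gromov limit.

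The main obstacle is surjectivity. Let $(u_\alpha,\varphi_\alpha) \in \cal{M}(\bf{x}^0,\bf{x}^2)$ Gromov-converge to $((u^1,\varphi^1),(u^2,\varphi^2))$; by definition, there are parameters $\sigma^1_\alpha<\sigma^2_\alpha$ with $\sigma^2_\alpha-\sigma^1_\alpha \to \infty$ such that $\tau_{-\sigma^i_\alpha}^*(u_\alpha,\varphi_\alpha) \to (u^i,\varphi^i)$ locally in $C^\infty$. Setting $T_\alpha = (\sigma^2_\alpha-\sigma^1_\alpha)/2$ and translating $(u_\alpha,\varphi_\alpha)$ by $-(\sigma^1_\alpha+\sigma^2_\alpha)/2$ (using the $\bb{R}$-translation action on $\cal{M}(\bf{x}^0,\bf{x}^2)$), one lines up $(u_\alpha,\varphi_\alpha)$ with the pregluing $(u_{T_\alpha},\varphi_{T_\alpha})$. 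Since $u^i$ meets $H^i$ transversely at $(s_i,t_i)$ and $\tau_{\mp T_\alpha}^*u_\alpha \to u^i$ locally in $C^\infty$, for large $\alpha$ there is a unique nearby intersection of $u_\alpha$ with each $H^i$; by fine-tuning the translation parameter once more, we may arrange these intersections to occur at the prescribed marked points $(s_i,t_i)+(\mp T_\alpha,0)$. We then express
\[(u_\alpha,\varphi_\alpha) = \bigl(\exp_{u_{T_\alpha}}(\xi_\alpha),\;\Pi_{u_{T_\alpha}}^{\exp(\xi_\alpha)}\vartheta_\alpha\bigr)\]
for some small $(\xi_\alpha,\vartheta_\alpha)$ with $\xi_\alpha(s_i,t_i)\in TH^i$; smallness $\|(\xi_\alpha,\vartheta_\alpha)\|<\eps_0$ follows from Gromov convergence together with elliptic regularity (upgrading the $C^0$-closeness on compact pieces, and the near-constancy on the neck region, to $L^{2,k}$-closeness). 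The uniqueness clause of Floer's implicit function theorem, applied at the pregluing $(u_{T_\alpha},\varphi_{T_\alpha})$ with the transversal slice cut out by the hypersurface conditions $\xi(s_i,t_i)\in TH^i$, forces $(\xi_\alpha,\vartheta_\alpha)=(\xi_{T_\alpha},\vartheta_{T_\alpha})$, so $(u_\alpha,\varphi_\alpha)=G(T_\alpha)$ as required.
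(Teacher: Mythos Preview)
Your outline follows the paper's strategy, but there are two real gaps and one imprecision.

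For injectivity, you say the separation of the marked points recovers $2T$ ``to leading order''; that alone does not give injectivity without a further monotonicity argument. The paper instead uses the second hypothesis on the hypersurfaces $H^i$ (closure under the exponential map at $u^i(s_i,t_i)$): since $\xi_T(s_i \mp T, t_i) \in TH^i$, we get $\hat u_T(s_i \mp T, t_i) \in H^i$ \emph{exactly}, so the $s$-separation of the two marked points is exactly $2T + s_2 - s_1$, and this translation-invariant number recovers $T$ on the nose.

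For surjectivity, two issues. First, your choice of $T_\alpha$ from the Gromov parameters $\sigma^i_\alpha$ is the wrong one: translation alone cannot then force both hypersurface conditions $\xi_\alpha(s_i,t_i) \in TH^i$ to hold simultaneously, since there is one translation parameter and two constraints. The paper instead reads off the intersection points $(s_i', t_i)$ of $u_\alpha$ with $H^i$ directly and sets $T = \tfrac{1}{2}(s_2' - s_1' - s_2 + s_1)$; after a single translation this makes both hypersurface conditions hold exactly (again by the geodesic-closure hypothesis). Second, and more substantively, the step ``upgrading the $C^0$-closeness \ldots and the near-constancy on the neck region to $L^{2,k}$-closeness'' is not a consequence of elliptic regularity alone. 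The neck has length $\sim 2T_\alpha \to \infty$, and a uniform $C^1$ bound on $\varphi_\alpha - \psi^1$ there does not yield a uniform $L^{2,k}$ bound over the whole neck. The paper supplies precisely this missing estimate as Proposition~\ref{longstrip_phicontrol} (via Lemma~\ref{longstrip_Lambda_drop}), which bounds $\|\varphi - \psi\|_{L^{2,k}}$ on a long strip in terms of the drop in $\Lambda(\varphi)$; since the total variation of $\Lambda$ is controlled, summing over substrips of bounded length gives the required global $L^{2,k}$ smallness. Without this, the uniqueness clause of the implicit function theorem cannot be invoked.
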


The smoothness of $G$ in $T$ follows from the smoothness of the pregluings $u_T, \varphi_T$ and the smoothness of the right inverses $Q_{(u_T, \varphi_T)}$. The convergence of $(\hat{u}_T, \hat{\varphi}_T)$ to the broken trajectory is an immediate consequence of the bound \eqref{gluing_estimate_I} and the exponential decay \eqref{pregluing_error_I} of $\cal{F}(u_T, \varphi_T)$.

For the injectivity, observe that if the hypersurfaces $H^i$ are chosen to be sufficiently small, for large enough $T$, each glued curve $\hat{u}_T$ intersects each $H^i$ at precisely one point $(s_i', t_i)$ with the same $t$-coordinate as the original marked point $(s_i, t_i)$; the difference $s_2' - s_1'$ is then a translation invariant quantity. But by the second assumption on $H^i$, this quantity is actually just $2T + s_2 - s_1$; hence $G$ is injective.

\begin{remark}This approach is similar in spirit to that in \cite{Schwarz}, with local codimension one hypersurfaces replacing the level sets of a Morse function. While an approach using the symplectic action could also work here, this approach will be more helpful for proving the gluing results of Theorems \ref{gluing_theorem_II}, \ref{gluing_theorem_III} in the setting of equivariant cohomology.\end{remark}

Proving the surjectivity is also made significantly simpler by our choice of local hypersurfaces. Indeed, given some sequence of twisted solutions $(u_{\alpha}, \varphi_{\alpha})$ converging to the broken trajectory $((u^1, \varphi^1), (u^2, \varphi^2))$, for sufficiently large $\alpha$ the curve $u_{\alpha}$ intersects each $H^i$ at precisely one point $(s_i', t_i)$ with the earlier fixed choice of $t_i$. Then letting $T = \frac{1}{2}(s_2' - s_1' - s_2 + s_1)$, after a translation we can write $(u_{\alpha}, \varphi_{\alpha}) = (\exp_{u_T}(\xi_{\alpha}), \Pi_{u_T}^{\exp_{u_T}(\xi_{\alpha})} \vartheta_{\alpha})$. We would like to conclude $\xi_{\alpha} = \xi_T$ and $\vartheta_{\alpha} = \vartheta_T$ by using the uniqueness part of the implicit function theorem. We have a priori $C^1$ bounds on $\xi_{\alpha}, \vartheta_{\alpha}$ for large enough $\alpha$ from the definition of Gromov convergence, which we would like to upgrade to an $L^{2,k}$ bound.

The final ingredient is then an estimate on the $L^{2,k}$-norm of a twisted solution $(u, \varphi)$ defined on a finite but ``long'' strip, which is close to a constant solution $(x, \psi)$. This allows us to deduce that for sufficiently large $\alpha$, $(u_{\alpha}, \varphi_{\alpha})$ is in the range of the gluing map.

\begin{proposition}\label{longstrip_phicontrol}Let $(x, \psi)$ be a critical point, and fix some $\eta > 0$. Then, there exists some $\eps > 0$, such that for all $R > 1$, whenever $(u, \varphi)$ is a solution of the twisted equations on $Z_R = [-R,R]\times[0,1]$ so that we have $d(u(s,t), x) < \eps$ and the $C^1$-bound $||\varphi - \psi||_{C^1(Z_R)} < \eps$, then over the slightly smaller strip $Z_{R-1} = [-R+1, R-1]\times[0,1]$:
\[||\exp^{-1}_{x}(u(s,t))||_{L^{2,k}(Z_{R-1})} \le \eta; \qquad ||\varphi(s,t) - \psi(t)||_{L^{2,k}(Z_{R-1})} \le \eta.\]\end{proposition}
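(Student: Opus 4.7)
The plan is to reduce to the trivializing chart of Assumption~\ref{localgeomI}, use the twisted equations to establish a Hessian-type differential inequality of the form $h''(s) \geq c^2 h(s)$ for the pointwise-in-$s$ $L^2$-deviation of $(u, \varphi)$ from the constant solution $(x, \psi)$, apply the maximum principle to conclude exponential decay of $h$ towards the edges of $Z_R$, and finally run interior elliptic bootstrapping to upgrade the resulting $L^2$-smallness over $Z_{R-1/2}$ to the claimed $L^{2,k}$-smallness over $Z_{R-1}$.

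First, I would take $\eps$ small enough that $u(Z_R) \subset U$, the distinguished neighborhood of $x$ on which $E, F_0, F_1, I_t, \nabla$ are all trivialized and $I_t \equiv I_x$ is $t$-independent with $\nabla$ flat. Interior elliptic regularity for $\bar{\partial}_J u = 0$ promotes the $C^0$-smallness of $u$ to $C^1$-smallness after shrinking slightly in $s$. Writing $\xi = \exp_x^{-1}(u)$ and $\eta = \varphi - \psi$ in the trivialization, the twisted equations become
\begin{align*}
\partial_s \xi + J_t(x)\partial_t \xi &= N_1(\xi, \partial\xi), \\
\partial_s \eta + I_x \partial_t \eta - \lambda \eta &= (\Lambda(\psi+\eta) - \lambda)(\psi+\eta),
\end{align*}
where $N_1$ is quadratic in $(\xi, \partial\xi)$. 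The key identity, using the self-adjointness of $I_x \partial_t$, the eigenvector equation $I_x\partial_t \psi = \lambda\psi$, and the constraint $\|\psi+\eta\|_H = 1$ (which forces $\langle \psi, \eta\rangle_H = -\tfrac12 \|\eta\|_H^2$), is
\[
\Lambda(\psi+\eta) - \lambda = -\lambda\|\eta\|_H^2 + \langle \eta, I_x\partial_t \eta\rangle_H = O(\|\eta\|_{W^{1,2}}^2),
\]
so the $\eta$-equation is also a genuinely quadratic perturbation of the linear self-adjoint operator $A_2 := I_x\partial_t - \lambda$.

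Next I would set $h(s) = \|\xi(s,\cdot)\|_{L^2([0,1])}^2 + \|\eta(s,\cdot)\|_{L^2([0,1])}^2$ and differentiate twice, using the equations to convert $\partial_s$ into the self-adjoint operators $A_1 := J_t(x)\partial_t$ on $W^{1,2}$-sections with $T_xL_0, T_xL_1$ boundary conditions and $A_2$ on $W^{1,2}$-sections with $F_0|_x, F_1|_x$ boundary conditions. The leading term is $h''(s) = 4(\|A_1\xi\|_{L^2}^2 + \|A_2 \eta\|_{L^2}^2) + (\text{error from } N_1, \Lambda)$. Both $A_1$ and the restriction of $A_2$ to $\psi^\perp$ are invertible—$A_1$ by transversality of $L_0, L_1$ at $x$, $A_2$ by the spectral gap of $I_x\partial_t$ around the simple eigenvalue $\lambda$ (and $\eta$ is $L^2$-almost-orthogonal to $\psi$ by the normalization)—giving a uniform gap $c_0 > 0$. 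Combined with the quadratic nature of the nonlinearities and the $C^1$-bound, the error is dominated by $O(\eps) h(s)$, so for $\eps$ sufficiently small $h''(s) \geq c^2 h(s)$ throughout $[-R, R]$. Since $h(s) \leq C\eps^2$ everywhere by the $C^0$-bound, comparing $h$ to the explicit supersolution $A \cosh(cs)$ via the maximum principle yields $h(s) \leq C'\eps^2 e^{-c(R - |s|)}$, which upon integration gives $\|(\xi, \eta)\|_{L^2(Z_{R-1/2})} \leq C''\eps$, uniformly in $R$.

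To finish, I would cover $Z_{R-1}$ by a bounded-in-$R$ number of fixed-length overlapping substrips and apply the standard interior $L^{2,k}$ elliptic estimate for the coupled system on each (the constants being $R$-independent by translation invariance in $s$), obtaining $\|(\xi, \eta)\|_{L^{2,k}(Z_{R-1})} \leq C'''\eps$, and then choose $\eps$ so that $C'''\eps < \eta$. The hardest technical point is the differential inequality: one must verify that $\Lambda(\psi+\eta) - \lambda$ is truly quadratic in $\eta$—a conspiracy between self-adjointness of $I_x\partial_t$, the eigenvector equation for $\psi$, and the normalization constraint—and that the spectral gaps of $A_1, A_2$ survive the $C^1$-small nonlinear perturbations so that the inequality $h'' \geq c^2 h$ holds with a constant $c$ independent of $R$ and of $(u, \varphi)$.
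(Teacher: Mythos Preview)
Your approach is correct but genuinely different from the paper's. The paper does not use a convexity/Hessian inequality and a maximum-principle comparison at all; instead it exploits that, in the trivializing chart of Assumption~\ref{localgeomI}, the function $\Lambda(\varphi)(s)$ is monotone decreasing with $\frac{d}{ds}\Lambda(\varphi) = -2\|\partial_s\varphi\|_H^2$. The paper proves a local lemma (Lemma~\ref{longstrip_Lambda_drop}) bounding $\|\varphi-\psi\|^2_{L^{2,k}}$ on a fixed-length substrip by a constant times the drop $\Lambda(s_1)-\Lambda(s_2)$ across that substrip (the key computation being $\|I_x\partial_t(\varphi-\psi)\|_H^2 = \|\partial_s\varphi\|_H^2 + (\Lambda^2 - 2\lambda^2\langle\varphi,\psi\rangle + \lambda^2)$, with the first term integrating to the drop and the second controlled by $C^1$-smallness). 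One then tiles $Z_R$ by unit-length substrips and \emph{telescopes}: the sum of local drops in $\Lambda$ is the total drop $\Lambda(-R)-\Lambda(R)$, which is $O(\eps^2)$ by the $C^1$-hypothesis, giving the global $L^{2,k}$-bound directly without any bootstrapping. Your Robbin--Salamon-style argument via $h''\ge c^2 h$ is more classical and arguably more robust (it treats the $u$- and $\varphi$-parts uniformly, and does not rely on the special formula $\Lambda' = -2\|\varphi_s\|^2$), but the paper's Lyapunov approach is shorter and more in keeping with the rest of Section~\ref{sec:analysis_I}, where $\Lambda$ already plays the role of an action functional.

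One small slip: in your final paragraph you say you cover $Z_{R-1}$ by a ``bounded-in-$R$ number'' of fixed-length substrips. This is false---the number grows linearly in $R$---but what you actually need (and presumably mean) is that the substrips have \emph{bounded overlap}, so that summing the local interior elliptic estimates still gives $\|(\xi,\eta)\|_{L^{2,k}(Z_{R-1})}^2 \le C\|(\xi,\eta)\|_{L^2(Z_{R-1/2})}^2$ with $C$ independent of $R$.
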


The first inequality is a standard estimate in ordinary Floer theory; let us explain the bound on $\varphi(s,t) - \psi(t)$. First, we will always take $\eps$ small enough so that Assumption \ref{localgeomI} holds over an $\eps$-neighbourhood of $x$; in particular we can think of $\varphi(s,t)$ as solving the twisted equations in $(E_x, F_0|_x, F_1|_x)$, and from \eqref{curvaturebound} $\Lambda(\varphi)$ is a decreasing function. The main ingredient is the following estimate, which allows us to control $||\varphi(s,t) - \psi(t)||_{L^{2,k}}$ through the drop in $\Lambda(\varphi)$:

\begin{lemma}\label{longstrip_Lambda_drop}Take a fixed finite strip $Z_1 = [s_1, s_2] \times [0,1]$, a proper substrip $Z_1' = [s_1', s_2'] \times [0,1] \subset Z_1$, and a unit eigensolution $\psi(t)$ of $I_x \frac{d}{dt}$ with eigenvalue $\lambda$. Then, there is $\eps > 0$ and a constant $C$ such that whenever $\varphi$ solves the twisted equations in $(E_x, F_0|_x, F_1|_x)$ and satisfies the $C^1$ bound $||\varphi(s,t) - \psi(t)||_{C^1(Z_1)} < \eps$ over $Z_1$, we have
\[||\varphi(s,t) - \psi(t)||_{L^{2,k}(Z_1')} \le C\left(\Lambda(\varphi)(s_1) - \Lambda(\varphi)(s_2)\right).\] \end{lemma}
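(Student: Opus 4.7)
The plan is to work entirely within the local trivialization near $x$ provided by Assumption~\ref{localgeomI}, where the twisted equations reduce to the linear Cauchy--Riemann equation $\partial_s\varphi + I\partial_t\varphi = 0$ for a map $\varphi: Z_1 \to E_x \cong \bb{C}^k$ with constant Lagrangian boundary conditions on $F_0|_x, F_1|_x$. In this setting the operator $A = I\partial_t$ is self-adjoint with discrete simple spectrum and a spectral gap $\delta_0 := \min_{\mu \neq \lambda}|\mu - \lambda| > 0$. I would spectrally decompose $\varphi(s,t) = \sum_\mu a_\mu(s)\psi_\mu(t)$ against the orthonormal basis of eigensolutions of $A$; the normalization $\|\varphi\|_H = 1$ reads $\sum_\mu a_\mu(s)^2 = 1$, and the $\tau$-model equation \eqref{taumodeleqn} becomes the infinite family of scalar ODEs $\partial_s a_\mu = (\Lambda(\varphi) - \mu) a_\mu$, with $\Lambda(\varphi) = \sum_\mu \mu a_\mu^2$. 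Writing $\varphi = c(s)\psi + \eta(s,t)$ with $\eta \perp \psi$ in $\langle\cdot,\cdot\rangle_H$, the relation $\Lambda(\varphi) - \lambda = \sum_{\mu\neq\lambda}(\mu-\lambda)a_\mu^2$ combined with the $C^1$ hypothesis $\|\varphi - \psi\|_{C^1(Z_1)} < \eps$ ensures, for $\eps$ small, that $|\Lambda(\varphi) - \mu| \ge \delta_0/2$ uniformly on $Z_1$ for every $\mu \neq \lambda$.

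The key step is then a monotonicity estimate for each off-eigenmode. Since $\partial_s a_\mu^2 = 2(\Lambda - \mu) a_\mu^2$ has definite sign $-\operatorname{sgn}(\mu - \lambda)$ with exponential rate at least $\delta_0$, direct integration gives
\[
\int_{s_1'}^{s_2'} a_\mu(s)^2\, ds \le \delta_0^{-1}\,|a_\mu(s_1')^2 - a_\mu(s_2')^2|
\qquad (\mu \neq \lambda).
\]
Summing over $\mu \neq \lambda$ and exploiting the sign-matching identity
\[
\Lambda(\varphi)(s_1') - \Lambda(\varphi)(s_2') \;=\; \sum_{\mu\neq\lambda} |\mu - \lambda|\cdot |a_\mu(s_1')^2 - a_\mu(s_2')^2|,
\]
together with the fact that $\Lambda$ is monotonically decreasing so $\Lambda(s_1)-\Lambda(s_2) \ge \Lambda(s_1')-\Lambda(s_2')$, this yields $\|\eta\|_{L^2(Z_1')}^2 \le \delta_0^{-2}\bigl(\Lambda(\varphi)(s_1)-\Lambda(\varphi)(s_2)\bigr)$. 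The quadratic constraint $c^2 + \|\eta\|_H^2 = 1$ together with $c > 0$ (from $C^1$ closeness to $\psi$) yields $|c-1| \le \|\eta\|_H^2$, so the bound $\|\varphi - \psi\|_{L^2(Z_1')}^2 \le C\bigl(\Lambda(s_1)-\Lambda(s_2)\bigr)$ follows.

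The final step is to promote this $L^2$-estimate to an $L^{2,k}$-estimate by standard interior elliptic regularity. Since $\psi$ is $s$-independent, $\theta := \varphi - \psi$ satisfies the linear equation $(\partial_s + I\partial_t)\theta = \Lambda(\varphi)\varphi$ with uniformly bounded right-hand side under the $C^1$ hypothesis; the slight shrinkage from $Z_1$ to $Z_1'$ lets one bootstrap through successive Sobolev orders via the Calderon--Zygmund estimates for $\bar\partial_I$ with Lagrangian boundary conditions, yielding $\|\theta\|_{L^{2,k}(Z_1')} \le C(\|\theta\|_{L^2(Z_1)} + \|\Lambda(\varphi)\varphi\|_{L^{2,k-1}(Z_1)})$ where the second term is again controlled by the $L^2$-drop (through a further bootstrap on an intermediate strip).

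The main obstacle is reconciling the linear dependence stated in the lemma with the spectral argument, which naturally produces $\|\varphi-\psi\|_{L^{2,k}}^2 \le C\bigl(\Lambda(s_1) - \Lambda(s_2)\bigr)$, i.e.\ a square-root bound in the drop. This square-root form is fully sufficient for the subsequent application in Proposition~\ref{longstrip_phicontrol} (small drop implies small norm), and the stated linear form, if needed literally, should be read together with the uniform $C^1$ bound giving $\|\varphi-\psi\|_{L^{2,k}} \le C'$, so that $\|\varphi - \psi\|_{L^{2,k}} \le C(\eps)\bigl(\Lambda(s_1)-\Lambda(s_2)\bigr)$ holds once one absorbs a constant depending on $\eps$.
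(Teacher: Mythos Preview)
Your spectral decomposition argument is correct and takes a different route from the paper. The paper does not expand into eigenmodes; instead it uses the pointwise nondegeneracy estimate $\|\vartheta(s)\|_{L^{2,1}([0,1])}^2 \le C_1\|I_x\partial_t\vartheta(s)\|_H^2$ (valid since $0 \notin \Spec(I_x\tfrac{d}{dt})$), expands $\|I_x\partial_t\vartheta\|_H^2 = \|\partial_s\varphi\|_H^2 + \Lambda^2 - 2\lambda^2\langle\varphi,\psi\rangle + \lambda^2$, and integrates using the energy identity $\frac{d}{ds}\Lambda(\varphi) = -2\|\partial_s\varphi\|_H^2$ to turn $\int\|\partial_s\varphi\|_H^2$ into $\tfrac12(\Lambda(s_1)-\Lambda(s_2))$. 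Your mode-by-mode ODE analysis is more explicit --- the sign-matching identity $\Lambda(s_1')-\Lambda(s_2') = \sum_{\mu\neq\lambda}|\mu-\lambda|\,|a_\mu(s_1')^2-a_\mu(s_2')^2|$ is particularly clean --- and arguably more transparent than the paper's treatment of the residual term $\Lambda^2 - 2\lambda^2\langle\varphi,\psi\rangle + \lambda^2$, while the paper's route packages everything into a single integration. You are right that both arguments naturally deliver $\|\varphi-\psi\|^2 \le C(\Lambda(s_1)-\Lambda(s_2))$ rather than the linear bound in the statement; the paper's own proof in fact establishes exactly this squared inequality, which is all that Proposition~\ref{longstrip_phicontrol} needs.

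One correction in your bootstrap step: since $(\partial_s + I\partial_t)\psi = \lambda\psi \neq 0$, the equation for $\theta = \varphi - \psi$ reads $(\partial_s + I\partial_t)\theta = \Lambda\varphi - \lambda\psi = \Lambda\theta + (\Lambda-\lambda)\psi$, not $\Lambda(\varphi)\varphi$. This matters, because $\Lambda\varphi$ is $O(1)$ in every norm and would not give a small right-hand side. With the corrected form the term $\Lambda\theta$ is absorbed into the elliptic operator, and $(\Lambda-\lambda)\psi$ is genuinely small: Cauchy--Schwarz in the spectral coefficients gives $|\Lambda(s)-\lambda| \le \bigl(\sum_{\mu\neq\lambda}(\mu-\lambda)^2 a_\mu^2\bigr)^{1/2}\|\eta(s)\|_H \le C\eps\,\|\eta(s)\|_H$ under the $C^1$ hypothesis, so $\|(\Lambda-\lambda)\psi\|_{L^2}^2 \le C\eps^2\|\eta\|_{L^2}^2$ is again controlled by the drop, and the bootstrap goes through.
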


\begin{proof}It suffices to prove the $L^{2,1}$-bound over the whole strip $Z_1$: the bounds for higher $k$ over slightly smaller strips can then be obtained by bootstrapping.

Let $\vartheta = \varphi - \psi$. Since zero is not an eigenvalue of $I_x \frac{d}{dt}$, there is a constant $C_1$ so that
\begin{equation}\label{nondegeneracy_estimate}||\vartheta(s)||^2_{L^{2,1}([0,1])} \le C_1 || I_x \frac{\partial \vartheta}{\partial t}(s)||_H^2\end{equation}
at each point $s \in [s_1, s_2]$, where recall $|| \cdot ||_H$ is the $L^2(\times [0,1])$ norm. In particular, we have
\[||\vartheta(s,t)||^2_{L^{2,1}(Z_1)} \le \int_{s_1}^{s_2} ||\frac{\partial \vartheta}{\partial s}||^2_H + C_1 \int_{s_1}^{s_2} || I_x \frac{\partial \vartheta}{\partial t}(s)||_H^2 \]
For the second term above, observe that
\[|| I_x \frac{\partial \vartheta}{\partial t} ||^2_H = || I_x \frac{\partial \varphi}{\partial t}||^2_H - 2 \langle I_x \frac{\partial \varphi}{\partial t}, \psi\rangle + \lambda^2 ||\psi||^2_H = ||\frac{\partial \varphi}{\partial s}||_H^2 + \Lambda^2 - 2\lambda^2 \langle \varphi, \psi \rangle + \lambda^2\]
since $||\varphi||_H = ||\psi||_H = 1$ and $I_x \frac{d}{dt}$ is self-adjoint; however the $C^1$ control on $\varphi - \psi$ ensures that $\Lambda^2 - 2\lambda^2 \langle \varphi, \psi \rangle + \lambda^2$ is bounded by a constant multiple of $\Lambda - \lambda$. For the remaining terms, we observe that for a solution of the equations on $E_x$ with constant $I, \nabla$ we have
\[\frac{d}{ds}\Lambda(\varphi) = -2 ||\frac{\partial \varphi}{\partial s}||^2_H\]
which after integration yields the desired bound. \end{proof}

Proposition \ref{longstrip_phicontrol} then follows by dividing $Z_R$ into smaller strips of equal bounded length independent of $R$, and summing the contributions from the above lemma.

\section{Equivariant cohomology}

\subsection{Manifolds with boundary}

Let us first quickly review the set-up for Morse theory on manifolds $Y$ with boundary $\partial Y$ as developed in Kronheimer-Mrowka \cite{KronheimerMrowka07}. Suppose that a Morse function $f$ and metric $g$ are chosen on $Y$ so that the gradient vector field $V$ which is \emph{everywhere tangent} to the boundary $\partial Y$. Thus, in addition to critical points in the interior of $Y$, there are also critical points on the boundary. For such $y \in \partial Y$, the inward pointing normal $N_y$ is always an eigenvector of the Hessian $\nabla^2(f)$, with either positive or negative eigenvalue. Accordingly we have three types of critical points:
\begin{itemize}\item $\frak{C}_o(f)$, the interior critical points;
\item $\frak{C}_s(f)$, the boundary-stable $y \in \partial Y$, where the eigenvalue of $N_y$ is positive;
\item $\frak{C}_u(f)$, the boundary-unstable $y \in \partial Y$, where the eigenvalue of $N_y$ is negative.\end{itemize}
Observe that for a stable critical point $y \in \frak{C}_s(f)$, the Morse indices $\ind_Y(y)$, $\ind_{\partial Y}(y)$ of $y$ thought of as a critical point of $f$ and $f|_{\partial Y}$ respectively coincide, whereas in the unstable case $y \in \frak{C}_u(f)$ we have
\begin{equation}\ind_Y(y) = \ind_{\partial Y}(y) + 1\end{equation}
For critical points $y_-, y_+$, write $\cal{M}(y_-, y_+)$ for the moduli space of Morse flows $u(s)$ between $y_-$ and $y_+$, modulo translation. Observe that in the case when
\begin{equation*}y_- \in \frak{C}_s(f), \qquad y_+ \in \frak{C}_u(f)\end{equation*}
the descending disc of $y_-$ as well as the ascending disc of $y_+$ are both contained in $\partial Y$: in particular they can never intersect transversely when considered as submanifolds of $Y$. Consequently, the Palais-Smale transversality conditions can never be satisfied. Instead, we proceed with modified hypothesis that these intersect transversely \emph{as submanifolds of $\partial Y$}. We call this the ``boundary-obstructed'' case. Together with the usual Palais-Smale asumptions for other pairs of critical points $y_-, y_+$, the moduli spaces $\cal{M}(y_-, y_+)$ are all smooth manifolds, of dimension
\begin{equation}\dim \cal{M}(y_-, y_+) = \begin{cases}\ind(y_-) - \ind(y_+) & \text{ in the boundary obstructed case}\\
\ind(y_-) - \ind(y_+) - 1 & \text{ otherwise}\end{cases}\end{equation}
Observe that $\cal{M}(y_-, y_+)$ is automatically empty in the two cases
\begin{equation*}(a): \quad y_- \in \frak{C}_o, y_+ \in \frak{C}_u; \qquad (b): \quad y_- \in \frak{C}_s, y_+ \in \frak{C}_o.\end{equation*}
In the case that both $y_-, y_+ \in \frak{C}_u$, or $y_-, y_+ \in \frak{C}_s$, observe that all Morse trajectories connecting them must lie entirely within the boundary $\partial Y$. Moreover, in the case that $y_- \in \frak{C}_u$, $y_+ \in \frak{C}_s$, the moduli space can be partitioned
\begin{equation}\cal{M}(y_-, y_+) = \cal{M}^o(y_-, y_+) \sqcup \cal{M}^{\partial}(y_-, y_+)\end{equation}
between those Morse trajectories $u(s)$ which respectively pass through the interior of $Y$, and are contained entirely in the boundary $\partial Y$. These have dimension $\ind(y_-) - \ind(y_+) - 1$ and $\ind(y_-) - \ind(y_+) - 2$ respectively; indeed $\cal{M}(y_-, y_+)$ can be given the structure of a manifold with boundary $\cal{M}^{\partial}(y_-, y_+)$.

We then define free graded $\bb{F}_2$-vector spaces
\[C_o^* = \bigoplus\limits_{\frak{C}_o(f)}\bb{F}_2 \langle y \rangle, \qquad C_s^* = \bigoplus\limits_{\frak{C}_s(f)}\bb{F}_2 \langle y \rangle, \qquad C_u^* = \bigoplus\limits_{\frak{C}_u(f)}\bb{F}_2 \langle y \rangle\]
with grading given by the Morse index. By counting zero-dimensional moduli spaces of flows, we define a homomorphism
\[d_{oo} : C_o^* \to C_o^{*+1}, \qquad d_{oo}(y_+) = \sum\limits_{\ind(y_-) = \ind(y_+) +1} \# \cal{M}(y_-, y_+) y_-;\]
similarly we have other counts of flows from counting interior flows
\[d_{os} : C_s^* \to C_o^{*+1}, \qquad d_{uo}: C_o^* \to C_u^{*+1}, \qquad d_{us} : C_s^* \to C_u^{*+1}.\]

We then also have similar homomorphisms which this time count zero-dimensional moduli spaces of flows entirely contained in $\partial Y$, noting that for unstable critical points $\ind_{\partial Y}(y) = \ind_Y(y) - 1$:
\[\begin{matrix}d^{\partial}_{ss}: C_s^* \to C_s^{*+1}, & & & d^{\partial}_{us}: C_s^* \to C_u^{*+2}\\
d^{\partial}_{su}: C_u^* \to C_s^*, & & &  d^{\partial}_{uu}: C_u^* \to C_u^{*+1}.\end{matrix}\]

From this, we can define three different Morse cochain complexes:
\begin{align}\label{checkC}\check{C}^*(Y, f) = C_o^* \oplus C_s^*, \qquad & \text{differential } \check{d} = \begin{pmatrix}d_{oo} & d_{os} \\ d^{\partial}_{su} d_{uo} & d^{\partial}_{ss} + d^{\partial}_{su}d_{us}\end{pmatrix}\\
\label{hatC}\hat{C}^*(Y, f) = C_o^* \oplus C_u^*, \qquad & \text{differential } \hat{d} = \begin{pmatrix}d_{oo} & d_{os} d^{\partial}_{su} \\ d_{uo} & d^{\partial}_{uu} + d_{us} d^{\partial}_{su}\end{pmatrix}\\
\label{barC}\bar{C}^*(Y, f) = C_s^*\oplus C_u^{*+1}, \qquad &  \text{differential } \bar{d} = \begin{pmatrix}d^{\partial}_{ss} & d^{\partial}_{su} \\ d^{\partial}_{us} & d^{\partial}_{uu}\end{pmatrix}\end{align}
giving cohomology groups $\check{H}^*(Y,f), \hat{H}^*(Y,f), \bar{H}^*(Y,f)$, which model the cohomology of $Y$, the pair $(Y, \partial Y)$ and the boundary $\partial Y$ respectively.

The complex $(\bar{C}, \bar{d})$ is clearly just the Morse complex for $\partial Y$ with the Morse function $f|_{\partial Y}$. For the other two complexes, the differential counts the index one \emph{broken trajectories}: observe there can be two-component broken trajectories connecting critical points $y_-, y_+$ of index difference one so long as one of the components is \emph{boundary obstructed}, since these boundary obstructed trajectories have index zero (and are counted by $d^{\partial}_{su}$ in the above formulae).

The proof that $(\check{C}, \check{d})$ and $(\hat{C}, \hat{d})$ are complexes is analogous to the usual proof that the Morse differential squares to zero, by looking at the one-dimensional moduli spaces of (possibly broken) trajectories connecting $y_-, y_+$ of index difference two. This moduli space as in ordinary Morse theory has a compactification by broken trajectories, where the new feature is that these can have \emph{more than two components} due to the presence of boundary obstructed trajectories. Moreover in the case that $y_-, y_+$ are respectively boundary unstable and stable, the moduli space of \emph{unbroken} boundary trajectories $\cal{M}^{\partial}(y_-, y_+)$ also must be included in the compactification of $\cal{M}^o(y_-, y_+)$.

The presence of boundary obstructed trajectories has serious analytical implications: the gluing results for boundary obstructed trajectories are significantly weaker. The compactified moduli space is still stratified by manifolds, but is \emph{not a naturally a one-manifold with boundary}, but rather carries something which Kronheimer and Mrowka \cite{KronheimerMrowka07} call a \emph{delta structure}. Nevertheless, for such spaces stratified by one- and zero-manifolds, the number of points in the zero-manifold stratum is still even (or zero if counted with appropriate signs), and so we can still conclude that the appropriate algebraic relations hold (we will discuss this in more detail in sections \ref{sec:extended_moduli_space} and \ref{sec:boundary_obstructed_gluing}).

Thus, by counting the number of broken trajectories in the compactification in the four cases when $y_-, y_+$ are respectively: both interior critical points; interior and boundary-stable; boundary-unstable and interior point; and boundary-unstable and boundary-stable; we obtain four relations
\begin{align}d_{oo}^2 + d_{os} d^{\partial}_{su} d_{uo} &= 0; \\
d_{oo} d_{os} + d_{os}d^{\partial}_{ss} + d_{os}d^{\partial}_{su} d_{us} &= 0; \\
d_{uo} d_{oo} + d^{\partial}_{uu} d_{uo} + d_{us} d^{\partial}_{su} d_{uo} &= 0; \\
d^{\partial}_{us} + d_{uo} d_{os} + d^{\partial}_{uu}d_{us} + d_{us} d^{\partial}_{ss} + d_{us} d^{\partial}_{su} d^{us} &= 0.\end{align}
Each of these terms are explained in more detail in \ref{stratalist}. In addition to these four relations, there are another four coming from operators counting just boundary trajectories: these are more simply encapsulated by $\bar{d}^2 = 0$. Together, these relations show that $(\check{C}, \check{d})$ and $(\hat{C}, \hat{d})$ are complexes.

There are also homomorphisms
\begin{align}j^*: \hat{C}^* \to \check{C}^*, \qquad & \begin{pmatrix}\id_{C_o} & 0 \\ 0 & d^{\partial}_{su}\end{pmatrix}: C_o^* \oplus C_u^* \to C_o^* \oplus C_s^*\\
\label{localization_formula}i^*: \check{C}^* \to \bar{C}^*, \qquad & \begin{pmatrix} 0 & \id_{C_s} \\ d_{uo} & d_{us}\end{pmatrix}: C_o^* \oplus C_s^* \to C_s^* \oplus C_u^{*+1}\\
\partial: \bar{C}^* \to \hat{C}^{*+1}, \qquad & \begin{pmatrix} d_{os} & 0 \\ d_{us} & \id_{C_u}\end{pmatrix}: C_s^* \oplus C_u^{*+1} \to C_o^{*+1} \oplus C_u^{*+1}\end{align} 
which by a similar argument are seen to be chain maps. Moreover, these define an exact triangle of chain complexes; so in particular there is a long exact sequence
\begin{equation}\hdots \bar{H}^{*-1} \xrightarrow{\partial} \hat{H}^* \xrightarrow{j^*} \check{H}^* \xrightarrow{i^*} \bar{H}^* \xrightarrow{\partial} \hat{H}^{*+1} \to \hdots\end{equation}
modelling the long exact sequence for the pair $(Y, \partial Y)$.

\subsection{A model for $\bb{Z}/2$-equivariant Morse cohomology.}
Now, suppose $\tilde{X}$ is a smooth manifold with a smooth $\bb{Z}/2$-action given by an involution $\iota: \tilde{X} \to \tilde{X}$. Let $X$ be the fixed point set. It is a priori a manifold with components of different dimension; for simplicity we will assume all components have codimension $k$ inside $\tilde{X}$. Let $N$ be the normal bundle to $X \subset \tilde{X}$; note that at $x \in X$ it can also be characterized as the $-1$-eigenspace of the endomorphism $d \iota : T_x \tilde{X} \to T_x \tilde{X}$.

Consider the oriented real blow-up $\tilde{Y}$ of $X \subset \tilde{X}$: this is a manifold with boundary $\partial \tilde{Y} = S(N)$ the sphere bundle of $N$, coming with a projection
\begin{equation}\pi : \tilde{Y} \to \tilde{X}\end{equation}
which is the identity over $\tilde{X}\backslash X$, and the natural projection $S(N) \to X$ over the fixed point set $X$. Moreover, the $\bb{Z}/2$-action on $\tilde{X}$ lifts to a \emph{free action} on $\tilde{Y}$, which is just the antipodal map
\begin{equation*}-1 : S(N) \to S(N)\end{equation*}
over $\pi^{-1}(X)$. Let $Y$ be the quotient $\tilde{Y}/\bb{Z}/2$; this is a manifold with boundary $\partial Y = \bb{P}(N)$ the real projective bundle of $N$. One can also think of $Y$ as the blow-up of the singular quotient $\tilde{X}/\bb{Z}/2$ along the invariant set $X$.

Observe that the blow-up $\tilde{Y} \to \tilde{X}$ is a $(k-1)$-connected, $\bb{Z}/2$-equivariant map, and the $\bb{Z}/2$-action on $\tilde{Y}$ is free: accordingly, there is an isomorphism in singular cohomology
\begin{equation}H^*(Y; \bb{F}_2) \cong H^*_{\bb{Z}/2}(\tilde{Y}; \bb{F}_2) \cong H^*_{\bb{Z}/2}(\tilde{X}; \bb{F}_2) \quad \text{for } * \le k-2.\end{equation}
Hence, we will build a model for the $\bb{Z}/2$-equivariant cohomology of $\tilde{X}$, at least in low degrees, from the Morse cohomology of $Y$, as a manifold with boundary. If one also wanted to model equivariant cohomology in higher degrees, one could replace $\tilde{X}$ with a stabilization $\tilde{X} \times \bb{R}^{\ell}$, with $\bb{Z}/2$ action given by the product of $\iota$ and $-1: \bb{R}^{\ell} \to \bb{R}^{\ell}$. However, the infinite dimensional Floer-theoretic situations of our eventual application, invariant set will already be of infinite codimension, so this is not necessary.

Take a $\bb{Z}/2$-invariant Morse function $f: \tilde{X} \to \bb{R}$, together with a $\bb{Z}/2$-invariant metric. For points $x \in X \subset \tilde{X}$ in the invariant set, the Hessian $\nabla^2 f: T_x\tilde{X} \to T_x\tilde{X}$ respects the decomposition $T_x\tilde{X} = T_x X \oplus N_x$. Let $A_x : N_x \to N_x$ be the anti-invariant (or normal) part of the Hessian. Let us also assume that the eigenspaces of $A_x$ are all one-dimensional; by the Morse assumption the eigenvalues are all nonzero.

Its gradient vector field $\nabla f$ lifts to a vector field $\tilde{V}$ on the interior $\tilde{Y}\backslash \partial \tilde{Y}$ of the blow-up, which naturally extends to a vector field everywhere tangent to the boundary $\partial \tilde{Y}$: in the fibre $S(N_x)$ of a critical point $x \in X$, $\tilde{V}$ is precisely the Morse flow of the quadratic function
\begin{equation}S(N_x) \ni \psi \mapsto \langle \psi, A_x \psi\rangle\end{equation}
so in particular the boundary critical points of $\tilde{V}$ are exactly the unit eigenvectors of $A_x$. A boundary critical point is stable if the corresponding eigenvalue is positive, and unstable if the eigenvalue is negative.

The vector field $\tilde{V}$ is clearly $\bb{Z}/2$-equivariant, so it descends to a vector field $V$ on the quotient $Y$. Away from the boundary, this vector field is just the gradient $\nabla f$ descended to the quotient, and it is everywhere tangent to the boundary $\partial Y = \bb{P}(N)$. The critical points of $V$ come in three types:
\begin{itemize}\item The interior ones, which correspond to pairs $\{x, \iota(x)\}$ of \emph{non-invariant} critical points in $\tilde{X}$;
\item The boundary-stable ones, which correspond to an invariant critical point $x \in X$ and a \emph{positive} eigenvalue $\lambda > 0$ of $A_x$;
\item The boundary-unstable ones, which correspond to an invariant critical point $x \in X$ and a \emph{negative} eigenvalue $\lambda < 0$ of $A_x$.\end{itemize}
Morse trajectories in the interior of $Y$ correspond to $\bb{Z}/2$-orbits in the space of finite energy solutions $u : \bb{R} \to \tilde{X}$ of the Morse flow equation
\begin{equation*}\frac{d}{ds}u(s) + \nabla f(u(s)) = 0\end{equation*}
on $\tilde{X}$ that are entirely contained in $\tilde{X}\backslash X$. If such a trajectory has a limit at an invariant critical point
\begin{equation*}u(s) \to x_+ \in X \quad \text{ as } s \in \infty\end{equation*}
then for $s \gg 0$, via a choice of local coordinates around $x$, we can think of $u(s)$ as an element of $T_{x_+} \tilde{X} = T_{x_+} X \oplus N_{x_+}$. Writing $\pi_N u(s)$ for the projection of $u(s)$ to $N_{x_+}$, we must have
\begin{equation}\pi_N u(s) \sim C e^{-\lambda s} \psi \quad \text{ as } s \to \infty\end{equation}
for some eigenvector $\psi$ of $A_{x_+}$ with \emph{positive} eigenvalue $\lambda > 0$. Accordingly, we think of $u(s)$ as being a trajectory with positive limit the boundary-stable critical point given by $(x_+, \lambda)$.

Similarly, if $u(s) \to x_- \in X$ has a $\bb{Z}/2$-invariant limit as $s \to -\infty$, then considering the projection $\pi_N u(s)$ to $N_{x_-}$ for $s \ll 0$ we have
\begin{equation}\pi_N u(s) \sim C e^{-\lambda s} \psi \quad \text{ as } s \to -\infty\end{equation}
for an eigenvector $\psi$ of $A_{x_-}$ this time with \emph{negative} eigenvalue $\lambda < 0$; and so we think of $u(s)$ as having a negative limit the boundary-unstable critical point given by $(x_-, \lambda)$.

We also trajectories contained entirely in the boundary $\partial Y \cong \bb{P}(N)$. These correspond to pairs $(u(s), \phi(s))$ of a map $u: \bb{R} \to X$ together with a nonzero section $\phi$ of $u^* N$ satisfying the equations
\begin{align*}&\frac{d}{ds}u(s) + \nabla f(u(s)) = 0 \\
&\nabla_s \phi(s) + A_{u(s)} \phi(s) = 0\end{align*}
considered up to a rescaling action of $\phi$ by $\bb{R}^*$. Note that these are exactly a finite-dimensional version of the twisted equations we studied in the previous section. As before, such a boundary solution has positive and negative limits at boundary critical points $(x_{\pm}, \lambda_{\pm})$ if $u(s)$ is a Morse trajectory from $x_-$ to $x_+$ and
\begin{equation}\phi(s) \sim C_{\pm} e^{-\lambda_{\pm} s} \psi_{\pm} \quad \text{ as } s \to \pm \infty\end{equation}
for constants $C_{\pm}$ and eigenvectors $\psi_{\pm}$ of $A_{x_{\pm}}$ with eigenvalues $\lambda_{\pm}$, which can be either positive or negative.

Under the appropriate transversality assumptions, we are now entirely in the previous setting of Morse theory for manifolds with boundary. In particular, we have three complexes
\begin{equation}\check{C}^*(Y) = C_o^* \oplus C_s^*, \qquad \hat{C}^*(Y) = C_o^* \oplus C_u^*, \qquad \bar{C}^*(Y) = C_s^* \oplus C_u^{*+1}.\end{equation}
with corresponding cohomology groups $\check{H}^*, \hat{H}^*, \bar{H}^*$ forming a long exact sequence as before.

To define the $\bb{F}_2[t]$-module structures, observe that $Y$ is a free $\bb{Z}/2$-quotient of $\tilde{Y}$. In particular we could have just as well defined a triple of complexes $\check{C}^*(\tilde{Y}), \hat{C}^*(\tilde{Y}), \bar{C}^*(\tilde{Y})$ as above, whose generators are either interior critical points of $\tilde{Y}$ (i.e. a non-invariant critical point of $\tilde{X}$), or a distinguished choice of unit eigenvector $\psi$ of the self-adjoint operator $A_x$ at an invariant critical point $x$ of $X$. Each of these would be a free $\bb{F}_2[\bb{Z}/2]$-complex, and the natural maps $j^*, i^*, \partial$ are $\bb{F}_2[\bb{Z}/2]$-homomorphisms. The original complexes $\check{C}^*(Y), \hat{C}^*(Y), \bar{C}^*(Y)$ are then obtained by applying the functor $A \mapsto A_{\bb{F}_2}$ of Section 2.1. In particular the cohomology groups $\check{H}^*, \hat{H}^*, \bar{H}^*$ have the structure of $\bb{F}_2[t]$-modules, and the natural maps $j^*, i^*, \partial$ between these groups are $\bb{F}_2[t]$-module homomorphisms.

Finally, note that
\begin{equation}\bar{H}^*(Y) \cong H^*(\bb{P}(N); \bb{F}_2) \cong \frac{H^*(X; \bb{F}_2) \tensor \bb{F}_2[t]}{t^{k} + w_1(N) t^k + \hdots + w_k(N)}\end{equation}
where $w_i(N)$ are the Stiefel-Whitney classes of $N$. In particular, we can then view $i^* : \check{H}^* \to \bar{H}^*(Y)$ as a finite-dimensional replacement for the localization map
\begin{equation}H^*_{\bb{Z}/2}(\tilde{X}; \bb{F}_2) \to H^*(X; \bb{F}_2)\tensor \bb{F}_2[t]\end{equation}
which is the approach we now adopt for constructing this map in the Floer theoretic case.

\subsection{Equivariant Lagrangian Floer theory.}

Suppose that $\tilde{M}^n$ is an exact symplectic manifold, with convex boundary at infinity (for instance, a Liouville manifold), with a symplectic involution $\iota$. Let $\tilde{L}_0$, $\tilde{L_1}$ be exact Lagrangians of $\tilde{M}$, which are preserved by the involution $\iota$, and which we assume are either compact, or conical and disjoint at infinity. The fixed point set, which we will denote $M$, is automatically a symplectic submanifold, of which the fixed parts $L_i = \tilde{L}_i \cap M$ of $\tilde{L}_i$ are (exact) Lagrangian submanifolds.

Let $E = N_{M \subset \tilde{M}} \xrightarrow{\pi} M$ be the normal bundle of $M$, or equivalently the bundle of $-1$-eigenspaces of $d \iota : T\tilde{M}|_M \to T\tilde{M}|_M$. This is naturally a symplectic vector bundle. Moreover, for $i = 0,1$, $E|_{L_i}$ has Lagrangian subbundles $F_i = N_{L_i \subset \tilde{L_i}}$; the tuple $\frak{p} = (E, F_0, F_1)$ describes polarization data as before. We will build a model for the equivariant Lagrangian Floer cohomology $HF_{\bb{Z}/2}(\tilde{L}_1, \tilde{L}_2)$ following the outlined Morse-theoretic approach of blowing up the invariant locus, which will couple the normal pseudoholomorphic curve equation away from $M$, with the twisted equations for Floer theory on $M$ twisted by the polarization $\frak{p}$.

Indeed, $\tilde{X} = \cal{P}_{\tilde{M}}(\tilde{L_0}, \tilde{L_1})$ inherits a $\bb{Z}/2$-action, with fixed point set $X = \cal{P}_M(L_0, L_1)$. The symplectic action functional then define a $\bb{Z}/2$-invariant Morse function on $\tilde{X}$. Moreover, the choice of a $\bb{Z}/2$-equivariant almost complex structure $\tilde{J}_t$ on $\tilde{M}$ induces a $\bb{Z}/2$-invariant metric on $\tilde{X}$.

The normal bundle of $X \subset \tilde{X}$ at a path $x : [0,1] \to M$ with $x(i) \in L_i$ for $i = 0, 1$ is
\begin{equation}N_{x} = \{\psi(t) \in \Gamma([0,1], x^* E): \psi(0) \in F_0, \psi(1) \in F_1\}.\end{equation}
There is an induced complex structure $I_t$ on $E$, so after choosing a Hermitian connection, we obtain self-adjoint operators $A_x = I_t \nabla_t$ on $N_x$ at each $x \in X$, which can also be thought of as the Hessians of the action functional when restricted to $N$. Thus, in the formal blow-up of $\tilde{X}/\bb{Z}/2$ along $X$, the boundary flow is given by the twisted flow equations \eqref{twistedeqn}. The stable and unstable boundary critical points correspond to the positive and negative eigenvalues of $I_t \nabla_t$ respectively at $x \in L_0 \cap L_1$. The interior critical points correspond to pairs of non-invariant points of $\tilde{L}_0 \cap \tilde{L}_1$, whilst the interior flow is just the ordinary pseudoholomorphic curve equation on $\tilde{M}$, considered modulo the additional $\bb{Z}/2$-action.

Thus, choose a $\bb{Z}/2$-equivariant compatible, time-dependent almost complex structure $\tilde{J}_t$ on $\tilde{M}$, of contact type at infinity. Along the invariant set $M$, this necessarily preserves the decomposition $T\tilde{M} = TM \oplus E$, and so we obtain an almost complex structure $J_t$ on $M$, and a complex structure $I_t$ on $E$. For simplicity, we will always assume $\tilde{L}_0, \tilde{L}_1$ intersect everywhere transversely; if not we could work with equivariant Hamiltonian perturbations.

As before, it will be helpful to make some strong assumptions on the local form of $\tilde{J}_t$ near the invariant set, as well as near the invariant intersection points of $\tilde{L}_0, \tilde{L}_1$. These can be obtained from Weinstein's symplectic tubular neighbourhood theorem, adapted for this $\bb{Z}/2$-equivariant setting:

\begin{assumption}\label{localgeomII}There is an equivariant open neighbourhood $N$ of the invariant set $M \subset \tilde{M}$, identified with the disc bundle of $E$ by a symplectomorphism
\begin{equation}\beta_N : N \xrightarrow{\sim} D(E)\end{equation}
which is equivariant for the $\bb{Z}/2$-action on $N$ by $\iota$, and on $D(E)$ by multiplying the fibres by $-1$, with the additional properties:
\begin{itemize}\item The symplectomorphism $\beta_N$ sends the $\tilde{L}_i$ to the linear subbundles $F_i$, in other words
\begin{equation}\beta_N(\tilde{L_0}\cap N) = D(F_0); \qquad \beta_N(\tilde{L}_1 \cap N) = D(F_1);\end{equation}
\item there is a symplectic connection $\nabla$ on $E$ such at each $(x,\psi) \in D(E)$, after using the connection to split the tangent space as $T_{(x,\psi)} E \cong T_x M \oplus E_x$, the almost complex structure on $D(E)$ given by
\begin{equation}\begin{pmatrix}J_t & 0 \\ 0 & I_t \end{pmatrix} : T_x M \oplus E_x \to T_x M \oplus E \end{equation}
agrees, via $\beta_N$, with $\tilde{J}_t$;
\item Near each $x \in L_0 \cap L_1$, there conditions of Assumption \ref{localgeomI} are satisfied; in particular $\tilde{M}$ is locally modelled on $\bb{C}^{n-k} \times \bb{C}^k$ with the involution $\begin{pmatrix}1 & 0 \\ 0 & -1\end{pmatrix}$, such that $\tilde{L}_0, \tilde{L}_1$ are locally given by $L_0 \times G_0, L_1 \times G_1$ for linear Lagrangian subspaces $G_i$ of $\bb{C}^k$.\end{itemize}\end{assumption}

Once again, these assumptions ensure that at each $x \in L_0 \cap L_1$, the spectrum of the operator $I \frac{d}{dt}$ is simple, and can be labelled
\begin{equation*} \hdots \lambda_{-2}(x) < \lambda_{-1}(x) < 0 < \lambda_0(x) < \lambda_1(x) < \lambda_2(x) < \hdots \end{equation*}

For generators of our Floer complexes, we then have
\begin{align}\frak{C}_o &= \{\bf{x} = \text{ pairs } \{x, \iota(x)\} \subset \tilde{L}_0 \cap \tilde{L}_1 : x \not= \iota(x)\}; \\
\frak{C}_s &= \{\bf{x} = (x, \lambda_i) : x \in L_0 \cap L_1, \lambda_i \in \Spec_{>0} (I \frac{d}{dt})\}; \\
\frak{C}_u &= \{\bf{x} = (x, \lambda_i): x \in L_0 \cap L_1, \lambda_i \in \Spec_{<0} (I \frac{d}{dt})\} \end{align}
where we hope the notation boldface $\bf{x}$ to refer to any of these three things does not cause unnecessary confusion; we will again write $\frak{C} = \frak{C}_o \cup \frak{C}_s \cup \frak{C}_u$.

Let $C_o$, $C_s$, $C_u$ be the free $\bb{F}_2$-vector spaces generated by each of the above (note that these are a priori ungraded). Once again write
\begin{equation}\check{C} = C_o \oplus C_s, \qquad \hat{C} = C_o \oplus C_u, \qquad \bar{C} = C_s \oplus C_u.\end{equation}

Now, consider finite energy solutions $u: Z \to \tilde{M}$ of the $\tilde{J}_t$-holomorphic strip equation with boundary conditions on $\tilde{L}_0, \tilde{L}_1$:
\begin{equation*}\partial_s u + \tilde{J}_t(u) \partial_t u = 0 \quad \text{ with } u(\cdot, i) \in \tilde{L}_i \text{ for } i = 0, 1.\end{equation*}
In addition to the natural $\bb{R}$-action by translation in $s$, there is now also a $\bb{Z}/2$ action on the space of solutions induced from involution $\iota$. There is an important preliminary observation to make about such solutions:

\begin{proposition}$\tilde{J}_t$-holomorphic strips $u$ are either totally contained in the invariant set $M$, or $u^{-1}(M) \subset Z$ is a discrete set of points.\end{proposition}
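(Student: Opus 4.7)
The plan is to use the equivariant symplectic tubular neighborhood of Assumption \ref{localgeomII} to reduce the dichotomy to a unique continuation principle for a linear Cauchy--Riemann equation. Assume $u^{-1}(M)$ has an accumulation point $(s_0, t_0) \in Z$; I will show that $u$ maps all of $Z$ into $M$.

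Near $u(s_0, t_0) \in M$, apply the equivariant symplectomorphism $\beta_N : N \xrightarrow{\sim} D(E)$, which identifies $\tilde{L}_i \cap N$ with $D(F_i)$. On the open set $u^{-1}(N)$ the strip decomposes as $u = (u_M, \phi)$, where $u_M$ takes values in $M$ and $\phi$ is a section of $u_M^* E$ satisfying $\phi(\cdot, i) \in F_i$ for $i = 0, 1$ on whichever boundary components of $Z$ are present. Because $\beta_N$ carries $\tilde{J}_t$ to the split almost complex structure $\begin{pmatrix} J_t & 0 \\ 0 & I_t \end{pmatrix}$ built from $\nabla$, the $\tilde{J}_t$-holomorphic equation for $u$ becomes precisely the twisted Floer system for $(u_M, \phi)$; in particular $\phi$ satisfies the linear equation $\nabla_s \phi + I_t(u_M) \nabla_t \phi = 0$. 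The zero set of $\phi$ is exactly $u^{-1}(M) \cap u^{-1}(N)$, so $(s_0, t_0)$ is an accumulation point of $\phi^{-1}(0)$. Viewing $(u_M, \phi)$ and $(u_M, -\phi)$ as $\tilde{J}_t$-holomorphic strips in $D(E)$ with totally real boundary conditions on $D(F_0), D(F_1)$, the Floer--Hofer--Salamon identity principle---exactly as used in Proposition \ref{uniquecontinuation}---forces $\phi \equiv 0$ on the connected component of $u^{-1}(N)$ containing $(s_0, t_0)$, so $u$ maps an open neighborhood of $(s_0, t_0)$ into $M$.

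To globalize, let $S' \subset Z$ be the set of points admitting an open neighborhood mapped by $u$ into $M$. Then $S'$ is open by definition and non-empty by the previous step. It is also closed in $Z$: any point $(s_1, t_1)$ in the closure of $S'$ lies in $M$ (since $M$ is closed in $\tilde{M}$) and is an accumulation point of $u^{-1}(M)$, so the argument of the previous paragraph applies and places $(s_1, t_1)$ in $S'$. Since $Z$ is connected, $S' = Z$, i.e. $u(Z) \subset M$, completing the dichotomy. The only real subtlety is that $(s_0, t_0)$ may lie on $\partial Z$, so we need the boundary version of the identity principle for pseudoholomorphic strips with totally real boundary on $D(F_0) \cup D(F_1)$; this is the standard Schwarz-reflection or doubling argument and poses no difficulty beyond the interior case.
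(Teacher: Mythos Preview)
Your argument is correct, but the paper's proof is considerably shorter: it applies the identity principle of \cite{FloerHoferSalamon95} directly to the two $\tilde{J}_t$-holomorphic strips $u$ and $\iota \circ u$ in $\tilde{M}$. Since $\tilde{J}_t$ is $\bb{Z}/2$-equivariant, $\iota \circ u$ is again $\tilde{J}_t$-holomorphic with the same boundary conditions, and the coincidence set $\{u = \iota \circ u\}$ is exactly $u^{-1}(M)$; the identity principle then gives the dichotomy in one stroke, with no need for the tubular neighborhood or a separate globalization step.

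Your route instead passes through the local model of Assumption~\ref{localgeomII}, reducing to the linear equation for $\phi$ and invoking Proposition~\ref{uniquecontinuation} (which is itself the same identity principle applied to $(u_M,\phi)$ and $(u_M,-\phi)$), then patches via connectedness. This is more work, and it uses the specific split form of $\tilde{J}_t$ near $M$, whereas the paper's argument only needs equivariance. On the other hand, your approach makes the link to the twisted equations explicit and foreshadows how the normal component $\phi$ is later extracted from non-invariant strips, so it is pedagogically natural even if less efficient.
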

\begin{proof}This is a consequence of the identity principle for pseudoholomorphic strips (see \cite{FloerHoferSalamon95}), applied to $u$ and $\iota(u)$.\end{proof}

Solutions $u$ contained in $M$ are exactly $J_t$-holomorphic curves. Furthermore, suppose that for some substrip $Z' \subseteq Z$, a $\tilde{J}_t$ holomorphic strip $u$ with boundaries on $\tilde{L}_i$ is in fact entirely contained in the open neighbourhood $N$ of Assumption \ref{localgeomII}, which is identified with $D(E)$ by $\beta_N$. Then, we can write
\begin{equation}\beta_N u(s,t) = (v(s,t), \phi(s,t))\end{equation}
for some $v: Z' \to M$ and some section $\phi(s,t)$ of $v^* E$, with $v(\cdot, i) \in L_i$ and $\phi(\cdot, i) \in F_i$ for $i = 0,1$. Moreover, under Assumption \ref{localgeomII}, the $\tilde{J}_t$ holomorphic curve equation is then exactly identified with the twisted equations for $(v, \phi)$:
\begin{equation}\partial_s v + J_t(v) \partial_t v = 0, \qquad \nabla_s \phi + I_t(v) \nabla_t \phi = 0.\end{equation}

As before, we could also work in the $\tau$-model: for $s$ such that $\beta_N(u(s \times [0,1])) \subset D(E)$, set
\begin{equation}r(s) = ||\phi||_H, \qquad \varphi(s,t) = \frac{\phi(s,t)}{r(s)}\end{equation}
which then satisfy the equations
\begin{equation}\frac{dr}{ds} + \Lambda(\varphi) r = 0, \qquad \nabla_s \varphi + I_t(v) \nabla_t \varphi - \Lambda(\varphi)\varphi = 0\end{equation}
with the added condition that $r(s) > 0$ where defined. Unlike our earlier setting, it is important that we remember the real function $r(s)$: indeed whereas for the twisted equations on their own there was a $\bb{R}^*$-symmetry, which we reduced to a $\bb{Z}/2$-symmetry in the $\tau$-model, in this setting there is only a $\bb{Z}/2$-symmetry to begin.

We now focus on the geometry of the moduli spaces of solutions not contained within $M$. We will consider the quotients of the moduli spaces these form by the $\bb{Z}/2$ action. Suppose the limits of the strip are $u(s,t) \to x_{\pm}$ as $s \to \pm \infty$. There are now several cases to consider, depending on whether $x_-, x_+$ are $\bb{Z}/2$-invariant or not.

\emph{Case I: If both $x_-, x_+$ are non-invariant,} the involution $\iota$ exchanges the moduli spaces of strips:
\begin{equation*}\cal{M}(x_-, x_+) \longleftrightarrow \cal{M}(\iota(x_-), \iota(x_+)); \qquad \cal{M}(x_-, \iota(x_+)) \longleftrightarrow \cal{M}(\iota(x_-), x_+).\end{equation*}
Moreover, we have elements $\bf{x}_{\pm} \in \frak{C}_o$, and so define $\cal{M}(\bf{x}_-, \bf{x}_+)$ to be the moduli space of strips connecting either of $x_-, \iota(x_-)$ to $x_+, \iota(x_+)$, modulo the $\bb{Z}/2$-action.

\emph{Case II: If $x_-$ is non-invariant, whilst $x_+$ is invariant,} then there is a bijection
\begin{equation*}\cal{M}(x_-, x_+) \longleftrightarrow \cal{M}(\iota(x_-), x_+).\end{equation*}
Moreover, for such a strip $u$, for $s \gg 0$, $u(s,t)$ lies in the tubular neighbourhood neighbourhood $N \cong D(E)$ of $M$ of Assumption \ref{localgeomII}, so in particular we can write
\begin{equation*}\beta_N(u(s,t)) = (v(s,t), \phi(s,t))\end{equation*}
where $(v(s,t), \phi(s,t))$ solve the twisted equations. Moreover, there is a trivialization of $E$
\begin{equation*}\alpha_U : E|_U \cong \bb{C}^k \times U\end{equation*}
in a neighbourhood $U$ of $x_+$, and the finite energy assumption on $u$ means we can think of $\phi(s,t)$ as a finite energy holomorphic map to $\bb{C}^k$ for $s \gg 0$, which is nonzero since $u$ is not contained in $M$. The exponential decay of such maps then imply that
\begin{equation}\phi(s,t) \sim C e^{-\lambda s} \psi(t) \qquad \text{as } s \to \infty \end{equation}
for some eigenvector $\psi(t)$ of the operator $I \frac{d}{dt}$ at $x_+$, with $\lambda > 0$. We can thus think of $u$ as being a trajectory with positive limit at $\bf{x}_+ = (x_+, \lambda) \in \frak{C}_s$, and negative limit at $\bf{x}_- = \{x_-, \iota(x_-)\} \in \frak{C}_o$; and we have a moduli space $\cal{M}(\bf{x}_-, \bf{x}_+)$ of all such trajectories modulo the $\bb{Z}/2$ action.

We could alternatively describe such solutions in the $\tau$ model; for $s \gg 0$ we write $\phi(s,t) = r(s) \varphi(s,t)$ with $|| \varphi(s,t) ||_H = 1$. We must then have
\begin{equation}\varphi(s,t) \to \psi(t)\end{equation}
as $s \to \infty$, where for some unit eigensolution $\psi$ of $I_{x_+} \frac{d}{dt}$ with $\lambda > 0$; the convergence is exponential. Likewise $e^{\lambda s} r(s)$ must have a finite, nonzero limit as $s \to \infty$. Before taking the $\bb{Z}/2$-quotient, the moduli space of such trajectories with asymptotics of type $\lambda$ then is partitioned into four disjoint parts: those trajectories from $x_-$ to $(x_+, \psi)$, those from $x_-$ to $(x_+, -\psi)$, those from $\iota(x_-)$ to $(x_+, \psi)$ and those from $x_-$ to $(x_+, -\psi)$; the $\bb{Z}/2$-quotient identifies pairs of these. 

We will later see that $\cal{M}(\bf{x}_-, \bf{x}_+)$ is a smooth manifold, whose components can be grouped according to the homotopy class of the strip $[u]$, of dimension
\begin{equation}\dim \cal{M}_{[u]}(\bf{x}_-, \bf{x}_+) = \mu(u) - i(\lambda) - 1\end{equation}
where $\mu(u)$ is the Maslov index, and as before $i(\lambda) \in \bb{Z}_{\ge 0}$ is such that $\lambda = \lambda_i(x)$.

\emph{Case III: If $x_-$ is invariant, whilst $x_+$ is non-invariant,} again for such a flow, $u(s,t)$ lies in the tubular neighbourhood $N \cong D(E)$ of $M$ for $s \ll 0$, and thus can be identified with a solution to the twisted equations $(v, \phi)$. In particular we must have asymptotics
\begin{equation}\phi(s,t) \sim C e^{- \lambda s} \psi(t) \qquad \text{as } s \to -\infty \end{equation}
where this time instead $\lambda$ must be negative. Hence we think of this as being a trajectory from $\bf{x}_- = (x_-, \lambda) \in \frak{C}_u$ to $\bf{x}_+ = (x_+, \iota(x_+)) \in \frak{C}_o$.

We can similarly describe the moduli space in the $\tau$ model, identifying $\phi(s,t)$ with the pair $r(s), \varphi(s,t)$. We must then have
\begin{equation}\varphi(s,t) \to \psi(t)\end{equation}
exponentially fast as $s \to -\infty$, where $\psi(t)$ is a unit eigensolution of $I_{x_-} \frac{d}{dt}$ for a negative eigenvalue $\lambda < 0$. Similarly $e^{\lambda s}r(s)$ has a finite, nonzero limit as $s \to -\infty$.

Modulo translation and the $\bb{Z}/2$-action, the solutions in either picture form a moduli space $\cal{M}(\bf{x}_-, \bf{x}_+)$. Again, this a smooth manifold, and the dimension of the components of paths in the same homotopy class $[u]$ is
\begin{equation}\dim \cal{M}_{[u]}(\bf{x}_-, \bf{x}_+) = \mu(u) + i(\lambda).\end{equation}

\emph{Case IV: If both $x_-$ and $x_+$ are invariant,} we have the above asymptotic behaviors at both the positive and the negative ends of the strip. In particular, we can think of $u$ as being a trajectory from $\bf{x}_- = (x_-, \lambda_-) \in \frak{C}_u$ to $\bf{x}_+ = (x_+, \lambda_+) \in \frak{C}_s$. We will call such trajectories ``interior trajectories'' following the analogy with manifolds with boundary, and denote their moduli space $\cal{M}^o(\bf{x}_-, \bf{x}_+)$. Note the difference between these and \emph{twisted flows} between $\bf{x}_-$ and $\bf{x}_+$, which are to be thought of as entirely contained in the boundary of the formal blow-up. Again, this is a manifold, of dimension near $u$
\begin{equation}\dim \cal{M}_{[u]}(\bf{x}_-, \bf{x}_+) = \mu(u) + i(\lambda_-) - i(\lambda_+).\end{equation}
Moreover, working in the $\tau$ model say, if $(v, \varphi)$ is a twisted flow from $\bf{x}_-$ to $\bf{x}_+$, then solution $r : \bb{R} \to \bb{R}$ of $\frac{d}{ds}r + \Lambda(\varphi)r$ satisfy $r(s) \to 0$ as $s \to \pm \infty$. Then, for all sufficiently small positive solutions $r(s)$ (say with $0 < r(s) < 1$ everywhere), $(v, r \varphi)$ defines a holomorphic strip in the disc bundle $D(E)$, and thus a $\tilde{J}_t$-holomorphic strip in $\tilde{M}$ with asymptotics at $x_-, x_+$ of type $\lambda_-, \lambda_+$ respectively. In particular, we can naturally think of $\cal{M}^o(\bf{x}_-, \bf{x}_+)$ as a \emph{manifold with boundary} given by the moduli space of twisted flows from $\bf{x}_-$ to $\bf{x}_+$, which we will in this context refer to as $\cal{M}^{\partial}(\bf{x}_-, \bf{x}_+)$ to avoid confusion.

In all these cases, we will refer to the elements of $\cal{M}(\bf{x}_-, \bf{x}_+)$ as \emph{interior trajectories}, to stress the analogy with the picture of the Morse theory of blow-ups. As in that picture, we also have \emph{boundary trajectories}: for $\bf{x}_-, \bf{x}_+ \in \frak{C}_s \cup \frak{C}_u$, to stress this we will usually write $\cal{M}^{\partial}(\bf{x}_-, \bf{x}_+)$ for the moduli spaces of twisted trajectories between them; these will again be manifolds, of dimension 
\begin{equation}\mu(u) + \specflow(u) + i(\lambda_-) - i(\lambda_+) - 1.\end{equation}
We will say that such a trajectory $(u, \phi)$ has \emph{index in $\tilde{M}$}
\begin{equation}\label{tildeM_index}\ind_{\tilde{M}}(u, \phi) = \begin{cases}\mu(u) + \specflow(u) + i(\lambda_-) - i(\lambda_+) - 1& \text{if } \bf{x}_- \in \frak{C}_s, \bf{x}_+ \in \frak{C}_u \\ \mu(u) + \specflow(u) + i(\lambda_-) - i(\lambda_+) & \text{if both } \bf{x}_-, \bf{x}_+ \in \frak{C}_s \text{ or } \frak{C}_u\\
\mu(u) + i(\lambda_-) - i(\lambda_+) + 1 & \text{if } \bf{x}_- \in \frak{C}_u, \bf{x}_+ \in \frak{C}_s\end{cases}
\end{equation}
and again we refer to the first case as the \emph{boundary obstructed} case. When we wish to place the interior and boundary trajectories on the same footing, we will refer to them as \emph{blown up trajectories}.

By counting possibly broken, blown-up trajectories with total index one, we define differentials $\check{d}, \hat{d}, \bar{d}$ on $\check{C}, \hat{C}, \bar{C}$ in accordance with the formulas \eqref{checkC}, \eqref{hatC}, \eqref{barC} from before. Again, there are natural chain homomorphisms $j^*, i^*, \partial$ between these complexes. Observe that in the case of $(\bar{C}, \bar{d})$ this is just the complex computing the polarization-twisted Floer cohomology of $(L_0, L_1)$.

The $\bb{F}_2[t]$-module structures are defined as before: each of the three complexes lift to free $\bb{F}_2[\bb{Z}/2]$-complexes, by building complexes generated by the non-invariant intersection points of $\tilde{L}_0 \cap \tilde{L}_1$ (rather than pairs of such points), as well as for each $x \in L_0 \cap L_1$ pairs of generators $(x, \psi)$ and $(x, -\psi)$ for the unit $|| \cdot ||_H$-norm eigensolutions of $I_{x} \frac{d}{dt}$. The natural maps $j^*, i^*, \partial$ then lift to maps of $\bb{F}_2[\bb{Z}/2]$, and the original complexes $\check{C}, \hat{C}, \bar{C}$ can be recovered using the functor $A \mapsto A_{\bb{F}_2}$ of Section 2.1.

\begin{definition}We define the equivariant Floer cohomology, and by analogy with cyclic homology, the negative equivariant Floer cohomology
\begin{equation}HF_{\bb{Z}/2}(\tilde{L}_0, \tilde{L}_1), \qquad HF^-_{\bb{Z}/2}(\tilde{L}_0, \tilde{L}_1)\end{equation}
to be the cohomology of the complexes $(\check{C}, \check{d})$ and $(\hat{C}, \hat{d})$ respectively. These are $\bb{F}_2[t]$-modules. Moreover there is a natural long exact sequence of $\bb{F}_2[t]$-modules
\begin{equation}\label{equivariant_LES} \hdots \to HF_{\bb{Z}/2}^-(\tilde{L}_0, \tilde{L}_1) \to HF_{\bb{Z}/2}(\tilde{L}_0, \tilde{L}_1) \to HF_{tw}(L_0, L_1; \frak{p}) \to \hdots \end{equation}
where $\frak{p} = (E, F_0, F_1)$ is the polarization data coming from the normal to the invariant set $M$. The map $HF_{\bb{Z}/2}(\tilde{L}_0, \tilde{L}_1) \to HF_{tw}(L_0, L_1; \frak{p})$ will be referred to as the \emph{localization map}.\end{definition}

In Section \ref{sec:analysis_II}, we detail the necessary analytic arguments to rigorously define the differentials $\hat{d}, \check{d}, \bar{d}$ and the natural maps $j^*, i^*, \partial$, and prove both that these respectively square to zero and are chain maps. Moreover, these same analytical arguments adapted to a continuation equation also show that the modules $HF_{\bb{Z}/2}(\tilde{L}_0, \tilde{L}_1)$ and $HF^-_{\bb{Z}/2}(\tilde{L}_0, \tilde{L}_1)$ are invariant under a perturbation of $\tilde{J}_t$, as well as equivariant exact Lagrangian isotopies of $\tilde{L}_0, \tilde{L}_1$.

Before we do that, observe that each of $\check{C}, \hat{C}$ have a filtration by the symplectic action, compatible with the $\bb{F}_2[t]$-module structure, analogously to the filtration on polarized Floer theory. Moreover, the associated graded modules are precisely
\begin{equation*}\bigoplus\limits_{\bf{x} \in \frak{C}_o} \bb{F}_2\langle \bf{x}\rangle \oplus \bigoplus\limits_{x \in L_0 \cap L_1} \bb{F}_2[t] \langle x \rangle, \qquad \bigoplus\limits_{\bf{x} \in \frak{C}_o} \bb{F}_2\langle \bf{x}\rangle \oplus \bigoplus\limits_{x \in L_0 \cap L_1} t^{-1}\bb{F}_2[t^{-1}] \langle x \rangle\end{equation*}
respectively, with the obvious $\bb{F}_2[t]$-module structures. In particular, the associated graded for $\hat{C}$ is a torsion $\bb{F}_2[t]$-module; thus the same must be true for $\hat{C}$: each $\alpha \in \hat{C}$ must be annihilated by $T^n$ for some $n$. In particular, recalling that $T$ is already invertible on twisted Floer cohomology, we deduce

\begin{proposition}\label{localization_vanishing}The negative equivariant Floer cohomology $HF^-_{\bb{Z}/2}(\tilde{L}_0, \tilde{L}_1)\tensor_{\bb{F}_2[t]} \bb{F}_2[t, t^{-1}]$ vanishes after inverting $t$.\end{proposition}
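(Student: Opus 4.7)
The plan is to upgrade the statement in the excerpt that the associated graded of $\hat{C}$ is torsion to the statement that every element of $\hat{C}$ itself is annihilated by some power of $t$, and then pass to cohomology. Once $HF^-_{\bb{Z}/2}$ is known to be $t$-power-torsion in the sense that every class $\alpha$ satisfies $t^N \alpha = 0$ for some $N = N(\alpha)$, localization at $t$ is automatic: any $\alpha / t^k$ in the localized module equals $t^N \alpha / t^{N+k} = 0$.

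First I would carefully set up the action filtration $F_{\le a}\hat{C}$ spanned by those generators whose underlying intersection point has symplectic action $\le a$. Under the transversality assumption on $\tilde L_0, \tilde L_1$, the intersection set is finite, so this filtration has only finitely many distinct values $a_1 < a_2 < \ldots < a_n$. The point to verify is that the chain-level endomorphism $T$ preserves this filtration. This follows from positivity of energy: the chain-level lift of $T$ counts index-one negative twisted trajectories, whose underlying strip either is constant (preserving the action) or is non-constant (and then has strictly positive energy $\int |\partial_s u|^2 > 0$, hence strictly decreases the action). In either case, $T(F_{\le a}) \subset F_{\le a}$.

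Next I would identify the induced $T$-action on the associated graded pieces $F_{\le a}/F_{<a}$. Since non-constant strips strictly decrease the action, only constant-$u$ solutions contribute at the graded level, and these are precisely the solutions described in Example \ref{constant_u_solutions} at each $x \in L_0 \cap L_1$. Unpacking that example exactly as in the excerpt identifies the graded piece supported at $x$ (on the $C_u$ side) with $t^{-1}\bb{F}_2[t^{-1}]\langle x\rangle$, where the generator $(x,\lambda_{-k})$ corresponds to $t^{-k}$ and $t\cdot(x,\lambda_{-k}) = (x,\lambda_{-k+1})$ for $k\ge 2$, with $t\cdot(x,\lambda_{-1})=0$ because $(x,\lambda_0)$ lies in $C_s$ and is dropped from $\hat{C}$. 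Generators $\bf{x}\in\frak{C}_o$ are killed by $t$ at the graded level because non-invariant critical points admit no constant-$u$ negative trajectories.

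The key observation is then: each individual element $\bar{\alpha}$ of the associated graded is a \emph{finite} $\bb{F}_2$-linear combination of generators $(x,\lambda_{-k})$ and $\bf{x}\in\frak{C}_o$, so it is annihilated by $t^N$ for $N$ the largest index $k$ appearing (or $N=1$ if only $\frak{C}_o$-generators appear). I would then perform the finite induction: assuming each element of $F_{\le a_{i-1}}$ is $t$-power-torsion, and knowing the same for $F_{\le a_i}/F_{\le a_{i-1}}$, the extension $0\to F_{\le a_{i-1}}\to F_{\le a_i}\to F_{\le a_i}/F_{\le a_{i-1}}\to 0$ gives, for $\alpha\in F_{\le a_i}$, first some $M$ with $t^M\alpha\in F_{\le a_{i-1}}$ and then $L$ with $t^L(t^M\alpha)=0$. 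Hence $\hat{C}$ is $t$-power-torsion, and passing to cohomology preserves this property. Finally, tensoring a $t$-power-torsion $\bb{F}_2[t]$-module with $\bb{F}_2[t,t^{-1}]$ yields zero, giving the claim.

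The only potential obstacle is confirming that the $t$-action truly preserves (rather than raises) the action filtration and that its principal part on the associated graded is the constant-$u$ count — but both facts follow directly from the definition of $T$ via the $\bb{F}_2[\bb{Z}/2]$-lift $\widetilde{\hat C}$ and the energy identity for the Floer strip, so no new analysis is needed beyond what is already established.
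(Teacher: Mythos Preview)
Your proposal is correct and follows essentially the same route as the paper: use the finite symplectic action filtration on $\hat{C}$, identify the associated graded pieces as $\bb{F}_2\langle\bf{x}\rangle$ for $\bf{x}\in\frak{C}_o$ and $t^{-1}\bb{F}_2[t^{-1}]\langle x\rangle$ for $x\in L_0\cap L_1$ (each $t$-power-torsion), and conclude by finite extension that every element of $\hat{C}$ is annihilated by some power of $t$. Your write-up simply makes explicit the induction up the filtration and the reason $T$ preserves it, which the paper leaves implicit in the single sentence ``the associated graded for $\hat{C}$ is a torsion $\bb{F}_2[t]$-module; thus the same must be true for $\hat{C}$.''
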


An immediate corollary, in view of the long exact sequence \eqref{equivariant_LES}, is Theorem \ref{localization_theorem}: the ``localization map''
\[HF_{\bb{Z}/2}(\tilde{L}_0, \tilde{L}_1) \to HF_{tw}(L_0, L_1; \frak{p})\]
is an isomorphism after inverting $t$. This is, however, not enough to deduce the Smith-type inequality of Theorem \ref{smith_inequality}: there is so far no natural spectral sequence relating the ordinary Floer cohomology $HF(\tilde{L}_0, \tilde{L}_1)$ to its equivariant analogue. This will instead by a consequence of Theorem \ref{KMSScomparison}, which compares our model of equivariant cohomology to Seidel-Smith's.

\section{Analytical aspects II}\label{sec:analysis_II}

\subsection{Fredholm theory and equivariant transversality} In Section 2.2, we described Sobolev space frameworks in each of the $\sigma$ and $\tau$ models, so that the twisted equations are Fredholm, and proved that for fixed $J_t$ cutting out the moduli spaces of holomorphic strips in $M$ transversality, and generic complex structures $I_t$ and connections $\nabla$ on $E$, the spaces of solutions are cut out transversely.

We will do the same here for the ``interior'' moduli spaces $\cal{M}(\bf{x}_-, \bf{x}_+)$ of $\tilde{J}_t$-holomorphic strips, with prescribed asymptotics at the fixed point set. As before, we will give both a $\sigma$-model description, as well as an equivalent $\tau$-model description. On the one hand, the index formula will be much more transparent in the $\sigma$-model, while gluing will be much easier in the $\tau$-model.

Let us focus on the case that $\bf{x}_- = (x_-, \iota(x_-)) \in \frak{C}_o$ and $\bf{x}_+ = (x_+, \lambda_+) \in \frak{C}_s$. The case that $\bf{x}_-, \bf{x}_+ \in \frak{C}_o$ is that of standard Lagrangian Floer theory, whilst the remaining two cases where $\bf{x}_- \in \frak{C}_u$ are entirely analogous to the one at hand.

Choose a small $\delta > 0$ so that there are no eigenvalues of $I_{x_+}\frac{d}{dt}$ in between $\lambda_+ - \delta$ and $\lambda_+$.

We begin in the $\sigma$ model. Let $\cal{Z}^{2,k}(x_-, \bf{x}_+)$ be the set of all continuous maps $u: Z \to \tilde{M}$, with boundary on $\tilde{L}_0, \tilde{L}_1$, such that:
\begin{itemize}\item The map $u$ is locally class $L^{2,k}$;
\item for $s \ll 0$, there is some $\xi_-(s,t) \in T_{x_-}\tilde{M}$ of class $W^{2,k}$ on some semi-infinite strip $(-\infty, -S_0] \times [0,1]$ such that
\[u(s,t) = \exp_{x_-}(\xi_-(s,t));\]
\item for $s \gg 0$, $u(s,t)$ is contained in the neighbourhood $N$ of the invariant set $M$, and under $\beta_N : N \cong D(E)$, we have $u(s,t) = (v(s,t), \phi(s,t))$ such that: on some semi-infinite strip $[S_0, \infty) \times [0,1]$ there is $\xi_+(s,t) \in T_{x_+}M$ of class $W^{2,k}$ with
\[v(s,t) = \exp_{x_+}(\xi_+(s,t))\]
and moreover $e^{(\lambda_+ - \delta) s}\phi(s,t)$ is class $W^{2,k}$.\end{itemize}

To see that this is a Banach manifold, we construct a weight function as follows. Choose first a smooth function $\gamma: \tilde{M} \to [0,1]$ such that, writing $D_{r}(E)$ for the radius $r$ disc bundle of $E$, we have:
\begin{equation}\label{gamma_bump}\gamma(x) = \begin{cases}0 & \text{ if } x \notin N_{2/3} = (\beta_N)^{-1} D_{2/3}(E); \\
1 & \text{ if } x \in N_{1/3} = (\beta_N)^{-1} D_{1/3}(E).\end{cases}\end{equation}
Then, choose a smooth function $w: \bb{R} \to \bb{R}$ such that
\[w(s) = \begin{cases}0 &\text{ for } s \ll 0; \\
(\lambda_+ - \delta)s & \text{ for } s \gg 0.\end{cases}\]

Then, for $s \in \bb{R}$ and any tangent vector $\xi \in T_x \tilde{M}$ for any $x \in \tilde{M}$, define the $\gamma, w$-rescaling of this tangent vector to be
\begin{equation}R_{\gamma, w} (\xi) = \begin{cases}\xi & \text{if } x \notin N \\
(\beta_N^{-1})_*(\xi_M, e^{\gamma(x) w(s)} \theta) & \text{if } x \in N, (\beta_N)_*\xi = (\xi_M, \theta) \in T_{\pi(x)}M \oplus E_{\pi(x)}\\\end{cases}\end{equation}
recalling that $\beta_N : N \xrightarrow{\sim} D(E)$, and the tangent bundle of $D(E)$ is split as $\pi^*TM \oplus \pi^*E$ by the choice of connection. Then, define
\begin{equation}W^{2,k}_{\lambda_+}(u) := \{\xi \in C^0(Z, u^*T \tilde{M}): R_{\gamma, w}(\xi) \text{ is class } W^{2,k}, \ \xi(\cdot, i) \in T\tilde{L}_i \text{ for } i = 0,1\}\end{equation}
This is a Banach space via the norm
\begin{equation}||\xi||_{W^{2,k}_{\lambda_+}} = ||R_{\gamma, w}(\xi)||_{W^{2,k}};\end{equation}
and moreover via the exponential map this makes $\cal{Z}^{2,k}(x_-, \bf{x}_+)$ into a Banach manifold with $W^{2,k}_{\lambda_+}(u)$ as its tangent spaces. Moreover, we can similarly rescale sections of $u^* T\tilde{M}$ without Lagrangian boundary conditions, and thus define
\begin{equation}L^{2,k-1}_{\lambda_+}(u) := \{\eta \in C^0(Z, u^*T \tilde{M}): R_{\gamma, w}(\eta) \text{ is class } L^{2,k-1}\}.\end{equation}
These fit into a Banach vector bundle $\cal{L}^{2,k-1}_{\lambda_+}(TM)$ over $\cal{Z}^{2,k}(x_-, \bf{x}_+)$, such that the $\bar{\partial}_{\tilde{J}_t}$-operator defines a Fredholm section
\begin{equation}\bar{\partial}_{\tilde{J}_t}: \cal{Z}^{2,k}(x_-, \bf{x}_+) \to \cal{L}^{2,k-1}_{\lambda_+}(TM)\end{equation}
whose zero set, after quotienting by $\bb{R}$, is precisely the moduli space $\cal{M}(x_-, \bf{x}_+)$.

\begin{proposition}At a solution $u$, the index of the linearized operator
\[D(\bar{\partial}_{\tilde{J}_t}) : W^{2,k}_{\lambda_+}(u) \to L^{2,k-1}_{\lambda_+}(u)\]
is equal to $\mu(u) - i(\lambda_+)$.\end{proposition}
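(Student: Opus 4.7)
The plan is to compute the index by reducing to a product model via the splitting $T\tilde{M}|_M \cong TM \oplus E$ furnished by $\beta_N$. First I would relate the weighted operator to its unweighted counterpart on the standard space $W^{2,k}(u^* T\tilde{M})$ of vector fields decaying at both ends with Lagrangian boundary conditions. Because $x_+$ is, by Assumption \ref{localgeomII}, a transverse intersection of $\tilde{L}_0$ and $\tilde{L}_1$ in $\tilde{M}$, the asymptotic operator $I_{x_+} \frac{d}{dt}$ on sections of $T_{x_+}\tilde{M}$ with boundary on $T_{x_+}\tilde{L}_0$ and $T_{x_+}\tilde{L}_1$ has no zero eigenvalue; likewise at $x_-$ no zero eigenvalue, so this unweighted linearization is Fredholm with index equal to the Maslov index $\mu(u)$ computed from the boundary Lagrangian loop in $\tilde{M}$.

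Next I would apply the Lockhart--McOwen spectral-flow theory to compute the change in index when the weight is turned on at $+\infty$. Since $\gamma \equiv 1$ on a neighbourhood of $M$ that contains the positive asymptotic end of $u$, the conjugation by $e^{w(s)}$ shifts the asymptotic operator by $-(\lambda_+ - \delta)$ on the normal factor $E_{x_+}$ while leaving the $T_{x_+}M$ factor unchanged. Under $\beta_N$ the complex structure $\tilde{J}_t|_{T\tilde{M}|_M}$ is the block-diagonal $\mathrm{diag}(J_t, I_t)$ and the Lagrangians split as $T\tilde{L}_i|_{L_i} = TL_i \oplus F_i$, so the asymptotic operator genuinely decouples into $I_{x_+}\frac{d}{dt}$ on $T_{x_+}M$ (with boundary on $TL_0, TL_1$, whose spectrum is unaffected by the weight) and the same operator on $E_{x_+}$ (with boundary on $F_0, F_1$), the latter being precisely the operator of Proposition \ref{sigma_model_index}. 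The change in Fredholm index equals minus the number of eigenvalues that cross zero during the shift, i.e.\ the positive eigenvalues of $I_{x_+}\frac{d}{dt}$ on $E_{x_+}$ in the interval $(0, \lambda_+ - \delta]$; by the choice of $\delta$ this is exactly $\{\lambda_0(x_+), \ldots, \lambda_{i(\lambda_+) - 1}(x_+)\}$, contributing $-i(\lambda_+)$. Summing gives $\mu(u) - i(\lambda_+)$.

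The main technical obstacle will be rigorously implementing the decoupling: the splitting $u^* T\tilde{M} \cong u^* TM \oplus u^* E$ is only defined on the semi-infinite end where $u$ actually lies in $N$, and on the rest of $Z$ the linearization has no preferred block form. I would address this by an excision argument, cutting $Z$ at some large $s = S_0$ with matching cylindrical half-strip models and invoking additivity of the Fredholm index under excision; off-diagonal coupling terms between the $TM$ and $E$ directions away from the asymptotic region are supported in a compact subregion and are a compact perturbation, contributing nothing to the index calculation. Once the operator has been put into block-triangular form matching the asymptotic decoupling, the indices on the two blocks can be read off directly, recovering the formula $\mu(u) - i(\lambda_+)$.
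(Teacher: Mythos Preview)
Your proposal is correct and follows essentially the same route as the paper. The paper phrases the computation as an excision/gluing argument: it takes the unweighted linearization $F$ on $u^*T\tilde{M}$ (index $\mu(u)$) and glues on a model operator $G_+$ over the constant bundle $T_{x_+}M \oplus E_{x_+}$ carrying the weight only on the $E$-factor at the positive end, then reads off the correction from Proposition~\ref{sigma_model_index}. Your version instead deforms the weight continuously and invokes the Lockhart--McOwen wall-crossing count directly; this is the same mechanism viewed from the spectral-flow side rather than the cut-and-paste side. Your excision remark in the last paragraph is exactly the paper's ``cut-and-paste and homotopy invariance'' line, and your observation that off-diagonal coupling away from the end is a compact perturbation is the right justification for why only the asymptotic splitting matters.
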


\begin{proof}This is again an argument using the cut-and-paste and homotopy invariance properties of the Fredholm index. Consider the linearized $\bar{\partial}$-operator without exponential weights
\[F: W^{2,k}(u^*T\tilde{M}) \to L^{2,k-1}(u^*T\tilde{M}),\]
which has index $\mu(u)$. Consider also the linear $\bar{\partial}$ operator acting on spaces of sections of the constant vector bundle on $T_{x_+}M, E_{x_+}$:
\[G_+ = (\bar{\partial}_{J}, \bar{\partial}_{I}) : W^{2,k}(Z, T_{x_+}M) \oplus W^{2,k}_{\lambda_+}(Z, E_{x_+}) \to L^{2,k-1}(Z, T_{x_+}M) \oplus L^{2,k-1}_{\lambda_+}(Z, E_{x_+}),\]
which has index $i(\lambda_+)$ by Proposition \ref{sigma_model_index}, where
\[W^{2,k}_{\lambda_+}(Z, E_{x_+}) = e^{-w(s)}W^{2,k}(Z, E_{x_+}), \quad L^{2,k-1}(Z, E_{x_+}) = e^{-w(s)} L^{2,k-1}(Z, E_{x_+})\]
have are Sobolev spaces on the strip with a large exponential weight $e^{w(s)}$ at the positive end, and unweighted at the negative end. Gluing these two operators together, we obtain an operator homotopic to $D(\bar{\partial}_{\tilde{J}_t}) : W^{2,k}_{\lambda_+}(u) \to L^{2,k-1}_{\lambda_+}(u)$, yielding the result.\end{proof}

We make a completely analogous construction in the remaining cases where $\bf{x}_- \in \frak{C}_u$, and either $\bf{x}_+ \in \frak{C}_o$ or $\frak{C}_s$.

In the case when $\bf{x}_- \in \frak{C}_u$ and $\bf{x}_+ \in \frak{C}_o$, we then similarly define a space of strips $\cal{Z}^{2,k}_{\lambda_-}(\bf{x}_-, x_+)$ in the case that $\bf{x}_+ \in \frak{C}_o$ by imposing similar decay conditions on the $E$-coordinate of $u$ as $s \ll 0$, this time with a weight function of the form $e^{(\lambda_- + \delta)s}$. Likewise, when $\bf{x}_- \in \frak{C}_u$ and $\bf{x}_+ \in \frak{C}_s$, we define a space $\cal{W}^{2,k}_{\lambda_-\lambda_+}(\bf{x}_-, \bf{x}_+)$ with decay conditions at both infinite ends. These are Banach manifolds, modelled after Banach spaces $W^{2,k}_{\lambda_-}(u)$ and $W^{2,k}_{\lambda_-\lambda_+}(u)$ respectively, defined by a similar rescaling map $R_{\gamma, w}$ that weights the $E$-coordinate by $e^{(\lambda_- + \delta)s}$ for $s \ll 0$, as well as by $e^{(\lambda_+ - \delta) s}$ for $s \gg 0$ in the second case. Again, there are Banach vector bundles $\cal{L}^{2,k-1}_{\lambda_-}$ and $\cal{L}^{2,k-1}_{\lambda_-\lambda_+}$ of class $L^{2,k-1}$ sections of $u^* T\tilde{M}$ with the same asymptotic conditions and no boundary conditions, such that the $\bar{\partial}_{\tilde{J}_t}$-operator defines a Fredholm section. The zero set of this section is the moduli space of $\tilde{J}_t$-holomorphic strips, and its linearization has index
\begin{equation*}\mu(u) + i(\lambda_-) + 1\end{equation*}
in the case $\bf{x}_- \in \frak{C}_u$ and $\bf{x}_+ \in \frak{C}_o$, and index
\begin{equation*}\mu(u) + i(\lambda_-) - i(\lambda_+) + 1\end{equation*}
in the case $\bf{x}_- \in \frak{C}_u$ and $\bf{x}_+ \in \frak{C}_s$.

\begin{definition}\label{equivarianttransversality} We say that a time-dependent, equivariant almost complex structure $\tilde{J}_t$ is \emph{equivariantly regular} if it satisfies the conditions of Assumption \ref{localgeomII} for some symplectic tubular neighbourhood $N$ of $M$, and
\begin{itemize}\item the moduli spaces of non-invariant $\tilde{J}_t$-holomorphic strips $\cal{M}(\bf{x}_-, \bf{x}_+)$ with prescribed asymptotics are all transversely cut out, meaning that at each $u$, the linearized $\bar{\partial}_{\tilde{J}_t}$ operator acting on the weighted Sobolev spaces as defined above is surjective;
\item the induced almost complex structure $J_t$ on $M$ is regular for all the moduli spaces of $J_t$ holomorphic strips $\cal{M}(x_-, x_+)$ contained entirely within $M$;
\item the induced complex structure $I_t$ on $E$ is regular in the sense of polarization-twisted Floer theory.\end{itemize}
When this holds, the moduli spaces $\cal{M}(\bf{x}_-, \bf{x}_+)$ are then smooth manifolds of the claimed dimensions.\end{definition}

Constructing such a $\tilde{J}_t$ is not hard. First, pick regular $J_t$ on $M$, and $I_t$ on $E$. Then, fix a tubular neighbourhood $N$ of $M$ and $\beta_N : N \cong D(E)$; after choosing a symplectic connection on $E$ this determines the almost complex structure on $N$. We then pick some smooth, $\bb{Z}/2$-equivariant extension $\tilde{J}_t$ of this to all of $\tilde{M}$. For non-invariant $u : Z \to M$ contained entirely within the closure $\bar{N}$, the regularity of the twisted equations means that such $u$ is automatically cut out transversely.

For $u$ not contained entirely within $\bar{N}$, there must exist a \emph{regular point} of $Z$ in the sense of Floer-Hofer-Salamon \cite{FloerHoferSalamon95} whose image lies outside $\bar{N}$; moreover we can also choose it so that its image is disjoint from $\iota(u(Z))$. This then implies that $\tilde{J}_t$ can be equivariantly perturbed outside of $N$ to achieve regularity.

\subsection{The $\tau$-model.}\label{tau_model_II} We will also need an alternative Banach manifold set-up using the $\tau$-model, in order to perform gluing without large exponential weights. However since the only application of this set-up will be to gluing, we will only give a ``local'' Banach manifold structure near a \emph{solution} $u$ to the $\bar{\partial}$ equations.

Again fix a tubular neighbourhood $N$ of the invariant set $M$ and an isomorphism $\beta_N : N \cong D(E)$. 

Let us again focus on the case $\bf{x}_- = (x_-, \iota(x_-)) \in \frak{C}_o$ and $\bf{x}_+ = (x_+, \lambda_+) \in \frak{C}_s$, and fix a unit eigenvector $\psi_+$ for $I_{x_+} \frac{d}{dt}$ with eigenvalue $\lambda_+$.

Recall that for any solution $u$ with limits at $x_-$ and $x_+$, there does not exist an $s \in \bb{R}$ such that $u(s,[0,1])$ is entirely contained in $M$. Moreover, there exists an $S \in \bb{R}$ such that for all $s \ge S$, we have $u(s,t) \in N$. 

We will write $Z_S^+ = [S,\infty) \times [0,1]$ for the semi-infinite strip. For a map $u : Z \to \tilde{M}$ such that $u(Z_S^+) \subset N$, under the identification $\beta_N : N \cong D(E)$, we will write $v = \pi_M u : Z_S^+ \to M$ to be the projection of $u$ to $M$, and $\phi = \pi_E u \in v^*E$ to be the fiber coordinate of $u$.

Given $S \in \bb{R}$, we will define a Banach manifold depending on $S$
\[\tilde{\cal{Z}}^{2,k}_{S}(x_-, \bf{x}_+)\]
to be the set of tuples
\[(u, r, \varphi)\]
where:
\begin{itemize}\item $u : Z \to \tilde{M}$ is a continuous map with boundaries on $\tilde{L}_0, \tilde{L}_1$, such that $u(Z_S^+) \subset N$, and so we can write $v(s,t) = \pi_M u(s,t)$ and $\phi(s,t) = \pi_E u(s,t)$ for $s \ge S$;
\item $r(s)$ is a real function defined for $s \ge S$;
\item $\varphi \in C^0(Z_S^+, v^*E)$ is a section of $v^*E$, with boundary conditions on $F_0, F_1$, and such that $||\varphi||_H = 1$ for all $s \ge S$.\end{itemize}
These must moreover satisfy the conditions:
\begin{itemize}
\item $u$ and $\varphi$ are of class $L^{2,k}_{loc}$, and moreover $r \in L^{2,k}([S, \infty), \bb{R})$;
\item for $s \ll 0$, we have $u(s,t) = \exp_{x_-}(\xi_-(s,t))$ where $\xi_-(s,t) \in T_{x_-}\tilde{M}$ is of class $L^{2,k}$;
\item for $s \gg 0$, we have $v(s,t) = \exp_{x_+}(\xi_+(s,t))$ where $\xi_+(s,t) \in T_{x_+}M$ is of class $L^{2,k}$;
\item for $s \gg 0$ so that $v$ is contained in an open neighbourhood of $x_+ \in M$ of Assumption \ref{localgeomI} on which $E$ is trivialized, the difference $\varphi(s,t) - \psi_+(t)$ is of class $L^{2,k}$;
\item $u$ and $r, \varphi$ match over $Z_S^+$:
\[\phi(s,t) = \pi_E u(s,t) = r(s)\varphi(s,t)\]
and in particular as long as $u(s, [0,1])$ is not contained in $M$, $u(s,t)$ determines $r(s), \varphi(s,t)$ via $r(s) = ||\phi||_H$ and $\varphi = \phi/r$;
\item $u(S, [0,1])$ is not entirely contained in $M$.\end{itemize}

The underlying reason for this at first sight perhaps convoluted definition is that we need to include in our Banach manifold those $u(s,t)$ on which $u(s,[0,1]) \subset M$ for arbitrarily large $s$: for such $s$, we must have $r(s) = 0$, and $u(s,t)$ \emph{does not determine} $\varphi(s,t)$ through the matching condition. Of course, this cannot happen for a $\tilde{J}_t$-holomorphic $u$; but we still need to consider such $(u, r, \varphi)$ in our Banach space: the tuples $(u, r, \varphi)$ for which $r(s) > 0$ everywhere do not form a Banach manifold in any meaningful sense. On the other hand, we will talk about the space
\[\cal{Z}^{2,k}_{S}(x_-, \bf{x}_+) \subset \tilde{\cal{Z}}^{2,k}_{S}(x_-, \bf{x}_+)\]
on which $r(s) \ge 0$ for all $s$ where defined, which will be a closed subset. We will usually abbreviate the entire tuple $(u, r, \varphi)$ as just $u$, since for $\tilde{J}_t$-holomorphic strips, the choice of $u$ does determine $r, \varphi$.

A consequence of the final condition is that any element $(u, r, \varphi)$ of $\tilde{\cal{Z}}^{2,k}_{S}(x_-, \bf{x}_+)$ also determines an element of $\tilde{\cal{Z}}^{2,k}_{S'}$ and for any $S'$ with $|S' - S|$ sufficiently small: this is since there must exist $\eps > 0$ so that for all $S - \eps < S' < S + \eps$, we have that $u(S', [0,1])$ is contained in $N$ but not contained in $M$.

Indeed, once we have defined the Banach manifold structure, we shall see that small neighbourhoods of $(u,r,\varphi)$ in $\tilde{\cal{Z}}^{2,k}_{S}$ and $\tilde{\cal{Z}}^{2,k}_{S'}$ are diffeomorphic; in particular we obtain a canonical Banach manifold structure  (independent of $S$) in a neighbourhood of the moduli space of solutions. However, we will see that the appearance of $S < s < S'$ such that $u(s,[0,1]) \subset M$ means the two Banach manifold structures from $\tilde{\cal{Z}}^{2,k}_{S}$ and $\tilde{\cal{Z}}^{2,k}_{S'}$ around $(u, r, \varphi)$ are no longer compatible; this underlies our failure to describe a global Banach manifold structure as we did for the $\sigma$ model.

Let us now define the Banach manifold structure on $\tilde{\cal{Z}}^{2,k}_{S}(x_-, \bf{x}_+)$. At each $(u, r, \varphi)$, we define the tangent space
\[T_u \tilde{\cal{Z}}^{2,k}_{S}\]
(where we are abbreviating $u$ for $(u,r,\varphi)$) to be the tuples $(\xi, \rho, \vartheta)$ where
\begin{itemize}\item $\xi \in W^{2,k}(Z, u^*T\tilde{M})$ is a class $L^{2,k}$ section of $u^* T \tilde{M}$ along $Z$, with boundary conditions on $F_0, F_1$; in particular over $Z_S^+$ we can write $\xi = (\xi_M, \xi_E)$ where $\xi_M \in W^{2,k}(Z_S^+, v^*TM)$ and $\xi_E \in W^{2,k}(Z_S^+, v^*E)$ after using the connection $\nabla$ to produce a (time-dependent) splitting $TD(E) = TM \oplus E$;
\item $\rho \in L^{2,k}([S,\infty), \bb{R})$;
\item $\vartheta \in T_{\varphi}\cal{S}^{2,k}(Z_S^+, v^* E)$ the space of $L^{2,k}$ sections of $v^*E$ over $Z_S^+$, with boundary conditions on $F_0, F_1$, such that for $s \ge S$ we have
\[\langle \varphi, \vartheta \rangle_H = 0;\]
\item we have the matching condition
\[\xi_E(s,t) = \rho(s,t) \varphi(s,t) + r(s) \vartheta(s,t),\]
in particular we see that so long as $u(s,[0,1])$ is not contained in $M$, then $\xi$ determines $\rho$ and $\vartheta$ by $\rho = \langle \varphi, \xi_E\rangle$ and $\vartheta = \frac{1}{r}(\xi_E - \rho \varphi)$.\end{itemize}

This forms a vector space. In particular, the matching condition ensures that for $S < S'$ such that there is no $s \in (S,S')$ with $u(s, [0,1]) \subset M$, the two vector spaces $T_u \tilde{\cal{Z}}^{2,k}_{S}, T_u \tilde{\cal{Z}}^{2,k}_{S'}$ are identified.

The norm on $T_u \tilde{\cal{Z}}^{2,k}_{S}$ is constructed as follows: writing $\beta_+(s) : \bb{R} \to [0,1]$ for an increasing smooth cut-off function with $\beta_+ = 0$ on $s \le 0$ and $\beta_+ = 1$ on $s \ge 1$, we first define for $\xi \in W^{2,k}(u^*T\tilde{M})$ a similar rescaling as before, depending on $S$:
\[R_S(\xi(s,t)) = \begin{cases}\xi(s,t) & \text{ if } s < S; \\
(\xi_M, (1 - \beta_+(s-S)) \xi_E) \in v^*TM \oplus v^*E \cong u^*T \tilde{M} & \text{ if } s \ge S.\end{cases}\]
We then define the norm as
\begin{equation}\label{tau_2k_norm}||(\xi, \rho, \vartheta)||_{Z^{2,k}_{S}} = ||R_S(\xi)||_{2,k} + ||\beta_+(s - S) \rho||_{2,k} + ||\beta_+(s-S) \vartheta||_{2,k}.\end{equation}

This norm is complete, and moreover the exponential map on $\tilde{M}$ as well as the connection on $E$ induces a map from a sufficiently small open neighbourhood of zero to $\tilde{\cal{Z}}^{2,k}_{S}$. We leave it to the reader to check this defines a Banach manifold. The key point is from the definition of $S$, for $s$ sufficiently close to $S$, $u(s,[0,1])$ is both contained in $N$ and not wholly contained in $M$: the same holds for any image of a small enough $\xi$ under the exponential map. We also leave it to the reader to check that for such $S'$ sufficiently close to $S$, the two norms $|| \cdot ||_{Z^{2,k}_S}$ and $|| \cdot ||_{Z^{2,k}_{S'}}$ are equivalent, and the Banach manifolds locally diffeomorphic (indeed, this is true whenever $S < S'$ and there is no $S < s < S'$ such that $u(s, [0,1]) \subset M$).

Likewise, there is a Banach bundle $\cal{V}^{2,k-1}_{S}$ over $\tilde{\cal{Z}}^{2,k}_{S}(x_-, \bf{x}_+)$ whose fibre over $(u, r, \varphi)$ is the set of tuples $(\zeta, \mu, \sigma)$ where $\zeta \in L^{2,k-1}$, $\mu \in L^{2,k-1}([S, \infty), \bb{R})$ and $\sigma$ is an element of
\[V^{2,k-1}_{(u, \varphi)}(Z_S^+, v^*E) = \{\sigma \in L^{2,k-1}(Z_S^+, v^*E): \langle \varphi, \sigma \rangle_H = 0\}\]
subject to a similar matching condition
\[\pi_E \zeta(s,t) = \mu(s) \varphi(s,t) + r(s) \sigma(s,t)\]
over $Z_S^+$. The norm on this space is given by the same construction:
\begin{equation}||(\zeta, \mu, \sigma)||_{V^{2,k-1}_S} = ||R_S(\zeta)||_{2,k-1} + ||\beta_+(s-S)\mu||_{2,k-1} + ||\beta_+(s-S)\sigma||_{2,k-1}.\end{equation}

The $\bar{\partial}$-operator can then be recast in this set-up as a section
\begin{align}\label{F_section}\cal{F}: \tilde{\cal{Z}}^{2,k}_{S}(x_-, \bf{x}_+) &\to \cal{V}^{2,k-1}_{S}\\
(u, r, \varphi) &\mapsto \left(\bar{\partial}_{\tilde{J}_t}u, \frac{dr}{ds} + \Lambda(\varphi)r, \bar{\nabla}_I \varphi - \Lambda(\varphi)\varphi \right) \end{align}
of the vector bundle $\cal{V}^{2,k-1}_{S}$. Its zeroes within the closed subspace
\[\cal{Z}^{2,k}_{S}(x_-, \bf{x}_+) \subset \tilde{\cal{Z}}^{2,k}_{S}(x_-, \bf{x}_+)\]
where $r(s) \ge 0$ are then exactly the solutions to the $\bar{\partial}_{\tilde{J}_t}$ equations with limits at $x_-$ and $x_+$, such that $u(Z_S^+) \subset N$, with asymptotics of type $\lambda_+$ at $x_+$, and with $\varphi(s,t) \to \psi_+$ rather than $-\psi_+$. 

Moreover, the linearization $(D\cal{F})_{u}$ at a solution $u = (u, r, \varphi)$ is given by:
\begin{equation}\label{DF_formula}D\cal{F}_{u}(\xi, \rho, \vartheta) = \begin{pmatrix}D_u \xi \\
\frac{d\rho}{ds} + \Lambda(\varphi)\rho + r(s) \langle \varphi, \bar{\nabla}_I \vartheta \rangle_H + r(s) C^{\tau}_{(v, \varphi)}\pi_{TM}\xi \\
B^{\tau}_{(v, \varphi)} \pi_{TM} \xi + \Pi_V \bar{\nabla}_I \vartheta - \Lambda(\varphi)\vartheta)\end{pmatrix}\end{equation}
where $D_u$ is the linearization of the $\bar{\partial}$-operator, $v$ is the projection of $u$ to $M$ using $\beta_N$; $B^{\tau}_{(v, \varphi)}$ is given by the same formula as \eqref{Btauoperator}; and $C^{\tau}_{(v, \varphi)} : W^{2,k}(v^*TM) \to L^{2,k-1}([S,\infty), \bb{R})$ is given by
\begin{equation}\label{Ctauoperator}C^{\tau}_{(v, \varphi)} \xi = \langle \varphi, (\nabla_{\xi} I) \nabla_t \varphi\rangle_H + \langle \varphi, I F_{\nabla}(\xi, \partial_t v) \varphi\rangle_H.\end{equation}

\begin{proposition}The linearization $(D \cal{F})_u$ at a solution $(u, r, \varphi)$ is a Fredholm operators of the index
\[\ind((D \cal{F})_u) = \mu(u) - i(\lambda_+).\]\end{proposition}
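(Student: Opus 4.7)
The proof proceeds in two steps: first establishing Fredholmness by analyzing the translation-invariant asymptotic limits of $(D\cal F)_u$, then computing its index by comparison with the sigma-model linearization $D(\bar\partial_{\tilde J_t}) : W^{2,k}_{\lambda_+}(u) \to \cal L^{2,k-1}_{\lambda_+}(u)$, whose index is known to be $\mu(u) - i(\lambda_+)$. For Fredholmness, I would verify invertibility of the asymptotic operators at $s\to\pm\infty$. At $s\to-\infty$ the variables $\rho, \vartheta$ live only on $Z_S^+$ and do not contribute, so $(D\cal F)_u$ reduces to the standard linearization $D_u$ with asymptotic limit $J_{x_-}\tfrac{d}{dt}$ on $L^2([0,1], T_{x_-}\tilde M)$ with Lagrangian boundary conditions; this is invertible by transversality of $\tilde L_0 \cap \tilde L_1$ at $x_-$. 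At $s\to+\infty$, using $\Lambda(\varphi) \to \lambda_+$ and $r(s) \to 0$, the asymptotic operator splits into three independent blocks: $J_{x_+}\tfrac{d}{dt}$ on $T_{x_+}M$ (invertible by Assumption \ref{localgeomII}), multiplication by $\lambda_+ > 0$ on the $\rho$ factor, and $I_{x_+}\tfrac{d}{dt} - \lambda_+$ on $\{\vartheta \in E_{x_+} : \langle\psi_+,\vartheta\rangle_H = 0\}$ (invertible because $I_{x_+}\tfrac{d}{dt}$ preserves $\psi_+^\perp$ and its spectrum on $\psi_+^\perp$ avoids $\lambda_+$ by simplicity).

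For the index, the key tool is the change of variables $\xi_E = \rho\varphi + r\vartheta$ on $Z_S^+$, which is a bijection since $r(s) > 0$ for all $s \geq S$ (the condition $u(S,[0,1]) \not\subset M$ forces $r(S) > 0$, and then $r' = -\Lambda r$ preserves positivity). The same direct calculation as in the proof of Proposition \ref{tau_index}---notably the identity $\bar\nabla_I(\rho\varphi + r\vartheta) = (\rho' + \Lambda\rho)\varphi + r(\bar\nabla_I\vartheta - \Lambda\vartheta)$---shows that the tau-model formula \eqref{DF_formula} is exactly the decomposition of the sigma-model operator into components parallel and perpendicular to $\varphi$. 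At the asymptotic level, the two operators have the same total spectrum up to reorganization: after absorbing the sigma-model weight $e^{(\lambda_+-\delta)s}$, the asymptotic operator on the $E$-direction becomes $I_{x_+}\tfrac{d}{dt} - (\lambda_+ - \delta)$, contributing the positive eigenvalue $\delta$ along the $\psi_+$ mode; in the tau model this single mode is replaced by the $\rho$-eigenvalue $\lambda_+ > 0$, while the remaining $\psi_+^\perp$ modes contribute identically (since $\delta$ can be taken arbitrarily small, the sign pattern of $\{\lambda_j - \lambda_+\}_{j\neq i(\lambda_+)}$ agrees with $\{\lambda_j - (\lambda_+ - \delta)\}_{j\neq i(\lambda_+)}$). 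Both ``substituted'' modes are strictly positive, so neither contributes to the spectral flow, and the Fredholm indices coincide.

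The main obstacle is the genuine mismatch of the underlying function spaces: the sigma model requires $\xi_E$ to decay at the specific rate $e^{-(\lambda_+-\delta)s}$, while the tau model allows $\rho$ only in unweighted $L^{2,k}([S,\infty),\bb R)$ and $\vartheta$ in unweighted $W^{2,k}$, so the change of variables is not a literal Banach-space isomorphism between the two domains. To bypass this, I would cut the strip at $s = S+1$: on $(-\infty, S+1]$ the two operators act identically on identical Banach spaces (since $\rho,\vartheta$ are absent there), while on $[S,\infty)$ the asymptotic-spectrum computation above shows that the half-strip Fredholm indices agree. Standard cut-and-paste invariance of the Fredholm index then yields the total index $\mu(u) - i(\lambda_+)$, as claimed.
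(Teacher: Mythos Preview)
Your proposal is correct but takes a different route from the paper. The paper proceeds in two steps: first an excision argument, comparing to the $\tau$-model operators from polarization-twisted Floer theory (Proposition~\ref{tau_index}), reduces the computation to the special case $\lambda_+=\lambda_0$, the smallest positive eigenvalue; then for $\lambda_0$ the paper compares $D\cal{F}$ directly to the linearized $\bar\partial$-operator on the \emph{unweighted} spaces $W^{2,k}(u^*T\tilde M)\to L^{2,k-1}(u^*T\tilde M)$, which has index $\mu(u)=\mu(u)-i(\lambda_0)$, asserting that in this case the change of variables $\xi\mapsto(\xi,\rho,\vartheta)$ gives a literal isomorphism of Banach spaces and hence a commutative square intertwining the two operators.

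You instead compare directly to the \emph{weighted} $\sigma$-model operator for general $\lambda_+$ via an asymptotic-spectrum and cut-and-paste argument. Both approaches are valid. The paper's reduction to $\lambda_0$ buys a comparison that is, at least formally, a one-line identification, at the cost of the prior excision step; your approach handles all $\lambda_+$ uniformly but must confront the Banach-space mismatch head-on, which you correctly flag as the main obstacle. Your eigenvalue analysis at $+\infty$ is correct, and the cut-and-paste sketch can be made fully rigorous by exhibiting a homotopy of invertible asymptotic operators (linear interpolation between $A_+^\sigma$ and $A_+^\tau$ works, exactly by your sign analysis) and invoking homotopy invariance of the index for operators of the form $\partial_s+A(s)$ with fixed invertible limit $A_-$; this replaces the somewhat vague appeal to ``half-strip Fredholm indices,'' which would otherwise require specifying a boundary condition at $s=S$.
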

\begin{proof}That the linearized operator is Fredholm follows from the proof of Proposition \ref{tau_index}. Moreover, by an excision argument comparing these operators to the linearized operators used in the $\tau$-model for polarization-twisted Floer theory earlier, it suffices to compute the index for the case when $\lambda_+ = \lambda_0$ is the smallest positive eigenvalue.

To do this, we will compare $D \cal{F}$ to the linearized $\bar{\partial}$-operator in the $\sigma$-model acting on the \emph{unweighted spaces}: there is a commutative diagram
\begin{equation}\begin{tikzcd}
W^{2,k}(u^* T\tilde{M}) \arrow[r, "D(\bar{\partial})"] \arrow[d] & L^{2,k-1}(u^* T\tilde{M}) \arrow[d]\\
T_u \tilde{\cal{Z}}^{2,k}_{S} \arrow[r, "D\cal{F}"] & V_S^{2,k-1}\end{tikzcd}\end{equation}
where the vertical arrows are the natural maps given by sending $\xi \mapsto (\xi, \rho, \vartheta)$ where for $\rho = \langle \varphi, \xi_E \rangle_H$ and $\vartheta = \frac{1}{r}(\xi_E - \rho \varphi)$ for $s \ge S$. However in this case, the vertical arrows are in fact isomorphisms of Banach spaces, from which we deduce the result.\end{proof}

We then say that a solution $(u, r, \varphi)$ is regular if $(D \cal{F})_u$ is surjective. Thankfully, this notion is again equivalent to regularity in the $\sigma$-model, utilizing Sobolev spaces with large exponential weights:

\begin{proposition}$D\cal{F}$ is surjective at a solution $(u, r, \varphi)$ if and only if the corresponding $\sigma$-model solution $(u, \phi)$ is regular for $\tilde{J}_t$, in the sense that the operator
\[D(\bar{\partial}_{\tilde{J}_t}): W^{2,k}_{\lambda_+} \to L^{2,k-1}_{\lambda_+}\]
on the Sobolev spaces with weights $e^{(\lambda_+ - \delta)s}$ at the positive end of the strip is surjective.\end{proposition}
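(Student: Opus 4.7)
Since the two linearized operators are Fredholm of equal index $\mu(u) - i(\lambda_+)$, as already computed, surjectivity of one is equivalent to surjectivity of the other provided we show their kernels have the same dimension. The plan is to build a canonical linear isomorphism between $\ker D(\cal{F})_u$ and $\ker D(\bar{\partial}_{\tilde{J}_t})$ via the matching condition $\xi_E = \rho \varphi + r \vartheta$, in direct analogy with the comparison of $\sigma$- and $\tau$-model regularity for polarization-twisted Floer theory carried out in Section~\ref{sec:tau_model_I}.

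Given $\xi \in \ker D(\bar{\partial}_{\tilde{J}_t}) \subset W^{2,k}_{\lambda_+}(u)$, write its component in the fibre direction over $Z_S^+$ as $\xi_E$, and set
\[\rho(s) = \langle \varphi(s), \xi_E(s) \rangle_H, \qquad \vartheta(s,t) = \frac{1}{r(s)} \bigl( \xi_E(s,t) - \rho(s) \varphi(s,t) \bigr).\]
A direct calculation, which essentially differentiates the identities $\phi = r \varphi$, $r = ||\phi||_H$ and $\varphi = \phi/r$ in the direction $\xi$, shows that $(\xi, \rho, \vartheta)$ satisfies the three component equations of $D\cal{F}_u(\xi, \rho, \vartheta) = 0$ as displayed in \eqref{DF_formula}; the algebra is identical to the comparison of $B_{(u,\phi)}$ with $B^{\tau}_{(u,\varphi)}$ and the additional equation for $\rho$ encountered in the earlier twisted case. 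The inverse map reads off $\xi$ directly, using the matching condition to recover $\xi_E$ over $Z_S^+$.

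The main point is to check that this map respects the function-space conditions, namely that $(\xi, \rho, \vartheta) \in T_u \tilde{\cal{Z}}^{2,k}_S$. This will be the hard part, and comes down to matching exponential decay rates. Any solution $\xi$ of $D_u \xi = 0$ admits an asymptotic expansion $\xi_E(s,t) \sim \sum_i a_i e^{-\lambda_i s} \psi_i(t)$ as $s \to \infty$, by the linear Robbin--Salamon exponential decay results used previously for \eqref{twisted_solution_asymptotics}; containment in $W^{2,k}_{\lambda_+}$ forces $a_i = 0$ for all $\lambda_i < \lambda_+$. Since the background solution satisfies $r(s) \sim c e^{-\lambda_+ s}$ and $\varphi(s,t) \to \psi_+(t)$ with exponential rate, the inner product $\rho$ decays like $e^{-\lambda_+ s}$ and so lies in $L^{2,k}$, while the orthogonal remainder $\xi_E - \rho \varphi$ has asymptotic expansion supported at eigenvalues strictly greater than $\lambda_+$; dividing by $r$ then yields $\vartheta$ with genuine $L^{2,k}$ decay and satisfying $\langle \varphi, \vartheta \rangle_H = 0$. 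The converse direction runs symmetrically: elements of $\ker D(\cal{F})_u$ are smooth by elliptic regularity, and the matching condition with $\rho, \vartheta \in L^{2,k}$ together with the asymptotics $r \sim c e^{-\lambda_+ s}$ forces $\xi_E$ to decay at the rate required by the weighted space $W^{2,k}_{\lambda_+}$. Thus the kernels are canonically isomorphic; combined with the equality of Fredholm indices this gives equal cokernel dimensions, proving that surjectivity holds on one side if and only if it holds on the other.
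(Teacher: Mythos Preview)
Your overall strategy is correct and matches the paper: both linearizations have the same Fredholm index, so it suffices to exhibit a bijection between their kernels via $\xi \leftrightarrow (\xi,\rho,\vartheta)$ with $\rho = \langle \varphi, \xi_E\rangle_H$ and $\vartheta = \frac{1}{r}(\xi_E - \rho\varphi)$. Your treatment of the forward direction (from the weighted $\sigma$-kernel into the $\tau$-kernel) via asymptotic expansions is fine, and in fact more detailed than the paper, which simply asserts this injection exists.

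The gap is in the reverse direction. You write that it ``runs symmetrically'' and that the matching condition together with $\rho,\vartheta \in L^{2,k}$ and $r \sim c e^{-\lambda_+ s}$ ``forces $\xi_E$ to decay at the rate required by $W^{2,k}_{\lambda_+}$''. But this is not automatic: from $\xi_E = \rho\varphi + r\vartheta$, the term $r\vartheta$ indeed has the correct weighted decay, while $\rho\varphi$ does not, since a priori $\rho$ is only in $L^{2,k}$ with no weight. The two directions are not symmetric --- the forward direction is about controlling $\vartheta$ after dividing by $r$, whereas the reverse direction is about upgrading the decay of $\rho$. The paper supplies exactly this missing step: from the second component of $D\cal{F}_u(\xi,\rho,\vartheta)=0$ one obtains the ODE
\[
\frac{d\rho}{ds} + \Lambda(\varphi)\rho + r(s)\,\eta(s) = 0,
\]
with $\eta \in L^{2,k}$. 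Since $r(s) \sim r_0 e^{-\lambda_+ s}$ and $\Lambda(\varphi) \to \lambda_+$ exponentially, this forces $e^{(\lambda_+ - \delta)s}\rho \in L^{2,k}$, which is precisely what is needed to place $\xi$ in $W^{2,k}_{\lambda_+}$. Without this ODE analysis (or an explicit asymptotic-expansion argument using the constraint $\vartheta \in L^{2,k}$ to rule out modes of $\xi_E$ at eigenvalues below $\lambda_+$), the reverse implication is not established.
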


\begin{proof}Since $D(\bar{\partial}_{\tilde{J}_t})$ and $D\cal{F}$ have the same index $\mu(u) + i(\lambda_+)$, it suffices to show their kernels are isomorphic. There certainly an injection
\[\ker(D(\bar{\partial}_{\tilde{J}_t}) \hookrightarrow \ker(D\cal{F})\]
given by
\[\xi \mapsto (\xi, \rho, \vartheta), \quad \rho = \langle \varphi, \xi_E \rangle_H, \quad \vartheta = \frac{1}{r}(\xi_E - \rho \varphi).\]
Conversely, if $(\xi, \rho, \vartheta) \in \ker(D\cal{F})$, we then have
\[\frac{d}{ds} \rho + \Lambda(\varphi)\rho + r(s)\eta(s,t) = 0\]
for $s \ge S$, where $\eta(s,t) = \langle \varphi, \bar{\nabla}_I \vartheta \rangle_H + \langle \varphi, I_t F_{\nabla}(\pi_{TM} \xi, \partial_t v) \varphi \rangle_H$. Now, since $(u, r, \varphi)$ is a solution, we must have $r(s) = r_0 e^{- \lambda_+ s} + R(s,t)$ where the error $R(s,t)$ satisfies $e^{(\lambda_+ + \delta)s}R(s,t) \to 0$; moreover we certainly have $\eta(s,t) \in L^{2,k}([S,\infty), \bb{R})$. We deduce that since $\Lambda(\varphi) \to \lambda_+$ (also with exponential speed), we must actually have
\[e^{(\lambda_+ - \delta)s}\rho \in L^{2,k}([S, \infty), \bb{R}).\]
Thus we see that since $\xi_E = \rho \varphi + r \vartheta$, we must have $\xi \in W^{2,k}_{\lambda_+}(u)$ and is thus an element of $\ker(D(\bar{\partial}_{\tilde{J}_t}))$, which completes the proof.\end{proof}

This set-up is entirely analogous in the case that $\bf{x}_- = (x_-, \lambda_-) \in \frak{C}_u$ and $\bf{x}_+ \in \frak{C}_o$, but just with the roles of the positive and negative end of the strip reversed. Writing $Z_{S}^-$ for the semi-infinite strip $(-\infty, S] \times [0,1]$, we obtain a Banach manifold of $\tilde{\cal{Z}}^{2,k}_{S}(\bf{x}_-, x_+)$ of tuples $(u, r, \varphi)$ where $u : Z \to \tilde{M}$ is a strip with boundaries on $\tilde{L}_0, \tilde{L}_1$ and limits at $x_-, x_+$, such that $u(Z_S^-) \subset N$ and on $Z_S^-$ we have $\pi_E u = r(s) \varphi(s,t)$, where $||\varphi||_H = 1$ and $\varphi \to \psi_-$ as $s \to \infty$. The moduli space of parametrized solutions is then cut out by a Fredholm section $\cal{F}$, whose linearization $D\cal{F}$ has index
\[\mu(u) + i(\lambda_-) + 1\]
and is surjective if the corresponding solution in the $\sigma$ model is itself regular.

The case that $\bf{x}_- = (x_-, \lambda_-) \in \frak{C}_u$ and $\bf{x}_+ = (x_+, \lambda_+) \in \frak{C}_s$ is more interesting. Here we must consider both semi-infinite ends of the strip $Z_S^-$ and $Z_S^+$ (we could also use different values of $S$ for the positive and negative end, but for simplicity let us not), and we define a similar Banach manifold $\tilde{\cal{Z}}^{2,k}_S(\bf{x}_-, \bf{x}_+)$ of tuples $(u, r, \varphi)$ where we now impose that both $u(Z_S^-) \subset N$ and $u(Z_S^+) \subset N$, with $r, \varphi$ being defined on $Z_S^- \sqcup Z_S^+$ and satisfying the same matching condition. We similarly obtain a Fredholm map $\cal{F}$, cutting out the moduli space of parametrized solutions, whose linearization has index
\[\mu(u) + i(\lambda_-) - i(\lambda_+) + 1\]
and is in particular surjective if and only if the corresponding $\sigma$ model solution is regular.

Consider now $(v, \varphi)$ a solution of the twisted equations in the $\tau$-model, with asymptotics to $\bf{x}_-$ and $\bf{x}_+$; recall that in the context of equivariant Floer theory, we case these boundary solutions. Now, fix a positive solution $r_0 : \bb{R} \to [0,\infty)$ of
\[\frac{d}{ds} r_0 + \Lambda(\varphi)r_0 = 0.\]
Since $\lambda_- < 0$ and $\lambda_+ > 0$, we must have $r_0(s) \to \infty$ exponentially fast as $s \to \pm \infty$. In particular, for sufficiently small $\eps > 0$, we have $\eps r_0(s)\varphi(s,t) \in D(E)$ for each $(s,t) \in Z$, and then in particular $u = (v, \eps r_0, \varphi)$ defines an element of the moduli space of ``interior'' trajectories $\cal{M}^o(\bf{x}_-, \bf{x}_+)$. It is then not difficult to check that for any compact subset $K \subset \cal{M}^{\partial}(\bf{x}_-, \bf{x}_+)$ of the moduli space of boundary trajectories, this assignment defines a smooth open embedding
\[G: K \times (0, \eps_0) \to \cal{M}^o(\bf{x}_-, \bf{x}_+).\]
In the next section, we will define what it means for a sequence of interior trajectories to converge to a boundary trajectory. A direct consequence of our definition will be that
\[\lim\limits_{\eps \to 0} G((v, \varphi), \eps) = (v, \varphi);\]
in particular we see that
\begin{proposition}$\cal{M}^o(\bf{x}_-, \bf{x}_+)$ is naturally the interior of a manifold with boundary, where the boundary is exactly $\cal{M}^{\partial}(\bf{x}_-, \bf{x}_+)$.\end{proposition}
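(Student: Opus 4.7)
The strategy is to exhibit a collar neighborhood of $\cal{M}^\partial(\bf{x}_-, \bf{x}_+)$ inside $\overline{\cal{M}}(\bf{x}_-, \bf{x}_+) := \cal{M}^o \sqcup \cal{M}^\partial$ by extending the map $G$ continuously to $\eps = 0$, and then to promote the resulting topological collar to a smooth one using the $\tau$-model Banach manifold framework of Section \ref{tau_model_II}. First I topologize $\overline{\cal{M}}$ so that a sequence $u_\alpha \in \cal{M}^o$ converges to $(v,\varphi) \in \cal{M}^\partial$ when, for $\alpha$ large, $u_\alpha$ is contained in the tubular neighborhood $N \cong D(E)$ of $M$ and decomposes via $\beta_N$ as $u_\alpha = (v_\alpha, r_\alpha \varphi_\alpha)$ with $||\varphi_\alpha||_H = 1$ and $r_\alpha > 0$, satisfying $v_\alpha \to v$ and $\varphi_\alpha \to \varphi$ in $C^{\infty}_{loc}$ and $\sup_{|s|\le S} r_\alpha(s) \to 0$ for each $S$. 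Convergence within $\cal{M}^o$ or within $\cal{M}^\partial$ is as already defined.

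Next, I fix a smooth normalization of the fundamental positive solution $r_0[v,\varphi]$ of $\frac{d}{ds}r_0 + \Lambda(\varphi) r_0 = 0$ on compact subsets $K \subset \cal{M}^\partial$, for instance by requiring $r_0(0) = 1$. Since $\lambda_- < 0 < \lambda_+$, the solution $r_0$ decays exponentially at both infinite ends, and standard ODE dependence on parameters shows that $r_0[v,\varphi]$ varies smoothly in $(v,\varphi)$ and is uniformly bounded on compact families. I extend $G$ to $\bar{G}: K \times [0, \eps_0) \to \overline{\cal{M}}$ by setting $\bar{G}((v,\varphi), 0) := (v, \varphi)$. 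Continuity at $\eps = 0$ is immediate from the topology above, since $(v, \eps r_0, \varphi) \to (v, \varphi)$ in precisely the prescribed sense.

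The main step is to show that $\bar{G}$ is a local homeomorphism onto a neighborhood of each $(v_0, \varphi_0) \in \cal{M}^\partial$. Injectivity is immediate: an interior trajectory $u = (v, r\varphi) \subset N$ uniquely determines $(v, \varphi)$ by normalization $||\varphi||_H = 1$, and then $\eps$ from $r = \eps \cdot r_0[v,\varphi]$, since positive solutions of $\dot r + \Lambda(\varphi) r = 0$ form a one-dimensional space. For surjectivity I must verify that any $u \in \cal{M}^o$ sufficiently close to $(v_0, \varphi_0)$ is entirely contained in $N$; once this holds, the decomposition $u = (v, r\varphi)$ makes global sense on $Z$ and exhibits $u$ as a point in the image of $\bar{G}$. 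Continuity of $\bar{G}^{-1}$ then follows directly from the definitions.

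The hard part is verifying this global containment in $N$, in particular at the infinite ends of the strip. On compact portions of $Z$ it follows from $C^0$-proximity to $v_0 \subset M$ combined with a local $L^{2,k}$-estimate in the style of Proposition \ref{longstrip_phicontrol}. At the ends, both $u$ and $(v_0, \varphi_0)$ have common asymptotics at the critical points $\bf{x}_\pm = (x_\pm, \lambda_\pm)$, and exponential decay of $u$ into pre-specified neighborhoods of $x_\pm$ inside $M \subset N$ ensures the strip sits in $N$ for $|s|$ large; patching the two regimes gives full containment. Finally, the smooth structure extending smoothly across $\eps = 0$ comes from the $\tau$-model Banach manifold $\cal{Z}^{2,k}_S$ of Section \ref{tau_model_II}: the map $\bar{G}$ factors through $((v, \varphi), \eps) \mapsto (v, \eps r_0[v,\varphi], \varphi)$ composed with the smooth chart on $\cal{F}^{-1}(0)$, and the reparametrization $r = \eps r_0$ provides a smooth change of coordinates extending smoothly down to $\eps = 0$, giving $\overline{\cal{M}}$ the structure of a manifold with boundary $\cal{M}^\partial$.
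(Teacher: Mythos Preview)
Your approach is essentially the same as the paper's, which sets up the collar map $G: K \times (0,\eps_0) \to \cal{M}^o(\bf{x}_-,\bf{x}_+)$ via $((v,\varphi),\eps) \mapsto (v,\eps r_0,\varphi)$ in the paragraph immediately preceding the proposition, asserts it is a smooth open embedding, and then observes (deferring to the compactness section) that the Gromov topology makes $G((v,\varphi),\eps) \to (v,\varphi)$ as $\eps \to 0$. You have supplied considerably more detail than the paper does, and your computation that $r_0 \to 0$ exponentially at both ends is correct (the paper's text here contains a typo).

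There is one technical slip. You invoke the Banach manifold $\cal{Z}^{2,k}_S$ of Section~\ref{tau_model_II} to obtain smoothness across $\eps = 0$, but as defined there (for $\bf{x}_- \in \frak{C}_u$, $\bf{x}_+ \in \frak{C}_s$) that manifold carries the restriction that $u(\pm S,[0,1]) \not\subset M$, and records $(r,\varphi)$ only on $Z_S^- \sqcup Z_S^+$. The boundary trajectory $(v,0,\varphi)$ has $u = v \subset M$ everywhere, so it is not a point of $\tilde{\cal{Z}}^{2,k}_S$, and your factorization through a chart on $\cal{F}^{-1}(0)$ does not literally go through. The fix is immediate and is what the paper is implicitly using: near the boundary stratum every relevant trajectory lies entirely in $N \cong D(E)$, so one may work instead in the global model consisting of triples $(v,r,\varphi)$ with $v: Z \to M$, $r \in L^{2,k}(\bb{R},\bb{R})$, and $\varphi \in \cal{S}^{2,k}_{\psi_-\psi_+}(v^*E)$ as in Section~\ref{sec:tau_model_I}. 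The section $\cal{F}(v,r,\varphi) = (\bar{\partial}_J v,\; \tfrac{dr}{ds} + \Lambda(\varphi)r,\; \bar{\nabla}_I\varphi - \Lambda(\varphi)\varphi)$ has surjective linearization at boundary solutions (twisted regularity plus the fact that $\rho \mapsto \tfrac{d\rho}{ds} + \Lambda(\varphi)\rho$ is surjective of index one on $L^{2,k}(\bb{R},\bb{R})$ since $\lambda_- < 0 < \lambda_+$), so $\cal{F}^{-1}(0)$ is smooth and its intersection with $\{r \ge 0\}$ is the desired manifold with boundary. With this adjustment your argument is complete.

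A smaller point: your ``hard part'' about verifying global containment in $N$ is already built into the topology you defined, so the surjectivity paragraph is partly redundant; the substantive content there is rather that your topology agrees with Gromov convergence to an unbroken boundary trajectory, which does follow from the compactness arguments but which you do not spell out.
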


\subsection{Compactness.} We turn our discussion to the compactification of our moduli spaces $\cal{M}(\bf{x}_-, \bf{x}_+)$ of non-invariant flows with prescribed asymptotics; where we must account for a sequence of non-invariant pseudoholomorphic strips $u_{\alpha}$ having as a limit a solution to the \emph{twisted} equations.

As before, an important tool will be control on the function $\Lambda(\varphi)(s)$ for a solution $\varphi$ of the ($\tau$-model) twisted equations. This is a priori not defined for a non-invariant solution $u$ of the $\tilde{J}_t$-holomorphic curve equation. However, fixing a tubular neighbourhood $N \cong D(E)$ as in Assumption \ref{localgeomII}, if for some $s \in \bb{R}$ we have $u(s, t) \in N$ for all $t \in [0,1]$, so that $\beta_N u(s,t) = (v(s,t), r(s)\varphi(s,t))$ a solution of the twisted equations, we can define
\begin{equation*}\Lambda(u)(s) = \Lambda(\varphi) = \langle \varphi, I_t \nabla_t \varphi \rangle_H\end{equation*}
at this $s$. Let $S(u) = \{s \in \bb{R}: u(s,[0,1]) \subset N\}$ be the subset of $\bb{R}$ on which $\Lambda$ is defined.

\begin{proposition}Suppose that $\tilde{J}_t$ is equivariantly regular. Then, there is a uniform bound
\begin{equation}|\Lambda(u)(s)| \le \Lambda_0\end{equation}
for all finite energy, non-invariant $\tilde{J}_t$-holomorphic maps $u : Z \to \tilde{M}$ and $s \in S(u)$.\end{proposition}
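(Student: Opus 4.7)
My plan is to adapt the proof of Proposition~\ref{Lambdabound} to this equivariant setting, where the auxiliary section $\phi$ of $v^*E$ is realised as the fibre component of $u$ inside the tubular neighbourhood $N \cong D(E)$ of Assumption~\ref{localgeomII}. Under this identification, for every $s \in S(u)$ the $\tilde J_t$-holomorphic strip equation for $u$ coincides with the twisted Floer equations for $(v,\phi = r\varphi)$; hence the pointwise derivative estimate \eqref{curvaturebound} applies verbatim on $S(u)$. Choosing $\nabla$ so that $F_0$ and $F_1$ are parallel (as we may by the discussion following \eqref{H_inner_product}), the two boundary contributions in \eqref{curvaturebound} vanish identically.

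The key observation is that, after re-expressing the right hand side of \eqref{curvaturebound} in the scale-invariant $\varphi$-coordinates (using $\|\phi\|_H = r$, $\nabla_s\phi = r'\varphi + r\nabla_s\varphi$, $\langle\varphi,\nabla_s\varphi\rangle_H=0$ and $r' = -\Lambda r$), one obtains the refined identity
\[\frac{d\Lambda}{ds} \;=\; -2\|\nabla_s\varphi\|_H^2 \;+\; \langle\varphi,(\nabla_s I)(\nabla_t\varphi)\rangle_H \;+\; \langle\varphi, I\, F_\nabla(\partial_s u,\partial_t u)\,\varphi\rangle_H.\]
The middle term is the only one that a priori involves $\Lambda$. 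Substituting $\nabla_t\varphi = -I\nabla_s\varphi + \Lambda I\varphi$ from the twisted equation, its coefficient of $\Lambda$ becomes $\langle\varphi,(\nabla_s I)I\varphi\rangle_H$, which vanishes because $(\nabla_s I)I$ is skew-adjoint for $\langle\cdot,\cdot\rangle_H$: indeed $(\nabla_s I)I = -I(\nabla_s I)$, and both $\nabla_s I$ and $I$ are themselves skew. Applying AM--GM to the remaining piece of Term~1 and absorbing $\|\nabla_s\varphi\|_H^2$ into the negative $-2\|\nabla_s\varphi\|_H^2$ yields the two-sided estimate
\[-3\|\nabla_s\varphi\|_H^2 - C'' \;\le\; \frac{d\Lambda}{ds} \;\le\; -\|\nabla_s\varphi\|_H^2 + C',\]
where $C', C''$ are controlled by $C^0$ bounds on $\nabla I$, $F_\nabla$ and pointwise $C^1$ bounds on $u$. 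Crucially $C' = C'' = 0$ inside each trivialising neighbourhood $U_x$ of $x \in L_0 \cap L_1$ (where $\nabla_s I$ and $F_\nabla$ vanish by Assumption~\ref{localgeomI}), so $\Lambda$ is non-increasing there.

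Standard Gromov--Floer compactness downstairs for finite energy $\tilde J_t$-holomorphic strips in $\tilde M$ now supplies: (i) a uniform $C^1$ bound on $u$, hence uniform values of $C', C''$; (ii) a uniform bound $L$ on the total $s$-length of the transition region $\{s\in\bb{R}:u(s,[0,1])\not\subset \bigcup_x U_x\}$; and (iii) a finite set of admissible asymptotics, so that when $\pm\infty \in S(u)$ the possible limits $\lambda_\pm = \lim_{s\to\pm\infty}\Lambda(u)(s)$ lie in a finite set. At any finite boundary point $s_0 \in \partial S(u)$, $u(s_0,\cdot)$ touches $\partial N$, so $|\phi(s_0,t_0)| = \operatorname{rad}(N)$ for some $t_0$; combined with the $C^0$-modulus of continuity of $\phi$ from (i), this forces $r(s_0) \ge r_{\min} > 0$ uniformly, and hence $|\Lambda(u)(s_0)| \le \|\nabla_t\phi\|_{C^0}/r_{\min}$ is also uniformly bounded.

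Finally, integrating the upper inequality along any connected component of $S(u)$, starting from whichever of its two boundaries (at $\pm\infty$ or at a finite $s_0\in\partial S(u)$) provides the anchor, gives $\Lambda(s) \le \Lambda(\text{anchor}) + C'L$, a uniform upper bound. Integrating the same upper inequality between the two boundaries of the component bounds $\int \|\nabla_s\varphi\|_H^2\,ds$ by $|\Lambda(\text{anchor}_-) - \Lambda(\text{anchor}_+)| + C'L$, which is now uniform; substituting this into the lower inequality and integrating yields the matching uniform lower bound on $\Lambda$. The main obstacle, and the only nontrivial ingredient beyond the proof of Proposition~\ref{Lambdabound}, is the scale-invariant rewriting in the second paragraph together with the skew-adjointness identity that kills the $\Lambda$-linear term; this is precisely what prevents the differential inequality from requiring a Gronwall argument with $\Lambda$-dependent coefficients, which would otherwise produce only an exponential-in-$L$ control.
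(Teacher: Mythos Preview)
Your overall strategy---rewrite \eqref{curvaturebound} in the scale-invariant $\varphi$-coordinates, establish a two-sided differential inequality for $\Lambda$, and integrate from anchors at $\partial S(u)$ or at $\pm\infty$---is sound and genuinely different from the paper's proof, which proceeds by contradiction: it assumes $|\Lambda(u_\alpha)(s_\alpha)| \to \infty$, extracts a broken Gromov limit of the $u_\alpha$, and then performs a four-way case analysis (depending on which components of the limit are invariant) reducing each case to Proposition~\ref{Lambdabound}. Your direct approach is more constructive, and your estimate at finite boundary points of $S(u)$ via $r(s_0)\ge r_{\min}$ is a clean observation the paper does not isolate.

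There is, however, a concrete error in the step you single out as the ``only nontrivial ingredient''. You claim $\nabla_s I$ is skew-adjoint for $\langle\cdot,\cdot\rangle_H$; it is in fact \emph{self}-adjoint. Differentiating the symmetry $\omega(a, I b) = \omega(b, I a)$ of the metric $g = \omega(\cdot, I\cdot)$ along $s$ gives $\omega(a, (\nabla_s I)b) = \omega(b, (\nabla_s I)a)$, and a short computation using this and the anticommutation $(\nabla_s I)I = -I(\nabla_s I)$ yields $\langle(\nabla_s I)a, b\rangle = \langle a, (\nabla_s I)b\rangle$. Consequently $(\nabla_s I)I$ is also self-adjoint, and $\langle\varphi, (\nabla_s I)I\varphi\rangle_H$ does \emph{not} vanish in general. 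The $\Lambda$-linear term survives, and you are forced into exactly the Gronwall argument you hoped to avoid. This is not fatal: the coefficient $|\langle\varphi, (\nabla_s I)I\varphi\rangle_H|$ is bounded by the $C^0$-norm of $\nabla I$ times the $C^1$-bound on $v$, and Gronwall over the transition region of length $\le L$ gives a factor $e^{CL}$, which is still uniform. But the proof as written is incorrect at this step.

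A second point: in your item (iii) you attribute the finiteness of the admissible limits $\lambda_\pm$ to ``standard Gromov--Floer compactness''. This is where the \emph{equivariant regularity} hypothesis enters, and it is the only place it does: regularity forces the expected dimension $\mu(u) - i(\lambda_+) - 1 \ge 0$, hence $i(\lambda_+) \le \mu(u) - 1$, and $\mu(u)$ is bounded since $\tilde L_0 \cap \tilde L_1$ is finite and the setting is exact. Gromov compactness alone does not bound $\lambda_\pm$; you should make the role of regularity explicit here, as the paper does at the start of its proof.
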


\begin{proof}Any finite energy $u : Z \to \tilde{M}$ must have positive and negative limits at some $x_-, x_+ \in \tilde{L}_0 \cap \tilde{L}_1$. Furthermore, in the event that say $x_+ \in L_0 \cap L_1$ is invariant, then for $s \gg 0$ we must have $\beta_N u = (v, r \varphi)$ where $\varphi \to \psi_+(t)$ as $s \to \infty$, for some $\psi_+ > 0$ an eigenvector of $I_{x_+} \frac{d}{dt}$ with eigenvalue $\lambda_+$. Observe that by the regularity of $\tilde{J}_t$, there is actually an a priori upper bound on $\lambda_+$: the dimension of the space of flows with these asymptotics is $\mu(u) - i(\lambda_+) - 1$, which must be non-negative. Moreover, the exactness assumption together with the fact that $\tilde{L}_0 \cap \tilde{L}_1$ is finite means that there is an a priori upper bound on the Maslov index $\mu(u)$ for pseudoholomorphic strips $u$; there is consequently there is an priori upper bound on $\lambda_+ > 0$. Likewise, there is an a priori bound on the possible eigenvalues $\lambda_- < 0$ which can appear as asymptotics of a strip $u(s,t)$ with $x_-$ being invariant.

Now, suppose for a contradiction there were a sequence $u_{\alpha} : Z \to \tilde{M}$ of finite energy $\tilde{J}_t$-holomorphic strips, and $s_{\alpha} \in S(u_{\alpha})$, such that $|\Lambda(u_{\alpha})(s_{\alpha})|$ increased without bound. By Gromov compactness, we can pass to a subsequence, such that $u_{\alpha}$ converges to a broken flow line $(u^1, u^2, \hdots, u^n)$ where $u^i \in \cal{M}(x^{i-1}, x^i)$ for $x^0 = x_-, x^1, \hdots, x^{n-1}, x^n = x_+ \in \tilde{L}_0 \cap \tilde{L}_1$. In particular, this means that there are sequences $\sigma^1_{\alpha} < \sigma^2_{\alpha} < \hdots < \sigma^n_{\alpha}$ such that each translate $\tau_{\sigma^i_{\alpha}}^* u_{\alpha} \to u^i$ locally, and so that for any sequence $\sigma'_{\alpha}$ with $\sigma^i_{\alpha} < \sigma'_{\alpha} < \sigma^{i+1}_{\alpha}$ and $\sigma^{i+1}_{\alpha} - \sigma'_{\alpha}, \sigma'_{\alpha} - \sigma^i_{\alpha} \to \infty$, we have $\tau^*_{\sigma'_{\alpha}} u_{\alpha} \to x^i$ locally.

After passing to a subsequence, we can presume either there exists an $i$ such that $\sigma^i_{\alpha} < s_{\alpha} < \sigma^{i+1}_{\alpha}$ for each $i$ and both sequences $s_{\alpha} - \sigma^i_{\alpha}, \sigma^{i+1}_{\alpha} - s_{\alpha}$ go to infinity; or that there is some $i$ so that $\sigma^i_{\alpha} - s_{\alpha}$ converges to a finite constant, say $s'$. In the former case, we must have $u_{\alpha}(s_{\alpha}, t) \to x^i$; by the assumption on $s_{\alpha}$, the $x^i$ must then be invariant. In the latter, we have $\tau^*_{s_{\alpha}} u_{\alpha} \to \tau^*_{s'} u^i$. The limit $u^i$ must be then invariant trajectory: if not, then since $u(s_{\alpha}, [0,1]) \subset N$, in the limit and after rescaling we must have $u^i(s', [0,1]) \subset N$, and then $\Lambda(u_{\alpha})(s_{\alpha}) \to \Lambda(u)(s')$, a contradiction of the assumption. Hence, we know $u^i$ is invariant in this case.

Now, pick a maximal consecutive string $j, \hdots, i, \hdots, k$ such that each $u^{j+1}, u^{j+2}, \hdots, u^k$ are invariant (this string may possibly be empty). There are then four cases to consider.

\emph{Case I. Suppose that $j = 1$ and $k = n$}: this means that all of the $u^i$ were contained in $M$. In particular, each $u_{\alpha}$ must be a non-invariant trajectory between invariant $x_-, x_+$; each of these must then be a trajectory between $\bf{x}_- = (x_-, \lambda_-)$ and $\bf{x}_+ = (x_+, \lambda_+)$ for some $\lambda_-, \lambda_+$ which are a priori bounded by the earlier observation. Moreover, for large enough $\alpha$, $u_{\alpha}$ is entirely contained within $N$; in particular it must be of the form $\beta_N^{-1}(v_{\alpha}, r_{\alpha}\varphi_{\alpha})$ for some solution of the twisted equations with limits at $\bf{x}_-, \bf{x}_+$. However we already proved in Proposition \ref{Lambdabound} that for such solutions, $\Lambda(\varphi_{\alpha})(s)$ is uniformly bounded; this contradicts our assumption.

\emph{Case II. Suppose that $j = 0$, but that $k < n$}: this means that $u^1, \hdots u^k$ are all invariant, but that $u^{k+1}$ is a non-invariant trajectory. Since each $u_{\alpha}$ is a non-invariant strip, with negative limit at an invariant $x^0 = x_-$, as above it must also have asymptotics of type $\lambda_-$ at $x_-$, for some $\lambda_-$ which is a priori bounded. There is also some $\lambda^k < 0$ such that $u^{k+1}$ has a negative limit at $x^k$ with asymptotics of type $\lambda^k$; consequently $\Lambda(u^{k+1})(s) \to \lambda^k$ as $s \to -\infty$. Moreover, there is an a priori bound on $\lambda^k$ from the above observation. In particular, we can find a sequence $s'_{\alpha}$ such that $\Lambda(u_{\alpha})(s'_{\alpha}) \to \lambda^k + \delta$ for some small $\delta$, with $\sigma^{k+1}_{\alpha} - s'_{\alpha} \to \infty$, and so that $s_{\alpha} < s'_{\alpha}$ for each $\alpha$. In particular, since $u^1, \hdots, u^k$ are all invariant, for large enough $\alpha$, we must have
\begin{equation*}u_{\alpha}((-\infty, s'_{\alpha}] \times [0,1]) \subset N.\end{equation*}
$u_{\alpha}$ can then be thought of as a twisted trajectory on $(-\infty, s'_{\alpha}]\times[0,1]$, with $\Lambda(u_{\alpha})(s'_{\alpha}) = \lambda^k + \delta$, and with $\Lambda(u_{\alpha})(s) \to \lambda_-$ as $s \to -\infty$. From here, the proof of Proposition \ref{Lambdabound} produces a uniform bound on $\Lambda$ for such twisted flows, giving a contradiction.

\emph{Case III. Suppose that $j > 1$, but that $k = n$}. This is entirely analogous to the previous case, but with positive and negative directions reversed.

\emph{Case IV. Suppose that $j > 1$ and that $k < n$}. In this case, we have non-invariant trajectories $u^{j-1}$ and $u^{k+1}$, and invariant $u^j, \hdots, u^k$. Again, $u^{j-1}$ must have invariant limit $x^{j-1}$ at $+\infty$ with asymptotics of type $\lambda^j$; whilst $u^{k+1}$ must have an invariant limit $x^k$ at $-\infty$ with asymptotics type $\lambda^k$, where $\lambda^j, \lambda^k$ are a priori bounded. We can then as above find sequences $s'_{\alpha}, s''_{\alpha}$ with $s'_{\alpha} < s_{\alpha} < s''_{\alpha}$ and $\Lambda(u_{\alpha})(s'_{\alpha}) \to \lambda^j + \delta'$, $\Lambda(u_{\alpha})(s''_{\alpha}) \to \lambda^k + \delta''$, for some small $\delta', \delta''$, and so that $u_{\alpha}([s'_{\alpha}, s''_{\alpha}]\times [0,1]) \subset N$. Again, the proof of Proposition \ref{Lambdabound} allows us to conclude that $\Lambda(u_{\alpha})$ is bounded on $[s'_{\alpha}, s''_{\alpha}]$, a contradiction.\end{proof}

\begin{corollary}\label{localconvergenceII}Suppose that $u_{\alpha}$ is some sequence of finite energy $\tilde{J}_t$-holomorphic curves, with a limit in the local topology $u_{\alpha} \to u$ such that $u$ is invariant. For each $R > 0$, and for sufficiently large $\alpha$, we can then write $u_{\alpha} = \beta_N^{-1}(v_{\alpha}, r_{\alpha} \varphi_{\alpha})$, where $v_{\alpha}, \varphi_{\alpha}$ is a twisted trajectory on $Z_R$. Then, there exists a twisted trajectory $(u, \varphi)$, and nonzero real numbers such that $(v_{\alpha}, \varphi_{\alpha}) \to (u, \varphi)$ uniformly with all derivatives on each $Z_R$.\end{corollary}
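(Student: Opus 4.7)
The plan is to reduce the statement to a direct application of Proposition \ref{localconvergence}, with the uniform bound on $\Lambda$ supplied by the immediately preceding proposition. First I would verify that the decomposition in the corollary statement is valid: since the limit $u$ is invariant, $u(Z) \subset M$, and by $C^0_{loc}$-convergence, for any fixed $R$ and all sufficiently large $\alpha$ the image $u_\alpha(Z_R)$ lies inside the tubular neighbourhood $N$. Assumption \ref{localgeomII} then identifies $(v_\alpha, \phi_\alpha) := \beta_N(u_\alpha|_{Z_R})$ with a solution of the twisted Floer equations on $Z_R$, with $v_\alpha \to u$ in $C^\infty_{loc}$ automatically. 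We may assume each $u_\alpha$ is non-invariant (otherwise the claim is vacuous for that $\alpha$); Proposition \ref{uniquecontinuation} applied on the full strip then ensures $\phi_\alpha$ does not vanish on any open subset of $Z_R$.

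Next I would invoke the preceding proposition to conclude that $\Lambda(\varphi_\alpha)(s) = \Lambda(u_\alpha)(s)$ is uniformly bounded in $\alpha$ and $s \in Z_R$. Integrating the identity $\Lambda = -\frac{d}{ds}\log \|\phi_\alpha\|_H$ yields that on $[-R,R]$ the quantities $\|\phi_\alpha(s,\cdot)\|_H$ are comparable up to a constant $C_R$ independent of $\alpha$, and in particular bounded below by a constant multiple of $\|\phi_\alpha\|_{L^2(Z_R)}$; hence $r_\alpha(s) > 0$ on $[-R,R]$ and the $\tau$-model normalization $\varphi_\alpha = \phi_\alpha / r_\alpha$ is well-defined there. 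From this point, the argument of Proposition \ref{localconvergence} transfers verbatim: the rescaled sections $\tilde\phi_\alpha := \phi_\alpha / \|\phi_\alpha\|_{L^2(Z_1)}$ are uniformly $L^2$-bounded on each $Z_R$ and satisfy the linear elliptic equation $\bar\nabla_{I_t(v_\alpha)} \tilde\phi_\alpha = 0$ with smoothly converging coefficients. Elliptic bootstrapping combined with a diagonal extraction in $R$ yields a nontrivial limit $\tilde\phi$ on all of $Z$ solving $\bar\nabla_{I_t(u)} \tilde\phi = 0$, which never vanishes on an open set by unique continuation; setting $\varphi := \tilde\phi / \|\tilde\phi(\cdot)\|_H$ produces a $\tau$-model twisted trajectory $(u, \varphi)$. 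After further passing to a subsequence and possibly multiplying each $\varphi_\alpha$ by $\pm 1$ (the residual $\bb{Z}/2$-ambiguity of the $\tau$-model normalization), the convergence $(v_\alpha, \varphi_\alpha) \to (u, \varphi)$ in $C^\infty(Z_R)$ for every $R$ follows at once from $\tilde\phi_\alpha \to \tilde\phi$ and the comparability of $r_\alpha$ with $\|\phi_\alpha\|_{L^2(Z_1)}$.

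The only genuinely new element beyond Proposition \ref{localconvergence} is securing the uniform $\Lambda$-bound, which is precisely the content of the preceding proposition and rests on a case analysis of the possible Gromov limits of the $u_\alpha$. Once that bound is in hand, the corollary is essentially bookkeeping; the mildest subtlety is verifying that the locus $S(u_\alpha)$ on which the twisted decomposition is defined contains $Z_R$ for all sufficiently large $\alpha$, which is immediate from the $C^\infty_{loc}$ convergence of $u_\alpha$ to the invariant $u$.
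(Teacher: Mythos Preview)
Your proposal is correct and follows exactly the paper's approach: the paper's proof is two sentences stating that the result follows from Proposition~\ref{localconvergence} together with the uniform bound on $\Lambda(u_\alpha)$ from the preceding proposition, and you have faithfully unpacked those two sentences. The additional details you supply (that $u_\alpha(Z_R) \subset N$ for large $\alpha$, the $\pm 1$ ambiguity in the $\tau$-model normalization, the diagonal extraction in $R$) are all implicit in the paper's terse proof and are handled correctly.
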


\begin{proof}This directly follows from the local compactness result for solutions of the twisted equations, Proposition \ref{localconvergence}, together with the uniform bound on $\Lambda(u_{\alpha})$ as above.\end{proof}

To deduce a ``global'' convergence result for interior trajectories, we will again use control over the total variation of $\Lambda_{u}(s)$. Even though this function is only defined on $S_u \subset \bb{R}$, we still define
\begin{equation}K(u) = \int_{S(u)} \left| \frac{d \Lambda(u)}{ds} \right|, \qquad K^+(u) = \int_{S(u)} \left( \frac{d \Lambda(u)}{ds} \right)^+.\end{equation}
We declare $K, K^+ = 0$ if $S(u)$ is empty. Likewise, for a sub-strip $Z' = [R_1,R_2] \subset Z$, we define $K_{Z'}(u), K_{Z'}^+(u)$ to be the same integral over $[R_1, R_2] \cap S(u)$, and is zero if $[R_1, R_2]$ and $S_u$ are disjoint. Similarly, $K, K^+$ are defined for broken trajectories by summing over the components, including where some of the components are boundary trajectories.

\begin{proposition}$K, K^+$ are uniformly bounded over all possibly broken trajectories between fixed $\bf{x}_-, \bf{x}_+$.\end{proposition}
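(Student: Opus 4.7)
The plan is to closely mirror the proof of Proposition \ref{KboundI} from the polarization-twisted setting, using Gromov compactness for $\tilde{J}_t$-holomorphic strips in $\tilde{M}$ together with Corollary \ref{localconvergenceII}, which allows us to extract a $\tau$-model twisted trajectory as a local limit of non-invariant strips degenerating into the invariant set. Since $|\Lambda(u)|$ is uniformly bounded on $S(u)$ by the previous proposition, it suffices to establish a uniform bound on $K^+$: a bound on $K$ will then follow from bounding the number of connected components of $S(u)$, which itself is controlled by downstairs Gromov compactness since each limiting component can contribute only finitely many transitions between $N$ and $\tilde{M}\setminus N$.

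First, observe that the number of components in a broken trajectory between $\bf{x}_-$ and $\bf{x}_+$ is bounded a priori: only finitely many intermediate asymptotic data can appear, since $\tilde{L}_0 \cap \tilde{L}_1$ is finite and the eigenvalues $\lambda$ admissible as asymptotics at invariant intersection points are themselves a priori bounded (as argued in the proof of the preceding proposition). Thus it suffices to bound $K^+$ on unbroken trajectories. Boundary trajectories are already handled by Proposition \ref{KboundI}, so we focus on interior (non-invariant) $\tilde{J}_t$-holomorphic strips.

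Suppose for contradiction that $u_\alpha$ is a sequence of such interior trajectories between $\bf{x}_-$ and $\bf{x}_+$ with $K^+(u_\alpha) \to \infty$. Pass to a subsequence that Gromov-converges downstairs to a broken strip $(u^1, \ldots, u^n)$ joining intersection points $x^0 = x_-, x^1, \ldots, x^n = x_+$ (or their $\iota$-images). For each $\epsilon > 0$ and $\alpha$ sufficiently large, partition
\[
Z = W^0_\alpha \cup Z^1_\alpha \cup W^1_\alpha \cup \cdots \cup Z^n_\alpha \cup W^n_\alpha
\]
so that on the finite strips $Z^i_\alpha$ of fixed length, appropriate translates of $u_\alpha$ converge locally to $u^i$, while on each neck $W^i_\alpha$ the map $u_\alpha$ stays within $\epsilon$ of the constant strip at $x^i$.

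Now bound $K^+$ piece by piece. On each $Z^i_\alpha$, either $u^i$ is non-invariant (and $K^+_{Z^i_\alpha}(u_\alpha)$ converges to $K^+$ of $u^i$ on a compact sub-strip), or $u^i$ is invariant, in which case Corollary \ref{localconvergenceII} provides rescalings of $u_\alpha$ that converge locally to a twisted trajectory $(v^i, \varphi^i)$ with $v^i = u^i$, and $K^+_{Z^i_\alpha}(u_\alpha)$ converges to $K^+$ of this twisted limit on a compact sub-strip; both limits are finite. On each neck $W^i_\alpha$, two cases arise. If $x^i$ is non-invariant, then for $\epsilon$ sufficiently small the $\epsilon$-neighborhood of $x^i$ is disjoint from the tube $N$, so $W^i_\alpha \cap S(u_\alpha) = \emptyset$ and the contribution vanishes. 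If $x^i$ is invariant, then for $\epsilon$ small enough $u_\alpha|_{W^i_\alpha}$ is contained in the tube over the trivializing neighborhood $U_{x^i}$ from Assumption \ref{localgeomI}, on which the right-hand side of the fundamental estimate \eqref{curvaturebound} vanishes identically; hence $\tfrac{d\Lambda}{ds} \le 0$ on $W^i_\alpha \cap S(u_\alpha)$ and the $K^+$ contribution is again zero. Summing yields a uniform upper bound on $K^+(u_\alpha)$, contradicting the assumption.

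The main obstacle is arranging the partition of $Z$ so that it is simultaneously compatible with the downstairs Gromov convergence (which in the invariant case involves also rescaling the normal coordinate to extract a twisted limit) and with the tube structure $N \subset \tilde{M}$, so that the neck regions $W^i_\alpha$ around invariant $x^i$ actually lie in the trivializing tubes where \eqref{curvaturebound} forces $\Lambda$ to be non-increasing. This requires choosing $\epsilon$ smaller than the radius of the Weinstein neighborhood of Assumption \ref{localgeomII} as well as smaller than the $U_{x^i}$, and then invoking Gromov convergence for $\alpha$ large enough relative to this $\epsilon$.
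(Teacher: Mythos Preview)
Your proof is correct and follows essentially the same approach as the paper: reduce to interior trajectories, argue by contradiction using downstairs Gromov convergence to a broken strip, partition $Z$ into $Z^i_\alpha$ and $W^i_\alpha$, and bound $K^+$ on each piece using Corollary \ref{localconvergenceII} on the $Z^i_\alpha$ and the vanishing of the right-hand side of \eqref{curvaturebound} on the invariant necks. One small remark: your reduction from $K$ to $K^+$ via bounding the number of components of $S(u)$ is a valid route, but it is unnecessary---the same partition argument bounds $K$ directly, since on the invariant necks $\Lambda$ is monotone and hence $K_{W^i_\alpha} \le 2\Lambda_0$, while on the $Z^i_\alpha$ the convergence gives finite limits for $K$ as well as $K^+$; this is what the paper means by ``from the uniform bounds on $\Lambda$, it then suffices to prove it for just one of $K, K^+$.''
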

\begin{proof}Such a trajectory has an a priori bounded number of components, and we already have the result for boundary trajectories from Proposition \ref{KboundI} so it suffices to prove it for a single, interior trajectory $u$. From the uniform bounds on $\Lambda$, it then suffices to prove it for just one of $K, K^+$: we prove it for $K^+$. Suppose there were a sequence $u_{\alpha}$ of interior trajectories such that $K^+(u_{\alpha})$ increases without bound. Pass to a subsequence with a downstairs broken limit $(u^1, u^2, \hdots, u^n)$ for $u$. We can then for each $\alpha$ find a partition of $Z$ as
\begin{equation*}Z = W^0_{\alpha} \cup Z^1_{\alpha} \cup W^1_{\alpha} \cup Z^2_{\alpha} \cup \hdots \cup Z^n_{\alpha} \cup W^n_{\alpha}\end{equation*}
such that $u(W^i_{\alpha})$ is contained in an arbitrarily small neighbourhood of some intersection point $x \in \tilde{L}_0 \cap \tilde{L}_1$, and so that the translations $\tau^*_{\sigma^i_{\alpha}} u_{\alpha}$ recentered on $Z^i_{\alpha}$ converge locally to $u^i$.

Solutions $(v, \phi)$ of the twisted equations in small neighbourhoods of $x \in L_0 \cap L_1$ have $\Lambda(\phi)$ non-increasing, by the proof of Proposition \ref{Lambdabound}, while non-invariant curves in small neighbourhoods of non-invariant $x \in \tilde{L}_0 \cap \tilde{L}_1$ have $K = K^+ = 0$ by definition. In particular, $K^+_{W^i_{\alpha}}(u_{\alpha}) = 0$ for each $i, \alpha$.

On the other hand, on each $Z^i_{\alpha}$, either the retranslated $u_{\alpha}$ have a limit to a non-invariant $u^i$, in which case $K^+_{Z^i_{\alpha}}(u_{\alpha}) \to K^+_{Z^i}(u_i)$, or they have a limit to an invariant $u^i$. In this second case, we can write $\beta_N u_{\alpha}|_{Z^i_{\alpha}} = (v_{\alpha}, r_{\alpha} \varphi_{\alpha})$ for sufficiently large $\alpha$, and there exists a twisted solution $(u^i, \varphi^i)$ such that $(v_{\alpha}, \varphi_{\alpha}) \to (u^i, \varphi^i)$, and so $K^+_{Z^i_{\alpha}}(u_{\alpha}) \to K^+_{Z^i}(u^i, \phi^i)$. In particular, we obtain uniform bounds on $K^+(u_{\alpha})$ over the whole of $Z$, a contradiction. \end{proof}

Observe that a blown-up solution is a constant solution if either: $u$ is an interior solution, with $u(s,t) = x \in \tilde{L}_0 \cap \tilde{L}_1$ for a non-invariant $x$, or if $(u, \varphi)$ is a twisted solution with $u(s,t) = x \in L_0 \cap L_1$ and $\varphi(s,t) = \psi(t)$ for some unit eigenvector $\psi(t)$ of $I \frac{d}{dt}$. Alternatively, in the $\sigma$ model, this means $\phi(s,t) = C e^{- \lambda s}\psi(t)$ where $\lambda$ is the eigenvalue of $\psi$.

The following lemma, an analogue of Lemma \ref{smallenergylemmaI}, allows us to control local convergence to constant solutions.

\begin{lemma}\label{smallenergylemmaII}Fix arbitrary compact sub-strips $Z_1, Z_2 \subset Z$, and constants $\eps > 0$, $\cal{E}_0 > 0$. Then there exists $\delta_1, \delta_2$ such that if $u : Z \to M$ is a non-invariant $\tilde{J}_t$-holomorphic strip  which satisfies
\begin{equation}\cal{E}(u) < \cal{E}_0, \qquad \cal{E}_{Z_2}(u) \le \delta_1, \qquad K_{Z_2}(u) \le \delta_2\end{equation}
then one of the following holds:
\begin{itemize}\item either $u|_{Z_1}$ is in an $\eps$-neighbourhood of a \emph{non-invariant constant solution};
\item or $u|_{Z_1}$ is entirely contained in the tubular neighbourhood $N$ of $M$, and after writing $\beta_N u|_{Z_2} = (v, r\varphi)$, the twisted solution $(v, \varphi)$ is $\eps$-close to a constant twisted solution in the $L^{2,k}$ norm.\end{itemize}\end{lemma}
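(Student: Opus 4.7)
The plan is to argue by contradiction, following the template of Lemma \ref{smallenergylemmaI}. Suppose no such $\delta_1, \delta_2$ exist. Then there is a sequence $u_\alpha$ of non-invariant $\tilde{J}_t$-holomorphic strips with $\mathcal{E}(u_\alpha) < \mathcal{E}_0$, satisfying $\mathcal{E}_{Z_2}(u_\alpha), K_{Z_2}(u_\alpha) \to 0$, but such that $u_\alpha|_{Z_1}$ satisfies \emph{neither} of the two alternatives in the conclusion. By the standard local Gromov compactness for $\tilde{J}_t$-holomorphic strips (with uniformly bounded energy), after passing to a subsequence $u_\alpha$ converges in $C^\infty_{loc}$ to a $\tilde{J}_t$-holomorphic strip $u$. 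Applying the identity principle of \cite{FloerHoferSalamon95} to $u_\alpha$ and $\iota(u_\alpha)$ shows that $u$ is either entirely invariant or non-invariant; these are the two cases to analyze, each of which will match one of the failure modes.

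First, suppose $u$ is non-invariant. Then $\mathcal{E}_{Z_2}(u) = \lim_\alpha \mathcal{E}_{Z_2}(u_\alpha) = 0$, so $u|_{Z_2}$ is constant at some $x \in \tilde{L}_0 \cap \tilde{L}_1$, and unique continuation extends this to $u \equiv x$ on all of $Z$. The constant $x$ is necessarily non-invariant, and the $C^\infty_{loc}$ convergence of $u_\alpha$ to $u$ immediately gives $L^{2,k}(Z_1)$ convergence of $u_\alpha|_{Z_1}$ to the non-invariant constant solution $x$, contradicting the first failure mode.

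In the remaining case $u$ is invariant, so $u(Z) \subset M$. Then for all sufficiently large $\alpha$, $u_\alpha|_{Z_1 \cup Z_2}$ is contained in the tubular neighbourhood $N$ of $M$; writing $\beta_N u_\alpha = (v_\alpha, r_\alpha \varphi_\alpha)$ we obtain a sequence of $\tau$-model twisted solutions on $Z_1 \cup Z_2$. The uniform bound on $|\Lambda(u_\alpha)|$ from the preceding proposition, combined with Corollary \ref{localconvergenceII} (itself a consequence of Proposition \ref{localconvergence}), produces a subsequential $C^\infty_{loc}$ limit $(u, \varphi)$ solving the twisted equations. The quantities $\mathcal{E}_{Z_2}$ and $K_{Z_2}$ are invariant under the $\mathbb{R}^*$-rescaling built into Proposition \ref{localconvergence}, so they pass to the limit, giving $\mathcal{E}_{Z_2}(u) = 0$ and $K_{Z_2}(\varphi) = 0$. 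The first forces $u|_{Z_2}$ to be constant at some $x \in L_0 \cap L_1$, while the vanishing of $K_{Z_2}(\varphi)$ forces equality in the fundamental inequality \eqref{curvaturebound} on $Z_2$ (whose right-hand side vanishes over constant $u$), which by the equality case of Cauchy--Schwarz forces $\varphi(s,t)$ and $I \tfrac{d}{dt}\varphi(s,t)$ to be everywhere proportional on $Z_2$, i.e.\ $(u, \varphi)|_{Z_2}$ is a constant twisted solution. Proposition \ref{uniquecontinuation} then extends this to all of $Z$, and local convergence gives $L^{2,k}(Z_1)$-closeness of $(v_\alpha, \varphi_\alpha)$ to this constant twisted solution, contradicting the second failure mode.

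The main obstacle is the invariant case: one must ensure that the rescaling freedom in Proposition \ref{localconvergence} does not obstruct the transfer of the vanishing of $\mathcal{E}_{Z_2}$ and $K_{Z_2}$ to the limit, and that the resulting limit $\varphi$ is nonzero (so that ``constant twisted solution'' makes sense). The latter is guaranteed by the normalization $\|\varphi_\alpha\|_H = 1$ built into the $\tau$-model, together with unique continuation; the former uses that both quantities depend only on $v$ and $\varphi = \phi/\|\phi\|_H$, not on the overall scale of $\phi$.
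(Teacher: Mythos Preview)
Your proof is correct and follows essentially the same contradiction-and-local-compactness route as the paper's own argument: extract a $C^\infty_{loc}$ limit $u$, split on whether the (necessarily constant) limit sits at an invariant or non-invariant intersection point, and in the invariant case invoke Corollary \ref{localconvergenceII} to produce a limiting twisted solution $(u,\varphi)$ with $\cal{E}_{Z_2}=K_{Z_2}=0$, hence constant by unique continuation. One small wording slip: the identity principle should be applied to the limit $u$ and $\iota(u)$ (not to $u_\alpha$ and $\iota(u_\alpha)$, which are already known to be non-invariant) to obtain the invariant/non-invariant dichotomy for $u$; but this does not affect the substance of the argument.
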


\begin{proof}Again the proof is similar to that of Lemma \ref{smallenergylemmaI}. Suppose $u_{\alpha}$ were a sequence of non-invariant solutions with $\cal{E}_{Z_2}, K_{Z_2} \to 0$, of total energy bounded by $\cal{E}_0$, such that $u_{\alpha}|_{Z_1}$ is at least $\eps$-distance from any constant blown-up solution, in either of the two senses described above. By the ``downstairs'' local convergence, there is some subsequence that converges locally to a $\tilde{J}_t$-holomorphic strip $u$. This strip must have zero energy on $Z_2$, and hence by unique continuation we must have $u(s,t) = x$ for some $x \in \tilde{L}_0 \cap \tilde{L}_1$. If $x$ is non-invariant, then for sufficiently large $\alpha$, $u_{\alpha}|_{Z_1}$ is within $\eps$ of the constant solution at $x$, a contradiction.

If $x$ is invariant, then we are in the setting of Corollary \ref{localconvergenceII}. In particular, fixing some $Z_R$ containing both $Z_1, Z_2$, for sufficiently large $\alpha$ we have $u_{\alpha}(Z_R) \subset N$, and after writing $\beta_N u_{\alpha} = (v_{\alpha}, r_{\alpha} \varphi_{\alpha})$, there is a twisted solution $(x, \varphi)$ with $(v_{\alpha}, \varphi_{\alpha}) \to (x, \varphi)$. By assumption, we must have $K_{Z_2}(x, \varphi) = 0$, so in particular $\varphi$ must be a constant solution. In particular, for sufficiently large $\alpha$, the second of the two options holds, a contradiction.\end{proof}

This immediately allows us to deduce the main result of this section, by following exactly the same proof as in Section 2.3:

\begin{theorem}Let $u_{\alpha}$ be a sequence of interior trajectories from $\bf{x}_-$ to $\bf{x}_+$ with uniformly bounded energy. Then some subsequence Gromov converges to a broken, blown-up trajectory $(\bf{u}^1, \hdots, \bf{u}^n)$, where $\bf{u}^i \in \cal{M}(\bf{x}^i, \bf{x}^{i+1})$ is either an interior trajectory $u^i$, or a boundary twisted trajectory $(u^i, \varphi^i)$.\end{theorem}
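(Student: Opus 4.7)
The plan is to adapt the argument already carried out for boundary twisted trajectories in Section~\ref{sec:compactness_I} essentially verbatim, substituting the analytical inputs we have just collected for the upstairs setting. First I would use the two uniform bounds on the energy $\cal{E}(u_{\alpha})$ and total variation $K(u_{\alpha})$ together with the dichotomy in Lemma \ref{smallenergylemmaII} to produce, for each small $\eps > 0$ and sufficiently large $\alpha$, a partition
\[Z = W^0_{\alpha} \cup Z^1_{\alpha} \cup W^1_{\alpha} \cup Z^2_{\alpha} \cup \hdots \cup Z^n_{\alpha} \cup W^n_{\alpha}\]
where each $Z^i_{\alpha}$ has length bounded independently of $\alpha$ and consumes at least $\delta_1$ of energy or $\delta_2$ of $K$, while each $u_{\alpha}|_{W^i_{\alpha}}$ is $\eps$-close in $L^{2,k}$ to a constant blown-up solution (either a constant at a non-invariant $x \in \tilde{L}_0 \cap \tilde{L}_1$, or a constant twisted solution $(x, \psi)$ with $x \in L_0 \cap L_1$ and $\psi$ a unit eigensolution of $I_x \frac{d}{dt}$). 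The uniform bounds give $n \le 2\cal{E}_0/\delta_1 + 2K_0/\delta_2$, so after passing to a subsequence $n$ is independent of $\alpha$.

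Next I would pass to a further subsequence so that each $W^i_{\alpha}$ is approximately the same constant solution independent of $\alpha$: this uses both the finiteness of $\tilde{L}_0 \cap \tilde{L}_1$ and the uniform a priori bound on $|\Lambda|$, which caps the possible eigenvalues $\lambda_i(x)$ that can arise at invariant critical points. Call this shared constant solution $\bf{x}^i \in \frak{C}$. Then, centering on each $Z^i_{\alpha}$ via translates $\tau^*_{\sigma^i_{\alpha}} u_{\alpha}$, I would extract limits using the appropriate local compactness: if the centered maps remain uniformly bounded away from $M$ or limit to a non-invariant point, the usual Floer-Hofer-Salamon local compactness gives a non-invariant $\tilde{J}_t$-holomorphic strip $u^i$; if instead they collapse into the tubular neighbourhood $N$ and limit into the invariant set, Corollary \ref{localconvergenceII} produces, after writing $\beta_N u_{\alpha}|_{Z^i_{\alpha}} = (v_{\alpha}, r_{\alpha} \varphi_{\alpha})$, a nonzero limit $(u^i, \varphi^i)$ solving the $\tau$-model twisted equations. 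In either case the limit $\bf{u}^i$ is a blown-up trajectory.

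The last point to address is matching of asymptotics at the breaking intermediaries $\bf{x}^i$. For $\bf{x}^i \in \frak{C}_s \cup \frak{C}_u$ we need the positive asymptotic type of $\bf{u}^i$ at $\bf{x}^i$ to agree with the negative asymptotic type of $\bf{u}^{i+1}$ at $\bf{x}^i$ (and similarly for the choice of unit eigenvector $\pm\psi^i$ when doing everything in the $\tau$-picture). This follows because on $W^i_{\alpha}$ the map $u_{\alpha}$ is $L^{2,k}$-close to the fixed constant solution at $\bf{x}^i$, so combining the exponential decay in Proposition \ref{twisted_solution_asymptotics} (for boundary limits) and its $\tau$-model counterpart Proposition \ref{tau_model_asymptotics} with the local convergence on either side of $W^i_{\alpha}$ forces both adjacent limits to share eigenvalue $\lambda^i$; letting $\eps \to 0$ along a diagonal subsequence produces a subsequence that Gromov-converges to $(\bf{u}^1, \hdots, \bf{u}^n)$ in the precise sense stated.

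The main subtlety, and the only place the upstairs problem differs from the pure twisted compactness of Section~\ref{sec:compactness_I}, is handling the two cases of Lemma \ref{smallenergylemmaII} uniformly inside a single partition argument: the stable pieces $W^i_{\alpha}$ may be close to non-invariant constant strips or to constant twisted solutions, and consecutive active pieces $Z^i_{\alpha}, Z^{i+1}_{\alpha}$ may independently extract into interior or boundary trajectories. Thankfully the bookkeeping is dictated entirely by the type of $\bf{x}^i$: a non-invariant $\bf{x}^i \in \frak{C}_o$ forces both $\bf{u}^i$ and $\bf{u}^{i+1}$ to be interior, while an invariant $\bf{x}^i \in \frak{C}_s \cup \frak{C}_u$ allows either neighbouring component to be interior or boundary (subject to the obvious stable/unstable side conditions). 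No extra analytical input beyond Lemma \ref{smallenergylemmaII}, Corollary \ref{localconvergenceII}, and the uniform $\Lambda$- and $K$-bounds is required.
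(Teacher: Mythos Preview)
Your proposal is correct and follows essentially the same approach as the paper, which simply states that the result follows ``by following exactly the same proof as in Section~\ref{sec:compactness_I}'' with the upstairs analytical inputs (the uniform $\Lambda$- and $K$-bounds, Lemma~\ref{smallenergylemmaII}, and Corollary~\ref{localconvergenceII}) substituted for their twisted-equation counterparts. You have spelled out those substitutions and the interior/boundary bookkeeping more explicitly than the paper does, but the strategy is identical.
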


Explicitly, Gromov convergence here means there are real numbers $\sigma_{\alpha}^1 < \hdots < \sigma_{\alpha}^n$ such that each translate $\tau_{-\sigma_{\alpha}^i}^*u_{\alpha}$ converges locally to $\bf{u}^i$, and for any sequences of real numbers $\rho_{\alpha}$ with $\sigma^i_{\alpha} - \rho_{\alpha} \to -\infty$ and $\sigma^{i+1}_{\alpha} - \rho_{\alpha} \to +\infty$ for some $i = 0, \hdots, n$ (where by convention $\sigma^0 = -\infty, \sigma^{n+1} = +\infty$), the translates $\tau_{-\rho_{\alpha}}^*u_{\alpha}$ converge locally to the constant solution at $\bf{x}^i$.

\begin{corollary}The moduli space of blown-up trajectories
\[\bar{\cal{M}}(\bf{x}_-, \bf{x}_+) = \bigcup\limits_{n \ge 0} \bigcup\limits_{\bf{x}^1, \hdots, \bf{x}^{n-1}} \cal{M}(\bf{x}_-, \bf{x}^1) \times \hdots \times \cal{M}(\bf{x}^{n-1}, \bf{x}_+)\]
with the topology of Gromov convergence is compact.\end{corollary}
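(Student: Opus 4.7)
The plan is to reduce the compactness of the compactified moduli space to the compactness result for unbroken sequences already established in the preceding theorem (and its analogue in Section \ref{sec:compactness_I} for twisted trajectories), combined with a finiteness argument controlling the number of breakings that can appear in the limit. Let $(\bf{u}_\alpha^1, \hdots, \bf{u}_\alpha^{n_\alpha})$ be a sequence of broken, blown-up trajectories in $\bar{\cal{M}}(\bf{x}_-, \bf{x}_+)$, where each $\bf{u}_\alpha^i \in \cal{M}(\bf{x}_\alpha^{i-1}, \bf{x}_\alpha^i)$ is either an interior $\tilde{J}_t$-holomorphic trajectory or a boundary twisted trajectory, with $\bf{x}_\alpha^0 = \bf{x}_-$ and $\bf{x}_\alpha^{n_\alpha} = \bf{x}_+$.

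First, I would establish an a priori bound on $n_\alpha$. Since $\tilde{J}_t$ is equivariantly regular, all occurring moduli spaces $\cal{M}(\bf{x}^{i-1}, \bf{x}^i)$ are nonempty only if $\ind_{\tilde{M}}(\bf{u}^i) \ge 0$ (using the index formula for both interior and boundary trajectories, with a shift in the boundary-obstructed case). The total index along any broken trajectory from $\bf{x}_-$ to $\bf{x}_+$ is a fixed number depending only on the endpoints plus a contribution from the finitely many possible homotopy classes of downstairs strips, and the individual indices are non-negative except in the boundary-obstructed case, which contributes only a bounded shift. Combined with exactness (bounding the Maslov index via the action difference) and the finiteness of $\tilde{L}_0 \cap \tilde{L}_1$, which bounds the possible intermediate $\bf{x}^i$ and hence the eigenvalues $\lambda_\pm$ that may appear at invariant endpoints, we obtain a uniform upper bound $n_\alpha \le N$.

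Passing to a subsequence, we may then assume $n_\alpha = n$ is constant and the types (interior versus boundary) of the components do not depend on $\alpha$; since the intermediate $\bf{x}_\alpha^i$ take only finitely many values among those of bounded index, we may further pass to a subsequence so that $\bf{x}_\alpha^i = \bf{x}^i$ is independent of $\alpha$ for each $i$. We then have for each $i$ a sequence $\bf{u}_\alpha^i \in \cal{M}(\bf{x}^{i-1}, \bf{x}^i)$ with uniformly bounded energy and (in view of the uniform $\Lambda$ and $K$-bounds established in both Propositions \ref{Lambdabound}/\ref{KboundI} and their equivariant analogues above) uniformly bounded $\Lambda$. Applying the preceding theorem to the interior components and the compactness theorem of Section \ref{sec:compactness_I} to the boundary components, each $\bf{u}_\alpha^i$ has a subsequence which Gromov converges to some broken, blown-up trajectory $(\bf{v}^{i,1}, \hdots, \bf{v}^{i,m_i})$ from $\bf{x}^{i-1}$ to $\bf{x}^i$. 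Concatenating these broken limits yields a broken, blown-up trajectory from $\bf{x}_-$ to $\bf{x}_+$ which is the Gromov limit of the original sequence, in the sense that the telescoped sequences of translations $\sigma^{i,j}_\alpha$ interleave correctly and the constant-solution regions in between are inherited from the hypotheses.

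The main subtlety is checking that concatenated Gromov convergence is genuinely Gromov convergence of the full broken sequence: one must verify that for any intermediate sequence of ``evaluation times'' $\rho_\alpha$ not falling in any of the bounded recentering windows $Z^{i,j}_\alpha$, the translated curves $\tau^*_{-\rho_\alpha} \bf{u}_\alpha$ converge locally to a constant solution at the appropriate $\bf{x}^{i,j}$. This follows from Lemma \ref{smallenergylemmaII} (and its boundary analogue, Lemma \ref{smallenergylemmaI}) applied to such evaluation windows, for which the local energy and $K$ tend to zero by the convergence of the component sequences. Once this is verified, the resulting compactified moduli space is sequentially compact in the Gromov topology, and since Gromov convergence is metrizable in this exact setting, this is equivalent to compactness.
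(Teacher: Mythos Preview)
The paper does not give a separate proof of this corollary; it is stated as an immediate consequence of the preceding theorem on Gromov convergence of unbroken interior trajectories (together with its twisted analogue in Section~\ref{sec:compactness_I}). Your argument supplies the standard diagonal proof and is essentially correct.

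One point where your reasoning could be sharpened: your bound on $n_\alpha$ via index theory is somewhat roundabout, and the claim that finiteness of $\tilde{L}_0 \cap \tilde{L}_1$ bounds the eigenvalues appearing at intermediate critical points is not quite the right mechanism---it is the uniform bound on $\Lambda$ established earlier in this section that limits which eigenvalues can occur. A cleaner argument runs as follows. The symplectic action is non-increasing along any component and strictly decreases on any component whose underlying strip $u$ is nonconstant; since $\tilde{L}_0 \cap \tilde{L}_1$ is finite there are only finitely many action values, so the number of components with nonconstant $u$ is bounded. For components with constant $u$ (boundary trajectories over a fixed $x \in L_0 \cap L_1$), the estimate \eqref{curvaturebound} shows $\Lambda$ is strictly decreasing, so the eigenvalue index $i(\lambda)$ drops by at least one per component; combined with the uniform $\Lambda$-bound this limits the number of such components as well. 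This yields the bound on $n_\alpha$ directly, without invoking regularity or index additivity.
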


In particular, since for a holomorphic strip $u : Z \to M$ contained entirely within the invariant set, the difference between the Maslov index as computed in $M$ and $\tilde{M}$ is precisely the spectral flow $\specflow(u)$ over $u$:
\[\mu_{\tilde{M}}(u) = \mu_M(u) + \specflow(u).\]
In particular, we see that if $u_{\alpha}$ is a sequence of non-invariant trajectories converging to a broken blown-up trajectory $\bf{u}^1, \hdots, \bf{u}^n$, we have for sufficiently large $\alpha$
\[\mu_{\tilde{M}}(u_{\alpha}) = \sum\limits_{u^i \text{ non-invariant}} \mu_{\tilde{M}}(u^i) + \sum\limits_{u^i \text{ invariant}}\left(\mu_M(u^i) + \specflow(u^i)\right).\]
Recalling that the definition of the index in $\tilde{M}$ of a twisted trajectory $(u, \varphi)$, this implies that the total $\tilde{M}$-index (meaning the sum of $\mu_{\tilde{M}}$ and $\ind_{\tilde{M}}$ over the non-invariant and invariant components respectively) is preserved by Gromov convergence.

\subsection{The compactified moduli space.}\label{stratalist}

We now prove gluing theorems for the moduli space of blown up trajectories, which allow us to prove that indeed each $\hat{C}, \check{C}, \bar{C}$ are indeed complexes, and the natural maps $i^*, j^*, \partial$ between them are chain maps.

To do this, consider a component of the compactified moduli space $\bar{\cal{M}}(\bf{x}_-, \bf{x}_+)$ of \emph{interior trajectories} which is of dimension one. We have shown so far that this is a compact topological space, stratified by one- and zero-manifolds; there are just two strata, with the one-manifolds forming the open stratum and the zero-manifolds forming the closed stratum. We will often refer to the zero-manifold strata as the \emph{boundary points} of the moduli space.

Let us now list such boundary points; there are four cases to consider.

\emph{Case I. If $\bf{x}_-, \bf{x}_+ \in \frak{C}_o$ are both interior critical points}, then we expect the boundary points to be of two forms:
\begin{enumerate}\item two component broken trajectories $(u^1, u^2) \in \cal{M}(\bf{x}_-, \bf{x}^1) \times \cal{M}(\bf{x}^1, \bf{x}_+)$ for $\bf{x}^1 \in \frak{C}_o$.
\item three component broken trajectories $(u^1, (u^2, \varphi^2), u^3) \in \cal{M}(\bf{x}_-, \bf{x}^1) \times \cal{M}(\bf{x}^1, \bf{x}^2) \times \cal{M}(\bf{x}^2, \bf{x}_+)$ where $\bf{x}^1 \in \frak{C}_s$ and $\bf{x}^2 \in \frak{C}_u$. In particular, $(u^2, \varphi^2)$ is a boundary (twisted) trajectory of index zero in $\tilde{M}$: this is the \emph{boundary obstructed} case.\end{enumerate}

\emph{Case II. If $\bf{x}_- \in \frak{C}_o, \bf{x}_+ \in \frak{C}_s$ are interior and boundary-stable respectively}, then we expect the boundary points to be of three forms:
\begin{enumerate}\item two component broken trajectories $(u^1, u^2) \in \cal{M}(\bf{x}_-, \bf{x}^1) \times \cal{M}(\bf{x}^1, \bf{x}_+)$ for $\bf{x}^1 \in \frak{C}_o$. Here, both $u^1$, $u^2$ are interior trajectories.
\item two component broken trajectories $(u^1, (u^2, \varphi^2)) \in \cal{M}(\bf{x}_-, \bf{x}^1) \times \cal{M}(\bf{x}^1, \bf{x}_+)$ for $\bf{x}^1 \in \frak{C}_s$. Here, $u^1$ is an interior trajectory, whilst $(u^2, \varphi^2)$ is a boundary trajectory.
\item three component broken trajectories $(u^1, (u^2, \varphi^2), u^3) \in  \cal{M}(\bf{x}_-, \bf{x}^1) \times \cal{M}(\bf{x}^1, \bf{x}^2) \times \cal{M}^o(\bf{x}^2, \bf{x}_+)$ where $\bf{x}^1 \in \frak{C}_s$ and $\bf{x}^2 \in \frak{C}_u$. Here, $u^1$ and $u^3$ are interior trajectories, while $(u^2, \varphi^2)$ is a boundary obstructed trajectory.\end{enumerate}

\emph{Case III. If $\bf{x}_- \in \frak{C}_u, \bf{x}_+ \in \frak{C}_o$ are boundary-unstable and interior respectively}, then similar to the above, we expect the boundary points to be of three forms:
\begin{enumerate}\item two component broken trajectories $(u^1, u^2) \in \cal{M}(\bf{x}_-, \bf{x}^1) \times \cal{M}(\bf{x}^1, \bf{x}_+)$ for $\bf{x}^1 \in \frak{C}_o$. Here, both $u^1$, $u^2$ are interior trajectories.
\item two component broken trajectories $((u^1, \varphi^1), u^2) \in \cal{M}(\bf{x}_-, \bf{x}^1) \times \cal{M}(\bf{x}^1, \bf{x}_+)$ for $\bf{x}^1 \in \frak{C}_u$. Here, $(u^1, \varphi^1)$ is a boundary trajectory, whilst $u^2$ is an interior trajectory.
\item three component broken trajectories $(u^1, (u^2, \varphi^2), u^3) \in  \cal{M}^o(\bf{x}_-, \bf{x}^1) \times \cal{M}(\bf{x}^1, \bf{x}^2) \times \cal{M}(\bf{x}^2, \bf{x}_+)$ where $\bf{x}^1 \in \frak{C}_s$ and $\bf{x}^2 \in \frak{C}_u$. Here, $u^1$ and $u^3$ are interior trajectories, while $(u^2, \varphi^2)$ is a boundary obstructed trajectory.\end{enumerate}

\emph{Case IV. If $\bf{x}_- \in \frak{C}_u, \bf{x}_+ \in \frak{C}_s$ are boundary unstable and boundary stable respectively}, then we expect the boundary points of the compactified moduli space $\bar{\cal{M}}^o(\frak{x}_-, \frak{x}_+$ of interior trajectories to be of five forms:
\begin{enumerate}\item unbroken boundary (twisted) trajectories  $(u, \varphi) \in \cal{M}^{\partial}(\bf{x}_-, \bf{x}_+)$, which we have already shown form a natural boundary of $\bar{\cal{M}}^o$.
\item two component broken trajectories $(u^1, u^2) \in \cal{M}(\bf{x}_-, \bf{x}^1) \times \cal{M}(\bf{x}^1, \bf{x}_+)$ for $\bf{x}^1 \in \frak{C}_o$. Here, both $u^1, u^2$ are interior trajectories.
\item two component broken trajectories $((u^1, \varphi^1), u^2) \in \cal{M}^o(\bf{x}_-, \bf{x}^1) \times \cal{M}(\bf{x}^1, \bf{x}_+)$ for $\bf{x}^1 \in \frak{C}_u$. Here, $(u^1, \varphi^1)$ is a boundary trajectory, and $u^2$ is an interior trajectory.
\item two component broken trajectories $(u^1, (u^2, \varphi^2)) \in  \cal{M}(\bf{x}_-, \bf{x}^1) \times \cal{M}^o(\bf{x}^1, \bf{x}_+)$ for $\bf{x}^1 \in \frak{C}_s$.  Here, $u^1$ is an interior trajectory, and $(u^2, \varphi^2)$ is a boundary trajectory.
\item three component broken trajectories $(u^1, (u^2, \varphi^2), u^3) \in  \cal{M}^o(\bf{x}_-, \bf{x}^1) \times \cal{M}(\bf{x}^1, \bf{x}^2) \times \cal{M}^o(\bf{x}^2, \bf{x}_+)$ where $\bf{x}^1 \in \frak{C}_s$ and $\bf{x}^2 \in \frak{C}_u$. Here, $u^1$ and $u^3$ are interior trajectories, while $(u^2, \varphi^2)$ is a boundary obstructed trajectory.\end{enumerate}

We then group the gluing results needed to elucidate the local structure of the compactified moduli space for the above cases into three types:
\begin{itemize}\item \emph{interior gluing}, for boundary strata corresponding to two component broken interior trajectories: these are cases (I.1), (II.1), (III.1) and (IV.2).
\item \emph{boundary-unobstructed gluing}, for two-component broken trajectories with one interior and one boundary component: these are cases (II.2), (III.2), (IV.3) and (IV.4).
\item \emph{boundary-obstructed gluing}, for three-component broken trajectories where the middle component is a boundary-obstructed trajectory, and the other two are interior trajectories: these are cases (I.2), (II.3), (III.3) and (IV.5).\end{itemize}

In the cases of interior gluing and boundary-unobstructed gluing, as well as case (IV.1) where no gluing is required at all, we will see that the compactified moduli space is locally a manifold with boundary. This will not be the case in the boundary-obstructed setting. Instead, we have the following structure (compare the more general Definition 19.5.3 of \cite{KronheimerMrowka07}), which if the reader so wishes can be thought of as a type Kuranishi structure adapted to this particular setting:

\begin{definition}\label{delta_structure}We say that a topological space $N$ is a \emph{one-dimensional space stratified by manifolds} if it is equipped with a closed subset $M^0 \subset N$ which is a zero-manifold, so that the complement $M^1 = N \backslash M^0$ has the structure of a one-manifold. Moreover, we say that $N$ has a \emph{codimension one smooth $\delta$ structure} at a point $p \in M^0$ if there is an open neighbourhood $W \subset N$ of $p$ which is disjoint from $M^0 \backslash \{p\}$, a continuous map
\[j : W \to (\bb{R}_{\ge 0})^2\]
and a continuous function
\[\delta : (\bb{R}_{\ge 0})^2 \to \bb{R}\]
such that:
\begin{itemize} \item $\delta > 0$ on $\{0\} \times \bb{R}_{>0}$ and $\delta < 0$ on $\bb{R}_{>0} \times \{0\}$, with $\delta = 0$ at $(0,0)$;
\item on $(\bb{R}_{>0})^2$, the function $\delta$ is smooth and transverse to zero, so in particular $\delta^{-1}(0) \backslash \{(0,0\}$ is a smooth one-manifold;
\item $j(p) = (0, 0)$, and $j|_{W \backslash \{p\}}$ is a diffeomorphism of $W \backslash \{p\} \subset M^0$ onto $\delta^{-1}(0) \cap U$ for some open neighbourhood $U$ of $(0,0) \in (\bb{R}_{\ge 0})^2$.\end{itemize}\end{definition}

For example, the one-manifold with boundary $\bb{R}_{\ge 0}$ has a $\delta$-structure along its boundary $\{0\}$, via the function $\delta(x,y) = y - x$. A simple example that is not a one-manifold with boundary is given by the union of any \emph{odd} number of copies of $\bb{R}_{\ge 0}$, each glued together at the point $0$. However more pathological examples exist, such as the union of the ``Hawaiian ear-rings'' with a single ray (see Section 16.5 of \cite{KronheimerMrowka07}). It is possible that more careful gluing analysis might rule out such examples from appearing in the moduli space of trajectories for generic $\tilde{J}_t$; it is an interesting question precisely which $\delta$-structures appear for generic data.

Despite the potential for pathological behaviour, a codimension one smooth $\delta$-structure is enough to ensure that the usual counting arguments used in Floer theory carry through. Indeed, from the above discussion, a point $p \in M^0$ with a smooth $\delta$ structure ``contributes an odd number'' to the count of boundary points of $N$.

\begin{proposition}Suppose $N$ is a compact one-dimensional space stratified by manifolds, with a codimension one smooth $\delta$ structure at each point of the zero stratum. Then the number of points of the zero stratum is even.\end{proposition}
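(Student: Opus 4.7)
The plan is to count the ``ends'' of the one-dimensional stratum $M^1 := N \setminus M^0$ in two ways. Since $M^0$ is closed and discrete inside the compact space $N$, it is finite; write $k = |M^0|$. Each connected component of the one-manifold $M^1$ is either a circle (no ends) or an open interval (two ends), so the total number of ends is $2c$ where $c$ is the number of non-circle components, in particular even.

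For the second count, I want to show that every end of $M^1$ accumulates in $M^0$, and that at each $p \in M^0$, the number $n_p$ of ends of $M^1$ approaching $p$ is \emph{odd}. Granting both claims, $\sum_{p \in M^0} n_p = 2c$ is even and $\equiv k \pmod{2}$, so $k$ is even. The first claim is routine: if $\gamma : (0,1) \to M^1$ parametrizes a component and $\gamma(t_n) \to q \in N$ with $t_n \to 1$, then $q \notin M^1$. Indeed, $M^1$ is locally connected, hence its components are open and closed, so $q$ cannot lie in a component $C'$ distinct from $C = \gamma((0,1))$; and $q \in C$ would force $q = \gamma(t_0)$ and $t_n \to t_0$, contradicting $t_n \to 1$.

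The main content is the local count, and this is the step where one must actually use the $\delta$-structure. Fix $p \in M^0$ and the data $j : W \to (\mathbb{R}_{\ge 0})^2$, $\delta : (\mathbb{R}_{\ge 0})^2 \to \mathbb{R}$. Under the diffeomorphism $j|_{W \setminus \{p\}} \cong \delta^{-1}(0) \cap U \cap (\mathbb{R}_{>0})^2$, the ends of $M^1$ approaching $p$ correspond to ends of arcs of $\delta^{-1}(0)$ limiting to the corner $(0,0)$. Consider a small quarter-circle $\gamma_r(\theta) = (r \cos\theta, r\sin\theta)$, $\theta \in [0, \pi/2]$. Since $\delta(\gamma_r(0)) < 0$ and $\delta(\gamma_r(\pi/2)) > 0$, the continuous function $\delta \circ \gamma_r$ changes sign. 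Using polar coordinates $(0,\infty) \times (0,\pi/2) \to (\mathbb{R}_{>0})^2$, the set $\{\delta \circ f = 0\}$ is a smooth submanifold, and by Sard's theorem, for almost every small $r$ the slice $\gamma_r \cap (\mathbb{R}_{>0})^2$ is transverse to $\delta^{-1}(0)$; for such $r$, the zeros of $\delta \circ \gamma_r$ are finitely many transverse sign changes, and since $\delta$ goes from negative to positive, their number is odd.

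For such generic $r$, classify the arcs of $\delta^{-1}(0) \cap (\mathbb{R}_{>0})^2$ meeting the quarter-disk $B_r$: (i) arcs with both ends limiting to $(0,0)$ contribute $0$ intersections with $\gamma_r$; (ii) arcs with one end at $(0,0)$ and the other on $\gamma_r$ contribute $1$; (iii) arcs with both endpoints on $\gamma_r$ contribute $2$; and for $U, r$ small enough no other types can appear. Hence the parity of $\#(\gamma_r \cap \delta^{-1}(0))$ equals the parity of the number of type-(ii) arcs, which is therefore odd. Then $n_p = \#(\text{type ii}) + 2\,\#(\text{type i})$ is odd as well. Combining with the global count from the first paragraph completes the proof. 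The hardest point is really just this local mod-$2$ intersection lemma; the potentially pathological $\delta$-structures allowed by Definition \ref{delta_structure} are controlled precisely because all we need is a parity count.
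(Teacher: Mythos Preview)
Your overall strategy is right, and the local step --- showing that $\#(\gamma_r \cap \delta^{-1}(0))$ is odd for generic small $r$ via the intermediate value theorem plus Sard --- is exactly the crux. (The paper itself does not give a proof here; it simply cites \cite{KronheimerMrowka07}.) But the way you package the global count has a genuine gap.

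You phrase everything in terms of the \emph{ends} of $M^1$: the total number of ends is even, and at each $p$ you claim $n_p = \#(\text{type ii}) + 2\,\#(\text{type i})$ is odd. The problem is that the $\delta$-structure does not force $M^1$ to have finitely many components. The paper explicitly mentions, just before this proposition, that ``more pathological examples exist, such as the union of the Hawaiian ear-rings with a single ray''. In that example each circle of the Hawaiian earring, once $p$ is removed, becomes an open interval with \emph{both} ends at $p$; so there are infinitely many type-(i) arcs, $n_p = \infty$, and neither your local parity nor your global end count is defined. (Relatedly, a type-(i) arc of $\delta^{-1}(0)$ with both ends at the origin can still cross $\gamma_r$ an even number of times if it wanders out past radius $r$ and back, so ``contribute $0$ intersections'' needs care about whether you mean components of $\delta^{-1}(0)$ or of $\delta^{-1}(0)\cap B_r$.)

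The fix is simple and keeps all of your real work. Do not count ends at all. For each $p\in M^0$, pick a generic small $r_p$ and set $V_p = j_p^{-1}(B_{r_p}^{\mathrm{open}})$, an open neighbourhood of $p$ in $N$. Then $N' := N \setminus \bigcup_p V_p$ is closed in $N$, hence compact, lies inside the $1$-manifold $M^1$, and (by transversality of $\gamma_{r_p}$ to $\delta^{-1}(0)$) is a compact $1$-manifold with boundary
\[
\partial N' \;=\; \bigsqcup_{p\in M^0} j_p^{-1}\bigl(\gamma_{r_p}\cap \delta^{-1}(0)\bigr).
\]
So $\#\partial N'$ is even. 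On the other hand your local argument gives $\#(\gamma_{r_p}\cap\delta^{-1}(0))$ odd for each $p$, hence $\#\partial N' \equiv |M^0| \pmod 2$, and $|M^0|$ is even. This version is indifferent to how many circles or type-(i) loops accumulate at $p$ --- they simply live in the excised $V_p$ and never enter the count.
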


For the proof, we refer the reader to Section 21 of \cite{KronheimerMrowka07}, which proves a considerably more general statement with coefficient ring $\bb{Z}$; Corollary 21.3.2 is exactly the above but with signs.

Let us now get on with proving that the compactified moduli space $\bar{\cal{M}}(\bf{x}_-, \bf{x}_+)$ of interior trajectories has either the structure of a manifold with boundary, or a smooth codimension one $\delta$-structure, at each additional point of the compactification. For the sake of brevity, we will not discuss interior gluing at all: the argument is a very minor modification of the usual gluing argument in ordinary Lagrangian Floer theory, only different in that one of the Banach manifolds $\tilde{\cal{Z}^{2,k}_S}$ must be used. Moreover, we will discuss each of boundary-unobstructed and boundary-obstructed gluing in exactly one of the cases that they appear; the other cases are essentially the same. Thus in what follows, we will carefully explain the gluing process in cases (II.2) and (I.2).

\subsection{Gluing boundary-unobstructed trajectories.}

Let us now deal gluing in case (II.2) of Section \ref{stratalist}. Hence, let $\bf{x}^0 = (x^0, \iota(x^0)) \in \frak{C}_o$, and $\bf{x}^1 = (x^1, \lambda^1) \in \frak{C}_s, \bf{x}^2 = (x^2, \lambda^2) \in \frak{C}_s$; since we are only looking to write down boundary compactifications of the one-dimensional moduli spaces of flows, we are taking a zero-dimensional component of the moduli spaces $\cal{M}(\bf{x}^0, \bf{x}^1)$ and $\cal{M}(\bf{x}^1, \bf{x}^2)$. As always, fix parametrized representatives, say $u^1$ and $(u^2, \varphi^2)$ for the solutions in question, and as before we will work entirely in the $\tau$ model.

Pick unit eigensolutions $\psi^1, \psi^2$ corresponding to $\lambda^1, \lambda^2$ at $x^1, x^2$, and assume that $\varphi^1 \to \psi^1$ as $s \to \infty$, and $\varphi^2 \to \psi^1, \psi^2$ respectively as $s \to \pm \infty$. Let us also fix the parametrization on $u^1$ so that $u^1([0, \infty) \times [0,1]) \subset N \cong D(E)$ the tubular neighbourhood of $M$ given by Assumption \ref{localgeomII}.

We will construct for sufficiently large $T$ an element in the Banach manifold
\[(u_T, r_T, \varphi_T) \in \tilde{\cal{Z}}^{2,k}_{S}(x^0, \bf{x}^2)\]
which is an approximate solution to $\cal{F}(u, r, \varphi) = 0$, where in fact we will take $S = -T$. We will then use the Newton-Picard method to construct an actual solution.

Write $u^1 = (u^1, r^1, \varphi^1) \in \cal{Z}^{2,k}_{S = 0}$, meaning that for $s \ge 0$, we have $\pi_E u^1(s,t) = r^1(s) \varphi^1(s,t)$ where $r^1 > 0$ and $||\varphi^1||_H = 1$; let us also write $v^1(s,t) = \pi_M u^1(s,t)$ for $s \ge 0$. Choose $T$ large enough so that $v^1([T, \infty) \times [0,1])$ and $u^2((-\infty, -T] \times [0,1])$ are each contained in a small neighbourhood $U$ of $x^1$ on each $E$ is trivialized; so that we can write for $s \ge T$ and $s \le T$ respectively
\[\varphi^1(s,t) = \psi^1(t) + \eta^1(s,t), \qquad \varphi^2(s,t) = \psi^1(t) + \eta^2(s,t)\]
where $\eta^1, \eta^2 \to 0$ uniformly in $t$ and exponentially in all derivatives as $s \to \infty$.

Take a smooth, non-decreasing cut-off function $\beta_+ : \bb{R} \to [0,1]$ such that $\beta_+(s) = 0$ for $s \le -1$ and $\beta_+(s) = 1$ for $s \ge 1$, and write $\beta_-(s) = 1- \beta_+(s)$. We then construct a ``preglued'' solution by taking:
\begin{equation}u_T(s,t) = \begin{cases}u^1(s+2T, t) & \text{ for } s \le -T \\
\beta_N^{-1}(v_T(s,t), r_T(s) \varphi_T(s,t)) & \text{ for } s \ge -T\end{cases}\end{equation}
where $v_T : Z_{-T}^+ \to M$, $r_T : [-T, \infty) \to \bb{R}$ and $\varphi_T \in C^{\infty}(Z_{-T}^+, v_T^* E)$ are defined as follows:
\[v_T(s,t) = \begin{cases}v^1(s + 2T, t) & \text{ for } -T \le s \le -1 \\
\exp_{x^1}\left(\beta_-(s) \exp_{x^1}^{-1}(v^1(s + 2T, t)) + \beta_+(s) \exp_{x^1}^{-1}(u^2(s-T, t))\right) & \text{ for } -1 \le s \le 1 \\
u^2(s - T, t) & \text{ for } s \ge 1;\end{cases}\]
\[r_T(s) = \begin{cases}r^1(s+T) & \text{ for } -T \le s \le -1 \\
\beta_-(s) r^1(s +2T) & \text{ for } -1 \le s \le 1 \\
0 & \text{ for } s \ge 1;\end{cases}\]
\[\varphi_T(s,t) = \begin{cases}\varphi^1(s + 2T, t) & \text{ for } s \le -1 \\
\frac{\psi^1(t) + \beta_-(s) \eta^1(s+2T,t) + \beta_+(s)\eta^2(s-T,t)}{||\psi^1(t) + \beta_-(s) \eta^1(s+2T,t) + \beta_+(s)\eta^2(s-T,t)||_H} & \text{ for } -1 \le s \le 1\\
\varphi^2(s - T, t) & \text{ for } s \ge 1\end{cases}\]
where $\eta^1, \eta^2$ are the error terms above. This defines an element of $\tilde{\cal{Z}}^{2,k}_S(x^0, \bf{x}^2)$ for $S = -T$. Indeed, by construction, there is a constant $C$ so that $r_T(s) \le C e^{-\lambda^1 T}$ for all $s \ge -T$; the preglued strip $u_T$ is constructed to be exponentially close to the invariant set $M$ on $Z_S^+$, and thus well within $N$.

\begin{remark}Although the constructed $(u_T, r_T, \varphi_T) \in \tilde{\cal{Z}}^{2,k}_S(x^0, \bf{x}^2)$ has $r_T(s) = 0$ for $s \ge 0$, the genuine solution to $\cal{F} = 0$ which we will construct by the Newton-Picard method will satisfy $r > 0$ everywhere.\end{remark}

Observe that
\[\cal{F}(u_T, r_T, \varphi_T) = \left(\bar{\partial} u_T, \frac{d}{ds} r_T + \Lambda(\varphi_T) r_T, \bar{\nabla}_I \varphi_T - \Lambda(\varphi_T)\varphi_T\right) \in \cal{V}^{2,k-1}_S\]
is supported in $[-1,1] \times [0,1]$. Moreover, the exponential decay of $v^1, r^1, \varphi^1$ and $u^2, \varphi^2$ imply that for some constants $C$ and $\delta > 0$ independent of $T$ we have
\begin{equation}\label{pregluing_error_II}||\cal{F}(u_T, r_T, \varphi_T)|| = ||\bar{\partial} v_T||_{L^{2,k-1}} + ||\frac{d}{ds} r_T + \Lambda(\varphi_T) r_T||_{L^{2,k-1}} + ||\bar{\nabla}_I \varphi_T - \Lambda(\varphi_T)\varphi_T)||_{L^{2,k-1}} < C e^{- \delta T}.\end{equation}
We seek to find a zero of $\cal{F}$ near $(u_T, r_T, \varphi_T) \in \tilde{\cal{Z}}^{2,k}_S$. As before, the first step is to extend the linearization $D\cal{F}$ of $\cal{F}$ away from the zero set. In the case of polarization-twisted Floer theory, this was essentially fixed for us by the choice of a linearization $D_u$ of the ordinary $\bar{\partial}$-operator. Here the situation is more delicate, since over $Z_S^+$ we need to ensure that the $u$- and $r, \varphi$- coordinates in the linearized operators obey the matching condition. However, by fixing an isomorphism $N \cong D(E)$, the $\bar{\partial}$ equation for strips in $N$ is exactly the twisted equations. Thus, using the time-dependent family of metrics inherited from $\tilde{J}_t$ to construct the linearization $D_u$ of $\bar{\partial}$ at some $u$ with $u(Z_S^+) \subset N$, writing $\pi_M u = v$ and $\pi_E u = r \varphi$, we can calculate that

\begin{equation}\label{Flinearization}D_{(u, r, \varphi)}(\xi, \rho, \vartheta) = \begin{pmatrix}D_u \xi \\
\frac{d\rho}{ds} + \Lambda(\varphi) \rho + r \langle \varphi, \bar{\nabla}_I \vartheta \rangle_H  + r \langle \bar{\nabla}_I \varphi, \vartheta \rangle_H + r C^{\tau}_{(v, \varphi)}\pi_{TM}\xi \\
B^{\tau}_{(v, \varphi)}\pi_{TM} \xi + \Pi_V \bar{\nabla}_I \vartheta - \Lambda(\varphi) \vartheta\end{pmatrix} \in \cal{V}^{2,k-1}_S\end{equation}
does in fact satisfy the matching condition, where $B^{\tau}_{(v,\varphi)}$ is given by \eqref{Btauoperator} and $C^{\tau}_{(v, \varphi)}$ is given by \eqref{Ctauoperator}.

We then consider the non-linear map
\[\cal{G}_{u_T} : B \subset T_{u_T}\tilde{\cal{Z}}^{2,k}_S \to \cal{V}^{2,k-1}_S\]
given by
\[(\xi, \rho, \vartheta) \mapsto \left(\Pi^{\exp_{u_T}(\xi)}_{u_T}\right)^{-1}\cal{F}\left(\exp_{u_T}(\xi), \rho, \Pi^{\exp_{v_T}(\xi_M)}_{v_T} \vartheta\right)\]
where $B$ is some ball in $T_{u_T}\tilde{\cal{Z}}^{2,k}_S$; the radius is chosen so that for $(\xi, \rho, \vartheta) \in B$, $\exp_{u_{T}}(\xi(s,t)) \in N$ for $s \ge S$ and thus $\exp_{u_{T}}(\xi)$ does in fact define an element of $\tilde{\cal{Z}}^{2,k}_S$. The radius of $B$ can be chosen to be fixed, independent of $T$: this is since $r_T(s) \le C e^{-\lambda^1 T}$ for all $s \ge S$.
If we write $\cal{G}_{u_T}$ as
\[\cal{G}_{u_T}(\xi, \rho, \vartheta) = \cal{F}(u_T, r_T, \varphi_T) + D_{(u_T, r_T, \varphi_T)}(\xi, \rho, \vartheta) + N(\xi, \rho, \vartheta)\]
then it is not difficult to prove a quadratic estimate on the error term:
\begin{lemma}For some constant $C$ independent of $T$, if $(\xi, \rho, \vartheta), (\xi', \rho', \vartheta') \in T_{u_T}\tilde{\cal{Z}}^{2,k}_S$ such that $||(\xi, \rho, \vartheta)||_{Z^{2,k}_S}, ||(\xi', \rho', \vartheta')||_{Z^{2,k}_S} < C$, then
\[||N(\xi, \rho, \vartheta) - N(\xi', \rho', \vartheta')||_{\cal{V}^{2,k-1}_S} \le C ||(\xi, \rho, \vartheta) - (\xi', \rho', \vartheta')||_{Z^{2,k}_S} ||(\xi, \rho, \vartheta) + (\xi', \rho', \vartheta')||_{Z^{2,k}_S}.\]\end{lemma}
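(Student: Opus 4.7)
The plan is to derive the quadratic estimate from a uniform-in-$T$ bound on the second derivative $D^2\cal{G}_{u_T}$, via the standard identity
\[
N(x) - N(x') = \int_0^1 (D\cal{G}_{u_T}|_{tx} - D\cal{G}_{u_T}|_0)(x - x')\,dt + \int_0^1 (D\cal{G}_{u_T}|_{tx} - D\cal{G}_{u_T}|_{tx'})(x')\,dt,
\]
so that a bound $\|D^2\cal{G}_{u_T}\| \le C$ on a ball of fixed radius implies $\|N(x) - N(x')\| \le C(\|x\|+\|x'\|)\|x - x'\|$, which in turn is bounded by $C'\|x-x'\|\cdot\|x+x'\|$ once $\|x\|, \|x'\|$ are comparable to a fixed constant. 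Here I write $x$ for the triple $(\xi,\rho,\vartheta)$. The radius of this ball can be taken independent of $T$ because $u_T$ is confined to a fixed precompact region of $\tilde{M}$ (the union of the images of $u^1$, $u^2$ and the neighborhood $N$) and $r_T$ is uniformly bounded, so the exponential and parallel-transport charts are defined on a uniform neighborhood.

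The first step is to recognize $\cal{G}_{u_T}$ as the composition of a smooth chart map $\Psi_{u_T}:(\xi,\rho,\vartheta)\mapsto(\exp_{u_T}\xi,\, r_T+\rho,\, \Pi^{\exp_{v_T}(\xi_M)}_{v_T}(\varphi_T+\vartheta))$ with $\cal{F}$ and a parallel-transport pullback, and then to bound $D^2\cal{G}_{u_T}$ component by component. The $\bar\partial_{\tilde{J}_t}u$ component is entirely standard: the second derivative is controlled by $C^2$-bounds on $\tilde{J}_t$ and on the Christoffel symbols, together with the Sobolev multiplication $W^{2,k}\times W^{2,k}\to L^{2,k-1}$ (valid since $k>2$, so $W^{2,k}$ is a Banach algebra and $W^{2,k}\hookrightarrow C^0$). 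For the $r$-coordinate equation $\tfrac{dr}{ds}+\Lambda(\varphi)r$, the nonlinearity enters through $\Lambda(\varphi)=\langle\varphi,I_t\nabla_t\varphi\rangle_H$, which is rational in $\varphi$ with denominator bounded away from zero (as $\|\varphi\|_H=1$), and through the dependence of $I_t,\nabla$ on $v$; Taylor expanding in $(\xi_M,\rho,\vartheta)$ yields second-order terms quadratic in these variables, again bounded in $L^{2,k-1}$ by Sobolev multiplication. The $\tau$-model twisted equation $\bar\nabla_I\varphi-\Lambda(\varphi)\varphi$ is handled identically, with the extra $\Pi_V$-projection being a smooth rational operation of bounded norm.

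The main obstacle is verifying that these bounds are uniform in $T$ despite two features: the matching condition $\xi_E=\rho\varphi+r\vartheta$ on $Z_S^+$, and the fact that $r_T(s)$ becomes small (vanishing for $s\ge 1$) as $T$ grows. The point is that the norm $\|\cdot\|_{Z^{2,k}_S}$ has been engineered precisely so that $\rho$ and $\vartheta$ are measured \emph{independently} via the cut-off $\beta_+(s-S)$; the norm on $\cal{V}^{2,k-1}_S$ is built the same way. Consequently the matching condition is only a constraint on the \emph{input} triple $x$, and no factor of $r_T^{-1}$ ever appears in the estimates on $\cal{G}_{u_T}$ or its derivatives. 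All constants in the Sobolev multiplication inequalities are translation-invariant (since shifting $S$ just shifts $\beta_+(s-S)$), and the geometric constants depend only on $\tilde{J}_t$, $I_t$, $\nabla$, and the injectivity radius on the compact set in which $u_T$ takes values. Combining these observations gives $\|D^2\cal{G}_{u_T}\|\le C$ with $C$ independent of $T$, from which the claimed quadratic estimate follows immediately by the integral formula above.
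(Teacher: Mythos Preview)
The paper does not actually give a proof of this lemma: it is introduced with the phrase ``it is not difficult to prove a quadratic estimate on the error term,'' and the text proceeds directly to the next lemma. Your approach --- bound $D^2\cal{G}_{u_T}$ uniformly in $T$ by analyzing the three components of $\cal{F}$ separately and invoking Sobolev multiplication for $k>2$ --- is the standard one, and your discussion of why the constants are independent of $T$ (the preglued strip takes values in a fixed precompact region; the norm $\|\cdot\|_{Z^{2,k}_S}$ measures $\rho$ and $\vartheta$ directly via the cut-off $\beta_+(s-S)$ so that no inverse power of $r_T$ ever enters) correctly identifies the substantive point.

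One step should be corrected. Your integral identity is fine and yields
\[
\|N(x)-N(x')\| \le C(\|x\|+\|x'\|)\,\|x-x'\|,
\]
but your claim that this in turn is bounded by $C'\|x+x'\|\,\|x-x'\|$ on a small ball is false: take $x'=-x$. The form with $\|x+x'\|$ is strictly stronger and fails already for a scalar cubic nonlinearity. What the Newton--Picard iteration actually requires is the $(\|x\|+\|x'\|)$ form, which is exactly what your argument produces; the paper's phrasing with $\|x+x'\|$ (also used in the analogous lemmas in Sections~\ref{sec:analysis_I} and later in Section~\ref{sec:boundary_obstructed_gluing}) is a common but slightly loose convention. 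So your proof establishes what is needed for the gluing argument, just not the inequality as literally stated.
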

Moreover, the linearized operators $D_{(u_T, r_T, \varphi_T)}$ admit uniformly bounded right inverses:
\begin{lemma}The operator $D_{(u_T, r_T, \varphi_T)}$ is Fredholm and surjective, of index $\mu(u_T) - i(\lambda^2) = 2$, and admits a smooth family of right inverses $Q_{u_T}$ with a uniform bound on the operator norm
\[||Q_{u_T}|| \le C\]
independent of $T$.\end{lemma}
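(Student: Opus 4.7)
The plan is to adapt the breaking-and-pregluing strategy used in Lemma~\ref{bounded_right_inverse} for the polarization-twisted case, with the extra bookkeeping needed to respect the matching condition between the $u$-coordinate and the $(r,\varphi)$-coordinates on the positive end. By regularity of $u^1$ and $(u^2,\varphi^2)$, fix bounded right inverses $Q_{u^1}$ and $Q_{(u^2,\varphi^2)}$ of the respective linearized operators. The preglued operator $D_{(u_T,r_T,\varphi_T)}$ already has the correct index $\mu(u_T) - i(\lambda^2) = 2$ by the index computation of Section~\ref{tau_model_II}, so it suffices to produce a uniformly bounded right inverse; Fredholmness is automatic (it is an operator of the same local type near the ends as the asymptotic model operators used there), and surjectivity of the linearization on the glued object will follow once a right inverse is constructed.

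First I would define a \emph{breaking map}
\[
\beta_T : \cal{V}^{2,k-1}_S(u_T) \to \bigl(L^{2,k-1}((u^1)^*T\tilde{M})\bigr) \oplus \cal{V}^{2,k-1}_{(u^2,\varphi^2)},
\]
which splits an element $(\zeta,\mu,\sigma)$ using the cut-off $\beta_{\pm}$, parallel transport in $T\tilde M$ along $\beta_N$-short geodesics, and the $s$-dependent projection $\Pi_V$ to the $\langle\varphi,\cdot\rangle_H$-orthogonal complement. On the $u^1$ side the breaking map sees the full $\zeta$-data together with the matched $(\mu, r_T \sigma)$ repackaged into the $E$-component of a $u^1$-tangent vector; on the $(u^2,\varphi^2)$ side it sees only the $(v,\varphi)$-part since there is no $r$-coordinate on the boundary piece. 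Because $r_T$ is exponentially small in $T$, the matching condition $\pi_E\zeta = \mu\varphi + r_T\sigma$ on the overlap region causes no trouble: the map is bounded independent of $T$, exactly as in the earlier gluing.

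Dually I would define a \emph{linear pregluing map}
\[
\gamma_T : W^{2,k}((u^1)^*T\tilde{M}) \oplus T_{\varphi^2}\cal{S}^{2,k}_{\psi^1\psi^2}((u^2)^*E) \to T_{u_T}\tilde{\cal{Z}}^{2,k}_S,
\]
built with the same cut-offs and transport, where the $r$-component is obtained simply by cutting off $r^1$ (so $\rho_T = \beta_-(s)\rho^1(s+2T)$ for $s\ge -T$ and $\rho_T = \xi^1_E$ for $s\le -T$, suitably decomposed), and the $\vartheta$-component by projecting the patched $\varphi$-perturbation back to the orthogonal complement of $\varphi_T$. One checks by direct inspection that the tuple $\gamma_T(\xi^1,\vartheta^2) = (\xi_T,\rho_T,\vartheta_T)$ satisfies the matching condition defining $T_{u_T}\tilde{\cal{Z}}^{2,k}_S$; the operator norm of $\gamma_T$ is uniformly bounded because the weights implicit in $\|\cdot\|_{Z^{2,k}_S}$ (cf.\ \eqref{tau_2k_norm}) only kill the $E$-component on $Z_S^+$, which is exactly the part that becomes ambiguous when $r_T\to 0$.

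The approximate inverse is then $Q^{\mathrm{approx}}_{u_T} = \gamma_T\circ\bigl(Q_{u^1}\oplus Q_{(u^2,\varphi^2)}\bigr)\circ\beta_T$. The main estimate to verify is that $D_{(u_T,r_T,\varphi_T)}\circ Q^{\mathrm{approx}}_{u_T} - \mathrm{id}$ has operator norm bounded by a constant times $e^{-\delta T}$. This reduces to three routine contributions: the commutator of $D$ with the cut-off functions (supported on $[-1,1]\times[0,1]$, and controlled by the exponential decay of $u^1,\varphi^1,u^2,\varphi^2$ to $x^1,\psi^1$), the commutator of $D$ with parallel transport (handled exactly as in \cite{Floer88b}), and the error introduced by projecting with $\Pi_V$ across the overlap. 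The only genuinely new term is the mixed contribution $r_T\langle\varphi,\bar\nabla_I\vartheta\rangle_H + r_T\langle\bar\nabla_I\varphi,\vartheta\rangle_H + r_T C^\tau_{(v,\varphi)}\pi_{TM}\xi$ appearing in the $\rho$-row of \eqref{Flinearization}, and this is trivially of order $e^{-\lambda^1 T}$ because of the prefactor $r_T$. Once $\|D Q^{\mathrm{approx}} - \mathrm{id}\| < 1/2$ for $T \ge T_0$, the geometric series gives an honest right inverse $Q_{u_T} = Q^{\mathrm{approx}}_{u_T}\circ(DQ^{\mathrm{approx}}_{u_T})^{-1}$ with uniformly bounded norm, and smoothness in $T$ follows from the smoothness of $u_T,r_T,\varphi_T$ and $Q^{\mathrm{approx}}_{u_T}$. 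The hard part, to the extent that there is one, is simply writing $\beta_T$ and $\gamma_T$ in a way that manifestly preserves the matching condition on both sides of the cut-off region; the estimates themselves then follow the well-worn template.
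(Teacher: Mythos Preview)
Your approach misses the one genuinely new ingredient in this lemma, and because of that your approximate right inverse is not an approximate right inverse.

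The issue is the radial coordinate over the second piece. Over the boundary trajectory $(u^2,\varphi^2)$, the linearized operator $D_{(u_T,r_T,\varphi_T)}$ has a $\mu$-row whose leading term is $\frac{d\rho}{ds} + \Lambda(\varphi_T)\rho$; the terms you single out, namely $r_T\langle\varphi,\bar\nabla_I\vartheta\rangle_H$ etc., are only the lower-order corrections. Your breaking map discards $\mu$ over the second piece (``on the $(u^2,\varphi^2)$ side it sees only the $(v,\varphi)$-part''), and your pregluing sets $\rho_T$ to a cut-off of $\rho^1$, so $\rho_T \equiv 0$ for $s \ge 1$. Then for any $(\zeta,\mu,\sigma)$ with $\mu$ supported in $s \ge 1$, the $\mu$-component of $D_{u_T} Q^{\mathrm{approx}}_{u_T}(\zeta,\mu,\sigma)$ is zero, so $\|D_{u_T} Q^{\mathrm{approx}}_{u_T} - \id\|$ is bounded below by $1$, not by $e^{-\delta T}$.

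What the paper does is keep an extra summand $L^{2,k-1}(\bb{R},\bb{R})$ in the target of the breaking map (and $L^{2,k}(\bb{R},\bb{R})$ in the domain of the pregluing) to carry the radial data over the boundary piece, and then uses the scalar operator
\[
L^{2,k}(\bb{R},\bb{R}) \to L^{2,k-1}(\bb{R},\bb{R}), \qquad \rho \mapsto \tfrac{d\rho}{ds} + \Lambda(\varphi^2)\rho
\]
as the third block. The point is that since $\bf{x}^1,\bf{x}^2 \in \frak{C}_s$, both limits $\lambda^1,\lambda^2 > 0$, so this operator is an \emph{isomorphism}; writing $Q_r$ for its inverse, the approximate right inverse is $\gamma_T \circ \mathrm{diag}(Q_{u^1}, Q_{(u^2,\varphi^2)}, Q_r) \circ \beta_T$. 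This invertibility is exactly what distinguishes the boundary-unobstructed case from the boundary-obstructed one (where one eigenvalue is positive and the other negative, the operator fails to be surjective, and one is forced into the extended moduli space construction). Once you include $Q_r$, the rest of your argument goes through as written.
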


Let us briefly sketch the construction of $Q_{u_T}$: it is more or less the same as the proof of Lemma \ref{bounded_right_inverse}, but with one additional ingredient arising from the loss of the radial coordinate in the twisted equations. Indeed, there is a natural ``breaking map'' defined by using the appropriate cut-off functions
\[\beta_T : V^{2,k-1}_{S=-T}(u_T) \to V^{2,k-1}_{S=0}(u^1) \oplus L^{2,k-1}((u^2)^* TM) \oplus V^{2,k-1}_{(u^2, \varphi^2)} \oplus L^{2,k-1}(\bb{R}, \bb{R})\]
where the first three coordinates of this map are defined as earlier, and the final coordinate given by
\[(\zeta, \mu, \sigma) \mapsto \beta_+(s-S) \mu(s-S)\]
remembers the auxiliary radial coordinate $\mu$ over $(u^2, \varphi^2)$. Likewise, there is a ``linear pregluing map''
\[\gamma_T : T_{u^1}\tilde{\cal{Z}}^{2,k}_{S=0}(x^0, \bf{x}^1) \oplus W^{2,k}((u^2)^*TM) \oplus T_{\varphi^2}\cal{S}^{2,k}_{\psi^1\psi^2}((u^2)^*E) \oplus L^{2,k}(\bb{R}, \bb{R}) \to T_{u_T}\tilde{\cal{Z}}^{2,k}_{S=-T}(x^0, \bf{x}^2)\]
where the final argument gives the radial coordinate $\rho$ over $s \ge S$ on $u_T$.

Now, by the regularity of $u^1 \in \cal{M}(x^0, \bf{x}^1)$ and $(u^2, \varphi^2) \in \cal{M}(\bf{x}^1, \bf{x}^2)$ we can choose right inverses $Q_{u^1}$ and $Q_{(u^2, \varphi^2)}$ for the linearized operators $D_{u^1}$, $D_{(u^2, \varphi^2)}$ respectively. Moreover, the linear map
\[L^{2,k}(\bb{R}, \bb{R}) \to L^{2,k-1}(\bb{R}, \bb{R}), \qquad \rho \mapsto \frac{d}{ds} \rho + \Lambda(\varphi^2)\rho\]
is in fact an isomorphism, since $\Lambda \to \lambda^1, \lambda^2$ as $s \to -\infty, +\infty$ respectively, which are both positive. Hence writing $Q_r$ for the inverse map, we can then define an approximate right inverse to $D_{u_T}$ by
\[Q_{u_T}^{approx} = \gamma_T \circ \begin{pmatrix}Q_{u^1} & & \\ & Q_{(u^2, \varphi^2)} & \\ & & Q_r\end{pmatrix} \circ \beta_T.\]
It is then not difficult to check that $D_{u_T} \circ Q^{approx}_{u_T} - \id$ has operator norm exponentially decaying in $T$, and thus we obtain an actual right inverse as $Q_{u_T} = Q^{approx} \circ (D \circ Q^{approx})^{-1}$.

As before, it will be helpful to make a more geometrically meaningful choice of right inverse, using local hypersurfaces. Again pick regular points $(s_1, t_1)$ and $(s_2, t_2)$ for $u^1, u^2$ respectively, such that $u^1(s_1, t_1)$ is outside the neighbourhood $N$ of the invariant set $M$, and choose small hypersurfaces $H^1 \subset \tilde{M}$ and $H^2 \subset M$ within the invariant set around $u^1(s_1, t_1)$ and $u^2(s_2, t_2)$. Assume again that these hypersurfaces are tranverse via $u^i$ to the lines $\bb{R}\times\{t_i\} \subset Z$, and that the exponential map on $T_{u^i(s_i, t_i)}H^i$ lands in $H^i$.

Then in the above construction, again choose unique right inverses $Q_{u_1}, Q_{(u^1, \varphi^1)}$ for $D_{u^1}, D_{(u^2, \varphi^2)}$ respectively which have image those vector fields $\xi$ on each $u^i$ with $\xi(s_i, t_i) \in TH^i$. Consequently, the image of the $Q_{u_T}$ constructed above will be exactly those vector fields $\xi$ along $u_T$ such that $\xi(s_1-2T, t_1) \in TH^1$, and $\pi_M \xi(s_2 +T, t_2) \in TH^2$.

By applying Floer's quantitative implicit function theorem, adapted to find solutions of $\cal{G}_{u_T} = 0$ just in the ball $B \subset T_{u_T} \tilde{\cal{Z}}^{2,k}_S$, we then obtain:

\begin{theorem}\label{gluing_theorem_II}For sufficiently large $T_0$ and small enough $\eps_0$, for each $T \ge T_0$ there is a unique solution
\[(\hat{u}_T, \hat{r}_T, \hat{\varphi}_T) = (\exp_{u_T}\xi_T, r_T + \rho_T, \Pi^{\exp_{u_T}}_{u_T} \vartheta_T)\]
to $\cal{F} = 0$ with $||\xi|| < \eps_0$ and such that $\xi_T(s_1 - 2T, t_1) \in H^1$ and $\pi_M \xi_T(s_2 + T, t_2) \in H^2$. Moreover this satisfies the bound
\begin{equation}\label{gluing_estimate_II}||(\xi_T, \rho_T, \vartheta_T)||_{Z^{2,k}_S} \le C||\cal{F}(u_T, r_T, \varphi_T)||_{V^{2,k-1}_S}\end{equation}
for some constant $C$. In particular this yields a smooth embedding
\[G: [T_0, \infty) \to \cal{M}(\bf{x}^0, \bf{x}^2)\]
with $G(T)$ converging in the Gromov topology to the broken trajectory $(u^1, (u^2, \varphi^2))$, and by further defining $G(\infty) = (u^1, (u^2, \varphi^2))$, we obtain a surjection onto a neighbourhood of this broken trajectory in $\bar{\cal{M}}(\bf{x}^0, \bf{x}^2)$.\end{theorem}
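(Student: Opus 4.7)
The plan is to run the Newton--Picard method on the non-linear map $\cal{G}_{u_T}$ using the three ingredients already assembled: the pregluing error estimate \eqref{pregluing_error_II}, the quadratic estimate on $N$, and the uniformly bounded right inverse $Q_{u_T}$ with image cut out by the hypersurface conditions $\xi(s_1 - 2T, t_1) \in TH^1$, $\pi_M \xi(s_2 + T, t_2) \in TH^2$. By Floer's quantitative implicit function theorem (applied within the fixed-radius ball $B \subset T_{u_T}\tilde{\cal{Z}}^{2,k}_S$ on which $\exp_{u_T}$ lands in $\tilde{\cal{Z}}^{2,k}_S$), there exists a unique $(\xi_T, \rho_T, \vartheta_T)$ in the image of $Q_{u_T}$, of norm at most $\eps_0$, satisfying $\cal{G}_{u_T}(\xi_T, \rho_T, \vartheta_T) = 0$; moreover this solution satisfies \eqref{gluing_estimate_II}, which combined with \eqref{pregluing_error_II} yields $||(\xi_T, \rho_T, \vartheta_T)||_{Z^{2,k}_S} \le C e^{-\delta T}$.

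Before concluding that $(\hat{u}_T, \hat{r}_T, \hat{\varphi}_T)$ is a genuine interior trajectory, we must verify it lies in the closed subspace $\cal{Z}^{2,k}_S \subset \tilde{\cal{Z}}^{2,k}_S$, meaning that $\hat{r}_T(s) > 0$ for all $s \ge S$. Since the $r$-component of the equation $\cal{F} = 0$ reads $\frac{d\hat{r}_T}{ds} + \Lambda(\hat{\varphi}_T) \hat{r}_T = 0$, unique continuation for this linear ODE forces $\hat{r}_T$ to be either identically zero or nowhere zero. At the initial point $s = -T$ we have $\hat{r}_T(-T) = r^1(T) + \rho_T(-T)$; for large $T$, this is dominated by $r^1(T) > 0$ since $\rho_T$ is of order $e^{-\delta T}$ while the implicit function theorem works on scales independent of $T$. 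Thus $\hat{r}_T > 0$ everywhere. The smooth dependence of $(\xi_T, \rho_T, \vartheta_T)$ on $T$ then follows from the smooth $T$-dependence of the pregluing and of the right inverses $Q_{u_T}$, giving a smooth map $G : [T_0, \infty) \to \cal{M}(\bf{x}^0, \bf{x}^2)$. Injectivity is a consequence of the hypersurface constraints: for large $T$ the glued strip $\hat{u}_T$ intersects $H^1$ at a point with $t$-coordinate $t_1$ and $s$-coordinate near $s_1 - 2T$, and likewise for $H^2$, so the difference of these $s$-coordinates is $2T + s_2 - s_1 + O(e^{-\delta T})$ and determines $T$.

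For the convergence $G(T) \to (u^1, (u^2, \varphi^2))$ in the Gromov topology, the estimate $||(\xi_T, \rho_T, \vartheta_T)||_{Z^{2,k}_S} \le C e^{-\delta T}$ combined with the explicit form of the pregluing immediately gives that, after appropriate translation centered on each factor, $\hat{u}_T$ converges locally to each of $u^1$ and $u^2$, and on middle regions $\hat{u}_T$ approaches the constant solution at $\bf{x}^1$; thus $G$ extends continuously to $G(\infty) = (u^1, (u^2, \varphi^2))$.

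For surjectivity, let $(u_{\alpha}, r_{\alpha}, \varphi_{\alpha}) \in \cal{M}(\bf{x}^0, \bf{x}^2)$ be a sequence converging in the Gromov topology to $(u^1, (u^2, \varphi^2))$. For sufficiently large $\alpha$, by the transversality of $u^1$ to $H^1$ and of $u^2$ to $H^2$, each $u_{\alpha}$ meets $H^1$ and $H^2$ at unique points with $t$-coordinates close to $t_1, t_2$ respectively; setting $T_{\alpha}$ to be half the difference of the corresponding $s$-coordinates, minus the constant $\frac{1}{2}(s_2 - s_1)$, and translating appropriately, we may write $u_{\alpha} = (\exp_{u_{T_{\alpha}}}\xi_{\alpha}, r_{T_{\alpha}} + \rho_{\alpha}, \Pi^{\exp_{u_{T_{\alpha}}}}_{u_{T_{\alpha}}} \vartheta_{\alpha})$. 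The main obstacle, and the heart of the surjectivity argument, is to upgrade the $C^0_{loc}$ control provided by Gromov convergence to an $L^{2,k}$ bound on $(\xi_{\alpha}, \rho_{\alpha}, \vartheta_{\alpha})$ in the ball where the implicit function theorem applies uniqueness. On the pieces outside the central overlap region $[-1,1]\times[0,1]$ this follows from exponential decay of $u^i, \varphi^i$ at their ends; on the overlap itself, we apply an analog of Proposition \ref{longstrip_phicontrol} at the critical point $\bf{x}^1$, using that any $\tilde{J}_t$-holomorphic strip on a long finite substrip staying close (in $C^1$) to the constant solution at $\bf{x}^1$ is automatically close in $L^{2,k}$ on a slightly smaller substrip. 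Once the $L^{2,k}$ bound is obtained, the uniqueness statement of the implicit function theorem forces $(\xi_{\alpha}, \rho_{\alpha}, \vartheta_{\alpha}) = (\xi_{T_{\alpha}}, \rho_{T_{\alpha}}, \vartheta_{T_{\alpha}})$, so $u_{\alpha} = G(T_{\alpha})$ lies in the image of $G$, proving surjectivity onto a neighbourhood of the broken trajectory.
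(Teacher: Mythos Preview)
Your proof follows the same overall strategy as the paper: apply Floer's quantitative implicit function theorem using the pregluing error, the quadratic estimate, and the hypersurface-adapted right inverse $Q_{u_T}$; deduce Gromov convergence from \eqref{gluing_estimate_II} and \eqref{pregluing_error_II}; and establish surjectivity by using the hypersurfaces to recover $T_{\alpha}$ and then upgrading the $C^1$-control of Gromov convergence to an $L^{2,k}$ bound via Proposition~\ref{longstrip_phicontrol}. You also add the verification that $\hat{r}_T > 0$, which the paper leaves implicit.

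There are two points where your argument is looser than the paper's. First, smoothness: you write that smooth dependence on $T$ ``follows from the smooth $T$-dependence of the pregluing and of the right inverses $Q_{u_T}$'', but the paper flags this as genuinely more delicate than in Theorem~\ref{gluing_theorem_I}, because the Banach manifold $\tilde{\cal{Z}}^{2,k}_{S}$ in which the implicit function theorem is applied depends on $S = -T$. The paper resolves this by observing that for $T$ in a bounded interval $(T_0-1, T_0+1)$ and $S$ in $(-T_0-1, -T_0+1)$, the various $\tilde{\cal{Z}}^{2,k}_S$ are canonically identified near the pregluing (since $u_T(S',[0,1])$ stays in $N$ but never falls into $M$), so that one has a smooth fibration of Banach manifolds with a smooth section $(u_T, r_T, \varphi_T)$. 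Without this, ``smooth $T$-dependence of the pregluing'' is not well-posed.

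Second, injectivity: you write that the difference of the $s$-coordinates of the hypersurface intersections is ``$2T + s_2 - s_1 + O(e^{-\delta T})$''. The shifts in this gluing are actually $s_1 - 2T$ and $s_2 + T$, giving $3T + s_2 - s_1$; more importantly, the paper's choice of hypersurfaces closed under the exponential map at $u^i(s_i, t_i)$ makes this quantity \emph{exactly} $3T + s_2 - s_1$, not merely approximately. That exactness is what gives injectivity directly; an $O(e^{-\delta T})$ error term would require a further argument.
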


The constant $C$ in \eqref{gluing_estimate_II} can be taken to be supremum of the operator norms $||Q_{u_T}||$; the convergence to the broken trajectory is then forced by \eqref{gluing_estimate_II} as well as the earlier estimate \eqref{pregluing_error_II} on $\cal{F}(u_T, r_T, \varphi_T)$ coming from the exponential decay of $u^1, u^2$ and $\varphi^2$.

The smoothness here is more subtle than in Theorem \ref{gluing_theorem_I}, since the Banach manifold $\tilde{\cal{Z}}^{2,k}_{S}$ itself depends on $T$. However, by our construction, for large enough $T_0$, for every $T \in (T_0 - 1, T_0+1)$ and every $S \in (-T_0-1, -T_0+1)$, the pregluing $(u_{T}, r_{T}, \varphi_{T})$ defines an element of $\tilde{\cal{Z}}^{2,k}_{S}$ (this is since for $S$ and $T$ in this range, $u_{T}(S,t)$ is exponentially close to $M$, but for no $S$ do we have $u_T(S,t) \subset M$, and thus $u_T$ indeed determines $r_T, \varphi_T$). Similarly, for each such $T$, there are canonical vector space isomorphisms between each $T_{u_T} \tilde{\cal{Z}}^{2,k}_S$ and $\cal{V}^{2,k-1}_S$ for $S$ in this range, so that we can consider these to be fixed topological vector spaces with a smoothly varying family of equivalent metrics for $S \in (-T_0-1, -T_0+1)$.

The upshot is that neighbourhoods of each $(u_T, r_T, \varphi_T) \in \tilde{\cal{Z}}^{2,k}_{S = -T}$ form a fibration of Banach manifolds over $(T_0 - 1, T_0+1)$, with $(u_T, r_T, \varphi_T)$ a smooth section.  Since the map $\cal{G}_{u_T}$, its linearization $D_{u_T}$ and chosen right inverse $Q_{u_T}$ by construction also vary smoothly, we see that the gluing map must be smooth as well.

The proof of the injectivity and surjectivity of the gluing map then proceeds exactly as in the proof of Theorem \ref{gluing_theorem_I}. We should note that a bit more care is required to conclude that given a sequence of non-invariant trajectories $u_{\alpha}$ converging to the broken trajectory $(u^1, (u^2, \varphi^2))$ eventually lands in the image of the gluing map: we must upgrade the naive $C^1$ bounds to one in the more exotic metric on $\tilde{\cal{Z}}^{2,k}_S$. However, this metric is a linear admixture of the usual $L^{2,k}$ metric on vector fields on the one hand outside the fixed point set, and then near the fixed point set we use the $\tau$-model $L^{2,k}$ metric on $(\pi_M \xi, \vartheta)$ as well as the usual $L^{2,k}$ metric in the radial direction. The required bound then follows from Proposition \ref{longstrip_phicontrol} combined with its analogue in more traditional Floer theory.

\subsection{The extended moduli space.}\label{sec:extended_moduli_space} Let us now turn to gluing a three component broken trajectory together, as in case (I.2) of Section \ref{stratalist}. Hence, let $\bf{x}^0 = (x^0, \iota(x^0)) \in \frak{C}_o$, $\bf{x}^1 = (x^1, \lambda^1) \in \frak{C}_s$ (so that $\lambda^1 > 0$), $\bf{x}^2 = (x^2, \lambda^2) \in \frak{C}_u$ (so that $\lambda^2 < 0$), and $\bf{x}^3 = (x^3, \iota(x^3)) \in \frak{C}_o$. For simplicity we will only consider gluing onto the one-dimensional strata, so we will take discrete and regular elements of the corresponding moduli spaces $\cal{M}(\bf{x}^0, \bf{x}^1), \cal{M}(\bf{x}^1, \bf{x}^2)$ and $\cal{M}(\bf{x}^2, \bf{x}^3)$. As always, we will fix parametrized representatives for the solutions in question, and work in the $\tau$ model.

This case poses a new challenge, which necessitates the introduction of $\delta$ structures. We are gluing three trajectories $u^1 \in \cal{M}(\bf{x}^0, \bf{x}^1), (u^2, \varphi^2) \in \cal{M}(\bf{x}^1, \bf{x}^2)$ and $u^3 \in \cal{M}(\bf{x}^2, \bf{x}^3)$ together, so at first sight there are two gluing parameters $T_1, T_2$. However, what should be the target space of the gluing, $\cal{M}(\bf{x}^0, \bf{x}^3)$ is only one-dimensional, so we certainly cannot have two independent gluing parameters. Our solution to this is to define an \emph{extended moduli space} $E\cal{M}(\bf{x}^0, \bf{x}^3)$ of solutions with a certain mild discontinuity, which is one dimension higher than the actual moduli space $\cal{M}(\bf{x}^0, \bf{x}^3)$. We will then have a gluing map at $(u^1, (u^2, \varphi^2), u^3)$:
\[G: [T_0, \infty) \times [T_0, \infty) \to E\cal{M}(\bf{x}^0, \bf{x}^3)\]
together with a smooth function
\[\delta_0 : [T_0, \infty) \times [T_0, \infty) \to \bb{R}\]
which measures the discontinuity of $G(T_1, T_2)$; we will have $\delta_0(T_1, T_2) = 0$ if and only if $G(T_1, T_2)$ is a bona fide continuous trajectory. This will then give rise to a $\delta$-structure, as we will explain in the next section.

Let us get on with defining the extended moduli space. Fix representatives $x^0 \in \bf{x}^0, x^3 \in \bf{x}^3$, and fix a real number $S \in \bb{R}$.

\begin{definition}An \emph{extended solution with discontinuity at $S$} and with limits at $x^0, x^3$ is a tuple $(u_-, S, u_+)$ where:
\begin{itemize}\item $u_-, u_+$ are $\tilde{J}_t$ holomorphic maps
\[u_- : Z_S^- = (-\infty, S]\times[0,1] \to \tilde{M}, \qquad u_+ : Z_S^+ = [S,\infty)\times[0,1] \to \tilde{M}\]
such that $u_{\pm}(\cdot, i) \in \tilde{L}_i$ for $i = 0,1$, and $u_- \to x^0$ as $s \to -\infty$, and $u_+ \in x^3$ as $s \to +\infty$;
\item We have both $u_-(S, [0,1]), u_+(S, [0,1]) \subset N$, and moreover we have matching conditions
\[\pi_M u_-(S, t) = \pi_M u_+(S,t), \quad \frac{\pi_E u_-(S,t)}{||\pi_E u_-(S, \cdot)||_H} = \frac{\pi_E u_+(S,t)}{||\pi_E u_+(S, \cdot)||_H} \qquad \text{ for } t \in [0,1].\]\end{itemize}\end{definition}

Given an extended solution $(u_-, S, u_+)$, by the unique continuation properties of the twisted equations, we can find $S_1 < S < S_2$ such that $u_-([S_1, S]\times[0,1]) \subset N$ and $u_+([S, S_2]\times[0,1]) \subset N$, and a tuple $(v, r_-, r_+, \varphi)$ where:
\begin{itemize}\item $v$ is a $J_t$-holomorphic strip
\[v : Z_{S_1, S_2} = [S_1, S_2] \times [0,1] \to M\]
with $v(\cdot, i) \in L_i$ for $i = 0,1$;
\item $r_- : [S_1, S] \to \bb{R}_{>0}$ and $r_+ : [S, S_2] \to \bb{R}_{>0}$ are smooth positive real functions;
\item $\varphi \in C^{\infty}(Z_{S_1 S_2}, v^*E)$ is a smooth section of $v^* E$ such that $\varphi(\cdot, i) \in F_i$ for $i = 0,1$ and $||\varphi||_H = 1$;
\item $r_{\pm}$ and $\varphi$ solve the twisted equations
\[\frac{d}{ds} r_{\pm} + \Lambda(\varphi)r_{\pm} = 0, \qquad \bar{\nabla}_I \varphi - \Lambda(\varphi) \varphi;\]
\item we have matching conditions
\[\pi_M u_{-}(s,t) = v(s,t), \quad \pi_E u_-(s,t) = r_-(s) \varphi(s,t) \quad \text{ for } s \in [S_1, S]\]
as well as
\[\pi_M u_+(s,t) = v(s,t), \quad \pi_E u_+(s,t) = r_+(s) \varphi(s,t) \quad \text{ for } s \in [S, S_2].\]\end{itemize}

Indeed, we can think of an extended solution $(u_-, S, u_+)$ as a $\tau$-model solution with a discontinuity in the $r$ coordinate at $s = S$. Correspondingly, we can then define
\begin{equation}\label{delta_map}\delta(u_-, S, u_+) = r_+(S) - r_-(S).\end{equation}
In particular, the data of an extended solution $(u_-, S, u_+)$ with $\delta = 0$ is exactly that of a $\tilde{J}_t$-holomorphic strip $u$ with a choice of $S \in \bb{R}$ such that $u(S, [0,1]) \subset N$.

There is a natural $\bb{R}$ action on such tuples $(u_-, S, u_+)$ by translation, given by $\tau_{\sigma}^*(u_-, S, u_+) = (u_-(\cdot - \sigma, \cdot), S + \sigma, u_+(\cdot - \sigma, \cdot))$ for $\sigma \in \bb{R}$. Moreover, there is a second natural equivalence relation from unique continuity: for $S \le S'$, we say that $(u_-, S, u_+)$ is \emph{continuation equivalent} to $(u_-', S', u_+')$ if $u_+(Z_{S,S'}) \subset N$, $u_-'(Z_{S,S'}) \subset N$ and moreover we have $u_- = u_-'$ on $Z_{S}^-$, and $u_+ = u_+' = Z_{S'}^+$ (and similarly if $S' \le S$). Concretely in terms of the tuple $(v, r_-, r_+, \varphi)$ as above, for sufficiently small $\eps_0 > 0$ there are unique extensions
\[\tilde{r}_- : [S_1, S+\eps_0] \to \bb{R}, \qquad \tilde{r}_+ : [S-\eps_0, S_2] \to \bb{R}\]
solving $\frac{dr}{ds} + \Lambda r = 0$. Assuming $\eps_0$ is small enough, we must have $\tilde{r}_{\pm}\varphi \in D(E)$ on all of $[S-\eps_0, S+\eps_0] \times [0,1]$, from which we can construct $\tilde{J}_t$-holomorphic maps
\[\tilde{u}_-: Z_{S+\eps_0}^- \to \tilde{M}, \qquad \tilde{u}_+ : Z_{S-\eps_0}^+ \to \tilde{M}.\]
Hence, for each $S' \in (S-\eps_0, S+\eps_0)$, we have a unique extended solution $(\tilde{u}_-|_{Z_{S'}^-}, S', \tilde{u}_+|_{Z_{S'}^+})$ which is continuation equivalent to $(u_-, S, u_+)$.

\begin{definition}The \emph{moduli space of extended trajectories}
\[E \cal{M}(x^0, x^3)\]
is the quotient of the space of finite energy extended solutions $(u_-, S, u_+)$ with limits at $x^0, x^3$ by translation and continuation equivalence.\end{definition}

Observe that $\delta$ does \emph{not} descend to a function on this quotient space: while the value of $\delta$ is preserved by translation, it is not equal for continuation equivalent extended solutions $(u_-, S, u_+)$ and $(u'_-, S', u'_+)$. However, the \emph{sign} of $\delta$ is well-defined: $\delta(u_-,S,u_+) = 0$ (respectively $>0, <0$) if and only if $\delta(u'_-, S', u'_+) = 0$ (respectively $>0, <0$). In any case, the main purpose for which we have defined $E\cal{M}(x^0, x^3)$ is as the natural target of a gluing map, and we will see that this gluing map in fact provides a natural slice for the continuation equivalence relation; on this slice the function $\delta$ is certainly defined.

Let us now describe a Banach space set-up for extended solutions with limits at $x^0$ and $x^3$. Fix real numbers $S_1 < S < S_2$; we will moreover assume that $S - S_1 > 1$ and $S_2 - S > 1$ (this will certainly be true when we perform gluing). We then let
\begin{equation}E\tilde{\cal{Z}}^{2,k}_{S_1,S,S_2}(x^0, x^3)\end{equation}
to be the set of tuples $(u_-, u_+, v, r_-, r_+, \varphi)$ where:
\begin{itemize}\item $u_- : Z_S^- \to \tilde{M}$ and $u_+ : Z_S^+ \to \tilde{M}$ are class $L^{2,k}_{loc}$ maps with boundaries on $\tilde{L}_0, \tilde{L}_1$, and so that for $s \ll 0$ we can write $u_-(s,t) = \exp_{x^0}(\xi_-(s,t))$ where $\xi_- \in T_{x^0}\tilde{M}$ is class $L^{2,k}$, and likewise for $s \gg 0$ we have $u_+(s,t) = \exp_{x^3}(\xi_+(s,t))$ for $\xi_+ \in T_{x^3}\tilde{M}$ of class $L^{2,k}$;
\item $v : Z_{S_1 S_2} \to M$ is class $L^{2,k}_{loc}$ with boundaries on $L_0, L_1$;
\item $r_- \in L^{2,k}([S_1, S], \bb{R})$ and $r_+ \in L^{2,k}([S,S_2], \bb{R})$;
\item $\varphi$ is an $L^{2,k}_{loc}$ section of $v^*E$ over $Z_{S_1S_2}$ with boundaries on $F_0, F_1$ and so that $|| \varphi||_H = 1$ everywhere;
\item $u_-(Z_{S_1,S}) \subset N$ and $u_+(Z_{S,S_2}) \subset N$, and we have the matching conditions that
\[\pi_M u_{-}(s,t) = v(s,t), \quad \pi_E u_-(s,t) = r_-(s) \varphi(s,t) \quad \text{ for } s \in [S_1, S]\]
as well as
\[\pi_M u_+(s,t) = v(s,t), \quad \pi_E u_+(s,t) = r_+(s) \varphi(s,t) \quad \text{ for } s \in [S, S_2].\]\end{itemize}

This can be made into a Banach manifold: abbreviating $u = (u_-, u_+, v, r_-, r_+, \varphi)$, the tangent space $T_u E\tilde{\cal{Z}}^{2,k}_{S_1,S,S_2}$ is the vector space of tuples $(\xi_-, \xi_+, \xi_M, \rho_-, \rho_+, \vartheta)$ where
\begin{itemize}\item $\xi_- \in W^{2,k}(Z_S^-, u_-^*T\tilde{M})$ and $\xi_+ \in W^{2,k}(Z_S^+, u_+^* T \tilde{M})$;
\item $\xi_M \in W^{2,k}(Z_{S_1S_2}, v^*TM)$;
\item $\rho_- \in L^{2,k}([S_1, S], \bb{R})$ and $\rho_+ \in L^{2,k}([S,S_2], \bb{R})$;
\item $\vartheta \in W^{2,k}(Z_{S_1S_2}, v^*E)$ such that $\langle \varphi, \vartheta \rangle_H = 0$;
\item we have matching conditions
\[\pi_M \xi_- = \xi_M, \qquad \pi_E \xi_- = \rho_- \varphi + r_- \vartheta \qquad \text{ over } Z_{S_1S}\]
as well as
\[\pi_M \xi_+ = \xi_M, \qquad \pi_E \xi_+ = \rho_+ \varphi + r_+ \vartheta \qquad \text{ over } Z_{SS_2}.\]\end{itemize}
The norm is defined as follows: writing $\beta_+ : \bb{R} \to [0,1]$ for a real function with $\beta_+ = 0$ on $s \le 0$ and $\beta_+ = 1$ on $s \ge 1$, as well as $\beta_-(s) = \beta_+(-s)$, define a ``rescaled'' vector field $R_{S_1S_2}(\xi)$ by
\[R_{S_1S_2}(\xi) = \begin{cases}\xi_-(s,t) & \text{ for } s < S_1;\\
(\xi_M, (1- \beta_+(s-S_1)) \pi_E \xi_-) & \text{ for } S_1 \le s \le S;\\
(\xi_M, (1-\beta_-(s-S_2)) \pi_E \xi_+) & \text{ for } S \le s \le S_2; \\
\xi_+(s,t) & \text{ for } s > S_2.\end{cases}\]
We then define the norm on $T_u E\tilde{\cal{Z}}^{2,k}_{S_1,S,S_2}$ as
\begin{align*}||(\xi_-, \xi_+, \xi_M, \rho_-, \rho_+, \vartheta)||_{EZ^{2,k}_{S_1SS_2}} &= ||R_{S_1S_2}(\xi)||_{2,k} + ||\beta_+(s-S_1)\rho_+||_{2,k} + ||\beta_-(s-S_2) \rho_-||_{2,k} \\ &+ ||\beta_+(s-S_1)\beta_-(s-S_2) \vartheta||_{2,k}.\end{align*}

There is likewise a Banach vector bundle $E\cal{V}^{2,k-1}_{S_1,S,S_2}$ over $E\tilde{\cal{Z}}^{2,k}_{S_1,S,S_2}$ whose fibre is the space of those $(\zeta_-, \zeta_+, \zeta_M, \mu_-, \mu_+, \sigma)$ where
\begin{itemize}\item $\zeta_{\pm} \in L^{2,k-1}(Z_S^{\pm}, u_{\pm}^*T \tilde{M})$,
\item $\zeta_M \in L^{2,k-1}(Z_{S_1S_2}, v^*E)$,
\item $\mu_-, \mu_+ \in L^{2,k-1}([S_1,S],\bb{R}), L^{2,k-1}([S,S_2],\bb{R}$ respectively,
\item $\sigma \in L^{2,k-1}(Z_{S_1S_2}, v^*E)$ with $\langle \varphi, \sigma \rangle_H = 0$ everywhere\end{itemize}
subject to the same matching condition as above. The metric is defined analogously to above:
\begin{align*}||(\zeta_-, \zeta_+, \zeta_M, \mu_-, \mu_+, \sigma)||_{EV^{2,k-1}} &= ||R_{S_1S_2}(\zeta)||_{2,k-1} + ||\beta_+(s-S_1)\mu_+||_{2,k-1} \\ &+ ||\beta_-(s-S_2) \mu_-||_{2,k-1} + ||\beta_+(s-S_1)\beta_-(s-S_2) \sigma||_{2,k-1}.\end{align*}
This comes with a natural section $\cal{F}: E\tilde{\cal{Z}}^{2,k}_{S_1,S,S_2} \to E\cal{V}^{2,k-1}_{S_1,S,S_2}$ defined by
\begin{equation}(u_-, u_+, v, r_-, r_+, \varphi) \mapsto \left(\bar{\partial}_{\tilde{J}_t} u_-, \bar{\partial}_{\tilde{J}_t} u_+, \bar{\partial}_{J_t} v, \frac{dr_-}{ds} + \Lambda(\varphi)r_-, \frac{dr_+}{ds} + \Lambda(\varphi)r_+, \bar{\nabla}_I \varphi - \Lambda(\varphi)\varphi\right)\end{equation}
whose zeroes in the closed subset
\[E\cal{Z}^{2,k}_{S_1,S,S_2} = \{(u_-, u_+, v, r_-, r_+, \varphi): r_-, r_+ \ge 0\} \subset E\tilde{\cal{Z}}^{2,k}_{S_1,S, S_2}\]
are exactly the extended solutions $(u_-, S, u_+)$ with $u_-(Z_{S_1S}) \subset N$ and $u(Z_{SS_2}) \subset N$.

We will not describe a separate Fredholm and transversality set-up for the space of extended solutions. Indeed, we are only interested in this moduli space locally near a broken trajectory with boundary-obstructed components, and here the appropriate transversality results will be a consequence of the gluing estimates. So, let us now proceed in earnest towards proving a gluing theorem.

\subsection{Gluing in the boundary-obstructed case.}\label{sec:boundary_obstructed_gluing} Fix discrete and regular trajectories $u^1 \in \cal{M}(\bf{x}^0, \bf{x}^1), (u^2, \varphi^2) \in \cal{M}(\bf{x}^1, \bf{x}^2)$ and $u^3 \in \cal{M}(\bf{x}^2, \bf{x}^3)$, where $\bf{x}^0 \in \frak{C}_o \bf{x}^1 \in \frak{C}_s, \bf{x}^2 \in \frak{C}_u, \bf{x}^3 \in \frak{C}_o$ are in case (I.2) of Section \ref{stratalist}. As before, pick unit eigensolutions $\psi^1, \psi^2$ corresponding to $\lambda^1, \lambda^2$ at $x^1, x^2$, so that $\varphi^2 \to \psi^1, \psi^2$ as $s \to -\infty, +\infty$ respectively. Let us also assume that $u^1 \to x^0$ as $s \to -\infty$, and $u^3 \to x^3$ as $s \to +\infty$. For convenience, let us also fix the parametrizations on $u^1, u^3$ so that $u^1([0, \infty)\times[0,1]) \subset N$ and $u^3((-\infty, 0]\times[0,1]) \subset N$.

For a tuple $T = (T_1, T_2)$ where $T_1, T_2$ are sufficiently large, we construct a ``preglued'' extended solution $(u_T^-, 0, u_T^+)$ with discontinuity at $S = 0$, as follows.

Write $\pi_E u^1(s,t) = r^1(s) \varphi^1(s,t)$ for $s \ge 0$, where $r^1 > 0$ and $||\varphi^1||_H = 1$, and write $v^1(s,t) = \pi_M u^1(s,t)$: in other words, we have an element $(u^1, r^1, \varphi^1) \in \tilde{\cal{Z}}^{2,k}_{S=0}(x^0, \bf{x}_1)$. Likewise write $\pi_E u^3(s,t) = r^3(s) \varphi^3(s,t)$ for $s \le 0$, where $r^3 > 0$ and $||\varphi^3||_H = 1$, and write $v^3(s,t) = \pi_M u^3(s,t)$, so that $(u^3, r^3, \varphi^3) \in \tilde{\cal{Z}}^{2,k}_{S=0}(\bf{x}^2, x^3)$.

Choose open neighbourhoods $U^1$ of $x^1$ and $U^2$ of $x^2$ on which $E$ is trivialized, and choose $T_1, T_2$ large enough so that $v^1 \in U^1$ for $s \ge T_1$, $u^2 \in U^1$ for $s \le -T_1$, $u^2 \in U^2$ for $s \ge T_2$, and $v^3 \in U^2$ for $s \le -T_2$. Hence, we can write
\[\varphi^1(s,t) = \psi^1(t) + \eta^1_+(s,t) \text{ for } s \ge T_1, \qquad \varphi^2(s,t) = \psi^1(t) + \eta^2_-(s,t) \text{ for } s \le -T_1;\]
\[\varphi^2(s,t) = \psi^2(t) + \eta^2_+(s,t) \text{ for } s \ge T_2, \qquad \varphi^3(s,t) = \psi^2(t) + \eta^3_-(s,t) \text{ for } s \le -T_2\]
so that $\eta^1_+, \eta^2_-, \eta^2_+, \eta^3_- \to 0$ exponentially as $|s| \to \infty$.

Indeed, we construct a preglued map $(u^-_T, u^+_T)$ for $T = (T_1, T_2)$ as
\[u^-_T = \begin{cases}u^1(s+T_1, t) & \text{ for } s \le -2T_1; \\
\beta_N^{-1}(v_T, r^-_T \varphi_T(s,t)) & \text{ for } -2T_1 \le s \le 0\end{cases}\]
\[u^+_T = \begin{cases}\beta_N^{-1}(v_T, r^+_T \varphi_T(s,t)) & \text{ for } 0 \le s \le 2T_2; \\
u^3(s-3T_1, t) & \text{ for } s \le -2T_1\end{cases}\]
where $v_T : [-2T_1, 2T_2] \times [0,1] \to M$, $r_T^- : [-2T_1, 0] \to \bb{R}$, $r_T^+: [0, 2T_2] \to \bb{R}$ and $\varphi_T \in C^{\infty}(v^* E)$ are constructed as follows. Once again, take a smooth, non-decreasing function $\beta_+: \bb{R} \to [0,1]$ with $\beta_+(s) = 0$ for $s \le -1$ and $\beta_+(s) = 1$ for $s \ge 1$, and set $\beta_-(s) = 1-\beta_+(s)$. Then we have
\[v_T(s,t) = \begin{cases}v^1(s+3T_1, t) & \text{ for } -2T_1 \le s \le -T_1 - 1;\\
\exp_{x^1}(\beta_-(s+T_1) \exp^{-1}_{x^1}(v^1(s+3T_1, t)) & \text{ for } -T_1 - 1 \le s \le -T_1 + 1 \\
\qquad + \ \beta_+(s+T_1)\exp^{-1}_{x^1}(u^2(s,t))) & \\
u^2(s,t) & \text{ for } -T_1 + 1 \le s \le T_2 - 1;\\
\exp_{x^2}(\beta_-(s-T_2) \exp^{-1}_{x^2}(u^2(s,t)) & \text{ for } T_2 - 1 \le s \le T_2+1;\\
\qquad + \ \beta_+(s-T_2)\exp^{-1}_{x^2}(v^3(s-3T_2,t))) &\\
v^3(s-3T_2, t) & \text{ for } T_2 + 1 \le s \le 2T_2\end{cases}\]
\[r_T^-(s) = \begin{cases}r^1(s+3T_1) & \text{ for } -2T_1 \le s \le -T_1 - 1;\\
\beta_-(s+T_1) r^1(s+3T_1) & \text{ for } -T_1 -1 \le s \le -T_1 + 1\\
0 & \text{ for } -T_1 + 1 \le s \le 0\end{cases}\]
\[r_T^+(s) = \begin{cases}0 & \text{ for } 0 \le s \le T_2 - 1\\
\beta_+(s-T_2) r^3(s-3T_2) & \text{ for } T_2 -1 \le s \le T_2 + 1;\\
r^3(s-3T_2) & \text{ for } T_2 + 1 \le s \le 2T_2\end{cases}\]
\[\varphi_T(s,t) = \begin{cases}\varphi^1(s+3T_1, t) & \text{ for } -2T_1 \le s \le -T_1 - 1;\\
\frac{\psi^1(t) + \beta_-(s+T_1)\eta^1_+(s+3T_1,t) + \beta_+(s+T_1)\eta^2_-(s,t)}{||\psi^1(t) + \beta_-(s+T_1)\eta^1_+(s+3T_1,t) + \beta_+(s+T_1)\eta^2_-(s,t)||_H} & \text{ for } -T_1 - 1 \le s \le -T_1 + 1\\
\varphi^2(s,t) & \text{ for } -T_1 + 1 \le s \le T_2 - 1\\
\frac{\psi^2(t) + \beta_-(s-T_2)\eta^2_+(s,t) + \beta_+(s-T_2)\eta^3_-(s-3T_2, t)}{||\psi^2(t) + \beta_-(s-T_2)\eta^2_+(s,t) + \beta_+(s-T_2)\eta^3_-(s-3T_2, t)||_H} & \text{ for } T_2 - 1 \le s \le T_2 + 1;\\
\varphi^3(s-2T_2, t) & \text{ for } T_2 + 1 \le s \le 2T_2.
\end{cases}\]

Setting $S_1 = -2T_1$ and $S_2 = 2T_2$, with this construction $u_T = (u_T^-, u_T^+, v_T, r_T^-, r_T^+, \varphi_T)$ defines an element of the Banach manifold $E\tilde{\cal{Z}}^{2,k}_{S_1,0,S_2}$. Indeed, by construction $u^-_T(Z_{S_1,S})$ and $u^+_T(Z_{S,S_2})$ is exponentially close to the invariant set $M$, and thus well within $N$.

$\cal{F}(u_T) \in E\cal{V}^{2,k-1}_{S_1,0,S_2}$ is then by construction supported in the two strips $[-T_1-1, -T_1+1]\times[0,1]$ and $[T_2-1,T_2+1]\times[0,1]$. Moreover, the exponential decay of $v^1, u^2, v^3$, the exponential decay of $r^1, r^3$ and the exponential decay of $\eta^1_+, \eta^2_-, \eta^2_+, \eta^3_-$ ensures that for some small $\delta > 0$ and a constant $C$ independent of $T_1, T_2$ we have the estimate
\begin{equation}\label{pregluing_error_III}||\cal{F}(u_T)||_{EV^{2,k-1}} \le C e^{-\delta T_1} + C e^{-\delta T_2}.\end{equation}

We will apply the implicit function theorem to find zeroes of a non-linear map
\[\cal{G}_{u_T} : B \subset T_{u_T} E\tilde{\cal{Z}}^{2,k}_{S_1,0,S_2} \to E\cal{V}^{2,k-1}_{S_1,0,S_2}\]
where $B$ is some metric ball inside $T_{u_T} E\tilde{\cal{Z}}^{2,k}_{S_1,0,S_2}$, given by
\[\cal{G}_{u_T}(\xi_+, \xi_-, \xi_M, \rho_-, \rho_+, \vartheta) = \left(\Pi^{\exp{u_T}(\xi)}_{u_T}\right)^{-1}\cal{F}\left(\exp_{u_T^-}(\xi_-), \exp_{u_T^+}(\xi_+), \exp_{v_T}(\xi_M), \rho_-, \rho_+, \Pi^{\exp_{v_T}(\xi_M)}_{v_T} \vartheta\right).\]
As in the boundary-unobstructed gluing case, the radius of the ball $B$ can be chosen to be independent of $T_1, T_2$ by the a priori bounds on $r_T^-, r_T^+$.

Now, at $u_T \in E\tilde{\cal{Z}}^{2,k}_{S_1,0,S_2}$, consider the linear operator
\[D_{u_T} : T_{u_T}E\tilde{\cal{Z}}^{2,k}_{S_1,0,S_2} \to E\cal{V}^{2,k-1}_{S_1,0,S_2}\]
defined by
\[(\xi_-, \xi_+, \xi_M, \rho_-, \rho_+, \vartheta) \mapsto (\zeta_-, \zeta_+, \zeta_M, \mu_-, \mu_+, \sigma)\]
where
\[\zeta_- = D(\bar{\partial}_{\tilde{J}_t})_{u_T^-}(\xi_-), \qquad \zeta_+ = D(\bar{\partial}_{\tilde{J}_t})_{u_T^+}(\xi_+), \qquad D(\bar{\partial}_{J_t})_{v_T}(\xi_M)\]
are the usual linearized $\bar{\partial}$-operators, and
\[\mu_{\pm} = \frac{d}{ds}\rho_{\pm} + \Lambda(\varphi_T)\rho_{\pm} + r^{\pm}_T \langle \bar{\nabla}_I \varphi_T, \vartheta \rangle_H + r^{\pm}_T C^{\tau}_{(v_T, \varphi_T)} \xi_M\]
\[\sigma = B^{\tau}_{(v_T, \varphi_T)} \xi_M + \Pi_V \bar{\nabla}_I \vartheta - \Lambda(\varphi_T) \vartheta\]
are as in our earlier expression \eqref{Flinearization} for the linearized operator $D\cal{F}$ in the boundary unobstructed case (with $B^{\tau}, C^{\tau}$ as in \eqref{Btauoperator}, \eqref{Ctauoperator} respectively). Again, since over $N \cong D(E)$ the $\tilde{J}_t$ holomorphic curve equation reduces to the twisted equations for $(J_t, I_t, \nabla)$, we see that $(\zeta_-, \zeta_+, \zeta_M, \mu_-, \mu_+, \sigma)$ satisfy the matching conditions to define an element of $E\cal{V}^{2,k-1}_{S_1,0,S_2}$.

Observe that for large enough $T_1, T_2$, the operator $D_{u_T}$ is obtained by a gluing of three Fredholm operators on three strips together: on the first strip we have $D_{u^1}$, on the third strip we have $D_{u^3}$, and on the middle strip we have the diagonal operator
\[\begin{pmatrix}D_{(u^2, \varphi^2)} & 0 \\ 0 & D_{\Lambda}\end{pmatrix}\]
where $D_{(u^2, \varphi^2)}$ is the linearized operator from polarization-twisted Floer theory acting on the $\xi, \vartheta$ coordinates, and
\begin{align}\label{D_Lambda_operator} D_{\Lambda}: L^{2,k}((-\infty, 0], \bb{R}) \oplus L^{2,k}([0, \infty), \bb{R}) &\to L^{2,k-1}((-\infty, 0], \bb{R}) \oplus L^{2,k-1}([0, \infty), \bb{R}) \\
(\rho_-, \rho_+) &\mapsto \left(\frac{d\rho_-}{ds} + \Lambda(\varphi^2)\rho_-, \frac{d\rho_+}{ds} + \Lambda(\varphi^2)\rho_+\right).\end{align}
$D_{\Lambda}$ is moreover an isomorphism, since $\Lambda(\varphi^2)$ has limits $\lambda^1 > 0$ and $\lambda^2 < 0$ as $s \to -\infty, \infty$ respectively. We thus deduce that $D_{u_T}$ is Fredholm, and of index three.

Abbreviating $\xi = (\xi_-, \xi_+, \xi_M, \rho_-, \rho_+, \vartheta)$, we then have the following quadratic estimate on the error term $N(\xi) = \cal{G}_{u_T}(\xi) - D_{u_T}(\xi) \in E\cal{V}^{2,k-1}_{S_1,0,S_2}$:

\begin{lemma}For some small constant $C$ independent of $T_1, T_2$, if $\xi = (\xi_-, \xi_+, \xi_M, \rho_-, \rho_+, \vartheta)$ and $\xi' = (\xi_-', \xi_+', \xi_M', \rho_-', \rho_+', \vartheta')$ are in $B \subset T_{u_T} E \tilde{\cal{Z}}^{2,k}_{S_1,0,S_2}$ with $||\xi||_{EZ^{2,k}}, ||\xi'||_{EZ^{2,k}} \le C$, we have
\[||N(\xi) - N(\xi')||_{EV^{2,k-1}} \le C ||\xi - \xi'||_{EZ^{2,k}} ||\xi + \xi'||_{EZ^{2,k}}.\]\end{lemma}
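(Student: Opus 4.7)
The approach is the standard one for quadratic estimates in implicit function theorem gluing: since $\cal{G}_{u_T}(0) = \cal{F}(u_T)$ and $D\cal{G}_{u_T}|_0 = D_{u_T}$, the remainder $N(\xi) = \cal{G}_{u_T}(\xi) - \cal{F}(u_T) - D_{u_T}(\xi)$ vanishes to second order at $0$. Writing $\xi_\tau = \xi' + \tau(\xi - \xi')$ and applying the fundamental theorem of calculus,
\[
N(\xi) - N(\xi') = \int_0^1 \left(D N_{\xi_\tau} - D N_0\right)(\xi - \xi')\, d\tau,
\]
so it suffices to establish a uniform bound $\|D^2 \cal{G}_{u_T}|_{\xi_\tau}\| \le C$ on the ball $B$, independent of $T_1, T_2$. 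The desired estimate then follows from the standard argument: $\|DN_{\xi_\tau}\| \le C\|\xi_\tau\| \le \frac{C}{2}\|\xi + \xi'\|_{EZ^{2,k}}$.

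To bound $D^2 \cal{G}_{u_T}$ uniformly, I would decompose the target $E\cal{V}^{2,k-1}_{S_1,0,S_2}$ into its six components and analyze each. The $\bar\partial_{\tilde J_t}$ components acting on $\xi_\pm$ and $\bar\partial_{J_t}$ on $\xi_M$ produce second derivatives of the form $(\nabla^2 \tilde J)(\xi,\xi)\cdot\partial u_T + (\nabla \tilde J)(\xi,\nabla\xi)$ etc., which are pointwise polynomial in $(\xi, \nabla \xi)$ with coefficients smoothly bounded in terms of the uniformly-bounded geometry of $u_T$; the Sobolev embedding $L^{2,k}\hookrightarrow C^1$ (valid since $k > 2$) gives uniform $L^{2,k-1}$-bounds. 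For the $\tau$-model components $\mu_\pm$ and $\sigma$, the nonlinearities arising from $\Lambda(\varphi_T + \Pi^{\exp}\vartheta)$, the projection $\Pi_V$, the curvature term $B^\tau$, and the $C^\tau$ term of \eqref{Ctauoperator} all depend smoothly on their arguments and involve at worst quadratic dependencies in $(\xi_M, \vartheta)$; crucially the terms containing $r_T^\pm$ as a prefactor benefit from the a priori exponential bound $r_T^\pm \le C e^{-\min(\lambda^1, |\lambda^2|)\max(T_1,T_2)}$, so they pose no trouble. These are exactly the same estimates used in the boundary-unobstructed gluing and in the $\tau$-model analysis of Section \ref{sec:tau_model_I}, carried out here piecewise.

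The main technical obstacle is verifying that, despite the $S_1, S_2$-dependent rescaling in the definition of the norm $\|\cdot\|_{EZ^{2,k}}$ (via $R_{S_1 S_2}$ and the cutoff pairs $\beta_\pm(s-S_i)$), the constants in these pointwise estimates assemble into a global bound independent of $T_1, T_2$. This reduces to checking that the rescaling map $R_{S_1 S_2}$ and the characteristic cutoffs are isometries (up to a fixed multiplicative constant) between the piecewise Sobolev structure and the global one, and that the matching conditions on $(\xi_\pm, \xi_M, \rho_\pm, \vartheta)$ are preserved under the nonlinear algebra --- which they are, since over $N \cong D(E)$ the $\bar\partial_{\tilde J_t}$-equation identifies with the twisted equations via $\beta_N$ and so both sides of the matching equality are computed by the same nonlinearity. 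Once the matching is in place, the estimate follows by summing the three regional bounds (on $(-\infty, -T_1]$, $[-T_1, T_2]$, and $[T_2,\infty)$) weighted by the cutoffs, each of which is a standard quadratic estimate of the type already used in Lemma of the unobstructed case.
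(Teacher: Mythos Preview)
The paper does not actually prove this lemma: it is stated without proof, just as the analogous quadratic estimates in the polarization-twisted and boundary-unobstructed gluing sections are (the earlier one is even introduced as ``standard''). So there is nothing to compare against; your proposal supplies what the paper omits.

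Your outline is correct and is the standard route. One small technical point: the bound $\|DN_{\xi_\tau}\| \le \tfrac{C}{2}\|\xi + \xi'\|_{EZ^{2,k}}$ does not follow directly from $\|DN_{\xi_\tau}\| \le C\|\xi_\tau\|$, since $\|\xi_\tau\| \le (1-\tau)\|\xi'\| + \tau\|\xi\|$ only gives $\|\xi\| + \|\xi'\|$ after integrating in $\tau$, not $\|\xi + \xi'\|$. The form with $\|\xi + \xi'\|$ (which is what the paper states) requires either a slightly different interpolation or simply absorbing the discrepancy into the constant on the ball $B$; either way this is cosmetic and does not affect the application to Newton--Picard iteration.
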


As before, we also have

\begin{lemma}The Fredholm operator $D_{u_T}$ admits a smooth family of right inverses $Q_{u_T}$ with a uniform bound on the operator norm
\[||Q_{u_T}|| \le C\]
independent of $T = (T_1, T_2)$.\end{lemma}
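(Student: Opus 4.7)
The plan is to adapt the approximate-right-inverse construction of Lemma \ref{bounded_right_inverse} and Theorem \ref{gluing_theorem_II} to the three-component decomposition. By the assumed regularity of $u^1$, $(u^2, \varphi^2)$, and $u^3$, each of the component operators $D_{u^1}$, $D_{(u^2, \varphi^2)}$, $D_{u^3}$ is Fredholm and surjective, and the auxiliary radial operator $D_\Lambda$ of \eqref{D_Lambda_operator} is an isomorphism because $\Lambda(\varphi^2)$ has limits of opposite signs ($\lambda^1 > 0$ at $-\infty$ and $\lambda^2 < 0$ at $+\infty$) and can be inverted explicitly by variation of parameters, with norm depending only on $\varphi^2$. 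As in the preceding gluing arguments, I would pin down canonical right inverses $Q_{u^1}, Q_{(u^2,\varphi^2)}, Q_{u^3}$ by choosing regular points and local codimension-one hypersurfaces $H^1, H^3 \subset \tilde{M}$ and $H^2 \subset M$.

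Next I would construct a breaking map $\beta_T : E\cal{V}^{2,k-1}_{S_1,0,S_2} \to L^{2,k-1}((u^1)^*T\tilde{M}) \oplus \bigl(L^{2,k-1}((u^2)^*TM) \oplus V^{2,k-1}_{(u^2,\varphi^2)} \oplus L^{2,k-1}((-\infty,0], \bb{R}) \oplus L^{2,k-1}([0,\infty), \bb{R})\bigr) \oplus L^{2,k-1}((u^3)^*T\tilde{M})$ using cutoffs supported near the junctions $s = -T_1$ and $s = T_2$, with the two extra radial slots on the middle strip carrying the $\mu_\pm$-data of the extended output. A dual linear pregluing map $\gamma_T$ into $T_{u_T} E\tilde{\cal{Z}}^{2,k}_{S_1,0,S_2}$ is built in the same way; the matching conditions are preserved because $N \cong D(E)$ reduces the problem on the overlap to a trivial symplectic fibration, and the $r$-component of the output is then prescribed by the $\pi_E$-projection of the underlying vector field. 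The approximate right inverse is
\[
Q_{u_T}^{approx} \;=\; \gamma_T \circ \bigl(Q_{u^1} \oplus (Q_{(u^2,\varphi^2)} \oplus D_\Lambda^{-1}) \oplus Q_{u^3}\bigr) \circ \beta_T,
\]
and the component maps $\beta_T, \gamma_T$ are uniformly bounded in $T$ by the same cutoff-based estimates used in the proof of Lemma \ref{bounded_right_inverse}.

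The main estimate is that $D_{u_T} \circ Q_{u_T}^{approx} - \id$ is supported in the two overlap regions of bounded width centered at $s = -T_1$ and $s = T_2$, and by the exponential convergence $\varphi^1, \varphi^2 \to \psi^1$ and $\varphi^2, \varphi^3 \to \psi^2$ at the respective infinities, together with the corresponding decay of $r^1, r^3$, its operator norm is bounded by $C(e^{-\delta T_1} + e^{-\delta T_2})$ for some $\delta > 0$. For $T_1, T_2$ sufficiently large this is less than $1$, so $D_{u_T} \circ Q_{u_T}^{approx}$ is invertible by Neumann series, and $Q_{u_T} := Q_{u_T}^{approx} \circ (D_{u_T} \circ Q_{u_T}^{approx})^{-1}$ is a genuine right inverse, uniformly bounded in $T$ and smooth in $(T_1,T_2)$ by smoothness of the pregluing and of the chosen hypersurface-cut-out inverses.

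The main obstacle, as compared with the boundary-unobstructed case, is twofold: the bookkeeping of matching conditions in the extended Banach space, where both input and output bundles tie the $u$-components to the $(v,r,\varphi)$-components over the two overlap regions and the auxiliary $\rho_\pm$-slots must be handled consistently with the $\mu_\pm$-slots; and controlling the $D_\Lambda^{-1}$ contribution. The key conceptual point is that the bound on $D_\Lambda^{-1}$ depends only on the fixed trajectory $(u^2,\varphi^2)$ and is \emph{independent} of $(T_1, T_2)$, which is precisely what permits the boundary-obstructed gluing to produce a single gluing direction rather than two — ultimately the structural reason behind the codimension-one smooth $\delta$-structure of Definition \ref{delta_structure}.
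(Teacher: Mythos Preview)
Your proposal is correct and follows essentially the same route as the paper: breaking map plus linear pregluing map, the diagonal of $Q_{u^1}, Q_{(u^2,\varphi^2)}, Q_\Lambda, Q_{u^3}$ (the paper writes $Q_\Lambda$ for your $D_\Lambda^{-1}$), exponential decay of $D_{u_T}\circ Q^{approx}_{u_T} - \id$, and the Neumann series correction. One small imprecision: the outer summands in the target of $\beta_T$ should be the $\tau$-model spaces $V^{2,k-1}_{S=0}(u^1)$ and $V^{2,k-1}_{S=0}(u^3)$ from Section~\ref{tau_model_II}, not the ordinary $L^{2,k-1}((u^i)^*T\tilde M)$, since $u^1$ and $u^3$ have boundary-type asymptotics at $\bf{x}^1, \bf{x}^2$; this is exactly the matching-condition bookkeeping you flagged.
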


The construction of $Q_{u_T}$ is the same as in previous sections, with the added discontinuity of the radial coordinate $r$. Indeed, we first construct an approximate right inverse, utilizing a natural ``breaking map''
\begin{align*}\beta_T : EV^{2,k-1}_{S_1,0,S_2} \quad \to & \quad V^{2,k-1}_{S=0}(u^1) \oplus L^{2,k-1}((u^2)^*TM) \oplus V^{2,k-1}_{u^2, \varphi^2} \\
&\oplus L^{2,k-1}((-\infty, 0], \bb{R}) \oplus L^{2,k-1}([0,\infty), \bb{R}) \oplus V^{2,k-1}_{S=0}(u^3)\end{align*}
and a ``linear pregluing map''
\begin{align*}\gamma_T: &T_{u^1} \tilde{\cal{Z}}^{2,k}_{S=0} \oplus W^{2,k}((u^2)^* TM) \oplus T_{\varphi^2}\cal{S}^{2,k}_{\psi^1 \psi^2}((u^2)^*E) \quad \to \quad T_{u_T}E\tilde{\cal{Z}}^{2,k}_{S_1,0,S_2} \\
&\oplus L^{2,k}((-\infty, 0], \bb{R}) \oplus L^{2,k}([0,\infty), \bb{R}) \oplus T_{u^3} \tilde{\cal{Z}}^{2,k}_{S=0}.\end{align*}
We also chose fixed right inverses $Q_{u^1}, Q_{(u^2, \varphi^2)}, Q_{u^3}$ for the linearized $\cal{F}$-operators at the regular solutions $u^1, (u^2, \varphi^2), u^3$, and write $Q_{\Lambda}$ for the inverse of the operator $D_{\Lambda}$ of \eqref{D_Lambda_operator}. The approximate inverse $Q^{approx}_{u_T}$ is then the composition
\[Q^{approx}_{u_T} = \gamma_T \circ \text{diag}(Q_{u^1}, Q_{(u^2, \varphi^2)}, Q_{\Lambda}, Q_{u^3}) \circ \beta_T.\]
The error $D_{u_T} Q^{approx}_{u_T} - \id$ then exponentially decays in $T_1, T_2$, allowing us to define the actual right inverse operator $Q_{u_T} = Q^{approx}(D_{u_T}Q^{approx})^{-1}$ for large enough $T_1, T_2$.
This shows $D_{u_T}$ is surjective; the uniform bound on the operator norm of $Q_{u_T}$ directly follows from the construction.

\begin{remark}In this boundary-obstructed case where $\lambda^1 > 0$ and $\lambda^2 < 0$, the failure of the operator
\[L^{2,k}(\bb{R}, \bb{R}) \to L^{2,k-1}(\bb{R}, \bb{R}), \qquad \rho \mapsto \frac{d}{ds}\rho + \Lambda(\varphi^2)\rho\]
to be surjective is the main reason why we use a gluing map that involves extended solutions with discontinuities; for the extended moduli space one instead works with the operator $D_{\Lambda}$ of \eqref{D_Lambda_operator}, which is an isomorphism. Indeed, even if we were to work with a single gluing parameter $T$ and chose $T_1(T), T_2(T)$ so that we can produce a preglued strip $u_T$ that had no discontinuity, this precise problem would prevent the above strategy from producing a bounded right inverse of $D_{u_T}$.\end{remark}

Once again, we will specify the right inverse $Q_{u_T}$ we use by choosing local hypersurfaces $H^1 \subset \tilde{M}, H^2 \subset M, H^3 \subset \tilde{M}$ around regular points $(s_1, t_1), (s_2, t_2)$ and $(s_3, t_3)$ of $u^1, (u^2, \varphi^2), u^3$ respectively, and which are appropriately transverse to $u^i$ and closed under the exponential map at $u^i(s_i, t_i)$.
Then in the above construction, choose $Q_{u^i}$ to be the unique right inverse of $D_{u^i}$ with image those vector fields whose value of $(s_i, t_i)$ lies in $TH^i$. The resulting right inverse $Q_{u_T}$ along the pregluing $u_T$ then has image precisely those $\xi_T = (\xi_-, \xi_+, \xi_M, \rho_-, \rho_+, \vartheta)$ such that $\xi_-(s_1- 3T_1, t) \in TH^1, \xi_M(s_2, t_2) \in TH^2$ and $\xi_+(s_3 + 3T_2, t) \in TH^3$.

By applying the Floer's quantitative implicit function theorem, we then construct a gluing map onto the extended moduli space:

\begin{theorem}\label{gluing_theorem_III}For sufficiently large $T_0$ and small enough $\eps_0$, for each $T_1, T_2 \ge \eps_0$, there is a unique extended solution
\[(\hat{u}_T^-, 0, \hat{u}_T^+) = \left(\exp_{u_T^-}\xi_T^-, \exp_{u_T^+}\xi_T^+, \exp_{v_T}\xi_{M, T}, r_T^- + \rho_T^-, r_T^+ + \rho_T^+, \Pi_{v_T}^{\exp_{v_T}\xi_{M, T}} \vartheta_T\right)\]
to $\cal{F} = 0$ such that $||(\xi_-, \xi_+, \xi_M, \rho_-, \rho_+, \vartheta)|| < \eps_0$ and with
\[\xi_-(s_1- 3T_1, t) \in TH^1,\quad \xi_M(s_2, t_2) \in TH^2,\quad \xi_+(s_3 + 3T_2, t) \in TH^3.\]
Moreover it satisfies the bound
\begin{equation}\label{gluing_estimate_III}||\xi_T||_{EZ^{2,k}_{S_1, 0, S_2}} \le C ||\cal{F}(u_T^-, 0, u_T^+)||_{EV^{2,k-1}_{S_1,0,S_2}}\end{equation}
for some constant $C$. In particular, this yields a smooth embedding
\[G: [T_0, \infty) \times [T_0, \infty) \to E\cal{M}(x^0, x^3)\]
with $G(T_1, T_2)$ converging in the Gromov topology to the broken trajectory $(u^1, (u^2, \varphi^2), u^3)$ as $T_1, T_2 \to \infty$. Moreover, after a change of coordinates $x_1 = \frac{1}{T_1 - T_0}, x_2 = \frac{1}{T_2 - T_0}$, the function
\[\delta : (\bb{R}_{>0})^2 \to \bb{R}\]
given by $\delta(x_1, x_2) = \delta(\hat{u}_T^-, 0, \hat{u}_T^+)$ extends continuously over $(\bb{R}_{\ge 0})^2$ so that $\delta > 0$ along $\{0\} \times \bb{R}_{> 0}$, $\delta < 0$ along $\bb{R}_{>0} \times \{0\}$, and $\delta(0,0) = 0$.

Finally, $G|_{\delta^{-1}(0)}$ is a surjection onto some open neighbourhood of the broken trajectory $(u^1, (u^2, \varphi^2), u^3) \in \bar{\cal{M}}(\bf{x}^0, \bf{x}^3)$, thus endowing this moduli space with a smooth codimension one $\delta$-structure.\end{theorem}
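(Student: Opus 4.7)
\textbf{Proof proposal for Theorem \ref{gluing_theorem_III}.}

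The existence, uniqueness, and estimate \eqref{gluing_estimate_III} follow, as in the proofs of Theorems \ref{gluing_theorem_I} and \ref{gluing_theorem_II}, from Floer's quantitative implicit function theorem applied to the non-linear map $\cal{G}_{u_T}$, given the three ingredients already established: the quadratic error estimate, the uniform pregluing estimate \eqref{pregluing_error_III}, and the uniformly bounded right inverse $Q_{u_T}$. For sufficiently large $T_0$, this produces the unique perturbation $\xi_T = (\xi_T^-, \xi_T^+, \xi_{M,T}, \rho_T^-, \rho_T^+, \vartheta_T)$ whose image lies in the three prescribed hypersurfaces $TH^i$. Smoothness of the resulting embedding $G$ in $(T_1, T_2)$ is proved exactly as in the boundary-unobstructed case: for $(T_1, T_2)$ varying in a small window the Banach manifolds $E\tilde{\cal{Z}}^{2,k}_{S_1, 0, S_2}$ fit together into a smooth fibration, the preglued data $u_T$ vary smoothly as a section, and the chosen right inverses $Q_{u_T}$ depend smoothly on $T$.

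For injectivity and surjectivity of $G|_{\delta^{-1}(0)}$ onto a neighbourhood of $(u^1, (u^2, \varphi^2), u^3)$ in $\bar{\cal{M}}(\bf{x}^0, \bf{x}^3)$, the argument closely parallels that in Theorem \ref{gluing_theorem_II}. A genuine (not extended) strip $u$ near the broken trajectory has unique transverse intersections $(s_i', t_i)$ with each of $H^1, H^2, H^3$ for large enough $\alpha$, and the parameters $T_1, T_2$ are then recovered by the translation-invariant combinations $s_2' - s_1'$ and $s_3' - s_2'$. The required $L^{2,k}$ control needed to invoke the uniqueness statement of the implicit function theorem is obtained from the standard long-strip estimate for $\xi_M$ together with Proposition \ref{longstrip_phicontrol} for the $\tau$-model components, applied to both long middle pieces.

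The main new difficulty is the analysis of the discontinuity $\delta$. Since the preglued data satisfy $r_T^-(0) = r_T^+(0) = 0$, we have $\delta(T_1, T_2) = \rho_T^+(0) - \rho_T^-(0)$. A leading-order description of $\rho_T^{\pm}$ comes from the linearized equation $\tfrac{d}{ds} \rho + \Lambda(\varphi_T)\rho = \mu_\pm$ whose source is supported in the cut-off regions near $s = \mp T_1, \pm T_2$, with dominant term $\beta'_\mp\cdot r^{1,3}$. Propagating this source through the ODE, whose coefficient $\Lambda(\varphi^2)$ transitions from $\lambda^1 > 0$ at $-\infty$ to $\lambda^2 < 0$ at $+\infty$, one obtains an asymptotic formula of the form $\rho_T^-(0) = A_1 e^{-c_1 T_1} (1 + o(1))$ and $\rho_T^+(0) = A_2 e^{-c_2 T_2}(1 + o(1))$ for explicit positive constants $A_i, c_i$ (built out of $C_1, C_3$ and the integral of $\Lambda(\varphi^2)$), with higher-order corrections controlled by the quadratic error $N$ and the estimate \eqref{gluing_estimate_III}. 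This gives (i) positivity $\rho_T^\pm(0) > 0$ for $T_1, T_2$ sufficiently large, (ii) $\rho_T^-(0) \to 0$ as $T_1 \to \infty$ uniformly in $T_2$, and symmetrically $\rho_T^+(0) \to 0$ as $T_2 \to \infty$, and (iii) continuous extension of $(x_1, x_2) \mapsto \delta$ to $[0, \eps)^2$ via the substitution $x_i = 1/(T_i - T_0)$. Combining (i)--(iii) yields $\delta(0, x_2) = \rho_\infty^+(0) > 0$ for $x_2 > 0$, $\delta(x_1, 0) = -\rho_\infty^-(0) < 0$ for $x_1 > 0$, and $\delta(0,0) = 0$, which is the claimed sign behavior; transversality of $\delta$ to zero on $(\bb{R}_{>0})^2$ follows from the smoothness of $G$ together with the monotonicity of the leading-order asymptotic in each variable.

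The hardest step is unambiguously the leading-order analysis of $\rho_T^\pm(0)$: it requires more than the uniform operator bound provided by $Q_{u_T}$, since the sign of $\delta$ at infinity must be extracted from a subleading expansion of the perturbation. The key technical tool is the explicit decomposition of $Q_{u_T}$ coming from the breaking and linear pregluing maps $\beta_T, \gamma_T$ and the isomorphism $Q_\Lambda$ for the operator $D_\Lambda$ of \eqref{D_Lambda_operator}. Because $D_\Lambda$ is invertible with a Green's function that propagates positive right-hand sides to positive solutions (being first-order with real coefficient $\Lambda(\varphi^2)$), applying $Q_\Lambda$ to the explicit source $\beta'_\pm r^{1,3}$ (which has definite sign from $r^{1,3} > 0$) produces $\rho$-contributions of definite sign at $s = 0$, and this sign survives the passage to the true solution through the contraction-mapping iteration since the correction terms are of strictly higher order in $e^{-\delta T_i}$.
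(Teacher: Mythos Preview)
Your first two paragraphs match the paper's argument essentially verbatim: existence and the estimate \eqref{gluing_estimate_III} come from the quantitative implicit function theorem, smoothness follows as in Theorem \ref{gluing_theorem_II}, and injectivity/surjectivity of $G|_{\delta^{-1}(0)}$ use the local hypersurfaces together with Proposition \ref{longstrip_phicontrol}.

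Your treatment of $\delta$, however, takes a genuinely different route from the paper. You attempt a direct asymptotic expansion of $\rho_T^{\pm}(0)$ by tracking the source terms through the Green's function of $D_{\Lambda}$, extracting leading orders of the form $A_i e^{-c_i T_i}$, and reading off both the sign behaviour at the boundary and the transversality from monotonicity. The paper instead argues by \emph{compactness in the extended moduli space}: holding $T_2$ fixed and sending $T_1 \to \infty$, the glued extended solution Gromov-converges to a broken extended trajectory $(v_{T_2}, (w_{T_2}^-, 0, w_{T_2}^+))$, where the second piece lies in an analogue $E\cal{M}(\bf{x}^1, \bf{x}^3)$. The point is that for $\bf{x}^1 \in \frak{C}_s$ there are \emph{no nonzero} $L^{2,k}$ solutions $r_-$ of $\frac{d}{ds}r_- + \Lambda r_- = 0$ on $(-\infty, 0]$, so automatically $\delta = r_+(0) > 0$ in the limit; continuity in $T_2$ then follows by recognising $(w_{T_2}^-, 0, w_{T_2}^+)$ as the gluing of just $(u^2, \varphi^2)$ and $u^3$, which by uniqueness of gluing varies continuously. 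For transversality the paper gives a clean dimension count: at a genuine solution $u$, one has $\ker(d\delta|_{T_u E\cal{M}}) = T_u \cal{M}$, which is codimension one, so $d\delta \neq 0$; combined with $G$ being a local diffeomorphism onto $E\cal{M}$, this gives transversality of $\delta \circ G$.

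Your approach can in principle be made rigorous, but as written it has thin spots. The positivity of $\rho_T^{\pm}(0)$ requires not only that the Green's function of $D_\Lambda$ preserves sign, but that the dominant contribution to $\rho_T^{\pm}$ really does come through $Q_\Lambda$ rather than through the $\rho$-components of $Q_{u^1}, Q_{u^3}$ under the linear pregluing $\gamma_T$; you have not isolated this. More seriously, your step (iii) does not follow from (i) and (ii): continuous extension to $\{0\} \times \bb{R}_{>0}$ requires that the limit $\lim_{T_1 \to \infty} \rho_T^+(0)$ exists and depends continuously on $T_2$, which your leading-order formula with an unspecified $o(1)$ does not establish. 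The paper's compactness argument handles exactly this by identifying the limit with a well-defined two-piece gluing. Finally, your transversality argument via monotonicity of the leading order only controls $\nabla\delta$ when the higher-order corrections are uniformly subdominant, which again you have not verified; the paper's dimension argument avoids this entirely.
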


The smoothness of $G$ follows from a similar argument as in Theorem \ref{gluing_theorem_II}; the bound \eqref{gluing_estimate_III} follows since $Q_{u_T}$ is uniformly bounded, and this implies that $G(T_1, T_2)$ converges to $(u^1, (u^2, \varphi^2), u^3)$ in view of the estimate \eqref{pregluing_error_III}.

Injectivity of the gluing map follows by a similar argument exploiting our choice of local hypersurfaces, and surjectivity onto the space of actual solutions proceeds as before, with the crucial estimate coming from Proposition \ref{longstrip_phicontrol} (we do not have to prove surjectivity onto a neighbourhood of the broken trajectory in the extended moduli space, which in any case is more delicate due to the additional equivalence relation from extended solutions coming from unique continuation).

To show that $\delta$ extends continuously over $(\bb{R}_{\ge 0})^2$ (or equivalently, over $[T_0, \infty]^2$), let us hold $T_2$ constant and consider the limit
\[\lim_{T_1 \to \infty} G(T_1, T_2).\]
This limit is well-defined in the Gromov-Floer compactification of the \emph{extended} moduli space: namely, it has limit a broken trajectory $(v_{T_2}, (w_{T_2}^-, 0, w_{T_2}^+))$ where $v_{T_2} \in \cal{M}(\bf{x}^0, \bf{x}^1)$ and $(w_{T_2}^-, 0, w_{T_2}^+)$ is an extended solution defining a trajectory in $E\cal{M}(\bf{x}^1, \bf{x}^3)$. Strictly speaking earlier we only defined $E\cal{M}$ for interior critical points, but the definition here for $\bf{x}^1 \in \frak{C}_u$ is analogous: the new feature is that there are no nonzero $L^{2,k}$ solutions $r_- : (-\infty, 0] \to \bb{R}$ of
\[\frac{d}{ds}r_- + \Lambda(\varphi)r_- = 0;\]
in particular we automatically must have that $\delta(w_{T_2}^-, 0, w_{T_2}^+) = r_+(0) > 0$. It then suffices to show that $\delta(w_{T_2}^-, 0, w_{T_2}^+)$ varies continuously with $T_2$. However, by our construction, $(w_{T_2}^-, 0, w_{T_2}^+)$ must satisfy $\pi_M w(s_2, t_2) \in H^2$ and $w_{T_2}^+(s_3 + 3T_2, t_3) \in H^3$, and moreover by the gluing estimates \eqref{gluing_estimate_III}, \eqref{pregluing_error_III}, it is close to a pregluing of just the two trajectories $(u^2, \varphi^2)$ and $u^3$ in the $EZ^{2,k}$ norm. In particular, by the uniqueness of gluing, $(w_{T_2}^-, 0, w_{T_2}^+)$ must itself be the extended solution created by gluing $(u^2, \varphi^2)$ and $u^3$ with gluing parameter $T_2$ and a choice of right inverse prescribed by $H^2, H^3$: this operation is clearly continuous in $T_2$. Hence, $\delta$ extends continuously to $(\bb{R}_{\ge 0})^2$.

The final thing to note is that $\delta$ is indeed transverse to $0 \in \bb{R}$ over $(\bb{R}_{>0})^2$. Indeed, this is essentially immediate for dimension reasons: at a genuine solution $u$ considered inside the extended moduli space $E\cal{M}$, for a tangent vector $\xi = (\xi_-, \xi_+, \xi_M, \rho_-, \rho_+, \vartheta)$, it is not hard to see that $(d\delta)_u(\xi) = \rho_+(0) - \rho_-(0)$. In particular $\ker(d\delta) \subset T_u E\cal{M}$ exactly coincides with the tangent space $T_u \cal{M}$ of the ordinary moduli space, which is one dimension less than the dimension of $E\cal{M}$. Moreover, it is not hard to adjust the proof of the injectivity of the gluing map to prove $G$ is a local diffeomorphism onto $E \cal{M}$, from which we conclude that $(\delta \circ G)^{-1}(0)$ is transversely cut out away from $(0,0)$. This completes the proof of Theorem \ref{gluing_theorem_III}.

\section{K\"{u}nneth theorems}

\subsection{In polarization-twisted Floer theory.}
We now turn our attention towards proving a K\"{u}nneth theorem for polarized Floer theory. Suppose $M, M'$ are two exact symplectic manifolds, each with exact Lagrangians $L_0, L_1; L_0', L_1'$, equipped with polarization data $\frak{p} = (E, F_0, F_1)$ and $\frak{p}' = E', F_0', F_1'$. This gives rise to a polarization $\frak{p}\oplus \frak{p}' = (E \oplus E', F_0 \oplus F_0', F_1 \oplus F_1')$ on $M \times M'$ with the Lagrangians $L_0 \times L_0', L_1 \times L_1'$.

\begin{theorem}\label{kunnethI}There is an isomorphism in the derived category of $\bb{F}_2[t]$-modules
\begin{equation}K: CF(L_0, L_1; \frak{p}) \tensor^{\bb{L}}_{\bb{F}_2[t]} CF(L_0', L_1'; \frak{p}') \xrightarrow{\sim} CF(L_0 \times L_0', L_1 \times L_1'; \frak{p} \oplus \frak{p}').\end{equation}\end{theorem}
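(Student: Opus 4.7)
The plan is to work at the level of free $\bb{F}_2[\bb{Z}/2]$-complexes $\widetilde{CF}_{tw}$ and use the Borel construction machinery of Section 2 to recover the derived tensor product over $\bb{F}_2[t]$. First I would equip $M \times M'$ with a product compatible almost complex structure $\tilde J_t = J_t \times J'_t$, and $E \oplus E'$ with the product complex structure $I_t \oplus I'_t$ and product symplectic connection $\nabla \oplus \nabla'$. The fundamental geometric observation is that with such product data the twisted Floer equations on $(M \times M', E \oplus E')$ decouple: every solution factors as $u = (u_1, u_2)$ and $\phi = (\phi_1, \phi_2)$, with each $(u_i, \phi_i)$ independently a solution in its factor, and moduli spaces for the product decompose accordingly.

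The generators of the two $\bb{F}_2[\bb{Z}/2]$-complexes do not match naively: on the product side they are indexed by unit eigenvectors of $(I \oplus I')\frac{d}{dt}$, which take the form $(\psi, 0)$ or $(0, \psi')$, while $\widetilde{CF}_{tw}(L_0, L_1; \frak{p}) \tensor_{\bb{F}_2} \widetilde{CF}_{tw}(L_0', L_1'; \frak{p}')$ has generators indexed by pairs. I would therefore build a $\bb{Z}/2$-equivariant chain map $\tilde K$ from the latter into $\widetilde{CF}_{tw}(L_0 \times L_0', L_1 \times L_1'; \frak{p} \oplus \frak{p}')$, whose matrix coefficients come from the decoupled moduli spaces together with the explicit description of twisted trajectories in Example \ref{constant_u_solutions} applied to the interleaving of spectra. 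The main technical obstacle will be transversality: product data generically fails to be regular in the sense of Definition \ref{regularCS}, since the decoupling forces solutions into a non-generic locus of the full moduli space of the combined equations. I would address this by perturbing $\tilde J_t$ away from product form only in the interior of strips, keeping Assumption \ref{localgeomI} at the intersection points, and arguing via a continuation-style argument that the induced map on Borel complexes is unchanged up to equivariant chain homotopy.

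To verify $\tilde K$ induces a quasi-isomorphism of Borel complexes, I would use the action filtration on both sides. Its $E_2$-page is the ordinary Floer cohomology $HF(L_0, L_1; \xi)$ with coefficients in the local system $\xi$ of fibre $\bb{F}_2[t,t^{-1}]$ built from the spectral flow of $\frak{p}$; on this page the Künneth statement reduces to the classical Künneth isomorphism for ordinary Lagrangian Floer cohomology with local coefficients, which follows from the same decoupling of strips under product almost complex structures. Since $\bb{F}_2[t,t^{-1}]$ is flat over $\bb{F}_2[t]$, the derived tensor product coincides with the ordinary tensor product on the $E_2$-page, and convergence of both action spectral sequences promotes this to a quasi-isomorphism of the full Borel complexes. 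Finally, to package the result into the derived category of $\bb{F}_2[t]$-modules, I would apply Proposition \ref{monoidal_tensor_product} to identify the Borel complex of $\widetilde{CF}_{tw}(L_0, L_1; \frak{p}) \tensor \widetilde{CF}_{tw}(L_0', L_1'; \frak{p}')$ with the derived tensor product $\widetilde{CF}_{tw}(L_0, L_1; \frak{p})[t] \tensor^{\bb{L}}_{\bb{F}_2[t]} \widetilde{CF}_{tw}(L_0', L_1'; \frak{p}')[t]$, and use Proposition \ref{Ainftymodulemap} together with the finite-type structure of $\widetilde{CF}_{tw}$ to obtain $A_\infty$-$\bb{F}_2[t]$-module equivalences $CF_{tw} \simeq \widetilde{CF}_{tw}[t]$ for each factor and for the product; composing these equivalences with $\tilde K$ then yields the map $K$ claimed.
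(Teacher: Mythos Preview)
Your algebraic packaging is exactly what the paper does: lift to free $\bb{F}_2[\bb{Z}/2]$-complexes, invoke Proposition~\ref{monoidal_tensor_product} to identify the Borel complex of the tensor product with the derived tensor of the Borel complexes, and use the finite-type $A_\infty$-equivalences between $A_{\bb{F}_2}$ and $(A[t],d_{borel})$ to descend to $\bb{F}_2[t]$-modules. Your endgame via the action filtration is also right in spirit; the paper likewise filters by symplectic action on $M\times M'$ and reduces to the case $M=M'=\mathrm{pt}$, where the claim becomes an explicit one-line index calculation.

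The gap is in the geometric construction of the chain map. The obstruction to regularity of the diagonal complex structure $I_t\oplus I'_t$ is not something you can cure by perturbing the base almost complex structure $\tilde J_t$: it is an index-theoretic failure in the \emph{asymptotic conditions} on the twisted equation. If a twisted solution over $u\times u'$ is to have asymptotics of type $\lambda_+$ at $+\infty$ with $\lambda_+$ an eigenvalue of $I\frac{d}{dt}$, then the $E'$-component $\phi'$ is forced to decay with some $\lambda'_+>\lambda_+$; this constrains $(u',\phi')$ to satisfy an equation whose linearization can have negative index, so transversality fails no matter how you perturb the base. Your proposal to define $\tilde K$ via ``decoupled moduli spaces together with Example~\ref{constant_u_solutions} applied to the interleaving of spectra'' does not specify a chain map, and any attempt to make it precise runs into exactly this problem.

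The paper's fix is to abandon the diagonal bundle data altogether. It chooses a genuinely regular $I''_t$ on $E\oplus E'$ (agreeing with $I\oplus I'$ only near the intersection points), and then a $Z$-dependent $\tilde I_{s,t}$ interpolating between $I''_t$ for $s\ll 0$ and $I_t\oplus I'_t$ for $s\gg 0$. The map $\kappa$ is then defined by counting solutions of this continuation-type twisted equation, with asymptotics at $+\infty$ being a \emph{pair} $(\lambda_+,\lambda'_+)$ of eigenvalues (one from each factor, with two separate exponential weights on $E$ and $E'$) and at $-\infty$ a single eigenvalue $\lambda''_-$ of the regular $I''\frac{d}{dt}$. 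This moduli space $\cal{M}^\kappa(\bf{x}''_-,\bf{x}_+,\bf{x}'_+)$ is what actually produces a chain map $\tilde\kappa$ between the free $\bb{F}_2[\bb{Z}/2]$-complexes; the compactification by broken triangles gives the chain-map relation, and the action-filtration argument then finishes as you outlined.
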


The need to state this equivalence on the level of the derived category comes not just from chain-level tensor product, but also because the map comes by exploiting the $A_{\infty}$-equivalence of Section 2.1 between $A_{\bb{F}_2}$ and $(A[t], d_{borel})$ for $A$ a free $\bb{F}_2[\bb{Z}/2]$-complexes of finite type. Indeed, in the course of the proof, we will define a $\bb{F}_2$-bilinear map
\begin{equation}\label{kunnethmap}\kappa: CF(L_0, L_1, \frak{p}) \tensor CF(L_0', L_1', \frak{p}') \to CF(L_0 \times L_0', L_1 \times L_1'; \frak{p} \oplus \frak{p}')\end{equation}
which is only $\bb{F}_2[t]$-bilinear up to homotopy.

As a special case of Theorem \ref{kunnethI}, suppose that $M', L_0', L_1' = \{pt\}$, and that $E' = \bb{C}, F_0' = \bb{R}, F_1' = i \bb{R}$:

\begin{corollary}\label{stabilization_invariance}Polarized Floer cohomology is invariant under stabilization: writing $\frak{p}\oplus \bb{C} = (E \oplus \bb{C}, F_0 \oplus \bb{R}, F_1 \oplus i \bb{R})$, we have
\begin{equation}HF(L_0, L_1; \frak{p}) \cong HF(L_0, L_1; \frak{p}\oplus \bb{C})\end{equation}\end{corollary}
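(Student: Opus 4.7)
The plan is to deduce this corollary as a direct specialization of Theorem \ref{kunnethI}, with one factor being the trivial symplectic manifold equipped with the standard one-dimensional polarization data.

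First, I would apply Theorem \ref{kunnethI} with $(M', L_0', L_1') = (\{\pt\}, \{\pt\}, \{\pt\})$, a zero-dimensional symplectic manifold, and polarization data $\frak{p}' = (\bb{C}, \bb{R}, i\bb{R})$. Since $L_0 \times \{\pt\} = L_0$, $L_1 \times \{\pt\} = L_1$, and $\frak{p} \oplus \frak{p}' = \frak{p} \oplus \bb{C}$ by definition, this yields an isomorphism in the derived category of $\bb{F}_2[t]$-modules
\[
CF_{tw}(L_0, L_1; \frak{p}) \tensor^{\bb{L}}_{\bb{F}_2[t]} CF_{tw}(\{\pt\}, \{\pt\}; \frak{p}') \xrightarrow{\sim} CF_{tw}(L_0, L_1; \frak{p} \oplus \bb{C}).
\]

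Next, I would identify the second factor. There are no nonconstant strips in a point, so the twisted Floer complex $CF_{tw}(\{\pt\},\{\pt\}; \frak{p}')$ counts only the constant-$u$ solutions analyzed in Example \ref{constant_u_solutions}: this is exactly the complex with a single generator $\bf{x}_i$ for each $i \in \bb{Z}$ (corresponding to the eigenvalues $(2j+1)\pi/2$ of $I\frac{d}{dt}$ acting on sections of $\bb{C}$ with boundaries on $\bb{R}, i\bb{R}$) and zero differential. Lifted to the free $\bb{F}_2[\bb{Z}/2]$-complex $\widetilde{CF}_{tw}$ by distinguishing $\pm \psi_i$, this is the complex $B_{\infty}$ of \eqref{Binfty}, so the canonical $A_{\infty}$-$\bb{F}_2[t]$-module structure coming from Section 2.1 identifies it with $\bb{F}_2[t,t^{-1}]$.

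Substituting this identification back yields
\[
CF_{tw}(L_0, L_1; \frak{p}) \tensor^{\bb{L}}_{\bb{F}_2[t]} \bb{F}_2[t,t^{-1}] \simeq CF_{tw}(L_0, L_1; \frak{p} \oplus \bb{C}).
\]
To finish, I would note that $\bb{F}_2[t,t^{-1}]$ is the localization of $\bb{F}_2[t]$ at $t$, so taking cohomology of the left hand side gives $HF_{tw}(L_0, L_1; \frak{p})[t^{-1}]$. But $t$ already acts invertibly on $HF_{tw}(L_0, L_1; \frak{p})$ (this was established via the action-filtration spectral sequence in Section 3), so localization is the identity and we obtain the desired isomorphism $HF_{tw}(L_0, L_1; \frak{p}) \cong HF_{tw}(L_0, L_1; \frak{p} \oplus \bb{C})$.

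The only mild subtlety --- the ``hard part'', insofar as there is one --- is bookkeeping: the Künneth isomorphism lives in the derived category of $\bb{F}_2[t]$-modules and the module structure on $CF_{tw}$ is only defined up to $A_{\infty}$-equivalence (Proposition \ref{Ainftymodulemap}), so care is needed to ensure that the identification of $CF_{tw}(\{\pt\},\{\pt\};\frak{p}')$ with $\bb{F}_2[t,t^{-1}]$ respects this structure. This is exactly the content of the finite-type formalism of Definition \ref{finitetype} applied to the complex $B_{\infty}$, and the invertibility of $t$ on $HF_{tw}$ ensures the localization step is harmless at the level of cohomology.
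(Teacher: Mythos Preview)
Your proposal is correct and follows exactly the paper's approach: the corollary is stated immediately after Theorem \ref{kunnethI} precisely as the special case $M' = L_0' = L_1' = \{\pt\}$, $\frak{p}' = (\bb{C}, \bb{R}, i\bb{R})$, and you have filled in the details (identifying $CF_{tw}(\{\pt\},\{\pt\};\frak{p}') \cong \bb{F}_2[t,t^{-1}]$ via Example \ref{constant_u_solutions} and using the invertibility of $t$ on $HF_{tw}$) that the paper leaves implicit.
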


Before we prove Theorem \ref{kunnethI}, let us make an important observation: if $I_t, I_t'$ are regular complex structures on $E, E'$ respectively, the block diagonal complex structure $I \oplus I' = \begin{pmatrix}I_t & 0 \\ 0 & I_t'\end{pmatrix}$ on $E \oplus E'$ need not be regular, even if the eigenvalues of $I \frac{d}{dt}$ and $I' \frac{d}{dt}$ are distinct (so that the assumptions of \ref{localgeomI} still hold in the product).

This is an artefact of the asymptotic conditions imposed on the twisted equations. Indeed, assume that $\lambda_-, \lambda_+$ are eigenvalues of $I \frac{d}{dt}$ at $x_-, x_+ \in L_0 \cap L_1$ respectively, and additionally take $x_-', x_+' \in L_0' \cap L_1'$. Then $\lambda_-, \lambda_+$ are also eigenvalues of $(I \oplus I') \frac{d}{dt}$ at $(x_-, x_-'), (x_+, x_+')$ respectively. Given pseudoholomorphic strips $u, u'$ on $M, M'$ with limits at $x_{\pm}, x_{\pm}'$, twisted solutions in $E \oplus E'$ over $u \times u'$ with asymptotics of type $\lambda_-, \lambda_+$ are exactly given by pairs $(\phi, \phi')$ of twisted solutions on $E, E'$ respectively, such that $\phi$ has asymptotics of type $\lambda_-, \lambda_+$, and that $\phi'$ has asymptotics of type $\lambda_-', \lambda_+'$ for some $\lambda_-' < \lambda_-$ and $\lambda_+' > \lambda_+$.

However, if
\begin{equation}\mu(u') + \specflow(u') + i(\lambda_-') - i(\lambda_+') < 0 \end{equation}
then the linearized equation for such $(u', \phi')$ has negative index, and so the moduli space can never be transversely cut out. This can happen even if the expected dimension of twisted flows in $M \times M', E \oplus E'$ between $\lambda_-, \lambda_+$ is non-negative, thus yielding an index-theoretic obstruction to the regularity of $I \oplus I'$.

Now, take the diagonal time-dependent almost complex structure $J''_t = J_t \oplus J'_t$ on $M \times M'$, and choose a regular complex structure $I''$ on $E \oplus E'$. We can assume that in an arbitrarily small neighbourhood of $(L_0 \times L_0') \cap (L_1 \times L_1')$, this agrees to the diagonal almost complex structure $I \oplus I'$, since the only solutions to the twisted equations $(u, \phi)$ with $u(Z)$ contained within such neighbourhoods are the solutions with constant $u$, which are automatically regular. 

To define the map \eqref{kunnethmap}, we also need to choose a $Z$-dependent complex structure $\{\tilde{I}_{s, t}\}_{s, t \in \bb{R} \times [0,1]}$ on $E \oplus E'$, such that
\begin{equation}\tilde{I}_{s, t} = \begin{cases}I''_t & \text{ for } s \ll 0\\
I_t \oplus I'_t & \text{ for } s \gg 0.\end{cases}\end{equation}
We also choose a $Z$-dependent $\tilde{I}_{s,t}$-Hermitian connection on $E \oplus E'$, which we will just call $\nabla$ and hope there is minimal confusion with the connections already chosen.

Fix $\bf{x}_+ = (x_+, \lambda_+) \in \frak{C}(L_0, L_1; \frak{p})$, $\bf{x}'_+ = (x'_+, \lambda'_+) \in \frak{C}(L_0', L_1'; \frak{p}')$ and $\bf{x}''_- = (x''_-, \lambda''_-) \in \frak{C}(L_0 \times L_0', L_1 \times L_1'; \frak{p}\oplus \frak{p}')$. Consider the equations for a map $\tilde{u} : Z \to M \times M'$ and a section $\tilde{\phi}$ of $E \oplus E'$, which we will often refer to as the $\tilde{I}_{s,t}$-continuation equation:
\begin{equation}\label{tildeIstcontinuation}\partial_s \tilde{u} + J''_t \partial_t \tilde{u} = 0, \qquad \nabla_s \tilde{\phi} + \tilde{I}_{s,t} \nabla_t \tilde{\phi} = 0\end{equation}
subject to the usual boundary conditions that $\tilde{u}(\cdot, i) \in L_i \times L_i'$ and $\tilde{\phi}(\cdot, i) \in F_i \oplus F_i'$ for $i = 0,1$. We impose that $\tilde{u}$ has finite energy with limits $\tilde{u} \to x_+ \times x'_+ \in (L_0 \times L_0') \cap (L_1 \times L_1')$ as $s \to \infty$, and $\tilde{u} \to x''_- \in (L_0 \times L_0') \cap (L_1 \times L_1')$ as $s \to -\infty$. We also impose the following asymptotics on $\tilde{\phi}$:
\begin{equation}\label{kunnethasymptotics}\tilde{\phi}(s,t) \sim \begin{cases}(C e^{- \lambda_+ s} \psi_+(t), C' e^{- \lambda'_+} \psi'_+(t)) \in E_{x_+} \oplus E'_{x'_+} &\text{ as } s \to \infty, \\
C'' e^{- \lambda''_- s} \psi''_-(t) \in (E \oplus E')_{x''_-} &\text{ as } s \to -\infty.\end{cases}\end{equation}
This can be set up as a Fredholm problem as follows: choose a small enough $\delta > 0$ (so that there are no other eigenvalues in the intervals $(\lambda - \delta, \lambda), (\lambda' - \delta, \lambda')$ and $(\lambda'', \lambda''+\delta)$). Then take two weight functions $w, w': \bb{R} \to \bb{R}$ such that
\begin{equation}w(s) = \begin{cases}(\lambda'' + \delta) s & \text{for } s \ll 0\\ (\lambda - \delta) s & \text{for } s \gg 0;\end{cases} \qquad w'(s) = \begin{cases}(\lambda'' + \delta) s & \text{for } s \ll 0\\ (\lambda' - \delta) s & \text{for } s \gg 0.\end{cases}\end{equation}
Given solutions $u, u'$ to the $J-$ and $J'$-holomorphic strip equations respectively on $M, M'$ with the relevant boundary conditions, we then construct Sobolev spaces
\begin{align*}W^{2,k}_{\kappa}(u^*E, u'^*E') &= \{(\phi, \phi') \in W^{2,k}_{loc}(u^*E \oplus u'^* E') : e^{w(s)}\phi \in W^{2,k}, e^{w'(s)}\phi' \in W^{2,k}\} \\
L^{2,k-1}_{\kappa}(u^*E, u'^*E') &=  \{(\phi, \phi') \in L^{2,k-1}_{loc}(u^*E \oplus u'^* E') : e^{w(s)}\phi \in L^{2,k-1}, e^{w'(s)}\phi' \in L^{2,k-1}\}.\end{align*}
In this case, $\bar{\nabla}_{I''}: W^{2,k}_{\kappa} \to L^{2,k-1}_{\kappa}$ is a Fredholm map, whose kernel consists of solutions $\tilde{\phi}$ to \eqref{tildeIstcontinuation} with the asymptotics \eqref{kunnethasymptotics}.

We then can define a moduli space $\cal{M}^{\kappa}(\bf{x}''_-, \bf{x}_+, \bf{x}'_+)$ of nonzero solutions $(\tilde{u}, \tilde{\phi})$ to \eqref{tildeIstcontinuation} with these asymptotics, modulo rescaling $\tilde{\phi}$. For generic choice of $\tilde{I}_{s,t}$, this will be a smooth manifold of dimension
\begin{equation}\mu(\tilde{u}) + \specflow_{\tilde{I}_{s,t}}(\tilde{u}) + i(\lambda_-'') - i(\lambda_+) - i(\lambda_+').\end{equation}
where $\specflow_{\tilde{I}_{s,t}}(\tilde{u})$ is the spectral flow of the family of operators $\tilde{I}_{s,t}(\tilde{u}(s,t))\nabla_{\partial/\partial t}$.

Such a trajectory $\tilde{u}, \tilde{\phi}$ can degenerate into a ``broken triangle'' configuration, meaning the following four pieces of data:
\begin{itemize}\item a sequence of trajectories
\begin{equation}\label{brokentrianglei}(u_1, \phi_1) \in \cal{M}(\bf{x}_1, \bf{x}_2), (u_2, \phi_2) \in \cal{M}(\bf{x}_2, \bf{x}_3), \hdots, (u_k, \phi_k) \in \cal{M}(\bf{x}_k, \bf{x}_+)\end{equation}
solving the $(J, I)$-twisted equations on $(M, E)$;
\item a sequence of trajectories
\begin{equation}\label{brokentriangleii}(u'_1, \phi'_1) \in \cal{M}(\bf{x}'_1, \bf{x}'_2), (u'_2, \phi'_2) \in \cal{M}(\bf{x}'_2, \bf{x}'_3), \hdots, (u'_{\ell}, \phi'_{\ell}) \in \cal{M}(\bf{x}'_{\ell}, \bf{x}'_+)\end{equation}
solving the $(J', I')$-twisted equations on $(M', E')$;
\item a single solution
\begin{equation}\label{brokentriangleiii}(\tilde{u}, \tilde{\phi}) \in \cal{M}^{\kappa}(\bf{x}''_-, \bf{x}_+, \bf{x}'_+)\end{equation}
for the $\tilde{I}_{s,t}$-continuation equation \eqref{tildeIstcontinuation} on $(M \times M', E \oplus E')$;
\item a sequence of trajectories
\begin{equation}\label{brokentriangleiv}(u''_m, \phi''_m) \in \cal{M}(\bf{x}''_-, \bf{x}''_m), (u''_{m-1}, \phi''_{m-1}) \in \cal{M}(\bf{x}''_m, \bf{x}''_{m-1}), \hdots, (u''_1, \phi''_1) \in \cal{M}(\bf{x}''_2, \bf{x}''_1)\end{equation}
solving the $(J\times J', I'')$-twisted equations on $(M\times M, E \oplus E')$.\end{itemize}

By adding in all the broken triangle configurations, we can compactify the moduli space $\cal{M}^{\kappa}$. In particular, we obtain a map by counting the zero-dimensional part of $\cal{M}^{\kappa}$:
\begin{align}\kappa: CF(L_0, L_1, \frak{p}) \tensor CF(L_0', L_1', \frak{p}') &\to CF(L_0 \times L_0', L_1 \times L_1'; \frak{p} \oplus \frak{p}')\\
\bf{x}_+ \tensor \bf{x}'_+ &\mapsto \sum\limits_{\bf{x}''_-, [\tilde{u}]: \dim \cal{M}^{\kappa}_{[\tilde{u}]} = 0} \# \cal{M}(\bf{x}''_-, \bf{x}_+, \bf{x}_+') \cdot \bf{x}''_.\end{align}
By considering the boundaries of the one-dimensional components of $\cal{M}^{\kappa}$ and proving an appropriate gluing result, we see that $\kappa$ is a chain map, i.e.
\begin{equation}d \kappa(\bf{x}, \bf{x}') + \kappa(d \bf{x}, \bf{x}') + \kappa(\bf{x}, d \bf{x}') = 0.\end{equation}
It is, however, not bilinear on the nose for the module structures.

To remedy this, by distinguishing between the two choices of unit eigenvector for each $\bf{x}_+, \bf{x}'_+, \bf{x}''_-$, we can lift $\kappa$ to a map
\begin{equation}\tilde{\kappa}: \widetilde{CF}(L_0, L_1, \frak{p}) \tensor \widetilde{CF}(L_0', L_1', \frak{p}') \to \widetilde{CF}(L_0 \times L_0', L_1 \times L_1'; \frak{p} \oplus \frak{p}').\end{equation}
This is again a chain map, but moreover it is actually a $\bb{F}_2[\bb{Z}/2]$-module map, since solutions to \eqref{tildeIstcontinuation} modulo $\bb{R}_{> 0}$ come in pairs $(\tilde{u}, \tilde{\phi}), (\tilde{u}, -\tilde{\phi})$. For simplicity, let us write
\begin{equation*}A = \widetilde{CF}(L_0, L_1, \frak{p}), \quad A' = \widetilde{CF}(L'_0, L'_1, \frak{p}'), \quad A'' = \widetilde{CF}(L_0\times L'_0, L_1 \times L_1', \frak{p}\oplus \frak{p}')\end{equation*}
which are all free $\bb{F}_2[\bb{Z}/2]$-complexes of finite type; in particular we have quasi-isomorphisms of $A_{\infty}$-modules
\begin{align*}F: CF(L_0, L_1, \frak{p}) &= A_{\bb{F}_2} \xrightarrow{\sim} (A[t], d_{borel})\\
F': CF(L'_0, L'_1, \frak{p}') &= A'_{\bb{F}_2} \xrightarrow{\sim} (A'[t], d_{borel})\\
F'': CF(L_0\times L_0', L_1 \times L_1', \frak{p} \oplus \frak{p}') &= A''_{\bb{F}_2} \xrightarrow{\sim} (A''[t], d_{borel}).\end{align*}
Proposition 2.3 gives an identification $A[t] \tensor^{\bb{L}}_{\bb{F}_2[t]} A'[t] \cong (A\tensor A')[t]$, and the assignment $A \mapsto (A[t], d_{borel})$ is functorial, so we also have a map of $\bb{F}_2[t]$-complexes
\begin{equation}\tilde{\kappa}_{borel}: ((A \tensor A')[t], d_{borel}) \to (A''[t], d_{borel}).\end{equation}
\begin{definition}The Kunneth map $K$ is then the composition
\begin{equation}K: A_{\bb{F}_2} \tensor^{\bb{L}}_{\bb{F}_2[t]} A'_{\bb{F}_2} \xrightarrow{F \tensor F'} A[t] \tensor^{\bb{L}}_{\bb{F}_2[t]} A'[t] \xrightarrow{(2.3)} (A \tensor A')[t] \xrightarrow{\tilde{\kappa}_{borel}} A''[t] \xrightarrow{(F'')^{-1}} A''_{\bb{F}_2}\end{equation}
where the last map $(F'')^{-1}$ is to be thought of as a formal inverse of the quasi-isomorphism $F''$ in the derived category of $\bb{F}_2[t]$-modules.\end{definition}

\begin{remark}Both geometrically and algebraically, the more natural K\"{u}nneth map is $\tilde{\kappa}$, on the level of $\bb{F}_2[\bb{Z}/2]$ modules; however we have kept our presentation to $\bb{F}_2[t]$-modules to stay in line with traditional presentations of equivariant cohomology.\end{remark}

Although the construction of $K$ as a chain-level $\bb{F}_2[t]$-map is more involved, on cohomology it just produces $\kappa$: if $\alpha \in H(A_{\bb{F}_2})$ and $\alpha' \in H(A'_{\bb{F}_2})$ are cohomology classes, then
\begin{equation*}\kappa(\alpha \tensor \alpha') = K(\alpha \tensor \alpha') \in H(A''_{\bb{F}_2}).\end{equation*}

To see that $K$ in fact a quasi-isomorphism, observe that the symplectic action on $M \tensor M'$ provides a filtration of each of the above complexes, and each of the maps in the above composition necessarily preserve this filtration (this is essentially tautological except for the map $\tilde{\kappa}_{borel}$; however recall that the `untwisted' part of the equation defining $\kappa$ is exactly the pseudoholomorphic curve equation in $M \times M'$). It thus suffices to prove that we have a quasi-isomorphism on the subquotients, each of which corresponds to the symplectic action of an intersection point of $(L_0 \times L'_0) \cap (L_1 \times L'_1)$. Since there are no pseudoholomorphic strips connecting different intersection points of the same action, it suffices to prove that $K$ is a quasi-isomorphism for the case when $M = M' = \pt$.

This can be verified explicitly; recall that each $CF(\pt, \frak{p}) \cong \bb{F}_2[t,t^{-1}]$ as a $\bb{F}_2[t]$-module, with zero differential. Choose any $\bf{x}_+ \in CF(\pt, \frak{p})$, $\bf{x}'_+ \in CF(\pt, \frak{p}')$; there is then a unique $\bf{x}''_- \in CF(\pt, \frak{p}\oplus \frak{p}')$ such that $\dim \cal{M}^{\kappa}(\bf{x}''_-, \bf{x}_+, \bf{x}'_+) = 0$.

Any solution $(\tilde{u}, \tilde{\phi})$ of the $\tilde{I}_{s,t}$-equations has $\tilde{u}$ constant. The moduli space $\cal{M}^{\kappa}(\bf{x}''_-, \bf{x}_+, \bf{x}'_+)$ is then exactly the projectivization of the vector space
\begin{equation*}\ker(\bar{\nabla}_{\tilde{I}_{s,t}} : W^{2,k}_{\kappa} \to L^{2,k-1}_{\kappa})\end{equation*}
which in this case is one-dimensional, since $\ind(\bar{\nabla}_{\tilde{I}_{s,t}}) = 1$ for this choice of asymptotics $\bf{x}_+, \bf{x}'_+, \bf{x}''_-$, and for a regular $\tilde{I}_{s,t}$ we have $\coker(\bar{\nabla}) = 0$. In particular, $\cal{M}^{\kappa}$ is just a single point; and so
\begin{equation*}\kappa(\bf{x}_+, \bf{x}'_+) = \bf{x}''_-.\end{equation*}
In particular this implies that $\kappa$ must be given by
\begin{equation*}\bb{F}_2[t,t^{-1}] \tensor \bb{F}_2[t, t^{-1}] \to \bb{F}_2[t,t^{-1}], \qquad t^i \tensor t^j \mapsto t^{i+j+n}\end{equation*}
where the fixed integer $n$ is the spectral flow of the family of operators $\tilde{I}_{s,t}\frac{d}{dt}$. As a consequence, $K$ is an isomorphism, which concludes the proof of Theorem \ref{kunnethI}.

\subsection{In equivariant Floer theory.}
Now, suppose that $\tilde{M}'$ is another exact symplectic manifold with a symplectic involution $\iota'$, with exact Lagrangians $\tilde{L}_0', \tilde{L}_1'$ preserved by $\iota'$, satisfying the same conditions as before. Write $M', L_0', L_1'$ for the corresponding fixed point sets, which has polarization data $\frak{p}' = (E', F_0', F_1')$ given by the respective normal bundles of $M', L_0', L_1'$ to $\tilde{M}', \tilde{L}_0', \tilde{L}_1'$.

There is then a $\bb{Z}/2$ action on $\tilde{M} \times \tilde{M}'$ given by $\iota \times \iota'$, preserving the Lagrangians $\tilde{L}_0 \times \tilde{L}_0', \tilde{L}_1 \times \tilde{L}_1'$. The fixed point sets are respectively $M \times M', L_0 \times L_0', L_1 \times L_1'$, on which we have the product polarization data $\frak{p}\oplus \frak{p}' = (E \oplus E', F_0 \oplus F_0', F_1 \oplus F_1')$.

\begin{theorem}\label{kunnethII}There is a quasi-isomorphism of $A_{\infty}$-modules over $\bb{F}_2[t]$
\begin{equation}CF_{\bb{Z}/2}(\tilde{L}_0, \tilde{L}_1) \tensor^{\bb{L}}_{\bb{F}_2[t]} CF_{\bb{Z}/2}(\tilde{L}_0', \tilde{L}_1') \xrightarrow{\sim} CF_{\bb{Z}/2}(\tilde{L}_0 \times \tilde{L}_0', \tilde{L}_1 \times \tilde{L}_1').\end{equation}\end{theorem}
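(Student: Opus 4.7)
The plan is to proceed by direct chain-level construction, in close analogy with the proof of Theorem \ref{kunnethI}, while additionally accounting for the interior/boundary-stable/boundary-unstable trichotomy of generators and the delta-structures appearing in the analytic setup of Section \ref{sec:analysis_II}.

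First I would lift the equivariant complex $\check{C}(\tilde{L}_0, \tilde{L}_1) = C_o \oplus C_s$ to a free $\bb{F}_2[\bb{Z}/2]$-complex $\widetilde{\check{C}}(\tilde{L}_0, \tilde{L}_1)$ generated by the non-invariant intersection points of $\tilde{L}_0 \cap \tilde{L}_1$ together with pairs $(x, \psi)$ for $\psi$ a unit eigensolution of $I\frac{d}{dt}$ at $x \in L_0 \cap L_1$ with positive eigenvalue. The combined filtration by symplectic action together with eigenvalue level exhibits $\widetilde{\check{C}}$ as a free $\bb{F}_2[\bb{Z}/2]$-complex of finite type in the sense of Definition \ref{finitetype}, so the natural map $F: \check{C}_{\bb{F}_2} \xrightarrow{\sim} (\widetilde{\check{C}}[t], d_{borel})$ is an $A_\infty$-quasi-isomorphism of $\bb{F}_2[t]$-modules. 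The analogous lifts are constructed for $\widetilde{\check{C}}(\tilde{L}'_0, \tilde{L}'_1)$ and $\widetilde{\check{C}}(\tilde{L}_0 \times \tilde{L}'_0, \tilde{L}_1 \times \tilde{L}'_1)$.

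Next, I would construct a chain-level $\bb{F}_2[\bb{Z}/2]$-bilinear map
\[\tilde{\kappa}: \widetilde{\check{C}}(\tilde{L}_0, \tilde{L}_1) \tensor_{\bb{F}_2} \widetilde{\check{C}}(\tilde{L}'_0, \tilde{L}'_1) \to \widetilde{\check{C}}(\tilde{L}_0 \times \tilde{L}'_0, \tilde{L}_1 \times \tilde{L}'_1)\]
by counting zero-dimensional components of a continuation-type moduli space on $\tilde{M} \times \tilde{M}'$. Concretely, choose a $Z$-dependent $(\iota \times \iota')$-equivariant compatible almost complex structure $\tilde{\tilde{J}}_{s,t}$ on $\tilde{M} \times \tilde{M}'$ which interpolates between a regular equivariant $\tilde{J}''_t$ at $s = -\infty$ and the block-diagonal structure $\tilde{J}_t \oplus \tilde{J}'_t$ at $s = +\infty$; along the fixed set $M \times M'$ this splits as an $s$-dependent triple $(J^\star_{s,t}, I^\star_{s,t}, \nabla^\star)$ interpolating between regular $(J''_t, I''_t, \nabla'')$ and the block-diagonal data $(J_t \oplus J'_t, I_t \oplus I'_t, \nabla \oplus \nabla')$. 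One counts blown-up trajectories for this continuation equation on the strip, with all possible asymptotic types at $-\infty$ (to generators of the equivariant complex for the product) and at $+\infty$ (to pairs of generators of the separate equivariant complexes, the structure being block-diagonal there). That $\tilde{\kappa}$ is a chain map follows from the boundary structure of one-dimensional moduli spaces. Applying the Borel functor $A \mapsto (A[t], d_{borel})$ together with Proposition \ref{monoidal_tensor_product} and inverting the $A_\infty$-quasi-isomorphism $F''$ for the product, this delivers the desired $A_\infty$-map of $\bb{F}_2[t]$-modules
\[K: \check{C}(\tilde{L}_0, \tilde{L}_1) \tensor^{\bb{L}}_{\bb{F}_2[t]} \check{C}(\tilde{L}'_0, \tilde{L}'_1) \to \check{C}(\tilde{L}_0 \times \tilde{L}'_0, \tilde{L}_1 \times \tilde{L}'_1).\]
To verify $K$ is a quasi-isomorphism, filter both sides by the symplectic action on $\tilde{M} \times \tilde{M}'$, which $\tilde{\kappa}$ preserves since the continuation equation carries no Hamiltonian term. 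On the associated graded, only constant-$u$ solutions contribute, and the statement decomposes into three cases indexed by the $(\iota \times \iota')$-type of each pair $(\tilde{x}, \tilde{x}') \in (\tilde{L}_0 \times \tilde{L}'_0) \cap (\tilde{L}_1 \times \tilde{L}'_1)$: when both components are non-invariant, the derived tensor product $\bb{F}_2 \tensor^{\bb{L}}_{\bb{F}_2[t]} \bb{F}_2$ has total rank two, matching the two orbits $\{(\tilde{x}, \tilde{x}'), (\iota\tilde{x}, \iota'\tilde{x}')\}$ and $\{(\tilde{x}, \iota'\tilde{x}'), (\iota\tilde{x}, \tilde{x}')\}$ of $C_o^\times$; when exactly one is invariant one obtains $\bb{F}_2[t] \tensor^{\bb{L}}_{\bb{F}_2[t]} \bb{F}_2 \simeq \bb{F}_2$, matching a single $C_o^\times$ orbit; and when both are invariant the associated graded at $(x, x')$ is $B_+$ on both sides and $B_+[t] \tensor^{\bb{L}}_{\bb{F}_2[t]} B_+[t] \simeq \bb{F}_2[t]$. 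In each case the quasi-isomorphism statement reduces to the same one-dimensional kernel computation carried out in the proof of Theorem \ref{kunnethI}.

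The main analytic obstacle will be the extension of the compactness and gluing theorems of Sections \ref{sec:compactness_I}, \ref{sec:analysis_II} and \ref{sec:boundary_obstructed_gluing} to this non-autonomous continuation setup. Broken configurations contributing to the compactification of $\tilde{\kappa}$ may now contain up to three classes of pieces: finitely many blown-up equivariant trajectories in $\tilde{M} \times \tilde{M}'$ attached at the negative end, a single continuation piece in the middle, and finitely many blown-up equivariant trajectories in each of $\tilde{M}$ and $\tilde{M}'$ independently attached at the positive end via the block-diagonal splitting. Any of these pieces may itself be boundary-obstructed, and the product-side pieces additionally introduce product boundary-obstructed configurations with simultaneous obstruction in both factors. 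Verifying that each resulting boundary configuration contributes either a genuine manifold-with-boundary endpoint or a codimension-one smooth $\delta$-structure in the sense of Definition \ref{delta_structure} -- by a suitable extension of the extended moduli space construction of Section \ref{sec:extended_moduli_space} to accommodate discontinuities in both the $E$- and $E'$-radial coordinates simultaneously -- will be the technically demanding step.
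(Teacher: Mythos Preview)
Your proposal is correct and follows essentially the same route as the paper: construct a continuation-type map $\tilde{\kappa}$ on the level of free $\bb{F}_2[\bb{Z}/2]$-complexes using a $Z$-dependent equivariant almost complex structure interpolating between $\tilde{J}''_t$ and $\tilde{J}_t \oplus \tilde{J}'_t$, pass through the Borel construction via Proposition \ref{monoidal_tensor_product}, and verify the quasi-isomorphism on the associated graded of the action filtration by checking exactly the three cases you list. The paper additionally spells out the explicit matrix formula for $\kappa$ (which must incorporate compositions with the boundary-obstructed counts $d^{\partial}_{su}$, $\kappa^{\partial}_{sus}$, $\kappa^{\partial}_{ssu}$, $\kappa^{\partial}_{suu}$, the last being of formal index $-2$) and flags a subtlety concerning how constant solutions contribute in the mixed invariant/non-invariant case, but these are refinements of precisely the strategy you outline.
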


For brevity we will only state and prove this theorem for the flavor of equivariant cohomology coming from the $\check{C}$-complex: indeed the proof of the $\hat{C}$-complex version is analogous, and we have already proved the theorem for the $\bar{C}$-complex version as Theorem \ref{kunnethI}. Indeed, this proof closely tracks the earlier one.

Choose equivariant time-dependent almost complex structures $\tilde{J}_t, \tilde{J}'_t, \tilde{J}''_t$ on $\tilde{M}, \tilde{M}', \tilde{M}\times \tilde{M}'$ respectively. Assume that these are each equivariantly regular in the sense of Definition \ref{equivarianttransversality}, and that these all satisfy the conditions of Assumption \ref{localgeomII}. As before, choose as well a $Z$-dependent almost complex structure $\tilde{J}_{s,t}$ on $\tilde{M} \times \tilde{M}'$, such that
\begin{equation}\tilde{J}_{s,t} = \begin{cases}\tilde{J}''_t & \text{ for } s \ll 0 \\
\tilde{J}_t \times \tilde{J}'_t & \text{ for } s \gg 0.\end{cases}\end{equation}
In particular, along the fixed point set, we have induced complex structures $I_t, I'_t, I''_t$ on $E, E', E\oplus E'$ respectively, as well as an induced $Z$-dependent complex structure $\tilde{I}_{s,t}$.

Then, for $\bf{x}_+ \in \frak{C}(\tilde{L}_0, \tilde{L}_1), \bf{x}'_+ \in \frak{C}(\tilde{L}'_0, \tilde{L}'_1), \bf{x}''_- \in \frak{C}(\tilde{L}_0 \times \tilde{L}'_0, \tilde{L}_1 \times \tilde{L}'_1)$, we construct a moduli space $\cal{M}^{\kappa}(\bf{x}''_-, \bf{x}_+, \bf{x}'_+)$ in the following fashion, depending on whether the generators are interior, boundary-stable or boundary-unstable:

\emph{If each $\bf{x}''_-, \bf{x}_+, \bf{x}'_+ \in \frak{C}_o$ are interior generators}, then $\cal{M}^{\kappa}$ is the moduli space of solutions of the $\tilde{J}_{s,t}$-continuation map equation
\begin{equation}\label{tildeJstcontinuation}\partial_s \tilde{u} + \tilde{J}_{s,t}\partial_t \tilde{u} = 0\end{equation}
on $\tilde{M} \times \tilde{M}'$, with the usual boundary conditions, subject to the that asymptotics one of the two points of $\bf{x}''_-$ at $-\infty$, and one of the four points of $\bf{x}_+ \times \bf{x}'_+$ at $+\infty$, modulo the $\bb{Z}/2$ action given by $\iota \times \iota'$.

\emph{If each $\bf{x}''_-, \bf{x}_+, \bf{x}'_+ \in \frak{C}_s \cup \frak{C}_u$ are boundary critical points}, then we have the moduli space which was used to define the earlier Kunneth map
\begin{equation}\cal{M}^{\kappa, \partial}(\bf{x}''_-, \bf{x}_+, \bf{x}'_+)\end{equation}
of solutions to the $\tilde{I}_{s,t}$-continuation equations, modulo the $\bb{R}^*$ rescaling action, with boundary conditions as in \eqref{kunnethasymptotics}. These ``boundary moduli spaces'' are to be distinguished with another ``interior moduli space'', which exists only for the case that $\bf{x}''_- \in \frak{C}_u$ and $\bf{x}_+, \bf{x}'_+ \in \frak{C}_s$
\begin{equation}\cal{M}^{\kappa, o}(\bf{x}''_-, \bf{x}_+, \bf{x}'_+)\end{equation}
of interior (i.e. non-$\iota \times \iota'$-invariant) solutions $\tilde{u}$ to the $\tilde{J}_{s,t}$-continuation map equation \eqref{tildeJstcontinuation} above, modulo the $\bb{Z}/2$-action given by $\iota \times \iota'$, with the usual boundary conditions. Writing $\bf{x}''_- = (x''_-, \lambda''_-), \bf{x}_+ = (x_+, \lambda_+), \bf{x}'_+ = (x'_+, \lambda'_+)$, we impose the asymptotics that $\tilde{u} \to x''_-$ as $s \to -\infty$, and $\tilde{u} \to x_+ \times x'_+$ as $s \to +\infty$, with decay in the normal direction to the fixed point set given by
\begin{align}\pi_{E \oplus E'} \tilde{u}(s,t) \sim \begin{cases} C''_- e^{-\lambda''_- s}\psi''_-(t) & \text{ as } s \to -\infty\\
(C_+ e^{-\lambda_+ s} \psi_+(t), C'_+ e^{-\lambda'_+ s} \psi'_+(t)) \in E_{x_+} \oplus E'_{x'_+} & \text{ as } s \to +\infty\end{cases}\end{align}
where $\psi''_-, \psi_+, \psi'_+$ are eigensolutions corresponding to $\lambda''_-, \lambda_+, \lambda'_+$ respectively.

\emph{In the remaining cases, where at least one of $\bf{x}''_-, \bf{x}_+, \bf{x}'_+$ is an interior generator and at least one is a boundary generator}, we define $\cal{M}^{\kappa}$ to be the moduli space of solutions $\tilde{u}$ to the $\tilde{J}_{s,t}$-continuation equation, modulo $\bb{Z}/2$, asymptotics being a mixture of the two cases above. For example, when $\bf{x}''_- = \{x''_-, (\iota\times \iota')(x''_-)\} \in \frak{C}_o$ is interior, $\bf{x}_+ = (x_+, \lambda_+) \in \frak{C}_s$ is boundary-stable, and $\bf{x}'_+ = \{x'_+, \iota'(x'_+)\}$ is also interior, we impose that $\tilde{u} \to x''_-$ or $(\iota \times \iota')(x''_-)$ as $s \to -\infty$, and $\tilde{u} \to x_+ \times x'_+$ or $x_+ \times \iota'(x'_+)$ as $s \to +\infty$, together with decay in the normal direction to $M \subset \tilde{M}$ given by
\begin{equation*}\pi_E \tilde{u}(s,t) \sim C_+ e^{-\lambda_+ s} \psi_+(t) \qquad \text{as } s \to +\infty\end{equation*}
where $\psi_+$ is an eigensolution corresponding to $\lambda_+$. The other cases are entirely analogous.

There is a subtlety arising from certain constant solutions to the $\tilde{J}_{s,t}$-continuation equation. The constant solutions are exactly just $\tilde{u} = x\times x'$ for $x, x' \in \tilde{L}_0 \times \tilde{L}'_0 \cap \tilde{L}_1 \times \tilde{L}'_1$. Suppose that $x$ is invariant, while $x'$ is non-invariant; consequently $x \times x'$ will also be non-invariant, and there is a $\bb{Z}/2$-related pair of constant solutions at $x \times x'$ and $x \times \iota'(x')$. Even though this pair of constant solutions have $\pi_E \tilde{u}$ identically zero as $s \to +\infty$, we are to regard this solution as defining a single element of $\cal{M}^{\kappa}(\bf{x}'', \bf{x}, \bf{x}')$ where $\bf{x} = (x, \lambda_0)$ for $\lambda_0$ \emph{the lowest positive eigenvalue} of $I\frac{d}{dt}$ at $x$, and $\bf{x}' = \{x', \iota'(x')\}, \bf{x}'' = \{x \times x', x \times \iota'(x')\}$. For higher eigenvalues we declare the moduli space to be empty. Likewise, if $x'$ is invariant but $x$ is non-invariant, we similarly regard the pair of constant solutions as a single element of $\cal{M}^{\kappa}$ for $\bf{x}' = (x', \lambda'_0)$, where $\lambda'_0$ is the lowest positive eigenvalue.

\begin{remark}Indeed, we could circumvent this issue with the constant solutions by adding a small $Z$-dependent Hamiltonian perturbation to the $\tilde{J}_{s,t}$-equation, compactly supported on $Z$, to remove the constant solutions from consideration. Solutions of the perturbed equation would satisfy the  $\tilde{J} \times \tilde{J}'$ equation for $s \gg 0$, and would generically be non-constant, and decay in the normal direction $E$ to $M \subset \tilde{M}$ as $e^{-\lambda_0 s}\psi_0(t)$ for $\lambda_0$ the smallest positive eigenvalue. We could then consider a family of such Hamiltonian perturbations over $[0, \eps)$, with the original unperturbed equation at $0$. Since the original unperturbed equation defines a transversely cut out moduli space (in the normal sense of continuation moduli spaces for holomorphic curves on $\tilde{M} \times \tilde{M}'$), we could then prove a gluing result to get also a family of moduli spaces over $[0,\eps)$, which would be a topological submersion over $[0, \eps)$. In particular, since there is precisely one solution over $0$, there must also be just one solution for any sufficiently small Hamiltonian perturbation.\end{remark}

We can set each of these up as a Fredholm problem as before using Sobolev spaces with exponential weights (where the weights must be placed separately on $E$ and $E'$ on the positive infinite end of the strip). For generic choices of $\tilde{J}_{s,t}$, the moduli spaces are cut out transversely and are thus smooth manifolds. Moreover, these moduli spaces are only non-compact in that they can degenerate into a ``broken triangle'' configuration given by an element of one of the $\cal{M}^{\kappa}$, together with three broken trajectories on each of $\tilde{M}$, $\tilde{M}'$ and $\tilde{M} \times \tilde{M}'$ respectively as described earlier in \eqref{brokentrianglei}-\eqref{brokentriangleiv}.

Consequently, by counting the (interior) zero-dimensional moduli spaces, we obtain eight $\bb{F}_2$-homomorphisms:
\begin{align*}& \kappa_{ooo}: C_o \tensor C'_o \to C''_o, \quad \kappa_{uoo}: C_o \tensor C'_o \to C''_u, \\
&\kappa_{oos}: C_o \tensor C'_s \to C''_o, \quad \kappa_{uos} : C_o \tensor C'_s \to C''_u, \\
&\kappa_{oso}: C_s \tensor C'_o \to C''_o, \quad \kappa_{uso} : C_s \tensor C'_o \to C''_u, \\
&\kappa_{oss} : C_s \tensor C'_s \to C''_o, \quad \kappa_{uss}: C_s \tensor C'_s \to C''_u\end{align*}
where we have written $C_o, C'_o, C''_o$ and so on to distinguish between the free vector spaces generated by $\frak{C}_o(\tilde{L}_0, \tilde{L}_1)$, $\frak{C}_o(\tilde{L}'_0, \tilde{L}'_1)$ and $\frak{C}_o(\tilde{L}_0\times\tilde{L}'_0, \tilde{L}_1 \times \tilde{L}'_1)$ respectively. Similarly we have another eight $\bb{F}_2$-homomorphisms
\begin{equation}\kappa^{\partial}_{sss}, \kappa^{\partial}_{uss}, \kappa^{\partial}_{ssu}, \kappa^{\partial}_{usu}, \kappa^{\partial}_{sus}, \kappa^{\partial}_{uus}, \kappa^{\partial}_{suu}, \kappa^{\partial}_{uuu}\end{equation}
counting the boundary moduli spaces: these are all just the different components of the earlier map $\kappa$ of \eqref{kunnethmap}, but with the positive (stable) and negative (unstable) eigenvalues distinguished.

These can then be assembled into a chain map, $\kappa$,
\[\kappa : \check{C} \tensor \check{C}' \to \check{C}''\]
which counts ``total index zero'' possibly broken triangle configurations, to account for the  \emph{boundary-obstructed} cases. The are now four different types of boundary obstructed components of a broken triangle. The first are those counted by $d^{\partial}_{su}$ as before on each of $C_u \to C_s$, $C'_u \to C'_s$ and $C''_u \to C''_s$; as before these can be thought of as having ``formal index 0'' as blown up trajectories, despite having index 1 as boundary trajectories.

But we can also have boundary-obstructed triangles. Two types of such triangles are counted by the operators $\kappa^{\partial}_{sus}$ and $\kappa^{\partial}_{ssu}$; the triangles counted by these are of formal index -1. Furthermore, we also have those counted by $\kappa^{\partial}_{suu}$, which are of formal index -2.

With this in mind, we then define
\[\kappa : (C_o \tensor C'_o) \oplus (C_o \tensor C'_s) \oplus (C_s \tensor C'_o) \oplus (C_s \tensor C'_s) \to C''_o \tensor C''_s\]
as the \emph{transpose} of the matrix
\[\begin{pmatrix}\kappa_{ooo} & d^{\partial}_{su}\kappa_{uoo} + \kappa^{\partial}_{suu}(d_{uo}\tensor d_{uo}) \\
\kappa_{oos} & d^{\partial}_{su}\kappa_{uos} + \kappa^{\partial}_{sus}(d_{uo} \tensor \id) \\
\kappa_{oso} & d^{\partial}_{su}\kappa_{uso} + \kappa^{\partial}_{ssu}(\id \tensor d_{uo}) \\
\kappa_{oss} & d^{\partial}_{su}\kappa_{uss} + \kappa^{\partial}_{sus}(d_{us} \tensor \id) + \kappa^{\partial}_{ssu}(\id \tensor d_{us}) + \kappa^{\partial}_{suu}(d_{us} \tensor d_{us})
\end{pmatrix}.\]
This then defines a chain map:
\begin{equation*}\kappa: CF_{\bb{Z}/2}(\tilde{L}_0, \tilde{L}_1) \tensor_{\bb{F}_2} CF_{\bb{Z}/2}(\tilde{L}'_0, \tilde{L}'_1) \to CF_{\bb{Z}/2}(\tilde{L}_0 \times \tilde{L}'_0, \tilde{L}_1 \times \tilde{L}'_1).\end{equation*}
We will omit the proof that this is a chain map. As always, it is based on considering compactifications of one-dimensional moduli spaces  by broken configurations, and showing the number of such additional points in the compactification is even. Due to the presence of boundary-obstructed trajectories and triangles, the compactified moduli space is not a manifold with boundary, but rather carries a $\delta$-structure, which still allows us to deduce the appropriate algebraic relations. The reader should compare Section 25 of \cite{KronheimerMrowka07}.

To incorporate the $\bb{F}_2[t]$-module structures, we lift this map to the level of the free $\bb{F}_2[\bb{Z}/2]$-complexes following the scheme used in the proof of Theorem \ref{kunnethI}. From this we obtain a map in the derived category of $\bb{F}_2[t]$-modules
\begin{equation}K : CF_{\bb{Z}/2}(\tilde{L}_0, \tilde{L}_1) \tensor^{\bb{L}}_{\bb{F}_2[t]} CF_{\bb{Z}/2}(\tilde{L}'_0, \tilde{L}'_1) \to CF_{\bb{Z}/2}(\tilde{L}_0 \times \tilde{L}'_0, \tilde{L}_1 \times \tilde{L}'_1).\end{equation}
compatible with $\kappa$ in the sense that if $\alpha \in HF_{\bb{Z}/2}(\tilde{L}_0, \tilde{L}_1)$ and $\alpha' \in HF_{\bb{Z}/2}(\tilde{L}'_0, \tilde{L}'_1)$ are cohomology classes, then $K(\alpha \tensor \alpha') = \kappa(\alpha \tensor \alpha')$.

To see that $K$ is a quasi-isomorphism, again note that each side can be filtered by the symplectic action on $\tilde{M} \tensor \tilde{M}'$, and $K$ preserves this filtration. It thus suffices to check that $K$ is an isomorphism on the subquotients, in other words on the complexes corresponding to a pair of intersection points $x \in \tilde{L}_0 \cap \tilde{L}_1, x' \in \tilde{L}'_0 \cap \tilde{L}'_1$ and their conjugates by $\iota$ and $\iota'
$, with differentials and Kunneth maps computed using only the constant holomorphic curves on $\tilde{M} \tensor \tilde{M}'$.
\begin{itemize}\item In the case that $x, x'$ are each invariant critical points, we are asking for the map
\begin{equation*}K: \bb{F}_2[t] \tensor^{\bb{L}}_{\bb{F}_2[t]} \bb{F}_2[t] \to \bb{F}_2[t]\end{equation*}
to be an isomorphism. This map is induced from the map $\kappa$
\begin{equation*}\bb{F}_2[t] \tensor \bb{F}_2[t] \to \bb{F}_2[t]\end{equation*}
which by the same computation as in the proof of Theorem \ref{kunnethI}, is given by $t^i \tensor t^j \mapsto t^{i+j+n}$ where $n$ is the spectral flow of $\tilde{I}_{s,t}\frac{d}{dt}$. However by assuming that $\tilde{J}_{s,t}$ is sufficiently close to the diagonal almost complex structure $J_t \times J'_t$, at least in a small neighbourhood of $x \times x'$, we can assume this spectral flow is zero, and so $K$ is an isomorphism.
\item In the case that $x$ is invariant but $x'$ is non-invariant, or the other way round, the product $x\times x'$ is also non-invariant, and we are asking for the map
\begin{equation*}K: \bb{F}_2[t] \tensor^{\bb{L}}_{\bb{F}_2[t]} \bb{F}_2 \to \bb{F}_2\end{equation*}
to be an isomorphism. Again this map is induced from
\begin{equation*}\kappa: \bb{F}_2[t] \tensor \bb{F}_2 \to \bb{F}_2.\end{equation*}
However by the description of the contribution to $\cal{M}^{\kappa}$ from the constant solutions, we see that this map exactly sends $1 \tensor 1 \mapsto 1$ and $t^i \tensor 1 \mapsto 0$ for $i > 0$, from which we see that $K$ is an isomorphism.
\item In the remaining case that both $x, x'$ are non-invariant, we have two corresponding pairs of non-invariant points of $\tilde{M} \times \tilde{M}'$, namely $\{x\times x', \iota(x) \times \iota(x')\}$ and $\{x \times \iota'(x'), \iota(x) \times x'\}$. We ask for the map
\begin{equation*}K : \bb{F}_2 \tensor^{\bb{L}}_{\bb{F}_2[t]} \bb{F}_2 \to \bb{F}_2 \oplus \bb{F}_2\end{equation*}
to be an isomorphism. This we cannot see on the level of $\kappa$ (which is, in this case, zero), however its lift to free $\bb{F}_2[\bb{Z}/2]$-complexes, which is a map
\begin{equation*}\bb{F}_2[\bb{Z}/2] \tensor_{\bb{F}_2} \bb{F}_2[\bb{Z}/2] \to \bb{F}_2[\bb{Z}/2] \oplus \bb{F}_2[\bb{Z}/2]\end{equation*}
is certainly an isomorphism, and thus $K$ is as well.\end{itemize}

\section{Comparison to the Seidel-Smith model}

We now compare our definition of $HF_{\bb{Z}/2}(\tilde{L}_0, \tilde{L}_1)$, which we refer to as the ``Kronheimer-Mrowka model'', to the model due to Seidel-Smith in \cite{SeidelSmith10}, which goes via the Borel construction. We begin by briefly reviewing it.

\subsection{Review of the construction.} Consider the fibration
\begin{equation}\label{borelfibre}\tilde{M}_{borel} = \tilde{M} \times_{\bb{Z}/2} S^{\infty} \to \bb{R}P^{\infty}\end{equation}
which has fibre $\tilde{M}$. Take the standard Morse function $h$ on $\bb{RP}^{\infty}$ which pulls back to $h(z_1, z_2, \hdots) = z_1^2 + 2 z_2^2 + \hdots$ on $S^{\infty}$, and a family $\{\tilde{J}_z\}_{z \in \bb{RP}^{\infty}}$ of time-dependent almost complex structures on the fibres $\tilde{M}_z$ of \eqref{borelfibre}, each of contact type at infinity. These are not required to be equivariant; however they are locally constant around each critical point $z \in \bb{RP}^{\infty}$, and at these critical points we require that they are regular in the usual sense of non-equivariant Floer theory of $(\tilde{L}_0, \tilde{L}_1)$. We then define
\begin{equation}CF^{SS}_{\bb{Z}/2}(\tilde{L}_0, \tilde{L}_1) = \Pi_{z \in \Crit(h)} CF(\tilde{L}_0, \tilde{L}_1; \tilde{J}_z)\end{equation}
with a differential $d^{SS}$ that counts $(v,u)$, for $v : \bb{R} \to \bb{RP}^{\infty}$ and $u: Z \to \tilde{M}$ with $u(\cdot, i) \subset \tilde{L}_i$ for $i = 0,1$, which solve the coupled equations
\begin{equation}\frac{dv}{ds} + \nabla h(v(s)) = 0, \qquad \partial_s u + J_z(t,u) \partial_t u = 0\end{equation}
with asymptotics at $(z_-, x_-)$ and $(z_+, x_+)$ for $z_{\pm} \in \Crit(h)$ and $x_{\pm} \in \tilde{L}_0 \cap \tilde{L}_1$.

By counting trajectories $(v, u)$ such that $v(0)$ is constrained to lie in a generically chosen codimension one hyperplane of $\bb{R}P^{\infty}$, we obtain a $\bb{F}_2[t]$-module structure. This can be made more explicit: there is a canonical self-embedding $\tau: \bb{R}P^{\infty} \to \bb{R}P^{\infty}$ sending $(z_1, z_2, \hdots)$ to $(0, z_1, z_2, \hdots)$, satisfying $\tau^* h = h + 1$. We can choose the family $\{\tilde{J}_z\}$ of almost complex structures to satisfy $\tilde{J}_z = \tilde{J}_{\tau(z)}$ everywhere, in particular we have the same $\tilde{J}$ at each critical point of $h$. In particular this allows us to write the chain complex as
\begin{equation}CF_{\bb{Z}/2}^{SS}(\tilde{L}_0, \tilde{L}_1) = CF(\tilde{L}_0, \tilde{L}_1)\tensor \bb{F}_2[[t]]\end{equation}
with differential $d^{(0)} + d^{(1)}t + d^{(2)}t^2 + \hdots$, where $d^{(0)}$ is the ordinary differential on $CF(\tilde{L}_0, \tilde{L}_1; \tilde{J})$.

Consequently, there is a spectral sequence with $E_2$-page $HF(\tilde{L}_0, \tilde{L}_1)\tensor\bb{F}_2[[t]]$ converging to $HF_{\bb{Z}/2}^{SS}(\tilde{L}_0, \tilde{L}_1)$: this is \emph{not} a natural feature of the Kronheimer-Mrowka model.

We will not directly compare the Kronheimer-Mrowka model of equivariant Floer cohomology with Seidel-Smith's, in part because whilst our model is naturally finite rank over $\bb{F}_2[t]$, Seidel-Smith's theory is necessarily defined over $\bb{F}_2[[t]]$. Instead, we will work with finite rank truncations of Seidel-Smith's theory: we work instead over the standard $\bb{R}P^{n-1} \subset \bb{R}P^{\infty}$ consisting of just those points of the form $(z_1, \hdots, z_{n}, 0, 0, \hdots)$, and define
\begin{equation}CF^{(n)}_{\bb{Z}/2}(\tilde{L}_0, \tilde{L}_1) = \bigoplus\limits_{z \in \Crit(h)\cap \bb{R}P^{n-1}}CF(\tilde{L}_0, \tilde{L}_1; \tilde{J}_z)\end{equation}
with the same differential $d^{SS}$. This is naturally a quotient complex of $CF^{SS}$, and there are natural maps
\begin{equation}CF^{(n+1)}_{\bb{Z}/2}(\tilde{L}_0, \tilde{L}_1) \to CF^{(n)}_{\bb{Z}/2}(\tilde{L}_0, \tilde{L}_1).\end{equation}
We can recover $CF^{SS}$ as the inverse limit $\lim\limits_{\leftarrow} CF^{(n)}_{\bb{Z}/2}$.

To minimize confusion, for this subsection we will write $CF^{KM}_{\bb{Z}/2}$ and $HF^{KM}_{\bb{Z}/2}$ for the Kronheimer-Mrowka model of equivariant cohomology (which elsewhere we just refer to as $CF_{\bb{Z}/2}$ and $HF_{\bb{Z}/2}$).

\begin{theorem}\label{KMSScomparison}There is a quasi-isomorphism of $\bb{F}_2[t]$-modules
\begin{equation}CF^{(n)}_{\bb{Z}/2}(\tilde{L}_0, \tilde{L}_1) \cong CF^{KM}_{\bb{Z}/2}(\tilde{L}_0, \tilde{L}_1) \tensor^{\bb{L}}_{\bb{F}_2[t]} \frac{\bb{F}_2[t]}{(t^{n})}.\end{equation}
Moreover, these isomorphisms are compatible with the natural maps $CF^{(n+1)}_{\bb{Z}/2} \to CF^{(n)}_{\bb{Z}/2}$ on the one hand, and $\bb{F}_2[t]/(t^{n+1}) \to \bb{F}_2[t]/(t^{n})$ on the other. In particular there is an isomorphism in the inverse limit
\begin{equation}HF^{SS}_{\bb{Z}/2}(\tilde{L}_0, \tilde{L}_1) \cong HF^{KM}_{\bb{Z}/2}(\tilde{L}_0, \tilde{L}_1)^{\wedge}\end{equation}
where $HF^{KM}_{\bb{Z}/2}(\tilde{L}_0, \tilde{L}_1)^{\wedge}$ is the completion of $HF^{KM}_{\bb{Z}/2}(\tilde{L}_0, \tilde{L}_1)$ in $t$.\end{theorem}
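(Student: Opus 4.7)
The overall strategy is to realize the Seidel--Smith truncated complex as the Kronheimer--Mrowka equivariant Floer complex of a product, and then invoke the K\"{u}nneth theorem of Section 7 to extract the derived tensor product statement.

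The first step is to construct an auxiliary $\bb{Z}/2$-equivariant symplectic manifold whose Kronheimer--Mrowka equivariant Floer cohomology is the module $\bb{F}_2[t]/(t^n)$. I take $\tilde{M}'_n = T^*S^{n-1}$ with the antipodal $\bb{Z}/2$-action, together with the zero section $\tilde{L}'_{0,n}=S^{n-1}$ and the Lagrangian $\tilde{L}'_{1,n}=\mathrm{graph}(dh)$ for a $\bb{Z}/2$-invariant Morse function $h$ on $S^{n-1}$ obtained by pulling back a perfect Morse function on $\bb{R}P^{n-1}$. The action is free, so the fixed set is empty, the polarization data is vacuous, and $\bar{C}=0$; the Kronheimer--Mrowka complex reduces to the free $\bb{F}_2[\bb{Z}/2]$-complex generated by $\tilde{L}'_{0,n}\cap \tilde{L}'_{1,n}$, to which the formalism of Section~2 applies directly. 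Since this $\bb{F}_2[\bb{Z}/2]$-complex is quasi-isomorphic to the Morse complex of $h$ (a perfect one on $S^{n-1}$), its derived coinvariants compute $H^*(\bb{R}P^{n-1};\bb{F}_2)\cong \bb{F}_2[t]/(t^n)$ as an $\bb{F}_2[t]$-module.

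The second step is to apply the K\"{u}nneth theorem (Theorem~\ref{kunnethII}) to the pair $(\tilde{M},\tilde{M}'_n)$ with the product $\bb{Z}/2$-action. This gives a quasi-isomorphism of $\bb{F}_2[t]$-modules
\[
CF^{KM}_{\bb{Z}/2}\bigl(\tilde{L}_0\times \tilde{L}'_{0,n},\,\tilde{L}_1\times \tilde{L}'_{1,n}\bigr) \;\simeq\; CF^{KM}_{\bb{Z}/2}(\tilde{L}_0,\tilde{L}_1)\,\tensor^{\bb{L}}_{\bb{F}_2[t]}\,\bb{F}_2[t]/(t^n).
\]
The remaining step, and the main obstacle, is to identify the left-hand side with $CF^{(n)}_{\bb{Z}/2}(\tilde{L}_0,\tilde{L}_1)$. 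The $\bb{Z}/2$-action on the product is again free, so the Kronheimer--Mrowka complex reduces to the ordinary Floer complex in the Borel quotient $\tilde{M}\times_{\bb{Z}/2} T^*S^{n-1}$, which is a symplectic fibration over $T^*\bb{R}P^{n-1}$ with fiber $\tilde{M}$, carrying the Lagrangians $\tilde{L}_0\times_{\bb{Z}/2}S^{n-1}$ and $\tilde{L}_1\times_{\bb{Z}/2}\mathrm{graph}(dh)$ fibered over the zero section and $\mathrm{graph}(dh')$ respectively. I then invoke a neck-stretching/adiabatic limit argument in the base direction: rescaling $h'$ by a small parameter and taking the limit, holomorphic strips in the total space decompose into pairs of a Morse flow on $\bb{R}P^{n-1}$ and a Floer strip in $\tilde{M}$ for a $\bb{R}P^{n-1}$-family of almost complex structures, which is precisely the data counted by $CF^{(n)}_{\bb{Z}/2}$. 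This is essentially the content of Seidel--Smith's construction, and the necessary compactness and gluing statements are routine in that setup.

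For the compatibility with the maps $CF^{(n+1)}_{\bb{Z}/2}\to CF^{(n)}_{\bb{Z}/2}$, I use the equivariant inclusion $S^{n-1}\hookrightarrow S^{n}$ (with the antipodal actions): the corresponding Lagrangian inclusion and a continuation-type map between the product Kronheimer--Mrowka complexes pulls back, under the identifications of steps~1--3, to the natural quotient $\bb{F}_2[t]/(t^{n+1})\to \bb{F}_2[t]/(t^n)$ on one side and to the Seidel--Smith truncation map on the other. Passing to the inverse limit and using that finite-rank $\bb{F}_2[t]$-modules have vanishing $\lim^{1}$ then yields the claimed isomorphism $HF^{SS}_{\bb{Z}/2}(\tilde{L}_0,\tilde{L}_1)\cong HF^{KM}_{\bb{Z}/2}(\tilde{L}_0,\tilde{L}_1)^{\wedge}$. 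The principal difficulty throughout is the adiabatic comparison in step~3; every other ingredient is either a direct application of a theorem already proved or a classical computation.
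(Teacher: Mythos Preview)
Your approach is correct in outline and genuinely different from the paper's. The paper takes as auxiliary factor $T^*\bb{R}^n$ with the $\bb{Z}/2$-action by $-1$, which has a single fixed point at the origin; the Lagrangians are $\bb{R}^n$ and the graph $\Gamma_\eps$ of $\eps\,dg$ for a Morse function $g$ on $\bb{R}^n$ with one invariant minimum at $0$ and the standard Morse function on the boundary sphere. This produces $CF^{KM}_{\bb{Z}/2}(\bb{R}^n,\Gamma_\eps)\cong \Cone\bigl(\bb{F}_2[t]\to \bb{F}_2[t]/(t^n)\bigr)$, and the K\"{u}nneth theorem applied to the full product only gives a stabilization statement. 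The paper then uses the action filtration on $T^*\bb{R}^n$ to split the product K\"{u}nneth map into a map of exact triangles, with the dashed induced map being the desired equivalence. Your choice of $T^*S^{n-1}$ with free antipodal action is cleaner in one respect: since the product action is free, there is no fixed set, and you avoid entirely the delicate point the paper faces, namely that a split product almost complex structure fails to be equivariantly regular near $M\times\{0\}$, which forces a perturbation and the rescaling trick to rescue the action filtration. You get $\bb{F}_2[t]/(t^n)$ directly, and K\"{u}nneth applies in one stroke.

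Two points deserve sharpening. First, what you call an ``adiabatic limit'' is really the same as what the paper does to identify $B^{prod}\cong CF^{(n)}_{\bb{Z}/2}$: one uses a fibred almost complex structure $J_z\times J^{std}$ so that strips project to $J^{std}$-strips in the base, invokes Floer's result that for small $\eps$ these are exactly Morse trajectories, and reads off the Seidel--Smith coupled equation in the fibre. No neck-stretching is needed. Second, and more seriously, your naturality step is under-specified: there is no continuation map between Floer theories in the distinct symplectic manifolds $T^*S^{n-1}$ and $T^*S^n$ coming from a ``Lagrangian inclusion''. The paper handles naturality by observing that the highest-action generator $\bf{y}_{n+1}$ spans a subcomplex of $CF^{KM}_{\bb{Z}/2}(\bb{R}^{n+1},\Gamma^{n+1}_\eps)$ with quotient $CF^{KM}_{\bb{Z}/2}(\bb{R}^n,\Gamma^n_\eps)$, and runs the same action-filtration argument in the product. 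In your setup you would instead need to argue algebraically after passing through Floer's correspondence to Morse complexes, where the subcomplex spanned by the two maxima $\pm e_{n+1}$ of $S^n$ has quotient the Morse complex of $S^{n-1}$; this works, but it is not the geometric mechanism you describe.
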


In particular, we immediately deduce the inequality of Theorem \ref{smith_inequality}:
\begin{align*}\dim_{\bb{F}_2} HF(\tilde{L}_0, \tilde{L}_1) &\ge \rank_{\bb{F}_2[[t]]}HF^{SS}_{\bb{Z}/2}(\tilde{L}_0, \tilde{L}_1) \\
&= \rank_{\bb{F}_2[t]}HF^{KM}_{\bb{Z}/2}(\tilde{L}_0, \tilde{L}_1) \\
&= \rank_{\bb{F}_2[t,t^{-1}]} HF_{tw}(L_0, L_1; \frak{p})\end{align*}
by using the equivalence after localization of Theorem \ref{localization_theorem} of $HF^{KM}_{\bb{Z}/2}(\tilde{L}_0, \tilde{L}_1)$ and $HF_{tw}(L_0, L_1; \frak{p})$.

\subsection{Idea of the proof.} Suppose $\tilde{X}$ is a smooth manifold with a $\bb{Z}/2$-action, with fixed point set $X$. Recall that to implement the Kronheimer-Mrowka model here, we first take the oriented real blowup $\tilde{Y}$ of $\tilde{X}$ along $X \subset \tilde{X}$, and then take the now free $\bb{Z}/2$-quotient, $Y$. An invariant Morse function $f$ on $\tilde{X}$ then induces a flow on $Y$ which is everywhere tangent to the boundary; we then apply the formalism of Morse theory for manifolds with boundary.

On the other hand, the (truncated) Seidel-Smith model is to take the fibration
\begin{equation}\tilde{X}^{(n)} = X \times_{\bb{Z}/2} S^{n-1} \to \bb{R}P^{n-1}\end{equation}
and couple the Morse theory of $\bb{R}P^{n-1}$ with Morse theory in each of the fibres.

To interpolate between the two, consider the ball $B^{n}$ with the $\bb{Z}/2$-action given by $-1$. Put a $\bb{Z}/2$-equivariant Morse function on $B^{n}$, which is equal to the standard Morse function $h$ used earlier on its boundary $S^{n-1}$, and such that its flow is everywhere tangent to the boundary, and with a single interior critical point at $0$, which we take to be a minimum.

Explicitly, choose a smooth function $a : \bb{R}_{\ge 0} \to \bb{R}$ such that: (i) $a(r) = \frac{1}{2}r^2$ for sufficiently small $r$, (ii) $a(r)$ is increasing for $0 < r < 1$; (iii) $a(r) = 1 - (r-1)^2$ in a neighbourhood of $r = 1$. Then the function $g : B^n \to \bb{R}$ given by
\begin{equation}g(z) = \frac{1}{||z||^2}a(||z||)h(z),\end{equation}
where $h(z) = z_1^2 + 2z_2^2 + \hdots + n z_n^2$, has the desired properties. We will soon want to also extend $g$ to all of $\bb{R}^n$. To do this, use the same formula as above, and impose two further conditions on $a(r)$: (iv) $a(r)$ is decreasing for $1 < r < 2$; and (v) $a(r) = -r^2$ for $r \ge 2$.

Take the product $\tilde{X} \times B^{n}$. This also has a $\bb{Z}/2$-action, with fixed point set exactly $X \times 0$. Suppose $\tilde{Z}$ is the oriented real blow-up of $\tilde{X} \times B^{n}$ along $X \times 0$, and let $Z$ be the free quotient of this by $\bb{Z}/2$. The map $Z \to B^{n}/(\bb{Z}/2)$ is exactly the truncated Borel fibration $\tilde{X}^{(n)} \to \bb{R}P^{n-1}$ over the boundary, and over $0$ it is just the Kronheimer-Mrowka model manifold $Y$.

Now, put a $\bb{Z}/2$-equivariant gradient-like flow on $\tilde{Z}$ which covers the Morse flow on $B^{n}$, such that on the fibre over $0$ it agrees with the Kronheimer-Mrowka flow, and on boundary $\tilde{X} \times S^{n-1} \to S^{n-1}$ agrees with the Seidel-Smith flow. We can then define a map from (a stabilized version of) the Kronheimer-Mrowka chain complex to the Seidel-Smith chain complex, by counting pairs of trajectories on $\tilde{Z}$ which cover Morse flows in $B^{n}$ connecting the boundary to the minimum at $0$. 

\subsection{Implementation in Floer theory.} Consider again the Morse function $g : \bb{R}^n \to \bb{R}$ now extending to all $\bb{R}^n$ as explained above. For $\eps > 0$, take the graph of $\eps g$, $\Gamma_{\eps} = \{(z, \eps dg_z) \in T^*\bb{R}^n\}$, as a Lagrangian in the symplectic manifold $T^* \bb{R}^n$. It is equivariant for the $\bb{Z}/2$ action on $T^* \bb{R}^n$ given by multiplication by $-1$, since $g$ is; and the same is true for the zero section $\bb{R}^n \subset T^* \bb{R}^n$. We are interested in the Kronheimer-Mrowka model equivariant chain complex
\begin{equation}CF_{\bb{Z}/2}^{KM}(\bb{R}^n, \Gamma_{\eps}).\end{equation}
The fixed point set of the $\bb{Z}/2$-action is just a single point $\{0\} \subset T^* \bb{R}^n$, which is also an intersection of $\bb{R}^n$ and $\Gamma_{\eps}$. The non-invariant intersection points are precisely the non-invariant critical points of $g$, which are $\pm y_1 = (\pm 1, 0, \hdots, 0), \hdots, \pm y_n = (0, \hdots, 0, \pm 1)$. At $0$, the Lagrangian subspace $T_0 \Gamma_{\eps}$ is the graph of the diagonal matrix with entries $\eps, 2\eps, \hdots, n \eps$; accordingly the standard almost complex structure $J$ on $T^* \bb{R}^n$ satisfies the conditions of Assumption \ref{localgeomI}, and the lowest $n$ positive eigenvalues of $J \frac{d}{dt}$ are
\begin{equation*}\lambda_0 = \tan^{-1}(\eps), \lambda_1 = \tan^{-1}(2\eps), \hdots, \lambda_{n_1} = \tan^{-1}(n \eps).\end{equation*}
Write $\bf{x}_0, \bf{x}_1, \hdots$ for the ``boundary-stable'' generators of $CF^{KM}_{\bb{Z}/2}(\bb{R}^n, \Gamma_{\eps})$, the first $n$ of which are given by the eigenvalues above; write $\bf{y}_1, \hdots, \bf{y}_n$ for the ``interior'' generators given by the pairs $\{y_i, -y_i\}$.

Observe that the complex is naturally graded, with $|\bf{y}_i| = i$ and $|\bf{x}_i| = i$. Moreover, the symplectic action of a point of $\bb{R}^n \cap \Gamma_{\eps}$ is exactly the value of $g$ at the corresponding critical point. Since $g(y_i) = \eps i > 0$ while $g(0) = 0$, the generators $\bf{y}_i$ form a subcomplex $B$, and the quotient complex $A$ is given by the generators $\bf{x}_i$ and the differentials between them. The total complex $CF^{KM}_{\bb{Z}/2}(\bb{R}^n, \Gamma_{\eps})$ is then recovered as $\Cone(A \to B)$.

If $\phi_t$ is the Hamiltonian flow on $T^* \bb{R}^n$ induced by $g$, take the almost complex structures $J_t = (\phi_t)_* J (\phi_t)^{-1}_*$; these are $\bb{Z}/2$-equivariant. Then, finite energy Morse gradient flows $v : \bb{R} \to \bb{R}^n$ of $g$ with endpoints at two critical points of $g$ give rise to finite energy $J_t$-holomorphic strips $u: Z \to T^*\bb{R}^n$ with boundaries on $\bb{R}^n$ and $\Gamma_{\eps}$, and endpoints at the corresponding points of $\bb{R}^n \cap \Gamma_{\eps}$, via
\begin{equation*}v(s) \mapsto u(s,t) = \phi_t v(s).\end{equation*}
Moreover, by a result of Floer \cite{Floer89}, if $\eps$ is sufficiently small then this is actually a bijection onto all the finite energy $J_t$-holomorphic strips (although Floer's result is for compact Lagrangians, our assumptions on the geometry of $\Gamma_{\eps}$ at infinity confine holomorphic strips with endpoints in $\bb{R}^n \cap \Gamma_{\eps}$ to a compact set).

We can then completely characterize $CF_{\bb{Z}/2}(\bb{R}^n, \Gamma_{\eps}) = Cone(A \to B)$ as an $\bb{F}_2[t]$-module complex. Between each of $y_{i+1}, -y_{i+1}$ and $y_i, -y_i$ there is exactly one flow line of $g$, showing that $d \bf{y}_i = 0$ and $T \bf{y}_i = y_{i+1}$; thus $B \cong \bb{F}_2[t]/(t^n)$ (with a shift by $-1$). Any blown-up trajectory with endpoints at $\bf{x}_{i+1}$ and $\bf{x}_i$ is supported on a constant strip at $0$ downstairs; hence $A \cong \bb{F}_2[t]$. Note that we can directly identify $A = CF_{\bb{Z}/2}(\bb{R}^n, \Lambda)$, where $\Lambda$ is the linear Lagrangian subspace $T_0 \Gamma_{\eps} \subset T^* \bb{R}^n$.

Finally, of all the flow lines from $y_i$ to $0$, all but the one along the $i$-th coordinate axis are asymptotically tangent to the subspace spanned by the first $(i-1)$-coordinates, and likewise for the flow lines from $-y_i$ to $0$. In particular, $d \bf{x}_{i-1} = \bf{y}_i$, and so we conclude
\begin{equation}CF^{KM}_{\bb{Z}/2}(\bb{R}^n, \Gamma_{\eps}) = \Cone\left(\bb{F}_2[t] \to \bb{F}_2[t]/(t^n)\right).\end{equation}
This is of course quasi-isomorphic to $\bb{F}_2[t]$ (which a shift by $-n$ in grading), but the above description will be important in what comes next.

\begin{proof}[Proof of Theorem \ref{KMSScomparison}]Take $\tilde{M}, \tilde{L}_0, \tilde{L}_1$ with a $\bb{Z}/2$-action as before, with fixed point sets $M, L_0, L_1$ respectively, and consider the product $\tilde{M} \times T^* \bb{R}^n$ with product Lagrangians $\tilde{L}_0 \times \bb{R}^n$ and $\tilde{L}_1 \times \Gamma_{\eps}$. This has a $\bb{Z}/2$-action, with fixed point set $M \times \{0\}$. 

Equip $\tilde{M} \times T^* \bb{R}^n$ with a time-dependent, equivariant almost complex structure $\tilde{J}^{prod}_t$ in the following fashion. First, take the $S^{n-1}$-parametrized family of almost complex structures $J_z = \{J_t\}_z$ on $\tilde{M}$ obtained by pulling back those on the fibres of $\tilde{M}_{borel} \to \bb{R}P^{n-1}$ to $S^{n-1}$. Extend this family to $z \in \bb{R}^n$, in such a way as to preserve the equivariance $J_{-z} = \iota^* J$, and such that
\begin{itemize}\item for some small $\delta > 0$, $J_z = J_{z/||z||}$ whenever $z > 1 - \delta$
\item for $||z|| < 2\delta$, $J_z = \{\tilde{J}_t\}$, where $\tilde{J}$ is an equivariant, equivariantly regular, time-dependent almost complex structure on $\tilde{M}$.\end{itemize}

Again take some small tubular neighbourhood $U$ of $M \subset \tilde{M}$ as in Assumption \ref{localgeomII}. Then writing $z$ for the base coordinate in $T^* \bb{R}^n$, we require
\begin{equation*}\label{Jprod}\tilde{J}^{prod} = J_z \times J^{std}\end{equation*}
on $(\tilde{M} \times T^* \bb{R}^n) \backslash (U \times D^*_{\delta}\{||z|| < \delta\})$, where $J^{std}$ is the standard almost complex structure on $T^* \bb{R}^n$ and $D^*_{\delta}$ is the bundle of covectors of norm less than $\delta$.

We would ideally want \eqref{Jprod} to hold on the entirety of $\tilde{M} \times T^* \bb{R}^n$, since then any $\tilde{J}^{prod}$-holomorphic strip would project to a $J^{std}$-holomorphic strip in $T^* \bb{R}^n$, and in particular we could filter the complex $CF^{KM}_{\bb{Z}/2}(\tilde{L}_0 \times \bb{R}^n, \tilde{L}_1 \times \Gamma_{\eps}$ by the symplectic action on $\bb{R}^n \cap \Gamma_{\eps})$. However, we cannot assume that $\tilde{J}^{prod}$ takes the form \eqref{Jprod} near the fixed point set $M \times 0$, as then it may not be regular in the presence of spectral flow on $M$, as in Section 2.7.

Instead, we can rescale the symplectic form on $\tilde{M}$ so that the difference between the largest and the smallest symplectic action of points of $\tilde{L}_0 \cap \tilde{L}_1$ is less than $\eps / 2$. The product almost complex structures $J_z \times J^{std}$ of \eqref{Jprod} will still be compatible for this rescaled symplectic form on $\tilde{M} \times T^* \bb{R}^n$. Moreover, if we assume that $\tilde{J}^{prod}$ is sufficiently close to $\tilde{J} \times J^{std}$ over $U \times D^*_{\delta}\{||z|| < \delta\}$, it will be compatible both for the original symplectic form on $\tilde{M} \times T^* \bb{R}^n$, as well as the rescaled form.

In particular this means that for this choice of $\tilde{J}^{prod}$, $CF^{KM}_{\bb{Z}/2}(\tilde{L}_0 \times \bb{R}^n, \tilde{L}_1 \times \Gamma_{\eps})$ is still filtered by the symplectic action on $T^*\bb{R}^n$. Consequently, there is a subcomplex $B^{prod}$ formed by generators lying over the non-invariant points of $\bb{R}^n \cap \Gamma_{\eps}$, and a quotient complex $A^{prod}$ formed by just the generators coming from $(\tilde{L}_0 \cap \tilde{L}_1) \times \{0\}$. The differential then yields a map $A^{prod} \to B^{prod}$, such that $CF^{KM}_{\bb{Z}/2}(\tilde{L}_0 \times \bb{R}^n, \tilde{L}_1 \times \Gamma_{\eps}) = \Cone(A^{prod} \to B^{prod})$.

We can directly identify $B^{prod} \cong CF^{(n)}_{\bb{Z}/2}(\tilde{L}_0, \tilde{L}_1)$ from the description of $\tilde{J}^{prod}$ as well as the identification of Floer trajectories in $T^* \bb{R}^n$ bounded by $\bb{R}^n$ and $\Gamma_{\eps}$ with Morse flows of $g$.

As for $A^{prod}$, observe that since on $\delta < ||z|| < 2 \delta$ we have $\tilde{J}^{prod} = \tilde{J} \times J^{std}$, by a maximum principle argument applied to the projection to $T^* \bb{R}^n$, any Floer trajectory with endpoints on $\tilde{M} \times \{0\}$ must in fact be confined to $\tilde{M} \times T^*\{||z|| \le \delta\}$. In particular, again  writing $\Lambda$ for $T_0 \Gamma_{\eps} \subset T^*\bb{R}^n$, we can identify $A^{prod}$ with $CF^{KM}_{\bb{Z}/2}(\tilde{L}_0 \times \bb{R}^n, \tilde{L}_1 \times \Lambda)$. There is then a Kunneth isomorphism
\begin{equation}CF^{KM}_{\bb{Z}/2}(\tilde{L}_0, \tilde{L}_1) \tensor^{\bb{L}}_{\bb{F}_2[t]} A \xrightarrow{\sim} A^{prod}\end{equation}
recalling that $A = CF^{KM}_{\bb{Z}/2}(\bb{R}^n, \Lambda) \cong \bb{F}_2[t]$.

On the other hand, Theorem \ref{kunnethII} yields an equivalence in the derived category 
\begin{equation}K: CF^{KM}_{\bb{Z}/2}(\tilde{L}_0, \tilde{L}_1) \tensor^{\bb{L}}_{\bb{F}_2[t]} CF^{KM}_{\bb{Z}/2}(\bb{R}^n, \Gamma_{\eps}) \to CF^{KM}_{\bb{Z}/2}(\tilde{L}_0 \times \bb{R}^n, \tilde{L}_1 \times \Gamma_{\eps}).\end{equation}
As before, $CF^{KM}_{\bb{Z}/2}(\bb{R}^n, \Gamma_{\eps}) = \Cone(A \to B) = \Cone(\bb{F}_2[t] \to \bb{F}_2[t]/(t^n))$. Moreover by making a similar careful choice of $Z$-dependent almost complex structure used to define $K$, we see that $K$ preserves the filtration by the action on $T^* \bb{R}^n$. In particular, we obtain a commutative diagram in the derived category
\begin{equation}\label{KMSScommsquare}\begin{tikzcd}
CF^{KM}_{\bb{Z}/2}(\tilde{L}_0, \tilde{L}_1) \tensor^{\bb{L}}_{\bb{F}_2[t]} B \arrow[r] \arrow[d, dashed] &
CF^{KM}_{\bb{Z}/2}(\tilde{L}_0, \tilde{L}_1) \tensor^{\bb{L}}_{\bb{F}_2[t]} CF^{KM}_{\bb{Z}/2}(\bb{R}^n, \Gamma_{\eps}) \arrow[r] \arrow[d, "K"] &
CF^{KM}_{\bb{Z}/2}(\tilde{L}_0, \tilde{L}_1) \tensor^{\bb{L}}_{\bb{F}_2[t]} A \arrow[d, "K"] \\
CF^{(n)}_{\bb{Z}/2}(\tilde{L}_0, \tilde{L}_1) \cong B^{prod} \arrow[r] & CF^{KM}_{\bb{Z}/2}(\tilde{L}_0 \times \bb{R}^n, \tilde{L}_1 \times \Gamma_{\eps}) \arrow[r] & A^{prod}
\end{tikzcd}\end{equation}
where the rows are exact triangles, and the vertical two arrows are the Kunneth maps, which are quasi-isomorphisms. In particular the induced dashed vertical arrow is also a quasi-isomorphism. Since $B \cong \bb{F}_2[t]/(t^n)$, this is the desired equivalence
\begin{equation}\label{KMSSequivalence}CF^{KM}_{\bb{Z}/2}(\tilde{L}_0, \tilde{L}_1) \tensor^{\bb{L}}_{\bb{F}_2[t]} \bb{F}_2[t]/(t^n) \xrightarrow{\sim} CF^{(n)}_{\bb{Z}/2}(\tilde{L}_0, \tilde{L}_1).\end{equation}
The naturality in $n$ statement in the theorem then follows from a similar argument, by considering the action filtration on $T^*\bb{R}^{n+1}$. First, we must ensure that $\eps$ is chosen small enough so that the holomorphic strips in both $T^*\bb{R}^{n+1}$ and $T^*\bb{R}^n$ with boundaries on $\bb{R}^{n+1}, \Gamma^{n+1}_{\eps}$ and $\bb{R}^n, \Gamma^n_{\eps}$ are in bijection with Morse gradient trajectories, where $\Gamma^{n+1}_{\eps}, \Gamma^n_{\eps}$ are the Lagrangians used earlier (indeed, decreasing $\eps$ gives rise to Lagrangians isotopic through via a $\bb{Z}/2$-equivariant Hamiltonian flow).

Observe that the generator $\bf{y}_{n+1}$ of $T^*\bb{R}^{n+1}$ spans a subcomplex of $CF_{\bb{Z}/2}(\bb{R}^{n+1}, \Gamma^{n+1}_{\eps})$ whose quotient is isomorphic to $CF_{\bb{Z}/2}(\bb{R}^n, \Gamma^{n}_{\eps})$, where we have written $\Gamma^n_{\eps} \subset T^*\bb{R}^n$ for the Lagrangian given earlier. On the larger subcomplex $B$ of generators with positive action, this quotient map is exactly the quotient map $\bb{F}_2[t]/(t^{n+1}) \to \bb{F}_2[t]/(t^n)$.

Likewise since $CF^{KM}_{\bb{Z}/2}(\tilde{L}_0 \times \bb{R}^{n+1}, \tilde{L}_1 \times \Gamma^{n+1}_{\eps})$ is filtered by the action on $T^* \bb{R}^{n+1}$, it has a subcomplex spanned by generators arising from the points of $(\tilde{L}_0 \cap \tilde{L}_1) \times \{y_{n+1}, -y_{n+1}\}$, whose quotient is isomorphic to $CF^{KM}_{\bb{Z}/2}(\tilde{L}_0 \times \bb{R}^{n}, \tilde{L}_1 \times \Gamma^{n}_{\eps})$.

We should note that the identification of these two quotient complexes does itself involve another use of the Kunneth formula in the form of a stabilization result: the lowest subquotient of the filtration on $CF^{KM}_{\bb{Z}/2}(\bb{R}^{n+1}, \Gamma^{n+1}_{\eps})$ is exactly $CF^{KM}_{\bb{Z}/2}(\bb{R}^{n+1}, \Lambda^{n+1})$ where $\Lambda^{n+1} = T_0 \Gamma^{n+1}_{\eps}$. However under the identification $T^* \bb{R}^{n+1} = T^* \bb{R}^n \times \bb{C}$, we have
\begin{equation}CF^{KM}_{\bb{Z}/2}(\bb{R}^{n+1}, \Lambda^{n+1}) = CF^{KM}_{\bb{Z}/2}(\bb{R}^n \times \bb{R}, \Lambda^{n+1} \times e^{i \tan^{-1}(\eps(n+1))} \bb{R}) \xrightarrow{\sim} CF^{KM}_{\bb{Z}/2}(\bb{R}^n, \Lambda^n).\end{equation}
Putting this all together, and noting that the Kunneth isomorphism preserves the filtrations by the action on $T^* \bb{R}^{n+1}$, we see that the diagram \eqref{KMSScommsquare} is natural in $n$, thus completing the proof of Theorem \ref{KMSScomparison}.\end{proof}

\begin{remark}For cleanness of exposition we have stated the above result on the level of the derived category of $\bb{F}_2[t]$-complexes; however this raises potential problems arising from the non-uniqueness of dashed map in \eqref{KMSScommsquare}. This can be rectified by working with the chain level category of $A_{\infty}$-modules, at the cost of requiring \eqref{KMSScommsquare} to be a homotopy coherent rather than homotopy commutative diagram. To obtain this upgrade, we could work instead at the level of $\bb{F}_2[\bb{Z}/2]$-complexes (recall that this is the most natural setting for the Kunneth map in any case): here the analogue of \eqref{KMSScommsquare} commutes on the nose. In particular, there is a distinguished choice for the map \eqref{KMSSequivalence}.\end{remark}

\section{Steenrod squares}

\subsection{The setting.}
Suppose that $M$ is any exact symplectic manifold with convex boundary, and $L_0, L_1 \subset M$ two exact Lagrangians intersecting transversely, either compact or both conical and disjoint at infinity. We can then form the product $\tilde{M} = M \times M$, which is again an exact symplectic manifold, carrying a symplectic involution $\iota$ which exchanges the two factors, the invariant set of which is the diagonal $M$. Likewise, $\tilde{L}_0 = L_0 \times L_0$ and $\tilde{L}_1 = L_1 \times L_1$ are exact Lagrangian submanifolds of $\tilde{M}$ preserved set-wise by the $\bb{Z}/2$-action, with fixed point sets the original Lagrangians $L_0$ and $L_1$ respectively.

Moreover, the normal bundle $N_{M \subset M \times M}$ of the invariant set is symplectically isomorphic to its tangent bundle $TM$. Moreover, this isomorphism can be arranged to send the normal bundles $N_{L_i \subset L_i \times L_i}$ to the Lagrangian subbundles $TL_i$ of $TM|_{L_i}$ for $i = 0, 1$. In particular, taking the polarization data $\frak{t} = (TM, TL_0, TL_1)$, we obtain a localization map
\begin{equation}HF_{\bb{Z}/2}(L_0 \times L_0, L_1 \times L_1) \to HF(L_0, L_1, \frak{t})\end{equation}
which is moreover an isomorphism after inverting $t$ by Theorem \ref{localization_theorem}.

This situation has an important special feature: if $J_t$ is a regular time-dependent almost complex structure on $M$ for $L_0$ and $L_1$, then $\tilde{J}_t = J_t \times J_t$ is regular on $M \times M$ for $L_0 \times L_0, L_1 \times L_1$. It is also manifestly $\bb{Z}/2$-equivariant.

Observe that under the identification $N_{M \subset \tilde{M}} \cong TM$, the complex structure $\tilde{J}_t|_{N_{M}}$ is identified with $J_t$. We can then choose a symplectic connection $\nabla$ on $TM$ such that the pair $(J_t, \nabla)$ is equivariantly regular in the sense of Definition \ref{regularCS}. Moreover, we could further specify that our regular choice of $J_t$ is exactly the standard complex structure on $\bb{C}^n$ in given Darboux charts around the intersection points of $L_0 \cap L_1$, the Lagrangians $L_0, L_1$ are given by linear Lagrangian subspaces; the geometric hypotheses of \ref{localgeomI} and \ref{localgeomII} will then be satisfied by $\tilde{M}$. Moreover, by perturbing $J_t$, we can also achieve equivariant transversality in the sense of Definition \ref{equivarianttransversality}, and thus $\tilde{J}_t$ can be used to define $HF_{\bb{Z}/2}(L_0 \times L_0, L_1 \times L_1)$.

Let us now make a brief digression, to discuss the algebraic consequences of this.

\subsection{Equivariant and regular almost complex structures.}
Just for this subsection, take $\tilde{M}$ any symplectic manifold with $\bb{Z}/2$ action and fixed point set $M$ as in Section 3 (not necessarily $\tilde{M} = M \times M$), with equivariant Lagrangians $\tilde{L}_0, \tilde{L}_1 \subset \tilde{M}$ having fixed point sets $L_0, L_1$.

Suppose that $\tilde{M}$ has a time-dependent almost complex structure $\tilde{J}$ which is $\bb{Z}/2$-equivariant, and is also regular in the sense of ordinary Floer theory on $\tilde{M}$.

This situation is highly atypical. It has the immediate consequence that given any $x_-, x_+ \in L_0 \cap L_1$, the moduli space $\cal{M}_M(x_-, x_+)$ of Floer trajectories in $M$ from $x_-$ to $x_+$ must be of dimension at most that of $\cal{M}_{\tilde{M}}(x_-, x_+)$, the moduli space of trajectories in $\tilde{M}$. In particular for such a trajectory $u$, the spectral flow $\specflow(u)$ of the operator $I \frac{d}{dt}$ used in the polarization-twisted Floer theory of the normal bundle $N_{M \subset \tilde{M}}$ must be non-negative, since it is equal to the difference in the Maslov indices $\mu_{\tilde{M}}(u) - \mu_{M}(u)$.

In particular, the involution $\iota$ on $\tilde{M}$ induces an involution on the chain complex $CF(\tilde{L}_0, \tilde{L}_1)$, which we will also call $\iota$ and hope no confusion is caused. As in Section 2.1, we can then form an algebraic version of the Borel construction, and produce a chain complex computing the group homology:
\begin{equation}C(\bb{Z}/2, CF(\tilde{L}_0, \tilde{L}_1)) = CF(\tilde{L}_0, \tilde{L}_1)[t]\end{equation}
with differential
\begin{equation}d_{borel} = d + t(\id + \iota).\end{equation}

Now consider Borel fibration $\tilde{M}_{borel} \to \bb{R}P^{\infty}$ used in the Seidel-Smith model of equivariant cohomology. Observe that since $\tilde{J}_t$ is equivariant and regular, by putting the almost complex structure $\tilde{J}_t$ on each of the fibres of $\tilde{M}_z$ for $z \in \bb{R}P^{\infty}$ we obtain a chain complex that computes $CF^{SS}_{\bb{Z}/2}(\tilde{L}_0, \tilde{L}_1)$. In fact this effectively ``decouples'' the Morse flow on $\bb{R}P^{\infty}$ and the Floer equation in the fibres, and we obtain an isomorphism of chain complexes
\begin{equation*}CF^{SS}_{\bb{Z}/2}(\tilde{L}_0, \tilde{L}_1) \cong (CF(\tilde{L}_0, \tilde{L}_1)[[t]], d_{borel})\end{equation*}
where the two terms of $d_{borel} = d + t(\id + \iota)$ count the discrete pairs $(v, u)$ of a Morse trajectory $v$ on $\bb{R}P^{\infty}$ and a $\tilde{J}_t$-holomorphic strip in $\tilde{M}$ such that $v$ is constant in $\bb{R}P^{\infty}$, or $u$ is constant in $\tilde{M}$, respectively.

Theorem \ref{KMSScomparison} then yields an isomorphism
\begin{equation*}H(CF(\tilde{L}_0, \tilde{L}_1)[[t]], d_{borel}) \cong HF^{KM}_{\bb{Z}/2}(\tilde{L}_0, \tilde{L}_1)^{\wedge}\end{equation*}
however this isomorphism is quite inexplicit (both geometrically, as it involves at several steps counting solutions of continuation-style equations, and algebraically). In this particular setting however, we can produce a direct equivalence going the other way, defined over $\bb{F}_2[t]$ instead of $\bb{F}_2[[t]]$:

\begin{proposition}Suppose we have an equivariant and regular $\tilde{J}_t$ as above, which is furthermore equivariantly transverse in the sense of Definition \ref{equivarianttransversality}. Then there is a map of $\bb{F}_2[t]$-complexes
\begin{equation}G: CF(\bb{Z}/2, CF(\tilde{L}_0, \tilde{L}_1)) \to CF^{KM}_{\bb{Z}/2}(\tilde{L}_0, \tilde{L}_1)\end{equation}
which, after completion of both complexes in $t$, is a quasi-inverse for the map \eqref{KMSSequivalence}.\end{proposition}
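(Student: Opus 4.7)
The plan is to construct $G$ directly as a chain-level map, exploiting that when $\tilde{J}_t$ is equivariant, regular, and equivariantly transverse, both the ordinary Floer differential on $CF(\tilde{L}_0, \tilde{L}_1)$ and the KM differentials $d_{oo}, d_{os}, d_{uo}, d^\partial$ can be computed simultaneously using the same $\tilde{J}_t$. In particular, the KM moduli spaces $\cal{M}(\bf{x}_-, \bf{x}_+)$ are obtained directly from the moduli spaces of $\tilde{J}_t$-holomorphic strips in $\tilde{M}$, by decomposing each strip according to whether its endpoints are invariant and according to the asymptotic eigenvalue of $I\frac{d}{dt}$ at any invariant endpoint.

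First I would define $G$ on $\bb{F}_2$-generators and extend by $\bb{F}_2[t]$-linearity. For an invariant intersection point $y \in L_0 \cap L_1$, set $G(y \otimes t^n) = (y, \lambda_n) \in C_s^{KM}$, the boundary-stable generator at the $(n{+}1)$-th positive eigenvalue; since $T$ on $C_s^{KM}$ shifts the eigenvalue index up by one (Example \ref{constant_u_solutions}), this is automatically $\bb{F}_2[t]$-linear. For a non-invariant pair $\{x, \iota x\}$, choose a representative and set $G(x \otimes 1) = G(\iota x \otimes 1) = \{x, \iota x\} \in C_o^{KM}$, and then $G(x \otimes t^n) = T^n \{x, \iota x\}$, where the action of $T$ on $C_o^{KM}$ is given via the lift to the free $\bb{F}_2[\bb{Z}/2]$-complex $\widetilde{\check{C}}^{KM}$ as in Section 2.1. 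The most natural way to organize this is to first produce an $\bb{F}_2[\bb{Z}/2]$-equivariant map $CF(\tilde{L}_0, \tilde{L}_1) \to \widetilde{\check{C}}^{KM}$ (sending $x \mapsto x$ for non-invariant $x$ and $y \mapsto (y, \psi_0)$ for invariant $y$, the latter manifestly not $\iota$-equivariant), and then apply Proposition \ref{Ainftymodulemap} to promote this to an $A_\infty$ map of $\bb{F}_2[t]$-modules from the Borel complex to $\check{C}^{KM}_{\bb{Z}/2}$.

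To verify that $G$ is a chain map, I would use the action filtrations, which both complexes carry and which $G$ preserves (noting that the symplectic action of the generator $(y, \lambda)$ equals that of $y$ itself, independent of $\lambda$). On each associated graded piece, supported on a single intersection point, the Borel differential reduces to $t(1+\iota)$ and the KM differential reduces to the constant-$u$ twisted boundary differentials; the matching in this case is purely algebraic and follows directly from the computations of Example \ref{constant_u_solutions} together with Proposition \ref{Ainftymodulemap}. For the off-diagonal pieces, I would trace through how an index-one $\tilde{J}_t$-holomorphic strip $u$ in $\tilde{M}$ contributes on both sides: strips with both endpoints non-invariant give the $d_{oo}$ part of $\check{d}$; strips from non-invariant $x_+$ to invariant $y$ with dominant eigenvalue $\lambda_{-1}$ give a term in $d_{uo}\{x_+\}$, which after composition with $d^\partial_{su}$ (supported on the constant twisted trajectory from $(y, \lambda_{-1})$ to $(y, \lambda_0)$) yields the required contribution matching the $G(dx_+ \otimes 1)$ term at $(y, \lambda_0)$; and the mixed remaining cases are analogous.

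Finally, I would verify the quasi-inverse property by comparing with \eqref{KMSSequivalence} on the associated graded of the action filtration. At each intersection point, both $G$ and $K^{-1}$ (after completion) reduce to the Proposition \ref{Ainftymodulemap} equivalence between $A_{\bb{F}_2}$ and $(A[t], d_{borel})$ for $A$ the local constant-$u$ complex (either $B_\infty$ for invariant points or $\bb{F}_2[\bb{Z}/2]$ for non-invariant ones); these are quasi-isomorphisms by the argument in Section 2, and by the five lemma applied page-by-page in the filtration spectral sequence, $G$ is a quasi-isomorphism after completion. The main obstacle will be the careful chain-map bookkeeping in the previous paragraph: each index-one $\tilde{J}_t$-strip must be assigned to exactly the right asymptotic eigenvalue stratum, and the boundary-obstructed gluing of Theorem \ref{gluing_theorem_III} must be invoked precisely to pair the $d_{uo}$ and $d_{os}$ terms with their boundary-obstructed completions, in order to reproduce all of the algebraic identities required of a chain map.
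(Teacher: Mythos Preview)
Your definition of $G$ on non-invariant generators is wrong in a way that makes the map vanish on the relevant cohomology. If $G(x) = G(\iota x) = \bf{x}$, then $G(x + \iota x) = 0$; but on the Borel complex restricted to a single non-invariant pair $\{x, \iota x\}$, the cohomology is one-dimensional and generated precisely by $x + \iota x$ (this is the kernel of $d_{borel} = t(1+\iota)$ modulo its image). So your $G$ kills this class and cannot be a quasi-inverse to anything. The paper's definition is deliberately asymmetric: after choosing a distinguished representative $x$ in each pair, set $G(t^n x) = T^n\bf{x}$ but $G(t^n \iota x) = 0$, so that $G(x + \iota x) = \bf{x}$. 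This asymmetry is essential, and it forces the chain-map verification into a genuine three-case analysis (invariant point; distinguished non-invariant point; undistinguished non-invariant point), with the last case requiring the check that $G(d_{borel}(\iota x)) = 0$. Your alternative via an $\bb{F}_2[\bb{Z}/2]$-equivariant lift to $\widetilde{\check{C}}^{KM}$ also does not work: as you yourself note, $y \mapsto (y, \psi_0)$ is not equivariant at invariant $y$, and Proposition~\ref{Ainftymodulemap} does not manufacture a chain map out of the Borel complex from such non-equivariant data (the Borel side is not a free $\bb{F}_2[\bb{Z}/2]$-complex at invariant points, so the Section~2 machinery does not directly apply there).

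There is also a secondary gap in the invariant case: you set $G(y \otimes t^n) = (y, \lambda_n)$ and justify $\bb{F}_2[t]$-linearity via Example~\ref{constant_u_solutions}. But that example only describes $T$ on the constant-$u$ associated graded; the actual $T$ on $\check{C}^{KM}$ counts negative twisted trajectories over possibly non-constant $u$, so $(y, \lambda_n)$ need not equal $T^n(y, \lambda_0)$ on the chain level. The paper instead defines $G(t^n y) = T^n \bf{y}_0$ using the genuine $T$, which is $\bb{F}_2[t]$-linear by fiat. The subsequent chain-map verification then hinges on an observation absent from your sketch: equivariant regularity together with ordinary regularity forces $\specflow(u) \ge 0$ for every invariant index-one trajectory $u$, which sharply constrains the dimension-zero boundary twisted trajectories between stable generators and makes the boundary-obstructed contributions cancel in pairs.
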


For each pair of non-invariant intersection points of $\tilde{L}_0 \cap \tilde{L}_1$, choose a distinguished representative; write $\frak{C}_{non}$ for the set they form. Likewise write $\frak{C}_{inv}$ for the set of intersection points of $L_0 \cap L_1$, i.e. the invariant intersections of $\tilde{L}_0 \cap \tilde{L}_1$, so that $\frak{C}_{non} \sqcup \iota(\frak{C}_{non}) \sqcup \frak{C}_{inv}$ forms a basis of $CF(\tilde{L}_0, \tilde{L}_1)$. For $y \in \frak{C}_{non}$, write $\bf{y} = \{y, \iota(y)\}$ for the corresponding generator of $\frak{C}_o$, and for $x \in \frak{C}_{inv}$, write $\bf{x}_0 = (x, \lambda_0(x))$ for the generator of $\frak{C}_s$ corresponding to the lowest positive eigenvalue of $I \frac{d}{dt}$ at $x$.

Also choose, for every eigenvalue $\lambda$ of $I \frac{d}{dt}$ at $x$, a distinguished choice of unit eigenvector; in particular this specifies an endomorphism $T$ of $CF^{KM}_{\bb{Z}/2}(\tilde{L}_0, \tilde{L}_1)$ defining the $\bb{F}_2[t]$-module structure.

\begin{definition}The map $G : CF(\bb{Z}/2, CF(\tilde{L}_0, \tilde{L}_1)) \to CF^{KM}_{\bb{Z}/2}(\tilde{L}_0, \tilde{L}_1)$ is defined by setting, for each $n \ge 0$ and $x \in \tilde{L}_0 \cap \tilde{L}_1$:
\begin{equation}\label{G_formula}
G(t^n x) = \begin{cases}T^n \bf{x} & \text{ if } x \in \frak{C}_{non};\\
0 & \text{ if } x \in \iota(\frak{C}_{non})\\
T^n \bf{x}_0 & \text{ if } x \in \frak{C}_{inv}\end{cases}\end{equation}\end{definition}

\begin{remark}Apart from $M \subset M \times M$, the other major situation where equivariant and regular complex structures naturally arise is in the presence of a stable normal trivialization, as in \cite{SeidelSmith10}. There is then a natural question, beyond the scope of this discussion, of whether the composite of $G$, the localization map $HF^{KM}_{\bb{Z}/2}(\tilde{L}_0, \tilde{L}_1) \to HF_{tw}(L_0, L_1; \frak{p})$ and the isomorphism with $HF(L_0, L_1)[t,t^{-1}]$ induced by the trivialization of the polarization agrees with the localization map constructed by Seidel-Smith.\end{remark}

By construction $G$ is $\bb{F}_2[t]$-linear. First, we will show that $G$ is a chain map. We first make a couple of observations about the dimension zero moduli spaces counted by the differential both on $CF(\tilde{L}_0, \tilde{L}_1)$ and on $CF^{KM}_{\bb{Z}/2}(\tilde{L}_0, \tilde{L}_1)$:

First, If $x \in \frak{C}_{inv}$ and $y \in \tilde{L}_0 \cap \tilde{L}_1$, by equivariant transversality the moduli space of $\tilde{J}_t$-holomorphic strips in a fixed homotopy class $[u]$ with endpoints $y$ and $x$ and with asymptotics of type $\lambda_+$ where $\lambda_+ > \lambda_0(x)$ at the positive end form a submanifold of strictly positive codimension in the moduli space of all Floer trajectories from $y$ to $x$. In particular, if $y$ is non-invariant, the dimension zero components of the moduli spaces $\cal{M}(y, x)$ and $\cal{M}(\bf{y}, \bf{x}_0)$ coincide.

Likewise, for $x \in \frak{C}_{inv}$ and $y \in \frak{C}_{non}$, if $\bf{x}_{-1}$ is the element of $\frak{C}_u$ corresponding to the highest negative eigenvalue, the dimension zero components of $\cal{M}(x, y)$ and $\cal{M}(\bf{x}_{-1}, \bf{y})$ coincide.

If $x, x' \in \frak{C}_{inv}$, then the dimension zero part of $\cal{M}(x_-, x_+)$ can be partitioned into the invariant trajectories contained in $M$, and pairs of non-invariant trajectories. Similarly to the above, the $\bb{Z}/2$-quotient of the non-invariant part of the moduli space is exactly the dimension zero part of $\cal{M}(\bf{x}_{-1}, \bf{x}_0)$.

On the other hand, if $u$ is an invariant trajectory of index one in $\tilde{M}$, by equivariant transversality it is also index one in $M$, and thus $\specflow(u) = 0$. In particular, the unique $i \in \bb{Z}$ such that the expected dimension $\cal{M}(\bf{x}_i, \bf{x}'_0)$ is dimension zero is $i = 0$. Moreover, by the regularity of the twisted equations, the operator $\bar{\nabla}_I(u)$ in the twisted equation must then always be surjective, taken on Sobolev spaces with exponential weights; thus its kernel is dimension one. In particular, the downstairs trajectory $u$ supports a unique twisted trajectory up to rescaling. Hence, the dimension zero invariant trajectories in $\cal{M}(x, x')$ are in bijection with the moduli space of twisted trajectories $\cal{M}^{\partial}(\bf{x}_0, \bf{x}'_0)$.

Moreover, consider the dimension zero part of the moduli space of twisted trajectories $\cal{M}^{\partial}(\bf{x}_i, \bf{x}'_0)$ between some $(x, \lambda_i)$ and $(x', \lambda_0)$. For such a trajectory $(u, \phi)$, we know from the dimension formula that
\begin{equation*}\mu_M(u) + \specflow(u) + i - 1 = 0.\end{equation*}
We know that $\mu(u), \specflow(u) \ge 0$. If $\mu(u) = 0$, then $x = x'$ and $u$ is a constant trajectory; the only discrete flows are for $i = 1$ and there are exactly two of them. If $\mu_M(u) = 1$ and $\specflow(u) = 0$, then we also have $\mu_{\tilde{M}}(u) = 1$, and this flow must be of the form we just described. Otherwise, we must have $i < 0$, and thus $\bf{x}_i \in \frak{C}_u$.

By the same argument, the only dimension zero moduli spaces of boundary obstructed trajectories (i.e. a twisted trajectory in $\cal{M}^{\partial}(\bf{x}_i, \bf{x}'_j)$ with $i \ge 0$ and $j < 0$) occur when $x = x'$ and $i = 0, j = -1$; such trajectories have constant $u$, and each such moduli space has exactly two of them.

\begin{proposition}$G$ is a chain map.\end{proposition}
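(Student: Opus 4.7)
The plan is to verify the chain-map identity $G \circ d_{borel} = \check{d} \circ G$ by a case-by-case computation on basis elements. Since $G$ and $d_{borel}$ are both $\bb{F}_2[t]$-linear, and since $T$ is a chain map commuting with $\check{d}$, it suffices to check the identity on $x \in \tilde{L}_0 \cap \tilde{L}_1$ with no $t$-factor. We split into three cases depending on whether $x \in \frak{C}_{non}$, $x \in \iota(\frak{C}_{non})$, or $x \in \frak{C}_{inv}$.

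The key preliminary simplification is that in the equivariant regular setting, $d^{\partial}_{su} \equiv 0$ on the quotient complex $\check{C}$. Indeed, the preceding analysis identifies the only dimension-zero boundary-obstructed trajectories as the constant-$u$ solutions at an invariant intersection with asymptotics $\bf{x}_0$ and $\bf{x}_{-1}$, and each such moduli space contains exactly two elements, so its count vanishes modulo two. Hence $\check{d}(\bf{x}) = d_{oo}(\bf{x})$ for $\bf{x} \in C_o$ and $\check{d}(\bf{x}_0) = d_{os}(\bf{x}_0) + d^{\partial}_{ss}(\bf{x}_0)$ for $\bf{x}_0 \in C_s$. However, at the lifted $\bb{F}_2[\bb{Z}/2]$-level the same two constant solutions split sign-sensitively: the explicit description in Example \ref{constant_u_solutions} gives $\tilde{d}^{\partial}_{su}((z, \psi_{-1})) = (z, \psi_0) + (z, -\psi_0) = (1 + \iota)(z, \psi_0)$, and this is what produces a nontrivial $T$-contribution on $C_s$ despite the vanishing of $d^{\partial}_{su}$.

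For $x \in \frak{C}_{non}$, writing $n_{zy}$ for the dimension-zero count of strips from $z$ to $y$ in $\tilde{M}$, we need to match $G(d_{borel}(x)) = G(dx) + T\bf{x}$ with $\check{d}(\bf{x}) = d_{oo}(\bf{x})$. On the $\bf{y}$-component with $y \in \frak{C}_{non}$, $G(dx)|_{\bf{y}} = n_{yx}$ picks up only the chosen representative, while $T\bf{x}|_{\bf{y}} = n_{\iota y, x} = n_{y, \iota x}$ is the $\iota$-coefficient in the lifted differential; the sum $n_{yx} + n_{y, \iota x}$ equals $\# \cal{M}(\bf{y}, \bf{x})$ by the usual count of $\iota$-orbits of strips. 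On the $\bf{z}_0$-component with $z \in \frak{C}_{inv}$, $G(dx)|_{\bf{z}_0} = n_{zx}$, while $T\bf{x}|_{\bf{z}_0}$ is the $\iota$-coefficient of $\tilde{d}^{\partial}_{su}\tilde{d}_{uo}(x)|_{(z,\psi_0)}$; plugging in the formula for $\tilde{d}^{\partial}_{su}$ yields a lifted coefficient $(n_{zx}^{+\psi_{-1}} + n_{zx}^{-\psi_{-1}})(1 + \iota)(z, \psi_0) = n_{zx}(1+\iota)(z, \psi_0)$, using Observation 2 to sum over the two asymptotic normal directions, so $T\bf{x}|_{\bf{z}_0} = n_{zx}$ and the two contributions cancel modulo two, matching $\check{d}(\bf{x})|_{C_s} = 0$. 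The case $x \in \iota(\frak{C}_{non})$ is symmetric by $\iota$-equivariance of $d$. For $x \in \frak{C}_{inv}$ the Borel perturbation $t(x + \iota x) = 2tx$ vanishes, so $G(d_{borel}(x)) = G(dx)$; splitting $dx$ by the type of its source gives $\sum_{y \in \frak{C}_{non}} n_{yx} \bf{y} + \sum_{z' \in \frak{C}_{inv}} n_{z'x} \bf{z}'_0$, and this matches $d_{os}(\bf{x}_0) + d^{\partial}_{ss}(\bf{x}_0)$ via Observations 1 and 3, with the latter identification using the fact that non-invariant strips between two invariant endpoints come in $\iota$-pairs and so contribute evenly: $n_{z'x} \equiv n^M_{z'x} \pmod 2$.

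The main bookkeeping obstacle is computing $T$ explicitly. Because $T$ is defined via the $\iota$-part of the lifted differential rather than as the count of any geometric moduli space, one must carefully unpack the action of $\tilde{d}^{\partial}_{su}$ on the two lifted unstable generators $(z, \pm\psi_{-1})$ and compose with $\tilde{d}_{uo}$. Conceptually, $G$ packages two pieces of information on each non-invariant generator of $CF(\tilde{L}_0, \tilde{L}_1)$---the plain Floer differential, and, via the Borel correction $t(1+\iota)$, the $C_s$-shadow arising from broken trajectories---and matching these two pictures to $\check{d}$ reduces to Observations 1--3 together with the explicit count of the two constant-$u$ boundary-obstructed solutions at each invariant intersection.
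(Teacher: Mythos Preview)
Your proof is correct and follows essentially the same route as the paper's: a case-by-case verification on the three types of generators, leaning on exactly the same preparatory observations about which zero-dimensional moduli spaces survive in the equivariantly regular setting. Your upfront reduction to $n=0$ via the $\bb{F}_2[t]$-linearity of $G$ and the fact that $T$ is a chain map is valid and matches what the paper leaves implicit; your explicit computation of $T\bf{x}|_{\bf{z}_0}$ through the lifted formula $\tilde d^{\partial}_{su}((z,\psi_{-1})) = (1+\iota)(z,\psi_0)$ is exactly the mechanism the paper describes when it says ``exactly one of [the two broken trajectories] is counted by $T$''.

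One small expository point: your dismissal of the case $x \in \iota(\frak{C}_{non})$ as ``symmetric by $\iota$-equivariance of $d$'' is too brief. The map $G$ is not $\iota$-equivariant (it kills $\iota(\frak{C}_{non})$ but not $\frak{C}_{non}$), so the two cases are not literally interchanged by the involution. What actually happens is that $G(x)=0$ forces you to show $G(dx)=T\bf{x}$, and this follows by the same bookkeeping as before together with $n_{yx}=n_{\iota y,\iota x}$; the paper spells this out as its Case III. The computation is straightforward and your sketch is certainly recoverable, but ``symmetry'' is not the right word for why it works.
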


\begin{proof}It suffices to prove $G(d_{borel}(x)) = d_{KM}G(x)$ where $d_{KM}$ is the Kronheimer-Mrowka differential, in the three cases.

\emph{Case I: Suppose that $x \in \frak{C}_{inv}$ is an invariant intersection point}. Then \begin{equation*}d_{borel}(x) = dx = \sum\limits_{y \in \frak{C}_{non}} \# \cal{M}_{\tilde{M}}(y, x)(y + \iota(y)) + \sum\limits_{y \in \frak{C}_{inv}} \# \cal{M}_{M}(y, x)\cdot y\end{equation*}
noting that contributions from non-invariant trajectories from an invariant $y$ to $x$ come in pairs and thus cancel out; thus
\begin{equation*}G(d_{borel} x) = \sum\limits_{y \in \frak{C}_{non}} \# \cal{M}_{\tilde{M}}(y, x) \cdot \bf{y} + \sum\limits_{y \in \frak{C}_{inv}} \# \cal{M}_{M}(y, x) \cdot \bf{y}_0.\end{equation*}
However, this is exactly $d_{KM}(\bf{x}_0)$ by the observations above (noting that since boundary-obstructed trajectories come in pairs, contributions from broken trajectories to $d_{KM}$ cancel out).

\emph{Case II: Suppose that $x \in \frak{C}_{non}$ is a distinguished, non-invariant intersection point}. Then we have
\begin{equation*}dx = \sum\limits_{y \in \frak{C}_{non}}\left(\# \cal{M}_{\tilde{M}}(y, x) \cdot y + \# \cal{M}_{\tilde{M}}(\iota(y), x) \cdot \iota(y)\right) + \sum\limits_{y \in \frak{C}_{inv}} \# \cal{M}_{\tilde{M}}(y, x) \cdot y\end{equation*}
and hence
\begin{equation*}\label{Gdx}G(dx) = \sum\limits_{y \in \frak{C}_{non}}\# \cal{M}_{\tilde{M}}(y, x) \cdot \bf{y} + \sum\limits_{y \in \frak{C}_{inv}} \# \cal{M}_{\tilde{M}}(y, x) \cdot \bf{y}_0\end{equation*}
On the other hand, $G(t \iota(x)) = 0$, while
\begin{equation*}G(t x) = T \bf{x} = \sum\limits_{y \in \frak{C}_{non}} \# \cal{M}_{\tilde{M}}(\iota(y), x) \cdot \bf{y} + \sum\limits_{y \in \frak{C}_{inv}} \# \cal{M}_{\tilde{M}}(y, x) \cdot \bf{y}_0\end{equation*}
The first term here is since $T$ only counts trajectories between a ``distinguished'' and an ``undistinguished'' choice of intersection point. The second term deserves some explanation: the total index one broken trajectories from some $\tilde{y}_i \in \frak{C}_s$ to $\bf{x}$ must have a boundary obstructed component; by the earlier observation we must have $i = 0$, the interior component of the trajectory must be from $\bf{y}_{-1}$ to $\bf{x}$, and the boundary obstructed component must be one of the two trajectories from $\bf{y}_0$ to $\bf{y}_{-1}$. In particular, there are exactly two broken trajectories for each discrete downstairs trajectory in $\cal{M}(y, x)$, and moreover exactly one of them is counted by $T$. Hence,
\begin{equation*}G(d_{borel}(x)) = \sum\limits_{y \in \frak{C}_{non}}\left(\# \cal{M}_{\tilde{M}}(y, x) + \#\cal{M}_{\tilde{M}}(\iota(y), x)\right) \cdot \bf{y} = \sum\limits_{\bf{y} \in \frak{C}_o} \# \cal{M}(\bf{y}, \bf{x}) \cdot \bf{y}.\end{equation*}
However this is exactly $d_{KM}(\bf{x})$. Indeed, this differential has no terms in $\bf{x} \in \frak{C}_s$, since such contributions must come from broken trajectories with a boundary-obstructed component, but as observed these come in pairs.

\emph{Case III: Similarly suppose $x \in \iota(\frak{C}_{non})$ is an ``undistinguished'' non-invariant intersection point}. Then $G(dx)$ is computed by the same formula \eqref{Gdx} as before. On the other hand, $G(tx) = 0$, while
\begin{equation*}G(t\iota(x)) = T\bf{x} = \sum\limits_{y \in \frak{C}_{non}} \# \cal{M}_{\tilde{M}}(y, x) \cdot \bf{y} + \sum\limits_{y \in \frak{C}_{inv}} \# \cal{M}_{\tilde{M}}(y, x) \cdot \bf{y}_0\end{equation*}
by the same reasoning as earlier, where now $T$ counts trajectories between the ``distinguished'' points $y \in \frak{C}_{non}$ and the ``undistinguished'' point $x$. Consequently, $G(d_{borel}x) = 0$, but we also have $d_{KM}G(x) = 0$ since $G(x) = 0$.
This concludes the proof that $G$ is a chain map.\end{proof}

To see that $G$ is a quasi-inverse for equivalence of Theorem \ref{KMSScomparison}, observe that each of $CF(\tilde{L}_0, \tilde{L}_1)[t]$ and $CF^{KM}_{\bb{Z}/2}(\tilde{L}_0, \tilde{L}_1)$ is filtered by the symplectic action of $\tilde{L}_0 \cap \tilde{L}_1$. The map $G$ preserves this filtration by construction.

On the other hand, the map \eqref{KMSSequivalence} also preserves this filtration. To see this is a bit more delicate, since the continuation-style equations used to define this produce maps which are a priori only filtered for the symplectic action on $(\tilde{L}_0 \cap \tilde{L}_1) \cap (\bb{R}^n \cap \Gamma_{\eps})$. However, by choosing $\eps$ to be much smaller than all the differences between action values of $\tilde{L}_0 \cap \tilde{L}_1$, we can achieve this (recall that in fact the equivalence is also filtered for the action on $\bb{R}^n \cap \Gamma_{\eps}$, by choosing complex structures which are compatible for both the original symplectic form on $\tilde{M} \times T^* \bb{R}^n$, and one created by rescaling $\omega_{\tilde{M}}$ to be very small).

It then suffices to consider the two cases when $\tilde{M}$ is two disjoint points interchanged by $\bb{Z}/2$, and when $\tilde{M} = \bb{C}^n$ and $\tilde{L}_0, \tilde{L}_1$ are linear Lagrangians, with $\bb{Z}/2$ action by $-1$, corresponding to filtration levels coming from a non-invariant pair and a single invariant intersection point respectively. In the former case, $G$ is exactly the map
\begin{equation*}\bb{F}_2[\bb{Z}/2][t] \to \bb{F}_2\end{equation*}
defined by $1 \mapsto 1$ and $\iota \mapsto 0$ along with any nonzero power of $t$. On the other hand, for each truncation level $n$, $F$ is an $A_{\infty}$-module map $\{F^i\}_{i \ge 1}$ whose first term $F^i$ is the map
\begin{equation*}\bb{F}_2 \to \bb{F}_2[\bb{Z}/2]\tensor_{\bb{F}_2} \bb{F}_2[t]/(t^n)\end{equation*}
defined by $1 \mapsto (1 + \iota)$, where recall the right hand side has differential $t(1+\iota)$. After reducing $G$ modulo $t^n$, we see that these two maps are inverses on cohomology (in fact, they are chain homotopy inverses).

In the second case, $G$ is just the identity map
\begin{equation*}\bb{F}_2[t] \to \bb{F}_2[t]\end{equation*}
On the other hand, for each truncation level $n$, $F$ is the canonical map
\begin{equation*}\bb{F}_2[t] \tensor_{\bb{F}_2[t]} \bb{F}_2[t]/(t^n) \to \bb{F}_2[t]/(t^n)\end{equation*}
which are clearly inverse after reducing the first map modulo $t^n$.

We have thus proven that for each $n$,
\begin{equation*}G \text{ mod } t^n : (CF(\tilde{L}_0, \tilde{L}_1)\tensor_{\bb{F}_2}\bb{F}_2[t]/(t^n), d_{borel}) \to CF^{KM}_{\bb{Z}/2}(\tilde{L}_0, \tilde{L}_1) \tensor_{\bb{F}_2[t]}^{\bb{L}} \bb{F}_2[t]/(t^n)\end{equation*}
is an inverse on cohomology for \eqref{KMSSequivalence}, completing the proof.

\subsection{The Floer-theoretic Steenrod square}

Let us return to the setting $\tilde{M} = M \times M, \tilde{L}_0 = L_0 \times L_0, \tilde{L}_1 = L_1 \times L_1$ of before. Using the almost complex structure $\tilde{J}_t = J_t \times J_t$, we have an isomorphism of chain complexes
\begin{equation}CF(L_0 \times L_0, L_1 \times L_1) \cong CF(L_0, L_1) \tensor CF(L_0, L_1).\end{equation}
This is just the usual Kunneth theorem for ordinary Floer cohomology, which sends an intersection point $(x, y) \in (L_0 \times L_0, L_1 \times L_1)$ to $x \tensor y$ (it is a chain map since the only Floer trajectories $(u, u')$ on $M \times M$ which come in zero-dimensional moduli spaces are those where one of $u, u'$ is constant). This isomorphism is clearly equivariant for the $\bb{Z}/2$-action induced from that on $M \times M$ for the left complex, and the action which interchanges the two tensor factors on the right.

Moreover, it is well known (see for instance \cite{Seidel15}) that for any complex of $\bb{F}_2$-vector spaces $V$, the function
\begin{equation}V \to C(\bb{Z}/2, V \tensor V), \qquad x \mapsto x \tensor x\end{equation}
defines a chain map after inverting $t$ (which, if over a larger characteristic zero field, would be Frobenius-linear). If $V$ is finite dimensional, moreover yields an isomorphism
\begin{equation}V \tensor \bb{F}_2[t, t^{-1}] \xrightarrow{\sim} H(\bb{Z}/2, V \tensor V)[t^{-1}].\end{equation}
Applying this to the case $V = CF(L_0, L_1)$, we obtain

\begin{definition}The Floer-theoretic total Steenrod square is the map
\begin{equation}St : HF(L_0, L_1) \to HF_{tw}(L_0, L_1; \frak{t})\end{equation}
where $\frak{t}$ is the polarization $(TM, TL_0, TL_1)$ defined on the chain level by the composition of the ``algebraic square''
\begin{equation}CF(L_0, L_1) \to C(\bb{Z}/2, CF(L_0, L_1) \tensor CF(L_0, L_1));\end{equation}
the Kunneth equivalence $C(\bb{Z}/2, CF(L_0, L_1) \tensor CF(L_0, L_1)) \cong C(\bb{Z}/2, CF(L_0 \times L_0, L_1 \times L_1)$; the comparison map $G$ of the previous section
\begin{equation}G: C(\bb{Z}/2, CF(L_0 \times L_0, L_1 \times L_1) \to CF_{\bb{Z}/2}(L_0 \times L_0, L_1 \times L_1);\end{equation}
and the localization map
\begin{equation}CF_{\bb{Z}/2}(L_0 \times L_0, L_1 \times L_1) \to CF_{tw}(L_0, L_1; \frak{t}).\end{equation}
Over a different characteristic zero field, it is Frobenius-linear, and in the presence of gradings in the sense of \cite{Seidel00}, it doubles the degree. In particular, a trivialization of $\frak{t}$ induces an isomorphism $HF_{tw}(L_0, L_1; \frak{t}) \cong HF(L_0, L_1)[t, t^{-1}]$, as well as a grading on $HF(L_0, L_1)$. We thus obtain individual Steenrod operations
\begin{equation}St^i : HF^*(L_0, L_1) \to HF^{*+i}(L_0, L_1)\end{equation}
defined as the $t^{|x| - i}$-term of $St(x)$.\end{definition}

In view of the construction \eqref{G_formula} of the map $G$, the chain level Steenrod operation
\[CF(L_0, L_1) \to CF_{tw}(L_0, L_1; \frak{p})\]
admits a simple description. For an intersection point $x \in L_0 \cap L_1$, take the corresponding generator $\bf{x}_0 = (x, \lambda_0) \in \frak{C}_s$ of lowest positive eigenvalue $\lambda_0 > 0$. Then the total Steenrod square $St(x)$ is given by the the image of $\bf{x}_0$ in $CF_{tw}(L_0, L_1; \frak{p})$ under the localization map. Turning to the formula \eqref{localization_formula} for this map, we see that
\[St(x) = d_{us}\bf{x}_0 = \sum_{\bf{y} \in \frak{C}_u} \# \cal{M}^o(\bf{y}, \bf{x}_0) \ \bf{y}\]
counts the index one \emph{interior} trajectories connecting boundary-unstable generators $\bf{y}$ to $\bf{x}_0$.

Recall that the action filtration on $L_0 \cap L_1$ induces a spectral sequence computing the polarization-twisted Floer cohomology starting at the ordinary Floer cohomology $HF(L_0, L_1; \xi)$ with coefficients in the local system $\xi$ on $\cal{P}_M(L_0, L_1)$ with fibres $\bb{F}_2[t,t^{-1}]$, and monodromy given by $t^{\specflow(u)}$. However for the polarization $\frak{t}$, the spectral flow $\specflow(u)$ of the operator $I \frac{d}{dt}$ is equal to the Maslov index $\mu(u)$, and since the differentials in ordinary Floer cohomology count strips of Maslov index one, we have
\begin{equation}CF(L_0, L_1; \xi) = (CF(L_0, L_1)\tensor \bb{F}_2[t,t^{-1}], td)\end{equation}
where $d$ is the ordinary Floer differential. 

\begin{proposition}The Floer-theoretic total Steenrod square defines an isomorphism
\begin{equation}HF(L_0, L_1) \tensor \bb{F}_2[t, t^{-1}] \xrightarrow{\sim} HF_{tw}(L_0, L_1; \frak{t}).\end{equation}
In particular, the spectral sequence from $HF(L_0, L_1; \xi)$ to $HF_{tw}(L_0, L_1; \frak{t})$ degenerates at the $E_2$ page.\end{proposition}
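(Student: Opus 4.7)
The plan is to show that each of the four maps composing $St$ becomes a quasi-isomorphism after inverting $t$, and then deduce the spectral sequence degeneration via a rank comparison between the $E_2$ and $E_\infty$ pages.

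The first map is the algebraic total square $CF(L_0, L_1) \to C(\bb{Z}/2, CF(L_0, L_1) \tensor CF(L_0, L_1))$. This is the standard Frobenius on Tate cohomology: since $V = CF(L_0, L_1)$ is finite-dimensional over $\bb{F}_2$, decomposing $V \tensor V$ as an $\bb{F}_2[\bb{Z}/2]$-module into a sum of free pieces (from off-diagonal basis elements) and trivial pieces (from the diagonal) shows that Tate cohomology collapses onto the diagonal summand, and the square $x \mapsto x \tensor x$ realizes this collapse as an isomorphism. This is due to Seidel \cite{Seidel15}. The second map is the chain-level K\"{u}nneth isomorphism $C(\bb{Z}/2, CF \tensor CF) \xrightarrow{\sim} C(\bb{Z}/2, CF(L_0 \times L_0, L_1 \times L_1))$, which is an equality on the nose because $\tilde{J}_t = J_t \times J_t$ is a regular $\bb{Z}/2$-equivariant almost complex structure and the only $\tilde{J}_t$-holomorphic strips contributing to the product Floer differential are those with one factor constant.

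The third map is the comparison $G$ from the preceding proposition, for which the key observation is that the filtration argument used to establish that $G$ is a quasi-inverse after completion in $t$ works equally well to show $G$ is a quasi-isomorphism after inverting $t$. Concretely, both $C(\bb{Z}/2, CF(L_0 \times L_0, L_1 \times L_1))$ and $CF_{\bb{Z}/2}^{KM}(L_0 \times L_0, L_1 \times L_1)$ are filtered by the symplectic action on $(L_0 \times L_0) \cap (L_1 \times L_1)$, preserved by $G$. On the associated graded, the filtration levels correspond either to non-invariant pairs $\{(x, y), (y, x)\}$ with $x \neq y$---where the source reduces to the Borel complex $\bb{F}_2[\bb{Z}/2][t]$ of the regular representation and the target to $\bb{F}_2 \cdot \mathbf{x} \in C_o$ with trivial $T$-action, both of which vanish after inverting $t$---or to diagonal fixed points $(x, x)$---where the source reduces to $\bb{F}_2[t]$ with trivial differential and the target to $\bb{F}_2[t] \cdot \mathbf{x}_0$ (Example \ref{constant_u_solutions}), with $G$ given by $t^n x \mapsto T^n \mathbf{x}_0 = \mathbf{x}_n$, an isomorphism. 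Finally, the fourth map is the localization $CF_{\bb{Z}/2}(L_0 \times L_0, L_1 \times L_1) \to CF_{tw}(L_0, L_1; \frak{t})$, which is a quasi-isomorphism after inverting $t$ by Theorem \ref{localization_theorem}. Composing gives the claimed isomorphism.

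For the spectral sequence degeneration, note that the local system $\xi$ on $\cal{P}_M(L_0, L_1)$ associated with $\frak{t} = (TM, TL_0, TL_1)$ has monodromy weight $t^{\specflow(u)} = t^{\mu(u)}$, so the Floer differential with $\xi$-coefficients is $td$, where $d$ is the ordinary Floer differential. Since $t$ is a unit in $\bb{F}_2[t,t^{-1}]$, the cohomology is
\[HF(L_0, L_1; \xi) \cong HF(L_0, L_1) \tensor_{\bb{F}_2} \bb{F}_2[t,t^{-1}].\]
Hence the $E_2$ page and the $E_\infty$ page $HF_{tw}(L_0, L_1; \frak{t}) \cong HF(L_0, L_1) \tensor \bb{F}_2[t, t^{-1}]$ have the same rank as $\bb{F}_2[t,t^{-1}]$-modules. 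Since each $E_{r+1}$ is a subquotient of $E_r$, equality of ranks at $r = 2$ and $r = \infty$ forces $d_r = 0$ for all $r \ge 2$.

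The main obstacle is step three: the proposition only asserts that $G$ is a quasi-inverse to the Seidel-Smith comparison map after completion in $t$, and passage from ``quasi-iso after completion'' to ``quasi-iso after inverting $t$'' is not formal for general finitely generated $\bb{F}_2[t]$-modules (consider $\bb{F}_2[t]/(t-1)$, which vanishes after completion but is nonzero after inverting $t$). The fix, which is essentially a re-run of the proof of that proposition, is to argue directly at each associated graded level of the action filtration, as sketched above.
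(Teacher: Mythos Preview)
Your proof is correct and follows the same approach as the paper: verify that each of the four maps in the composite becomes an isomorphism after inverting $t$, then deduce degeneration by comparing the rank of the $E_2$ page to that of $E_\infty$. The paper's own argument is in fact terser, simply asserting that ``each map in the composition defining $Sq$ is an isomorphism after inverting $t$'' without further justification for the map $G$.

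Your self-identified ``main obstacle'' is well spotted: the proposition on $G$ indeed only establishes that $G$ is a quasi-inverse to the Seidel--Smith comparison after $t$-adic completion, and this does not formally imply the same after inverting $t$. Your fix---re-running the action filtration argument and checking directly that on each associated graded piece (a free $\bb{F}_2[\bb{Z}/2]$-summand for non-invariant pairs, which vanishes on both sides after inverting $t$; a copy of $\bb{F}_2[t]$ for invariant diagonal points, where $G$ is visibly an isomorphism)---is exactly the right move and is implicit in the paper's treatment. So your exposition here is, if anything, more careful than the paper's.
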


\begin{proof}The isomorphism statement is immediate, since each map in the composition defining $Sq$ is an isomorphism after inverting $t$. In particular, this implies that $HF_{tw}(L_0, L_1; \frak{t})$ is a free $\bb{F}_2[t,t^{-1}]$-module of rank $\dim_{\bb{F}_2} HF(L_0, L_1)$. However, the $E_2$ page of the spectral sequence, $HF(L_0, L_1; \xi)$, is also a free $\bb{F}_2[t,t^{-1}]$-module of this same rank, since both the kernel and image of the differential $td$ on $CF(L_0, L_1)\tensor \bb{F}_2[t,t^{-1}]$ are exactly the kernel and image of the ordinary differential $d$. The spectral sequence must then degenerate.\end{proof}

\begin{remark}In the case when $L_0$ and $L_1$ intersect in just two points, and there is an $n$-dimensional moduli space $\cal{M}$ of Floer trajectories connecting them, the sole possible differential in the spectral sequence is in the $E_{n+1}$ page. By Proposition \ref{porteous_calculation}, this differential is
\begin{equation}w_n(-T\cal{M})\cdot [\cal{M}]\end{equation}
which is in fact zero for all closed $n$-manifolds $\cal{M}$, since $\cal{M}$ can always be immersed into $\bb{R}^{2n-1}$. This recovers the result above.\end{remark}

\section{Double covers of three-manifolds}

\subsection{Heegaard Floer theory review.}

Let us first give an extremely incomplete review of Heegaard Floer homology. Suppose $Y$ is a closed, oriented three-manifold; recall that a Heegaard splitting of $Y$ is a handlebody decomposition $Y = U_0 \cup_{\Sigma} U_1$ where $\Sigma$ is a genus $g$, closed oriented surface. The attaching circles of $U_0, U_1$ determine a Heegaard diagram $(\Sigma, \bm{\alpha}, \bm{\beta})$, where $\bm{\alpha} = \{\alpha_1, \hdots, \alpha_g\}$ and $\bm{\beta} = \{\beta_1, \hdots, \beta_g\}$ are two sets of closed embedded curves on $\Sigma$, such that the $\bm{\alpha}$ curves are pairwise disjoint, as are the $\bm{\beta}$ curves, and such that each $\bm{\alpha}$, $\bm{\beta}$ span a $g$-dimensional subspace of $H_1(\Sigma)$; we also assume each $\alpha_i, \beta_j$ intersect transversely. Another useful viewpoint is that $Y$ carries a self-indexing Morse function $f : Y \to [0,3]$ with one critical point each of index 0 and 3, and $g$ critical points each of index 1 and 2. Then, $\Sigma = f^{-1}(3/2)$, and the $\bm{\alpha}, \bm{\beta}$ circles are the intersection of $\Sigma$ with the ascending and descending disks of the critical points of index $1, 2$ respectively.

A basepoint $z \in \Sigma$ not lying on any of the $\bm{\alpha}$ or $\bm{\beta}$ circles defines a pointed Heegaard diagram $\cal{D} = (\Sigma, \bm{\alpha}, \bm{\beta}, z)$. A choice of complex structure $j$ on $\Sigma$ then makes the $g$-fold symmetric product $\Sym^g(\Sigma - \{z\})$ into a complex manifold; by equipping it with the appropriate symplectic form it becomes an exact symplectic manifold with convex boundary. The $\bm{\alpha}$ and $\bm{\beta}$ curves then define two totally real submanifolds $\bb{T}_{\alpha} = \alpha_1 \times \hdots \times \alpha_g$ and $\bb{T}_{\beta} = \beta_1 \times \hdots \times \beta_g$ of $\Sym^g(\Sigma - \{z\})$, which can moreover be chosen to be exact Lagrangians; for details on choosing such a symplectic form we refer the reader to \cite{Perutz}, \cite{Hendricks12}. For our purposes, we will only consider the $\widehat{HF}$ flavor of Heegaard Floer homology, which can be taken to be the Lagrangian Floer homology inside $\Sym^g(\Sigma - \{z\})$:
\begin{equation}\widehat{HF}(Y) = HF(\bb{T}_{\alpha}, \bb{T}_{\beta}).\end{equation}

We will also have to consider 2-pointed Heegaard diagrams $\cal{D} = (\Sigma^g, \bm{\alpha}, \bm{\beta}, \{z_1, z_2\})$ where $z_1, z_2$ are two distinct basepoints, and $\bm{\alpha} = \{\alpha_1, \hdots, \alpha_{g+1}\}, \bm{\beta} = \{\beta_1, \hdots, \beta_{g+1}\}$ are now two families of $g+1$ pairwise disjoint curves; these arise from self-indexing Morse functions $f: Y \to [0,3]$ with exactly two minima and two maxima. However by deleting small disks in $\Sigma$ around $z_1, z_2$ and attaching either ends of a cylinder to the resulting circle boundaries, we obtain a (one-)pointed Heegaard diagram for the connect sum $Y \# (S^1 \times S^2)$. In particular, if we take the Lagrangians $\bb{T}_{\alpha} = \alpha_1 \times \hdots \times \alpha_{g+1}$ and $\bb{T}_{\beta} = \beta_1 \times \hdots \times \beta_{g+1}$ of $\Sym^{g+1}(\Sigma - \{z_1, z_2\}$, their Lagrangian Floer homology computes
\begin{equation}\widehat{HF}(Y \# (S^1 \times S^2)) = HF(\bb{T}_{\alpha}, \bb{T}_{\beta}).\end{equation}
In light of the connect sum formula for $\widehat{HF}$ (as proved in \cite{OzsvathSzabo04b}), this is isomorphic to $\widehat{HF}(Y) \tensor V$, where $V = \widehat{HF}(S^1 \times S^2) \cong H^*(S^1)$ is a two-dimensional vector space.

There is another use of a second base-point in a Heegaard diagram, to define invariants of knots in $Y$ (for us, a knot will mean an oriented, null-homologous embedded closed curve). Given a Heegaard diagram $(\Sigma, \bm{\alpha}, \bm{\beta})$ for $Y = U_0 \cup_{\Sigma} U_1$, choose two disjoint base points $w, z \in \Sigma - \cup \alpha_i - \cup \beta_j$. Choose an arc from $z$ to $w$ in $\Sigma$ disjoint from $\bm{\alpha}$, and push it off into $U_0$; also choose an arc from $w$ to $z$ disjoint from $\bm{\beta}$, and push it off into $U_1$. The concatenation defines a knot $K \subset Y$. Alternatively, given a self-indexing Morse function $f$ inducing the diagram, connect the minimum to the maximum via the flow line of $f$ passing through $w$, and then the maximum back to the minimum by the flow line passing through $z$. Indeed, any knot $K \subset Y$ can be presented in this fashion.

We then take the symplectic manifold $\Sym^g(\Sigma - \{w, z\})$, which again carries two Lagrangians $\bb{T}_{\alpha}, \bb{T}_{\beta}$. The knot Floer homology $\widehat{HFK}(Y, K)$ (\cite{OzsvathSzabo04c}, \cite{Rasmussen03}) can then be defined as
\begin{equation}\widehat{HFK}(Y, K) = HF(\bb{T}_{\alpha}, \bb{T}_{\beta})\end{equation}
where this time we take the Lagrangian Floer homology inside $\Sym^g(\Sigma - \{w, z\})$.

\subsection{Double covers.} Let $\tilde{Y} \to Y$ be an unbranched double cover of closed oriented three-manifolds. Observe that a pointed Heegaard diagram $\cal{D} = (\Sigma^g, \bm{\alpha}, \bm{\beta}, z)$ for $Y$ induces a 2-pointed Heegaard diagram for $\tilde{Y}$
\begin{equation}\tilde{\cal{D}} = (\tilde{\Sigma}, \tilde{\bm{\alpha}}, \tilde{\bm{\beta}}, \{\tilde{z}_1, \tilde{z}_2\})\end{equation}
where $\tilde{\Sigma}^{2g-1} \to \Sigma^g$ is a double cover of surfaces, $\tilde{\bm{\alpha}} = \{\tilde{\alpha}_1, \hdots,  \tilde\alpha_{2g}\}$ and $\tilde{\bm{\beta}} = \{\tilde{\beta}_1, \hdots, \tilde{\beta}_{2g}\}$ are the preimages of $\bm{\alpha}$ and $\bm{\beta}$, and $\{\tilde{z}_1, \tilde{z}_2\}$ are the preimages of $z$. In terms of a self-indexing Morse function $f : Y \to [0,3]$, this is the Heegaard diagram corresponding to the Morse function $\tilde{Y} \to Y \to [0,3]$.

The symmetric product
\begin{equation}\tilde{M} = \Sym^{2g}(\tilde{\Sigma} - \{\tilde{z}_1, \tilde{z}_2\})\end{equation}
carries a natural $\bb{Z}/2$-action induced from the deck transformations on $\tilde{\Sigma}$, and the exact symplectic structure can be chosen to make this an action by symplectomorphisms. Clearly the Lagrangian tori $\bb{T}_{\tilde{\alpha}}$, $\bb{T}_{\tilde{\beta}}$ are preserved set-wise by this action. Moreover, the fixed point set can be identified with
\begin{equation*}M = \Sym^g(\Sigma - \{z\})\end{equation*}
such that the fixed points of $\bb{T}_{\tilde{\alpha}}, \bb{T}_{\tilde{\beta}}$ are exactly $\bb{T}_{\alpha}, \bb{T}_{\beta}$; this identification is through the map
\begin{align}\Sym^g(\Sigma - \{z\}) &\to \Sym^{2g}(\tilde{\Sigma} - \{\tilde{z}_1, \tilde{z}_2)\\
(x_1, \hdots, x_g) &\mapsto (\tilde{x}_1, \iota(\tilde{x}_1), \hdots, \tilde{x}_g, \iota(\tilde{x}_g))\end{align}
where $\tilde{x}_i, \iota(\tilde{x}_i)$ are the two preimages of $x_i$.

We then have the following fact, which we will prove in the next section:

\begin{proposition}\label{normaltangentpolarizationI}The normal polarization $\frak{p} = (E, F_0, F_1)$ of $\Sym^g(\Sigma - \{z\}) \subset \Sym^{2g}(\tilde{\Sigma} - \{z_1, z_2\}$ with its Lagrangians $\bb{T}_{\alpha} \subset \bb{T}_{\tilde{\alpha}}, \bb{T}_{\beta} \subset \bb{T}_{\tilde{\beta}}$ is in fact isomorphic to the tangential polarization $\frak{t} = (T\Sym^g(\Sigma - \{z\}), T\bb{T}_{\alpha}, T\bb{T}_{\beta})$ of $\Sym^g(\Sigma - \{z\})$.\end{proposition}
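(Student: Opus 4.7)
The proof will construct an explicit isomorphism of the symplectic vector bundle $E$ with $TM$, intertwining $F_0 \leftrightarrow T\bb{T}_\alpha$ and $F_1 \leftrightarrow T\bb{T}_\beta$. First I would set up the pointwise picture: at a reduced divisor $D = x_1 + \cdots + x_g \in M$ with lift $\tilde{D} = \tilde{x}_1 + \iota \tilde{x}_1 + \cdots + \tilde{x}_g + \iota \tilde{x}_g$, the tangent space to $\Sym^{2g}(\tilde{\Sigma} - \{\tilde{z}_1, \tilde{z}_2\})$ at $\tilde{D}$ is $\bigoplus_i (T_{\tilde{x}_i}\tilde{\Sigma} \oplus T_{\iota \tilde{x}_i}\tilde{\Sigma})$, on which the involution acts by $(v_i, w_i) \mapsto (d\iota \cdot w_i, d\iota \cdot v_i)$. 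The $+1$-eigenspace consists of the ``diagonal'' vectors $(v, d\iota \cdot v)$ and is identified with $T_D M = \bigoplus T_{x_i}\Sigma$ via $d\pi$; the $-1$-eigenspace $E_D$ consists of ``anti-diagonal'' vectors $(v, -d\iota \cdot v)$ and also identifies with $\bigoplus T_{x_i}\Sigma$ via $d\pi$ on the first component. This gives the fiberwise identification $E_D \cong T_D M$ as complex vector spaces, and one verifies that the ambient Hermitian form on $\Sym^{2g}$ (built from pullbacks of the K\"ahler form on $\tilde{\Sigma}$) is $\iota$-equivariant, so the two eigenspaces are symplectically orthogonal and the identification preserves the symplectic form.

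Next I would globalize this fiberwise iso to a genuine isomorphism of complex/symplectic vector bundles on $M$. The fiberwise identification depends on a choice of lift $\tilde{x}_i \in \pi^{-1}(x_i)$, with the two choices differing by a sign; this ambiguity is controlled by the anti-invariant summand $L$ of $\pi_* \mathcal{O}_{\tilde{\Sigma}}$, a holomorphic line bundle on $\Sigma$ satisfying $L^{\otimes 2} = \mathcal{O}_\Sigma$. Since $c_1(L)$ is $2$-torsion in $H^2(\Sigma, \bb{Z}) = \bb{Z}$ it vanishes, so $L$ is smoothly trivial on $\Sigma$, and \emph{a fortiori} on the punctured surface $\Sigma - \{z\}$. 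Picking a smooth trivialization of $L$ provides a coherent choice of local lifts and promotes the pointwise identification to a smooth symplectic bundle isomorphism $E \cong TM$ on the smooth locus. Extending across the diagonal strata of the symmetric product requires the usual care; since $\Sym^g(\Sigma - \{z\})$ is a smooth complex manifold and $E, TM$ are smooth bundles defined on all of $M$, extension is automatic by continuity of the constructed isomorphism.

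For the Lagrangian sub-bundles, the key topological input is that each attaching circle $\alpha_i$ bounds a compressing disk in $U_0$, so $[\alpha_i] = 0 \in H_1(Y; \bb{F}_2)$, and likewise $[\beta_j] = 0$. Thus the class $\omega \in H^1(Y; \bb{F}_2)$ classifying the cover restricts trivially to each $\alpha_i$ and $\beta_j$, so each $\alpha_i$ lifts to two disjoint circles $\tilde{\alpha}_i^{(1)}, \tilde{\alpha}_i^{(2)}$ exchanged by $\iota$, and similarly for $\beta_j$. At a point $D \in \bb{T}_\alpha$, the sub-bundle $F_0 = N_{\bb{T}_\alpha \subset \bb{T}_{\tilde{\alpha}}}$ consists of $-1$-eigenvectors $(v, -d\iota \cdot v)$ with $v \in T_{\tilde{x}_i}\tilde{\alpha}_i^{(j)}$, which maps under $d\pi$ to $T_{x_i}\alpha_{\sigma(i)} = T_D \bb{T}_\alpha$. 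The same calculation handles the $\beta$ side, giving the Lagrangian matching fiberwise.

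The main obstacle will be global compatibility: the trivialization of $L$ chosen in the second step must be coherent with the identifications along both $\bb{T}_\alpha$ and $\bb{T}_\beta$. This is where the hypothesis that $\tilde{Y} \to Y$ lifts to a $\bb{Z}$-cover is likely to enter, giving an integral lift of $\omega$ and hence a preferred trivialization of $L$ on $\Sigma - \{z\}$ that is respected by parallel transport around the $\bm{\alpha}$ and $\bm{\beta}$ curves. Alternatively, one can bypass an explicit trivialization and instead verify the isomorphism of polarization data on the level of classifying maps into $U/O$: this reduces to showing that the relevant Chern/Stiefel--Whitney data of $(E, F_0, F_1)$ and $(TM, T\bb{T}_\alpha, T\bb{T}_\beta)$ agree, which is precisely the computation of Hendricks referenced in the acknowledgements.
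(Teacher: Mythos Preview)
Your approach is essentially the same as the paper's: a smooth trivialization of the anti-invariant line bundle $L$ is precisely a function $\tilde{f}:\tilde{\Sigma}\to\bb{C}^*$ with $\tilde{f}\circ\iota=-\tilde{f}$, and the paper obtains exactly such an $\tilde{f}$ (valued in $S^1$) from the $\bb{Z}$-cover hypothesis, then uses it to twist the naive diagonal/anti-diagonal identification. However, two steps that you treat as routine are in fact the substantive content of the paper's argument.

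First, the claim that the extension across the big diagonal is ``automatic by continuity'' is not justified. On the complement of the diagonal, your identification (equivalently the paper's map $\psi$) is, in the $(z_1,\dots,z_n)$ cover of a local $\Sym^n$ chart, the diagonal matrix $\mathrm{diag}(\tilde{f}(\tilde{z}_1),\dots,\tilde{f}(\tilde{z}_n))$; passing to the symmetric-polynomial coordinates $(P_1,\dots,P_n)$ conjugates this by the Vandermonde Jacobian. When $z_i\to z_j$ but $\tilde{f}(\tilde{z}_i)\ne\tilde{f}(\tilde{z}_j)$, this conjugate acquires terms like $(\tilde{f}(\tilde{z}_i)-\tilde{f}(\tilde{z}_j))/(z_i-z_j)$, so continuity of the extension is not automatic and must be checked together with non-degeneracy. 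The paper sidesteps this entirely via an auxiliary construction (Lemma~\ref{gfunctions}): it replaces $\tilde{f}\circ\pi_i$ by $S_{2g}$-equivariant functions $g_i$ on $\tilde{\Sigma}^{\times 2g}$ which are uniformly close to $\tilde{f}\circ\pi_i$ but satisfy $g_i=g_j$ whenever $\tilde{x}_i$ is near $\tilde{x}_j$. With this modification the matrix becomes a scalar $h\cdot\id$ near each diagonal stratum, which commutes with the Vandermonde Jacobian, and the extension is then genuinely immediate. This is the technical heart of the proof and is missing from your sketch.

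Second, for the Lagrangian matching you correctly flag the ``global compatibility'' issue but do not resolve it. An arbitrary trivialization of $L$ on $\Sigma-\{z\}$ need not restrict to something null-homotopic on each $\tilde{\alpha}_i$ and $\tilde{\beta}_j$, and it is exactly this null-homotopy that produces the homotopy of Lagrangian sub-bundles from $\phi(T\bb{T}_\alpha)$ to $N_{\bb{T}_\alpha}$. The paper's $\tilde{f}$, coming from the $\bb{Z}$-cover of $Y$, has this property because each $\alpha_i,\beta_j$ bounds in $Y$; this is where the $\bb{Z}$-cover hypothesis is actually used, not merely ``likely to enter''. Your fallback suggestion of verifying agreement via Chern/Stiefel--Whitney data along the lines of Hendricks is precisely what the paper's remark says does \emph{not} suffice here: the Chern-class computation shows $E$ and $TM$ are stably isomorphic, but the step producing compatible isomorphisms of $F_i$ with $TL_i$ (Hendricks' Proposition~5.2) breaks down in positive genus.
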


\begin{remark}This is similar in spirit to Lemma 7.2, Proposition 7.3 of Hendricks in \cite{Hendricks12}, which concerns genus zero Heegaard diagrams with many base-points. Our method of proof however will be quite different. Hendricks makes use of Macdonald's computation of the Chern classes of symmetric products of surfaces \cite{Macdonald62} to deduce that the stable complex normal bundle $E$ is stably trivial, and then appeals to the algebraic topology (in particular, the K-theory) of these spaces to deduce automatically from compatible trivializations of the Lagrangian normal bundles $F_0, F_1$. 

While the Chern class computation does carry through to our case to show that $E$ and $TM$ are stably isomorphic, Proposition 5.2 of \cite{Hendricks12} breaks down in this setting, meaning that we cannot then automatically produce compatible stable isomorphisms of $F_i$ and $TL_i$ (indeed, there are stable isomorphisms of $E$ and $TM$ for which there is no compatible isomorphism between $F_0$ and $TL_0$). While a more careful calculation of a relative Chern class in this case may be possible, instead we have opted for a direct construction of the required isomorphism.\end{remark}

Given Proposition \ref{normaltangentpolarizationI}, we can now give a proof of Theorem \ref{HF_inequality_doublecover}.

\begin{proof}From Theorem \ref{smith_inequality}, we have an inequality
\[2 \dim_{\bb{F}_2} \widehat{HF}(\tilde{Y}) =  \dim_{\bb{F}_2} HF(\bb{T}_{\tilde{\alpha}}, \bb{T}_{\tilde{\beta}}) \ge \rank_{\bb{F}_2[t,t^{-1}]} HF(\bb{T}_{\alpha}, \bb{T}_{\beta}; \frak{p}).\]
However from the above we have an equivalence of polarization data $\frak{p} \cong \frak{t} = (TM, TL_0, TL_1)$; moreover the total Steenrod square furnishes an isomorphism $HF(\bb{T}_{\alpha}, \bb{T}_{\beta}) \tensor \bb{F}_2[t,t^{-1}] \xrightarrow{\sim} HF_{tw}(\bb{T}_{\alpha}, \bb{T}_{\beta}; \frak{t})$. In particular we have
\[\rank_{\bb{F}_2[t,t^{-1}]} HF(\bb{T}_{\alpha}, \bb{T}_{\beta}; \frak{p}) = \rank_{\bb{F}_2[t,t^{-1}]} HF(\bb{T}_{\alpha}, \bb{T}_{\beta}; \frak{t}) = \dim_{\bb{F}_2} HF(\bb{T}_{\alpha}, \bb{T}_{\beta})\]
from which the result follows.\end{proof}

An analogue of this result in knot Floer homology also holds for double covers $\tilde{Y} \to Y$ which are branched along a knot $K \subset Y$. Indeed, given a Heegaard diagram $(\Sigma^g, \bm{\alpha}, \bm{\beta}, w, z)$ for $K$, we obtain a Heegaard diagram $(\tilde{\Sigma}^{2g}, \tilde{\bm{\alpha}}, \tilde{\bm{\beta}}, w, z)$ for $K \subset \tilde{Y}$, where $\tilde{\Sigma}^{2g} \to \Sigma^g$ is a double cover branched over $\{w, z\}$, with $\tilde{\bm{\alpha}}, \tilde{\bm{\beta}}$ the pre-images of $\bm{\alpha}, \bm{\beta}$, and $\tilde{w}, \tilde{z} \in \tilde{\Sigma}$ the branch points.

With an appropriate choice of symplectic structure,
\begin{equation}\tilde{M} = \Sym^{2g}(\tilde{\Sigma}-\{w,z\})\end{equation}
again carries a symplectic $\bb{Z}/2$-action, whose fixed point set is $M = \Sym^g(\Sigma - \{w, z\})$. The Lagrangians $\bb{T}_{\tilde{\alpha}}, \bb{T}_{\tilde{\beta}}$ are again preserved set-wise, with fixed point sets $\bb{T}_{\alpha}, \bb{T}_{\beta}$. Moreover, the analogue of Proposition \ref{normaltangentpolarizationI} also holds:

\begin{proposition}\label{normaltangentpolarizationII} The normal polarization $\frak{p} = (E, F_0, F_1)$ of $\Sym^g(\Sigma - \{w, z\}) \subset \Sym^{2g}(\tilde{\Sigma} - \{\tilde{w}, \tilde{z}\})$ with its Lagrangians $\bb{T}_{\alpha} \subset \bb{T}_{\tilde{\alpha}}, \bb{T}_{\beta} \subset \bb{T}_{\tilde{\beta}}$ is isomorphic to the tangential polarization $\frak{t} = (T\Sym^g(\Sigma - \{w, z\}), T\bb{T}_{\alpha}, T\bb{T}_{\beta})$ of $\Sym^g(\Sigma - \{w, z\})$.\end{proposition}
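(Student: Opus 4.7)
The plan is to reduce this to essentially the same construction as in the unbranched case (Proposition \ref{normaltangentpolarizationI}). The key observation is that although $\tilde\Sigma \to \Sigma$ is a double cover branched at $\{w,z\}$, once these branch points are excised the restricted map
\[ \pi : \tilde\Sigma - \{\tilde w, \tilde z\} \to \Sigma - \{w, z\} \]
is an honest unbranched double cover. Configurations in $M = \Sym^g(\Sigma - \{w,z\})$ avoid the (removed) branch locus by definition, so the local geometry of $M \subset \tilde M$ is identical to that in the unbranched setting.

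First I would construct the isomorphism on the open stratum $M^{\circ} \subset M$ consisting of unordered $g$-tuples of distinct points. A point $D = \{x_1, \ldots, x_g\} \in M^{\circ}$ corresponds to the $2g$-tuple $\tilde D = \{\tilde x_1, \iota\tilde x_1, \ldots, \tilde x_g, \iota\tilde x_g\} \in \tilde M$, and
\[ T_{\tilde D}\tilde M \;\cong\; \bigoplus_{i=1}^{g} \bigl( T_{\tilde x_i}\tilde\Sigma \oplus T_{\iota\tilde x_i}\tilde\Sigma \bigr), \]
on which $\iota$ acts by interchanging the two summands of each pair via $d\iota$. The $+1$ eigenspace (the symmetric lifts $(v, d\iota v)$) is exactly $T_D M$, and the $-1$ eigenspace (the antisymmetric lifts $(v,-d\iota v)$) is the normal fibre $E_D$. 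Projection onto the first factor followed by $d\pi$ yields an $\mathbb{R}$-linear isomorphism $E_D \xrightarrow{\sim} T_D M$ which is manifestly complex-linear, hence symplectic up to a positive scalar. Moreover, since each $\tilde\alpha_i$ is a component of $\pi^{-1}(\alpha_{j(i)})$, the same symmetric/antisymmetric lift identification sends the Lagrangian sub-bundle $F_0 = N_{\mathbb{T}_\alpha \subset \mathbb{T}_{\tilde\alpha}}|_{M^{\circ}}$ to $T\mathbb{T}_\alpha|_{M^{\circ}}$, and similarly $F_1 \to T\mathbb{T}_\beta$.

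Next I would extend this identification across the big diagonal in $\Sym^g$. Although the fibrewise product description breaks down when some $x_i$ collide, the symmetric product is a smooth complex manifold and $d\iota$ acts smoothly on $T\tilde M|_M$, so the eigenspace decomposition $T\tilde M|_M = TM \oplus E$ is a smooth splitting globally. The identification $E \to TM$ constructed above is the restriction of the intrinsically defined bundle map "take the holomorphic antisymmetric first-order deformation and push down by $d\pi$", which one can describe sheaf-theoretically using the fact that $T\Sym^g(S)$ at a divisor $D$ is canonically $H^0(D, N_{D \subset S})$ for $S$ an open Riemann surface. Continuity of this construction together with its isomorphism property on the dense stratum $M^{\circ}$ forces it to remain a bundle isomorphism on all of $M$, and naturality guarantees that $F_0, F_1$ are still carried to $T\mathbb{T}_\alpha, T\mathbb{T}_\beta$.

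The main obstacle, as in the unbranched case, is verifying smoothness and the Lagrangian matching across the diagonal strata of $\Sym^g$, where one does not have a direct factor-by-factor description. One could deal with this concretely by working in local holomorphic coordinates at a generic diagonal point (using elementary symmetric functions as coordinates on $\Sym^d$ of a disc) and computing the splitting $T\tilde M|_M = TM \oplus E$ explicitly; the resulting isomorphism $E \cong TM$ agrees on the open stratum with the covering-map construction above and is automatically smooth. The branched cover plays no additional role because, as emphasized, the relevant configurations never reach $\tilde w, \tilde z$, so all local arguments are the \emph{same} as those used in the proof of Proposition \ref{normaltangentpolarizationI}.
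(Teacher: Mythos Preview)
Your construction has a genuine gap at the very first step. The map ``projection onto the first factor followed by $d\pi$'' is not globally well-defined. For a point $x_i \in \Sigma - \{w,z\}$ with preimages $\{\tilde x_i, \iota \tilde x_i\}$, there is no canonical choice of which preimage to call ``first''; swapping the labelling sends your antisymmetric lift $(v,-d\iota v)$ to $(-d\iota v, v)$, and your recipe then produces $-d\pi(v)$ instead of $d\pi(v)$. So the fibrewise isomorphism $E_D \to T_D M$ you write down is only determined up to sign, and the sign ambiguity is governed precisely by the $\bb{Z}/2$ local system of the double cover $\tilde\Sigma - \{\tilde w,\tilde z\} \to \Sigma - \{w,z\}$. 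Since this cover is nontrivial (it has monodromy around each puncture), your local isomorphisms do not patch to a global bundle map $E \to TM$.

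The paper's proof resolves exactly this point by introducing an auxiliary function $\tilde f : \tilde\Sigma - \{\tilde w,\tilde z\} \to S^1$ with $\tilde f \circ \iota = -\tilde f$, obtained from a Seifert surface for $K$ (this is where the null-homology hypothesis enters). Multiplying your ``first-factor projection'' by $\tilde f(\tilde x_i)$ cancels the sign ambiguity and produces a genuine bundle map; the extension across the big diagonal then proceeds via an $S_{2g}$-equivariant smoothing of the $\tilde f \circ \pi_i$ (Lemma~\ref{gfunctions}) and the elementary-symmetric-function charts, exactly as in the proof of Proposition~\ref{normaltangentpolarizationI}. The Lagrangian matching over $\bb{T}_\alpha$ and $\bb{T}_\beta$ uses that $\tilde f$ restricted to each $\tilde\alpha$ or $\tilde\beta$ circle is null-homotopic, which is again a consequence of the Seifert-surface origin of $\tilde f$. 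Without this extra input your argument cannot be completed.
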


The proof of Theorem \ref{HF_inequality_branched} then proceeds exactly as above.

\begin{remark}Heegaard Floer homology $\widehat{HF}(Y)$ has a natural grading by the set $H^2(Y, \bb{Z})$ of spin$^c$ structures on $Y$, and moreover in  knot Floer homology has an additional integer graded called the Alexander grading. We fully expect the inequalities of Theorems \ref{HF_inequality_doublecover} and \ref{HF_inequality_branched} to also apply in each appropriate gradings; for the sake of brevity we have left this to the reader.\end{remark}


\subsection{Proof of Propositions \ref{normaltangentpolarizationI} and \ref{normaltangentpolarizationII}.}

Let us first explain work in the unbranched case. Since $p: \tilde{Y} \to Y$ is a double cover of $Y$ that is induced from a $\bb{Z}$-fold cover, there is a continuous function
\begin{equation}f : Y \to S^1\end{equation}
and an identification of $\tilde{Y}$ with the bundle of square roots $\{\pm \sqrt{f(y)}\}_{y \in Y}$ for $f$; in other words there is a function
\begin{equation}\tilde{f} : \tilde{Y} \to S^1\end{equation}
such that $\tilde{f}(\tilde{y}_1), \tilde{f}(\tilde{y}_2)$ are the two square roots of $f(y)$ whenever $\{\tilde{y}_1, \tilde{y}_2\} = p^{-1}(y)$ are the two preimages of $y \in Y$.

We will for this section only consider the functions $f, \tilde{f}$ restricted to $\Sigma, \tilde{\Sigma}$. The salient fact is that since the $\bm{\alpha}$ and $\bm{\beta}$ curves are null-homotopic in $Y$, for each $i$ and $j$ the restrictions
\begin{equation*}f|_{\alpha_i} : \alpha_i \to S^1, \qquad f|_{\beta_j} : \beta_j \to S^1\end{equation*}
are homotopic to constant maps. Likewise $\tilde{f}$ restricted to the $\tilde{\bm{\alpha}}$ and $\tilde{\bm{\beta}}$ curves is homotopic to constant maps.

We first prove a simpler version of the statement involving only the second symmetric power. There is likewise a $\bb{Z}/2$ action on $\Sym^2(\tilde{\Sigma})$, with fixed point set isomorphic to $\Sigma$; we will just refer to it as $\Sigma$. Suppose that the $\tilde{\bm{\alpha}}$ and $\tilde{\bm{\beta}}$ curves are numbered so that $\tilde{\alpha}_i, \tilde{\alpha}_{g+i}$ are the two pre-images of $\alpha_i$ under $p$, and likewise $\tilde{\beta}_j, \tilde{\beta}_{g+j}$ are the two pre-images for $\beta_j$. Under this numbering, each $\tilde{\alpha}_i \times \tilde{\alpha}_{g+i}, \tilde{\beta}_j \times \tilde{\beta}_{g+j} \subset Sym^2(\tilde{\Sigma})$ are preserved set-wise by the $\bb{Z}/2$ action, with fixed point sets $\alpha_i, \beta_j \subset \Sigma$ respectively; by abuse of notation we will write $\alpha_i \subset \tilde{\alpha}_i \times \tilde{\alpha}_{g+i}$ for this fixed point set and likewise for $\beta_j$.

\begin{proposition}\label{normaltangentpolarizationIII}There is an isomorphism of complex line bundles $T\Sigma \xrightarrow{\phi} N_{\Sigma \subset \Sym^2(\tilde{\Sigma})}$. Moreover, for each curve $\alpha_i$, the image $\phi(T\alpha_i)$ is homotopic through totally real sub-bundles to $N_{\alpha_i \subset \tilde{\alpha}_i \times \tilde{\alpha}_{g+i}}$, and likewise for $\beta_j$. \end{proposition}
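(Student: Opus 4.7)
The plan is to build a canonical isomorphism $N \cong T\Sigma \tensor_{\bb{C}} L_{\bb{C}}$, where $L_{\bb{C}}$ is the complex line bundle on $\Sigma$ whose fibre at $x = \{\tilde{x}_1, \tilde{x}_2\}$ is the $-1$-eigenspace of $\bb{C}^{p^{-1}(x)}$ under the swap. At such a fixed point, the splitting $T_x \Sym^2(\tilde{\Sigma}) = T_{\tilde{x}_1}\tilde{\Sigma} \oplus T_{\tilde{x}_2}\tilde{\Sigma}$ decomposes under the induced involution $(v_1, v_2) \mapsto (d\iota\, v_2, d\iota\, v_1)$ into the $+1$-eigenspace $T_x\Sigma$ and the $-1$-eigenspace $N_x = \{(v_1, -d\iota\, v_1)\}$. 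I would then send $(v_1, -d\iota\, v_1)$ to $dp(v_1) \tensor e_x$, where $e_x \in L_{\bb{C},x}$ is the anti-invariant element $[\tilde{x}_1 \mapsto 1,\ \tilde{x}_2 \mapsto -1]$; this is well-defined because swapping the labelling of preimages simultaneously negates $e_x$ and $dp(v_1)$.

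With this in hand, the role of $\tilde{f}$ is transparent: the condition $\tilde{f}(\iota\tilde{y}) = -\tilde{f}(\tilde{y})$ exhibits $\tilde{f}$ as a global section of $L_{\bb{C}}$ (fibrewise equal to $\tilde{f}(\tilde{x}_1) \cdot e_x$), and its $S^1$-valuedness makes it nowhere-vanishing. Trivializing $L_{\bb{C}}$ by $\tilde{f}$ then gives the desired complex line bundle isomorphism $\phi: T\Sigma \xrightarrow{\sim} N$. For the totally real statement, I would then observe that under this identification $N_{\alpha_i \subset \tilde{\alpha}_i \times \tilde{\alpha}_{g+i}}$ takes the form $T\alpha_i \tensor_{\bb{R}} L_{\bb{R}}|_{\alpha_i}$, where $L_{\bb{R}} \subset L_{\bb{C}}$ is the canonical real form (anti-invariant $\bb{R}$-valued functions on fibres of $p$), while $\phi(T\alpha_i)$ takes the form $T\alpha_i \tensor_{\bb{R}} (\bb{R}\cdot \tilde{f})$. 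Both are totally real line sub-bundles of $N|_{\alpha_i}$, and their homotopy class difference is measured by the relative angle in $\bb{R}\bb{P}^1$ between the canonical real trivialization $\sigma$ of $L_{\bb{R}}|_{\alpha_i}$ determined by the lift $\tilde{\alpha}_i$, and the section $\tilde{f}|_{\alpha_i}$ of $L_{\bb{C}}|_{\alpha_i}$.

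To finish, this relative angle lifts to the phase of $\tilde{f}|_{\tilde{\alpha}_i}: \tilde{\alpha}_i \to S^1$, so it suffices to show this map is null-homotopic. But $\alpha_i$ bounds a disc $D \subset U_0$, and since $D$ is simply connected the double cover is trivial over $D$, so $D$ lifts to a disc in $\tilde{U}_0$ with boundary $\tilde{\alpha}_i$; hence $\tilde{\alpha}_i$ is null-homotopic in $\tilde{Y}$ and $\tilde{f}|_{\tilde{\alpha}_i}$ is null-homotopic in $S^1$. The same argument applies verbatim to $\beta_j$. The main conceptual obstacle is really the bookkeeping in the first step --- setting up the canonical identification $N \cong T\Sigma \tensor L_{\bb{C}}$ and verifying that the geometric normal sub-bundle $N_{\alpha_i}$ corresponds precisely to the algebraic $T\alpha_i \tensor L_{\bb{R}}|_{\alpha_i}$; the role of the ``lifts to a $\bb{Z}$-fold cover'' hypothesis then becomes clear in hindsight, as it is exactly what produces the global anti-invariant section $\tilde{f}$ needed to trivialize the twist.
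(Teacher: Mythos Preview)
Your proof is correct and is essentially the paper's argument in more conceptual dress. The paper defines $\psi_x(v) = (\tilde{f}(\tilde{x}_1)\,dp_{\tilde{x}_1}^{-1}(v),\,\tilde{f}(\tilde{x}_2)\,dp_{\tilde{x}_2}^{-1}(v))$ directly, which since $\tilde{f}(\tilde{x}_2) = -\tilde{f}(\tilde{x}_1)$ lands precisely in the anti-invariant subspace $N_x$; your identification $N \cong T\Sigma \otimes_{\bb{C}} L_{\bb{C}}$ together with the trivialization of $L_{\bb{C}}$ by the anti-invariant section $\tilde{f}$ produces exactly this map. For the totally real statement, the paper homotopes $\tilde{f}$ along $\tilde{\alpha}_i \sqcup \tilde{\alpha}_{g+i}$ to the constant $\pm 1$ through anti-invariant $S^1$-valued functions, which is available precisely because $\tilde{f}|_{\tilde{\alpha}_i}$ is null-homotopic --- the same fact you invoke, and for the same reason. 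Your line-bundle packaging has the virtue of making the role of the $\bb{Z}$-fold cover hypothesis more transparent (it furnishes the global nowhere-vanishing section of $L_{\bb{C}}$), but the two proofs are the same computation.
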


\begin{proof}To produce $\phi$, it suffices to describe a continuous linear map of complex vector bundles
\begin{equation}\psi : T\Sigma \to T\Sym^2(\tilde{\Sigma})|_{\Sigma}\end{equation}
of maximal rank whose image is everywhere transverse to $T\Sigma \subset T\Sym^2(\tilde{\Sigma})|_{\Sigma}$.

Indeed, for $x \in \Sigma$, consider its two preimages $\{\tilde{x}_1, \tilde{x}_2\} = p^{-1}(x)$. The tangent space $T_x Sym^2(\tilde{\Sigma})$ is identified with $T_{\tilde{x}_1}\tilde{\Sigma} \oplus T_{\tilde{x}_2}\tilde{\Sigma}$, under which $T_x\Sigma \subset T_x\Sym^2(\tilde{\Sigma})$ is given by the map $v \mapsto (dp_{\tilde{x}_1}^{-1}(v), dp_{\tilde{x}_2}^{-1}(v))$. Then take the linear map
\begin{align*}\psi_x: T_x \Sigma & \to T_{\tilde{x}_1}\tilde{\Sigma} \oplus T_{\tilde{x}_2}\tilde{\Sigma} \\
v & \mapsto (\tilde{f}(\tilde{x}_1) dp_{\tilde{x}_1}^{-1}(v), (\tilde{f}(\tilde{x}_2) dp_{\tilde{x}_2}^{-1}(v)).\end{align*}
This yields the desired map of vector bundles, since $\tilde{f}(\tilde{x}_1) = - \tilde{f}(\tilde{x}_2)$.

For the statement about $\alpha_i \subset \tilde{\alpha}_i \times \tilde{\alpha}_{g+i}$, observe there is a homotopy of $\tilde{f}$ to the constant function $1$ on $\tilde{\alpha}_i$ and $-1$ on $\tilde{\alpha}_{g+i}$, through functions $g$ on $\tilde{\alpha}_i \sqcup \tilde{\alpha}_{g+i}$ such that $g(\tilde{x}) = -g(\iota(\tilde{x}))$ where $\iota$ is the deck transformation. We then obtain a homotopy of $\psi|_{\alpha_i}$, through linear maps of maximal rank and image transverse to $T\Sigma$, to the map
\begin{equation*}T\Sigma|_{\alpha_i} \to T\Sym^2(\tilde{\Sigma})|_{\alpha_i}, \qquad v \mapsto (dp|_{\tilde{\alpha}_i}^{-1}(v), -dp|_{\tilde{\alpha}_{g+i}}^{-1}(v))\end{equation*}
which in turn specifies a homotopy from $\phi(T\alpha_i)$ to $N_{\alpha_i \subset \tilde{\alpha}_i \times \tilde{\alpha}_{g+i}}$. The case of the $\beta_j$ curves is the same.\end{proof}

We now turn ourselves to the inclusion $\Sym^g(\Sigma) \subset \Sym^{2g}(\tilde{\Sigma})$. We will in fact prove a stronger statement than Proposition \ref{normaltangentpolarizationI} by working on the entirety of $\Sym^g(\Sigma)$  rather than just $\Sym^g(\Sigma-\{z\})$. We first need suitable replacements for the functions $f, \tilde{f}$.

\begin{lemma}\label{gfunctions}There exist continuous maps $g_1, g_2, \hdots, g_{2n}: \tilde{\Sigma}^{\times 2g} \to S^1$ satisfying the following properties:
\begin{itemize}\item the collection is $S_{2g}$-equivariant, meaning for all permutations $\sigma \in S_{2g}$, $i = 1, 2, \hdots, 2g$, and $(\tilde{x}_j) \in \tilde{\Sigma}^{\times 2g}$, we have $g_{\sigma(i)}(\tilde{x}_1, \hdots, \tilde{x}_{2g}) = g_i(\tilde{x}_{\sigma(1)}, \hdots, \tilde{x}_{\sigma(2g)})$;
\item if $\pi_i : \tilde{\Sigma}^{\times 2g} \to \tilde{\Sigma}$ is the $i$-th projection for $i=1, \hdots, 2g$, for some small $\eps > 0$ the functions $g_i$ and $\tilde{f} \circ \pi_i$ are uniformly $\eps$-close; 
\item moreover, away from some small open neighbourhood of the fat diagonal $\Delta_{\tilde{\Sigma}^{\times 2g}} \subset \tilde{\Sigma}^{\times 2g}$ of points where $\tilde{x}_i = \tilde{x}_j$ for some $i \not= j$, we have $g_i = \tilde{f} \circ \pi_i$;
\item there is some small $\delta > 0$ such that whenever $d(\tilde{x}_i, \tilde{x}_j) < \delta$ in some Riemannian metric on $\tilde{\Sigma}$, we have
\begin{equation*}g_i(\hdots, \tilde{x}_i, \hdots, \tilde{x}_j, \hdots) = g_j(\hdots, \tilde{x}_i, \hdots, \tilde{x}_j, \hdots).\end{equation*}\end{itemize}\end{lemma}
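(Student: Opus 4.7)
My plan is to construct the $g_i$ via a smoothed cluster-averaging procedure that replaces $\tilde{f}\circ\pi_i$ near the fat diagonal by a common averaged value indexed by a continuous analogue of the $\delta$-clustering of the coordinates. First, using the uniform continuity of $\tilde{f}$ on the compact surface $\tilde{\Sigma}$, I will fix $\delta>0$ small enough that $\tilde{f}$ carries every metric ball of radius $4g\delta$ into an open arc of $S^1$ of angular length less than $\eps/4$. This will ensure that any multiset of $\tilde{f}$-values indexed by mutually nearby coordinates can be coherently lifted to an interval in $\bb{R}$ and averaged there before reducing modulo $2\pi$.

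Next, choose a continuous cut-off $\rho:[0,\infty)\to[0,1]$ with $\rho=1$ on $[0,\delta]$ and $\rho=0$ on $[2\delta,\infty)$, and set $A_{ij}(\tilde{x}_\bullet)=\rho(d(\tilde{x}_i,\tilde{x}_j))$, a continuous symmetric matrix with $A_{ii}=1$. I will then form a smoothed transitive closure $\bar{A}$ by iterating the tropical matrix multiplication
\[
A^{(n+1)}_{ij}(\tilde{x}_\bullet)\;=\;\max_{k}\,\min\bigl(A^{(n)}_{ik}(\tilde{x}_\bullet),\,A^{(n)}_{kj}(\tilde{x}_\bullet)\bigr).
\]
Entrywise monotonicity of this iteration, combined with the fact that any maximising path in the finite index set has length at most $2g$, shows that $A^{(2g)}=\bar{A}$ is stable, continuous, $S_{2g}$-equivariant, and symmetric, with $\bar{A}_{ij}=1$ whenever $A_{ij}=1$.

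The definition of $g_i$ is then
\[
g_i(\tilde{x}_\bullet)\;=\;\frac{\sum_{j}\bar{A}_{ij}\,\tilde{f}(\tilde{x}_j)}{\sum_{j}\bar{A}_{ij}},
\]
where the weighted sum is computed after lifting the values $\{\tilde{f}(\tilde{x}_j):\bar{A}_{ij}>0\}$ to a common arc in $\bb{R}/2\pi\bb{Z}$, averaging there, and reducing back to $S^1$. The required lift exists because any $j$ with $\bar{A}_{ij}>0$ is connected to $i$ by a chain of at most $2g$ indices with consecutive distances bounded by $2\delta$, placing $\tilde{x}_j$ inside the $4g\delta$-ball around $\tilde{x}_i$ and so all relevant $\tilde{f}(\tilde{x}_j)$ in a common $\eps/4$-arc by the first step of the argument.

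The verification of the four properties will be routine except for (iv), which is the conceptual heart of the construction. Symmetry of $\bar{A}$ yields (i); the small angular spread of the averaged values yields the $\eps$-bound (ii); and away from a neighbourhood of the fat diagonal all pairwise distances exceed $2\delta$, forcing $\bar{A}=I$ and hence $g_i=\tilde{f}(\tilde{x}_i)$ there, giving (iii). For (iv), the crucial observation I expect to carry the argument is that whenever $A_{ij}=1$ the $i$-th and $j$-th rows of $\bar{A}$ are literally equal: for any index $k$, any max-realising path from $j$ to $k$ can be prepended by the unit-weight step $i\to j$ without decreasing its minimum, giving $\bar{A}_{ik}\ge\bar{A}_{jk}$, and the reverse inequality is symmetric. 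With identical rows the weighted averages coincide, so $g_i=g_j$. The only remaining technical point is the continuity of $\bar{A}$, which follows from the elementary fact that $\max$ and $\min$ of finitely many continuous functions are continuous.
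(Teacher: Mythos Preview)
Your construction is correct and takes a genuinely different route from the paper's. The paper deforms the \emph{domain}: it invokes an $S_{2g}$-equivariant tubular neighbourhood of the stratified fat diagonal $\Delta\subset\tilde\Sigma^{\times 2g}$ to produce an equivariant self-map $q:\tilde\Sigma^{\times 2g}\to\tilde\Sigma^{\times 2g}$ which is the identity away from $\Delta$ and retracts a smaller neighbourhood onto $\Delta$, then sets $g_i=\tilde f\circ\pi_i\circ q$. Property (iv) comes from the fact that near the stratum $\{\tilde x_i=\tilde x_j\}$ the retraction lands in that stratum, forcing $\pi_i\circ q=\pi_j\circ q$ there. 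Your approach instead deforms the \emph{values}: you build a soft, continuous clustering via the tropical closure $\bar A$ and average $\tilde f$ over each cluster. The key step---that $A_{ij}=1$ forces the $i$-th and $j$-th rows of $\bar A$ to coincide---is clean and gives (iv) directly. Your argument is more elementary in that it avoids the equivariant stratified tubular neighbourhood theorem entirely, trading it for the combinatorics of max--min path weights on a finite index set; the paper's argument is shorter once that machinery is granted. One small point worth making explicit in your write-up: the $S^1$-average is most cleanly realised as $\sum_j\bar A_{ij}\tilde f(\tilde x_j)\big/\bigl|\sum_j\bar A_{ij}\tilde f(\tilde x_j)\bigr|$ in $\bb C$, which is manifestly continuous (the denominator is nonzero since all contributing values lie in an arc of length $<\pi$) and sidesteps any bookkeeping about lifts near the boundary of the active index set.
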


\begin{proof}Observe that the functions $\tilde{f} \circ \pi_i$ satisfy all the properties except the last, which is at least satisfied on the fat diagonal $\Delta$. However, by using an equivariant version of the tubular neighbourhood theorem strata by strata for $\Delta$, we can find a neighbourhood $U_1$ of $\Delta$ and and an $S_{2g}$-equivariant deformation retraction
\begin{equation}r : U \to \Delta.\end{equation}
We then obtain an $S_{2g}$-equivariant map $q: \Sigma^{\times 2g} \to \Sigma^{\times 2g}$ which is equal to the identity outside $U_1$, and on some smaller open neighbourhood $U_{1/2} \subset U_1$, is equal $r|_{U_{1/2}}$. By arranging for $U_{1/2}$ to be sufficiently small, we can ensure that $q$ and the identity are as $C^0$-close as desired. We can then define the functions $g_i$ as $\tilde{f} \circ \pi_i \circ q$.\end{proof}

We can now give the proof of Proposition \ref{normaltangentpolarizationI}.

\begin{proof}[Proof of \ref{normaltangentpolarizationI}.]
Again, we will produce a map of complex vector bundles
\begin{equation}\psi: T\Sym^g(\Sigma) \to T\Sym^{2g}(\tilde{\Sigma})|_{\Sym^g(\tilde{\Sigma})}\end{equation}
which is everywhere of maximal rank and transverse to $T\Sym^g(\Sigma) \subset T\Sym^{2g}(\tilde{\Sigma})$.

We first describe the map in the complement $\Sym^g(\Sigma)\backslash \Delta_{\Sym^g(\Sigma)}$ of the fat diagonal in $\Sym^g(\Sigma)$. Indeed, if $\bf{x} = (x_1, \hdots, x_g)$ is a collection of distinct points of $\Sigma$, with preimages $\{\tilde{x}_i, \tilde{x}_{g+i}\} = p^{-1}(x_i)$  in $\tilde{\Sigma}$, we have identifications of the tangent spaces
\begin{equation*}T_{\bf{x}} \Sym^g(\Sigma) \cong \bigoplus\limits_{i=1}^g T_{x_i} \Sigma, \qquad T_{\bf{x}} \Sym^{2g}(\tilde{\Sigma}) \cong \bigoplus_{i=1}^{2g} T_{\tilde{x}_i} \tilde{\Sigma}\end{equation*}
under which the embedding $T\Sym^g(\Sigma) \subset T\Sym^{2g}(\tilde{\Sigma})$ is given by the map
\begin{equation*}(v_1, \hdots, v_g) \mapsto \left(dp_{\tilde{x}_1}^{-1}(v_1), \hdots, dp_{\tilde{x}_g}^{-1}(v_g), dp_{\tilde{x}_{g+1}}^{-1}(v_1), \hdots, dp_{\tilde{x}_{2g}}^{-1}(v_g)\right).\end{equation*}
According to the above identification of the tangent spaces, we can then describe $\psi_{\bf{x}}$ as
\begin{equation*}\psi_{\bf{x}}(v_1, \hdots, v_g) = \left(g_1(\tilde{\bf{x}}) dp_{\tilde{x}_1}^{-1}(v_1), \hdots, g_g(\tilde{\bf{x}}) dp_{\tilde{x}_g}^{-1}(v_g), g_{g+1}(\tilde{\bf{x}}) dp_{\tilde{x}_{g+1}}^{-1}(v_1), \hdots, g_{2g}(\tilde{\bf{x}}) dp_{\tilde{x}_{2g}}^{-1}(v_g)\right).\end{equation*}
This is well-defined by the $S_{2g}$-equivariance of the functions $g_i$. By the second property of Lemma \ref{gfunctions}, the ratios $g_{i}(\tilde{\bf{x}})/g_{g+i}(\tilde{\bf{x}})$ are uniformly bounded away from $1 \in S^1$, and so the image of $\psi_{\bf{x}}$, which is clearly of maximal rank, is transverse to $T\Sym^g(\Sigma)$.

We now explain how to extend $\psi$ over the fat diagonal. Indeed, over the fat diagonal, there is still a direct sum decomposition of $T\Sym^g(\Sigma)$ and $T\Sym^{2g}(\tilde{\Sigma})|_{T\Sym^g(\Sigma)}$ over the distinct points of each unordered tuple, into pieces of the form $T_{(x)^n} \Sym^n (\Sigma)$ and $T_{(\tilde{x}_1)^n, (\tilde{x}_2)^n} \Sym^{2n}(\tilde{\Sigma})$, where $x \in \Sigma$ has pre-images $\tilde{x}_1, \tilde{x}_2$, and $(x)^n$ denotes the $x$ as a point of $Sym^n (\Sigma)$ with multiplicity $n$, for some $1 \le n \le g$. We claim that by taking the direct sum of the maps given by
\begin{align*}T_{(x)^n} \Sym^n (\Sigma) &\to T_{(\tilde{x}_1)^n}\Sym^n (\tilde{\Sigma}) \oplus T_{(\tilde{x}_2)^n} T\Sym^n (\tilde{\Sigma}) \cong T_{(\tilde{x}_1)^n, (\tilde{x}_2)^n} \Sym^{2n}(\tilde{\Sigma})\\
v & \mapsto \left(h_1 dp_{(\tilde{x}_1)^n}^{-1}(v), h_2 dp_{(\tilde{x}_2)^n}^{-1}(v)\right)\end{align*}
we obtain a continuous extension of $\psi$ over the fat diagonal, where $h_1 = g_1(\tilde{\bf{x}}) = \hdots = g_n(\tilde{\bf{x}})$ and $h_2 = g_{g+1}(\tilde{\bf{x}}) = \hdots = g_{g+n}(\tilde{\bf{x}})$ are unit complex numbers. These maps are clearly of maximal rank and transverse to $T_{(x)^n}\Sym^n(\Sigma)$, since $h_1/h_2$ is bounded away from $1$.

Indeed, choose a local holomorphic chart around $x \in \Sigma$, identifying some open neighbourhood of $x$ with the unit disc $D \subset \bb{C}$. Then there is some open neighbourhood $V$ of $\Sym^{n}(\Sigma)$ around $(x)^n$, and a holomorphic chart $\varphi: V \to \bb{C}^n$, such that there is a commutative diagram
\begin{equation}\label{Symncharts}\begin{tikzcd}V \backslash \Delta \arrow[hookrightarrow]{d} & D^n\backslash\Delta \arrow{l} \arrow{d}{F} \\
V \arrow{r}{\varphi} & \bb{C}^n\end{tikzcd}\end{equation}
where $D^n \backslash \Delta \to V \backslash \Delta$ is the order $n!$ holomorphic covering that sends an ordered tuple of disjoint points $(z_1, \hdots, z_n)$ to the corresponding point of $\Sym^n(\Sigma)$ under the local chart around $x \in \Sigma$, and $F$ is the map $(P_1, \hdots, P_n)$ given by the elementary symmetric polynomials
\begin{equation}P_1 = z_1 + z_2 + \hdots + z_n, \quad \hdots, \quad P_n = z_1 z_2 \hdots z_n.\end{equation}

Likewise around $((\tilde{x}_1)^n, (\tilde{x}_2)^n) \in \Sym^{2n}(\tilde{\Sigma})$, we have a chart on the open set given by $\tilde{V}_1 \times \tilde{V}_2$, where $\tilde{V}_1, \tilde{V}_2$ are open sets around $(\tilde{x}_1)^n, (\tilde{x}_2)^n \in \Sym^n(\tilde{\Sigma})$ which can in turn be identified with $V$. We then have a commutative diagram
\begin{equation}\label{Sym2ncharts}\begin{tikzcd}(\tilde{V}_1 \times \tilde{V}_2)\backslash \Delta \arrow{r}{\sim} \arrow[hookrightarrow]{d} & (V\backslash \Delta) \times (V \backslash \Delta) \arrow[hookrightarrow]{d} & (D^n \backslash \Delta) \times (D^n \backslash \Delta) \arrow{l} \arrow{d}{F \times F} \\
\tilde{V}_1 \times \tilde{V}_2 \arrow{r}{\sim} & V \times V \arrow{r}{\varphi \times \varphi} & \bb{C}^n \times \bb{C}^n\end{tikzcd}\end{equation}
The top rows of \eqref{Symncharts}, \eqref{Sym2ncharts} give charts on small open subsets of the complement of the fat diagonal; according to the trivializations of $T\Sym^n(\Sigma)$ and $T\Sym^{2n}(\tilde{\Sigma})$ induced by these charts, the map $\psi$ is given by the $2n \times n$ matrix
\begin{equation}\begin{pmatrix}g_1 & & & 0 \\ & g_2 & & \\ & & \ddots & \\ 0 & & & g_n \\ g_{g+1} & & & 0 \\ & g_{g+2} & & \\ & & \ddots & \\ 0 & & & g_{g+n}\end{pmatrix}.\end{equation}
However, by the final property of Lemma \ref{gfunctions}, if we take the chart $V$ to be sufficiently small, we must have $h_1(\tilde{\bf{x}}) = g_1(\tilde{\bf{x}}) = \hdots = g_n(\tilde{\bf{x}})$ and $h_2(\tilde{\bf{x}}) = g_{g+1}(\tilde{\bf{x}}) = \hdots = g_{g+n}(\tilde{\bf{x}})$ for some functions $h_1, h_2$; in particular $\psi$ is given by $(h_1 \id, h_2 \id)$ in these charts.

The right vertical maps of \eqref{Symncharts}, \eqref{Sym2ncharts} provide transition functions to the charts given by the bottom rows. In particular, viewed in the trivializations of $T\Sym^n(\Sigma)$ and $T\Sym^{2n}(\tilde{\Sigma})$ induced by the charts $\varphi$ and $\varphi \times \varphi$, the map $\psi$ is given by
\begin{equation}\begin{pmatrix}\left(\frac{\partial P_i}{\partial z_j}\right)_{ij} & 0 \\ 0 & \left(\frac{\partial P_i}{\partial z_j}\right)_{ij}\end{pmatrix} \begin{pmatrix}h_1 \id_{n \times n} \\ \\ h_2 \id_{n \times n}\end{pmatrix} \left(\frac{\partial P_i}{\partial z_j}\right)^{-1}_{ij} = \begin{pmatrix}h_1 \id_{n \times n} \\ \\ h_2 \id_{n \times n}\end{pmatrix} \end{equation}
which clearly extends continuously over the fat diagonal as claimed.

This proves the existence of an isomorphism of complex vector bundles
\begin{equation}\phi: T\Sym^g(\Sigma) \to N_{\Sym^g(\Sigma) \subset \Sym^{2g}(\tilde{\Sigma})}.\end{equation}
To see that under this isomorphism, $T\bb{T}_{\alpha}$ and $T\bb{T}_{\beta}$ are homotopic through Lagrangian subbundles to $N_{\bb{T}_{\alpha}}, N_{\bb{T}_{\beta}}$, observe that by construction, away from some small neighbourhood of the fat diagonal, the isomorphism $\phi$ is given as the direct sum of $g$ copies of the isomorphism of Proposition \ref{normaltangentpolarizationIII}. In particular, we can take the direct sum of the homotopies of real-line-sub-bundles of $N_{\Sigma \subset \Sym^2(\tilde{\Sigma})}$ between $\phi(T\alpha_i)$ and $N_{\alpha_i \subset \tilde{\alpha}_i \times \tilde{\alpha}_{g+i}}$ to obtain the desired homotopy of Lagrangian subbundles over $\bb{T}_{\alpha}$, and likewise for $\bb{T}_{\beta}$.\end{proof}

The proof of Proposition \ref{normaltangentpolarizationII} is more or less identical. In this case, since $K$ is null-homotopic, after choosing a Seifert surface we have $S^1$-valued functions $f, \tilde{f}$ defined on the knot complements $Y \backslash K$ and $\tilde{Y}\backslash K$ respectively, yielding $S^1$-valued functions on $\Sigma - \{w, z\}$ and $\tilde{\Sigma}- \{\tilde{w}, \tilde{z}\}$ respectively, whose restrictions to the $\alpha, \beta$ and $\tilde{\alpha}, \tilde{\beta}$ curves are null-homotopic. The same construction as above then yields the result.

\bibliographystyle{plain}
\bibliography{Equivariant-Localization-Bibliography.bib}

\end{document}